\documentclass[11pt]{amsproc}
\pdfoutput=1
\usepackage{amsmath,amssymb,amsthm,eucal, eufrak,amscd,tikz,mathtools}
\usepackage{xcolor}
\usepackage{enumitem}
\usepackage[shortalphabetic]{amsrefs}
\usepackage{xypic}
\usepackage{graphicx}
\definecolor{allrefcolors}{rgb}{0,0.2,0.5}
\usepackage[linktocpage=true,colorlinks=true,allcolors=allrefcolors,bookmarksopen,bookmarksdepth=3]{hyperref}
\usepackage[margin=1.25in]{geometry}

\newcommand{\cev}[1]{\reflectbox{\ensuremath{\vec{\reflectbox{\ensuremath{#1}}}}}}

\newcommand{\D}{\mathbf{D}}
\newcommand{\PSS}{\operatorname{PSS}}
\newcommand{\PSSlog}{\operatorname{PSS}_{log}}

\newcommand{\DIo}{\mathring{D}_I}

\newcommand{\SIo}{\mathring{S}_I}
\newcommand{\Etop}{E_{\operatorname{top}}}
\newcommand{\Egeo}{E_{\operatorname{geo}}}
\newcommand{\Evzo}{\operatorname{Ev}^{\v}_{z_{0}}}
\newcommand {\Evo}{\operatorname{Ev}^{\v}_{0}}
 
\newcommand{\GWv}{GW_{\v_I}(\alpha)}
\newcommand{\GWvc}{GW_{\v_I}(\underline{\alpha})}

\newcommand{\vi}{v_i}

\def\hlm{h^{\ell}}
\def\Hlm{H^{\ell}}
\def\Hm{H}

\def\Glm{G^{\ell}}
\def\Gm{G}
\def\rlm{R^{\ell}}
\def\rm{R}

\newcommand{\HFXM}{HF^*(X \subset M; \Hlm)} 
\newcommand{\HFw}{HF^*(X \subset M; \Hlm)_{w}}
\newcommand{\FwHF}{F_wHF^*(X \subset M; \Hlm)}

\newcommand{\FpHF}{F_{w_{p}}HF^*(X \subset M; H_p^\ell)}

\newcommand{\FwCF}{F_wCF^*(X \subset M; \Hlm)}
 
\newcommand{\kappail}{\kappa_{i,\ell}}
\newcommand{\hatkappa}{\widehat{\kappa}_{i,\ell}}
\newcommand{\hatX}{\widehat{\Sigma}_{\epsilon_{1}}}
\newcommand{\hatXl}{\widehat{\Sigma}_{\epsilon_{\ell}}}
\newcommand{\hatXlp}{\widehat{\Sigma}_{\epsilon_{\ell}^p}}
\newcommand{\Uiloc}{U_{i,\ell}^{loc}}
\newcommand{\UIloc}{U_{I,\ell}^{loc}}
\newcommand{\out}{\operatorname{out}}
\newcommand{\chifI}{\mathcal{X}(\mathring{S}_I,f_I)}
\newcommand{\critfI}{\mathcal{X}(\mathring{S}_I,f_I)} 

\def\ra{\rightarrow}

\def\Z{{\mathbb Z}}
\def\R{{\mathbb R}}
\def\C{{\mathbb C}}
\def\P{{\mathbb P}}
\def\K{{\mathbf{k}}}

\newcommand{\QH}{H_{log}}

\newcommand{\kappaipert}{\kappa_{i,p}}

\def\mc#1{\mathcal{#1}}

\def\k{\kappa}

\def\ainf{A_\infty}

\def\z2{\Z / 2\Z}

\def\z{\mc{Z}}

\def\v{\mathbf{v}}

\newtheorem{lem}{Lemma}[section]

\newtheorem{thm}[lem]{Theorem}
\newtheorem{cor}[lem]{Corollary}

\newtheorem{defn}[lem]{Definition}

\newtheorem{ques}[lem]{Question}
\newtheorem{rem}[lem]{Remark}

\theoremstyle{remark}

\newtheorem{example}{Example}[section]
\numberwithin{equation}{section}
\setcounter{tocdepth}{2}

\begin{document}
\begin{abstract}

    We construct a multiplicative spectral sequence converging to the
    symplectic cohomology ring of any affine variety $X$, with first page built
    out of topological invariants associated to strata of any fixed normal
    crossings compactification $(M,\D)$ of $X$. We exhibit a broad class of
    pairs $(M,\D)$ (characterized by the absence of relative holomorphic
    spheres or vanishing of certain relative GW invariants) for which the
    spectral sequence degenerates, and a broad subclass of pairs (similarly
    characterized) for which the ring structure on symplectic cohomology can
    also be described topologically. Sample applications include: (a) a
    complete topological description of the symplectic cohomology ring of the
    complement, in any projective $M$, of the union of sufficiently many
    generic ample divisors whose homology classes span a rank one subspace,
    (b) complete additive and partial multiplicative computations of degree
    zero symplectic cohomology rings of many log Calabi-Yau varieties, and 
    (c) a proof in many cases that symplectic cohomology is finitely generated
    as a ring. A key technical ingredient in our results is a logarithmic
    version of the PSS morphism, introduced in our earlier work \cite{GP1}.

\end{abstract}
\title{Symplectic cohomology rings of affine varieties in the topological limit}
\author{Sheel Ganatra and Daniel Pomerleano}
\thanks{S.~G.~  was partially supported by the National Science Foundation through a postdoctoral fellowship --- grant number DMS-1204393 --- and agreement number DMS-1128155. Any opinions, findings and conclusions or recommendations expressed in this material are those of the author(s) and do not necessarily reflect the views of the National Science Foundation.\\
    \mbox{ }\mbox{ }\mbox{ }\mbox{ }\mbox{ }D.~P.~was supported by EPSRC, Imperial College, University of Cambridge, and UMass Boston.}
\maketitle

\section{Introduction} \label{subsection:intro} 

Symplectic cohomology, a version of Hamiltonian Floer homology for exact
convex symplectic manifolds (such as affine varieties and more generally
Stein manifolds), has attracted widespread attention as a powerful
invariant of symplectic manifolds. 
A landmark result of Viterbo \cite{Viterbo:1996kx, Salamon:2006ys, AbSch1}
gives a topological description of symplectic cohomology in the case of
cotangent bundles and it is known that the invariant vanishes under particular
hypotheses on the underlying Stein topology (such as being ``subcritical'' or
``flexible'', see \cite{Cieliebak, BEE, MurphySiegel}). 
Outside of these cases, there are relatively few explicit computations, which
tend to be difficult and rely, in one way or another, on
the enumeration of pseudo-holomorphic curves (see \S \ref{subsection:
othermthds} for a comparison of other computational approaches with the
methods developed here).

The present paper gives a new technique for computing symplectic cohomology
rings of affine varieties via reduction to algebraic topology and algebraic
geometry. To set notation, let $X$ be a smooth complex affine variety, and let
us fix any smooth projective compactification $M$ of $X$ by a simple normal
crossings divisor $\D = D_1 \cup \cdots \cup D_k$ supporting an ample line
bundle, which is guaranteed to us by Hironaka's theorem. One can encode much of
the combinatorics and algebraic topology of the pair $(M, \D)$ in a ring called
the {\em log(arithmic) cohomology} of $(M,\D)$, which we denote $H^*_{log}(M,\D)$ 
(the underlying abelian group was introduced in our earlier work \cite{GP1}) .
See \S \ref{subsec:logcoh} for a precise definition; roughly, $H^*_{log}(M,\D)$
is the direct sum of the cohomology $H^*(X)$ and classes of the form
$\alpha t^{\v}$, where $\v \in (\mathbb{Z}^{\geq 0})^{k}$ is a multiplicity
vector, $\alpha$ is a cohomology class in the normal torus bundle $\SIo$ to
the (open part of the) stratum $D_I = D_{I_1} \cap \cdots \cap D_{I_{|I|}}$ of
$(M,\D)$; the subset $I \subset \{1, \ldots, k\}$ is required to consist
exactly of the indices $i$ for which $\v_i \neq 0$. The product is given by adding
multiplicity vectors and restricting the cohomology classes to a common stratum
where they can be multiplied.

Our first main result relates the logarithmic cohomology of the pair
$H^*_{log}(M,\D)$ to the symplectic cohomology of the complement $SH^*(X)$, by
way of a spectral sequence:
\begin{thm}\label{thm:main} (Theorem \ref{thm:spectral}) 
There is a multiplicative spectral sequence converging to the symplectic cohomology ring
\begin{align} 
    \label{eq:ss} \lbrace E_r^{p,q},d_r \rbrace \Rightarrow SH^*(X). 
\end{align}
whose first page is isomorphic as rings to the logarithmic cohomology of $(M,\D)$:
\begin{equation}\label{eq:sspage1}
 H^*_{log}(M,\D)  \stackrel{\cong}{\rightarrow} \bigoplus_{p,q} E_1^{p,q}.
\end{equation}
\end{thm}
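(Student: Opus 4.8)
The plan is to present $SH^*(X)$ as the cohomology of a filtered $A_\infty$ (or differential graded) algebra and to take the associated spectral sequence of algebras. Building on the Floer-theoretic framework of \cite{GP1}, one computes $SH^*(X)$ as the colimit over a cofinal family of Hamiltonians of linearly growing slope whose dynamics near infinity are modelled on the Reeb flow on the boundary of a tubular neighborhood of $\D\subset M$; after a Morse--Bott perturbation the time-one orbits are organized by a multiplicity vector $\v\in(\Z^{\geq 0})^{k}$, and for fixed $\v$ with $I=\operatorname{supp}(\v)$ the orbits of that multiplicity form, after perturbation by a Morse function, a copy of the normal torus bundle $\SIo$ over the open stratum $D_I^{\circ}$, the $\v=0$ family being the interior $X=\mathring{S}_{\emptyset}$. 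Fixing once and for all positive integer weights $a_i$ --- for instance those of an ample line bundle supported on $\D$ --- set $w(\v)=\sum_i a_i\v_i$ and let $F_p CF^*(X)$ be spanned by the generators with $w(\v)\leq p$. For each fixed slope this filtration is bounded, so the associated spectral sequence converges, and passing to the colimit over slopes gives \eqref{eq:ss}; the real content is that $F_\bullet$ is compatible with the differential and the product, and the identification of $E_1$.

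That $d$ preserves $F_\bullet$ and that the pair-of-pants product satisfies $F_p\cdot F_q\subseteq F_{p+q}$ is the essential geometric input, and it follows from positivity of intersection with $\D$. Choosing the Floer data so that each $\Di$ is almost complex near $\D$, any Floer cylinder asymptotic to orbits of multiplicities $\v^-$ and $\v^+$ compactifies to a curve in $M$ meeting $\Di$ in a set of points whose count, with nonnegative local multiplicities, equals $\v^-_i-\v^+_i$; hence $\v^+\leq\v^-$ componentwise and, the $a_i$ being positive, $w$ is non-increasing along $d$. The analogous count for a pair of pants gives $\v^{+}\leq\v^{1}+\v^{2}$, hence superadditivity of $w$. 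In particular, if a contribution to $d$ preserves $w$ then it preserves $\v$ itself, so $(E_0,d_0)$ is the direct sum over all $\v$ of the $\v$-summand with its weight-preserving differential; a localization argument in the normal-crossings model near $D_I$ identifies the latter, on cohomology, with the Morse cohomology of $\SIo$. With the grading conventions built into the definition of $H^*_{log}(M,\D)$ this yields $\bigoplus_{p,q}E_1^{p,q}\cong\bigoplus_\v H^*(\SIo)\,t^{\v}=H^*_{log}(M,\D)$ as graded groups. To make this identification canonical --- and to reduce the multiplicative statement to the additive one --- one uses the logarithmic PSS morphism of \cite{GP1}: on a cochain model $C^*_{log}(M,\D)$ equipped with its own weight-preserving differential, $\PSSlog\colon C^*_{log}(M,\D)\to CF^*(X)$ is a filtered chain map that one checks carries the weight-$p$ summand $H^*(\SIo)\,t^{\v}$ onto the local cohomology above, hence induces \eqref{eq:sspage1} on $E_1$.

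Superadditivity of $w$ makes \eqref{eq:ss} a spectral sequence of algebras, compatible with the ring structure on $SH^*(X)$. On $E_1$ the induced product counts weight-preserving pairs of pants, which are forced to satisfy $\v^{+}=\v^{1}+\v^{2}$ and, by a further localization, to concentrate near the common stratum $D_{I_1\cup I_2}$; the resulting operation is the cup product on $H^*(\mathring{S}_{I_1\cup I_2})$ after restricting the two inputs, which is exactly the product defining $H^*_{log}(M,\D)$. A clean way to verify the match is once more through $\PSSlog$, which is multiplicative up to strictly filtration-decreasing terms via the triple log-PSS moduli space; the $E_1$-isomorphism is then automatically a ring homomorphism, so the ring statement follows from the additive one already established. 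The main obstacle throughout is the curve-counting analysis near the normal crossings locus --- proving the positivity and the precise relation between local intersection numbers with each $\Di$ and the jumps in $\v$, particularly along deep strata where the asymptotic orbits wind simultaneously around several branches of $\D$, together with the localization statements that compute $d_0$ and the $E_1$-product and their comparison with the restriction maps entering the definition of $H^*_{log}(M,\D)$ --- although much of the foundational analysis is already available from \cite{GP1}, so that the new work lies in organizing it compatibly with the weight filtration and its multiplicative structure.
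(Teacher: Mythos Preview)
Your overall architecture matches the paper's, but the mechanism you propose for filtration preservation does not work. You claim that a Floer cylinder between orbits of multiplicities $\v^-,\v^+$ ``compactifies to a curve in $M$ meeting $D_i$ in a set of points whose count, with nonnegative local multiplicities, equals $\v^-_i-\v^+_i$.'' This is false: Floer cylinders between orbits in $X$ lie entirely in $X=M\setminus\D$ (this is exactly what the maximum principle and the compactness arguments of \S \ref{subsection:Floercohom} establish) and have zero geometric intersection with each $D_i$. One can cap both ends with fiber disks to obtain a closed class $A\in H_2(M)$ with $A\cdot D_i=\v^{-}_i-\v^{+}_i$, but that surface is not $J$-holomorphic and nothing forces this homological number to be nonnegative. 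The same objection applies to your superadditivity claim for the pair of pants. As written, you have no reason the differential or the product respect $F_\bullet$.

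The paper's route is different: $F_w$ is arranged to be an \emph{action} filtration. This is the content of \S \ref{section:SHtor}, where one uses not a fixed Liouville domain but an exhausting family $\bar X_\ell\nearrow X$ with boundaries approaching $\D$; for the adapted Hamiltonians $H^\ell$ the action of an orbit of multiplicity $\v$ is $\approx -w(\v)(1-\epsilon_\ell^2/2)$ (Lemma \ref{lem: sharpactions}), so the standard fact that $\partial_{CF}$ increases action gives $w(\text{output})\le w(\text{input})$ (Lemma \ref{lem:filteredsubcomplex}), and similarly for continuation maps and products (Lemma \ref{lem: homotopycomp} and the end of \S \ref{subsection:Floercohom}). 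Making the integer winding filtration coincide with action up to a controllable error, by pushing orbits toward $\D$ as the slope grows, is new relative to \cite{GP1} and is the key point you are missing. Your sketch also underestimates what goes into the $E_1$ isomorphism: the paper (\S \ref{sect: PSSiso1}--\S \ref{section: inverseloc}) perturbs $\omega$ to separate the actions $w_p(\v)$ for distinct $\v$, uses monotonicity with split almost complex structures to geometrically confine low-energy PSS solutions near strata, transplants the problem into the fiberwise $(\mathbb{C}P^1)^{|I|}$-compactifications $PD_I$ of the normal bundles, and there constructs an explicit one-sided inverse (``local SSP'') to the low-energy PSS map; none of this is available from \cite{GP1}.
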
 
For a precise relationship between the spectral sequence gradings $p,q$ and the
(bi)-grading on $H^*_{log}(M,\D)$, see Theorem \ref{thm:spectral} below. We
content ourselves here by observing that while $SH^*(X)$ is typically
$\Z_2$-graded, it can be made $\Z$-graded when $c_1(X) = 0$, and can additionally
be equipped with a grading by $H_1(X)$ measuring homology classes of generating
orbits; all of these choices can be realized on the level of log cohomology and
the identification \eqref{eq:sspage1}. We have opted to describe our proof in
the $\Z$-graded setting for notational simplicity when defining moduli spaces
and operations, but our methods are grading-independent and apply immediately to
the above (as well as other) graded situations with minor modifications to
definitions; see the discussion below Theorem \ref{thm:spectral}.

There are easy examples where the spectral sequence \eqref{eq:ss} fails to degenerate at
the $E_1$ page (for example when $X=\mathbb{C}$, $SH^*(X)$ vanishes). On the other hand, one of the main themes of the present paper is that there are many cases in
which \eqref{eq:ss} {\em does} degenerate at $E_1$.
In particular, the multiplicative structure in Theorem \ref{thm:main} gives a powerful 
computational tool for proving degeneration (or more generally, analyzing differentials). 
Note that the $E_1$ page $\QH^*(M,\D)$ is generated as a
$\K$-algebra by classes of the form $y \in H^*(X)$ or $\alpha t^{\v_I}$, where
for any subset $I \subset \{1, \ldots, k\}$, 
\[
    (\v_I)_i := \begin{cases} 1 & i \in I \\ 0 & \textrm{otherwise}. \end{cases} 
\] 
i.e., $\v_I = \sum_{i \in I} \mathbf{e}_i$ is the primitive multiplicity 1
vector supported on elements of $I$.  
The multiplicatively of \eqref{eq:ss} then implies that the spectral sequence \eqref{eq:ss}
degenerates if $d_r(\alpha t^{\v_I})= d_r(y) = 0$ for all $\alpha t^{\v_I}, y
\in H^*(X), r\geq 1$. Here is an easy corollary of this observation, which
follows from analyzing homology classes of possible differentials on such
generators:
\begin{cor}[Corollary \ref{thm: easycor}]
    \label{cor:simplecor} Suppose that there is a divisor $H$ such that for
    each smooth component $D_i \subset \D$, $\mathcal{O}(D_i)\cong
    \mathcal{O}(n_i H)$ for $n_i \in \mathbb{Z}^{>0}$.  If any of the $n_i >
    1$, then the spectral sequence \eqref{eq:ss} degenerates at the $E_1$ page.   
\end{cor}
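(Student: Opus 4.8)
The plan is to reduce to the criterion recalled just above the statement: it suffices to check that $d_r$ annihilates each $\K$-algebra generator of $E_1\cong\QH^*(M,\D)$ for every $r\geq1$, i.e.\ that $d_r(y)=0$ for $y\in H^*(X)$ and $d_r(\alpha t^{\v_I})=0$ for every $I\subseteq\{1,\dots,k\}$. I would use two auxiliary structures on the spectral sequence, both coming directly from its construction in Theorem~\ref{thm:spectral}. First, the spectral sequence is graded by $H_1(X)$ (the homology class of the generating orbit), and every $d_r$ preserves this grading, since a Floer trajectory — and more generally any configuration computing a higher differential — is a homology between its asymptotics; on $E_1$ the class $\alpha t^{\v}$ lies in $H_1(X)$-degree $\sum_i v_i\mu_i$, where $\mu_i\in H_1(X)$ is the meridian of $D_i$, while $H^*(X)$ lies in degree $0$. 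Second, $d_r$ strictly lowers the defining weight filtration $\{F_w\}$: since this is an increasing filtration by subcomplexes, $d_r\colon E_r^{w}\to E_r^{w-r}$. On $E_1$ the weight of $\alpha t^{\v}$ is $w(\v)=\sum_i a_iv_i$, where the $a_i\in\Z^{>0}$ are chosen so that $\sum_i a_iD_i$ is ample, and $H^*(X)$ has weight $0$.

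Granting these, the argument would run as follows. Since $y\in H^*(X)$ sits at the minimal weight $0$, automatically $d_r(y)=0$. For $\alpha t^{\v_I}$ with $I\neq\emptyset$ I would argue by contradiction, examining one summand of a hypothetical nonzero $d_r(\alpha t^{\v_I})$. The key input is a homology computation: writing $\psi\colon\Z^k\to H_1(X)$ for $\mathbf e_i\mapsto\mu_i$, the residue/Gysin exact sequence of the open inclusion $X\hookrightarrow M$,
\[
H_2(M;\Z)\xrightarrow{\ \beta\,\mapsto\,(\beta\cdot D_i)_i\ }\Z^k\xrightarrow{\ \psi\ }H_1(X;\Z)\longrightarrow H_1(M;\Z)\longrightarrow 0,
\]
identifies $\ker\psi$ with $\{(\beta\cdot D_1,\dots,\beta\cdot D_k):\beta\in H_2(M;\Z)\}$; since $\mathcal O(D_i)\cong\mathcal O(n_iH)$ gives $\beta\cdot D_i=n_i(\beta\cdot H)$ with $\beta\cdot H\in\Z$, this forces $\ker\psi\subseteq\Z\cdot(n_1,\dots,n_k)$. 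Now the chosen summand of $d_r(\alpha t^{\v_I})$ has weight $w(\v_I)-r<w(\v_I)$ and $H_1(X)$-degree $\psi(\v_I)$. If it lay in $H^*(X)$ then $\psi(\v_I)=0$, so $\v_I\in\ker\psi$ and $\v_I=c(n_1,\dots,n_k)$ for some $c\in\Z$; as $\v_I$ is a nonzero $0$--$1$ vector this would give $c=1$ and every $n_i=1$, contradicting $n_{i_0}>1$. So the summand has the form $\beta t^{\mathbf w}$ with $\mathbf w\in(\Z^{\geq0})^k$ and $\psi(\mathbf w)=\psi(\v_I)$, whence $\mathbf w-\v_I\in\ker\psi$ and $\mathbf w=\v_I+c(n_1,\dots,n_k)$ for some $c\in\Z$. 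Comparing weights, $c\,\sum_i a_in_i=w(\mathbf w)-w(\v_I)<0$ with $\sum_i a_in_i>0$, so $c\leq-1$; but then the $i_0$-th entry of $\mathbf w$ is $(\v_I)_{i_0}+c\,n_{i_0}\leq 1-n_{i_0}\leq-1<0$, contradicting $\mathbf w\in(\Z^{\geq0})^k$. Hence $d_r(\alpha t^{\v_I})=0$ for all $I$ and $r\geq1$, and degeneration follows.

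The step I expect to require the most care is pinning down the \emph{direction} of the weight filtration — that $d_r$ lowers rather than raises the weight — since the whole argument turns on obtaining $c\leq-1$ rather than $c\geq1$: were the differential to raise weight, the forced target $\mathbf w=\v_I+c(n_1,\dots,n_k)$ would have $c\geq1$ and nonnegative entries, and one would instead have to rule out the differential via the relative Gromov--Witten invariants that compute it, invoking the paper's general degeneration criterion rather than this elementary bookkeeping. The only other point needing attention is exactness of the displayed Gysin sequence, which is exactly what forces $c$ to be an integer (so that $c\leq-1$), making the hypothesis $n_{i_0}\geq2$ just enough to push $w_{i_0}$ below zero.
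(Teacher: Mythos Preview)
Your argument is correct and is the same idea as the paper's: both show that any differential on a primitive class $\alpha t^{\v_I}$ would be mediated by a class $A\in H_2(M)$ (your integer $c$ is exactly $A\cdot H$) whose intersection vector $(A\cdot D_i)_i=c(n_1,\dots,n_k)$ forces the target multiplicity vector out of the nonnegative cone. The paper phrases this by fixing $\kappa_i=1$ and capping off Floer trajectories directly to obtain $\sum_i A\cdot D_i\geq\sum_i n_i>k$ while $\sum_i A\cdot D_i\leq w(\v_I)\leq k$, whereas you package the same constraint via the $H_1(X)$-grading (which the paper establishes after Theorem~\ref{thm:spectral}) and the Gysin sequence; the content is identical, and your concern about the direction of the filtration is resolved exactly as you anticipated.
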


In addition to being a useful computational tool, Theorem \ref{thm:main}  is also very useful for proving general qualitative results, for example:

\begin{thm} \label{thm:introfinite}
    (Theorem \ref{thm: finitely}) 
    Assume that $\D= D_1 \cup \cdots \cup D_k $ is an ample divisor and all of the strata $D_I$ are connected. Then $SH^*(X)$ is
    finitely generated as a graded algebra over $\mathbf{k}$.  
\end{thm}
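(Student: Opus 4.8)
The plan is to run the multiplicative spectral sequence of Theorem~\ref{thm:main} and argue that the property ``finitely generated $\K$-algebra'', suitably repackaged as Noetherianity, passes from the $E_1$-page $\QH^*(M,\D)$ to the abutment $SH^*(X)$. The first point is that $E_1\cong\QH^*(M,\D)$ is a finitely generated $\K$-algebra: by the generation statement recorded below Theorem~\ref{thm:main} it is generated by $H^*(X)$ together with the classes $\alpha t^{\v_I}$ with $I\subseteq\{1,\dots,k\}$ and $\alpha$ running over $H^*(\SIo)$, and each of these cohomology groups is finite dimensional ($X$ is smooth affine; each stratum and its normal torus bundle is a complex algebraic variety) while there are only finitely many $I$; hence $E_1$ is finitely generated and, by Hilbert's basis theorem, Noetherian.

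The key structural input of the hypotheses (ampleness of $\D$, connectedness of the strata) is the existence of a well-behaved Noetherian subring of permanent cycles. Since $H^0(\SIo)=\K$, the monomial classes $t^{\mathbf e_i}:=1\cdot t^{\mathbf e_i}$, $i=1,\dots,k$, are unambiguously defined, and the subalgebra $R\subseteq\QH^*(M,\D)$ they generate is a quotient of $\K[t_1,\dots,t_k]$, hence Noetherian. I would then check two things. (i) $E_1$ is \emph{module-finite} over $R$: the classes $\alpha t^{\v_I}$ and $H^*(X)$ already generate it as an $R$-module, because multiplying by the $t^{\mathbf e_i}$ surjects from the multiplicity-$\v_I$ piece onto the pieces of higher multiplicity supported on the same stratum. (ii) The $t^{\mathbf e_i}$ are \emph{permanent cycles}: ampleness forces the index/degree contribution of a unit of multiplicity to be strictly positive, so any differential $d_r(t^{\mathbf e_i})$ with $r\ge 1$ would land in a stratum cohomology group shifted into degrees where it vanishes. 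Given (ii), $R$ acts compatibly on every page (the $d_r$ are $R$-linear), so $\{E_r,d_r\}_{r\ge1}$ is a spectral sequence of $R$-modules.

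Now I would descend. Writing $E_{r+1}=Z_r/B_r$ with $B_1\subseteq B_2\subseteq\cdots\subseteq Z_2\subseteq Z_1=E_1$ the usual tower of $R$-submodules of $E_1$, convergence of the spectral sequence gives $E_\infty=Z_\infty/B_\infty$ with $Z_\infty=\bigcap_r Z_r$ an \emph{honest} $R$-submodule of the Noetherian $R$-module $E_1$, hence finitely generated over $R$; thus $E_\infty$ is a finitely generated $R$-module, in particular a Noetherian ring. Finally $E_\infty\cong\operatorname{gr}_F SH^*(X)$ for the weight filtration $F$, which by the asserted convergence is exhaustive, separated and complete, so the standard fact that a complete filtered ring with Noetherian associated graded is Noetherian shows $SH^*(X)$ is a Noetherian ring; lifting a finite set of $R$-module generators of $E_\infty$ together with a lift of $R$ and running a successive-approximation argument along $F$ (which converges by completeness) exhibits $SH^*(X)$ as module-finite over a finitely generated $\K$-subalgebra, hence finitely generated as a graded $\K$-algebra (take homogeneous components of the lifts).

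The main obstacle is precisely the passage $E_1\rightsquigarrow E_\infty\rightsquigarrow SH^*(X)$: a subquotient of a Noetherian \emph{ring} need not be Noetherian, so the naive ``$E_\infty$ is a subquotient of the Noetherian ring $E_1$'' fails. This is why the hypotheses are needed --- to produce the Noetherian base ring $R$ of permanent cycles over which everything is module-finite (so that the relevant objects become \emph{submodules} of a Noetherian module, where finiteness does descend), and so that the weight filtration is complete and Noetherianity returns from $\operatorname{gr}_F SH^*(X)$ to $SH^*(X)$. Concretely, the two facts to nail down are the module-finiteness of $\QH^*(M,\D)$ over $R$ and the permanent-cycle property of the monomial classes $t^{\mathbf e_i}$.
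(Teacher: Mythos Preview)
Your overall architecture is close to the paper's: both arguments produce a Noetherian subring $R$ (the Stanley--Reisner ring, generated by the $t^{\mathbf e_i}$) over which $E_1$ is module-finite, show that $R$ lifts to $SH^*(X)$, and then use a standard lifting lemma (the paper cites Bourbaki, Commutative Algebra, Ch.~III \S 2) to pass from finite generation of $\operatorname{gr}_F SH^*(X)$ to finite generation of $SH^*(X)$. Your point (i) is correct and matches the paper's observation that $H^*_{log}(M,\D)$ is a finitely generated $\mathcal{SR}(M,\D)$-module.

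The genuine gap is in (ii). Your degree argument does not work. With the ampleness hypothesis (meaning $\kappa_i=1$), the class $t^{\mathbf e_i}$ sits in weight $w=1$, so indeed $d_r(t^{\mathbf e_i})=0$ for $r\ge 2$ simply because the target has negative weight. But $d_1(t^{\mathbf e_i})$ lands in weight $0$, i.e.\ in $H^{*}(X)$, and there is no grading reason for this to vanish: in the $\mathbb Z$-graded case the target is $H^{2(1-a_i)+1}(X)$, which is typically nonzero (e.g.\ whenever $a_i\le 1$ and $X$ has odd cohomology), and in the $\mathbb Z/2$-graded case any degree argument is hopeless. Ampleness says nothing about the $a_i$, which come from the choice of volume form, so ``ampleness forces the index contribution to be strictly positive'' is not correct.

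What the paper actually does for (ii) is geometric. Ampleness ($\kappa_i=1$) makes each $\v_i$ \emph{admissible} in the sense of \S\ref{sect: suitesspectrales}, so that the only possible differential on $\alpha_i t^{\v_i}$ is $d_1$, and Theorem~\ref{thm: degenerescence} (equivalently \cite{GP1}*{Lemma~4.21} at the chain level) identifies this differential with the relative GW-type class $GW_{\v_i}(\alpha_i)$. When $\alpha_i\in H^0(\mathring S_i)$ is the fundamental class, the stable-manifold constraint is vacuous, and the resulting pseudocycle in $X$ factors through the $S^1$-quotient of the moduli space $\mathcal M_{0,2}^{S^1}(M,\D,\v_i)^o$, hence has image of strictly lower dimension and represents the zero class. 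This gives $\partial_{CF}\PSS_{log}(c\, t^{\v_i})=0$ at the chain level, so $\PSS_{log}(\alpha_i t^{\v_i})$ is an honest class in $SH^*(X)$---a strictly stronger statement than being a permanent cycle. Once you have this, your Noetherian descent (or the simpler Bourbaki lifting) goes through.
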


A few comments on the proof of Theorem \ref{thm:main} are in order. The spectral sequence in Theorem \ref{thm:main} is induced by a version of
the action filtration on (the cochain complex computing) $SH^*(X$), specifically a 
nice integer-valued version of this filtration arising from the
compactification $(M,\D)$ (constructed in \S \ref{section:SHtor}). The product
operation on the symplectic cohomology cochain complex can be made to respect
this filtration, giving us the multiplicative structure of the spectral sequence.
Additively, the identification of the first page \eqref{eq:sspage1} can be
thought of as a consequence of a model of Reeb (or Hamiltonian) flow near $\D$
for which the orbit sets come in Morse-Bott families associated to normal
bundles to strata $\SIo$.\footnote{When $\D = D$ is a smooth divisor, this is standard and in the literature, compare \cite[eq. (3.2)]{Seidel:2010fk}.} 
The construction of this model and the explicit description of its periodic
orbits uses our earlier work \cite{GP1} and, like that work, relies extensively
on the study of symplectic structures and Liouville
geometry near $\D$ as developed by McLean \cite{McLean:2012ab, McLean2}.
The families of orbit sets produced are manifolds with corners, but nevertheless we
can apply a careful version of Morse-Bott analysis to them to
produce \eqref{eq:sspage1} additively (compare \cite[\S
1.1]{McLeanMultiplicity} for a spectral sequence in a related situation).
However, such analysis does not make it easy to see that the multiplicative structure on the
first page is compatible with log cohomology via \eqref{eq:sspage1},
and a new argument is needed.

The basic idea, coming from our earlier work \cite{GP1} and inspired by
Piunikhin-Salamon-Schwarz's \cite{Piunikhin:1996aa} classic construction, is
the introduction of log PSS moduli spaces. Roughly speaking, these moduli
spaces count maps from a punctured sphere to $M$ which are holomorphic near a
marked point $z_0$ and solve Floer's equation along a cylindrical end around
the puncture. At the marked point $z_0$, we place tangency and jet constraints
on the intersection of $z_0$ with strata of $\D$, and away from $z_0$ we
require the map to land in $X$, allowing us to interpolate between log
cohomology classes for $(M,\D)$ and Hamiltonian Floer cochains in $X$. In the
presence of sphere bubbling, counting such log PSS solutions does not
necessarily produce a well-defined cochain map from log cochains to Floer
cohomology.  However, energy considerations show that such bubbling does not
arise for \emph{low energy} solutions. Thus, by counting these low energy
solutions and suitably quotienting the Floer complexes, we can define the map
\eqref{eq:sspage1}, which we call the {\em low energy log PSS map}.

A considerable amount of work is then required to prove that this map is an
isomorphism. 
However, a benefit of this approach
is that one may use a standard TQFT argument to prove that the map
\eqref{eq:sspage1} intertwines product structures. In fact, this argument
closely follows Piunikhin-Salamon-Schwarz's original argument
\cite{Piunikhin:1996aa} that the
 PSS map intertwines product structures, adapted in a non-trivial way to our (``relative $\D$'') setup.

\subsection{Computations in the absence of pointed relative holomorphic spheres} \label{sec:intropointedspheres}
We next describe how, in the absence of certain relative holomorphic spheres, we can use similar methods to the proof of Theorem \ref{thm:main} to directly define an isomorphism from log cohomology of $(M,\D)$ to symplectic cohomology that splits the spectral sequence \eqref{eq:ss}. More precisely, we obtain geometric and often checkable criteria (i) under which the spectral sequence \eqref{eq:ss} degenerates and (ii) under which we can further topologically compute (in terms of $\QH^*(M,\D)$)
the ring structure on $SH^*$.\footnote{We remind the reader that for any (convergent)
spectral sequence associated to a filtered dg or
$A_\infty$ algebra $F^{p}C^{\bullet}$, the algebra structure on the final
$E_\infty$ page, the associated graded algebra $gr_F(H^*(C^{\bullet}))$, need not be isomorphic as rings to $H^*(C^{\bullet})$ even if it is additively isomorphic: in general $H^*(C^{\bullet})$ could be a non-trivial deformation of
$gr_F(H^*(C^{\bullet}))$.} 
To state these criteria, let $J$ be an almost complex
structure on $M$, and let $m$ be a non-negative integer. 
An {\em $m$-pointed relative $J$-holomorphic sphere} in $(M,\D)$ is a non-constant
$J$-holomorphic map $u: \mathbb{C}P^1 \to M$ whose image lies generically in
some open stratum ($X$ or $\mathring{D}_I$) and which intersects a deeper stratum ($\D$
or $\cup_{j \notin I} D_I \cap D_j$) in exactly $m$ distinct points. 
We work with a class of almost complex structures $\mathcal{J}(M,\D)$ (see
Definition \ref{defn:complexint}) that tame the symplectic form $\omega$,
preserve $\D$ and satisfy a certain integrability condition in the normal
directions to $\D$; 
for such an almost complex structure $J \in \mathcal{J}(M,\D)$, every
$J$-holomorphic sphere in $M$ is $m$-pointed for some $m$.
Our criterion for
degeneration is:
\begin{thm} \label{thm: toppair}(Theorem \ref{thm: toppair2}) 
    Suppose that $(M, \D)$ has no 0 or 1-pointed relative $J_0$-holomorphic
    spheres, for some $J_0 \in \mathcal{J}(M,\D)$.  Then the spectral sequence
    \eqref{eq:ss} degenerates.  More precisely, there is a canonical {\em splitting}
    of the spectral sequence, i.e., a filtered isomorphism of
    $\mathbf{k}$-modules 
     \begin{align} 
         \label{eq:PSSintro} \operatorname{PSS}_{log}: H^*_{log}(M,\D) \stackrel{\cong}{\to} SH^*(X). 
     \end{align} 
     such that the induced map $\oplus_{p,q} E^1_{p,q} \cong H^*_{log}(M, \D)
     \to \oplus_p F^p SH^*(X) / F^{p-1} SH^*(X) \cong \oplus_{p,q} E^{\infty}_{p,q}$ is the isomorphism specifying
     collapse
     of the spectral sequence.
\end{thm}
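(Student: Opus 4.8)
The plan is to upgrade the low-energy log PSS construction behind Theorem~\ref{thm:main} to an \emph{honest} filtered cochain morphism by counting \emph{all} (not merely low-energy) rigid log PSS configurations; the hypothesis on relative spheres is precisely what makes this count a cochain map and what guarantees that its associated graded recovers \eqref{eq:sspage1}.

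First I would fix a cochain model $C^*_{\mathrm{log}}(M,\D)$ computing $H^*_{log}(M,\D)$ --- for instance a direct sum of Morse complexes of $X$ and of the normal torus bundles $\SIo$ weighted by multiplicity vectors, whose differential preserves the multiplicity filtration so that the spectral sequence of $C^*_{\mathrm{log}}$ with respect to that filtration degenerates tautologically. Then I would define
\[
    \PSSlog\colon C^*_{\mathrm{log}}(M,\D)\longrightarrow CF^*(X;\Hlm)
\]
on a generator (a Morse critical point on some $\SIo$ together with its multiplicity vector $\v$, or a critical point in $X$) as the signed count of $0$-dimensional log PSS moduli spaces: maps $u\colon \mathbb{C}P^1\setminus\{\mathrm{pt}\}\to M$ that are $J$-holomorphic near a marked point $z_0$ with the prescribed tangency and jet constraints on the strata of $\D$ at $z_0$, solve Floer's equation on the cylindrical end asymptotic to a Hamiltonian orbit, map the complement of $z_0$ into $X$, and carry a half-infinite gradient trajectory recording the input critical point. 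For transversality of the $0$- and $1$-dimensional moduli spaces the almost complex structure is chosen generic within $\mathcal{J}(M,\D)$ subject to agreeing with $J_0$ near $\D$; the requisite regularity for such constrained problems is handled as in \cite{GP1}.

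The crux is that $\PSSlog$ is a cochain map, which amounts to analyzing the codimension-one boundary of the compactified $1$-dimensional log PSS moduli spaces. This boundary consists of (a) Floer breakings on the cylindrical end, (b) gradient-trajectory breakings at the input, and (c) stable maps with a sphere bubble. Terms (a) and (b) assemble (with appropriate signs) into the relation $d_{CF}\circ\PSSlog = \pm\,\PSSlog\circ d_{\mathrm{log}}$, so everything hinges on (c) being empty. Here is where the hypothesis enters: by the defining properties of $\mathcal{J}(M,\D)$ any such bubble is a $J_0$-holomorphic sphere in $M$, hence $m$-pointed relative to $(M,\D)$ for some $m$ (a bubble with image generically in $X$ has zero $\omega$-area by exactness, hence is constant, so the issue is bubbles in the strata). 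A dimension and constraint count --- the tangency/jet data at $z_0$ cost their full codimension, and splitting off a bubble that absorbs any of these constraints, or that itself meets a deeper stratum, drops the dimension accordingly --- shows that for such a configuration to live in a codimension-one stratum the bubble carrying the deepest contact with $\D$ must be a relative sphere meeting a deeper stratum in at most one point, i.e.\ $0$- or $1$-pointed. By hypothesis there are none, so (c) does not occur and $\PSSlog$ is a cochain map. I expect this bubbling analysis --- setting up constrained Gromov compactness and doing the dimension bookkeeping for the jet constraints correctly --- to be the main obstacle.

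Finally I would promote this to the filtered statement. With $C^*_{\mathrm{log}}$ carrying the multiplicity filtration and $CF^*(X;\Hlm)$ the integer action filtration of \S\ref{section:SHtor}, an energy estimate --- the symplectic area of a log PSS solution is controlled from below by its total intersection multiplicity with $\D$, which is determined by the input generator, exactly as in the construction underlying Theorem~\ref{thm:main} --- shows that $\PSSlog$ respects the filtrations and that its leading (filtration-preserving) part is the low-energy log PSS map inducing \eqref{eq:sspage1}. Thus $\PSSlog$ induces an isomorphism on $E_1$ pages; since the source spectral sequence degenerates, commutativity of $\PSSlog$ with the $d_r$ forces every higher differential of the target spectral sequence to vanish, so \eqref{eq:ss} degenerates, and (using the convergence established in Theorem~\ref{thm:main}) $\PSSlog$ is a filtered quasi-isomorphism. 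Hence the induced map on cohomology is the asserted filtered isomorphism of $\mathbf{k}$-modules $\PSSlog\colon H^*_{log}(M,\D)\stackrel{\cong}{\to}SH^*(X)$, and its associated graded is the collapse isomorphism. Independence of the auxiliary data --- and hence canonicity --- follows from a filtered-homotopy argument comparing the morphisms built from different choices via the usual continuation/TQFT construction (compare \cite{Piunikhin:1996aa}), again using emptiness of (c).
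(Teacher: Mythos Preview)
Your overall architecture is exactly the paper's: define the full (not merely low-energy) log PSS map on cochains, prove it is a filtered chain map, observe that its associated graded is the low-energy PSS isomorphism \eqref{eq:sspage1}, and conclude via the spectral sequence comparison. The filtered-quasi-isomorphism deduction at the end is correct and matches Theorem~\ref{thm: toppair2} verbatim.

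The gap is in your bubbling analysis, and it is not merely a matter of missing details. You propose to exclude sphere bubbles at $z_0$ by a ``dimension and constraint count'' showing that a bubble in a codimension-one stratum would have to be $0$- or $1$-pointed. This is not how the paper proceeds, and it is not clear such a dimension argument can be made to work: sphere bubbles may be multiply covered, their moduli spaces need not be cut out transversely, and there is no Chern-class hypothesis available to bound their dimension. The paper's argument (Lemma~\ref{lem:compactness2}) is instead \emph{combinatorial} and avoids any dimension count on the bubble strata. The key technical input is the rescaling analysis of \cite[Lemma~4.9]{Tehrani}, which equips any limiting bubble tree at $z_0$ with a \emph{pre-logarithmic enhancement}: this forces all intersections of each component $u_\nu$ with the deeper strata $\cup_{j\notin I_\nu}(D_j\cap D_{I_\nu})$ to occur \emph{only at the nodal/marked points of the tree}. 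The topological-pair hypothesis then says each non-constant component has at least two such points, and the component attached to the PSS piece has at least three (since the attaching node, where the order vector is $\leq 0$, cannot itself be such an intersection by positivity). A valence count on a tree with two external edges gives $\sum_\nu(\mathrm{val}(\nu)-2)=0$, contradicting $\mathrm{val}(\nu_f)\geq 3$ and $\mathrm{val}(\nu)\geq 2$ elsewhere. So the bubble tree simply cannot exist --- not merely that it lies in high codimension.

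Separately, your treatment of bubbles away from $z_0$ is incomplete: the claim ``a bubble with image generically in $X$ has zero $\omega$-area by exactness, hence is constant'' does not rule out bubbles in $M$ that meet $\D$ without being contained in it. The paper handles this instead by a homological intersection argument: any collection of nontrivial bubbles has positive intersection with some $D_i$, forcing the remaining Floer/PSS pieces to have negative intersection with $D_i$, contradicting positivity of intersection (since away from $z_0$ the total intersection with each $D_i$ must vanish).
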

The map \eqref{eq:PSSintro}, which we call the {\em log PSS map}, was
introduced in our earlier work \cite{GP1} and is constructed in a similar way
to \eqref{eq:sspage1}, except we no longer
restrict to low energy log PSS solutions. The main work is showing that such counts
now define a cochain map in the absence of 0 and 1-pointed relative spheres
(note there may be other spheres in $M$, but the point is to show they do not arise in
the compactification of log PSS moduli spaces). From there, given that the
``associated graded'' of the map \eqref{eq:PSSintro} is easily seen to be
\eqref{eq:sspage1}, Theorem \ref{thm: toppair} becomes a straightforward
consequence of Theorem \ref{thm:main}.

Under the hypothesis in which Theorem \ref{thm: toppair} applies, the multiplicative structure of the spectral sequence \eqref{eq:ss} implies that the Log PSS morphism \eqref{eq:PSSintro} induces a ring isomorphism between the log cohomology
$H^*_{log}(M,\D)$ and the associated graded\footnote{with respect to the (homological shadow of) action filtration
inducing the spectral sequence \eqref{eq:ss}} symplectic cohomology ring $gr_F
SH^*(X)$. Our next result gives a criterion under which this can be promoted to
a ring isomorphism 

with symplectic cohomology $SH^*(X)$:
\begin{thm} \label{thm: toppairring}
    Suppose that $(M, \D)$ has no 0, 1, or 2-pointed relative $J_0$-holomorphic
    spheres, for some $J_0 \in \mathcal{J}(M,\D)$.
    Then, the additive isomorphism \eqref{eq:PSSintro} given in
    Theorem \ref{thm: toppair} is a ring isomorphism.
\end{thm}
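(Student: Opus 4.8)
By Theorem~\ref{thm: toppair}, $\PSSlog$ is already a filtered isomorphism of $\K$-modules (and, under the standing ``no $0$- or $1$-pointed spheres'' hypothesis, a cochain map), so the only thing to establish is multiplicativity. The plan is to run the Piunikhin--Salamon--Schwarz argument for compatibility of PSS with products --- the very same argument invoked in the proof of Theorem~\ref{thm:main} to identify the ring structure on $E_1$ --- but now applied to the full Log PSS morphism \eqref{eq:PSSintro} rather than to its low-energy truncation. Concretely, I would introduce a moduli space of \emph{log pair-of-pants PSS maps}: the domain is a genus-zero surface carrying two marked points $z_1, z_2$, near which the map is $J$-holomorphic and meets $\D$ with prescribed tangency and jet data (of total multiplicities $\v$ and $\mathbf{w}$), together with one output cylindrical end on which the map solves Floer's equation and is asymptotic to an orbit in the Morse--Bott family of multiplicity $\v+\mathbf{w}$; away from $z_1,z_2$ the map is required to land in $X$. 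As in the classical PSS product argument, the domain is equipped with an auxiliary $[0,\infty]$ gluing/neck parameter interpolating between two choices of consistent perturbation data. Counting rigid elements of this parametrized space, one end of the resulting one-dimensional cobordism yields $\PSSlog(a)\ast\PSSlog(b)$, where $\ast$ is the pair-of-pants product computing the ring structure on $SH^*(X)$; the other end --- where the two tangency/jet constraints merge onto a single marked point --- reproduces $\PSSlog(a\cdot b)$, the combinatorial product on $\QH^*(M,\D)$ (adding multiplicity vectors, restricting the cohomology classes $\alpha,\beta$ to the common deeper stratum $\SIo$ over $D_{I\cup J}$, and multiplying the restricted jet/cohomology data) being exactly what results from degenerating two tangency constraints to a point, just as in \cite{GP1} and the $E_1$-page computation.

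To turn this into a rigorous chain homotopy one must set up the usual analytic package for the new moduli space: Fredholm and transversality theory for generic $J\in\mathcal{J}(M,\D)$ and consistent Floer data, a Gromov-type compactification, gluing identifying the codimension-one boundary, and a coherent orientation scheme so that all identities hold with signs. The only new phenomenon, relative to the proofs of Theorems~\ref{thm:main} and~\ref{thm: toppair}, is possible sphere bubbling in the compactification; this is precisely where the hypothesis is used. A sphere bubble forming in the interior of such a configuration is a relative $J_0$-holomorphic sphere in $(M,\D)$, hence $m$-pointed for some $m$, and a dimension count entirely analogous to the one underlying Theorem~\ref{thm: toppair} shows that the presence of the \emph{second} tangency/jet constraint on the main component raises by one the minimal number of marked intersection points a relative sphere bubble must carry in order to occupy a boundary stratum of dimension $\leq 1$. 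Thus ruling out $0$-, $1$-, and $2$-pointed relative $J_0$-holomorphic spheres is exactly what is needed to forbid all such bubbling in the relevant low-dimensional moduli spaces, so that the cobordism argument of the previous paragraph goes through. One may also read the numerology filtration-theoretically: Theorem~\ref{thm: toppair} together with Theorem~\ref{thm:main} already shows $\PSSlog$ intertwines products on the associated graded $gr_F\, SH^*(X)$, and the no-$2$-pointed-spheres condition is what kills the potential lower-order-in-filtration correction terms to the pair-of-pants product, ruling out any nontrivial filtered deformation of the ring structure.

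I expect the main obstacle to be the compactness and gluing analysis for the log pair-of-pants PSS moduli space in the presence of the Morse--Bott asymptotics, which --- exactly as in the additive case --- force the relevant moduli spaces to be manifolds with corners rather than with boundary; one must verify that the corner strata (where several inputs/outputs degenerate simultaneously, or where the asymptotic constraint slides to the boundary of its Morse--Bott family) do not contribute extra terms to the cobordism, and that the count at the merging end matches the algebraically defined product on $\QH^*(M,\D)$ on the nose, signs included. Granting this package, the theorem is immediate: $\PSSlog$ is a $\K$-module isomorphism by Theorem~\ref{thm: toppair} and the above exhibits it as a ring homomorphism, hence it is a ring isomorphism. \qed
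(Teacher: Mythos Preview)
Your strategy is essentially the paper's (Theorem~\ref{thm: ringstructures}): rerun the TQFT argument of \S\ref{section:rings} for the two-marked-point moduli spaces $\mathcal{M}(\v_1,\v_2;x_0)$, now over all outputs $x_0$ rather than only those with $w(x_0)=w(\v_1)+w(\v_2)$, and use the hypothesis to exclude sphere bubbling so that the compactification statement of Lemma~\ref{lem:nospheresinproducthomotopy} continues to hold; the rest of the cobordism (Lemmas~\ref{lem: connectsumcob}--\ref{lem:PSSofproduct}) then carries through verbatim.

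Two clarifications are worth making. First, the exclusion of sphere bubbles is \emph{not} a transversality/dimension count but the combinatorial argument of Lemma~\ref{lem:compactness2} via pre-logarithmic enhancements: when $z_1$ and $z_2$ collide, the resulting bubble tree has up to three external marked points (rather than two), so one needs every non-constant component to meet deeper strata in at least three distinct points --- exactly the no-$\leq 2$-pointed hypothesis. This argument applies to arbitrary $J_0$-spheres, in particular multiply covered ones, where a naive dimension count would fail. Second, your Morse--Bott concern is misplaced: the Hamiltonians $H^\ell$ have already been perturbed so that all orbits are non-degenerate (the manifolds-with-corners $\mathcal{F}_\v$ describe the unperturbed orbit sets, not the asymptotics of the actual Floer moduli spaces), so the compactness/gluing analysis is standard once bubbling is ruled out.
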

Once more, this follows from running the same TQFT argument which
showed the isomorphism \eqref{eq:sspage1} in Theorem \ref{thm:main} was a ring
map, applied now to the ``global'' (rather than just low energy) log PSS moduli
spaces that arise in the construction of \eqref{eq:PSSintro}. 
The only possible obstruction to our
TQFT argument applying is the failure of the relevant interpolating moduli
spaces to be compact, and we show the only possible problems are the bubbling of
0, 1 or 2-pointed relative spheres. With such spheres excluded, the proof then
straightforwardly reduces to earlier methods.

Throughout this paper, we will call $(M,\D)$ a {\em topological
pair}\footnote{The terminology ``topological pair'' was introduced in our
    earlier work \cite{GP1}, where its usage is slightly less general than what
is used here.}, respectively a {\em multiplicatively topological pair} if, for
some almost complex structure $J_0 \in \mathcal{J}(M,\D)$, $(M,\D)$
has no $m$-pointed relative $J_0$-holomorphic spheres for $m \leq 1$,
respectively $m \leq 2$, i.e., if $(M,\D)$ satisfy the hypotheses of Theorems \ref{thm: toppair}, respectively \ref{thm: toppairring}.\footnote{An alternative naming convention would be to say a pair $(M,\D)$ is {\em $r$-topological} if (for some $J_0$) it contains  no $m$-pointed relative $J_0$-holomorhic spheres for all $m \leq r$. We expect such generalized notions to be useful in the study of higher-arity operations on symplectic cohomology, such as $\ainf$ or $L_{\infty}$ structures.}

Theorems \ref{thm: toppair} and \ref{thm: toppairring} allow us to deduce many
complete topological computations of symplectic cohomology groups and rings.
An incomplete list of examples of topological and multiplicatively topological pairs
(for which the relevant Theorems apply) are provided in Examples
\ref{example:toppairs} and \ref{example:multoppairs}. As a first example, we
note that for any smooth projective $M$, the pair $(M, \D)$ will be
topological (respectively multiplicatively topological) if $\D$ consists of at
least $\dim_{\C} M + 1$ (respectively $2 \dim_{\C} + 1)$ generic ample divisors
which are powers of the same line bundle.

\subsection{Computations in the presence of pointed spheres}

We expect that, for general pairs $(M,\D)$, by counting the relative spheres
that arise in the compactification of log PSS moduli spaces (using a cochain
level version of log Gromov-Witten theory), one could write down a corrected
differential on the log cohomology cochain complex, along with a corrected
product, for which counts of (compactified) log PSS moduli spaces induce both a
cochain map and a ring map (up to chain homotopy); note that if such a cochain map were
constructed, Theorem \ref{thm:main} immediately implies that
it would be a quasi-isomorphism. In the case that
$\mathbf{D} = D$ is a smooth divisor, under suitable monotonicity assumptions
Diogo and Lisi \cite{diogothesis, diogolisi} have given a related additive
cochain level model for symplectic cohomology in terms of (relative)
GW-invariants, and there is work towards a model for the product in this setting \cite{diogothesis}.

At the level of the spectral sequence \eqref{eq:ss}, the existence of such a
model would imply that the differentials $d_r$ on the $E_r$ pages of the
spectral sequence could be calculated in terms of \emph{cohomological} log GW
invariants. When all of these invariants vanish, the spectral sequence would
degenerate.  There would be further (analogous) vanishing criteria under which
the ring structure could also be computed in terms of log cohomology.

The foundations of symplectic log Gromov-Witten theory are still under active
development (see \cite{Ionel:2011fk}, \cite{MR3383807}, \cite{Tehrani} for different approaches) and constructing such a
cochain level model goes beyond the scope of the present
article.
Nevertheless, motivated by these ideas, we
show that in a broad class of examples, additive and multiplicative
computations of symplectic cohomology can be reduced to only counting (rather, 
exhibiting the vanishing of counts of) 
the \emph{simplest kinds of relative curves}
(those which intersect each component of $\D$ at most once). We focus on two
distinct situations: (a) computing the ring structure topologically in the
absence of 0 or 1-pointed spheres but presence of 2-pointed spheres (meaning
when we already understand $H^*_{log}(M, \D) \cong SH^*(X)$ additively), and
(b) computing $SH^*(X)$ additively in terms of $H^*_{log}(M,\D)$ in the
possible presence of 0 or 1-pointed spheres.

\subsubsection{Trivializing deformations of rings in the presence of 2-pointed relative spheres}

For topological pairs, there is an additive (but not necessarily
multiplicative) isomorphism $SH^*(X) \cong H^*_{log}(M,\D)$; moreoever the
product structure on $SH^*(X)$ induces a deformation of the product structure
on $H^*_{log}(M,\D)$ which is in general non-trivial. For a number of such
pairs $(M,\D)$ (under topological hypotheses and hypotheses on the vanishing of
certain two-point Gromov-Witten counts), we have an a posteriori argument
establishing a ring isomorphism $SH^*(X) \cong H^*_{log}(M,\D)$, via showing
that the deformation of the product structure on $H^*_{log}(M,\D)$ is
trivial(izable); see Theorem \ref{eq:topproduct} for such a
criterion.

\begin{example}[Symplectic cohomology rings of $\P^n$ minus generic hyperplanes]
    Let $M = \P^n$ and $\mathbf{D}_k$ be a union of $k$ generic planes, and
    $X_k = M \backslash \mathbf{D}_k$ the complement; note that $X_{n+2}$ is
    Mikhalkin's generalized pair of pants \cite{Mikhalkin}.  Our
    results lead to a complete computation of $SH^*(X_k)$ as a ring for all
    $k$, extending well-understood computations when $k \leq n+1$:  
    \begin{itemize}
        \item when $k\leq
    n$ then $X_k = M \backslash \mathbf{D}_k =  (\C^*)^{k-1} \times
    \C^{n-k+1}$ with $n-k+1 > 0$. Since $SH^*(\C) = 0$, the K\"{u}nneth formula
    \cite{MR2208949} implies that $SH^*(X_k) = 0$.
    \item When $k=n+1$, $X_{n+1} \cong (\C^*)^{n} \cong  T^*(T^n)$, so
        Viterbo's formula implies $SH^*(X_{n+1}) \cong H_{n-*}(\mathcal{L}
        T^n)$. 

    \item 
    For $k > n+1$, $(M, \mathbf{D}_k)$ is a topological pair,
    so Theorem \ref{thm: toppair} gives an additive isomorphism 
    $SH^*(X_k)\cong H^*_{log}(M,\D_k)$. In fact, this is a ring isomorphism by
    the following argument:
    $(M,\mathbf{D}_k)$ is multiplicatively topological when $k \geq 2n+1$ (see Example \ref{example:multoppairs}) 
    so Theorem \ref{thm: toppairring} says that $H^*_{log}(M,\D) \to SH^*(X_k)$ is a ring isomorphism in that range.
    For the remaining intermediate range $n+1 < k < 2n+1$, Theorem
    \ref{eq:topproduct} applies to argue that, while in principle there could
    have been a deformation of the product structure on $H^*_{log}(M,
    \mathbf{D}_k)$ contributing to $SH^*(X_k)$, the deformation was in fact
    trivial.
    \end{itemize}
\end{example}

\subsubsection{Degeneration arguments in the presence of spheres}

We show that that for many pairs $(M,\D)$, the multiplicative structure on the
spectral sequence allows us to reduce (for the purposes of degeneration
arguments) to counting moduli spaces of
relative spheres which intersect each component of $\D$ at most once.
 We will show that for these pairs, if the relevant cohomological count is zero, 
then the spectral sequence \eqref{eq:ss} degenerates, albeit without a canonical splitting.

More precisely, we look at ``admissible" pairs $(M, \D)$, which are pairs for which the differentials on primitive cohomology classes $\alpha t^{\v_I}$ are tightly controlled--- given an admissible pair, a vector $\v_I$, and $\alpha t^{\v_I} \in H^*_{log}(M,\D)$, there is single possible non-vanishing differential, $d_{w(\v_I)}(\alpha t^{\v_I})$. We show that this differential can be encoded in Gromov-Witten type invariants (called ``obstruction classes" in \cite{GP1}) 
\begin{align} \label{eq:obstructionclass} GW_{\v_{I}}: H^*(\SIo) \to H^*(X) \end{align}

\begin{thm} \label{thm: degenerescence1}(Theorem \ref{thm: degenerescence}) Let  $(M,\D)$ be an admissible pair and assume $\mathbf{k}= \mathbb{Z}$. For any primitive vector $\v_I$, we have an equality 
    \begin{align} 
        d_{w(\v_I)}(\alpha t^{\v_I})=GW_{\v_{I}}(\alpha). 
    \end{align} 
\end{thm}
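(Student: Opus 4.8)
The plan is to compute the differential $d_{w(\v_I)}$ on the class $\alpha t^{\v_I}$ directly at the cochain level, using the log PSS machinery that underlies the isomorphism \eqref{eq:sspage1}, and to match the output against the definition of the obstruction class $GW_{\v_I}(\alpha)$. First I would recall that, by the construction in \S\ref{section:SHtor}, the spectral sequence \eqref{eq:ss} arises from the integer-valued action filtration $F^p$ on the Floer cochain complex $C^*$, and that the identification $H^*_{log}(M,\D) \cong E_1$ is realized by counting \emph{low-energy} log PSS solutions with tangency/jet constraints at $z_0$ along the stratum $D_I$. A representative for $\alpha t^{\v_I}$ sits in filtration level $w(\v_I)$; its image $\PSSlog(\alpha t^{\v_I})$ is a Floer cocycle modulo $F^{w(\v_I)-1}$, but \emph{not} in general an honest cocycle, and the failure of $d(\PSSlog(\alpha t^{\v_I}))$ to vanish — i.e. its class in $F^{w(\v_I)-1}C^*/F^{w(\v_I)-2}C^* \cong (E_1)$-level data — is precisely $d_{w(\v_I)}(\alpha t^{\v_I})$ by the standard description of higher differentials in a filtered complex. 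The admissibility hypothesis guarantees that all intermediate differentials $d_r$, $1 \le r < w(\v_I)$, vanish on $\alpha t^{\v_I}$ for filtration/homology-class reasons, so $d_{w(\v_I)}$ is well-defined on the genuine class without ambiguity.

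Next I would analyze the one-parameter family of log PSS moduli spaces responsible for $d(\PSSlog(\alpha t^{\v_I}))$: a Gromov-type compactness/gluing argument identifies the codimension-one boundary strata of the compactified log PSS space as (a) the usual Floer-trajectory breaking, which contributes the terms computing $d \circ \PSSlog$, and (b) degenerations in which a relative sphere bubbles off at the marked point $z_0$ carrying the tangency constraint. Because $\v_I$ is primitive (multiplicity one along each $D_i$, $i \in I$), the only bubbles that can appear are relative spheres meeting $\D$ in exactly the stratum-incidence pattern dictated by $\v_I$, i.e. hitting each $D_i$, $i\in I$, once — these are exactly the curves enumerated by the obstruction invariant \eqref{eq:obstructionclass}. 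Matching the evaluation/intersection data: the relative sphere is constrained by $\alpha \in H^*(\SIo)$ at its intersection node with the stratum and outputs a class in $H^*(X)$ at the Floer end. Assembling these boundary contributions and invoking $\partial(\text{compact }1\text{-manifold}) = 0$ yields $d_{w(\v_I)}(\alpha t^{\v_I}) = \pm GW_{\v_I}(\alpha)$, with the sign fixed by orientation conventions on the log PSS spaces; since $\mathbf{k} = \mathbb{Z}$ these signs must be tracked carefully, but they are the same conventions already fixed in \cite{GP1} for the obstruction classes, so consistency gives the stated equality on the nose.

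The main obstacle I anticipate is the \emph{transversality and compactness bookkeeping for the bubbled configurations}: one must show that, for a generic $J_0 \in \mathcal{J}(M,\D)$ and generic auxiliary data, the stratum of the compactification consisting of a principal log PSS component together with a relative-sphere bubble is cut out transversally and has the expected dimension, so that it genuinely contributes $GW_{\v_I}(\alpha)$ and not some correction, and that \emph{no other} degenerations (e.g. multiply-covered bubbles, bubbles in deeper strata, or simultaneous breaking-and-bubbling of codimension $\ge 2$) contribute in codimension one. The admissibility condition is designed precisely to rule these extraneous phenomena out at the primitive level $\v_I$, so the argument should reduce to carefully invoking that hypothesis together with the energy/index inequalities from \cite{GP1}; nonetheless, verifying that the relative Gromov-Witten moduli space appearing at the boundary matches the one defining $GW_{\v_I}$ — including the precise tangency order and the identification of the normal-bundle evaluation with the class $\alpha$ on $\SIo$ — is the delicate geometric heart of the proof.
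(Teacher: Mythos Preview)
Your proposal is correct and follows essentially the same approach as the paper: compute the Floer differential of the full log PSS cochain $\PSSlog(\underline{\alpha}\,t^{\v_I})$, identify the sphere-bubbling boundary stratum at $z_0$ with the relative curves defining $GW_{\v_I}(\alpha)$, and read off $d_{w(\v_I)}$ from the definition of the spectral sequence pages. The paper does not re-derive the boundary analysis but instead invokes \cite[Lemma 4.21]{GP1} for the chain-level identity $\partial_{CF}\bigl(\PSSlog(\underline{\alpha}\,t^{\v_I})\bigr)=\PSS_{\v_\emptyset}(\GWvc)$, and it packages the Morse-to-GW translation via Schwarz's pseudocycle construction rather than working directly with intersection constraints --- a technical device you elide but which addresses exactly the ``matching of evaluation data'' step you flag as delicate.
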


In particular, the above discussion shows that for admissible pairs, these invariants determine when the spectral sequence degenerates:

\begin{cor} \label{cor: degenerescencecor1}(Corollary \ref{cor: degenerescencecor}) 
 Assume $\mathbf{k}= \mathbb{Z}$. Given an admissible pair $(M,\D)$, suppose the maps \eqref{eq:obstructionclass} vanish for all $I \in \lbrace 1,\cdots k \rbrace$. Then the spectral sequence \eqref{eq:ss} degenerates at the $E_1$ page. \end{cor}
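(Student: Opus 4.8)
The plan is to derive Corollary~\ref{cor: degenerescencecor1} directly from Theorem~\ref{thm: degenerescence1} together with the multiplicativity of the spectral sequence \eqref{eq:ss}, using exactly the logic sketched in the paragraph preceding Corollary~\ref{cor:simplecor}. First I would recall that, by the identification \eqref{eq:sspage1}, the algebra $\bigoplus_{p,q}E_1^{p,q} \cong \QH^*(M,\D)$ is generated as a $\mathbf{k}$-algebra by the classes $y\in H^*(X)$ together with the primitive classes $\alpha t^{\v_I}$ ($I\subseteq\{1,\dots,k\}$, $\alpha\in H^*(\SIo)$): this is the content of the discussion surrounding the definition of $\v_I$, since an arbitrary multiplicity vector $\v$ with support $I$ is obtained (up to restricting and multiplying cohomology classes) from products of the $\v_{\{i\}}$. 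Consequently, any differential $d_r$ ($r\geq 1$) on the spectral sequence is, by the Leibniz rule, determined by its values on a set of algebra generators: if $d_r$ annihilates every generator of $E_r^{*,*}$ that survives to the $E_r$ page, then $d_r\equiv 0$, and since this would hold for all $r$, the spectral sequence would degenerate at $E_1$.

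Next I would treat the two types of generators separately. For the classes $y\in H^*(X)\subset\QH^*(M,\D)$: these lie in the lowest filtration piece (multiplicity vector $\v=0$, i.e.\ weight $w=0$), so for purely filtration-degree reasons no differential $d_r$ with $r\geq 1$ can have a nonzero target — the target would have to live in a strictly lower weight, of which there is none — hence $d_r(y)=0$ automatically for all $r$. For the primitive classes $\alpha t^{\v_I}$: by the definition of ``admissible pair'' invoked in the statement of Theorem~\ref{thm: degenerescence1} (and recalled in the paragraph above it), the differentials on such a class are ``tightly controlled'' — the only possibly nonzero differential emanating from $\alpha t^{\v_I}$ is $d_{w(\v_I)}(\alpha t^{\v_I})$, and all earlier differentials $d_r$ for $1\leq r< w(\v_I)$ vanish on it, so in particular $\alpha t^{\v_I}$ is an $E_{w(\v_I)}$-cycle and survives to that page. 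Theorem~\ref{thm: degenerescence1} then identifies this single potentially-nonzero differential with the Gromov--Witten obstruction class: $d_{w(\v_I)}(\alpha t^{\v_I}) = GW_{\v_I}(\alpha)$. Under the hypothesis of the Corollary, the maps \eqref{eq:obstructionclass} vanish for all $I$, so $GW_{\v_I}(\alpha)=0$ for every $I$ and every $\alpha\in H^*(\SIo)$; hence $d_{w(\v_I)}(\alpha t^{\v_I})=0$, and combined with the vanishing of the earlier differentials on these classes, we conclude $d_r(\alpha t^{\v_I})=0$ for all $r\geq 1$.

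Finally I would assemble these observations: every $\mathbf{k}$-algebra generator of $E_1^{*,*}\cong\QH^*(M,\D)$ survives to the $E_\infty$ page and is killed by every differential $d_r$, $r\geq 1$. Since \eqref{eq:ss} is a spectral sequence of algebras (Theorem~\ref{thm:main}), each $E_r^{*,*}$ is generated as a $\mathbf{k}$-algebra by the images of these same generators (the product of surviving cycles is a surviving cycle with the same value), and $d_r$ is a derivation with respect to this product; therefore $d_r$ vanishes on all of $E_r^{*,*}$. Inductively $E_r^{*,*}=E_1^{*,*}$ for all $r$, i.e.\ the spectral sequence degenerates at $E_1$, as claimed. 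The main subtlety — and the step I would spell out most carefully — is the bookkeeping that the only differential on a primitive class is $d_{w(\v_I)}$ and that this class is genuinely an $E_{w(\v_I)}$-cycle (so that Theorem~\ref{thm: degenerescence1} is even applicable); this is precisely where the definition of ``admissible pair'' does its work, and one must confirm that the derivation argument is not obstructed by a generator failing to survive far enough to be hit by its predicted differential. Given admissibility, the remaining work is the routine (but worth stating) verification that degeneration of a multiplicative spectral sequence is detected on algebra generators, together with the observation that the $H^*(X)$-generators are in the minimal filtration and hence automatically cycles for all $d_r$.
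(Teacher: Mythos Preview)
Your proposal is correct and follows essentially the same approach as the paper: the paper states the corollary without a separate proof, relying on the discussion at the start of \S\ref{sect: suitesspectrales} (the $E_1$ page is multiplicatively generated by primitive classes, so degeneration is detected on generators) together with the admissibility hypothesis (the only possibly nonzero differential on $\alpha t^{\v_I}$ is $d_{w(\v_I)}$) and Theorem~\ref{thm: degenerescence} identifying that differential with $GW_{\v_I}(\alpha)$. Your write-up simply makes these steps explicit, including the filtration-degree argument for classes in $H^*(X)$ and the survival of primitive classes to the relevant page.
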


While Corollary \ref{cor: degenerescencecor1} is more technical to state than
Corollary \ref{cor:simplecor}, it is likely significantly more general.  For
example, \cite{diogothesis}*{\S 6.1.2} consider cases which fall outside
of the purview of Corollary \ref{cor:simplecor}; however the Gromov-Witten
counts vanish and the spectral sequence degenerates. Other related examples can
easily be constructed and it appears likely that many more cases could be
treated by slightly more elaborate Gromov-Witten theory computations. 
As a concrete question, we ask:

\begin{ques} 
    Given a smooth hypersurface $M \subset \mathbb{C}P^{n+1}$ and a collection
    of hyperplanes $\D = D_1\cup \cdots\cup D_k$, with $X = M \backslash \D$,
    is it the case that either $SH^*(X)=0$ or the spectral sequence
    \eqref{eq:ss} degenerates at the $E_1$ page?  
\end{ques}

We can also consider variants of Theorem \ref{thm: degenerescence1} where we
focus on the spectral sequence in a given degree. The most interesting example
of this is the following theorem.  

\begin{thm} 
    \label{thm: logCYintro} (Theorem \ref{thm: Fano})  Let $(M, \D)$ be a pair
    with $M$ a Fano manifold and $\D$ an anticanonical divisor. Assume that all
    strata $D_I$ are connected. Then the spectral sequence degenerates in
    degree zero. With respect to the standard filtration, we have an
    isomorphism 
    \begin{align} 
        gr_F SH^0(X) \cong \mathcal{SR}(M,\D) 
    \end{align}
    where $\mathcal{SR}(M,\D)$ is the Stanley-Reisner ring on the dual
    intersection complex of $\D$, defined in \eqref{eq:SRdef}
    (roughly, the subalgebra consisting of $e \in H^0(X)$ and $\alpha t^{\v}$
    where $\alpha \in H^0(\mathring{S}_I)$).   
\end{thm}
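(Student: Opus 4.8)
The plan is to reduce Theorem \ref{thm: logCYintro} to the degeneration criterion of Theorem \ref{thm: degenerescence1} (equivalently Corollary \ref{cor: degenerescencecor1}), applied only in cohomological degree zero, and to identify the surviving degree-zero part of the $E_1$ page with the Stanley--Reisner ring $\mathcal{SR}(M,\D)$. First I would examine the bigraded structure of $\QH^*(M,\D)$ and pin down exactly which classes contribute to total degree zero. By the description of log cohomology recalled in the introduction, a generator $\alpha t^{\v}$ supported on a stratum $D_I$ sits in a total degree that combines the cohomological degree of $\alpha \in H^*(\mathring{S}_I)$ with a shift determined by $\v$ and by the first Chern data of the normal bundle of $D_I$ — concretely a shift involving $2\langle c_1, \cdot\rangle$ type terms plus codimension corrections; here the anticanonical hypothesis $\D \sim -K_M$ is what forces this shift to land the classes $\alpha t^{\v}$ with $\alpha \in H^0(\mathring{S}_I)$ in degree zero, while higher $\alpha$ or non-primitive $\v$ with nontrivial contributions land in nonzero degree. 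This is the step where the Calabi--Yau/Fano anticanonical normalization is used decisively: it makes the ``weight'' $w(\v)$ match the cohomological grading so that the degree-zero slice of $\QH^*$ is precisely the span of $e \in H^0(X)$ and the $\alpha t^{\v}$ with $\alpha \in H^0(\mathring{S}_I)$, i.e. $\mathcal{SR}(M,\D)$ as a ring (the ring structure being the obvious one on the dual intersection complex, with the connectedness of all $D_I$ guaranteeing $H^0(\mathring{S}_I) \cong \mathbf{k}$ so that there are no ``extra'' degree-zero classes beyond combinatorial ones).

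Next I would argue degeneration in degree zero. By Theorem \ref{thm:main} the spectral sequence is multiplicative with $E_1 \cong \QH^*(M,\D)$, so its differentials are determined on algebra generators; by the discussion preceding Corollary \ref{cor:simplecor} it suffices to show $d_r$ vanishes on the degree-zero generators $y \in H^0(X)$ and $\alpha t^{\v_I}$ with $\alpha \in H^0(\mathring{S}_I)$. For $y \in H^0(X)$: $H^0(X) = \mathbf{k}$ (as $X$ is connected, being the complement of an ample divisor in an irreducible $M$), spanned by the unit, which is a cocycle for all $r$. For the primitive classes $\alpha t^{\v_I}$: the target of $d_r$, by the bigraded bookkeeping above, would have to be a class of cohomological degree $1$; but I would show — using the admissibility-type analysis behind Theorem \ref{thm: degenerescence1}, i.e. that the only possibly-nonzero differential on $\alpha t^{\v_I}$ is $d_{w(\v_I)}$ landing via the obstruction/Gromov--Witten map $GW_{\v_I}: H^*(\mathring{S}_I) \to H^*(X)$ — that for $\alpha \in H^0(\mathring{S}_I)$ the image lies in $H^1(X)$, and that in the anticanonical Fano situation the relevant count is forced to vanish. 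The cleanest way to see the vanishing is dimensional/degree-theoretic: the relative spheres contributing to $GW_{\v_I}$ on a degree-zero input would need to be rigid curves meeting $\D$ with total tangency matching $\v_I$, and the anticanonical condition pins their expected dimension so that, after imposing the point/jet constraints defining $GW_{\v_I}$, the moduli space contributing to a degree-$1$ output in $H^1(X)$ is empty — or, phrased cohomologically, $GW_{\v_I}(\mathrm{pt\text{-}dual})$ lands in $H^{\le 0}(X) = 0$ in positive degree. (One must be slightly careful that Theorem \ref{thm: degenerescence1} is stated for \emph{admissible} pairs over $\mathbf{k} = \mathbb{Z}$; in degree zero the relevant admissibility constraints — that only one differential can be nonzero on each $\alpha t^{\v_I}$ — should hold automatically from the grading, so this hypothesis can be bypassed or verified directly here, and then the statement extends to general $\mathbf{k}$ by universal coefficients.)

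Finally I would assemble the pieces. Having shown $d_r = 0$ on all degree-zero algebra generators of $E_1$, multiplicativity of the spectral sequence gives $d_r = 0$ on the whole degree-zero part for all $r \ge 1$ (a product of cocycles in the relevant degrees has differential expressible via Leibniz in terms of the generator differentials, and degree reasons prevent a degree-zero product from supporting or receiving a differential from outside the degree-zero slice); hence the spectral sequence degenerates in degree zero and $gr_F SH^0(X) \cong E_1^{\bullet,0\text{-part}} = \mathcal{SR}(M,\D)$ as claimed, as rings, using the $E_1$-ring identification of Theorem \ref{thm:main}. The main obstacle I anticipate is the grading computation in the first paragraph together with the vanishing of the relevant degree-zero obstruction class in the second: one has to carefully track how the normal-bundle Chern classes enter the bigrading on $\QH^*(M,\D)$, confirm the anticanonical hypothesis yields exactly the Stanley--Reisner degree-zero slice with no spurious contributions (this is where connectedness of all strata $D_I$ is essential), and then show the Fano/anticanonical geometry forces $GW_{\v_I}$ to vanish on $H^0$ — a statement that is morally a dimension count for relative rational curves but which must be reconciled with the precise definition of the obstruction maps from \cite{GP1}. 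I would also double-check that no $d_r$ maps \emph{into} the degree-zero slice from a class of cohomological degree $-1$ (impossible since $\QH^{<0}$ vanishes in the situations at hand, as $X$ is affine so $H^{<0}(X) = 0$ and the stratum contributions are likewise nonnegatively graded after the anticanonical shift), which closes the argument.
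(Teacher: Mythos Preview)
Your overall strategy is the same as the paper's: identify the degree-zero part of $E_1$ with $\mathcal{SR}(M,\D)$ via the grading formula (correct, since with $a_i=1$ one has $\deg(\alpha t^{\v})=\deg(\alpha)$), then kill all differentials on the ring generators $\alpha_i t^{\v_i}$ with $\alpha_i\in H^0(\mathring{S}_i)$, and conclude by multiplicativity. Two points need correction.

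First, the role of the Fano hypothesis. You say admissibility ``should hold automatically from the grading'' --- it does not. Admissibility of $\v_i$ is the geometric condition $\sum_j \kappa_j(A\cdot D_j)\geq w(\v_i)$ for every $A\in H_2(M)_\omega$. With $\D$ anticanonical and $\kappa_j=1$, the left side is $A\cdot c_1(M)$; the Fano hypothesis is exactly what forces this to be $\geq 1=w(\v_i)$. The anticanonical condition alone handles the grading; Fano is needed separately to make Theorem~\ref{thm: degenerescence1} applicable to each $\v_i$.

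Second, and this is the real gap: your argument that $GW_{\v_i}(\alpha)=0$ for $\alpha\in H^0(\mathring{S}_i)$ is not right. You claim the relevant moduli space is empty by a dimension count, or that the class lands in $H^{\leq 0}(X)=0$. Neither holds: the target is $H^1(X)$, which need not vanish, and the moduli space $\mathcal{M}_{0,2}^{S^1}(M,\D,\v_i)^o$ is typically nonempty (e.g.\ flex lines on a plane cubic). The actual reason is different. When $\alpha$ is the fundamental class, the constraint imposed by $\operatorname{Ev}_0^*(\alpha)$ is vacuous, so the obstruction cycle is simply $ev_{\infty,*}[\mathcal{M}_{0,2}^{S^1}(M,\D,\v_i)^o]$. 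But $ev_\infty$ is invariant under the residual $S^1$ action rotating the tangent direction at $z_0$, hence factors through the quotient $\mathcal{M}_{0,2}^{S^1}/S^1$, which has dimension one less. The pushforward of a fundamental class through a map with positive-dimensional generic fibers is zero. This is what the paper means by ``factors through a lower-dimensional manifold'' (used already in the proof of Theorem~\ref{thm: finitely}), and it is the step your proposal is missing. Once you have this, the paper in fact obtains the stronger chain-level statement $\partial_{CF}(\PSSlog(c\, t^{\v_i}))=0$, producing genuine classes in $SH^0(X)$ rather than merely killing spectral sequence differentials.
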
 

  The setting of Theorem \ref{thm: logCYintro} (or more generally log Calabi-Yau pairs) is of special interest in mirror symmetry, where it is expected that under suitable circumstances, the mirror variety to $X$ is birational to the affine variety $\operatorname{Spec}(SH^0(X))$ \cite{MR3415066}. It is suggested there that there should exist a flat one parameter degeneration with general fiber $\operatorname{Spec}(SH^0(X))$ and central fiber $\operatorname{Spec}(\mathcal{SR}(M,\D))$. Theorem \ref{thm: logCYintro} validates this expectation in a large number of cases. It is worth noting that the resulting deformation is usually non-trivial and should be expressible in terms of log Gromov-Witten invariants of the pair $(M,\D)$ (see \cite{GS}). 

As suggested in the previous paragraph, it is likely that Theorem \ref{thm: logCYintro} holds without the assumptions that
$M$ is Fano and that the strata of $\D$ are all connected and we expect that the methods developed in this paper can be extended to treat the general case. As evidence for this, we use our methods to prove this when  $\operatorname{dim}_\mathbb{C}(M)=2$ in Theorem \ref{thm:
logCYsurf}, recovering a result of James Pascaleff \cite{Pascaleff}. Our
method of proof bears some similarities to Pascaleff's argument in that both
use knowledge of ``low energy" product operations in symplectic cohomology.  In
our case, these computations of the product are implied by basic considerations
in TQFT, whereas Pascaleff's argument is more geometric.

\subsection{Comparison with other methods for computing symplectic cohomology} \label{subsection: othermthds} 
Most methods of explicitly computing symplectic cohomology $SH^*(X)$ begin by
explicitly describing a (Weinstein) handle presentation of $X$ (possibly, but
not necessarily, one that comes from a Lefschetz fibration), or 
alternatively a Lagrangian skeleton for $X$, any of which we might collectively
call ``a presentation of the core'' of $X$. From such presentations, one
can attempt to extract a computation of associated ``open-string'' Floer-theoretic or
pseudo-holomorphic curve invariants, such as the Chekanov-Eliashberg
DGA, an object of symplectic field theory associated to the Legendrians in the
handle presentation, or the wrapped Fukaya category, associated to non-compact
Lagrangians in $X$. Finally, one takes the Hochschild homology and/or cohomology of
such a computation, which relates to $SH^*(X)$ via surgery formulae and/or
open-closed maps \cite{BEE, BEE2, Abouzaid:2010kx, ganatra1_arxiv}. 
While this strategy has been carried out in interesting examples (see e.g.
\cite{BEE, BEE2, EkholmNg,Etgu-Lekili}) and remains a remarkably effective tool
for identifying instances when $SH^*(X)$ vanishes, 
general computations can run into two difficult issues: computing the relevant
open-string invariant (a process that in some cases can be combinatorialized),
and computing its Hochschild invariants (which is known to be a difficult
algebraic problem except in special circumstances). Also, realizing an affine
variety as an explicit handlebody or calculating a skeleton may be challenging in
practice, though there is some work to systematize the process
\cite{CasalsMurphy}.

The results in this paper, which instead make use of a ``presentation at
infinity'' of $X$ (in terms of the compactification $(M,\D)$), give many
cases where calculations of $SH^*(X)$ can be performed purely topologically.  It
seems likely that any elaboration of these models to include non-trivial counts
of relative spheres will frequently produce models for symplectic cohomology with far
fewer generators, where the differentials may be approached using algebraic
geometry. Of course, applying these techniques for a given $X$ requires finding
an explicit compactification $(M,\D)$, which is also known to be a non-trivial problem \cite{Complexity}.  On the other hand, affine varieties arising in mirror symmetry are typically
presented as divisor complements, which provides a major impetus for developing
these methods. 

\begin{rem} The interplay between the ``compactification" and ``core"
    approaches to studying $SH^*(X)$ has been explored in \cite{Khoa}. It would
    likely be profitable to pursue this interplay further, in light of the fact
    that frequently one of the two presentations is much simpler than the other
    for the purpose of computing $SH^*(X)$. 
\end{rem}

\subsection{Overview of paper}
In \S \ref{section:SHtor} we review the symplectic geometry of normal
crossings compactifications, and construct a `normal-crossings adapted' model
of the action-filtered symplectic cohomology co-chain complex (this builds off
of a model constructed in \cite{GP1}) with its ring structure, inducing the 
multiplicative spectral sequence \eqref{eq:ss}.
In \S \ref{section: PSSreview}, we define the log cohomology ring
$\QH^*(M,\D)$, construct the low energy log PSS map \eqref{eq:sspage1}
between log cohomology and the
first page of the spectral sequence, and show that low energy log PSS is a ring map. In \S
\ref{sect: PSSiso2}, we prove that this map is an isomorphism, and complete the
proof of Theorem \ref{thm:main}. In \S \ref{sec:computations}, we establish the
various geometric criteria under which the spectral sequence \eqref{eq:ss}
degenerates and the ring structure can be computed, and apply these results to
give a number of
computations and qualitative results: in more detail, degeneration for
topological pairs and ring structure computations for multiplicatively
topological pairs 
are discussed in \S \ref{subsec:topologicalpairs}, trivializing ring
deformations in the presence of 2-pointed spheres are discussed in \S
\ref{subsec:trivializingdeformations}, degeneration criteria in the presence of
(0 and 1-pointed) spheres are discussed in \S \ref{sect: suitesspectrales}, and
the special case of log Calabi-Yau pairs is discussed in \S \ref{sect:logCY}.

\subsection*{Acknowledgements}

Our use of normal crossings-adapted symplectic and Liouville
structures and subsequent Morse-Bott analysis borrows extensively from work of
Mark McLean \cite{McLean2, McLean:2012ab}. We would like to thank him for
explanations of his work as well as several other conversations related to this
paper. We also benefited greatly from discussions with Strom Borman, Luis Diogo, Yakov Eliashberg, Eleny Ionel, Samuel Lisi, and Nick Sheridan at an early stage of this project; we thank all of them. Finally, we would like to thank an anonymous referee for suggestions that improved the exposition of this article.

\subsection*{Conventions}
Our grading conventions for symplectic cohomology follow
\cite{Abouzaid:2010kx}. We work over an arbitrary ground ring $\K$ (unless
otherwise stated).

\section{A divisorially-adapted model of filtered symplectic cohomology} \label{section:SHtor} 
In this section, we show that the symplectic cohomology complex $SC^*(X)$ of
any affine variety $X$ admits a particularly nice  ``tailored to the divisor
$\D$'' model in the presence of a projective compactification $(M,
\D)$, which
satisfies two key properties: first, it is generated by (small perturbations
of) Morse-Bott families (with corners) of orbits associated to strata of
$(M,\D)$ and secondly, it possesses an an {\em integral} refinement of its (a
priori real) action filtration, with integer weights measuring (a weighted sum
of) winding of orbits around divisors in $\D$ at infinity.  This filtration is
compatible with products and induces the desired multiplicative spectral
sequence converging to $SH^*(X)$.

Achieving both of the above properties (and in particular the second property)
entails a substantial refinement of our earlier work \cite{GP1}*{\S 3}: roughly
speaking, to achieve the above integral filtration, orbits contributing to
$SC^*(X)$ which wind a large number of times around $\D$ are made to to
occur arbitrarily close (depending on the amount of winding) to $\D$, in order to 
to compensate for action errors (between the usual symplectic action and the
relevant divisorial winding number) that would otherwise magnify with winding.  This
integral filtration can also be thought of as a limit of the action filtrations
on (symplectic cohomologies of) a family of Liouville domains which exhaust
$X$, see Remark \ref{rem:inverselimit}.

\subsection{Normal crossings symplectic geometry} \label{subsec:ncsymplectic}
\begin{defn}
    \label{def:logsmooth} 
    A {\em log-smooth compactification} of a smooth complex $n$-dimensional
    affine variety $X$ is a pair $(M,\mathbf{D})$ with $M$ a smooth, projective
    $n$-dimensional variety and $\mathbf{D} \subset M$ a divisor satisfying
    \begin{align}
     &X = M \backslash \mathbf{D};\\
     &\textrm{The divisor $\mathbf{D}$ is normal crossings in the strict sense, e.g.,}\\
     &\nonumber\mathbf{D} := D_1 \cup \cdots \cup D_i \cup \cdots \cup D_k\ \textrm{where $D_i$ are smooth components of $\mathbf{D}$; and}\\
     &\label{eq:kappai}\textrm{There is an ample line bundle $\mathcal{L}$ on $M$ together with a section $s \in H^0(\mathcal{L})$ whose }\\
     &\nonumber\textrm{divisor of zeroes is $\sum_i \kappa_i D_i$ with $\kappa_i \in \Z_{>0}$}.
    \end{align}
\end{defn}

Going forward, fix a log-smooth compactification $(M,\D = D_1 \cup \cdots \cup
D_k)$. There is a natural induced ``divisorial'' stratification of $M$, with
strata indexed by subsets of $\{1, \ldots, k \}$: for $I \subset \{1, \ldots, k \}$,
define
\begin{equation}
    D_I:= \cap_{i \in I} D_i.
\end{equation}    
We refer to the associated open parts of the stratification induced by
$\mathbf{D}$ as 
\begin{equation}
    \DIo = D_I \backslash \cup_{j \notin I} D_j,
\end{equation}
and allow $I = \emptyset$, with the convention 
\begin{align*}
    D_{\emptyset} &:= M\\
    \mathring{D}_{\emptyset} &= M \backslash \D = X.
\end{align*}
Denote by $\mathfrak{i}: X \hookrightarrow M$ the natural inclusion map. 

We equip $M$ with a symplectic form $\omega$ which is a K\"{a}hler form associated to some
positive Hermitian metric $|| \cdot ||$ on $\mathcal{L}$. On $X$, consider
the potential $h=- \operatorname{log}||s||$, where $s$ is the section given in
\eqref{eq:kappai}. Restricting to $X$, we have that $\omega:=-dd^c h$. Using $\theta = -d^c h$,
and we further equip the submanifold $X$ with the structure of a finite
type convex symplectic manifold (see e.g., \cite[\S
A]{McLean:2012ab} for a definition) which, up to deformation, is independent of
the compactification or the choice of ample line bundle $\mathcal{L}$, and
equivalent to the canonical structure obtained from an embedding $X
\hookrightarrow \C^N$ \cite{Seidel:2010fk}.

As is typically done, we find it convenient (for understanding geometry
at infinity) to further deform this finite type convex symplectic structure on
$X$ to one which is ``nice'' or ``adapted to $\D$'', meaning it admits a 
system of suitably compatible (symplectic, punctured) tubular
neighborhoods around each $D_i$ \cite{Seidel:2010fk, McLean:2012ab}, see
Theorems \ref{thm: MTZ} and \ref{thm:exactnicestructure} below. We first review
compatibility properties for such systems of symplectic tubular neighborhoods
which make them ``nice'', following the comprehensive notion of an {\em
$\omega$-regularization} developed in \cite{MTZ} (see also
\cite{McLeanMultiplicity} for a slightly different exposition).

We recall the local standard model for symplectic forms near a symplectic
submanifold $Z$ which is either of codimension-2 or an intersection of
codimension-2 submanifolds.  Let $\pi: L \to Z$ be a real-oriented rank-2 vector bundle
equipped with a {\em Hermitian structure} $(\rho, \nabla)$, meaning a pair such
that, with respect to the complex structure on $j:=j_{\rho}$ uniquely induced
by the associated Riemannian metric $\mathrm{Re}(\rho)$  (recall $L$ is rank-2
oriented and $SO(2) = U(1)$), 
$\rho$ is a Hermitian metric and 
$\nabla$ is a $\rho$-compatible connection. We use the notation $L_z :=
\pi^{-1}(z)$ for the fibers of $\pi$,
and (by a standard abuse of notation) we also use the abbreviation
$\rho(v):=\rho(v,v)$ to refer to the norm-squared function. If $L$ is
symplectic vector bundle with symplectic structure $\Omega$, we say a Hermitian
structure $(\rho, \nabla)$ is {\em compatible} with $\Omega$ if
$\Omega(-,j_{\rho}) = \mathrm{Re}(\rho))$ as usual.
The Hermitian structure induces a splitting of the
tangent bundle of $L$: $TL \cong T^{vert} L \oplus T^{horiz} L$ where $(T^{vert} L
L$ denotes the vertical sub-bundle of the tangent bundle, which has fibers
$\ker(d\pi_p) \cong L_{\pi(p)}$ (and in particular has a complex structure). 
Recall that there is a standard angular one-form on the vertical tangent space
$d \varphi \in \Gamma(T^{vert, *} (L \backslash Z))$ (which, for instance, admits the
global definition $d (\log \rho) \circ j$, where $j$ is the complex structure
on the vertical tangent space). Using the Hermitian structure,  
we lift $d \varphi$ to a connection 1-form $\theta_{e} \in \Omega^1(L\setminus Z)$
by the prescription that for each $p \in L \backslash Z$,
\begin{align} 
    (\theta_{e})|_{T^{vert}L_p}&= (d\varphi) \\ 
    (\theta_{e})|_{T^{horiz}L_p}&=0.
 \end{align} 
If we are further given a symplectic structure $\omega_Z$ on $Z$,
we can associate a 2-form on $L$
\begin{equation}\label{localsymplecticformcodimension2}
    \omega_{(\rho,\nabla)}:= \pi^* \omega_Z + \frac{1}{2} d (\rho \theta_{e}),
\end{equation} 
which is symplectic in a neighborhood of $Z$. Similarly, given
a collection of Hermitian line bundles $\{L_i = (L_i, \rho_i, \nabla_i)\}_{i
\in I}$, we have connection 1-forms $\{\theta_{e,i}\}_{i \in I}$, and we can
associate a 2-form on $(\oplus_{i \in I} L_i)$ which is symplectic in a neighborhood of zero:
\begin{equation} \label{localsymplecticformhighcodimension}
    \omega_{\{(\rho_i,\nabla_i)\}_{i \in I}}:= \pi^* \omega_Z + \frac{1}{2} \sum_{i \in I} \pi_{I,i}^*d (\rho_i \theta_{e,i})   
\end{equation} 
(above, $\pi_{I, j}: \oplus_{i \in I} L_i \to L_j$ is the canonical projection map).

Next we study systems of tubular neighborhoods in the smooth category.  For any
smooth submanifold $Z \subset M$, let 
\begin{equation}
    N_MZ
\end{equation}
(or $NZ$ if $M$ is implicit) denote its normal bundle.  
We implicitly identify $Z$ with its zero section in $NZ$. In
\cite[Def. 2.8]{MTZ}, a slight strengthening of the notion of a tubular
neighborhood is introduced: a {\em regularization} of $Z$ is a tubular
neighborhood $\psi: U \subset NZ \to M$ such that, under the identification
$N_{NZ} Z \cong N_M Z$,
\begin{equation}\label{eq:regularization}
    \textrm{The (normal component of) the derivative }(D\psi)|_Z: N_{NZ} Z = N_M Z \to  N_M Z\textrm{ is }id_{N_M Z}.
\end{equation}
We recall the normal component of the derivative can be defined as the following composition, where $\pi: NZ \to Z$ denotes the vector bundle structure:
\begin{equation}\label{eq:normalderivative}
    N_{NZ} Z = \pi^*(NZ)|_{Z} \stackrel{\simeq}{\longrightarrow} T^{vert}(NZ)|_{Z} \hookrightarrow T (NZ)|_{Z} \stackrel{D\psi|_{Z}}{\longrightarrow} TM|_{Z} \twoheadrightarrow N_M Z.
\end{equation}
Now, let $\{Z_i\}_{i \in S}$ be a collection of transversely intersecting
submanifolds, and denote by $Z_I := \cap_{i \in I} Z_i$ for $I\subseteq S$, with
$Z_{\emptyset} := M$.  Note that there are inclusions of normal bundles
$N_{Z_{I'}} Z_I \subset N_{Z_{I''}} Z_i$ for any $I'' \subset I' \subset I$,
and in particular the sub-bundles $N_{Z_{I'}} Z_I \subset N_M Z_I$ (``the local
model for $Z_{I'}$ near $Z_I$'') give a stratification
of $N_M Z_I$, also inducing a splitting 
\begin{equation}\label{splitting}
    N_M Z_I \cong \bigoplus_{i \in I} N_{Z_{I - \{i\}}} Z_I \cong \bigoplus_{i \in I} (N_M Z_i)|_{Z_I}
\end{equation}
One can ask for a
collection of regularizations for each $Z_I$, $\{\psi_I: U_I \to M\}_{I \subset
S}$ to {\em preserve the stratifications above them} in the sense that 
\begin{equation}\label{systempreservingstratifications}
    \textrm{For each $I' \subset I$, } \psi_I ( (N_{Z_{I'}} Z_I) \cap U_I)= Z_{I'} \cap \psi_I(U_I).
\end{equation}
(c.f., \cite[Def. 2.10]{MTZ}). In order to impose some compatibilities between the $\psi_I$'s, we
introduce some more notation: for $I' \subset I$, let
\begin{equation}
    \pi_{I; I'}: N_{Z_{I'}} Z_I  \to Z_I
\end{equation}
denote the vector bundle projection map. There are canonical identifications
\begin{align}
    \label{eq: quickident}
    N_M Z_I &= \pi_{I; I'}^* N_{Z_{I - I'}} Z_I = N_{N_M Z_I} (N_{Z_{I'}} Z_I) \\ 
    N_{Z_{I-I'}} Z_I &= N_M Z_{I'}|_{Z_I}
\end{align}
and using this, a canonical map
\begin{equation}\label{eq: i map}
    i: N_{N_M Z_I} (N_{Z_{I'}} Z_I) = \pi_{I; I'}^* N_{Z_{I - I'}} Z_I \hookrightarrow \pi_{I; I'}^* N_M Z_I = \pi_I^* (N_M Z_I)|_{N_{Z_{I'}} Z_I}  \hookrightarrow T (N_M Z_I)|_{N_{Z_{I'}} Z_I}
\end{equation}
where the last inclusion is by the fact that $\pi_I^*(N_M Z_I)$ is the vertical
tangent bundle of $N_M Z_I$.

Now for any collection of regularizations $\{\psi_I: U_I \to M\}_{I \subset S}$ satisfying \eqref{systempreservingstratifications}, and any $I' \subset I \subset S$,
$\psi_I$ induces a diffeomorphism from $N_{Z_{I'}} Z_I \cap U_I$ to its image $Z_{I'} \cap \psi_I(U_I)$, and hence $d\psi_I$ induces an isomorphism of vector bundles 
\begin{equation}
    T (N_M Z_I )|_{N_{Z_{I'}} Z_I \cap U_I} \stackrel{\cong}{\to} TM|_{Z_{I'} \cap \psi_I(U_I)}
\end{equation}
and preserves the sub-tangent bundles associated to the the submanifolds on
each side, hence there is a map of normal bundles (or normal derivative) given
by
\begin{equation} \label{stratifiednormalderivative}
    \begin{split}
        D\psi_{I;I'}: N_{N_M Z_I}(N_{Z_{I'}} Z_I)_{ N_{Z_{I'}} Z_I \cap U_I} = \pi_{I;I'}^* N_{Z_{I-I'}} Z_I |_{N_{Z_{I'}} Z_I \cap U_I} &\stackrel{i}{\hookrightarrow} T (N_M Z_I)_{N_{Z_{I'}} Z_I \cap U_I} \\
        &\stackrel{d \psi_I}{\longrightarrow}  TM|_{Z_{I'} \cap \psi_I(U_I)} \twoheadrightarrow N_M(Z_{I'})_{Z_{I'} \cap \psi_I(U_I)}.
\end{split} 
\end{equation}
where $i$ is as in \eqref{eq: i map} the last map is the canonical projection onto the normal bundle (compare also \eqref{eq:normalderivative}).

This map allows us to move between the domains of the different tubular
neighborhoods, and we can now ask that a collection or regularizations
satisfying \eqref{systempreservingstratifications} further be {\em compatible with
passing to substrata}, meaning that
\begin{equation} \label{eq:agreesubstrata}
    \begin{split}
    D\psi_{I;I'}(U_I)&= U_{I'}|_{Z_{I'} \cap \psi_I(U_I)}  \\
    \psi_I &= \psi_{I'} \circ D\psi_{I;I'}|_{U_{I}}.
\end{split}
\end{equation}
(compare \cite[Def. 2.11]{MTZ}).

Finally we come to the main definition of a ``nice system of tubular neighborhoods for symplectic divisors''.
For the definition that follows recall that, for a symplectic submanifold $Z \subset (M, \omega)$, the normal bundle $N_M Z$ 
inherits the structure of a symplectic vector bundle bundle (and in particular
is canonically oriented), with symplectic structure denoted $\omega_{N_M Z}$. 
\begin{defn}[\cite{MTZ}, Def. 2.12]
    Let $\{Z_i\}_{i \in S}$ be a collection of codimension-2 symplectic manifolds
    of $(M, \omega)$.  An {\em $\omega$-regularization} of $\{Z_i\}_{i \in S}$
    consists of
    a collection of 
    \begin{itemize}
        \item  tubular neighborhoods  $\{\psi_I: U_I \to M\}_{I \subset
            S}$ for each $\{Z_I = \cap_{i \in I} Z_i\}_{I \subset S}$;  and

        \item Hermitian structures $\{(\rho_{I,i}, \nabla_{I,i})\}_{i \in I}$
            on the normal bundles $\{(N_M Z_i)|_{Z_I} = N_{Z_{I - \{i\}}} Z_I\}_{i
            \in I}$ which are compatible with the canonically induced
            symplectic structures $(\omega_{N_M Z_i})|_{Z_I}$.
    \end{itemize}
    These structures satisfy the following conditions:
    \begin{enumerate}
        \item Each $\psi_I$ is a regularization in the sense of \eqref{eq:regularization};
        \item Each $\psi_I$ preserves the strata above them in the sense of \eqref{systempreservingstratifications};
        \item The collection $\{\psi_I: U_I \to M\}_{I \subset S}$ is compatible with passing to substrata, in the sense of \eqref{eq:agreesubstrata};
        \item With respect to the decomposition $N_M Z_I \cong \oplus_{i \in I}
            N_{Z_{I - i}} Z_I = \oplus_{i \in I} (N_M Z_i)|_{Z_I}$ and the
            Hermitian structures $\{(\rho_{I,i}, \nabla_{I,i})\}_{i \in I}$,
            the pullback along of $\omega$ along $\psi_I$ takes the standard
            local form from \eqref{localsymplecticformhighcodimension}:
            \begin{equation}
                (\psi_I)^*\omega = \omega_{\{(\rho_{I,i},\nabla_{I,i})\}_{i \in I}}.
            \end{equation}
        \item The maps $D\psi_{I;I'}$ are product Hermitian isomorphisms
            (meaning a vector bundle isomorphism respecting the direct sum
            decomposition on both sides and intertwining Hermitian structures
            on each summand), with respect to the natural splittings of the source and target
            \begin{align*}
                \pi_{I;I'}^* N_{Z_{I-I'}} Z_I |_{N_{Z_{I'}} Z_I \cap U_I} &\cong \bigoplus_{i \in I'} \pi_{I; I'}^* ((N_M Z_i)|_{Z_I})|_{N_{Z_{I'}} Z_I \cap U_I} \\
            N_M(Z_{I'})|_{Z_{I'} \cap \psi_I(U_I)} &\cong \bigoplus_{i \in I'} (N_M Z_i)|_{Z_{I'} \cap \psi_I(U_I)}.
        \end{align*}

    \end{enumerate}
\end{defn}

A key result ensures these structures can always be found on $\mathbf{D}$ in
our setting, possibly after deformation of symplectic form:
\begin{thm}[ \cite{McLean:2012ab} Lemma 5.4, 5.15,  \cite{MTZ} ] \label{thm: MTZ}
There is a deformation of symplectic structures $\{\omega_t\}_{t \in [0,1]}$, with
$\omega_0=\omega$, $\omega_1=\tilde{\omega}$, and $[\omega_t] = [\omega] \in
H^2(M)$, such that the collection of divisors
$\{D_i\}_{i=1}^k$ admits an $\tilde{\omega}$-regularization.  (Moreoever, this deformation can be taken to be 
supported in an arbitrarily small
neighborhood of the singular strata of $\mathbf{D}= \cup_{i=1}^k D_i$.)
\end{thm}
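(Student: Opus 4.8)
The plan is to follow McLean's construction of adapted symplectic structures \cite{McLean:2012ab} (compare \cite{MTZ}), proceeding by induction on the depth of the strata of $\D$, from deepest to shallowest. The first, \emph{soft}, step is to fix once and for all a system of Hermitian structures $\{(\rho_{I,i},\nabla_{I,i})\}_{i\in I}$ on the normal bundles $(N_M D_i)|_{D_I} = N_{D_{I-\{i\}}} D_I$, compatible with restriction to substrata and with the canonical symplectic structures $(\omega_{N_M D_i})|_{D_I}$. This can be done by downward induction on $|I|$: the space of Hermitian structures compatible with a fixed symplectic vector bundle structure is non-empty and convex, so a choice on deeper strata extends over shallower ones by a partition of unity. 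These data pin down the target local models \eqref{localsymplecticformhighcodimension} in a neighborhood of every stratum.

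The substantive step is the deformation itself, built stratum by stratum. For a deepest stratum $D_I$ (with $|I|$ maximal), choose a tubular neighborhood of $D_I$ which is a regularization in the sense of \eqref{eq:regularization} and preserves the strata above it; then $\omega$ and the standard model $\omega_{\{(\rho_{I,i},\nabla_{I,i})\}_{i\in I}}$ agree along $D_I$, hence are cohomologous on a neighborhood of $D_I$ (which retracts onto it) and, on a small enough such neighborhood, are joined by a path of symplectic forms, so Moser's argument yields a diffeomorphism carrying one to the other, which we absorb into $\psi_I$. In the inductive step, assume $\tilde\omega$ together with a compatible family of regularizations has been produced near all strata of depth greater than $d$; given $D_I$ with $|I| = d$, condition \eqref{eq:agreesubstrata} already prescribes the form and the tubular-neighborhood data near where $D_I$ meets the deeper strata, and one runs a \emph{relative} version of the Moser argument --- choosing, via the Poincar\'e lemma with supports, the primitive of the difference of the two forms to vanish near the already-fixed locus --- to install the standard local form over $\mathring{D}_I$ and then, by a further compatible extension, over a neighborhood of $D_I$ in $M$. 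No deformation is needed near $\mathring{D}_i$: there $\omega$ already admits a regularization by the symplectic neighborhood theorem, agreeing in overlaps with the regularizations built on the $D_I$, $I \ni i$. Thus all of the deformation is confined to arbitrarily small neighborhoods of the singular strata $\{D_I : |I| \geq 2\}$. Concatenating and reparametrizing the finitely many Moser deformations gives the path $\{\omega_t\}$; since each one replaces $\omega$ by $\omega + d\beta$ with $\beta$ supported in such a small neighborhood and can be taken through symplectic forms of a fixed cohomology class (the endpoints agree on the relevant stratum, hence are cohomologous on a retracting neighborhood and joined there by the symplectic linear path, while outside the form is unchanged), we get $[\omega_t] = [\omega]$ for all $t$. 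That the resulting collection satisfies conditions (1)--(5) of the definition of a $\tilde\omega$-regularization is ensured stagewise by the construction.

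I expect the main obstacle to be the bookkeeping in the inductive step: deforming the symplectic form, simultaneously adjusting the tubular neighborhoods so that they remain regularizations preserving all strata above them, and arranging the normal-derivative maps $D\psi_{I;I'}$ to be product Hermitian isomorphisms, all while keeping the supports confined to small neighborhoods of the singular strata and keeping the neighborhoods appropriately nested and shrinkable so that the compatibility conditions \eqref{eq:agreesubstrata} can be maintained. This is precisely the content of \cite[Lemmas 5.4 and 5.15]{McLean:2012ab} and the parallel discussion in \cite{MTZ}, to which we defer for the detailed verification.
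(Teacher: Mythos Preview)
The paper does not give its own proof of this theorem: it is stated with attribution to \cite{McLean:2012ab} (Lemmas 5.4, 5.15) and \cite{MTZ}, and the text immediately moves on to use the result. Your sketch is a reasonable outline of the Moser-type inductive argument carried out in those references, and you correctly identify the main technical point (the relative Moser argument stratum by stratum, with bookkeeping to maintain the compatibility conditions) and defer to the cited sources for the details. There is nothing further to compare.
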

This deformation does not change the symplectomorphism type of the complement
$X = M \backslash \mathbf{D}$ \cite{McLean:2012ab}*{Lemma 5.15}.  For notational simplicity, we replace $\omega$
by $\tilde{\omega}$ going forward, i.e., assume that our divisors $\mathbf{D}$
admit (and come equipped with) an $\omega$-regularization $\{\psi_I: U_I \to M\}_{I \subset S}$, which we will henceforth refer to as simply a regularization.
Note that the existence of a regularization implies that the
divisors $\{D_i\}_{i=1}^k$ are symplectically orthogonal. In what follows, except when necessary
we will drop the parameterizations $\psi_i$ from our notation and identify the
source $U_i$ with its image in $M$.  We have projection maps 
\begin{equation}
            \pi_i: U_i \ra D_i
\end{equation}
such that on a $|I|$-fold intersection of tubular neighborhoods 
\begin{equation}
    U_I: = \cap_{i \in I} U_i = U_{i_1} \cap \cdots \cap U_{i_{|I|}},
\end{equation}
iterated projection $|I|$ times 
\begin{equation}\label{eq:piI}
    \pi_I := \pi_{i_1} \circ \pi_{i_2} \circ \cdots \circ \pi_{i_{|I|}}: U_I \ra D_I
\end{equation}
is a symplectic fibration with structure group $U(1)^{|I|}$ and with fibers symplectomorphic to a(n open subset of a) product of standard discs 
\begin{equation}\label{eq:fiberdisks}
    \pi^{-1}_I(p) \hookrightarrow \prod_{i \in I} \mathbf{D}_{\epsilon}.
\end{equation}
We also have the radial (norm-squared distance to $D_i$) functions
\begin{equation}\label{rhofunctions}
    \rho_i: U_i \to \R.
\end{equation}
See Figure \ref{normalcrossing} for a schematic picture of $(M, \D)$, the strata $D_I$, and the neighborhoods $U_I$ in a simple case.
\begin{figure}[h] 
    \caption{A schematic of a normal crossing divisor with two components (represented by lines meeting orthogonally, as the divisors can be arranged to meet symplectically orthogonally) and their tubular neighborhoods (represented by 1-dimensional disc bundles). \label{normalcrossing}}
    \centering
    \includegraphics[scale=1.5]{./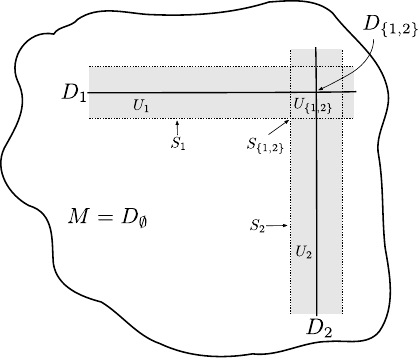}
\end{figure}

An elementary but important consequence of having such an $\omega$-regularization is:
\begin{lem}\label{nicehamiltonianvectorfields}
    \begin{enumerate}
        \item[(1)] The symplectic orthogonal to the tangent space of any fiber $\pi^{-1}_i(p)$
    is contained in a level set of the radial function $\rho_i: U_i \to \R$. 
\item[(2)] In particular, if $I = \{i_1, \ldots, i_s\}$, any smooth function
    $f$ of the corresponding radial functions $f(\rho_{i_1}, \ldots,
    \rho_{i_s}): U_I \to \R$, has Hamiltonian vector field $X_{f}$ tangent to
    the fibers of $\pi_{I}$. 
\item[(3)] For $f$ as in (2), if $F = \pi_{I}^{-1}(p)$ denotes any fiber with
    its standard (induced by \eqref{eq:fiberdisks}) symplectic form, 
    \begin{equation}\label{hamvectorfield}
        X_f|_{F} = X_{f |_{F}} = \sum_{i \in I} 2 \frac{\partial f}{\partial \rho_i}\partial_{\varphi_i}
    \end{equation}

\item[(4)] Let $I = \{i_1, \ldots, i_s\} \subset \{1, \ldots, k\}$ be any
    subset. In the associated neighborhood $U_I$ of $D_I$, any two functions of
    the radial coordinates $f(\rho_{i_1}, \ldots, \rho_{i_s})$, $g(\rho_{i_1},
    \ldots, \rho_{i_s})$ have commuting Hamiltonian vector fields: $\omega(X_f,
    X_g) = df(X_g) = -dg(X_f) = 0$.  In particular, $d\rho_{i_t}(X_f)$ = 0 for
    any $t$ and $f$ as above.

    \end{enumerate}
\end{lem}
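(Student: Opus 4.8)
The plan is to deduce all four parts from one computation in the standard local model near a stratum, namely the identification $X_{\rho_i}=2\,\partial_{\varphi_i}$ of the Hamiltonian vector field of a radial function. I would work throughout in the neighbourhood $\UI=\cap_{i\in I}U_i$ of $\DI$, where by Theorem \ref{thm: MTZ} and condition $(4)$ in the definition of an $\omega$-regularization the symplectic form is the standard local expression \eqref{localsymplecticformhighcodimension},
\[
 \omega \;=\; \pi_I^*\bigl(\omega|_{\DI}\bigr)\;+\;\tfrac12\sum_{i\in I}\pi_{I,i}^*\,d\bigl(\rho_i\,\theta_{e,i}\bigr),
\]
and I would write $\partial_{\varphi_i}$ for the fibrewise infinitesimal–rotation vector field on the $i$-th normal summand. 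Note that $\partial_{\varphi_i}$ is globally defined and smooth on all of $\UI$ (it vanishes to first order along $D_i$) even though the angle $\varphi_i$ itself is not a global function; the only properties I need are that $\partial_{\varphi_i}\in\ker d\pi_I$ and $\partial_{\varphi_i}\in\ker d\pi_{I,j}$ for $j\neq i$, that $\theta_{e,j}(\partial_{\varphi_i})=\delta_{ij}$, that $d\rho_j(\partial_{\varphi_i})=0$ for all $i,j\in I$ (rotating in a fibre disc preserves each radial function), and that $\theta_{e,i}$ is $U(1)$-invariant so $\iota_{\partial_{\varphi_i}}d\theta_{e,i}=0$ (either directly, or by Cartan's formula, since $\iota_{\partial_{\varphi_i}}\theta_{e,i}$ is constant).

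First I would contract $\omega$ with $\partial_{\varphi_i}$. The base term $\pi_I^*(\omega|_{\DI})$ and every summand $\pi_{I,j}^* d(\rho_j\theta_{e,j})$ with $j\neq i$ die by the verticality properties above, while the $i$-th summand contributes $\iota_{\partial_{\varphi_i}}d(\rho_i\theta_{e,i})=(\iota_{\partial_{\varphi_i}}d\rho_i)\,\theta_{e,i}-d\rho_i\,(\iota_{\partial_{\varphi_i}}\theta_{e,i})+\rho_i\,\iota_{\partial_{\varphi_i}}d\theta_{e,i}=-\,d\rho_i$. Hence $\iota_{\partial_{\varphi_i}}\omega=-\tfrac12\,d\rho_i$ on all of $\UI$, whence $X_{\rho_i}=2\,\partial_{\varphi_i}$ (with the convention $\iota_{X_H}\omega=-dH$). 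This already gives part $(1)$: a fibre $F=\pi_i^{-1}(p)$ is pointwise spanned by $\partial_{r_i}$ and $\partial_{\varphi_i}$, and since $\omega(v,\partial_{\varphi_i})$ is a nonzero multiple of $d\rho_i(v)$, the symplectic orthogonal $(T_qF)^{\omega}$ is contained in $\ker d\rho_i|_q$, i.e.\ in the level set of $\rho_i$ through $q$.

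For parts $(2)$ and $(3)$: given $f=f(\rho_{i_1},\dots,\rho_{i_s})$ we have $df=\sum_{i\in I}\tfrac{\partial f}{\partial\rho_i}\,d\rho_i$, so by the previous step and $C^{\infty}$-linearity of contraction in the vector field, $\iota_{\,2\sum_{i\in I}(\partial f/\partial\rho_i)\,\partial_{\varphi_i}}\,\omega=-df$; nondegeneracy of $\omega$ then yields $X_f=2\sum_{i\in I}\tfrac{\partial f}{\partial\rho_i}\,\partial_{\varphi_i}$, which is manifestly tangent to the fibres of $\pi_I$ — this is $(2)$. Restricting $\omega$ to a fibre $F=\pi_I^{-1}(p)$, the base term and the curvature terms $\rho_i\,d\theta_{e,i}$ restrict to zero, so one recovers the standard product-of-discs form of \eqref{eq:fiberdisks}; restricting the identity $\iota_{X_f}\omega=-df$ to $F$ then gives $X_f|_F=X_{f|_F}$, which together with the formula just obtained is \eqref{hamvectorfield}. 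Part $(4)$ is then immediate: by $(3)$, both $\omega(X_f,X_g)$ and $d\rho_{i_t}(X_f)$ are $\K$-linear combinations of the numbers $d\rho_i(\partial_{\varphi_j})$, all of which vanish.

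The computation itself is routine linear algebra in the fibre; the only points that genuinely need care — and where I expect the (mild) obstacle to lie — are the bookkeeping issues: checking that the regularization identifications $\psi_I$ really do intertwine $\omega$ with \eqref{localsymplecticformhighcodimension} and that the radial functions $\rho_i$ and the rotation fields $\partial_{\varphi_i}$ restrict compatibly on overlaps $\UI\subset U_{I'}$ — which is precisely what conditions $(2)$--$(5)$ in the definition of an $\omega$-regularization provide — together with verifying that $X_f=2\sum_i(\partial f/\partial\rho_i)\,\partial_{\varphi_i}$ extends smoothly across the deeper strata $D_j\cap\UI$, which holds because $\partial_{\varphi_i}$ vanishes to first order along $D_i$ while $\partial f/\partial\rho_i$ stays bounded there.
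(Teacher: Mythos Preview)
Your proof is correct and follows essentially the same approach as the paper: reduce to the standard local model \eqref{localsymplecticformhighcodimension} and compute there. Your treatment is more explicit (you compute $X_{\rho_i}=2\partial_{\varphi_i}$ directly and derive everything from it), whereas the paper's argument for (2) is phrased as a double-orthogonal argument (the symplectic orthogonal to the fibres lies in the level sets of the $\rho_i$ by (1), and $df$ vanishes there, so $X_f$ is orthogonal to this orthogonal, hence tangent to the fibres); but the content is the same.
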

\begin{proof}
    By the regularization property, we can check (1) using the model
    symplectic form for $D_i$ \eqref{localsymplecticformcodimension2}, where
    the result is immediate.  To see (2), note that 
    $df = \omega(X_{f}, -)$ must be zero on vectors tangent to the intersection of
    level sets of $\rho_{i_1}, \ldots, \rho_{i_s}$. Hence, $X_{f}$ is 
    orthogonal to the symplectic orthogonal to the tangent space to the fibers,
    i.e., tangent to the fibers. (3) is an immediate corollary of (2), and the
    specific form of $X_{f|_{F}}$ is a calculation in the fiber $F \subset \prod
    \mathbf{D}_{\epsilon}$. (4) can be deduced from \eqref{hamvectorfield} and
    the fact that $d\rho_j(\partial_{\varphi_i}) = 0$.
 \end{proof}

Finally, we recall what it means for the 1-form $\theta$ on $X = M \backslash
\D$ to be compatible with the regularization chosen above, in the sense of
\cite{McLean:2012ab}.
\begin{defn}\label{def:niceconvexstructure}
    Fix $(M, \D)$ be as above, as well as an $\omega$-regularization
    $\{\psi_I: U_I \to M\}_{I \subset S}$ for $\mathbf{D}$.  
    We say a convex symplectic symplectic structure $\theta$ on $X = M
    \backslash \D$ is {\em nice} (with respect to the fixed data) if on each
    $ \pi_I := U_I \ra D_I$ described in \eqref{eq:piI},
    $\theta$ restricted to each fiber $(\pi_I)^{-1}(pt)$ agrees with
   \[
       \sum_{i \in I} (\frac{1}{2}\rho_i - \frac{\kappa_i}{ 2\pi}) d\varphi_i.
   \]
   with respect to the identification of the fibers \eqref{eq:fiberdisks},
   where $\kappa_i \in \mathbb{Z}^{> 0}$ are as in \eqref{eq:kappai}.
\end{defn}

\begin{thm}[\cite{McLean:2012ab}*{Thm. 5.20}] 
    \label{thm:exactnicestructure}
    After possibly shrinking the neighborhoods $U_i$ (and $U_I$), there exists
    a deformation
    of the canonical convex symplectic structure $(X,\theta)$ to one $(X, \tilde{\theta})$ which is nice.
\end{thm}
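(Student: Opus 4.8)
The statement is \cite{McLean:2012ab}*{Thm.~5.20}, so the plan is to reconstruct McLean's argument; its shape is to deform $\theta$ only inside the regularization neighbourhoods $U_I$, inductively on the codimension of the stratum from deepest to shallowest, at each stage subtracting a cut-off exact $1$-form --- so that $\omega = d\theta$ is never altered --- while leaving $\theta$ untouched on the compact part of $X$ lying outside $\bigcup_i U_i$. Before starting I would invoke Theorem \ref{thm: MTZ} to assume $\omega$ is already in the standard form \eqref{localsymplecticformhighcodimension} on every $U_I$; then along any fibre $F = \pi_i^{-1}(p)$ of \eqref{eq:fiberdisks} one has $\omega|_F = \tfrac12\, d\rho_i\wedge d\varphi_i$.

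The core local input concerns a single component $D_i$. Because $\theta|_F$ is a primitive of the standard area form on the punctured fibre, a Stokes argument shows it has the shape $\theta|_F = \bigl(\tfrac12\rho_i + c(p)\bigr)\,d\varphi_i + dg_p$ for a locally constant ``period defect'' $c(p)$ and a single-valued function $g_p$ on $F\setminus\{0\}$, depending smoothly on $p$ once one normalizes it along a fixed section of $\pi_i$. The first step is to check that $c(p)$ is forced to equal exactly the constant $-\tfrac{\kappa_i}{2\pi}$ appearing in Definition \ref{def:niceconvexstructure}: integrating $\theta|_F$ over $\{\rho_i = \epsilon'\}\subset F$ gives $\pi\epsilon' + 2\pi c(p)$, so the limit as $\epsilon'\to 0$ is $2\pi c(p)$; on the other hand, since $s$ vanishes to order exactly $\kappa_i$ along $D_i$ one has $h = -\log\|s\| = -\tfrac{\kappa_i}{2}\log\rho_i + O(1)$ near $D_i$, and with $\theta = -d^c h$ a residue computation identifies this same limit with $-\kappa_i$ (this is the step where one must keep careful track of the Hermitian metric on $\mathcal{L}$ and of the sign and angular normalizations in play). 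Equivalently, $\eta_i := \theta - \bigl(\tfrac12\rho_i - \tfrac{\kappa_i}{2\pi}\bigr)\theta_{e,i}$ --- with $\theta_{e,i}$ the connection $1$-form of the regularization, which restricts to $d\varphi_i$ on fibres --- extends smoothly over $D_i$ (the singular terms cancel) and restricts to the exact form $dg_p$ on every fibre; hence $\eta_i = dG_i + \xi_i$ for a smooth function $G_i$ on $U_i$ with $dG_i|_F = dg_p$ and a smooth $1$-form $\xi_i$ vanishing on all fibres of $\pi_i$ (this last summand plays no further role).

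I would then pick a cut-off $\chi_i$ equal to $1$ on a smaller neighbourhood of $D_i$ and supported in $U_i$, and set $\tilde\theta := \theta - \sum_i d(\chi_i G_i)$. By construction $d\tilde\theta = \omega$, so $(X,\tilde\theta)$ is again a convex symplectic structure; it equals $(X,\theta)$ away from $\bigcup_i U_i$; and on the inner region, where $\chi_i\equiv 1$, its fibre restriction is precisely $\bigl(\tfrac12\rho_i - \tfrac{\kappa_i}{2\pi}\bigr)d\varphi_i$ --- that is, $\tilde\theta$ is nice --- and $t\mapsto \theta - t\sum_i d(\chi_i G_i)$ is the required deformation. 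To handle the overlaps $U_I$ one runs this construction inductively, beginning with the deepest strata, using the product structure of the $\omega$-regularization near $D_I$ (conditions (4) and (5) of the definition of an $\omega$-regularization, together with the compatibility with substrata \eqref{eq:agreesubstrata}) to see that $\theta|_{\pi_I^{-1}(q)}$ decouples into a sum over $i\in I$ of terms of the form treated above, and then choosing the $\chi_i$ and $G_i$ compatibly so that all the corresponding corrections fire simultaneously near $D_I$; this bookkeeping is routine.

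The step I expect to be the real obstacle is none of the above, but rather the verification that $(X,\tilde\theta)$ --- and each structure $\theta - t\sum_i d(\chi_i G_i)$ in the interpolating family --- is still of \emph{finite type}: replacing $\theta$ by $\tilde\theta$ changes the Liouville vector field near $\D$ by a Hamiltonian vector field, and one must ensure this perturbation does not destroy convexity and completeness of the Liouville flow as one approaches $\D$. This is exactly what forces the hypothesis ``after possibly shrinking the neighbourhoods $U_i$'' in the statement: one shrinks the $U_i$ until the corrections $d(\chi_i G_i)$ are dominated by the standard model near $\D$, so that the perturbed Liouville field still points outward along the level sets of $h$ near $\D$ and its flow still limits onto $\D$ only in infinite time. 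Together with the residue identification $c(p) = -\tfrac{\kappa_i}{2\pi}$, this completeness check is the part of the argument that genuinely requires care; the rest is formal.
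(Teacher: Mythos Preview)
The paper does not prove this theorem: it is stated with attribution to \cite{McLean:2012ab}*{Thm.~5.20} and immediately used as a black box (``Going forward, we replace $(X,\theta)$ by the corresponding nice structure\ldots''). So there is nothing in the paper to compare against; what you have written is a reconstruction of McLean's argument rather than of anything in this paper.

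That said, your outline is a faithful sketch of how such results are proved. The two substantive points --- that the fibrewise period of $\theta$ around $D_i$ is forced by the pole structure of $h=-\log\|s\|$ to be exactly $-\kappa_i$, and that one then corrects by an exact compactly supported form $d(\chi_i G_i)$ so $\omega$ is unchanged --- are the heart of the matter, and you have them right. Your closing observation, that the genuine content lies in checking the deformed structure remains finite-type convex (and that this is why one shrinks the $U_i$), is also on target. The only caveat is the one you already flag: the residue computation $c(p)=-\kappa_i/(2\pi)$ depends on the precise conventions for $d^c$ and for the orientation of the angular coordinate, and getting the sign wrong here would not be a small error --- it would give the wrong model. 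But since the paper simply imports the result, there is nothing further to check against it.
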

Going forward, we replace $(X,\theta)$ by the corresponding nice structure, a
process which leaves symplectic cohomology unchanged (recall that symplectic
cohomology is unchanged under deformations such as those appearing in Theorem
\ref{thm:exactnicestructure}, compare \cite{McLean:2012ab}*{Lemma 4.11}).

\subsection{Normal crossings-adapted Liouville domains and Hamiltonians}
This section describes the exhaustive family of Liouville domains as well as the Hamiltonian functions which will be used to define Floer cohomology.  

Denote by $UD \subset M$ the union of the neighborhoods of $D_i$ in our
regularization (thought of as living in $M$) 
\begin{equation}
    UD:=\cup_iU_i.
\end{equation}
There is some $\epsilon$ such that $\rho_i^{-1}[0,\epsilon^2] \subset U_i$
(where now we think of $U_i \subset N_M D_i$ and $\rho_i: N_M D_i \to \R$) for
all $i$, i.e., each $U_i$ contains a tube of size $\epsilon$. 
For any $\rho_0 \leq \epsilon^2$ such that 
$\rho_0 \leq \operatorname{min}_i \frac{2\pi \epsilon^2} {\kappa_i}$, 
we define the subregion 
\begin{equation} \label{eq:randomUithing}
    U_{i,\rho_{0}}:= \{ \frac{\rho_i}{\kappa_i/2\pi} \leq \rho_0\} \subset U_i 
\end{equation}
and
\begin{equation} 
    UD_{\rho_{0}}:= \cup_i U_{i,\rho_{0}}.
\end{equation}

For any $\epsilon_1 \leq \sqrt{\rho_0}$ sufficiently small, we can associate a smoothing of the
hypersurface with corners, $\hatX:=\partial(M\setminus UD_{\epsilon_1^{2}})$
which depends on a vector of sufficiently real small numbers
$\vec{\epsilon}=(\epsilon_1,\epsilon_2,\epsilon_3)$. 
More precisely, let $q(s) = q_{\epsilon_1,\epsilon_2}(s): [0, \epsilon_1^2) \to \R$ be a non-negative function (implicitly depending on $\epsilon_1, \epsilon_2$) satisfying:
\begin{enumerate} 
    \item There is some $\epsilon_{2} \in (\epsilon_1-\epsilon_1^3/2,\epsilon_1)$ such that $q(s)=0$ iff $s \in [\epsilon_{2}^2,\epsilon_1^2)$.

    \item The derivative of $q(s)$ is strictly negative when $q(s) \neq 0$.

    \item $q(s)=1-s^2$ near $s=0$. 

    \item \label{itm: conv} There is a unique point $s=s_0$ with $q''(s_0)=0$ and $q(s_0) \neq 0$. 

\end{enumerate} 
(Compare \cite{McLean2}*{proof of Theorem 5.16} and \cite[\S 3.1]{GP1}) Define 
\begin{align}
    S_{\epsilon_1,\epsilon_2}(\rho_1,\cdots,\rho_k): UD_{\epsilon_1^{2}} &\to \mathbb{R}\\
    S_{\epsilon_1,\epsilon_2}(\rho_1, \cdots, \rho_k)&=\sum_i q(\frac{2 \pi \rho_i}{\kappa_i}) \label{Sfunction}.
\end{align} 
where we implicitly smoothly extend $q(\frac{2 \pi \rho_i}{\kappa_i})$ to be 0 outside of the region where $\rho_i$ is defined.
We let \begin{align} \label{eq: definitionsigma} \partial \bar{X}_{\vec{\epsilon}}= \Sigma_{\vec{\epsilon}}
:=S_{\epsilon_1, \epsilon_2}^{-1}(\epsilon_3) \end{align}  (Meaning $\bar{X}_{\vec{\epsilon}}$ is the region bounded by $\Sigma_{\vec{\epsilon}}$  for $\epsilon_3$ sufficiently small).
The fourth hypothesis implies that $q''(s)>0$ when $q$ is sufficiently small, which in turn implies that the function $S_{\epsilon_1,\epsilon_2}$ is convex near $S_{\epsilon_1,\epsilon_2}^{-1}(\epsilon_3)$; this will be useful in understanding the Reeb dynamics on the contact manifolds defined below.  Let $Z$ be the {\em Liouville vector field}, that is to say the
canonical vector field on $X$ determined by the equation: 
\begin{align}
    \iota_Z\omega = \theta 
\end{align} 
\begin{lem}
For $\epsilon_3$ sufficiently small the Liouville vector field $Z$ associated to
$\theta$ is strictly outward pointing along $\Sigma_{\vec{\epsilon}}$; in
particular $(\bar{X}_{\vec{\epsilon}}, \theta)$ is a Liouville domain.
\end{lem}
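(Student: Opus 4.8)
The plan is to verify the Liouville condition $\theta(Z)|_{\Sigma_{\vec{\epsilon}}} > 0$ (equivalently $dS_{\epsilon_1,\epsilon_2}(Z) > 0$ along the regular level set $S^{-1}_{\epsilon_1,\epsilon_2}(\epsilon_3)$, after checking $\epsilon_3$ is a regular value) by a direct local computation inside each tubular neighborhood $U_i$, using the explicit nice form of $\theta$ on the fibers of $\pi_I$ provided by Definition \ref{def:niceconvexstructure} and the standard symplectic model \eqref{localsymplecticformhighcodimension} guaranteed by the $\omega$-regularization.

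First I would fix a point $p \in \Sigma_{\vec\epsilon}$ and let $I = \{i : \rho_i(p) \text{ is defined and small}\}$ be the set of divisor-neighborhoods that $p$ lies in; on $U_I$ the symplectic form is $\omega_{\{(\rho_{I,i},\nabla_{I,i})\}} = \pi_I^*\omega_{D_I} + \tfrac12 \sum_{i\in I}\pi_{I,i}^* d(\rho_i \theta_{e,i})$. Since $S_{\epsilon_1,\epsilon_2} = \sum_i q(2\pi\rho_i/\kappa_i)$ is a function of the radial coordinates alone, Lemma \ref{nicehamiltonianvectorfields} tells us its Hamiltonian vector field is tangent to the fibers of $\pi_I$ and $d\rho_{j}(X_S) = 0$ for all $j$; I would next observe that the relevant computation of $dS(Z) = \theta(X_S)$ — using $\iota_Z\omega = \theta$, hence $dS(Z) = \omega(Z, X_S) \cdot(-1)\dots$ wait, more precisely $dS(Z) = \iota_{X_S}\theta$ after identifying via $\omega$ — reduces to a fiberwise calculation. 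On each fiber $F = \pi_I^{-1}(pt) \subset \prod_{i\in I}\mathbf{D}_\epsilon$, using \eqref{hamvectorfield} we have $X_S|_F = \sum_{i\in I} 2 \tfrac{\partial S}{\partial \rho_i}\partial_{\varphi_i} = \sum_{i\in I}\tfrac{4\pi}{\kappa_i} q'(2\pi\rho_i/\kappa_i)\,\partial_{\varphi_i}$, and $\theta|_F = \sum_{i\in I}(\tfrac12\rho_i - \tfrac{\kappa_i}{2\pi})d\varphi_i$, so
\[
    dS(Z) = \theta(X_S) = \sum_{i \in I} \frac{4\pi}{\kappa_i} q'\!\left(\tfrac{2\pi\rho_i}{\kappa_i}\right)\left(\frac{\rho_i}{2} - \frac{\kappa_i}{2\pi}\right) = \sum_{i\in I} q'\!\left(\tfrac{2\pi\rho_i}{\kappa_i}\right)\left(\frac{2\pi\rho_i}{\kappa_i} - 2\right).
\]
Writing $s_i = 2\pi\rho_i/\kappa_i$, each summand is $q'(s_i)(s_i - 2)$. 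By property (2) of $q$, $q'(s_i) < 0$ wherever $q(s_i) \neq 0$ (and on the smoothed level set $\Sigma_{\vec\epsilon}$ for $\epsilon_3 > 0$ small at least one $s_i$ has $q(s_i) \neq 0$), while $s_i \leq \epsilon_1^2 < 2$ makes $s_i - 2 < 0$; hence every summand is $\geq 0$ and the total is strictly positive. The same inequality shows $\epsilon_3$ is a regular value (the derivative of $S$ along $Z$ is nonzero) so $\Sigma_{\vec\epsilon}$ is a smooth hypersurface and $(\bar X_{\vec\epsilon},\theta)$ is a Liouville domain.

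The main obstacle — really the only subtlety — is bookkeeping at points lying in multiple tubular neighborhoods simultaneously (deep strata) and at points near the ``seam'' where one radial coordinate passes through the region where $q$ has been smoothly extended by zero: there one must confirm that the contributions from the various $U_i$ add coherently via the compatibility-with-substrata condition \eqref{eq:agreesubstrata} and the product-Hermitian property of the $D\psi_{I;I'}$, so that the global $\theta$ and the fiberwise formulas glue, and that the summand from any index $i$ with $q(s_i) = 0$ (so $q'(s_i) = 0$) simply drops out without spoiling positivity. One also needs to know that at least one index contributes nontrivially at each point of $\Sigma_{\vec\epsilon}$, which follows since $S = 0$ identically outside $UD_{\epsilon_1^2}$ and $\epsilon_3 > 0$; I expect this to be routine given the structure already set up, and the convexity point $s_0$ from property (4) plays no role here (it is reserved for the Reeb dynamics). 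No compactness or transversality input is needed — this is purely a pointwise differential-geometric verification.
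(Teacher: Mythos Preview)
Your argument is correct and is precisely the computation that the paper defers to \cite{GP1}*{Lemma 3.7}: use that $X_S$ is fiber-tangent (Lemma~\ref{nicehamiltonianvectorfields}) so that $dS(Z)=\theta(X_S)$ reduces to the fiberwise pairing $\sum_{i\in I} q'(s_i)(s_i-2)$, which is a sum of nonnegative terms with at least one strictly positive since $S=\epsilon_3>0$. Your bookkeeping remarks about deep strata and the role of property~(4) are also accurate.
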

\begin{proof}
    This is the content of \cite[Lemma 3.7]{GP1} and uses the ``nice'' property
    of $\theta$ and the $\omega$-regularization of $\mathbf{D}$ (specifically
    Lemma \ref{nicehamiltonianvectorfields}); see also \cite[Theorem
    5.16]{McLean2} for a similar discussion in the case of concave boundaries. 
\end{proof}

\begin{rem} 
    \label{rem: C0close} By choosing $\epsilon_3$ and $|\epsilon_2 - \epsilon_1|$
    sufficiently small, we can ensure that the rounding $\partial
    \bar{X}_{\vec{\epsilon}}$ is arbitrarily $C^0$ close to the original
    cornered domain $\hatX$, a fact which will allow us to obtain explicit
    estimates on the actions of Hamiltonian orbits.
\end{rem}
It follows also that $\Sigma_{\vec{\epsilon}} = \partial
\bar{X}_{\vec{\epsilon}}$ admits a contact structure with contact form $\alpha
= \theta|_{\Sigma_{\vec{\epsilon}}}$. We recall that any contact manifold
equipped with a contact form $(Y, \alpha)$ possesses a canonical {\em Reeb
vector field} $X_{Reeb}$ determined by $\alpha(X_{Reeb}) = 1$,
$d\alpha(X_{Reeb}, -) = 0$. The {\em spectrum} of $(Y, \alpha)$, denoted
$\operatorname{Spec}(Y)$ if $\alpha$ is implicit, is the set of real numbers
which are lengths of closed Reeb orbits of $X_{Reeb}$, and is discrete if
$\alpha$ is sufficiently generic. 

Any such choice of $\bar{X}_{\vec{\epsilon}}$ induces a Liouville
coordinate defined as 
\begin{equation}\label{eq:inducedliouvillecoordinate}
    R^{\vec{\epsilon}}(x) = e^t,
\end{equation}
where $t$
is the time it takes to flow along $Z$ from the hypersurface $\partial
\bar{X}_{\vec{\epsilon}}$ to $x$. Flowing for some small negative time $t_0^{\vec{\epsilon}}$ defines a collar neighborhood of $\partial \bar{X}_{\vec{\epsilon}} = \Sigma_{\vec{\epsilon}}$,
\[
    C(\Sigma_{\vec{\epsilon}}) \subset \bar{X}_{\vec{\epsilon}}.
\] 
Letting $X_{\vec{\epsilon}}^{o}$ denote the complement of this collar in the domain $\bar{X}_{\vec{\epsilon}}$
\[
    X_{\vec{\epsilon}}^o := \bar{X}_{\vec{\epsilon}} \setminus C(\Sigma_{\vec{\epsilon}}),
\]
we see that $R^{\vec{\epsilon}}$ may be viewed as a function 
$R^{\vec{\epsilon}}: X\setminus X_{\vec{\epsilon}}^{o} \to \mathbb{R}.$
As shown in \cite{GP1} (see Lemma 3.10 and the preceeding discussion), 
the fact that our convex symplectic structure is ``nice'' with respect to our fixed
regularization implies that $R^{\vec{\epsilon}}$ is a function of
$\rho_1,\cdots,\rho_k$
\[
    R^{\vec{\epsilon}}=R^{\vec{\epsilon}}(\rho_1,\rho_2,\cdots,\rho_k)
\]
(meaning in each $U_I$ it is a function of $\rho_i$ for $i \in I$)
and moreoever that $R^{\vec{\epsilon}}$ extends smoothly across the divisors $\mathbf{D}$, hence can be viewed
as a function (which by abuse of notation we use the same name for):
\begin{equation}
    R^{\vec{\epsilon}}: M\setminus X_{\vec{\epsilon}}^{o} \to \R.
\end{equation}
We say a function $h(r): \R \to \R_{\geq 0}$ is {\em linear adapted to $R^{\vec{\epsilon}}$ of slope $\lambda$} if
\begin{enumerate}
    \item $h$ vanishes for $r \leq e^{-t^{\vec{\epsilon}}_0}$ (where $t^{\vec{\epsilon}}_0$ is as before). 
    \item $(h) ' \geq 0$; 
    \item $(h) '' \geq 0$; and crucially, 
    \item (linearity at $\infty$ of slope $\lambda$)
        for some $K_{\vec{\epsilon}}$ 
        much closer to $1$ than $\operatorname{min}_{\D} R^{\vec{\epsilon}}$, \footnote{Recall $\operatorname{min}_{\D} R^{\vec{\epsilon}} > 1$, because $R^{\vec{\epsilon}} =1$ along $\partial \bar{X}_{\vec{\epsilon}}$.}
        \begin{align} \label{eq:linearity}
            h(r)=\lambda(r-1) \quad  \forall r \geq K_{\vec{\epsilon}} 
        \end{align}  
\end{enumerate}
Note that because of condition (1), the composition $h(R^{\vec{\epsilon}})$ is linear outside the compact subset of $X$ given by $X^o_{\vec{\epsilon}} \cup (R^{\vec{\epsilon}})^{-1}(-\infty, K_{\vec{\epsilon}}])$, and extends
smoothly to a Hamiltonian on all of $M$, which we also call $h$: 
\begin{equation}
    h:= h(R^{\vec{\epsilon}}): M \to \mathbb{R}^{\geq 0}.
\end{equation}
We often fix the $K_{\vec{\epsilon}}$ for which \eqref{eq:linearity} occurs,
and call it the {\em linearity level} of $h$. Often we also fix an auxiliary
$\mu_{\vec{\epsilon}} \in (K_{\vec{\epsilon}}, \operatorname{min}_{\D}
R^{\vec{\epsilon}})$ in order to define open neighborhoods of $\D$
$V_{0,\vec{\epsilon}} = (R^{\vec{\epsilon}})^{-1}(\mu_{\vec{\epsilon}}, \infty)$,
$V_{\vec{\epsilon}} = (R^{\vec{\epsilon}})^{-1}(K_{\vec{\epsilon}}, \infty)$ and
a ``(contact) shell'' region
\begin{equation}\label{eq:shell}
    V_{\vec{\epsilon}} \backslash V_{0,\vec{\epsilon}}
\end{equation}
along which we require almost complex structures to have a specified
(contact-type) form. We often also perform $C^2$-small (time-dependent) perturbations of the
functions $h$ to functions $H$, taking care that on some shell of the form
\eqref{eq:shell}, the function is unperturbed. 
If $\mathcal{L} X$ denotes the free loop space of $X$, and $H: S^1 \times X \to \R$ a (possibly time-dependent) Hamiltonian, recall that the {\em ($H$-perturbed) action functional} $A_{H}: \mathcal{L}X \to \mathbb{R}$ is defined to be 
\begin{align} \label{eq:action}
    A_{H}(x_0)= -\int_{S^1} x_0^*(\theta) + \int_0^{1} H(t,x_0(t)) dt 
\end{align}

Now we consider a sequence of these Liouville domains indexed by $\ell \in
\mathbb{Z}^{>0}$. Choose a sequence of integers $w_\ell \in \mathbb{Z}^{\geq 0}$ with
$w_{\ell+1}>w_\ell$ and fix for each $\ell$, a collection of parameters $\vec{\epsilon}_\ell$ needed to
define $\Sigma_{\vec{\epsilon}_{\ell}}$. Let
\begin{align} \label{eq:defnsigmaell} \Sigma_{\ell}:= \partial \bar{X}_{\ell}:= \Sigma_{\vec{\epsilon}_{\ell}} \end{align}  be the corresponding hypersurface, let
$\bar{X}_\ell$ be the corresponding domain bounded by this hypersurface, and let
$\epsilon_\ell:=\epsilon_{1,\ell}$. We assume that our choices are made so that 
\begin{equation}
    \bar{X}_{\ell_{1}} \subseteq \bar{X}_{\ell_{2}}\textrm{ whenever }\ell_1<\ell_2.
\end{equation}
Let $R^{\ell}$ denote the (induced by $\bar{X}_{\ell}$) Liouville coordinate as in \eqref{eq:inducedliouvillecoordinate}, and $X_{\ell}^o$ the complement in $\bar{X}_{\ell}$ of the time $-t_0^{\ell}$ flow collar (with respect to $Z$) of $\partial\bar{X}_{\ell}$. As before,
$R^{\ell}$ is a function $X \setminus X_{\ell}^o \to \R$ extending smoothly across the divisors $\D$ to a function
\begin{equation}
\rlm: M\setminus X_\ell^{o} \to \R,
\end{equation}
and moreoever is a function of the radial coordinates $\rho_1, \ldots, \rho_k$.
We further choose for each $\ell \in \Z$ a real number $\lambda_\ell \notin
\operatorname{Spec}(\partial \bar{X}_{\ell})$, satisfying
\begin{equation}
w_\ell <\lambda_\ell<w_{\ell}+1.
\end{equation}

Fix a collection of tuples $\{(w_{\ell}, \lambda_\ell, \bar{X}_\ell)\}_{\ell \in \Z_{> 0}}$ satisfying the
conditions of the previous paragraph. For each $\ell$, we choose a function
$\hlm(r): \R \to \R_{\geq 0}$ which is linear adapted to $R^{\ell}$ of slope $\lambda_\ell$ with linearity level $K_\ell$, in the sense defined earlier.
As before, the composition $\hlm(R^{\ell})$ extends
smoothly to a Hamiltonian on all of $M$, which we also call $\hlm$: 
\begin{equation}
\hlm:= \hlm(\rlm): M \to \mathbb{R}^{\geq 0}.
\end{equation}
Also as before, we choose an auxiliary constant $\mu_{\ell} \in (K_{\ell}, \operatorname{min}_{\D} \rlm)$, in order to define
two open sets $V_{0,\ell} \subset V_\ell \subset UD $ containing $\D$ by
\begin{align}
    V_{0,\ell}&:=(\rlm)^{-1}( \mu_{\ell} ,\infty), \\
    V_{\ell}&:=(\rlm)^{-1}(K_\ell,\infty) 
\end{align}
and the associated contact shell $V_\ell \setminus V_{0,\ell}$.  It is
straightforward to ensure (and henceforth we assume) that 
\begin{equation}\label{eq:disjointshells}
    \textrm{for $\ell \neq \ell'$, the contact shells' $V_{\ell} \backslash V_{0, \ell}$ and $V_{\ell'} \backslash V_{0,\ell'}$ are disjoint.}
\end{equation}

Compared to Section 3 of \cite{GP1}, we will shortly take $\bar{X}_\ell$ to be
an exhaustive family of Liouville manifolds for $X$ (that is let $\epsilon_\ell
\to 0$). Note that, in particular, the $C^0$-norm of Hamiltonian $\hlm$ can be
assumed arbitrarily small (for a given slope $\lambda_\ell$) by making
$\epsilon_\ell$ sufficently small and $K_\ell$ sufficiently close to 1. 
In fact,
along the divisors, we have the following estimate for $\hlm$:
\begin{lem}\label{lem:hamiltonianestimateonD}
    If $\Sigma_\ell$ is sufficiently $C^0$ close to $\hatXl$ (as in Remark
    \ref{rem: C0close}), then on $\D$ the following estimate holds:
\begin{align}
    \label{eq:estimateD} \hlm \approx
    \lambda_{\ell}(\frac{1}{1-\frac{1}{2}\epsilon_{\ell}^2}-1).
\end{align}
\end{lem}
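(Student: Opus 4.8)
The plan is to first pin down the value of the Liouville coordinate $\rlm$ along $\D$, and then to invoke the linearity of $\hlm$ at infinity. Recall from \cite[Lemma 3.10]{GP1} and the discussion recalled above that $\rlm$ is a smooth function of the radial coordinates $\rho_1,\dots,\rho_k$, extends smoothly across $\D$, and equals $1$ on $\Sigma_\ell=\partial\bar{X}_\ell$; and that the niceness of $\theta$ together with the $\omega$-regularization force the Liouville vector field $Z$ (with $\iota_Z\omega=\theta$) to have, in the fibre directions of $\pi_I$ near any stratum $D_I$, the form $\sum_{i\in I}\bigl(\rho_i-\tfrac{\kappa_i}{\pi}\bigr)\partial_{\rho_i}$ --- a direct computation from the local model \eqref{localsymplecticformhighcodimension} and Definition \ref{def:niceconvexstructure} (compare Lemma \ref{nicehamiltonianvectorfields}), the sign being such that the flow is outward-pointing near $\D$. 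Since $\rlm$ depends only on the $\rho_i$, this is all that enters the evolution of $\rlm$ along the Liouville flow; in particular each quantity $\tfrac{\kappa_i}{\pi}-\rho_i$ scales by the same factor as $\rlm$ along each flow line.

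Now fix a point $p\in\D$; it lies in some open stratum $\DIo$, so $\rho_i(p)=0$ for every $i\in I$. Flow backward from $p$ along $Z$ until the flow line meets $\Sigma_\ell$. Since $\tfrac{\kappa_i}{\pi}-\rho_i$ equals $\tfrac{\kappa_i}{\pi}$ at $p$ (where $\rlm=\rlm(p)$) and scales like $\rlm$ along the flow line, it equals $\tfrac{\kappa_i}{\pi}/\rlm(p)$ at the point where the flow line reaches $\Sigma_\ell$ (where $\rlm=1$); hence at that point
\[
\frac{2\pi\rho_i}{\kappa_i}\;=\;2\Bigl(1-\frac{1}{\rlm(p)}\Bigr)\qquad\text{for every }i\in I,
\]
a value independent of $i$. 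Since $\Sigma_\ell=S_{\epsilon_{1,\ell},\epsilon_{2,\ell}}^{-1}(\epsilon_{3,\ell})$ for the function \eqref{Sfunction}, and only the indices $i\in I$ contribute to $S_{\epsilon_{1,\ell},\epsilon_{2,\ell}}$ near $\DIo$ (the other $\rho_j$ staying bounded away from $0$), this point satisfies $|I|\cdot q\bigl(2(1-1/\rlm(p))\bigr)=\epsilon_{3,\ell}$. Because $q$ vanishes precisely on $[\epsilon_{2,\ell}^2,\epsilon_{1,\ell}^2)$, is strictly monotone where nonzero, and $\epsilon_{3,\ell}$ may be taken arbitrarily small, this forces $2(1-1/\rlm(p))$ to within $O(\epsilon_{3,\ell})$ of $\epsilon_{2,\ell}^2$; combined with $\epsilon_{1,\ell}=\epsilon_\ell$ and $|\epsilon_{2,\ell}-\epsilon_\ell|=O(\epsilon_\ell^3)$ --- so that $\Sigma_\ell$ is $C^0$-close to $\hatXl$, on which $\tfrac{2\pi\rho_i}{\kappa_i}=\epsilon_\ell^2$ near $\D$ --- we obtain
\[
\rlm\big|_{\D}\;\approx\;\frac{1}{1-\tfrac12\epsilon_\ell^2},
\]
with error controlled by the $C^0$-distance of $\Sigma_\ell$ to $\hatXl$ exactly as in Remark \ref{rem: C0close}, uniformly over $\D$ since $\rlm$ is continuous on the compact set $\D$ and the value above does not depend on the stratum.

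It remains to substitute this into $\hlm=\hlm(\rlm)$. Since $K_\ell$ is chosen much closer to $1$ than $\min_{\D}\rlm$, the value $(1-\tfrac12\epsilon_\ell^2)^{-1}$ lies above the linearity level $K_\ell$, so along $\D$ the Hamiltonian is given by the linear formula \eqref{eq:linearity}:
\[
\hlm\big|_{\D}\;=\;\lambda_\ell\bigl(\rlm|_{\D}-1\bigr)\;\approx\;\lambda_\ell\Bigl(\frac{1}{1-\tfrac12\epsilon_\ell^2}-1\Bigr),
\]
which is the asserted estimate. The main obstacle is the middle step --- making the estimate for $\rlm|_\D$ genuinely uniform over all of $\D$: one must check that the backward Liouville flow from any point of $\D$ stays inside the region where the local model \eqref{localsymplecticformhighcodimension} is valid before it reaches $\Sigma_\ell$, that the neighbourhoods $U_j$ with $j\notin I$ really contribute nothing to $S_{\epsilon_{1,\ell},\epsilon_{2,\ell}}$ near $\DIo$ and near the closures of strata, and that the smoothing parameters $\epsilon_{2,\ell},\epsilon_{3,\ell}$ enter only through the stated higher-order corrections --- all of which follow from $\Sigma_\ell$ being sufficiently $C^0$-close to $\hatXl$ as in Remark \ref{rem: C0close}, together with the explicit description of $\rlm$ in \cite[Lemma 3.10]{GP1}.
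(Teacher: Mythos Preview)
Your proof is correct and follows the same approach as the paper's: both reduce the estimate to computing $\rlm|_{\D}$ by integrating the vertical component $Z_{vert}=\sum_{i\in I}(\rho_i-\kappa_i/\pi)\partial_{\rho_i}$ of the Liouville flow between $\D$ and $\Sigma_\ell$, and then invoke linearity of $\hlm$. Your phrasing via the scaling of $\kappa_i/\pi-\rho_i$ along flow lines is equivalent to (and slightly slicker than) the paper's explicit ODE solution $\gamma(t)=\frac{\kappa_j}{\pi}(1-e^{-t})$, and your added remarks on uniformity across strata are a bit more careful than the paper's terse treatment.
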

\begin{proof}
    Given \eqref{eq:linearity}, the estimate \eqref{eq:estimateD} is equivalent
    to the following estimate for $R$ at points of $\D$:
    \begin{equation}\label{eq:estimateRD}
        R \approx \frac{1}{1 - \frac{1}{2}\epsilon_{\ell}^2}.
    \end{equation}
    To see this, let $p \in \D$ be any point; it is contained $D_I \subset
    U_I$ for some $I = \{i_1, \ldots, i_s\} \subset \{1, \ldots, k\}$. By the
    discussion in \cite[p. 15]{GP1}
    the time it takes for $p$ to flow by $-Z$ to $\Sigma_\ell$ is the time
    it takes for $p$ to flow by $-Z_{vert}$ to $\Sigma_\ell \cap U_I$, where
    $Z_{vert}$, the vertical component of $Z$ with respect to the symplectic
    fibration $\pi_I: U_I \to D_I$ is given by:
    \begin{equation}\label{eq:zvert}
        Z_{vert} = \sum_{i \in I} (\rho_i - \frac{\kappa_i}{\pi} ) \partial_{\rho_i}.
    \end{equation}
    This time now only (approximately) depends on a computation in the
    coordinates $(\rho_{i_1}, \ldots, \rho_{i_s}) \in \R^{|I|}$ as we 
    note that for any $x \in \Sigma_\ell \cap U_I \approx \hatXl \cap U_I$,
    $(\rho_{i_1}(x), \ldots, \rho_{i_s}(x)) \approx (\epsilon_\ell^2
    \frac{\kappa_{i_1}}{2\pi}, \ldots, \epsilon_\ell^2
    \frac{\kappa_{i_s}}{2\pi})$ (and $(\rho_{i_1}(p), \ldots, \rho_{i_s}(p)) = \mathbf{0}$). 
    Thus it (approximately) suffices to compute the (exponential of the) time
    it takes to flow in $\R^{|I|}$ from $\mathbf{0}$ to $(\epsilon_\ell^2
    \frac{\kappa_{i_1}}{2\pi}, \ldots, \epsilon_\ell^2 \frac{\kappa_{i_s}}{2\pi})$
    by $-Z_{vert}$;
    this can be done one component at a time as everything is split.  For $j
    \in I$, let $\gamma(t)$ be an integral curve of $-Z_{vert}^{j} = -(\rho_{j} -
    \frac{\kappa_{j}}{\pi} ) \partial_{\rho_j} $ starting at $0$, i.e.,
    $\gamma(t)$ solves the ODE $\gamma'(t) = -\gamma(t) +
    \frac{\kappa_{j}}{\pi}$ with initial condition $\gamma(0) = 0$, i.e.,
    $\gamma(t) = -\frac{\kappa_{j}}{\pi} e^{-t} + \frac{\kappa_{j}}{\pi}$. We
    find $\gamma(t) = \epsilon_\ell^2 \frac{\kappa_{j}}{2\pi}$ precisely when $e^t
    = \frac{1}{1-\frac{1}{2}\epsilon_\ell^2}$. 
\end{proof}

Because the Hamiltonian flow of $\hlm$ preserves $\D$, the time-1 orbits
of this flow are either completely contained in $\D$ or completely
contained in $X$ (in fact $X\setminus V_{\ell}$, in light of the form
\eqref{eq:linearity} of $\hlm$ in $V_{\ell} \backslash \D$.). We will refer to
the set of (time-1) orbits contained in $\D$ as {\em divisorial orbits} and
denote them by  
\begin{equation}
\mathcal{X}(\D; \hlm)
\end{equation}
and all other (time-1) orbits by  
\begin{equation}
\mathcal{X}(X; \hlm). 
\end{equation}
We first describe the orbits of the Hamiltonian flow of $\hlm$ inside of
$M\setminus V_{\ell}$. Let 
\begin{equation}
    R_{0,\ell} := \max \{ \rlm\ |\ \hlm(\rlm) = 0\}.
\end{equation}
be the largest value of $\rlm$ for which $\hlm (\rlm)=0$. The time-1 orbits of
the Hamiltonian $\hlm$ lying in the region $M \setminus V_{\ell}$ come in two types of families. The
first is
the set of constant orbits which is the complement 
\begin{equation}\label{constantorbits}
    \mathcal{F}_{\mathbf{0}}:= M\setminus \lbrace \rlm \geq R_{0,\ell} \rbrace.
\end{equation}
This is a manifold with boundary. 

The second type of Hamiltonian orbit corresponds to non-constant orbits.
By Lemma \ref{nicehamiltonianvectorfields},
over each stratum $U_I$ the Hamiltonian flow is tangent to the
fibers of the projections $ \pi_I: U_I \to D_I$ 
and has the explicit form
\begin{align} \label{Xhlm}
    X_{\hlm}=\sum_{i \in I} 2 \frac{\partial \hlm}{\partial \rho_i}\partial_{\varphi_i}.
\end{align} 
 The
 orbits of \eqref{Xhlm} in $U_I$ correspond to (possibly multiply-covered) circles in any fiber of $\pi_I$ where
\begin{align} 
    X_{\hlm}=\sum_{i \in I} -2 \pi v_i\partial_{\varphi_i}
\end{align}  
for an integer vector $\v = (v_1, \ldots, v_k) \in \mathbb{Z}^k_{\geq
0}$ which has non-zero $i$th component precisely when
 $i \in I \subset \{1, \ldots, k\}$; in other words, wherever $[\frac{\partial
\hlm}{\partial \rho_i}]_{i=1}^k=-\pi\v$. These sets, denoted by
\begin{equation} \label{eq: orbitsetsF}
    \mathcal{F}_{\v}
\end{equation}
are connected in view of item \eqref{itm: conv} in the definition of ($q$ and
hence) $S_{\epsilon_1,\epsilon_2}$ and are homeomorphic to 
manifolds-with-corners. For proofs of these assertions, see Step 2 of the proof of Theorem 5.16 of \cite{McLean2}.    The {\em multiplicity vector associated to an orbit $x_0$},
\[
    \v(x_0),
\]
is the unique $\v$ so that $x_0 \in \mathcal{F}_{\v}$ (where $\v = \mathbf{0}$ if $x_0 \in \mathcal{F}_{\mathbf{0}}$).
We define the 
{\em weighted winding number} of a multiplicity vector $\v$ to be
\begin{equation}\label{eq:windingvec}
    w(\v) := \sum_{i \in \{1, \ldots, k\}} \kappa_i v_i = \sum_{i \in I = \mathrm{support}(\v)} \kappa_i v_i,
\end{equation}
and the weighted winding number of an orbit $x_0$ to be
\begin{equation}\label{eq:weightedwindingnumber}
    w(x_0):= w(\v(x_0))
\end{equation}
(note these are integers). The weight $w(\v)$ will appear geometrically in our
setup as the limiting action of (meaning integral of $\theta$ over) any
sufficiently small loop in $X$ winding $\v_i$ times around $D_i$ (compare
\cite[Lemma 2.11]{GP1} or Lemma \ref{lem:energypss} below). 

We can make all of the orbits nondegenerate by a $C^2$ small time-dependent
perturbation $\Hlm: M \to \mathbb{R}$. Describing careful choices of
$C^2$ small time-dependent perturbations for the divisorial orbits as well as
near orbits in $X$ would take us on a small detour and so we postpone this to
\S \ref{subsection:spectral}. For now we just state the two most important
properties of these $C^2$-small perturbations: 
\begin{itemize} 
    \item The perturbation is disjoint from $V_{\ell} \setminus V_{0,\ell}$ and inside of $M\setminus V_{\ell}$, it is supported in the disjoint union of small isolating neighborhoods $U_\v$ of the orbit sets $\mathcal{F}_{\v}$.  

    \item The Hamiltonian flow of $\Hlm$  preserves each divisor $D_i$.
\end{itemize}
As before we let 
\begin{equation} \label{eq:divisorially}
    \mathcal{X}(\D; \Hlm)
\end{equation}
denote the time-1 orbits of
$\Hlm$ contained in $\D$ and let
\begin{equation}
    \mathcal{X}(X; \Hlm).
\end{equation}
denote the remaining time-1 orbits of $\Hlm$, which are disjoint from $\D$ (and lie in $X \backslash V_\ell$).
Let $A_{\ell}:=A_{H^{\ell}}$ denote the ($H^{\ell}$-perturbed) action functional as in \eqref{eq:action}
 If we assume that $\Sigma_\ell$ is taken sufficiently $C^0$ close to $\hatXl$,
 we also obtain good estimates on the actions of our Hamiltonian orbits:

\begin{lem} \label{lem: sharpactions}
   By taking $\Sigma_\ell$ sufficiently $C^0$ close to $\hatXl$, $K_{\ell}$ sufficiently close to 1 and $t^{\vec{\epsilon}_\ell}_0$ sufficiently small, the action of each orbit set $\mathcal{F}_\v$ can be made arbitarily close to 
    \begin{align} \label{eq: action} 
        A_\ell (x_0) \approx -w(x_0)(1- \epsilon_\ell^2/2) 
    \end{align}  
\end{lem}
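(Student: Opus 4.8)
The plan is to evaluate $A_\ell(x_0)$ for $x_0\in\mathcal{F}_{\v}$ essentially on the nose, using the niceness of $\theta$ to handle the $\theta$-term and the location of $\mathcal{F}_{\v}$ together with the smallness of $\hlm$ to handle the Hamiltonian term. Fix $x_0\in\mathcal{F}_{\v}$ with $\v\neq\mathbf{0}$ and $I=\mathrm{supp}(\v)$; then $x_0$ is a time-$1$ orbit of $X_{\hlm}$ inside a single fiber $\pi_I^{-1}(p)\subset U_I$, with radial coordinates $\rho_i$ ($i\in I$) constant along it (Lemma~\ref{nicehamiltonianvectorfields}(4)). By niceness (Definition~\ref{def:niceconvexstructure}), $\theta$ restricted to this fiber is $\sum_{i\in I}(\tfrac12\rho_i-\tfrac{\kappa_i}{2\pi})\,d\varphi_i$, and by \eqref{Xhlm} together with the definition of $\mathcal{F}_{\v}$ one has $\dot x_0=\sum_{i\in I}(-2\pi v_i)\partial_{\varphi_i}$, so $\int_{S^1}x_0^*(d\varphi_i)=-2\pi v_i$. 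Plugging into \eqref{eq:action} and using that the $C^2$-small perturbation $\Hlm-\hlm$ can be taken $C^0$-negligible,
\begin{equation*}
    A_\ell(x_0)\;\approx\;-w(\v)\;+\;\pi\sum_{i\in I}\rho_i v_i\;+\;\hlm\big(\rlm(x_0)\big).
\end{equation*}
It then suffices to show, as $\Sigma_\ell\to\hatXl$ (so $\epsilon_{2,\ell}\to\epsilon_\ell$ and $\epsilon_{3,\ell}\to 0$), $K_\ell\to 1$ and $t_0^{\vec{\epsilon}_\ell}\to 0$, the two limits \emph{(i)} $\hlm(\rlm(x_0))\to 0$ and \emph{(ii)} $\pi\sum_{i\in I}\rho_i v_i\to\tfrac{\epsilon_\ell^2}{2}\sum_{i\in I}\kappa_i v_i=\tfrac{\epsilon_\ell^2}{2}\,w(\v)$.

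For \emph{(i)}: since $\v\neq\mathbf{0}$, $X_{\hlm}\neq 0$ along $x_0$, so $(\hlm)'(\rlm)\neq 0$ there, which forces $\rlm(x_0)>R_{0,\ell}$; also $\mathcal{F}_{\v}$ is disjoint from $V_\ell$ and lies near $\D$, so $\rlm(x_0)\in[R_{0,\ell},K_\ell]$, on which $0\le\hlm\le\lambda_\ell(K_\ell-1)\to 0$. (For $\v=\mathbf{0}$, $\mathcal{F}_{\mathbf{0}}\subset\{\rlm<R_{0,\ell}\}$, where both $\int_{S^1}x_0^*\theta$ and $\hlm$ vanish, so $A_\ell(x_0)\to 0=-w(\mathbf{0})(1-\epsilon_\ell^2/2)$, matching the claim.)

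Estimate \emph{(ii)} is the heart of the argument. From $\rlm(x_0)\in[R_{0,\ell},K_\ell]$ and $R_{0,\ell},K_\ell\to 1$, the orbit $x_0$ becomes arbitrarily $C^0$-close to $\Sigma_\ell=\{\rlm=1\}$; one must identify the limiting point inside $\Sigma_\ell$. Because $x_0$ winds nontrivially around $D_i$ for \emph{every} $i\in I$, \eqref{Xhlm} forces $\partial\rlm/\partial\rho_i\neq 0$ at $x_0$ for all $i\in I$. Now $\rlm$ is a function of the $\rho_i$ on $U_I$ which extends across $\D$ and is constant along the Liouville flow of $Z$; since the vertical part of $Z$ on $U_I$ is $Z_{vert}=\sum_{i\in I}(\rho_i-\tfrac{\kappa_i}{\pi})\partial_{\rho_i}$ (cf.\ \eqref{eq:zvert}), $\rlm$ satisfies $\sum_{i\in I}(\rho_i-\tfrac{\kappa_i}{\pi})\,\partial_{\rho_i}\rlm=\rlm$ and is reconstructed from the defining function $\sum_i q(2\pi\rho_i/\kappa_i)$ by flowing characteristics back to $\Sigma_\ell$; consequently $\partial\rlm/\partial\rho_i\neq 0$ can only occur where each $\rho_i$ lies in the window on which $q$ is strictly decreasing. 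Hence $x_0$ limits to the deepest corner of the rounded region, a point of $\Sigma_\ell$ with $\sum_{i\in I}q(2\pi\rho_i/\kappa_i)=\epsilon_{3,\ell}$ and all summands strictly positive; as $\epsilon_{3,\ell}\to 0$ each summand tends to $0$, forcing each argument $2\pi\rho_i/\kappa_i$ toward the zero $\epsilon_{2,\ell}^2$ of $q$, so $\rho_i(x_0)\to\epsilon_\ell^2\kappa_i/(2\pi)$ (using $|\epsilon_{2,\ell}-\epsilon_\ell|\to 0$). Multiplying by $\pi v_i$ and summing gives \emph{(ii)}, and combining \emph{(i)}, \emph{(ii)} yields $A_\ell(x_0)\to -w(\v)+\tfrac{\epsilon_\ell^2}{2}w(\v)=-w(\v)(1-\epsilon_\ell^2/2)$.

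The step I expect to be the main obstacle is \emph{(ii)}: pinning down the Morse--Bott families $\mathcal{F}_{\v}$ precisely enough to see that they concentrate at the multi-corner of $\Sigma_\ell$ with each $\rho_i\to\epsilon_\ell^2\kappa_i/(2\pi)$. This leans on the detailed analysis of the Hamiltonian/Reeb dynamics near $\D$ from \cite{McLean2} and on the convexity condition \eqref{itm: conv} built into $q$. A convenient repackaging of the above is the closed form $A_\ell(x_0)=\hlm(R_*)-R_*\,(\hlm)'(R_*)$ with $R_*:=\rlm(x_0)$, obtained by combining the niceness identity with $\sum_{i\in I}(\rho_i-\tfrac{\kappa_i}{\pi})\,\partial_{\rho_i}\rlm=\rlm$ and the orbit equation $(\hlm)'(\rlm)\,\partial_{\rho_i}\rlm=-\pi v_i$; from this one needs only $R_*\to 1$ and $R_*\,(\hlm)'(R_*)=w(\v)-\pi\sum_{i\in I}\rho_i v_i\to w(\v)(1-\epsilon_\ell^2/2)$, which is again \emph{(ii)}.
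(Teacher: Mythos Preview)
Your proof is correct and follows essentially the same approach as the paper: both split the action into the Hamiltonian term and the $\theta$-integral, bound the former using $\rlm(x_0)\in[R_{0,\ell},K_\ell]$ with $K_\ell\to 1$, compute the latter using the nice form of $\theta$ on fibers, and then argue that $\rho_i\to\epsilon_\ell^2\kappa_i/(2\pi)$. The paper's proof is considerably more terse on this last point---it simply asserts that by taking $t_0^{\vec{\epsilon}_\ell}$ small and $K_\ell$ close to $1$ one can arrange $\rho_i\approx(\kappa_i/2\pi)\epsilon_\ell^2$---whereas you supply an actual mechanism (the orbit must sit where every $\partial\rlm/\partial\rho_i\neq 0$, hence in the window where each $q$ is strictly decreasing, forcing concentration at the deepest corner as $\epsilon_{3,\ell}\to 0$). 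Your closed-form repackaging $A_\ell(x_0)=\hlm(R_*)-R_*(\hlm)'(R_*)$ is also a nice addition not present in the paper.
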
 

\begin{proof} 
   The orbit set $\mathcal{F}_\v$ lies in the region where $R^{\ell} \leq K_{\ell}.$ By assuming $K_\ell$ is sufficiently close to 1, the Hamiltonian term in can be taken arbitrarily small and so we focus on the contact term. Because the orbits lie in the fibers, the action may be calculated by:
    \begin{align} 
        -\sum_i \int_{x_{0}} (\frac{1}{2}\rho_i-\frac{\kappa_i}{2\pi}) d\varphi_i  \\
        =-\sum_i (\frac{1}{2}\rho_i-\frac{\kappa_i}{2\pi}) (-2\pi v_i) 
    \end{align} 
   By taking  $t^{\vec{\epsilon}_\ell}_0$ and $K_\ell$ close to zero, we also have that $\rho_i$ can be taken arbitrarily close to $(\frac{\kappa_i}{2\pi})(\epsilon_\ell)^2$ thus completing the computation.  
 \end{proof} 

 \begin{defn}\label{defn:complexint} 
Define $\mathcal{J}(M,\D)$ to be the space of $\omega$-tamed almost complex structures $J$ which preserve $\D$  and such that 
\begin{enumerate} 
    \item for any $i \in \lbrace 1,\cdots, k \rbrace$, $p \in D_i$, and tangent vectors $\eta_1,\eta_2 \in T_pM$, the Nijenhuis tensor $N_J(\eta_1,\eta_2) \in T_p D_i$. 
    
\end{enumerate}
\end{defn}

\begin{defn} For any choice of Liouville domain and shells $\bar{X}_{\vec{\epsilon}}, V_{\vec{\epsilon}}, V_{0,\vec{\epsilon}}$,
    define $\mathcal{J}(\bar{X}_{\vec{\epsilon}}, V) \subset \mathcal{J}(M,\D)$ to be the space of
    $\omega$-compatible almost complex structures which are of contact type on
    the closure of $V_{\vec{\epsilon}} \backslash V_{0,\vec{\epsilon}}$, meaning on this region 
    \begin{equation}
        \theta \circ J = -dR^{\vec{\epsilon}}.
    \end{equation}
For the case when $\bar{X}_{\vec{\epsilon}}=\bar{X}_\ell, V_{\vec{\epsilon}}= V_\ell, V_{0,\vec{\epsilon}}=V_{0,\ell}$,
we simplify the notation by 
$$\mathcal{J}_\ell(V):= \mathcal{J}(\bar{X}_{\ell}, V).$$ 
\end{defn}

It follows by standard arguments that these spaces are non-empty (see e.g.,
\cite{Ionel:2011fk}*{\S A}) and contractible.

\subsection{Floer cohomology} \label{subsection:Floercohom}
Choose a Hamiltonian 
$\Hlm$ for each $\ell$, by perturbing the Hamiltonians $h^\ell$ as in the previous section. 
For each Hamiltonian orbit $x \in \mathcal{X}(X; \Hlm)$, a
choice of trivialization $\gamma$ of $x^*(TX)$ determines a 1-dimensional real vector
space 
\begin{equation}\label{eq:detline}
    \mathfrak{o}_x,
\end{equation}
the {\em determinant line} associated to a local Cauchy-Riemann operator
$D_\gamma$. Implicitly this depends on the choice of trivialization $\gamma$,
but for notational simplicity we remove this ambiguity (assuming $c_1(X) = 0$) by working in the $\Z$-graded context: 
we fix a holomorphic volume form $\Omega_{M, \mathbf{D}}$ on $M$ which is
non-vanishing on $X$, 
and to define \eqref{eq:detline} choose the unique trivialization $\gamma$ of
$x^*TX$ compatible with the trivialization of $\Lambda_{\C}^n T^*X$ induced by
$\Omega_{M, \D}$.
Recall also that the {\em $\K$-normalization} of any vector space $W$, denoted
\begin{equation}
    |W|,
\end{equation} 
is the free $\K$ module generated by the set of orientations of $W$, modulo the
relation that the sum of the orientations vanishes. We call $|\mathfrak{o}_x|$
the {\em orientation line} associated to $x$, and define 
\begin{align} \label{eq:Floerrelcoc}
    CF^*(X \subset M;\Hlm) := \bigoplus_{x \in \mathcal{X}(X; \Hlm)} |\mathfrak{o}_x|. 
\end{align} 

\begin{defn} \label{defn: S1acdef} Let $\mathcal{J}_{F}(M,\D)$ denote the space of $S^1$ dependent complex structures, $\mathcal{C}^\infty(S^1; \mathcal{J}(M,\D))$, and let $\mathcal{J}_{F,\ell}(V) \subset \mathcal{J}_{F}(M,\D)$ denote the space $\mathcal{C}^\infty(S^1; \mathcal{J}_{\ell}(V))$. 
\end{defn}

Choose a generic $S^1$-dependent almost complex structure $J_F \in \mathcal{J}_{F,\ell}(V)$. For pairs of orbits $x_0$, $x_1 \in  \mathcal{X}(X; \Hlm)$, let
$\widetilde{\mathcal{M}}(x_0,x_1)$ denote the moduli space of {\em Floer
trajectories} between $x_1$ and $x_0$, namely the space of solutions to the
following PDE with asymptotics:
\begin{align} \label{eq:FloerRinv}
\left\{
\begin{aligned}
 & u \colon \mathbb{R} \times S^1 \to X, \\
& \lim_{s \to -\infty} u(s, -) = x_0\\
& \lim_{s \to +\infty} u(s, -) = x_1 \\
 &  \partial_s u + J_{F}(\partial_tu-X_{\Hlm})=0.
\end{aligned}
\right.
\end{align}
For generic $J_F$, $\widetilde{\mathcal{M}}(x_0,x_1)$ is a manifold of dimension
\[ 
    \deg(x_0) - \deg(x_1) 
\]
where $\deg(x)$, the index of the local operator $D_x$ associated to $x$, is
equal to $n - CZ(x)$, where $CZ(x)$ is the Conley-Zehnder index of
$x$ 
\cite{Floer:1995fk}. 
There is an induced $\mathbb{R}$-action on the
moduli space $\widetilde{\mathcal{M}}(x_0,x_1)$ given by translation in the $s$-direction,
which is free for non-constant solutions. Whenever $\deg(x_0) - \deg(x_1) \geq 1$, 
for a generic $J_F$ the quotient space 
\begin{equation}\label{eq:modulispacetraj}
    \mathcal{M}(x_0, x_1) := \widetilde{\mathcal{M}}(x_0,x_1)/ \mathbb{R}
\end{equation}
is a manifold of dimension $\deg(x_0) - \deg(x_1) - 1$. 
Whenever $\deg(x_0) - \deg(x_1) = 1$ (and $J_F$ is generically chosen), standard Gromov compactness arguments show that
the moduli space \eqref{eq:modulispacetraj} is compact of dimension 0, provided
elements of $\widetilde{\mathcal{M}}(x_0, x_1)$ have image contained in some
compact subset $K \subset X$ only depending on $x_0$ and $x_1$. In turn this
latter a priori $C^0$ bound (away from $\D$) is a consequence of standard
maximum principle arguments, which prevent Floer solutions from crossing the
region where the Hamiltonian has the special form \eqref{eq:linearity} and
$J_F$ is also contact type; see e.g., \cite[Lem. 7.2]{Abouzaid:2010ly}.
Now orientation theory associates, to every rigid element 
$u \in \mathcal{M}(x_0,x_1)$
an isomorphism of orientation lines $\mu_u: \mathfrak{o}_{x_1}
\stackrel{\sim}{\ra} \mathfrak{o}_{x_0}$ and hence an induced map $\mu_u:
|\mathfrak{o}_{x_1}| \ra |\mathfrak{o}_{x_0}|$. 
Using this, one defines the $|\mathfrak{o}_{x_1}| - |\mathfrak{o}_{x_0}|$
component of the differential
\begin{equation} 
    (\partial_{CF})_{x_1, x_0}  = \sum_{u \in \mathcal{M}(x_0,x_1)} \mu_u
\end{equation} 
whenever $\deg(x_0) = \deg(x_1) + 1$. 
A standard analysis of the boundary of the (compactified by adding broken
trajectories) 1-dimensional components of \eqref{eq:modulispacetraj} implies
that $\partial_{CF}^2 = 0$.\footnote{Once more, this requires establishing that
relevant Floer trajectories are a priori bounded away from $\D$, which is a
consequence of the maximum principle mentioned earlier.}
We define $HF^*(X\subset M; \Hlm)$ to be the cohomology of the complex 
$(CF^*(X \subset M; \Hlm),\partial_{CF})$. 
\begin{lem}\label{indJshellchange}
   Fix a Liouville domain $\bar{X}_\ell$ as above: then $HF^*(X \subset M; \Hlm)$ is independent of \begin{itemize} \item the choice of $\Hlm$ satisfying \eqref{eq:linearity} with fixed slope $\lambda_\ell$ and $K_\ell$ as well as the choice of (generic) $J_F$
    in $\mathcal{J}_{F,\ell}(V)$ (with respect to a fixed contact shell). \item Moreover, it is independent of the choice of $K_\ell$ arising in the definition of $\Hlm$ and the choice of contact shell $V_{\ell} \setminus V_{\ell, 0}$ arising in the definition of $\mathcal{J}_{F, \ell}(V).$
\end{itemize}
\end{lem}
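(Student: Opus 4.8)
The plan is to verify each independence assertion by constructing a continuation map, showing it is an isomorphism via the standard ``up-and-down'' composition argument, and — crucially — checking that every Floer solution entering these arguments obeys an a priori $C^0$ bound keeping it away from $\mathbf{D}$ (so that the maximum-principle machinery quoted above applies). The point is that, since $\bar{X}_\ell$ (hence $R^\ell$, hence the linearity region of $\hlm$) is fixed throughout, all the Hamiltonians and almost complex structures under comparison are contact-type of the \emph{same} slope $\lambda_\ell$ on a common neighborhood of $\mathbf{D}$, so the relevant integrated maximum principle (e.g. \cite[Lem.~7.2]{Abouzaid:2010ly}) yields a uniform compact subset $K \subset X$ containing all continuation trajectories, depending only on $\bar{X}_\ell$, $\lambda_\ell$ and the asymptotic orbits.

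First I would treat the top bullet. For two choices $(\Hlm_0, J_{F,0})$ and $(\Hlm_1, J_{F,1})$ with the same slope $\lambda_\ell$ and the same $K_\ell$ (and fixed contact shell), choose a homotopy $(\Hlm_s, J_{F,s})$, $s \in \R$, that is $s$-independent near $s = \pm\infty$, interpolates monotonically (with $\partial_s \Hlm_s \leq 0$ in the radial direction, which is possible since both endpoints have the identical slope there), and is unperturbed and contact-type on the fixed shell $V_\ell \setminus V_{0,\ell}$ for all $s$. Counting rigid solutions of the associated $s$-dependent Floer equation defines a continuation map $CF^*(X \subset M; \Hlm_0) \to CF^*(X \subset M; \Hlm_1)$; the monotonicity of $\partial_s \Hlm_s$ together with the fact that $J_{F,s}$ is contact type on the shell gives the maximum principle, hence the a priori $C^0$ bound and Gromov compactness. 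A standard argument (composing the continuation map with its reverse, and with the one induced by a contractible loop of homotopies) shows it is a chain homotopy equivalence; this uses contractibility of $\mathcal{J}_{F,\ell}(V)$ and the space of admissible $\Hlm$ of fixed slope. So $HF^*$ is independent of these choices.

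Next, the bottom bullet. Changing $K_\ell$ (within its allowed range, keeping $\lambda_\ell$ fixed): two such Hamiltonians differ only in a compact region, and one may again interpolate monotonically between them through Hamiltonians of slope $\lambda_\ell$ — the key is that we never change the asymptotic slope, so no new orbits appear ``at infinity'' near $\mathbf{D}$ and the maximum principle still holds on whatever shell we keep fixed during the homotopy. Changing the contact shell $V_\ell \setminus V_{0,\ell}$ (equivalently, the choice of $\mu_\ell$): shrinking or enlarging the shell enlarges or shrinks $\mathcal{J}_{F,\ell}(V)$, and one compares by a homotopy of almost complex structures that is contact-type on the \emph{intersection} of the two shells; since the intersection is still a nonempty contact-type collar of a level set of $R^\ell$, the maximum principle continues to confine trajectories. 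Here one should note the mild subtlety that when the shell moves one may need to simultaneously adjust $K_\ell$ (so that $K_\ell < \mu_\ell < \min_{\mathbf{D}} R^\ell$ persists), but that is handled by combining this step with the previous one. I would then invoke the already-established first bullet to conclude that the resulting groups are canonically identified.

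The main obstacle is not the homological algebra — that is entirely standard once the maps are defined — but the uniform $C^0$ confinement of continuation trajectories away from $\mathbf{D}$ throughout each homotopy. This requires verifying that the homotopies can be chosen to keep $J_{F,s}$ contact-type on a fixed (or slowly varying) shell and to keep $\partial_s \Hlm_s$ of the correct sign in the radial coordinate $R^\ell$, which in turn rests on the structural facts established earlier: that $R^\ell$ is a genuine function of the radial coordinates $\rho_i$ extending smoothly across $\mathbf{D}$, and that $\hlm$ is linear of slope $\lambda_\ell$ beyond $K_\ell$. Granting those, the integrated maximum principle applies uniformly in $s$, and all the compactness and gluing inputs behave exactly as in the $X$-internal Floer theory already set up above.
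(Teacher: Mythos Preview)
Your argument for the first bullet is correct and matches the paper's (which simply calls it ``standard''). For the second bullet, your approach via continuation maps is valid in spirit but differs from the paper's and has a small gap worth noting.

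The paper's argument is sharper: it observes that since $J_F$ is contact type on the shell starting at $R^\ell = K_\ell$, the maximum principle confines all Floer cylinders to the region $\{R^\ell \le K_\ell\}$. Consequently, shrinking the shell (i.e., moving $\mu_\ell$ closer to $K_\ell$) or modifying $J_F$ anywhere in $\{R^\ell > K_\ell\}$ leaves the chain complex \emph{literally unchanged}, not merely quasi-isomorphic via a continuation map. This lets the paper compare two disjoint shells $S$ and $T$ (arranged so that $T$ lies entirely in $\{R^\ell > \mu_\ell\}$, outside $S$) by first making $J$ contact type on both simultaneously --- a change that does nothing to the complex since it occurs beyond where curves can go --- and only then invoking the first bullet to drop the contact-type condition on $S$.

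Your proposal instead runs a continuation homotopy that is contact type on the \emph{intersection} of the two shells. The gap is that this intersection can be empty: two shells $(K_\ell,\mu_\ell)$ and $(K_\ell',\mu_\ell')$ with $\mu_\ell < K_\ell'$ are disjoint. You could patch this by moving through a chain of overlapping intermediate shells, but the paper's confinement observation avoids this entirely and is both cleaner and stronger (chain-level bijection rather than homotopy equivalence). Either route ultimately works; the paper's buys you a more direct identification.
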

\begin{proof}
The first assertion is a consequence of standard arguments. The second is too,
but we give a brief discussion: first, the maximum principle implies there is a
bijection of chain complexes if we shrink the shell region along which
$J_{F,\ell}$ is contact type to $V_{\ell} \setminus \tilde{V}_{\ell, 0}$
(the point being that Floer cylinders for any such $J$ cannot even cross $\partial
V_{\ell}$). Next, given two different shells $ S = V_{\ell} \setminus V_{\ell, 0}$
and $T = V_{\ell}' \setminus V_{\ell, 0}'$, if $V_{\ell} = V_{\ell}'$, the
shrinking argument implies we are done; otherwise, we can shrink $V_{\ell, 0}$,
$V_{\ell, 0}'$ so that the two shell regions are disjoint, without loss of
generality say $V_{\ell, 0}' \subset V_{\ell}' \subset V_{\ell, 0} \subset
V_{\ell}$. Now, starting with a $J$ which is contact type for $S$, we can
simultaneously make it contact type for $T$ without changing the Floer complex
at all (as Floer cylinders don't cross $S$). Finally, we can turn off the
contact type condition on $S$, which could possibly change the Floer complex on the
chain level, but does not change the cohomology, by the first assertion in the Lemma.
\end{proof}

For any $w$, let $\FwCF$ denote the sub-$\K$-module generated by those orbits
$\mathcal{X}(X; \Hlm)_{\leq w}$ with $w(x_0) \leq w$: 
\begin{align}
    \label{filteredsubcomplex}
    \FwCF:= \bigoplus_{x \in \mathcal{X}(X; \Hlm)_{\leq w}} |\mathfrak{o}_x| 
\end{align} 
It follows from Equation \eqref{eq: action} and the well known fact that $\partial_{CF}$ strictly increases 
action that 
\begin{lem}\label{lem:filteredsubcomplex}
    If $\epsilon_\ell$ is sufficiently small, $\Sigma_\ell$ is sufficiently $C^0$ close to
    $\hatXl$, and if $C^2$ small perturbations are used when defining $\Hlm$, then the differential $\partial_{CF}$ preserves the submodule $\FwCF$.  In particular, \eqref{filteredsubcomplex} is filtration of chain complexes.\qed
\end{lem}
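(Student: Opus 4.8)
The plan is to show that every Floer trajectory contributing to $\partial_{CF}$ strictly increases the weighted winding number $w(x_0)$, so that the submodule $\FwCF$ generated by orbits with $w(x_0) \leq w$ is preserved. The starting point is the standard energy identity: for a Floer trajectory $u \in \widetilde{\mathcal{M}}(x_0, x_1)$ solving \eqref{eq:FloerRinv}, one has
\[
    0 \leq E(u) = \int |\partial_s u|^2 = A_{H^\ell}(x_0) - A_{H^\ell}(x_1),
\]
so $A_\ell(x_0) \geq A_\ell(x_1)$, with equality only on constant cylinders (which are excluded when $\deg(x_0) = \deg(x_1) + 1$). Thus the differential strictly increases the action functional $A_\ell$. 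The point is then to leverage the \emph{sharp} action estimate of Lemma \ref{lem: sharpactions}, namely $A_\ell(x_0) \approx -w(x_0)(1 - \epsilon_\ell^2/2)$, to convert the (a priori only real-valued) statement ``$A_\ell$ increases'' into the integer-valued statement ``$w$ decreases'' — recalling that in cohomological conventions the differential raises degree and action, which corresponds to \emph{decreasing} $w(x_0)$ since $A_\ell$ is a negative multiple of $w$. Concretely, if $A_\ell(x_0) > A_\ell(x_1)$ then $-w(x_0)(1-\epsilon_\ell^2/2) + \delta_0 > -w(x_1)(1-\epsilon_\ell^2/2) + \delta_1$ where $|\delta_j|$ are the (controllable) errors in Lemma \ref{lem: sharpactions}; rearranging gives $(w(x_1) - w(x_0))(1 - \epsilon_\ell^2/2) > \delta_1 - \delta_0$.

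First I would fix, once and for all, a lower bound $\kappa_{\min} := \min_i \kappa_i \geq 1$ on the jump in $w$ between orbit sets of distinct weighted winding number: since $w(\v) = \sum_i \kappa_i v_i$ takes integer values and any two orbits with $w(x_0) \neq w(x_1)$ have $|w(x_0) - w(x_1)| \geq 1$, the quantity $(w(x_1) - w(x_0))(1 - \epsilon_\ell^2/2)$ is either $0$ or has absolute value at least $(1 - \epsilon_\ell^2/2) \geq 1/2$. Next, I would invoke Remark \ref{rem: C0close} together with Lemma \ref{lem: sharpactions}: by choosing $\Sigma_\ell$ sufficiently $C^0$-close to $\hatXl$, $K_\ell$ close to $1$, and $t_0^{\vec\epsilon_\ell}$ small, the errors $|\delta_j|$ in the approximation $A_\ell(x_j) \approx -w(x_j)(1-\epsilon_\ell^2/2)$ can be made smaller than, say, $1/8$ uniformly over the (finitely many families of) orbits. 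Finally, the requirement that $C^2$-small perturbations are used when passing from $h^\ell$ to $H^\ell$ guarantees the perturbed action differs from the unperturbed one by at most another small quantity, also absorbable into the error budget. With these choices, if $w(x_1) \leq w(x_0)$ — i.e., $w(x_1) - w(x_0) \leq 0$ — then $A_\ell(x_0) - A_\ell(x_1) \leq (w(x_1)-w(x_0))(1 - \epsilon_\ell^2/2) + |\delta_0| + |\delta_1| + (\text{perturbation error}) \leq 0 + 1/2 < 1$... no: I would instead phrase it contrapositively. If a nonconstant Floer trajectory from $x_1$ to $x_0$ exists, then $A_\ell(x_0) > A_\ell(x_1)$, hence $w(x_1) - w(x_0) > (\text{total error})/(1 - \epsilon_\ell^2/2) > -1$; since this difference is an integer, $w(x_1) - w(x_0) \geq 0$, i.e. $w(x_0) \leq w(x_1)$. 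Therefore if $x_1 \in \mathcal{X}(X;H^\ell)_{\leq w}$ then $x_0 \in \mathcal{X}(X;H^\ell)_{\leq w}$, which is exactly the statement that $\partial_{CF}$ preserves $\FwCF$.

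The main obstacle is purely bookkeeping: making sure the various error terms — the $C^0$-approximation error in Lemma \ref{lem: sharpactions}, the Hamiltonian-term contribution (controlled by taking $K_\ell$ close to $1$), the contribution of $t_0^{\vec\epsilon_\ell}$, and the $C^2$-perturbation error — can all be simultaneously bounded below the threshold $\tfrac{1}{2}(1 - \epsilon_\ell^2/2)$ (or any constant strictly less than $1$), \emph{uniformly} over the finitely many orbit families $\mathcal{F}_{\v}$ that occur for the fixed slope $\lambda_\ell$. Since there are only finitely many such families (the winding $w(\v)$ is bounded above in terms of $\lambda_\ell$ and the geometry of $R^\ell$) and each error tends to $0$ as the relevant parameter is tuned, this uniformity is automatic; the lemma then follows, and the fact that a filtration of chain complexes results is immediate from $\FwCF \subseteq \Fww{w'}CF$ for $w \leq w'$. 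I would also remark that the strict inequality in ``$\partial_{CF}$ strictly increases action'' is only needed to handle the boundary case $w(x_0) = w(x_1)$ harmlessly — even orbits of equal winding are permitted in the same filtration level, so in fact only the non-strict inequality $A_\ell(x_0) \geq A_\ell(x_1)$ from the energy identity is truly required.
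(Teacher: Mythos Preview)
Your proposal is correct and follows precisely the approach indicated by the paper: the paper's entire argument is the single sentence preceding the lemma, ``It follows from Equation \eqref{eq: action} and the well known fact that $\partial_{CF}$ strictly increases action,'' and you have simply unpacked this in detail --- using the energy identity for the action increase, the sharp estimate of Lemma~\ref{lem: sharpactions}, and the integrality of $w$ to absorb the small errors. Your elaboration of the error bookkeeping (finitely many orbit families, uniform smallness of the approximation, perturbation, and Hamiltonian errors) is exactly the content hidden behind the paper's ``sufficiently small'' hypotheses.
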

We let $\FwHF$ denote the filtration on $HF^*(X\subset M; \Hlm)$ induced by
the cochain level filtration $\FwCF$. Throughout the rest of the paper,
\begin{equation}
    \textrm{Choose $\epsilon_\ell$, $\Sigma_\ell$, and $\Hlm$ so that Lemma \ref{lem:filteredsubcomplex} holds for each $\ell \in \mathbb{N}$.}
\end{equation}
The $\epsilon_\ell$ guaranteed by the proof of Lemma \ref{lem:filteredsubcomplex} tend to zero as $\ell \to \infty$
and the $\bar{X}_\ell$ form an exhaustive family of domains.
We next turn to defining continuation maps: 
\begin{equation}\label{eq:continuation}
    \mathfrak{c}_{\ell_1,\ell_2}: HF^*(X \subset M; \Hm^{\ell_1})  \to HF^*(X
    \subset M ; \Hm^{\ell_2}) 
\end{equation}
(note we will also use $\mathfrak{c}_{\ell_1, \ell_2}$ to refer to the
chain-level maps). 
Typically, continuation maps between Floer complexes
associated to Hamiltonians $H_a$ and $H_b$ are given by counting solutions to
Floer's equation with respect to a domain dependent Hamiltonian (and complex
structure) varying between $H_a$ and $H_b$ (as well as the respective complex
structures). 
Of course, in this non-compact setting, some care must be taken to ensure that
solutions are $C^0$ bounded away from $\D$, and hence that the requisite Gromov
compactness results hold. The present situation requires slightly subtler arguments than usual (in which one notes that a maximum principle holds
provided the surface dependent Hamiltonian is monotone), in light
of the fact that even when the slope of $H_a$ is less than the slope of $H_b$,
it may not be possible to ensure monotonicity of the interpolation. To
constrast the situation above with the usual
construction of symplectic cohomology, or rather its variant given in
\cite{GP1}, recall that in {\em loc. cit.} we fixed once and for all a single
Liouville domain $\overline{X}_{\vec{\epsilon}_{0}} \subset X$ (which
determined a single function $R: M \backslash X^o_{\vec{\epsilon}_0} \to \R$),
and considered
\begin{itemize}
    \item functions $G^\ell$ which are $C^2$ small perturbations 
         of the functions $g^{\ell}$ which are linear adapted to $R$ of slope $\lambda_{\ell}$, and moreoever, say, equal to 
$\lambda_\ell h^{1}(\rm)$ for a fixed $h^1(\rm)$ linear adapted to $R$ of slope 1; and

    \item (sufficiently generic) $S^1$-dependent almost complex structures $J_t \in \mathcal{J}(M,\D)$ which are contact type with respect to the function $R$ on a fixed shell $V \backslash V_0$, where $V = R^{-1}(K, \infty)$ and $V_0 = R^{-1}(\mu, \infty)$ for some $\mu$.
\end{itemize}
This data defines a chain complex $CF^*(X \subset M; G^{\ell})$ as above (and note
that $R$ and $J$ do {\em not} depend on $\ell$).
Continuation maps for any $\ell_1 \leq \ell_2$ were constructed by counting
solutions to Floer's equation with respect to a {\em monotone} homotopy of
Hamiltonians $G_{s,t}$ between $G^{\ell_1}$ and $G^{\ell_2}$ (i.e. a family of functions satisfying
$\partial_sG_{s,t} \leq 0$) which take on the standard form above $R=K$, 
\begin{align}\label{eq: stndform} 
     G_{s,t}= \lambda_s(\rm-1),
\end{align} 
for $\lambda_s$ a monotone homotopy between $\lambda_{\ell_{1}}$ and
$\lambda_{\ell_{2}}$, as well as generic families of almost complex structures
satisfying the same conditions as above. 
In light of the standard form \eqref{eq: stndform} and the contact-type condition of the almost
complex structures chosen, and monotonicity, solutions of the continuation map equation satisfy a maximum
principle (see e.g., \cite[Lem. 7.2]{Abouzaid:2010ly}), implying by the usual
analysis that the counts of such solutions give a chain map.
Using this system of maps, we set
\[
SH^*(\overline{X}_{\vec{\epsilon}}):=\varinjlim_{\ell} HF^*(X \subset M; G^{\ell}).
\]

The situation we will need to consider in the present paper somewhat more
delicate because each $H^{\ell}$ is constructed using a different
$\bar{X}_{\ell}$ (and hence different radial function $R^{\ell}$) for a
sequence of exhaustive domains $\bar{X}_\ell$.
Hence the standard form \eqref{eq: stndform} for a single function $R$ (on a
region where $J$ is also contact type for the same $R$) may be impossible to
arrange; i.e., it may not be possible to construct strictly monotone homotopies
for pairs $\ell_1 < \ell_2$.  Nevertheless, we can use homotopies which are
``monotone up to a sufficiently small error", as we now describe. We will
forget for a moment about our family of $H^{\ell}$ chosen and give slightly
more general criteria under which a continuation map exists.

To do so, let $\bar{X}_{a}$ and $\bar{X}_{b}$ denote any pair of Liouville domains in $X$
constructed as in the previous section using parameters $\vec{\epsilon}_a$,
$\vec{\epsilon}_b$; and let $R^a$ and $R^b$ be respective induced Liouville
coordinates as in \eqref{eq:inducedliouvillecoordinate}. For simplicity, we assume that $\partial \bar{X}_a$ and $\partial \bar{X}_b$ are disjoint,
so that one domain strictly contains the other. We let $R^{\operatorname{out}}, K^{\operatorname{out}}$ denote the Liouville coordinate and  $K_{\vec{\epsilon}}$ constant corresponding to the bigger domain.

Pick Hamiltonians $h^a$, $h^b$ which are linear adapted to $R^a$
respectively $R^b$ of (generic) slope $\lambda^a$ respectively $\lambda^b$,
with linearity levels $K^a$ and $K^b$ respectively, and 
denote by $H^a$ and $H^b$ $C^2$-small perturbations of these respective
Hamiltonians so that all orbits are non-degenerate, and so that the perturbation is
trivial on certain contact shells $V_{a}\setminus V_{a,0}$ and $V_{b}\setminus
V_{b,0}$. The discussion so far gives, for generic time-dependent almost
complex structures $J_t^a, J_t^b \in \mathcal{J}(M, \D)$ (which are contact
type on the respective shell-regions) Floer complexes $CF^*(X \subset M, H^a)$
and $CF^*(X \subset M, H^b)$ respectively. Using the analysis in Lemma
\ref{indJshellchange}, it is safe to assume that \begin{itemize} \item the $K_{\vec{\epsilon}}$ (hence $V_{\vec{\epsilon}}$) and $V_{0,\vec{\epsilon}}$ on the inner domain have been shrunk so that the contact shell for the inner domain
is disjoint from the contact shell for the outer domain. \end{itemize} 

Now, let $\rho(s)$ be a non-negative, monotone non-increasing cutoff function such that 
\begin{equation} \label{eq:cuttoff}
    \rho(s) = \begin{cases} 0 & s \gg 0 \\ 1 & s \ll 0 \end{cases}
\end{equation} 
Set
\begin{align} 
    \label{eq: concretehomotopy} H_{s,t}= (1-\rho(s))\Hm^{a} +\rho(s)\Hm^{b} 
\end{align} 
Up to a small perturbation near $\D$, we claim that  
\begin{align}\label{eq:hcond1} 
    \theta(X_{H_{s,t}})&=H_{s,t}+\lambda_s, \\
    \label{eq:hcond2}  d\rm^{\out}(X_{H_{s,t}}) &= 0 
\end{align} 
whenever 
$\rm^{\out} \geq K^{\out}$, where 
\begin{equation}\label{lambdas}
    \lambda_s := (1-\rho(s)) \lambda_{a} + \rho(s) \lambda_{b}
\end{equation}
is monotone if $\lambda_b \geq \lambda_a$.
The first equation \eqref{eq:hcond1} is a basic consequence of linearity
\eqref{eq:linearity} of both Hamiltonians and the fact that for any radial
coordinate $R$ as above, $\theta(X_{H(R)}) = \omega(X_H, Z) = dH(Z) = RH'(R)$.
The second equation \eqref{eq:hcond2} follows from the fact that in each $U_I$ above $R^{\out} = K^{\out}$, $H_{s,t}$ only
depends on the radial coordiantes $\rho_i$ for $i \in I$, given that it is a linear function of $R^{a}$ and
$R^{b}$ in this region, hence $df(X_{H_{s,t}}) = 0$ is zero for any function
$f$ of $\{\rho_i\}_{i \in I}$
(see Lemma \ref{nicehamiltonianvectorfields} item (4)).

Let $J_{s,t}$ be a generic compatible $\R \times S^1$ dependent almost complex
structure which \begin{itemize} \item is of contact type on the outer contact shell for all $s,t.$ \item agrees at $s=\pm \infty$ with the choices of $J^a_t$ and $J^b_t$ defined earlier. \end{itemize}
If $x_1$ is an orbit of $\Hm^{a}$ and $x_2$ is an orbit of
$\Hm^{b}$, let $\mathcal{M}_s(x_2,x_1)$ denote the moduli space of maps
$u: \R \times S^1 \ra X$ satisfying Floer's equation for 
$H_{s,t}$ and $J_{s,t}$:
\[
    \partial_s u + J_{s,t} (\partial_t u -  X_{H_{s,t}}) = 0
\]
which in addition satisfy requisite asymptotics:
\begin{align} \label{eq: continuationmaps}
\left\{
\begin{aligned}
 & \lim_{s \to -\infty} u(s, -) = x_2\\
 & \lim_{s \to +\infty} u(s, -) = x_1
\end{aligned}
\right.
\end{align}  
As usual, one defines the $|\mathfrak{o}_{x_1}|\! -\! |\mathfrak{o}_{x_2}|$ component
of the continuation map \eqref{eq:continuation} by counting rigid elements $u
\in \mathcal{M}_s(x_2,x_1)$ (for a suitably generic $J_{s,t}$).

 To establish the necessary estimates, note that up to arbitrarily small error, 
we have that for any map $u: \mathbb{R}\times S^1 \to M$ and any closed $\bar{S} \subset
\mathbb{R}\times S^1$, 
\begin{align}\label{eq: errorterm} 
    \int_{\bar{S}} u^*(\partial_s H_{s,t}) dsdt \leq \operatorname{sup} (\Hm^{a}-\Hm^{b}) < \operatorname{sup} (\Hm^{a}).
\end{align}  We now establish the necessary Gromov compactness result for these continuation solutions:

 \begin{lem} 
     \label{lem: homotopycomp} 
     Let $H_{s,t}$ be as above, let $\deg(x_2)-\deg(x_1) \leq 1$, and suppose that either
     \begin{itemize}
         \item[(a)] (strict monotonicity at $\infty$) $\lambda_b \gg \lambda_a$ (meaning specifically that $\lambda_b > \lambda_a\frac{R^a-1}{R^b-1}$) on the region $R^{\out} \geq K^{\out}$ and the action of $x_1$ with respect to $A_{a} :=
             A_{H^a}$ (recall  \eqref{eq:action}) satisfies  \begin{align} -A_a(x_1) < \lambda_a. \end{align}  
 
         \item[(b)] (monotonicity of slopes with bounded error) $\lambda_b \geq
             \lambda_a$ and
             \begin{align} \label{actionx1estimate}
                 -A_{a}(x_1)+ \operatorname{sup} (\Hm^{a})<\lambda_{a}.
         \end{align} 
     \end{itemize}
 Let $ \overline{\mathcal{M}}_s(X; x_2,x_1)$ denote the Gromov compactification
 of $\mathcal{M}_s(x_2,x_1)$ in $M$. Then, \begin{itemize} \item if $\deg(x_2)-\deg(x_1)=0$, $\overline{\mathcal{M}}_s(X; x_2,x_1)=\mathcal{M}_s(X;x_2,x_1)$ i.e. the moduli space is compact. \item if  $\deg(x_2)-\deg(x_1)=1$, $\partial
 \overline{\mathcal{M}}_s(X;x_2,x_1)= \partial_1
 \overline{\mathcal{M}}_s(x_2,x_1) \cup \partial_2
 \overline{\mathcal{M}}_s(x_2,x_1)$ where 
 \begin{align} \partial_1 \overline{\mathcal{M}}_s(x_2,x_1)= \bigsqcup_{y, \deg(y)-\deg(x_1)=1}
     \mc{M}(x_2,y) \times \mc{M}_s(y, x_1) \\
     \partial_2\overline{\mathcal{M}}_s(x_2,x_1) = \bigsqcup_{y, \deg(y)-\deg(x_1)=0}
     \mc{M}(x_2,y) \times \mc{M}_s(y, x_1) 
 \end{align}  
\end{itemize}
 \end{lem}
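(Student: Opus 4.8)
This is the standard compactness package for continuation-map moduli spaces in a symplectic manifold convex at infinity, the only new feature being that the homotopy $H_{s,t}$ of \eqref{eq: concretehomotopy} need not be monotone in $s$, so that the usual $C^0$-confinement must be extracted from action bookkeeping --- which is exactly the role of hypotheses (a) and (b). I would begin with the uniform energy estimate: for $u\in\mathcal{M}_s(x_2,x_1)$ the geometric energy equals the topological energy
\[
E(u)= A_b(x_2)-A_a(x_1)+\int_{\R\times S^1}u^*(\partial_s H_{s,t})\,ds\,dt ,
\]
so that by \eqref{eq: errorterm} one has $E(u)\le A_b(x_2)-A_a(x_1)+\sup(H^a)$, uniformly in $u$. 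With this bound, Gromov compactness applies: any sequence in $\mathcal{M}_s(x_2,x_1)$ has a subsequence converging, \emph{a priori in $M$}, to a stable configuration of Floer and continuation cylinders in $M$ together with holomorphic sphere bubbles in $M$.

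The heart of the argument is ruling out escape of $u$ into a neighborhood of $\D$, which simultaneously forbids bubbling. I would show every $u\in\mathcal{M}_s(x_2,x_1)$ has image in the fixed compact set $K\subset X$ given by the complement in $M$ of the open region $\{R^{\out}>K^{\out}\}$. Since both asymptotic orbits already lie in $K$, it suffices to prevent $R^{\out}\circ u$ from exceeding a value $c$ slightly above $K^{\out}$. On $\{R^{\out}\ge K^{\out}\}$ the vector field $X_{H_{s,t}}$ has the form \eqref{eq:hcond1}--\eqref{eq:hcond2} and $J_{s,t}$ is of contact type with respect to $R^{\out}$; when $\lambda_s$ is monotone this is precisely the setting of the usual maximum principle (cf.\ \cite[Lem.\ 7.2]{Abouzaid:2010ly}), which already excludes an interior maximum of $R^{\out}\circ u$ there. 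In the non-monotone case I would instead run an integrated maximum principle in the spirit of Abouzaid--Seidel: apply Stokes' theorem to $u^*\omega$ over $\Sigma_c:=u^{-1}(\{R^{\out}\ge c\})$, on whose boundary $R^{\out}\circ u\equiv c$, and bound the resulting boundary integral using \eqref{eq:hcond1}--\eqref{eq:hcond2} and the fact that $\lambda_s$ interpolates between $\lambda_a$ and $\lambda_b$. Under hypothesis (a) the precise strict-slope inequality on $\{R^{\out}\ge K^{\out}\}$ together with $-A_a(x_1)<\lambda_a$ makes this boundary integral strictly negative, contradicting $\int_{\Sigma_c}u^*\omega\ge 0$; under hypothesis (b), where slope monotonicity holds only up to the error $\sup(H^a-H^b)$, the strengthened estimate \eqref{actionx1estimate} is exactly what absorbs the error and yields the same contradiction. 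Hence $\Sigma_c=\emptyset$ and $u$ lands in $K$.

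Granting the $C^0$-bound, the remainder is routine. Every component of a Gromov limit has image in $\overline{K}\subset X$; since $(X,\theta)$ is exact, Stokes' theorem forbids nonconstant $J$-holomorphic spheres in $X$, so there is no sphere bubbling and no limiting component maps into $\D$. The only possible degenerations are thus breakings of Floer/continuation cylinders in $X$, and because the continuation equation carries no $\R$-translation symmetry, a limiting configuration retains exactly one continuation cylinder while the $\R$-invariant equations for $H^a$ and $H^b$ may break off at the respective ends. Together with transversality for generic $J_{s,t}$ and standard gluing, this yields compactness of $\mathcal{M}_s(X;x_2,x_1)$ when $\deg(x_2)-\deg(x_1)=0$ and the claimed boundary decomposition $\partial\overline{\mathcal{M}}_s(X;x_2,x_1)=\partial_1\overline{\mathcal{M}}_s(x_2,x_1)\cup\partial_2\overline{\mathcal{M}}_s(x_2,x_1)$ when $\deg(x_2)-\deg(x_1)=1$. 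The one genuinely delicate point, as flagged above, is the integrated maximum principle of the middle paragraph: it is the sole place where non-monotonicity of $H_{s,t}$ enters, and it is why the two action hypotheses are imposed.
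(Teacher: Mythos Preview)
Your outline is the right shape, but the core step --- the integrated maximum principle in the middle paragraph --- does not work as stated. For a smooth solution $u\in\mathcal{M}_s(x_2,x_1)$ both asymptotics lie below the shell, so $\Sigma_c=u^{-1}(\{R^{\out}\ge c\})$ is compact with $\partial\Sigma_c$ lying entirely on the level $R^{\out}=c$. Applying Stokes over $\Sigma_c$ therefore produces no term involving $A_a(x_1)$ at all; the estimate reduces to
\[
0\le E_{geo}(u|_{\Sigma_c})\le(\text{boundary term along }\partial\Sigma_c)+\int_{\Sigma_c}\partial_sH_{s,t}\,ds\,dt,
\]
and in case (b) the last integral is only bounded by $\sup(H^a)$, which is no contradiction. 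So hypothesis \eqref{actionx1estimate} cannot be ``absorbed'' into this estimate the way you claim. (In case (a) your confinement does go through, but simply because strict monotonicity on $\{R^{\out}\ge K^{\out}\}$ makes $\partial_sH_{s,t}\le 0$ there and the usual maximum principle applies; the action bound plays no role at that point.)

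The paper's argument is structured differently, and this is precisely where the action hypothesis enters. One takes the Gromov compactification in $M$, not in $X$, and the key claim is that no broken limit can involve an orbit $y\in\mathcal{X}(\D;H^a)$. If it did, there would be a Floer cylinder $u_1\in\mathcal{M}(y,x_1)$ and a continuation piece $u_2\in\overline{\mathcal{M}}_s(x_2,y)$; one then runs Stokes over the region $\tilde{\bar S}:=u_2^{-1}(\{R^{\out}\ge K^{\out}\})\cup\operatorname{dom}(u_1)$, whose boundary now consists of $\partial\bar S$ on the shell together with the asymptote $x_1$. The term $-A_a(x_1)$ genuinely appears, and combined with a $-\lambda_a$ contribution coming from Stokes applied to $\lambda_s\,dt$ (using $\int_y\lambda_s\,dt=\lambda_a$ at the $s\to+\infty$ end of $u_2$), the inequality $-A_a(x_1)+\{0,\sup(H^a)\}<\lambda_a$ forces $E_{geo}(u_2|_{\bar S})\le 0$, a contradiction since $y\in\D$ while $\partial\bar S$ sits on the shell. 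Once divisorial breaking is excluded, sphere bubbling is then ruled out not by exactness of $X$ (which is unavailable until confinement is known) but by positivity of intersection with $\D$: the total intersection of the limit with each $D_i$ is zero and every Floer component contributes nonnegatively, so any sphere bubble --- which has positive intersection with some $D_i$ --- would force a negative contribution from a Floer piece.
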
 

 \begin{proof}
Gromov compactness in $M$ implies that the Gromov-Floer compactification of
these moduli spaces as maps to $M$ are compact. One needs to show therefore that
this compactification only contains broken solutions in $X$ (not intersecting
$\D$), i.e., elements of these moduli spaces do not limit to broken solutions
intersecting $\D$  (After this, standard transversality and gluing arguments
imply the desired result). 

The argument, like several others in this paper, follows the pattern of
\cite[Lemma 4.13]{GP1}. Namely, we first make the key claim that 
\begin{itemize}
    \item[(i)]a broken Floer solution
cannot break along an orbit in $\mathcal{X}(\D,\Hlm)$. 
\end{itemize}
If this is true, it follows
that any broken Floer curve $\tilde{u}$ in $M$ in the limit of trajectories above has (all
asymptotics in $X$ and hence) a well-defined total topological intersection
number with $\D$, equal to $0$ (the original intersection number) and additive
over its components, and positive over all components not completely contained in $\D$ (which includes all Floer trajectories), see e.g. Lemma 4.13 of
 \emph{loc. cit.}. 
If there were holomorphic sphere bubbles in $\tilde{u}$, the disjoint union of
all of these bubbles necessarily give an $H_2$ class which has positive
symplectic area hence positive intersection with some $D_i$, implying 
the remaining broken Floer trajectory must have negative intersection number
with $D_i$, a contradiction. So,
\begin{itemize}
    \item[(ii)] No holomorphic sphere bubbles can form in the limit of broken Floer trajectories.
\end{itemize}
Finally, the remaining broken Floer trajectories satisfy positivity of
intersection with $\D$, hence (since the total intersection number is 0) do not
intersect $\D$ as desired.

 So it remains to show (i). For example suppose that there is 
 $y \in \mathcal{X}(\D, \Hm^{a})$
 together with a sequence in
 $\overline{\mathcal{M}}_s(X; x_2,x_1)$ limiting to a configuration $u_1 \in
 \mathcal{M}(y,x_1)$ and $u_2 \in \overline{\mathcal{M}}_s(x_2,y)$. Consider
 the piece of the curve 
 $\bar{S}:=u_2^{-1}((R^{\out})^{-1}[K^{\out},\infty))$ 
    which lives above the slice where $R^{\out} = K^{\out}$ (note in this region, we have both
    $R^{a} \geq K^{a}$ and $R^{b} \geq K^{b}$ and both of our functions are linear with respect to their respective coordinates).
    Then, if $\tilde{\bar{S}}$ denotes the union of $\bar{S}$ with the domain
    of $u_1$, the geometric energy of $\bar{S}$ (see \cite{GP1} or the discussion around
    \eqref{eq:geo} below 
    for a review of this concept) satisfies the following inequality
 \begin{align} 
      \label{eq:homeest} E_{geo}(u_2|_{\bar{S}}) &\leq E_{geo}(u|_{\tilde{\bar{S}}}) \\
     &\leq E_{top}(u|_{\tilde{\bar{S}}}) + \int_{\tilde{\bar{S}}} u^* (\partial_s H_{s,t}) ds dt \\ 
     \label{eq:stokes}& = -A_{a}(x_1) + \int_{\partial \bar{S}}u^*\theta- H_{s,t}dt + \int_{\bar{S}} u^* (\partial_s H_{s,t}) ds dt \\
     \label{eq:twocases}&\leq -A_{a}(x_1)+ \{0,\operatorname{sup} (\Hm^{a})\} +\int_{\partial \bar{S}}u^*\theta- H_{s,t}dt \\ 
     \label{eq:finalest216} &= -A_{a}(x_1)+ \{0,\operatorname{sup} (\Hm^{a})\} + \int_{\partial \bar{S}}u^*\theta-\theta(X_{H_{s,t}})dt + \int_{\partial \bar{S}}\lambda_s dt 
 \end{align} 
 where $E_{top}$ denotes the {\em topological energy} of a map, defined in
 \cite{GP1} or \eqref{eq:top} below, \eqref{eq:stokes} follows from Stokes' theorem\footnote{Let $\underline{S}:= u_2 \setminus \bar{S}$. By Stokes, $E_{top}(\underline{S})=A_a(x_2)-(\int_{\partial \bar{S}}u^*\theta- H_{s,t}dt)$. As the topological energy of $u_1 \cup u_2$ is $A_{a}(x_2)-A_{a}(x_1)$, the equation holds.}, and the terminology $\{0,\operatorname{sup}
 (\Hm^{a})\}$ in \eqref{eq:twocases} means one should use $0$ in case (a) of
 the Lemma (by strict monotonicity of $H_{s,t}$ in the region $R^{\out} \geq K^{\out}$ in
 this case) and $\operatorname{sup} \Hm^a$ in case (b) by \eqref{eq:
 errorterm}.  Going forward we will just assume that term is
 $\operatorname{sup} \Hm^a$, as that case is strictly more difficult.

By Stokes' theorem we have that 
\begin{align} 
    \int_{\partial \bar{S}}\lambda_s dt= -\int_{y}\lambda_sdt + \int_{\bar{S}} d(\lambda_s dt) \leq -\lambda_{a}  
\end{align} 
where the last inequality used $\lambda_a \leq \lambda_b$ and monotonicity of $\lambda_s$, as defined in \eqref{lambdas}.
Therefore it follows from \eqref{actionx1estimate} 
that 
\begin{align}\label{eq:keyinequality}  
    -A_{a}(x_1)+ \operatorname{sup} (\Hm^{a}) + \int_{\partial \bar{S}}u^*\theta-\theta(X_{H_{s,t}})dt + \int_{\partial \bar{S}}\lambda_s dt \leq \int_{\partial \bar{S}}u^*\theta-\theta(X_{H_{s,t}})dt 
\end{align} 
The rest proceeds as in the proof of Lemma 4.13 of \cite{GP1} or \cite[Lem.
7.2]{Abouzaid:2010ly}, where it is shown that, under the hypotheses of $J$ being contact type along $R^{\out} = K^{\out}$, 
this last expression is non-positive and so $u|_{\bar{S}}$ must have 0 energy
and hence be constant, a contradiction.  
\end{proof} 

Returning to our system of Hamiltonians (and Liouville domains, etc.)
$H^{\ell}$, \eqref{eq: action} shows that by taking $\epsilon_\ell$
sufficiently small (and $\Sigma_\ell$ taken sufficiently $C^0$ close to
$\hatXl$), we may also assume that 
\begin{align} 
    \mathcal{F}_\v \in
    \mathcal{X}(X;\hlm) \ \text{whenever} \ w(\v) \leq  w_\ell; \textrm{ and} \\
    \lambda_{\ell}(1-\epsilon_{\ell}^2)> w(\v_\ell)(1-\epsilon_\ell^2/2)^2.
\end{align} 
We do this throughout the rest of the paper. In view of the
estimate \eqref{eq:estimateD}, Lemma \ref{lem: homotopycomp}
and the above inequality then implies that \eqref{actionx1estimate} holds for
all orbits
$x_1$, 
hence implies the existence of continuation maps
$\mathfrak{c}_{\ell_1,\ell_2}$ between our Hamiltonians $H^{\ell_1}$ and
$H^{\ell_2}$ as desired.

A standard elaboration of the above argument then shows that continuation maps
compose as expected (homologically). Also, any such continuation map from a
chain complex to itself is homologically the identity. In particular
\begin{cor}
When $\epsilon_\ell$ is sufficiently small (and as usual
$\Sigma_\ell$ is sufficiently $C^0$ close to $\hatXl$),  $HF^*(X\subset M;
\Hlm)$ is independent of $\epsilon_\ell$. \qed
\end{cor}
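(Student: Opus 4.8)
The plan is to exhibit, for any two sufficiently small values $\epsilon_\ell$ and $\epsilon_\ell'$, a canonical isomorphism between the corresponding Floer cohomologies, built from mutually inverse continuation maps. Write $\bar X_a$ and $\bar X_b$ for the Liouville domains associated to $\epsilon_\ell$ respectively $\epsilon_\ell'$ (as in the previous subsection), and $H^a, H^b$ for Hamiltonians adapted to them. First I would fix a single slope $\lambda \in (w_\ell, w_\ell+1)$, chosen generically so that $\lambda \notin \operatorname{Spec}(\partial\bar X_a) \cup \operatorname{Spec}(\partial\bar X_b)$, and take both $H^a$ and $H^b$ to have this common slope. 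If $\partial\bar X_a$ and $\partial\bar X_b$ happen not to be disjoint I would either perturb the defining parameters slightly, or --- more robustly --- introduce a third domain $\bar X_c$ built with an even smaller parameter (hence containing both $\bar X_a$ and $\bar X_b$, since the $\bar X_\ell$ exhaust $X$ as the parameter tends to $0$), also of slope $\lambda$, and compare each of $\bar X_a, \bar X_b$ to it. So it suffices to treat the case $\bar X_a \subset \bar X_b$ with disjoint boundaries.

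In that case I would apply Lemma \ref{lem: homotopycomp}, case (b), with $\lambda_a = \lambda_b = \lambda$, to produce a continuation chain map $\mathfrak c\colon CF^*(X\subset M; H^a) \to CF^*(X\subset M; H^b)$; exchanging the roles of the inner and outer domains in that Lemma gives a continuation chain map $\mathfrak c'$ in the opposite direction. The only hypothesis to verify is \eqref{actionx1estimate}, namely $-A_{a}(x_1) + \operatorname{sup} H^a < \lambda$ for every orbit $x_1$ of $H^a$ (and symmetrically for $H^b$), and this holds for exactly the reason recorded in the paragraph just above the corollary: by Lemma \ref{lem: sharpactions} and \eqref{eq: action}, every such orbit satisfies $-A_{a}(x_1) \lesssim w(x_1)(1-\epsilon_\ell^2/2) \le w_\ell(1 - \epsilon_\ell^2/2)$, while $\operatorname{sup} H^a$ is of order $\tfrac12\lambda\epsilon_\ell^2$ by Lemma \ref{lem:hamiltonianestimateonD} and the linearity \eqref{eq:linearity}; hence, up to the arbitrarily small errors in these estimates, $-A_{a}(x_1) + \operatorname{sup} H^a \lesssim w_\ell + \tfrac12\epsilon_\ell^2(\lambda - w_\ell) < \lambda$ (the last inequality being equivalent to $\tfrac12\epsilon_\ell^2 < 1$). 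Since moreover the error term $\operatorname{sup}(H^a - H^b)$ entering case (b) is dominated by $\operatorname{sup} H^a$, which is negligible, the argument is symmetric in the two directions and both $\mathfrak c, \mathfrak c'$ are well-defined chain maps; Lemma \ref{lem: homotopycomp} also supplies the boundary description of the relevant $1$-dimensional moduli spaces that makes them chain maps.

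I would then invoke the standard facts quoted immediately before the corollary: continuation maps compose as expected on cohomology, and a continuation map from a Floer complex to itself is homologically the identity. Since $\mathfrak c' \circ \mathfrak c$ and $\mathfrak c \circ \mathfrak c'$ are each (homotopic to) the continuation map of a concatenated homotopy from one of the two complexes to itself, they act as the identity on cohomology; hence $\mathfrak c$ and $\mathfrak c'$ induce mutually inverse isomorphisms, and $HF^*(X\subset M;\Hlm)$ is independent of $\epsilon_\ell$. Combined with Lemma \ref{indJshellchange}, this also yields independence of all the remaining auxiliary choices.

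The step I expect to be the main obstacle --- though it really amounts to bookkeeping, given what precedes --- is arranging the hypotheses of Lemma \ref{lem: homotopycomp}(b) simultaneously in both directions: this forces the $C^0$-sizes of both $H^a$ and $H^b$ to be small relative to the gap $\lambda - w_\ell > 0$, which is precisely the regime already fixed in the preceding subsection (the $C^0$-norm of $\hlm$ can be made arbitrarily small, for a fixed slope, by shrinking the parameter and taking $K_\ell$ close to $1$). The secondary point --- that the two boundaries need not be disjoint --- is dealt with by the common-refinement trick above, and every other detail is a verbatim repetition of the Gromov-compactness and gluing arguments already carried out in and after Lemma \ref{lem: homotopycomp}.
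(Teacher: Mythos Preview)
Your proposal is correct and follows essentially the same approach as the paper: the corollary is stated with a \qed\ because its proof is implicit in the sentence immediately preceding it (``continuation maps compose as expected \ldots\ and any such continuation map from a chain complex to itself is homologically the identity''). You have simply spelled out the details of that implicit argument --- constructing continuation maps in both directions via Lemma \ref{lem: homotopycomp}(b) at equal slope, verifying the action hypothesis \eqref{actionx1estimate} from Lemmas \ref{lem: sharpactions} and \ref{lem:hamiltonianestimateonD}, and handling possible non-disjointness of boundaries by passing through a common third domain --- all of which is exactly what the paper intends.
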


Define 
\begin{align}\label{adaptedSHdefinition}
    SH^*(X):= \varinjlim_{\ell} HF^*(X \subset M; \Hlm) 
\end{align}

Using continuation maps whose existence is guaranteed by Lemma \ref{lem: homotopycomp},
we also deduce that
\begin{lem}
There is a natural isomorphism \begin{align}
SH^*(X) \to SH^*(\bar{X}_{\vec{\epsilon}}).
\end{align}
\end{lem}
\begin{proof}
At the expense of possibly increasing the slope, Lemma \ref{lem: homotopycomp} says one can construct a continuation map $HF^*(X \subset M;
H^{\ell})$ to some $HF^*(X \subset M; G^{\ell+N})$ for each $\ell$ using the homotopies from \eqref{eq: concretehomotopy} (which are monotone, in
the sense of satisfying the hypotheses of Lemma \ref{lem: homotopycomp}, for $N$ large), compatibly with maps in both systems, getting a map on direct limits. One can also go the other way by the same argument, and naturality properties of continuation maps imply the composition in either direction is the identity (on the direct limits).
\end{proof}

 \begin{cor}
     $SH^*(X)$ as defined in \eqref{adaptedSHdefinition} coincides 
      with the usual symplectic cohomology of $X$, as
     defined by taking $SH^*(\bar{X}_{\vec{\epsilon}})$ for any
     $\bar{X}_{\vec{\epsilon}}$ in the sense of \cite{Seidel:2010fk}. \qed
 \end{cor}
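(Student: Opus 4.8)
The plan is to reduce the Corollary to the preceding Lemma together with two by-now-standard invariance properties of symplectic cohomology, the only real content being the divisorial bookkeeping. The Lemma immediately above already identifies $SH^*(X)$, as defined in \eqref{adaptedSHdefinition} via the exhaustive family $\{\bar{X}_\ell\}$, with $SH^*(\bar{X}_{\vec{\epsilon}_0})$ for the single distinguished Liouville domain $\bar{X}_{\vec{\epsilon}_0}\subset X$ recalled from \cite{GP1}, where the latter group is $\varinjlim_\ell HF^*(X\subset M; G^\ell)$. So it suffices to show: (a) $\varinjlim_\ell HF^*(X\subset M; G^\ell)$ is independent of the parameters $\vec{\epsilon}$ defining the underlying Liouville domain; and (b) for one — hence every — such choice this colimit agrees with the symplectic cohomology of the Liouville domain $\bar{X}_{\vec{\epsilon}}$ in the sense of \cite{Seidel:2010fk}.

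For (a), I would repeat verbatim the argument already used for the two Lemmas above. Given parameter choices $\vec{\epsilon}$, $\vec{\epsilon}'$, after shrinking neighborhoods we may assume $\partial\bar{X}_{\vec{\epsilon}}$ and $\partial\bar{X}_{\vec{\epsilon}'}$ are disjoint, so one domain contains the other; Lemma \ref{lem: homotopycomp}(b) then supplies continuation maps from $HF^*$ of a slope-$\lambda_\ell$ Hamiltonian adapted to one domain into $HF^*$ of a Hamiltonian of suitably larger slope adapted to the other, compatibly with the internal continuation maps of each directed system. Since continuation maps compose as expected homologically and a continuation map from a complex to itself is homologically the identity, these interleaving maps induce mutually inverse isomorphisms on colimits; and because the slopes $\lambda_\ell\to\infty$ are cofinal among admissible slopes, this colimit is the usual one defining symplectic cohomology of the Liouville domain.

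For (b), I would observe that, after forgetting the ambient $M$, the complex $CF^*(X\subset M; G^\ell)$ with the data used here is literally the Hamiltonian Floer complex of a Hamiltonian linear of slope $\lambda_\ell$ at infinity on the completion $\widehat{\bar{X}}_{\vec{\epsilon}}$: the generators are the usual ones (the divisorial orbits do not enter the complex), and the Floer trajectories counted are exactly those contained in $X\setminus V_\ell\subset\widehat{\bar{X}}_{\vec{\epsilon}}$, since the maximum principle (via the contact-type condition on the shell and the linear form \eqref{eq: stndform}, as in the compactness arguments above and \cite[Lem. 7.2]{Abouzaid:2010ly}) confines all rigid solutions there and in particular keeps them away from $\D$. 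The remaining discrepancies are immaterial: restricting almost complex structures to $\mathcal J_\ell(V)\subset\mathcal J(M,\D)$ rather than to arbitrary contact-type structures near infinity changes nothing on cohomology, since this space is non-empty and contractible and one connects it to the unconstrained choice through a family of almost complex structures for which the maximum principle still holds (so the relevant continuation moduli spaces remain compact in $X$); and the ``nice'', divisor-adapted convex symplectic structure on $X$ is, by Theorems \ref{thm: MTZ} and \ref{thm:exactnicestructure} together with the deformation-invariance of $SH^*$ noted just below them, deformation equivalent to the canonical finite-type convex structure on $X$ coming from an embedding $X\hookrightarrow\C^N$ — the one implicit in \cite{Seidel:2010fk}. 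Hence $SH^*(\bar{X}_{\vec{\epsilon}})\cong SH^*(X)$ in the usual sense, and combining with (a) and the preceding Lemma proves the Corollary.

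The hard part — everything else being formal bookkeeping with continuation maps and directed colimits — will be the verification in (b) that the constraints built into our model (Hamiltonians defined on all of $M$ and preserving $\D$, almost complex structures in $\mathcal J(M,\D)$, the nice divisor-adapted Liouville form) do not affect Floer cohomology relative to the unconstrained definitions. Each such comparison amounts to producing a path of Floer data from the constrained choice to a generic unconstrained one along which the pertinent moduli spaces stay compact inside $X$; this is exactly where the maximum-principle arguments already developed above, and the specific structure of $\mathcal J(M,\D)$, are needed, and where one should take care that no sphere or broken-trajectory bubbling at $\D$ is introduced along the homotopy.
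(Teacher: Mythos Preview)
Your proposal is correct and essentially matches the paper's approach: the paper treats this Corollary as an immediate consequence of the preceding Lemma together with standard invariance properties of symplectic cohomology (it is marked with a \qed and given no further argument). Your write-up simply makes explicit the background facts the paper leaves implicit—independence of $\vec{\epsilon}$ via interleaving continuation maps, and identification of the model $CF^*(X\subset M;G^\ell)$ with the usual Floer complex on the completion via the maximum principle and deformation invariance.
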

In view of Equation \eqref{eq: errorterm}, we have that after
 possibly shrinking $\epsilon_\ell$ further, the continuation maps can be made
 to induce maps of filtered subcomplexes (see \eqref{filteredsubcomplex})
 \begin{align} 
     \mathfrak{c}_{\ell_1,\ell_2}: F_wCF^{*}(X \subset
     M; \Hm^{\ell_1}) \to F_wCF^{*}(X \subset M; \Hm^{\ell_2}) 
 \end{align}
This enables us to equip $SH^*(X)$ with a filtration $F_wSH^*(X)$. 
\begin{rem}\label{rem:inverselimit}
As remarked earlier the filtration $F_wSH^*(X)$ is a limit of the natural
action filtrations on the various $SH^*(\bar{X}_{\ell})$ where $\bar{X}_{\ell}$
is our sequence of exhausting domains. A variant of our construction would be
to define 
$$ SH^*(X) := \varprojlim_{\ell} SH^*(\bar{X}_{\ell}):=\varprojlim_{\ell}
\varinjlim_{\lambda} HF^*(X \subset M; G^{\lambda,\ell}) $$ 
where $G^{\lambda,\ell}$ denote
Hamiltonians which agree with $\lambda(\rlm-1)$ when $\rlm \geq
K^{\ell}$ and the inverse limit is formed using monotone continuation maps in
\eqref{eq: concretehomotopy}.

Of course all of the maps in the inverse limit are isomorphisms. However the
natural action filtration in the inverse limit is given by $F_w$. The
equivalence between our definition and this one amounts to the fact that in the
present setting one may commute the limit and the colimit. 

A third natural possibility would be to take the inverse limit with respect to the system of Viterbo
functoriality maps $SH^*(\bar{X}_{\ell_2}) \to  SH^*(\bar{X}_{\ell_1})$ (for
$\ell_2 > \ell_1$) as in \cite[Eq. (7.2)]{Seidel:2010fk}. Doing this would
require checking that our $\PSSlog$ maps are compatible with Viterbo's
construction, and involve further technical detours.
\end{rem} 

Our final task is to put a product structure on $SH^*(X)$. We again describe
this first for the directed system $\Glm$. Namely we may define a product
operation on $SH^*(\overline{X}_{\vec{\epsilon}})$ by considering a variant of
Floer's equation defined over a pair of pants $\Sigma$ as in Figure \ref{pairofpants} equipped with standard
cylindrical ends $\epsilon_i$.  
\begin{figure}[h] 
    \caption{\label{pairofpants}}
    \centering
    \includegraphics[scale=0.7]{./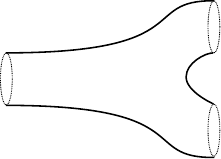}
\end{figure}
For any $\ell_1$ and $\ell_2$ we choose $\ell_3$
such that $$\lambda_{\ell_{3}} \geq \lambda_{\ell_{1}}+ \lambda_{\ell_{2}} $$
To each cylindrical end associate a time dependent Hamiltonian $H_i$. Let $\mathcal{K}
\in \Omega^1(\Sigma,C^{\infty}(X))$ be a 1-form on $\Sigma$ which along the
cylindrical ends, satisfies:  $$\epsilon_i^*(\mathcal{K})=(\Gm^{\ell_i}) \otimes dt$$
whenever $|s|$ is large. We also require that outside of a compact set in $X$
we have that 
$$K= h^{1} \otimes \beta+\mathcal{K}_{pert}$$ 
for some subclosed $\beta$
and $\mathcal{K}_{pert}$ is supported near the divisor $\D$ and along the cylindrical
ends.  To such a $\mathcal{K}$, we may associate a Hamiltonian one form $X_\mathcal{K} \in
\Omega^1(\Sigma,C^{\infty}(TX))$ which is characterized by the property that
for any tangent vector at a point $z \in \Sigma$, $\vec{r}_z$, we have that
$X_{\mathcal{K}}(\vec{r}_z)$ is the Hamiltonian vector-field of $\mathcal{K}(\vec{r}_z)$. Over
$\Sigma$, we consider a generalized form of  Floer's equation: 
\begin{align}
    \label{eq:generalFloer} \left\{ 
        \begin{aligned} & u \colon \Sigma \to X, \\
            & (d u - X_{\mathcal{K}})^{0,1} = 0.  
    \end{aligned} \right.
\end{align} 

To such data we can associate the {\em geometric energy}
\begin{equation}\label{eq:geo}
    \Egeo(u) :=  \frac{1}{2}\int_{\Sigma}|| du - X_{\mathcal{K}}|| 
 \end{equation} 
as well as the {\em topological energy}
\begin{equation} \label{eq:top}
    \Etop(u) = \int_{\Sigma} u^* \omega - d(u^*\mathcal{K}).
\end{equation}

We say that a perturbation $\mathcal{K}$ is monotonic if its curvature (Equation 8.12 of \cite{Seidel_PL}) is nonnegative (standard examples are perturbations of the form $H(R^{\ell})\otimes \beta$ with $H \geq 0$ and $\beta$ subclosed). When $\mathcal{K}$ is monotonic, we have an energy inequality 
\begin{equation} 
    \label{eq:basicenergyineq}  \Egeo(u) \leq \Etop(u).
\end{equation}

As usual, we assume that when $\rm \geq K$, $\mathcal{K}$ is monotonic and satisfies a
suitable variant of Equation \eqref{eq: stndform} near $R_H$. By counting
solutions to this equation we may define an associative and commutative product
\begin{equation}
    SH^*(\overline{X}_{\vec{\epsilon}})\otimes SH^*(\overline{X}_{\vec{\epsilon}}) \to SH^*(\overline{X}_{\vec{\epsilon}})
\end{equation}

To define appropriate Floer data $X_\mathcal{K}$ for our system $\Hlm$, we take
$\overline{X}_{\vec{\epsilon}}= \bar{X}_{\ell_3}$ and assume that
$\Gm^{\ell_3}=\Hm^{\ell_3}$. Along the incoming cylindrical ends we glue in the
homotopies used to defined continuation maps from $\Hm^{\ell_i}$ to
$\Gm^{\ell_i}$.  It follows 
that the product operations so defined will respect the filtration $F_w$ from \eqref{filteredsubcomplex}.

\subsection{Action spectral sequences} \label{sect: actionspec}

We define  the low energy Floer cohomology of weight $w$, $\HFw$, by the formula
\begin{align} \label{eq:lowlow}
    \HFw :=H^*(\frac{F_wCF^*(X \subset M; \Hlm)}{F_{w-1}CF^*(X
    \subset M; \Hlm)}) 
\end{align} 

We consider the corresponding descending filtrations:\footnote{We do this so that our conventions for cohomological spectral sequences match those found in standard textbooks such as \cite{McCleary}.} $$F^{p}CF^*(X \subset M; \Hlm):= F_{-p}CF^*(X \subset M; \Hlm).$$ By definition, the filtration $F^p$ on the cochain complex gives rise to a spectral
sequence 
\begin{align}  
    \label{eq:Spec1} \lbrace E_{\ell,r}^{p,q}, d_r \rbrace
    \implies \HFXM  
\end{align} 
where  the first page is by definition
identified with 
\begin{align} 
    \bigoplus_q E_{\ell,1}^{p,q}: = HF^*(X \subset M;
    \Hlm)_{w=-p}   
\end{align} 

 As is customary, we set $E_{\ell,1}=\bigoplus_{p,q} E_{\ell,1}^{p,q}.$ We have
 seen that the continuation map can be made to respect the filtration by
 $w(\v)$ and thus induce a filtration on $SH^*(X)$. For the purposes of
 constructing a spectral sequence, it is convenient to use a (co)chain-level
 direct limit construction. We define  
 \begin{align} SC^*(X):=
     \bigoplus_{\ell}CF^*(X \subset M; \Hlm)[q] 
 \end{align} 
 where $q$ is
 a formal variable of degree $-1$ such that $q^2=0$.  For $a+qb \in CF^*(X
 \subset M; \Hlm)[q]$, the differential on this complex is given by the
 formula 
\begin{align}
    \partial(a+qb)=(-1)^{\deg(a)}\partial(a) +(-1)^{\deg(b)}(q\partial(b)+\mathfrak{c}_{\ell,\ell+1}(b)-b)  
\end{align} 
The fact that $\partial^2=0$ relies on the fact that the differential on each
complex $CF^*(X \subset M; \Hlm)$ satisfies $\partial^2=0$ and the fact that
the $\mathfrak{c}_{\ell,\ell+1}$ are chain maps. It is an algebraic consequence
of the definition that $SH^*(X) \cong H^*(SC^*(X))$. The benefit of working
with $SC^*(X)$ is that the filtrations by $w(\v)$ gives it the structure of a
filtered complex. We again consider the corresponding descending filtration $F^p SC^*(X)$, which are bounded above and exhaustive. As a result (see e.g. \cite{McCleary} Theorem 3.2), the descending filtration $F^pSC^*(X)$ gives rise to a convergent cohomological spectral sequence: 
\begin{align} \label{eq:Spec2} 
    \lbrace E^{p,q}_{r},d_r \rbrace \Rightarrow SH^*(X) 
\end{align}

Recall that we have chosen our data in such a way that the continuation maps  $\mathfrak{c}_{\ell_1,\ell_2}$ respect the filtrations and thus give rise to induced continuation maps: 
 \begin{align} \label{eq:lowencont} \mathfrak{c}_{\ell_1,\ell_2}:
     HF^{*}(X \subset M; \Hm^{\ell_1})_{w} \to HF^{*}(X \subset M;
     \Hm^{\ell_2})_{w}. 
 \end{align}
By construction, the $E_0$ page is again a cochain level direct limit of low energy Floer complexes. It follows that the $E_1$ page may be concretely described as:
\begin{align} \label{eq:E1concrete} \bigoplus_q E_{1}^{p,q}: = \varinjlim_\ell HF^*(X \subset M;
    \Hlm)_{w=-p}   
\end{align} 
where the maps in the direct limit are the maps \eqref{eq:lowencont}. 
The product operation on $SH^*(X)$ can similarly be lifted to a map of filtered
co-chain complexes 
\begin{align} \label{eq: chainlevelprod} SC^*(X)\otimes
    SC^*(X) \to SC^*(X).
\end{align} 
Setting $E_r := \bigoplus_{p,q} E^{p,q}_{r}$, the theory of spectral sequences
shows that this induces a (bi-graded) product operation 
$$E_r \otimes E_r \to E_r$$ 
which satisfies the Leibnitz rule with respect to the differential $d_r$.
Because the map \eqref{eq: chainlevelprod} is well known 
to be associative up to filtered homotopies, the induced
multiplications are associative for $r=1$ and consequently for all $r$.

\section{The low energy log PSS map} \label{section: PSSreview}
The first goal of this Section is to define the low energy log PSS map, $\PSS_{log}$ from log cohomology to the $E_1$ page  \eqref{eq:E1concrete} of the action spectral sequence and to prove that it is a ring homomorphism. To prepare for this, in \S \ref{section: blowups}, we recall the notion of the real-oriented blow ups, which will be useful in providing an elegant model for describing the restrictions between strata involved in the definition of the product on log cohomology.  We then introduce log cohomology in \S \ref{subsec:logcoh} and a Morse model for the product structure (Morse theory is a convenient model for carrying out cochain level constructions, but one could use other versions of cochains such as singular cochains or various flavors of geometric (co)chains as well). 

In \S \ref{section:lowenergydef}, we describe the construction of the low energy log PSS map, \eqref{eq:Speciso2}. The main result of \S \ref{section:rings} is that this map is a ring homomorphism(Theorem \ref{lem:spectralrings}). Finally, \S \ref{sect: PSSiso1}, shows that after perturbing our symplectic form and Hamiltonians slightly, we may further refine the $\PSS_{log}$ map to a map \eqref{eq:realPSSloc}  between log cohomology classes with multiplicity vector $\v$ and a Floer cohomology group generated by orbits that wind around the divisors $\D_i$ with multiplicity $\v$.

\subsection{The real blowup} \label{section: blowups}

This sub-section gives the definition of the real-oriented blow up $M^{log}$ of $M$ along the divisor $\D$. Recall the notation from the previous section $D_I$, $\DIo$, for the
stratified components of $\mathbf{D}$ and their open parts. Let $S_I$ denote
the $T^{|I|}$ torus bundle associated to $ND_I$, 
\begin{align} 
    S_I= (ND_I\setminus \cup_i D_i)/ (\mathbb{R}^{+})^{I},
\end{align} 
and set 
\begin{equation}\label{opentorusbundle}
    \SIo: = (S_I)|_{\DIo} 
\end{equation}
to be the torus bundle restricted to the open stratum $\DIo$, with $S_{\emptyset} = M$ and $\mathring{S}_{\emptyset} = X$. 
Note that all of these manifolds can all be oriented. For $S_I$, this comes
from the exact sequence at each tangent space 
\begin{align} 
    TT^{I} \to T_pS_I \to T_{\pi(p)} D_I. 
\end{align}    
Our convention is that each circle in the torus is oriented clockwise (this
convention is the opposite of the usual one) and that $T^{I}$ is oriented
lexicographically. 

We now form the real oriented blow up of $M$ along the divisor $\D$, 
\begin{equation}
    M^{log},
\end{equation}
which canonically realizes the torus bundles \eqref{opentorusbundle} as strata
of a space.
There are several
possible constructions for this; the most expedient for us is in terms of local
coordinate charts (see \cite{Melrose} for another possible construction based on
the tubular neighborhood theorem).  We first consider the linear/local
situation. Let $\mathbb{V}:= \mathbb{C}^{n}$ with coordinates $y_1 \cdots, y_n$
and let $\mathbf{H}$ be the union of first $\mathbf{a}$ coordinate hyperplanes
$H_1,\cdots, H_\mathbf{a}$ for some $n \geq \mathbf{a} \geq 0$. Define the real
oriented blow-up of $\mathbb{V}$ along $\mathbf{H}$ to be given by
$\mathbb{V}^{log}:= (\mathbb{R}^{\geq 0} \times S^1)^\mathbf{a} \times
\mathbb{C}^{n-\mathbf{a}}$. There is a canonical morphism $\mathbb{V}^{log} \to
\mathbb{V}$ given by sending $$((r_1,\theta_1),\cdots,
(r_\mathbf{a},\theta_\mathbf{a}),x) \to (r_1\theta_1,\cdots r_\mathbf{a}
\theta_\mathbf{a}, x).$$

To globalize this blowup construction, we need to show that local
diffeomorphisms of $\mathbb{V}$ can be lifted uniquely to the blowup. Given a
diffeomorphism $F: \mathbb{V} \to \mathbb{V}$ and any $x \in
\mathbb{C}^{n-\mathbf{a}}$, we let $F_{\mathbf{a},x}$ denote the induced map
\[F_{\mathbf{a},x}: \mathbb{C}^{\mathbf{a}} \stackrel{z \mapsto (z,x)}{\to} \C^n \stackrel{F}{\to} \C^n \stackrel{(y,w) \mapsto y}{\to} \mathbb{C}^{\mathbf{a}}\] 
and we
 also let $\pi_\mathbf{a}$ denote the projection $\pi_\mathbf{a}:
 \mathbb{V}^{log} \to (S^1)^{\mathbf{a}}.$ The key computation is then the
 following: 

\begin{lem} 
Given a diffeomorphism $F: \mathbb{V} \to \mathbb{V}$ which preserves the hyperplanes $H_1,\cdots H_\mathbf{a}$, there is a unique diffeomorphism $\tilde{F}: \mathbb{V}^{log} \to \mathbb{V}^{log}$ lifting $F$, i.e. such that we have a commutative diagram:   \[
\xymatrix{
   \mathbb{V}^{log} \ar[d] \ar[r]^{\tilde{F}} &  \mathbb{V}^{log} \ar[d] \\
  \mathbb{V}  \ar[r]^{F}  & \mathbb{V}
}
\] 
Restricted to the preimage of the locus where $y_1=y_2=\cdots y_\mathbf{a}=0$,
we have the following explicit formula for $\pi_\mathbf{a} \circ \tilde{F}$,
\begin{align} \label{eq: diffofF}  
    \pi_\mathbf{a} \circ \tilde{F}:
    ((0,\theta_1), (0,\theta_2), \cdots, (0,\theta_\mathbf{a}),x) \to
    [D_0F_{\mathbf{a},x}(\theta_1,\cdots,\theta_\mathbf{a})],
\end{align} where
$[D_0F_{\mathbf{a},x}(\theta_1,\cdots,\theta_\mathbf{a})]$ denotes the
equivalence class of the differential of $F_{\mathbf{a},x}$ at $y_1=y_2=\cdots
y_\mathbf{a}=0$ applied to (any positive real multiple of) $(\theta_1, \ldots,
\theta_\mathbf{a})$, modulo rescaling by $(\mathbb{R}^{+})^{\mathbf{a}}$.
\end{lem}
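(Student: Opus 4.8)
The plan is: uniqueness of the lift is formal, existence reduces to a single smoothness statement across the exceptional locus, and the explicit formula \eqref{eq: diffofF} then reads off from the factorization used to prove that smoothness.

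\textbf{Uniqueness (and the diffeomorphism property).} Write $b\colon\mathbb{V}^{log}\to\mathbb{V}$ for the blow-down map and $E\subset\mathbb{V}^{log}$ for the preimage of $\mathbf H$ (the locus where some $r_i=0$); for a point $p=\bigl((r_1,\theta_1),\dots,(r_{\mathbf{a}},\theta_{\mathbf{a}}),x\bigr)$ abbreviate $y:=b(p)=(r_1\theta_1,\dots,r_{\mathbf{a}}\theta_{\mathbf{a}},x)$. Since $b$ restricts to a diffeomorphism $\mathbb{V}^{log}\setminus E\xrightarrow{\ \sim\ }\mathbb{V}\setminus\mathbf H$ and $F$ preserves $\mathbf H$, any continuous lift of $F$ must coincide with $b^{-1}\circ F\circ b$ on the dense open set $\mathbb{V}^{log}\setminus E$, hence is unique ($\mathbb{V}^{log}$ being Hausdorff). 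Granting existence, applying it to $F^{-1}$ and to $\mathrm{id}$ and invoking uniqueness gives $\widetilde{F^{-1}}\circ\tilde F=\mathrm{id}=\tilde F\circ\widetilde{F^{-1}}$, so $\tilde F$ is automatically a diffeomorphism.

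\textbf{Existence.} For $i\le\mathbf{a}$ the component $F_i$ vanishes on $H_i=\{y_i=0\}$ because $F$ preserves $H_i$. The key move --- and the one place the smooth (rather than holomorphic) nature of $F$ matters, since one cannot write $F_i=y_iG_i$ --- is to apply the $\mathcal{C}^\infty$ Hadamard lemma in the variables $y_i,\bar y_i$, producing smooth $\C$-valued functions $A_i,B_i$ on $\C^n$ with $F_i=y_iA_i(y,\bar y)+\bar y_iB_i(y,\bar y)$. Substituting $y_i=r_i\theta_i$, $\bar y_i=r_i\bar\theta_i$ (with $\theta_i\in S^1$, so $\bar\theta_i=\theta_i^{-1}$) yields
\[
    F_i(y)=r_i\,\theta_i\,G_i,\qquad G_i:=A_i+\bar\theta_i^{\,2}\,B_i ,
\]
a smooth $\C$-valued function on $\mathbb{V}^{log}$. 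One checks that $G_i$ is \emph{nowhere zero}: where $r_i>0$ this holds because $y\notin H_i$ forces $F_i(y)\ne0$; where $r_i=0$ one computes $\theta_iG_i=L_i^{\,y}(\theta_i)$, where $L_i^{\,y}\colon\C\to\C$ is the $\R$-linear map induced by $D_yF$ on the normal line $T_y\mathbb{V}/T_yH_i$ at $y\in H_i$ (there is no tangential term, as $F_i$ vanishes identically on $H_i$), and this is invertible because $F$ is a diffeomorphism preserving $H_i$. Consequently the prescription $r_i':=r_i|G_i|$, $\theta_i':=\theta_iG_i/|G_i|$ for $i\le\mathbf{a}$, together with $x':=(F_{\mathbf{a}+1}(y),\dots,F_n(y))$ on the remaining coordinates, defines a smooth map $\tilde F\colon\mathbb{V}^{log}\to\mathbb{V}^{log}$; since $r_i'\theta_i'=r_i\theta_iG_i=F_i(y)$ and $x'$ is the remaining block of $F(y)$, we have $b\circ\tilde F=F\circ b$, so $\tilde F$ lifts $F$ (and visibly agrees with $b^{-1}\circ F\circ b$ off $E$).

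\textbf{The explicit formula.} Restrict to the stratum $r_1=\dots=r_{\mathbf{a}}=0$, where $y=(0,\dots,0,x)$. Then $\theta_i'=[\theta_iG_i]=\bigl[L_i^{\,(0,x)}(\theta_i)\bigr]$, with $[\,\cdot\,]$ denoting the class in $\C^{*}/\R^{>0}\cong S^1$. Finally, because $F$ preserves each $H_j$, the slice map $F_{\mathbf{a},x}$ preserves each coordinate hyperplane $\{z_j=0\}\subset\C^{\mathbf{a}}$, so $D_0F_{\mathbf{a},x}$ is block-diagonal, equal to $\operatorname{diag}\bigl(L_1^{\,(0,x)},\dots,L_{\mathbf{a}}^{\,(0,x)}\bigr)$ with each $L_i^{\,(0,x)}$ invertible --- which is exactly what makes the right-hand side of \eqref{eq: diffofF} well defined. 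Applying $D_0F_{\mathbf{a},x}$ to $(\theta_1,\dots,\theta_{\mathbf{a}})$ and taking classes modulo $(\R^{>0})^{\mathbf{a}}$ componentwise therefore returns $(\theta_1',\dots,\theta_{\mathbf{a}}')$, i.e.\ \eqref{eq: diffofF}.

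\textbf{Main obstacle.} There is no serious difficulty; the single subtlety is to stay in the smooth category --- using the $y_i,\bar y_i$ Hadamard factorization rather than a naive division by $y_i$ --- after which identifying the boundary value of $G_i$ with the invertible normal derivative $L_i^{\,y}$ makes both the smoothness of $\tilde F$ across $E$ and the formula \eqref{eq: diffofF} fall out at once.
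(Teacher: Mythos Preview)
Your proof is correct and essentially carries out in detail what the paper defers to references: the paper's proof simply notes uniqueness is formal, then reduces existence (and the formula) to the single-divisor case via fibre products and cites \cite{Arone}*{Lemma 2.8} and \cite{KM}*{\S 2.5}, where the Hadamard-type factorization you use is precisely the argument given. Your treatment handles all $\mathbf{a}$ hyperplanes at once rather than invoking a fibre-product reduction, but since the construction is manifestly coordinate-by-coordinate this is only a cosmetic difference.
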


\begin{proof} As the oriented blow-up is an isomorphism away from the coordinate hyperplanes, the lifts $\tilde{F}$ are unique if they exist. The existence and explicit formula for the extension follows from the case of a single smooth divisor by taking fibre products. The calculation in the case of a single smooth divisor is given in Lemma 2.8 of \cite{Arone}, see also of \cite[\S 2.5]{KM}.
\end{proof} 

Returning to the global situation, we may cover $M$ by charts $W \subset \mathbb{V} \to M$ on which $\D \cap W$ is the locus where at least one of $y_1,\cdots,y_\mathbf{a}=0$ ($\mathbf{a}$ here is equal to the depth of the stratum of $\D$ at which our chart is centered). By uniqueness of lifts, we may glue together the local models $W^{log}$ to form a manifold-with-corners $M^{log}$ which admits a canonically defined map $M^{log} \to M$.

For every non-empty stratum of $D_I$ of the normal cossings divisor $\D$,  the preimage of $D_I$ under the blow-up map defines a closed a closed stratum of $M^{log}$, which is canonically isomorphic (by equation \eqref{eq: diffofF}) to the blow up
\begin{equation}\label{eq:sIlog}
    S_I^{log}
\end{equation}
of $S_I$ along the preimages of the strata $D_j \cap D_I$ for $j \notin I$. 
In particular, the fiber of the map $M^{log} \to M$ over
any point in $m \in \mathring{D}_I$ is a rank $|I|$ torus, $T^{I}$. Away from $\D$, there is an open inclusion $X \to M^{log}$ which is easily seen to induce a canonical homotopy equivalence;
similarly, the open inclusions $\mathring{S}_I \to S_I^{log}$ are homotopy equivalences.

\begin{rem}\label{rem:kato} 
     Algebro-geometrically, the blow-up can be constructed as the closure of
     the graph $X \to \oplus_i S_i$ given by the defining sections $s_i$. More
     intrinsically, the real oriented blow-up may be viewed as a special case
     of the Kato-Nakayama construction in logarithmic geometry \cite{KN}. 
 \end{rem}

In what follows, we will identify $H^*(M^{log})$ and $H^*(X)$
without mention and similarly for the torus bundles over lower dimensional
strata.  Note that if $I \subset K$, then $S_K^{log} \subset S_I^{log}$,
inducing a restriction map on cohomology
\begin{equation}\label{stratumrestriction}
    r_{IK}^*: H^*(\mathring{S}_I) \ra H^*(\mathring{S}_K),
\end{equation}
which will factor into our definition of product structures on log cohomology below.
\begin{rem} \label{rem: diffeoblow}
    Up to non-canonical diffeomorphism, the real blowup $M^{log}$ is
    diffeomorphic to the complement $M \backslash \cup_{i} U_i$ of choices of
    disc-bundle tubular neighborhoods for each $D_i$ chosen previously. In such a model, the restriction maps
    \eqref{stratumrestriction} can be explained by observing first that 
    $\mathring{S}_I = \partial \mathring{U}_I \subset X$ for any $I$
    and more generally for $I \subset K$, there are inclusions $\mathring{S}_K
    \subset \mathring{S}_I$ living over the inclusion $\partial
    (\mathrm{nhood}(D_K \cap D_I)) \subset \mathring{D}_I$ (where these
    inclusions are implicitly using the tubular neighborhood identifications).
\end{rem}

\subsection{Logarithmic cohomology}\label{subsec:logcoh}

We now turn to recalling the {\em log(arithmic) cohomology ring} of $(M,\D)$, 
\begin{equation}\label{eq:logcoh}
    \QH^*(M,\mathbf{D}),
\end{equation}
which was defined additively in \cite{GP1}.
To define it, we use standard multi-index
notation, i.e., we have fixed formal variables $t_1, \ldots, t_k$, and
for any vector $\v = (v_1, \ldots, v_k) \in \Z_{\geq 0}^k$,
\[
t^{\v} := t_1^{v_1} \cdots t_k^{v_k}.
\]
Next, denote by $\v_I$ the multiplicity vector $(v_1, \ldots, v_k)$ whose components are non-zero precisely when
they are in $I$, in which case they are 1:
\begin{equation}\label{eq:primitivevI}
    \v_I := (v_1, \ldots, v_k)\textrm{ where } v_i := \begin{cases} 1 & i \in I \\ 0 & \textrm{otherwise}. \end{cases}
\end{equation}
In the case that $I$ consists of a single element $\lbrace i \rbrace$, we will
use the notation $\v_i:=\v_{\{i\}}$.  We refer to the vectors $\v_I$ as {\em
primitive vectors}. In terms of the primitive vectors $\v_I$, log cohomology
can be described as follows:
\begin{equation}
    \QH^*(M, \D) := \bigoplus_{I \subset \{1, \ldots, k\}}  t^{\v_I} H^*(\SIo)[t_i\ |\ i \in I]
\end{equation}
where $S_\emptyset = X$, and $S_I = \emptyset$ if the intersection $\cap_{i \in
I} D_i$ is empty.

$\QH^*(M,\D)$ is  generated as a $\K$-module by elements of the form
$\alpha t^{\v}$, where $\v \in \Z^k_{\geq 0}$ is a multiplicity vector, $I = \{i \in
\{1, \ldots, k\} | \v_i \neq 0\}$ denotes the indices of $\v$ which are
non-zero, and $\alpha$ is an element of the $\K$-module $H^*(\SIo)$.

These groups also come equipped with natural filtrations: the {\em logarithmic cohomology} of $(M,
\mathbf{D})$ {\em of slope $< w$}, denoted \begin{equation} \label{eq:filterlogchainys} F_w \QH^*(M,\D), \end{equation} is the sub
$\K$-module generated by those elements of the form $\alpha t^{\vec{\v}}$ for
some subset $I \subset \{1, \ldots, k\}$, such that 
\[
    w(\v) < w
\]
(recall the definition of $w(\v)$ in \eqref{eq:windingvec}).
By definition, whenever $w_{1} \leq w_{2}$ there is an inclusion
\[
i_{w_{1}, w_{2}}: F_{w_{1}} \QH^*(M,\D) \to F_{w_{2}} \QH^*(M,\D).
\]
and hence $F_w \QH^*(M,\D)$ defines a canonically split ascending filtration. Finally, we also
define the associated graded with respect to this filtration 
\begin{equation} \label{associatedgraded}
    \QH^*(M,\D)_{w}:=\frac{F_{w}\QH^*(M,\D)}{F_{w-1} \QH^*(M,\D)}
\end{equation}
and the multiplicity $\v$ submodule of $\QH^*(M,\D)$
\begin{equation} \label{eq:anothereqlogcohv}
    \QH^*(M,\D)_{\v}:= H^*(\SIo)t^{\v} \textrm{ where $I = \{i | v_i \neq 0\}$}.
\end{equation}
We say a multiplicity vector $\v$ is {\em supported on $I$} if $I = \{i | v_i
\neq 0\}$.

Given a choice of holomorphic volume form $\Omega_{M, \mathbf{D}}$ on $M$ which is non-vanishing
on $X$ with poles of order $a_i$ along $D_i$, i.e., a fixed isomorphism
\begin{align} 
 \wedge^n_{\C} T^*M & 
 \stackrel{\cong}{\to} \mathcal{O}(\sum_i -a_iD_i), 
    \label{eq:volform} 
\end{align}  
the vector space $H^*_{log}(M,\D)$ inherits a cohomological grading given by 
\begin{equation} 
     \label{eq:loggrading}
     \deg (\alpha t^{\v}) =\deg(\alpha)+2\sum_{i=1}^k (1-a_i) v_i.
\end{equation}

For  $\alpha \in H^*(\mathring{S}_I)$ and $ \beta \in H^*(\mathring{S}_J)$, let $K = I \cup J$ and define
\begin{align} 
    \label{eq:convol} \alpha \star \beta := r_{IK}^*\alpha \cup r_{JK}^*\beta \in H^*(\mathring{S}_K),
\end{align} 
where $r_{IK}$ and $r_{JK}$ are as in \eqref{stratumrestriction} (note that if
$I = J$, this is just the usual cup product in $H^*(\mathring{S}_I)$).  Using
\eqref{eq:convol}, we observe that there is a natural convolution product
on $H^*_{log}(M,\D)$:
\begin{defn} 
    \label{defn: ringstructure} 
    The ring structure on $H^*_{log}(M,\D)$ is by definition the unique (graded-)
    commutative ring structure additively extending the following product rule:
    \begin{align} 
        \alpha_1t^{\v_1} \cdot \alpha_2t^{\v_2}:= (\alpha_1 \star \alpha_2)t^{\v_1 + \v_2}.
    \end{align} 
    for any $\alpha_1 \in H^*(\SIo)$, $\alpha_2 \in H^*(\mathring{S}_J)$, and
    $\v_1, \v_2$ supported on $I$, $J$ respectively, where $\star$ is as in
    \eqref{eq:convol}.  
\end{defn}

With respect to this product there is a subalgebra 
\begin{align}\label{eq:SRdef} 
    \mathcal{SR}^*(M,\D)=  \bigoplus_{I \subset \{1, \ldots, k\}}  t^{\v_I} H^0(\SIo)[t_i\ |\ i \in I] \subset \QH^*(M,\D).
\end{align}
In cases where all of the strata $D_I$ are connected this is a graded version
of the {\em Stanley-Reisner ring} on the dual intersection complex of $\D$. The log
cohomology $H^*_{log}(M,\D)$ is generated by cohomology classes $\alpha
t^{\v_I}$ as a module over $\mathcal{SR}^*(M,\D)$; in particular, $H^*_{log}(M,\D)$ is
a finitely generated $\mathcal{SR}^*(M,\D)$-module. 

\begin{rem} 
    The Kato-Nakayama space $M^{log}$ (see Remark \ref{rem:kato}) arises
    naturally as the target of evaluation maps of punctured \emph{stable} log
    curves \cite{GS} decorated with trivializations of the unit normal bundles
    to each marked point (see Lemma \ref{lem: evalKN} for a special case). In
    particular, the standard ``push-pull" formalism allows one to construct the
    ring structure on
    $H^*_{log}(M,\D)$ using 3-pointed genus zero punctured curves in $(M,\D)$
    whose underlying stable map is constant. In view of this, it may be useful
    for algebraic geometers to view $H^*_{log}(M,\D)$ as a kind of orbifold
    cohomology of the log pair $(M,\D).$\footnote{We thank Alessio Corti and
    Nicolo Sibilla for suggesting this point of view.} 
\end{rem}

It will prove useful to have a Morse co-chain level model for $\QH^*(M, \D)$ 
with its product structure. To this end, fix Riemannian metrics $g_I$ on the compactified
strata $S_I^{log}$ as well as Morse functions $f_I: S_I^{log} \to \mathbb{R}$ whose gradient flows are ``outward pointing near the boundary of $S_I^{log}.$" To make this condition on $f_I$ more precise, let $\pi_I^{log}$ denote the projection $\pi_I^{log}: {S}_I^{log} \to
D_I$, 
and let $US_{I,\rho_0}^{log}$ denote the tubular neighborhood $(\pi_I^{log})^{-1}(\cup_{j \notin I} (U_{j,\rho_0} \cap D_I))$ for any sufficiently small $\rho_0$ (recall \eqref{eq:randomUithing}). Then we want that the gradient flow of $f_I$ points outwards (meaning into the relevant tubular neighborhood) along $\partial (S_I^{log} \setminus US_{I,\rho_0}^{log})$ for each $\rho_0$ (including along the boundary of $S_I^{log}$ itself).
Let $\phi^s_{f_{I}}$ denote the time $s$ flow of the negative
gradient $-\nabla(f_I)$. For any critical point $c$ of $f_I$, let 
\begin{align}
    W^u(f_I,c)&:= \lbrace x \in S_I^{log}, \ \operatorname{lim}_{s \to -\infty}
    \phi^s_{f_{I}}(x)=c \rbrace \\
    W^s(f_I,c)&:= \lbrace
    x \in S_I^{log}, \ \operatorname{lim}_{s \to +\infty} \phi^s_{f_{I}}(x)=c
    \rbrace  
\end{align}  
denote the unstable and stable manifolds of $f_I$ respectively, with respect to the metric $g_I$.
The regularising tubular neighborhoods $U_I$ are diffeomorphic to open subsets of $ND_I$ and hence inherit partial $(\R^{+})^I$ actions
which in turn lift to the blow-ups $U_I^{log}$. We assume that, inside of $U_I^{log}$, our metrics and functions are chosen so that the stable and
unstable manifolds are invariant under these partial actions.

We recall the definition of Morse cohomology in order to fix our conventions.
As all critical points and all flowlines between critical points lie in the
interior $\SIo$, we will often blur the distinction between doing Morse theory
on $\SIo$ and on $S_I^{log}$ (the notable exception being when we construct the
convolution product below). We denote the set of critical points of $f_I$ on
$\SIo$ by \begin{align} \label{eq:morsecritl} \chifI. \end{align} To each critical point $c \in \chifI$, we let the
orientation line, $|\mathfrak{o}_c |$, denote the $\mathbf{k}$-module generated
by choices of orientations of the manifolds $W^u(c)$ modulo the relation that
the sum of generators associated to opposite orientations vanishes.
Let 
\begin{align}
    CM^*(\mathring{S}_I,f_I):= \bigoplus_{c \in \chifI} |\mathfrak{o}_c |
\end{align} 
to the vector space generated by $\mathbf{k}$-orientation
lines associated to each critical point. For a critical point $c \in \chifI$, let
\[
    \operatorname{deg}(c)=\operatorname{dim} W^u(f_I,c).
\]
Let $c_0$ and $c_1$ be two critical points such that
$\operatorname{deg}(c_0)-\operatorname{deg}(c_1)=1$ and let
$\widetilde{\mathcal{M}}(c_0,c_1)$ denote the moduli space of flow lines of $-\nabla(f_I)$
\begin{equation}
    u: \R \to \mathring{S}_I, \textrm{ } \dot{u}(t) = -\nabla(f_I)
\end{equation}
with asymptotics given by $c_1$ and $c_0$ respectively at $\pm \infty$:
\begin{align} \operatorname{lim}_ {s\to \infty} u = c_1 \\
\operatorname{lim}_{s \to -\infty} u= c_0.
\end{align}
(this can equivalently be described as the intersection $W^u(f_I, c_0) \cap W^s(f_I, c_1)$).
 For a generic metric $g_I$ this moduli space is a compact 1-dimensional
 manifold with free $\mathbb{R}$ action, and quotienting by $\mathbb{R}$
 induces a compact 0-dimensional manifold, the moduli space of unparametrized
 flowlines 
 \begin{equation}\label{eq:unparametrizedmorseflowlines}
     \mathcal{M}(c_0,c_1):= \widetilde{\mathcal{M}}(c_0,c_1)/
 \mathbb{R}
 \end{equation}
Any rigid $u \in \mathcal{M}(c_0,c_1)$ induces an isomorphism of orientation
lines
\begin{align} 
     \mu_u: |\mathfrak{o}_{c_{1}}| \to |\mathfrak{o}_{c_{0}}|,
\end{align} 
and as usual counting such solutions (using the orientation isomorphisms above)
induces the Morse differential.  

Let $C^*_{log}(M,\D)$ denote the cochain complex generated by elements of the
form $\alpha_c t^{\v}$, where $\alpha_c$ is a (co)-chain in
$CM^{*}(\mathring{S}_I,f_I)$ for some subset $I \subset \{1, \ldots, k\}$, and
$\v = (v_1, \ldots, v_k)$ is a vector of
non-negative integer multiplicities {\em strictly supported} on $I$, meaning
$v_i > 0$ if and only if $i \in I$. 

The differential on  $C^*_{log}(M,\D)$ is induced from the differential on $CM^{*}(\SIo,f_I)$, and as before we can give an efficient description of $C_{log}^*(M,\D)$ as follows:
\begin{align}
    C_{log}^*(M,\D)&:= 
    \bigoplus_{I \subset \{1, \ldots, k\}}  t^{\v_I} CM^*(\SIo,f_I)[t_i\ |\ i \in I]\\
    \label{morselog}&= \bigoplus_{I \subset \{1, \ldots, k\}}\bigoplus_{\v\textrm{ supported on }I} CM^*(\SIo,f_I).
\end{align}
where $S_\emptyset = X$, and $S_I = \emptyset$ if the intersection $\cap_{i \in
I} D_i$ is empty, with differential induced by \eqref{morselog}. 
With respect to the grading defined earlier, the degrees of each 1-dimensional subspace associated to the $t^\v$ copy of a critical point $c \in \chifI$ is
\begin{equation}\label{logdegreemorse}
\deg(|\mathfrak{o}_c| t^{\v}) = \deg(c)  +  2\sum_{i=1}^k (1-a_i) v_i.
\end{equation}
Observe that indeed this gives a cochain model for logarithmic cohomology,
i.e.,
\begin{align} 
    \QH^*(M,\D)\cong H^*(C_{log}^*(M,\D)).
\end{align}

To implement a co-chain level product inducing the cohomological
product from Definition \ref{defn: ringstructure}, we need to recall the
construction of pullbacks in
Morse cohomology. If $I \subset K$, and given any $c_1 \in \chifI$, define
\begin{align} 
    W^s_{K}(f_I,c_1):= W^s(f_I,c_1) \cap S_K^{log} 
\end{align}

\begin{defn} For any $c_2 \in \mathcal{X}(\mathring{S}_K, f_K)$ with Morse index $\deg(c_2)=\deg(c_1)$, consider the moduli space
    \begin{align} 
        \label{eq: IKMorse} \mathcal{M}_{IK}(c_2,c_1) :=W^s_{K}(f_I,c_1) \cap W^u(f_K,c_2) 
    \end{align} 
\end{defn} 
Note that by our assumption that the Morse function $f_K$ has inward pointing negative gradient flow near the boundary, this intersection lies in $\mathring{S}_K$ and in fact outside of tubular neigborhood of the lower dimensional strata. It therefore follows from Appendix A.2. of \cite{AbSch2} that for generic metrics and Morse functions on $S_I^{log}$, and $S_K^{log}$, \eqref{eq: IKMorse} is a compact zero dimensional manifold (Note that condition (A.2.) of \emph{loc. cit.}  is vacuous in our case and (A.3.) can be achieved by a generic perturbation of the function $f_I$). To orient moduli spaces of rigid solutions, note that a choice of orientation of $W^u(f_I,c_1)$, induces an orientation of the normal bundle to $W^s_{K}(f_I,c_1)$ inside of $S_K^{log}$ via the canonical isomorphism 
\begin{align} 
    N_{S_{K}^{log}}W^s_{K}(f_I,c_1) \cong  N_{S_{I}^{log}}W^s(f_I,c_1)|_{W^s_K(f_I, c_1)}
\end{align} 
A choice of orientation on $W^u(f_K,c_2)$ then allows us to orient the
intersection $\mathcal{M}_{IK}(c_2,c_1)$ and we view this as giving a map on
the corresponding orientation lines. By counting the induced maps on
orientation lines associated to such solutions, we obtain a map  
\begin{align}
    r_{IK}^*: CM^*(\mathring{S}_I,f_I) \to CM^*(\mathring{S}_K, f_K)
\end{align}

Standard arguments in Morse theory prove that this defines a cochain map. Upon
identifying Morse  cohomology and singular cohomology, this operation
corresponds to the usual pull-back on singular cochains (see pages 1712-1713 of \cite{AbSch2} for a proof in the dual setting of homology). We next recall the
construction of the cup-product in Morse cohomology by counting Y-shaped
trajectories, a special case of more general operations on Morse complexes
induced by metrized ribbon trees.

For any tree $T$, let $E(T)$ denote the set of edges and $V(T)$ the set of
vertices. In the Morse context, edges are allowed to
come in two types; infinite (external) edges $E_{\operatorname{ext}}(T)$ and
finite (internal) edges $E_{\operatorname{int}}(T)$. Operations in Morse theory
are defined using rooted metrized ribbon (Stasheff) trees. For such graphs,
there is a distinguished outgoing external edge $\cev{e}_0$ and the remaining
external edges $\vec{e}_1, \cdots \vec{e}_{|E_{\operatorname{ext}}(T)|-1}$
(called incoming edges) acquire a linear ordering.  Furthermore, each edge $e
\in E(T)$ may be canonically oriented and 
comes with a canonical length and orientation preserving map $e \to
\mathbb{R}$.  We let $t_e$ denote the induced coordinate on each edge. 

 The simplest example of such a graph is  $T_{2,1}$, the unique trivalent
 Stasheff tree with three external edges, two of which are incoming and one
 which is outgoing. Each of the two incoming edges $\vec{e}_1, \vec{e}_2$ is
 identified with $[0,\infty)$ and the outgoing edge $\cev{e}_0$ is identified with
 $(-\infty,0]$. 

 Rather than equip each edge of $T_{2,1}$ with different Morse function, we
 will ensure transversality of gradient flow solutions for maps from $T_{2,1}$ by
 perturbing the gradient flow equation (for a single Morse function). We make
 use of the following special case of \cite[Def. 2.6]{Abouzaidtop}: 
 \begin{defn}
    A gradient flow perturbation datum on $T_{2,1}$ (on the stratum $S_K$) is a choice, for each edge
    $ e \in E(T_{2,1})$ of a smoothly varying family of vector fields, 
    $$X_e: e \to C^{\infty}(S_K^{log}) $$ 
    vanishes away from a bounded subset of $e$
    and which is invariant under the local action of $\prod _j \mathbb{R}^{+}$
    in each tubular neighborhood.  
\end{defn}  

Given a gradient flow perturbation datum as above, for each edge $e \in
E(T_{2,1})$ and any map $u: e \to S_{K}^{log}$, one can ask for $u$ to solve
the perturbed gradient flow equation (for $f_K$ with respect to $X_e$):
\begin{align}\label{eq:pertgradflow} 
    du_e(\partial_{t_{e}})= -\nabla f_K + X_e. 
\end{align}
   
\begin{defn} \label{defn:Ymoduli} 
    Let $f_{K}:S_K^{log} \to \mathbb{R}$, be a Morse function and fix gradient flow perturbation data
    perturbation data $\{X_e\}_{i = 0,1,2}$ as above. Suppose that $c_0, c_1, c_2$ lie in
    $\mathcal{X}(\mathring{S}_K, f_{K})$ and satisfy
    $\deg(c_0)=\deg(c_1)+\deg(c_2)$. With respect to this data, let 
    \begin{equation}\mathcal{M}_Y(c_0,c_1,c_2)\end{equation} 
    denote the moduli space of continuous maps $u: T_{2,1} \to S_K^{log}$ whose
    restriction to each edge is (smooth and) a solution to
    \eqref{eq:pertgradflow}, and which are asymptotic to $c_0$, $c_1$, and $c_2$ at the
    non-compact ends of $\cev{e}_0$, $\vec{e}_1$, and $\vec{e}_2$ respectively.
\end{defn}
See Figure \ref{morseproduct} for a schematic element of $\mathcal{M}_Y(c_0,c_1,c_2)$.
\begin{figure}[h] 
    \caption{\label{morseproduct}}
    \centering
    \includegraphics[scale=2.0]{./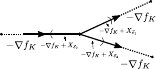}
\end{figure}

 Note near the vertex, the perturbation data can be arbitrary.
 It is not difficult to show that for a generic choice of
 perturbation data, our moduli spaces are cut out transversally.
 Somewhat
 informally, this corresponds to the fact that infinitesimally, solutions to
 the perturbed gradient flow equations correspond to intersections of the
 unstable and stable manifolds under perturbations by the diffeomorphisms
 $\phi_e$ given by integrating the
 vector fields $X_e$. As the vector fields $X_e$ can be chosen arbitrarily,
 these diffeomorphisms are essentially arbitrary (for a complete proof, see
 \cite[\S 7]{Abouzaidtop}). Futhermore, when our data is chosen generically,
 the zero dimensional components of the moduli spaces above induce maps between
 orientation lines as before, hence an operation on Morse complexes. This map
 can be composed with the restriction maps \eqref{eq: IKMorse}  to obtain a
 chain model for the $\star$ product map \eqref{eq:convol}. Equivalently, the
 constituent (chain-level) product map \eqref{eq:convol} for the ring structure
 defined in Definition \ref{defn: ringstructure}
 \[
     t^{\v_1} CM^*(\mathring{S}_I, f_I) \otimes t^{\v_2} CM^*(\mathring{S}_J, f_J) \to t^{\v_1 + \v_2} CM^*(\mathring{S}_K, f_K)
 \]
 can be defined by counting (in the usual signed sense) rigid elements of the fiber
 product of moduli spaces from \eqref{eq: IKMorse} and Definition
 \ref{defn:Ymoduli}:
 \[
     \coprod_{w,w' \in \mathcal{X}(\mathring{S}_K, f_K)} \mathcal{M}_Y(z, w, w') \times \mathcal{M}_{IK}(w, x) \times \mathcal{M}_{JK}(w', y),
 \]
 for each triple $x \in \mathcal{X}(\mathring{S}_I, f_I)$, $y \in
 \mathcal{X}(\mathring{S}_J, f_J)$, $z \in \mathcal{X}(\mathring{S}_K, f_K)$.
 See Figure \ref{starproduct} for a picture of an element of such a (broken) moduli space.
 \begin{figure}[h] 
    \caption{\label{starproduct}}
    \centering
    \includegraphics[scale=2.0]{./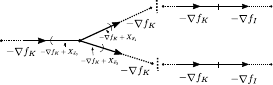}
\end{figure}

\subsection{Moduli spaces and operations} \label{section:lowenergydef}
Here we relate log cohomological structures to the Floer theoretic
constructions of \S \ref{section:SHtor}, by constructing the {\em low energy
log(arithmic) PSS map}:
\begin{equation}\label{psslowmodulispacesection}
    \PSSlog^{low} := (\bigoplus_{\v} \PSSlog^\v): \QH^*(M,\D)  \to \bigoplus_{p,q} E^{p,q}_{1},
\end{equation}
where the right hand side is the first page of the spectral sequence converging
to $SH^*(X)$ from \S \ref{sect: actionspec}.

Consider the domain 
\[S = \mathbb{C}P^1 \setminus \lbrace 0 \rbrace,\] 
thought of as a punctured Riemann sphere, with a distinguished marked point $z_0 =
\infty$ and a negative cylindrical end 
\begin{equation}
\epsilon: (-\infty, 0] \times S^1 \to S
\end{equation}
near $z=0$  given by 
\begin{equation}\label{cylend}
    \epsilon: (s,t) \mapsto e^{s+it} \end{equation} 
The map \eqref{cylend} can evidently be defined on all of $\R \times S^1$, and
correspondingly the coordinates $(s,t)$ extend to all of $S \setminus z_0$. 
Let $\rho(s)$ be a cutoff function as in \eqref{eq:cuttoff} and let 
\begin{equation}  
    \beta=\rho(s)dt \label{eq:sco} 
\end{equation} 
(implicitly smoothly extended across $z_0$).

Observe that $\beta$ restricts to $dt$ on the negative cylindrical end (at least for $s \ll 0$) and restricts to 0 in a neighborhood of $z_0$. At $z_0$, we also fix a
distinguished tangent vector which points towards the positive real axis. 
\begin{defn} \label{defn: sdcs} Let $\mathcal{J}_S(M,\D)$ denote the space of $S$-dependent
    families of complex structures $J_S = \{J_z\}_{z \in S}$ satisfying:
    \begin{itemize} 
    \item $J_z \in \mathcal{J}(M,\D)$ (see Definition \ref{defn:complexint}) for all $z \in S$;
    \item Near $z_0$, $J_S$ agrees with some fixed surface-independent almost
    complex structure $J_0 \in \mathcal{J}(M,\D)$; and
    \item along the negative cylindrical end, $J_S=J_F$ for some $J_F \in \mathcal{J}_F(M,\D)$ (recall Definition \ref{defn: S1acdef}).
\end{itemize}  
\end{defn} 

\begin{defn} 
    Let $\mathcal{J}_{S,\ell}(V) \subset \mathcal{J}_S(M,\D)$ be the space of
    almost complex structures such that along the negative cylindrical end,
    $J_S=J_F$ for some $J_F \in \mathcal{J}_{F,\ell}(V) \subset
    \mathcal{J}_F(M,\D)$ (again see Definition \ref{defn: S1acdef}).
\end{defn}

We first recall the definition of the classical PSS moduli space: 

\begin{defn} \label{def:classicalPSSmoduli}
Fix $\ell$ and $J_S \in \mathcal{J}_S(M,\D)$. For every orbit $x_0$ of $\mathcal{X}(X;\Hlm)$ or $\mathcal{X}(\D, \Hlm)$ define 
\begin{equation}\label{modulispace2}
\mc{M}(x_0)
\end{equation}
(which implicitly depends on $\Hlm$, $J_S$) to be the moduli space of maps
\[
u: S \ra M
\]
satisfying Floer's equation
\begin{equation} \label{eq:PSSeq}
    (du - X_{\Hlm} \otimes \beta)^{0,1} = 0
\end{equation}
(where $({0,1})$ is taken with respect to $J_S$) with asymptotics 
\begin{align}\label{eq:limitingcondition}
    \lim_{s \ra -\infty} u(\epsilon(s,t)) &= x_0
    \end{align}
\end{defn}
In \cite{GP1}, we introduced a relative version of these moduli spaces, where one imposes tangency conditions at the
marked points: 
\begin{defn} 
Fix $J_S \in \mathcal{J}_S(M,\D)$. For any orbit $x_0$ of $\mathcal{X}(X;\Hlm)$ and any multiplicity vector
$\v = (v_1, \ldots, v_k) \in \Z_{\geq 0}^k$, let 
\begin{equation}\label{modulispace1}
\mc{M}(\v, x_0)
\end{equation}
denote the moduli space of maps
\[
u: S \ra M
\]
satisfying Floer's equation \eqref{eq:PSSeq} (with respect to $\Hlm$, $J_S$), with
asymptotics \eqref{eq:limitingcondition}, and the following additional
tangency/intersection constraints:
\begin{align} \label{eq:incidencecondition}
u(z) &\notin \D\textrm{ for $z \neq z_0$}; \\
\label{eq:incidence} u(z_0) &\textrm{ intersects $D_i$ with multiplicity $\vi$ for $i \in I$}.
\end{align}
\end{defn} 
As explained in 
\cite[\S 3.5]{GP1}, when \eqref{eq:incidencecondition} and \eqref{eq:incidence} hold, the (real-oriented) projectivization of the $\vi$ normal jets of the map $u$ (with respect to a fixed real tangent ray in
$T_{z_0} C$) give an {\em enhanced evaluation map}
\begin{equation}\label{eq:enhancedevaluationpss}
    \Evzo:  \mc{M}(\v, x_0) \ra \mathring{S}_I
\end{equation}
where $\mathring{S}_I$ is defined as in \eqref{opentorusbundle}(compare \cite[\S 6]{CieliebakMohnke}, \cite[\S 3]{Ionel:2011fk}).
\begin{defn} \label{def: higher} 
For $c \in \chifI$ a critical point of $f_I$ on $\mathring{S}_I$, 
let $\mc{M}(\v,c, x_0)$  denote the moduli space 
    \begin{align} 
        \label{eq:eval} 
        \mc{M}(\v,c, x_0):= \mc{M}(\v, x_0)\times_{ \Evzo} W^s(f_I,c).
    \end{align}
\end{defn}  \vskip 5 pt
See Figure \ref{logpssfig} for a picture of the above definition.
  \begin{figure}[h] 
      \caption{The domain(s) of the moduli space $\mc{M}(\v,c, x_0)$ along with the incidence, matching, asymptotics, and PDE satisfied at various places.\label{logpssfig}}
    \centering
    \includegraphics[scale=1.5]{./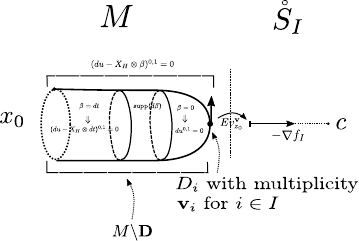}
\end{figure}
A straightforward index computation (see e.g., \cite[Lemma 4.11]{GP1}) shows
that the virtual dimension of $\mc{M}(\v,c, x_0)$ is given by
\begin{align}
    \label{vdimlogpss}
        \operatorname{vdim}(\mc{M}(\v,c, x_0))&= \deg(x_0)-  \deg(|\mathfrak{o}_c| t^{\v}), 
\end{align} 
where $\deg(|\mathfrak{o}_c| t^{\v})$ is as in \eqref{logdegreemorse}.
It follows from the same arguments explained in  \cite[Lemma 4.15]{GP1}
that for a generic choice of almost complex structure\footnote{In \cite{GP1},
we do not make any assumptions on the Nijenhuis tensor of our almost complex
structure, however the arguments explained there apply without modification.},
the moduli space \eqref{eq:eval} is a manifold of dimension equal to its virtual
dimension \eqref{vdimlogpss}.  In general, the Gromov compactification of this
moduli space necessarily contains holomorphic sphere bubbles, which could
obstruct counts of the (compactified) moduli space from (being well-defined or)
giving a chain map (see Lemma 4.21 of \cite{GP1}). 
The following Lemma gives us an estimate for the energy of the
PSS moduli space, which will be useful for further analysis: (Before stating it, we remind the reader of the definitions of the weighted winding numbers $w(\v)$ from \eqref{eq:windingvec} and $w(x_0)$ from \eqref{eq:weightedwindingnumber}.)
\begin{lem}\label{lem:energypss}
Fix a multiplicity vector $\v$  
as well as an orbit $x_0$.
If $\epsilon_\ell$ and $||\Hlm-\hlm||_{C^2}$  are each sufficiently
small and if $\Sigma_\ell$ (see \eqref{eq:defnsigmaell} and \eqref{eq: definitionsigma}) is sufficiently $C^0$-close to $\hatXl$ (recall Remark \ref{rem: C0close}), then the
topological energy of any $u
\in \mc{M}(\v, c, x_0)$ is approximately (i.e., becomes arbitrarily close to) 
\begin{align} 
    \label{eq:Etop} \Etop(u) \approx w(\v)-w(x_0)(1-\epsilon_{\ell}^2/2). 
\end{align} 
\end{lem}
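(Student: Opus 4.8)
The plan is to evaluate the topological energy $\Etop(u) = \int_S u^*\omega - \int_S d\bigl(u^*(\Hlm \otimes \beta)\bigr)$ directly, using Stokes' theorem to split it into a contribution coming from the puncture $z = 0$ (governed by the action of $x_0$) and a contribution coming from the marked point $z_0$ (governed by the tangency order $\v$). First I would note that $u$ has finite energy, and that by \eqref{eq:incidencecondition} the image $u(S \setminus \{z_0\})$ lies in $X$, where $\omega = d\theta$; hence on $S \setminus \{z_0\}$ we have $u^*\omega - d\bigl(u^*(\Hlm\otimes\beta)\bigr) = d\bigl(u^*\theta - u^*(\Hlm\otimes\beta)\bigr)$. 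Removing a small disc $D_\gamma$ around $z_0$ and truncating the negative cylindrical end at $\{s = -T\}$ produces a compact surface on which Stokes applies; since $\omega$ and $\Hlm\otimes\beta$ extend smoothly over $z_0$ and Floer solutions decay exponentially along the end, the integrals over $D_\gamma$ and over $\{s \leq -T\}$ tend to $0$ as $\gamma \to 0$ and $T \to \infty$, leaving
\[
\Etop(u) = \lim_{\gamma\to 0,\; T\to\infty}\Bigl(\int_{\partial D_\gamma} + \int_{\{s = -T\}}\Bigr)\bigl(u^*\theta - u^*(\Hlm\otimes\beta)\bigr),
\]
the two circles being oriented as components of the boundary of the truncated surface.

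Next I would evaluate the two limiting integrals. Along the negative cylindrical end $\beta = dt$ and $u(\{s=-T\},\cdot) \to x_0$ uniformly, so this term converges (with the orientation conventions above) to the $\Hlm$-perturbed action $A_{\Hlm}(x_0)$ of \eqref{eq:action}. Since $\|\Hlm - \hlm\|_{C^2}$ is small, $x_0$ is $C^1$-close to the Morse--Bott orbit set $\mathcal{F}_{\v(x_0)}$ and has essentially the same action, so Lemma \ref{lem: sharpactions} gives $A_{\Hlm}(x_0) \approx -w(x_0)(1 - \epsilon_\ell^2/2)$ once $\Sigma_\ell$ is sufficiently $C^0$-close to $\hatXl$, $K_\ell$ is close to $1$, and $t_0^{\vec{\epsilon}_\ell}$ is small. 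For the other term, $\beta$ vanishes near $z_0$, so it contributes only $\int_{\partial D_\gamma} u^*\theta$; near $z_0$ the map $u$ is $J_0$-holomorphic and meets $D_i$ at $z_0$ with multiplicity $v_i$ by \eqref{eq:incidence}, so for $\gamma$ small $u(\partial D_\gamma)$ is a loop inside the regularized neighborhood $U_I$ that winds $v_i$ times around $D_i$ (positively, by positivity of holomorphic intersections) and contracts to $u(z_0) \in D_I$ as $\gamma \to 0$. Using the explicit ``nice'' form of $\theta$ along the fibres of $\pi_I$ from Definition \ref{def:niceconvexstructure} --- so that the $\rho_i$-dependent part of $\int_{u(\partial D_\gamma)}\theta$ vanishes in the limit (the loop tends to $\{\rho_i = 0\}$) and the part pulled back from the base $D_I$ vanishes (the loop contracts there) --- exactly as in \cite[Lemma 2.11]{GP1}, this term converges to $w(\v)$. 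Adding the two contributions gives \eqref{eq:Etop}.

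The main obstacle is the local analysis at $z_0$: proving that $\int_{u(\partial D_\gamma)}\theta \to w(\v)$, i.e.\ controlling both the $\rho_i$-part and the base part of $\theta$ along the contracting loop $u(\partial D_\gamma)$ using only that $u$ is $J_0$-holomorphic with the prescribed tangency of order $\v$ at $z_0$. This is essentially the content of \cite[Lemma 2.11]{GP1}; the remaining work is the bookkeeping of orientation signs and the absorption of the small errors coming from the Hamiltonian perturbation $\Hlm - \hlm$ and from the $C^0$-approximation of $\hatXl$ by $\Sigma_\ell$.
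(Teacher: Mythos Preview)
Your proposal is correct and follows essentially the same approach as the paper: excise a small disc around $z_0$, use exactness of $\omega$ on $X$ and Stokes' theorem to reduce $\Etop(u)$ to two boundary terms, identify the term at the cylindrical end with the action $A_{\Hlm}(x_0)\approx -w(x_0)(1-\epsilon_\ell^2/2)$ via Lemma~\ref{lem: sharpactions}, and identify the term around $z_0$ with $w(\v)$ via \cite[Lemma 2.11]{GP1}. Your write-up is in fact more careful than the paper's (you make the truncation of the negative end explicit and track the boundary orientations), but the argument is the same.
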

\begin{proof}
    One obtains \eqref{eq:Etop} by taking the limit as $\delta \to 0$ of the
    topological energy of $u$ restricted to the complement $S_{\delta}:= S
    \backslash B_{\delta}(z_0)$. Now $u$ restricted to $S_{\delta}$ maps to $X$
    where $\omega$ is exact, so Stokes' theorem applies and this
    energy can be calculated as the integral of $\theta$ pulled back along
    $u|_{\partial B_{\delta}(z_{0})}$ minus the $H^{\ell}$ action of
    $x_0$, which we have seen in Equation \eqref{eq: action} is $\approx
    w(x_0)(1-\epsilon_{\ell}^2/2)$ under the above smallness hypothesis. 
    The term $w(\v)$ arises as limit of the former term, i.e., the limiting
    integral of $\theta$ over a small loop winding around each $D_i$ with
    multiplicity $v_i$, compare \cite[Lemma 2.11]{GP1}.  
\end{proof}

Using this, the next Lemma shows that the ``low energy'' PSS moduli spaces
\eqref{eq:eval} with $w(x_0)=w(\v)$ 
admit a nice compactification (i.e., without sphere bubbles):
\begin{lem} 
\label{lem:compactness} Suppose that $$w(\v) < \lambda_{\ell} $$ and that $\epsilon_\ell, ||\Hlm-\hlm||_{C^2}$  are both sufficiently small and $\Sigma_\ell$ is sufficiently $C^0$-close to $\hatXl$.
    Consider $|\mathfrak{o}_c| t^{\v} \in C_{log}^*(M,\D)$ and take $x_0 \in \mathcal{X}(X;\Hlm)$
    to be an orbit such that $w(x_0)=w(\v)$: 
\begin{itemize} \item If 
    $\deg(x_0)-\deg(|\mathfrak{o}_c| t^{\v})=0$, 
     then for generic $J_S \in
    \mathcal{J}_{S,\ell}(V)$, the moduli space ${\mc{M}}(\v, c, x_0)$ is compact. \vskip 5 pt
\item If  $\deg(x_0)-\deg(|\mathfrak{o}_c| t^{\v})=1$, then for generic  $J_S \in
    \mathcal{J}_{S,\ell}(V)$,  ${\mc{M}}(\v, c, x_0)$
    admits a compactification (in the sense of Gromov-Floer convergence)
    $\overline{\mc{M}}(\v,c, x_0)$, such that $\partial\overline{\mc{M}}(\v, c, x_0)= \partial_M \bigsqcup \partial_F$ where 
    \begin{align} 
        \partial_F:= \bigsqcup_{x',\deg(x_0)-\deg(x')=1}^{w(x')=w(\v)}  \mc{M}(x_0,x') \times \mc{M}(\v,c, x') \\
     \partial_M:= \bigsqcup_{c',\deg(c')-\deg(c)=1} \mc{M}(\v,c', x_0) \times \mc{M}(c',c)
     \end{align}
\end{itemize} 
  \end{lem}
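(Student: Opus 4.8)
The plan is to prove Lemma \ref{lem:compactness} by a standard Gromov--Floer compactness and gluing argument, with the one non-standard ingredient being the exclusion of sphere bubbling via the energy estimate of Lemma \ref{lem:energypss}. First I would recall that for generic $J_S \in \mathcal{J}_{S,\ell}(V)$ the moduli space ${\mc{M}}(\v,c,x_0)$ is cut out transversally of the expected dimension \eqref{vdimlogpss} (this is the transversality statement quoted from \cite[Lemma 4.15]{GP1}, together with the transversality of the Morse stable manifold evaluation in Definition \ref{def: higher}, which holds for generic $f_I$ and $g_I$). Next, by Gromov--Floer compactness for maps to the \emph{compact} manifold $M$, any sequence in ${\mc{M}}(\v,c,x_0)$ has a subsequence converging to a stable broken configuration consisting of: (i) a principal PSS component $u_\infty: S \to M$ still satisfying \eqref{eq:PSSeq} and the tangency constraint \eqref{eq:incidence} at $z_0$; (ii) possibly some broken Floer cylinders along the negative end; (iii) possibly some broken Morse flowlines in $S_I^{log}$ at the constraint end; and (iv) possibly some $J_z$-holomorphic sphere bubbles attached at points of $S$.

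The heart of the argument --- and the step I expect to be the main obstacle --- is showing that no sphere bubbles (iv) can appear, and moreover that the tangency constraint at $z_0$ does not ``degenerate'' onto a bubble. For this I would invoke Lemma \ref{lem:energypss}: under the smallness hypotheses on $\epsilon_\ell$, $\|\Hlm - \hlm\|_{C^2}$ and the $C^0$-closeness of $\Sigma_\ell$ to $\hatXl$, and using $w(x_0) = w(\v)$, the topological energy of any $u \in {\mc{M}}(\v,c,x_0)$ is approximately $w(\v) - w(x_0)(1-\epsilon_\ell^2/2) = w(\v)\cdot \epsilon_\ell^2/2$, which can be made smaller than the minimal energy $\hbar_{J_0}$ of any non-constant $J_z$-holomorphic sphere in $M$ (here one uses that the space of relevant $J_z$ is compact, so such an $\hbar > 0$ exists, together with $w(\v) < \lambda_\ell$ to keep the orbit $x_0$ within the controlled part of the complex). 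In the limit, topological energy is preserved and is non-negative on each piece (geometric energy is non-negative and equals topological energy up to the controlled Hamiltonian error on the PSS/Floer pieces, and equals a positive symplectic area on any sphere bubble); since the total is below $\hbar_{J_0}$, no sphere bubble can carry positive area, so there are no sphere bubbles. A parallel remark handles the tangency point: with no bubbling, the constraint \eqref{eq:incidence} passes to the limit on the principal component $u_\infty$ and defines the enhanced evaluation $\Evzo(u_\infty) \in \mathring{S}_I$ as in \eqref{eq:enhancedevaluationpss}, so $u_\infty \in {\mc{M}}(\v,c,x_0)$ for the appropriate $c$; one also notes Floer cylinders and Morse flowlines cannot escape to $\D$ by the maximum-principle / positivity-of-intersection arguments already used in Lemma \ref{lem: homotopycomp}, and that breaking at divisorial orbits is excluded as in the proof of that Lemma.

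Having excluded bubbling, the remaining analysis is routine. When $\deg(x_0) - \deg(|\mathfrak{o}_c| t^{\v}) = 0$ the limiting configuration has no room for any additional broken Floer cylinder or Morse flowline (each would cost at least one unit of index and leave a negative-index principal component, impossible by transversality), so the only limit is a reparametrization of an element of ${\mc{M}}(\v,c,x_0)$ itself; hence the moduli space is compact (a finite set of points). When $\deg(x_0) - \deg(|\mathfrak{o}_c| t^{\v}) = 1$, the usual dimension count shows that a codimension-one degeneration consists of exactly one breaking: either an index-one Floer cylinder $\mc{M}(x_0, x') $ splitting off at the negative end (with $w(x') = w(\v)$, since $\partial_{CF}$ preserves the winding filtration by Lemma \ref{lem:filteredsubcomplex} and the energy budget forbids $w(x') < w(\v)$ dropping out of the low-energy range --- actually the action-increasing property pins $w(x')$ to equal $w(\v)$), giving $\partial_F$, or an index-one Morse flowline $\mc{M}(c',c)$ splitting off at the constraint end, giving $\partial_M$. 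Standard gluing then shows every such broken configuration is the boundary of a unique end of the one-dimensional manifold ${\mc{M}}(\v,c,x_0)$, so $\overline{\mc{M}}(\v,c,x_0)$ is a compact one-manifold with boundary $\partial_M \sqcup \partial_F$ as claimed. The orientations and signs are handled by the orientation-line formalism exactly as for the ordinary PSS map in \cite{GP1}, which I would simply cite.
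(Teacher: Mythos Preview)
Your proposal is correct and follows essentially the same approach as the paper: the paper's proof also invokes Lemma~\ref{lem:energypss} to bound the topological energy by approximately $w(\v)\epsilon_\ell^2/2$, which is below the minimal sphere energy and hence excludes bubbling, cites the argument of \cite[Lemma~4.13]{GP1} (the same content as what you attribute to Lemma~\ref{lem: homotopycomp}) to rule out breaking along divisorial orbits, and then notes that the same energy budget forces any intermediate orbit $x'$ to satisfy $w(x')=w(\v)$. Your write-up is more detailed than the paper's terse version, but the logical skeleton is identical.
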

  \begin{proof}  
      We consider the closure $\overline{\mc{M}}(\v, x_0) \subset
      \overline{\mc{M}}(x_0)$. The argument of \cite{GP1}*{Lemma 4.13}
      rules out Floer cylinder breaking
      along $\D$, so the only ``bad'' possibilities which prevent the
      compactification from being as stated are sphere bubbling 
      (and cylinders breaking in $X$ but touching $\D$, but this is excluded by positivity of intersection).
      Now under our assumptions, Lemma \ref{lem:energypss} implies that the
      (topological) energy of a PSS solution is approximately
      $w(\v)\epsilon_\ell^2/2$. If $\epsilon_\ell$ is sufficiently small,
      then this quantity can be made much smaller than the minimal energy of a
      non-constant $J$-holomorphic sphere in $M$. Hence, 
      assuming all of the parameters in the statement of the Lemma are
      small enough to make the estimate \eqref{eq:Etop} have sufficiently small
      error, it follows that the energy of a PSS solution will be smaller
      than the minimal energy of a non-constant $J$-holomorphic sphere. This
      implies that such sphere bubbling is impossible. 
      The same argument shows
      that breaking can only occur along orbits with $w(x')=w(\v).$ 
  \end{proof} 
  See Figure  \ref{logpsslocalgromovcompactness} for a schematic picture of the various ``bad breakings'' exclude for low energy log PSS moduli spaces by Lemma \ref{lem:compactness}.

\begin{figure}[h] 
    \caption{ Bad breakings of low energy log PSS moduli spaces excluded by Lemma \ref{lem:compactness}. First, Floer cylinder breakings along orbits in $\D$ are ruled out by \cite{GP1}*{Lemma 4.13}. Then, sphere bubbling at any point in the domain (either the marked point or another point) are ruled out as there is not enough energy to product a non-constant $J$-holomorphic sphere. \label{logpsslocalgromovcompactness}}
    \centering
    \includegraphics[scale=1.0]{./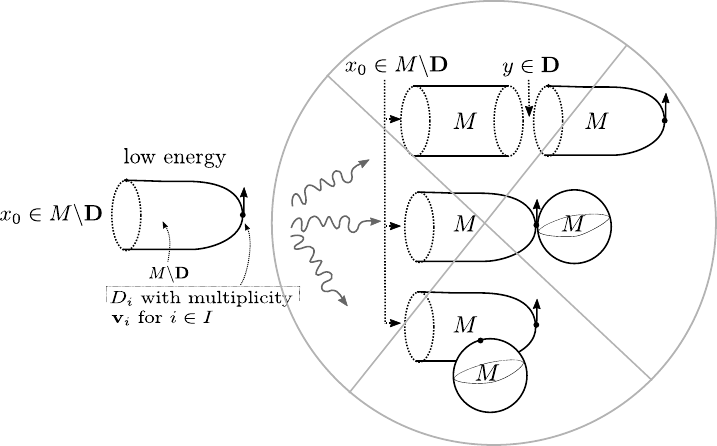}
\end{figure}

We next describe, for any $\v$ with $w(\v) \leq w_\ell$, a map: 
\begin{equation} \label{eq: PSSvmapeq}
\PSSlog^{\v,\ell}: H^*(\SIo)t^{\v} \to  HF^*(X \subset M ; \Hlm)_{w(\v)}
\end{equation} where $HF^*(X \subset M ; \Hlm)_{w(\v)}$ is defined in \eqref{eq:lowlow}. To do this, choose generic $J_S \in
\mathcal{J}_{S,\ell}(V)$ and apply the following prescription: as usual a rigid element $u \in
\mc{M}(\v,c,x_0)$ induces an isomorphism of orientation lines $\mu_u:
|\mathfrak{o}_c| \to |\mathfrak{o}_{x_0}|$. Then following a standard pattern,
we define $\PSSlog^{\v,\ell}$ by using such isomorphisms to give signed counts
of rigid elements (which is a well-defined finite count by Lemma
\ref{lem:compactness}): for $z \in |\mathfrak{o}_c|$, 
\begin{align} \label{eq:PSSdef}
 \PSSlog^{\v,\ell} (z t^{\v}) := \sum_{w(x_0) = w(\v), \operatorname{vdim}(\mc{M}(\v, c, x_0))=0} \sum_{u \in\mc{M}(\v, c, x_0)} \mu_u (z)
\end{align}
Under the smallness conditions of $\epsilon_\ell$, this is a well-defined
finite count and gives a chain map
\begin{equation}
    \label{eq:singlevlogpss}
    \PSSlog^{\v,\ell}: C^*_{log}(M,\D)_{\v} \to  CF^*(X \subset M ; \Hlm)_{w(\v)}
 \end{equation}
which induces  \eqref{eq: PSSvmapeq} on the level of cohomology.
 Taking a direct sum of these maps  \eqref{eq: PSSvmapeq} over all multiplicity vectors $\v$ with
 $w(\v)= w$, we obtain a map on cohomology 
 \begin{align}  
     \label{eq:locPSS} \PSSlog^{w,\ell}:
     \QH(M,\D)_w \to  HF^*(X \subset M ; \Hlm)_{w} 
 \end{align} 
 for any $w \leq w_\ell$. 
A straightforward variation of the above analysis (implemented in Lemma 4.18 of
\cite{GP1}) verifies that \eqref{eq:singlevlogpss}, hence
\eqref{eq:locPSS}, is compatible with the 
continuation maps \eqref{eq:lowencont}:
\begin{lem} \label{lem: lowcontok}
For $\ell_2 \geq \ell_1$, we have a commutative triangle: 
\[
\xymatrix{
   H^*(\SIo)t^{\v} \ar[d]^{\PSSlog^{\v,\ell_1}} \ar[dr]^{\PSSlog^{\v,\ell_2}} \\
  HF^{*}(X \subset M; \Hm^{\ell_1})_{w(\v)}  \ar[r]^{\mathfrak{c}_{\ell_1,\ell_2}}  & HF^{*}(X \subset M; \Hm^{\ell_2})_{w(\v)}
}
\] 
\end{lem}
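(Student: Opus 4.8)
The plan is to establish the commutative triangle by the standard ``interpolating moduli space'' argument, adapted to our relative setting with tangency conditions at the marked point $z_0$ and with careful attention to energy bounds so that no sphere bubbling intervenes. First I would fix $\v$ with $w(\v) \leq w_{\ell_1}$, fix a critical point $c \in \chifI$ and a pair of generic surface-dependent complex structures $J_S^{\ell_1} \in \mathcal{J}_{S,\ell_1}(V)$, $J_S^{\ell_2} \in \mathcal{J}_{S,\ell_2}(V)$ used to define the two low-energy log PSS maps. I would then introduce a moduli space $\mc{M}^{cont}(\v, c, x_0)$ of maps $u \colon S \to M$ which satisfy a homotopied version of equation \eqref{eq:PSSeq} — interpolating the Hamiltonian term from $\Hm^{\ell_1}$ to $\Hm^{\ell_2}$ along the negative cylindrical end (glueing in precisely the continuation homotopy $H_{s,t}$ from \eqref{eq: concretehomotopy}) — together with the tangency/intersection constraints \eqref{eq:incidencecondition}, \eqref{eq:incidence} at $z_0$ and the fiber-product constraint against $W^s(f_I,c)$ via the enhanced evaluation map $\Evzo$ of \eqref{eq:enhancedevaluationpss}. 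The asymptotic orbit $x_0 \in \mathcal{X}(X; \Hm^{\ell_2})$ ranges over orbits with $w(x_0) = w(\v)$, and a standard index count (as in \cite[Lemma 4.11]{GP1}, \eqref{vdimlogpss}) gives $\operatorname{vdim} = \deg(x_0) - \deg(|\mathfrak{o}_c| t^{\v})$.

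Next I would run the compactness analysis for the 0- and 1-dimensional strata of $\mc{M}^{cont}(\v, c, x_0)$, which is the step where the restriction $w(\v) < \lambda_{\ell_i}$ does its work. The key point is the energy estimate: because the interpolating homotopy is ``monotone up to small error'' (cf.\ Lemma \ref{lem: homotopycomp} and \eqref{eq: errorterm}), and because the orbits involved all have weight $w(\v)$, the topological energy of any interpolating solution is approximately $w(\v)\epsilon_{\ell}^2/2$ plus the controlled error $\operatorname{sup}(\Hm^{\ell_1})$, which is itself arbitrarily small by choosing $\epsilon_{\ell}$ small; this is the direct analogue of Lemma \ref{lem:energypss}. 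Hence, exactly as in the proof of Lemma \ref{lem:compactness}, for $\epsilon_{\ell_i}$ sufficiently small this energy lies below the minimal energy of a non-constant $J$-holomorphic sphere in $M$, so no sphere bubbling can occur; breaking along divisorial orbits in $\mathcal{X}(\D; \Hm^{\ell_i})$ is ruled out by the positivity-of-intersection argument of \cite[Lemma 4.13]{GP1}; and any Floer breaking off the cylindrical end must occur along an orbit $x'$ with $w(x') = w(\v)$. The upshot is that for $\operatorname{vdim} = 0$ the moduli space is compact of dimension $0$, and for $\operatorname{vdim} = 1$ its compactification has boundary exactly
\[
    \partial \overline{\mc{M}}^{cont}(\v, c, x_0) = \Bigl(\bigsqcup_{x'} \mc{M}(\v, c, x') \times \mc{M}^{cont}(x', x_0)\Bigr) \sqcup \Bigl(\bigsqcup_{x''} \mc{M}^{cont}(\v, c, x'') \times \mc{M}(x'', x_0)\Bigr) \sqcup \Bigl(\bigsqcup_{c'} \mc{M}(c, c') \times \mc{M}^{cont}(\v, c', x_0)\Bigr),
\]
the first two families being the continuation-map breakings at either end (with $\mc{M}^{cont}(x', x_0)$ the ordinary weight-preserving continuation trajectory moduli space defining $\mathfrak{c}_{\ell_1,\ell_2}$) and the last being Morse breaking on $\SIo$.

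Finally I would extract the identity. Counting rigid elements of $\mc{M}^{cont}(\v, c, x_0)$ defines a degree-$(-1)$ map $K \colon C^*_{log}(M,\D)_{\v} \to CF^{*}(X \subset M; \Hm^{\ell_2})_{w(\v)}$, and the boundary description above, together with the orientation bookkeeping in the now-standard way, gives the chain homotopy identity $\partial_{CF} \circ K + K \circ \partial_{log} = \mathfrak{c}_{\ell_1,\ell_2} \circ \PSSlog^{\v,\ell_1} - \PSSlog^{\v,\ell_2}$. Passing to cohomology and summing over all $\v$ with $w(\v) = w$ yields the commutativity of the triangle. The main obstacle, as usual in this circle of arguments, is not the algebra but the compactness/transversality package: one must verify that the interpolating moduli spaces are cut out transversally for generic data while keeping the tangency constraints at $z_0$ (the enhanced evaluation map machinery of \cite[\S 3.5]{GP1}) intact, and — more delicately — one must pin down the smallness conditions on $\epsilon_{\ell_1}, \epsilon_{\ell_2}$, on $\|\Hm^{\ell_i} - \hm^{\ell_i}\|_{C^2}$, and on the $C^0$-closeness of $\Sigma_{\ell_i}$ to $\hatXl$ uniformly enough that the energy bound excludes sphere bubbling for both the incoming and outgoing ends simultaneously. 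Since all of these are precisely the conditions already imposed in Lemmas \ref{lem:energypss}, \ref{lem:compactness} and \ref{lem: homotopycomp}, and since \cite[Lemma 4.18]{GP1} carries out exactly this argument in the closely related setting there, the proof reduces to citing that analysis with the word-for-word modifications indicated above.
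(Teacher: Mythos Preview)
Your proposal is correct and is precisely the argument the paper intends: the paper gives no detailed proof here, simply deferring to ``a straightforward variation of the above analysis (implemented in Lemma 4.18 of \cite{GP1})'', and your sketch fills in exactly that variation with the appropriate energy bounds to exclude sphere bubbling in the low-energy regime. One small expository slip: your list of boundary strata for the one-dimensional interpolating moduli space omits the parameter-endpoint stratum that directly produces $\PSSlog^{\v,\ell_2}$, but since that term appears correctly in your final chain-homotopy identity this is cosmetic.
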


In view of this, we drop the superscript $\ell$ from the notations $\PSSlog^{\v}$ and $\PSSlog^{w}.$ Summarizing the above considerations, we have the following: 
\begin{lem} 
    \label{lem:spectral} 
There are canonical maps: 
\begin{align} 
    \label{eq:Speciso1} \left(\bigoplus_{\v, w(\v) \leq w}\PSSlog^{\v}\right): F_w \QH^*(M,\D) \to \bigoplus_{p\geq -w,q} E_{\ell,1}^{p,q} \\  
    \label{eq:Speciso2}\PSSlog^{low} := \left(\bigoplus_{\v} \PSSlog^\v\right): \QH^*(M,\D)  \to \bigoplus_{p,q} E^{p,q}_{1}  
\end{align} 
where the spectral sequence appearing in  \eqref{eq:Speciso1} is defined in \eqref{eq:Spec1} and the spectral sequence appearing in \eqref{eq:Speciso2} is defined in \eqref{eq:Spec2}.
\end{lem} 
\begin{proof} The first statement is immediate from the above discussion and the second follows from that statement in view of the explicit description of the $E_1$ page from \eqref{eq:E1concrete} and Lemma \ref{lem: lowcontok}. \end{proof} 
We refer to the map defined in \eqref{eq:Speciso2} as the \emph{low energy} PSS map.

\subsection{Low energy log PSS is a ring map} \label{section:rings}

The aim of this subsection is to prove that
\begin{thm} 
    \label{lem:spectralrings} The low energy log PSS map \eqref{eq:Speciso2} is a ring homomorphism.  
\end{thm}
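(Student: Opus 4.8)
The plan is to prove Theorem~\ref{lem:spectralrings} by the standard PSS-type TQFT argument, adapted to the relative/divisorial setting and then pushed down to the associated graded level of the action spectral sequence. The basic strategy is to construct a moduli space of ``pair-of-pants log PSS'' solutions interpolating, on the one hand, between two log PSS caps attached to the two incoming ends of a pair of pants, and on the other hand, between a single log PSS cap composed with the log-cohomology product (the Morse $Y$-tree together with the restriction maps $r_{IK}^*$ of \S\ref{subsec:logcoh}). Concretely, I would fix multiplicity vectors $\v_1$ supported on $I$ and $\v_2$ supported on $J$, set $K = I \cup J$, and consider a one-parameter family of domains: a sphere with two marked points $z_1, z_2$ carrying tangency data of orders $\v_1, \v_2$ along $\D$, a negative cylindrical end, and a varying conformal parameter that degenerates at one end of the interval to a nodal configuration (two separate PSS caps glued along a pair of pants) and at the other end to a configuration where $z_1$ and $z_2$ collide into a single point $z_0$ carrying the combined tangency $\v_1 + \v_2$ (together with a Morse-theoretic $Y$-graph / broken flow recording the cup product $\alpha_1 \star \alpha_2$ via the restriction maps). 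The key geometric input is that the enhanced evaluation map $\Evzo$ of \eqref{eq:enhancedevaluationpss} behaves compatibly with collision of marked points carrying tangency data: when $z_1, z_2$ come together, the $(\v_1 + \v_2)$-jet of the colliding map at $z_0$, projected to $\mathring{S}_K$, agrees with the $\star$-product of the individual projections to $\mathring{S}_I$ and $\mathring{S}_J$ restricted to the common deeper stratum $\mathring{S}_K$. This is precisely the relative analogue of the classical fact (Piunikhin--Salamon--Schwarz) that the PSS map intertwines products, but with jets along normal bundles to strata replacing point evaluations.

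The key steps, in order, are: (1) set up the interpolating moduli space $\mathcal{M}^{PoP}_{\v_1,\v_2}(c_1, c_2, c_0, x_0)$ of such log PSS pair-of-pants solutions with appropriate tangency and incidence constraints away from the marked points (requiring maps to land in $X$ off the marked points, exactly as in Definition~\ref{def: higher}), equipped with evaluation maps to $W^s$-manifolds of $f_I, f_J, f_K$ at the marked/limit data; (2) compute the virtual dimension, matching \eqref{vdimlogpss} and \eqref{logdegreemorse}, and establish transversality for generic $S$-dependent almost complex structures in $\mathcal{J}_{S,\ell}(V)$ and generic perturbation data, using the same arguments as \cite[Lem. 4.15]{GP1}; (3) establish the crucial energy estimate analogous to Lemma~\ref{lem:energypss}: for the pair-of-pants PSS moduli space with total incoming winding data $\v_1, \v_2$ and outgoing orbit $x_0$, the topological energy is approximately $w(\v_1) + w(\v_2) - w(x_0)(1 - \epsilon_\ell^2/2)$, so that when $w(x_0) = w(\v_1) + w(\v_2) = w(\v_1 + \v_2)$ (the only relevant case on the $E_1$-page, since the product respects the filtration) the energy is again $O(\epsilon_\ell^2)$, smaller than the minimal energy of any $J$-holomorphic sphere, hence sphere bubbling (and crossing $\D$) is excluded just as in Lemma~\ref{lem:compactness}; (4) analyze the boundary of the compactified one-dimensional components: codimension-one strata are exactly (a) Floer breaking along the negative end (at orbits with the same weight), (b) Morse breaking in the $W^s$/$W^u$ ends over the strata, (c) the $\lambda \to 0$ end where the pair of pants splits off, giving $\PSSlog^{\v_1}(\alpha_1 t^{\v_1}) \cdot_{\text{Floer}} \PSSlog^{\v_2}(\alpha_2 t^{\v_2})$ (the product being the one on $CF^*$ from \S\ref{subsection:Floercohom}, inducing the product on $E_1$), and (d) the $\lambda \to \infty$ end where the marked points collide, giving $\PSSlog^{\v_1 + \v_2}((\alpha_1 \star \alpha_2) t^{\v_1 + \v_2})$, using the jet-compatibility of enhanced evaluation; (5) conclude that the signed count of the boundary vanishes, yielding the chain-level identity (up to filtered homotopy) $\PSSlog^{low}(\alpha_1 t^{\v_1}) \cdot \PSSlog^{low}(\alpha_2 t^{\v_2}) = \PSSlog^{low}((\alpha_1 t^{\v_1}) \cdot (\alpha_2 t^{\v_2}))$ on the $E_1$-page, using that all product structures on $E_1$ are induced from the chain-level products which we have arranged (in \S\ref{subsection:Floercohom} and \S\ref{sect: actionspec}) to respect the filtration $F_w$.

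I expect the main obstacle to be step (4)(d), i.e.\ a careful analysis of the ``marked points colliding'' boundary stratum and the verification that it reproduces exactly the convolution product $\star$ of \eqref{eq:convol} --- in particular that the relevant Morse $Y$-tree with its restriction maps $r_{IK}^*, r_{JK}^*$ emerges naturally from the gluing/degeneration picture, with matching orientations. This requires understanding how jets of order $\v_1$ at $z_1$ and order $\v_2$ at $z_2$ limit, under collision, to a jet of order $\v_1 + \v_2$ at $z_0$, and how the real-oriented projectivization defining $\Evzo$ interacts with this limit; the statement is geometrically natural (it is the relative analogue of the classical PSS product argument of \cite{Piunikhin:1996aa}, and parallels how punctured log curves with constant underlying stable map compute the ring structure, cf.\ the Remark after Definition~\ref{defn: ringstructure}), but making it rigorous involves a nontrivial extension of the enhanced-evaluation formalism of \cite[\S3.5]{GP1} and \cite[\S6]{CieliebakMohnke} to degenerating families with colliding tangency points. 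A secondary technical point is the usual one of arranging coherent perturbation/transversality data that is simultaneously compatible with all the degenerations (Floer breaking, Morse breaking, pair-of-pants splitting, point collision), which I would handle by the standard inductive choice of consistent universal perturbation data as in \cite{Abouzaidtop}; once the energy estimate in step (3) is in place, the compactness argument is a verbatim repetition of Lemma~\ref{lem:compactness} and presents no new difficulty.
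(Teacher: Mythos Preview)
Your proposal follows essentially the same TQFT cobordism strategy as the paper: a one-parameter family of two-pointed PSS domains, with one end giving $\PSSlog^{low}(\alpha_1 t^{\v_1})\cdot\PSSlog^{low}(\alpha_2 t^{\v_2})$ (pair-of-pants splitting) and the other giving $\PSSlog^{low}((\alpha_1\star\alpha_2)t^{\v_1+\v_2})$ (marked points colliding), with the low-energy hypothesis $w(x_0)=w(\v_1)+w(\v_2)$ excluding sphere bubbles exactly as you say. You have also correctly identified that the collision boundary~(4)(d), i.e.\ the continuity of the enhanced evaluation under collision of tangency points, is the crux.

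There is, however, one important idea in the paper's argument that you have not anticipated, and without it your step~(4)(d) is substantially harder than you estimate. The paper does \emph{not} attempt the jet-collision analysis for arbitrary $(\v_1,\v_2)$. Instead it first proves an elementary algebraic lemma (Lemma~\ref{lem:generatingrelationsfortheproduct}): the ring structure on $H^*_{log}(M,\D)$ is determined by products where $\v_2=\v_J$ is \emph{primitive} and either (i) $\v_1=\v_I$ is also primitive, or (ii) $J\subseteq I$ and $\alpha_2=[\mathring S_{J,m}]$ is a fundamental class. This reduces the multiplicativity check to these two cases. The payoff is that the continuity of the enhanced evaluation maps \eqref{eq:evz1}, \eqref{eq:evz2} into $M^{log}$ as the marked points collide (Lemma~\ref{lem: evalKN}) is then established by an explicit Taylor-expansion argument that uses the primitivity hypothesis in an essential way: in case~(i) one tracks at most a second-order jet in each normal coordinate, and in case~(ii) one only needs continuity of the evaluation at the $\v_1$-marked point (not the other), while the fundamental-class hypothesis on $\alpha_2$ lets one replace the stable-manifold constraint at $z_2$ by a generic open condition. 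For general $(\v_1,\v_2)$ the leading-jet bookkeeping under collision is considerably more intricate and the paper avoids it entirely. So your plan is correct in outline, but you should insert the reduction to generating products before attempting~(4)(d); this will make the hardest step routine rather than a ``nontrivial extension of the enhanced-evaluation formalism.''
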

A proof of Theorem \ref{lem:spectralrings} appears at the bottom of this
subsection, using intermediate results we now describe.
For intuition, the reader could also simply consult Figure \ref{logpssringmap} which
gives an overview of the series of 1-parametric families of moduli spaces used
to construct chain homotopies (and the intermediate moduli spaces whose
operations they provide chain homotopies between) between the product of low
energy log PSS elements and the low energy log PSS of the product element. 
\begin{figure}[h] 
    \caption{The 1-parameter families of surfaces on the right column of this figure indicate the domains of interpolating moduli spaces of maps which produce chain homotopies between the operations coming from the surfaces on the left column. To turn all these domains into the desired low energy log PSS-type moduli spaces, one should study maps from (each component of) these domains satisfying the relevant PDE along with the following conditions/modifications (compare e.g., in Figure \ref{logpssfig}): (a) the complement of all marked points on each Riemann surface $\Sigma$ maps to $M \backslash \D$. (b) for any $\Sigma$ above with two marked points (cases (1)-(4)), the ``top'' point in the picture hits $\D$ with multiplicity $\v_1$ and the ``bottom'' point hits $\D$ with multiplicity $\v_2$. (c) for any $\Sigma$ with only one marked point (cases (5)-(7)), this point hits $\D$ with multiplicity $\v_1 + \v_2$ (d) For $T$ any rooted metrized ribbon tree pictorially attached to a marked point $p$ on any Riemann surface $\Sigma$, the associated space of Morse trajectories (in $S_K^{log}$, $S_I^{log}$, or $S_K^{log}$) should actually be attached to the {\em enhanced evaluation} at $p$ of the map from $\Sigma$ in the sense of \eqref{eq:enhancedevaluationpss} (e) Morse trajectories should be further perturbed (near 3-valent vertices) as spelled out in Figure \ref{morseproduct}.
    \label{logpssringmap}}
    \centering
    \includegraphics[scale=0.9]{./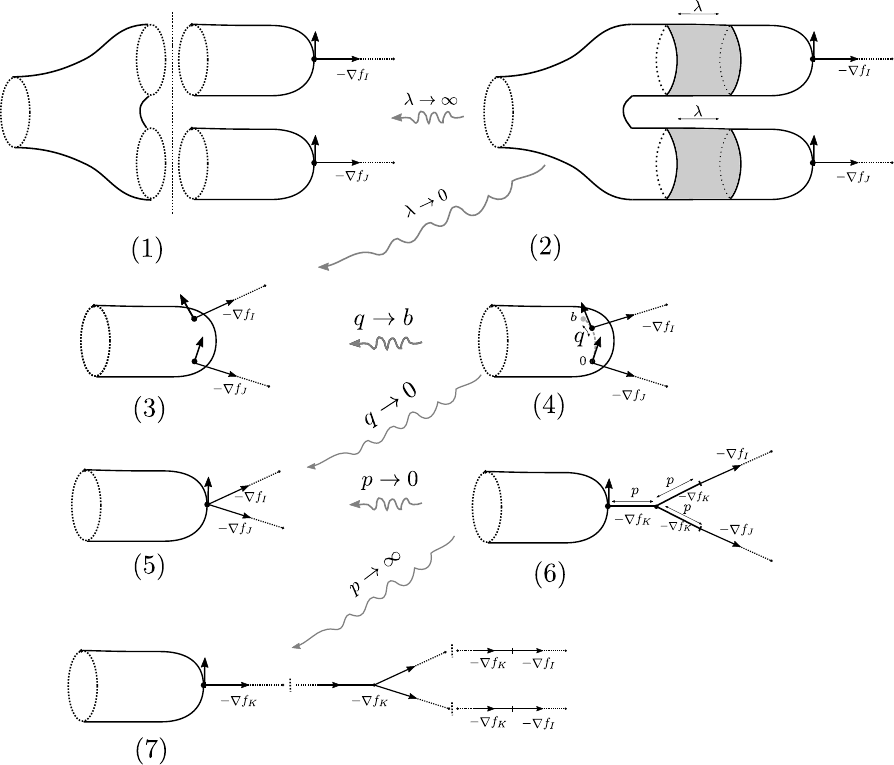}
\end{figure}

In order to simplify our proof, we begin by observing that the ring structure on
$H_{log}^*(M,\D)$ from Definition \ref{defn: ringstructure} is determined uniquely
by the value of the product on a much smaller collection of (pairs of)
elements. Given a stratum $\mathring{S}_J$ with connected components
$\mathring{S}_{J,m}$, $m \in \lbrace 1,\cdots,\pi_0(\mathring{S}_J) \rbrace$,
we denote the fundamental cycle (i.e., the unit) in $H^0(\mathring{S}_{J,m})$
by $[\mathring{S}_{J,m}]$. For what follows below, recall the definition
\eqref{eq:primitivevI} of the primitive vector $\v_I$ supported on a given $I
\subset \{1, \ldots, k\}$. 
\begin{lem}\label{lem:generatingrelationsfortheproduct}
The product on $H_{log}^*(M,\D)$ is the unique (graded-) commutative ring structure such that 
\begin{itemize} 
    \item[(i)]  For any $I$, $J$, $\alpha_1 \in H^*(\mathring{S}_I)$ and $\alpha_2 \in H^*(\mathring{S}_J)$,
        \begin{align} 
            \label{eq:drel2} 
            \alpha_1t^{\v_I} \cdot \alpha_2t^{\v_J}= (\alpha_1 \star \alpha_2)t^{\v_I + \v_J}.
        \end{align} 

    \item[(ii)] For any $I$, let $\v$ be an \emph{arbitrary} multiplicity vector supported on $I$ and $\alpha \in
        H^*(\mathring{S}_I)$ a cohomology class. For any $J \subseteq I$, we have that:
        \begin{align} 
            \label{eq:drel1} \alpha t^{\v} \cdot
            [\mathring{S}_{J,m}]t^{\v_J}=( \alpha \star [\mathring{S}_{J,m}])
            t^{\v+\v_J} 
        \end{align}
 \end{itemize} 
 \end{lem}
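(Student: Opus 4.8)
The plan is to reduce the full product rule of Definition \ref{defn: ringstructure} to the two families of relations (i) and (ii) by a two-step argument. First I would observe that the stated product structure is \emph{uniquely} determined by its values on pairs of classes of the form $\alpha t^{\v}$ (since these generate $\QH^*(M,\D)$ as a $\K$-module), so it suffices to show that knowing (i) and (ii) forces the value of $\alpha_1 t^{\v_1} \cdot \alpha_2 t^{\v_2}$ for \emph{arbitrary} multiplicity vectors $\v_1, \v_2$ supported on $I, J$ respectively. Write $K = I \cup J$. The key algebraic input is the factorization $t^{\v} = t^{\v_I} \cdot t^{\v - \v_I}$ available whenever $\v$ is supported on $I$: here $\v - \v_I$ is a vector with non-negative entries supported on $I$, and multiplying by $t^{\v_I}$ simply transports a class in $H^*(\mathring{S}_I)$ to a higher-multiplicity copy of the same group via relation (i) (with $\alpha_2 = $ a fundamental class, or rather by iterating the $J=I$ case of (i) which is just cup product in $H^*(\mathring{S}_I)$ tensored with polynomial multiplication in the $t_i$, $i\in I$).

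Second, given this, I would argue as follows. Any element $\alpha_1 t^{\v_1}$ with $\v_1$ supported on $I$ can be written, using commutativity and relation (i) repeatedly, as $\big(\alpha_1 t^{\v_I}\big)\cdot t^{\v_1 - \v_I}$, and similarly $\alpha_2 t^{\v_2} = \big(\alpha_2 t^{\v_J}\big) \cdot t^{\v_2 - \v_J}$. Associativity and commutativity of the (assumed) ring structure then let us regroup the product $\alpha_1 t^{\v_1}\cdot \alpha_2 t^{\v_2}$ as $\big(\alpha_1 t^{\v_I} \cdot \alpha_2 t^{\v_J}\big)\cdot t^{\v_1 - \v_I}\cdot t^{\v_2 - \v_J}$; the first factor is computed by (i) to be $(\alpha_1 \star \alpha_2) t^{\v_I + \v_J}$, and the remaining monomial factors $t^{\v_1 - \v_I}, t^{\v_2 - \v_J}$ are supported on subsets of $K$, so multiplying by them — again via (i), using that $\mathring{S}_{K}$ is the relevant common stratum and the classes $t^{\v}$ act by the polynomial-variable multiplication built into \eqref{eq:convol} — yields exactly $(\alpha_1\star\alpha_2)t^{\v_1+\v_2}$, matching Definition \ref{defn: ringstructure}. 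So (i) and (ii) together determine the product uniquely, and conversely the product of Definition \ref{defn: ringstructure} visibly satisfies both (i) (it is a special case of the defining formula) and (ii) (it is another special case, with $\v_2 = \v_J$ primitive and $\alpha_2 = [\mathring{S}_{J,m}]$), so the two descriptions coincide.

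The role of relation (ii) specifically — as opposed to just (i) — is that in the TQFT/Morse-model proof of Theorem \ref{lem:spectralrings} which this lemma serves, the product of a general class $\alpha_1 t^{\v_1}$ with a \emph{primitive unit} class $[\mathring{S}_{J,m}] t^{\v_J}$ for $J \subseteq I$ is the geometrically accessible case (it corresponds to a restriction-of-strata map composed with a class that is pulled back from a point-component), so I would make sure the reduction is phrased so that only products of the two types (i) and (ii) — rather than fully general pairs — need to be matched on the Floer side. Concretely: any pair $\alpha_1 t^{\v_1}, \alpha_2 t^{\v_2}$ reduces, by the regrouping above, to a composite of products each of which is either of type (i) (two primitive-support classes, or a class times a pure $t$-monomial which is the $\alpha_2 = $ unit special case) or type (ii).

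The main obstacle I anticipate is purely bookkeeping: carefully tracking that the iterated regrouping respects gradings (via \eqref{eq:loggrading}) and signs in the graded-commutative setting, and verifying that the ``$t$-monomial multiplication'' operations genuinely follow from (i) applied with fundamental-class inputs on possibly-disconnected strata — which is where one must invoke the component-wise formulation with $[\mathring{S}_{J,m}]$ in (ii) rather than a single fundamental class, since $\mathring{S}_J$ need not be connected. None of this is deep, but it is the only place where care is needed; the essential content is the factorization $t^{\v} = t^{\v_I}\cdot t^{\v-\v_I}$ plus associativity.
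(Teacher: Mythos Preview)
Your proposal is correct and matches the paper's approach, which consists of the single sentence ``It is a straightforward exercise to see that these relations imply the multiplication rule given in Definition~\ref{defn: ringstructure}.'' You have supplied the details of that exercise: factor each $\alpha_i t^{\v_i}$ as a primitive-support class times a product of singleton monomials $e_j t^{\v_j}$, use associativity and commutativity to regroup, apply (i) to the primitive pair, then reabsorb the monomials via (ii).

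One small expository point: in your first paragraph you attribute the factorization $\alpha_1 t^{\v_1} = (\alpha_1 t^{\v_I})\cdot t^{\v_1-\v_I}$ to iterating relation (i), but (i) only governs products of two \emph{primitive}-support classes, so once you have passed to $\alpha_1 t^{2\v_I}$ you cannot apply (i) again. The iteration that builds $\alpha_1 t^{\v_1}$ from $\alpha_1 t^{\v_I}$ by successively multiplying by $e_j t^{\v_j}$ (with $j\in I$) is exactly relation (ii) with $J=\{j\}\subseteq I$, summed over components $m$. You do invoke (ii) for precisely this purpose later in the proposal, so this is only a matter of attribution in the opening paragraph, not a gap in the argument.
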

 \begin{proof}
It is a straightforward exercise to see that these relations imply the
multiplication rule given in Definition \ref{defn: ringstructure}.
\end{proof}
Thus Theorem \ref{lem:spectralrings} will follow from verifying the following
special cases:
\begin{itemize} 
    \item[(i)]  For any $I$, $J$,  $\alpha_1 \in H^*(\mathring{S}_I)$ and $\alpha_2 \in H^*(\mathring{S}_J)$,
    \begin{align} \label{eq:rel2} \PSSlog^{low}(\alpha_1t^{\v_I}) \cdot \PSSlog^{low}(\alpha_2t^{\v_J})= \PSSlog^{low}((\alpha_1 \star \alpha_2)t^{\v_I + \v_J}).
    \end{align}

\item[(ii)] For any $I$, $J \subseteq I$, $\alpha \in H^*(\mathring{S}_I)$ and an arbitrary multiplicity vector $\v$ supported on $I$,
    \begin{align} \label{eq:rel1} 
        \PSSlog^{low}(\alpha t^{\v}) \cdot  \PSSlog^{low}([\mathring{S}_{J,m}]t^{\v_J})=\PSSlog^{low}((\alpha \star [\mathring{S}_{J,m}]) t^{\v+\v_J}). 
    \end{align} 
 \end{itemize}

\begin{rem} The place in our proof of Theorem \ref{lem:spectralrings} where we restrict to these special cases is in Lemma \ref{lem: evalKN}. In fact, we expect that the continuity statement of Lemma \ref{lem: evalKN} is true for arbitrary pairs of multiplicity vectors $\v_1, \v_2$, however the analysis is somewhat simpler if we restrict to the cases above. \end{rem}

 The proof of Theorem \ref{lem:spectralrings} in these cases follows the standard
 pattern in TQFT of using interpolating families of moduli spaces to give a
 cobordism (relative ``chain homotopy terms'') between the operations either
 side of the equality. In our case, the argument bears a resemblance to
 Piunikhin-Salamon-Schwarz's original argument \cite{Piunikhin:1996aa} that the
 PSS map intertwines product structures, adapted in a non-trivial way to our
 (``relative $\D$'') setup.

 To begin, for some small $b > 0$ and any $q \in (0,b]$, let $\Sigma_{q}= \mathbb{C}P^1
 \setminus \lbrace 0 \rbrace$, with a negative cylindrical end \eqref{cylend}
 as before, along with two additional marked points at $z_1=-1/q$ and
 $z_2=\infty$. The family of $\{\Sigma_q\}_{q \in (0,b]}$ limits to a stable domain
 $\Sigma_0$ as $q \to 0$, and including this domain gives a family of stable curves over
 $q \in [0,b]$. 
 It will be convenient to work in coordinates centered about
 $\infty$, which we denote by  \begin{align} \label{eq:coordinfty} y = z^{-1}. \end{align}
 As usual, we will equip these domains with Floer data. To simplify
 the discussion a bit, note that our product operation from \S \ref{subsection:Floercohom} is defined by first continuing both inputs to a single Liouville domain and then defining the product on a fixed Liouville domain. As $\PSS_{log}$ is compatible with these continuation maps, we can and will assume that our Liouville domain $\bar{X}_\ell$ used to define the Hamiltonians remains fixed.
 The Floer data on domains $\Sigma_q, q \neq 0$ is
 determined by a sub-closed one form  $\beta$ satisfying
\begin{align}
    &\textrm{The form $\beta$ restricts to $2dt$ on the cylindrical end \eqref{cylend}.}\\
    &\textrm{$\beta=0$ in a neighborhood of the points $y(z_1)=-q$ and $y(z_2)=0$.}
\end{align}

In the next definition, we fix vectors $\v_1,\v_2$ as well as an orbit $x_0$ of $H^{\ell}$. We also fix a family of domain-dependent almost complex structures $J_{\Sigma_q} \in \mathcal{J}_S(\bar{X}_\ell, V)$, which varies smoothly in $q$ and so that $J_{\Sigma_{q}}$ is domain independent in a neighborhood of $y(z_1)=-q$ as well. We assume that as $q \to 0$, $J_{\Sigma_{q}}$ converges to a complex structure $J_S \in \mathcal{J}_S(\bar{X}_\ell, V)$ which we used to define low energy log $\PSS$. 

\begin{defn} \label{psstwomarkedpoints}
 Define $\mathcal{M}(\v_1, \v_2 ; x_0)$ to be the moduli space of pairs 
 \[
 \{(q,u) | q \in (0,b],\ u: \Sigma_q \to M\}
 \]
whose associated map $u$ satisfying Floer's equation
\begin{equation}
(du - X_{\Hlm} \otimes \beta)^{0,1} = 0
\end{equation}
(where $0,1$ component is taken with respect to $J_{\Sigma_q}$), with asymptotics given by $x_0$
\begin{equation}
\lim_{s \ra -\infty} u(\epsilon(s,t)) = x_0
\end{equation}
and tangency/intersection conditions along $\D$ at $(z_1, z_2)$ as specified by
$(\v_1, \v_2)$: 
\begin{align}
u(x) &\notin \D \textrm{ for $x \neq z_i$}; \\
 u(z_1) &\textrm{ intersects $\D$ with multiplicity $\v_1$}.\\
 u(z_2) &\textrm{ intersects $\D$ with multiplicity $\v_2$}.
\end{align}
We define $\mathcal{M}_s(\v_1, \v_2; x_0)$ to consist of the subspace of maps
$(q,u)$ as above with $q=s$, i.e., the space of maps $u: \Sigma_s \to M$ satisfying
the conditions above.  
\end{defn} 
The virtual dimension of these moduli spaces $\mathcal{M}_s(\v_1, \v_2; x_0)$ is \begin{align} \operatorname{vdim}(\mathcal{M}_s(\v_1, \v_2; x_0))=  \deg(x_0)- 2\sum_{i=1}^k (1-a_i) v_{1,i}- 2\sum_{i=1}^k (1-a_i) v_{2,i}. \end{align}  For generic choices of $J_{\Sigma_{q}}$, these are moduli spaces of the expected dimension by arguments similar to those in Lemma 4.14 of \cite{GP1}. 

As a degenerate case of this (when $q = 0$) we study maps from $\Sigma_0$. For what
follows, write $\Sigma_0 = \Sigma_{sphere} \cup_{z' = z_0} S$, where $\Sigma_{sphere}$ is the
sphere bubble containing $z_1$, $z_2$ and another (nodal) point $z'$ and
$S= (\mathbb{C}P^1 \backslash \{0\},z_0)$ is the usual domain for $\PSS_{log}$ (which is
connected to $z'$ at the point $z_0$).
\begin{defn}\label{psscollidedmarkedpointsghost}
    Define $\mathcal{M}_0(\v_1,
    \v_2; x_0)$ to be the moduli space of maps $u:
    \Sigma_0 \to M$ satisfying the following conditions: 
    \begin{align}
        &u|_{\Sigma_{sphere}}\textrm{ is a constant map};\\
        &u|_{S}\textrm{ is an element of the moduli space $\mathcal{M}(\v_1 + \v_2; x_0)$}.
    \end{align}
\end{defn}
Using this $(q=0)$ moduli space, we can state the key result about the Gromov compactification of $\mathcal{M}(\v_1, \v_2; x_0)$:
\begin{lem}\label{lem:nospheresinproducthomotopy}
    If $w(x_0) = w(\v_1) + w(\v_2)$ and the constants $\epsilon_{\ell}$, etc.
    are chosen to be small as in Lemma \ref{lem:energypss}, then the Gromov
    compactification of $\mathcal{M}(\v_1, \v_2; x_0)$ 
    \begin{equation}
        \overline{\mathcal{M}}(\v_1, \v_2 ; x_0)
    \end{equation}
    has codimension-1 boundary covered by the images of the natural inclusions:
    \begin{align}
        \overline{\mathcal{M}}_b(\v_1, \v_2 ; x_0) &\to \partial \overline{\mathcal{M}}(\v_1, \v_2 ; x_0)\  (q \to b)\\
        \overline{\mathcal{M}}_0(\v_1, \v_2 ; x_0) &\to \partial \overline{\mathcal{M}}(\v_1, \v_2 ; x_0)\  (q \to 0)\\
        \overline{\mathcal{M}}(\v_1, \v_2 ; x_1) \times \overline{\mathcal{M}}(x_1, x_2) &\to \partial \overline{\mathcal{M}}(\v_1, \v_2 ; x_0).
    \end{align}
\end{lem}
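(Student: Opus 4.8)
The plan is to run the standard Gromov--Floer compactness argument for the 1-parameter family $\{(q,u)\}$ over $q \in (0,b]$, and show that the only strata that can appear in the compactification are the three listed. First I would invoke Gromov compactness for maps to the closed manifold $M$ (recalling that $J_{S_q} \in \mathcal{J}(M,\D)$ tames $\omega$, so energy bounds are available): a sequence $(q_n, u_n)$ with $q_n \to q_\infty$ and uniformly bounded topological energy has a subsequence converging to a stable map. The energy bound here is exactly Lemma \ref{lem:energypss} adapted to the two-marked-point domains: under the smallness hypotheses and the assumption $w(x_0) = w(\v_1)+w(\v_2)$, the topological energy of any $u \in \mathcal{M}_{q}(\v_1,\v_2;x_0)$ is approximately $w(\v_1)+w(\v_2) - w(x_0)(1-\epsilon_\ell^2/2) \approx w(x_0)\epsilon_\ell^2/2$, which can be made smaller than the minimal energy of any non-constant $J$-holomorphic sphere in $M$.

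The core of the argument is then a case analysis of the possible limits, exactly parallel to the proof of Lemma \ref{lem:compactness}. There are three mechanisms: (a) the domain parameter $q$ converging to an endpoint of $[0,b]$; (b) breaking of a Floer cylinder along the negative cylindrical end; (c) bubbling of $J$-holomorphic spheres. For (a): $q \to b$ gives the first boundary stratum $\overline{\mathcal{M}}_b$; $q \to 0$ is where the domain $S_q$ degenerates to the nodal curve $S_0 = S_{sphere}\cup_{z'} S_{plane}$, and one must check that in this limit the component $S_{sphere}$ carries a constant map (so the limit lies in $\overline{\mathcal{M}}_0(\v_1,\v_2;x_0)$ of Definition \ref{psscollidedmarkedpointsghost}): this follows because $S_{sphere}$ receives no asymptotic constraint and no $\beta$-term (as $\beta \equiv 0$ near $z_1,z_2$), so a non-constant component there would be a $J$-holomorphic sphere, ruled out by the energy bound, while a constant component survives stability by carrying the three special points $z_1,z_2,z'$; the tangency conditions then pass to $u|_{S_{plane}}$ as intersection multiplicity $\v_1+\v_2$ at $z' = z_0$. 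For (b): breaking along the cylindrical end produces the third stratum, a two-level configuration $\overline{\mathcal{M}}(\v_1,\v_2;x_1) \times \overline{\mathcal{M}}(x_1,x_2)$, and the energy estimate combined with the fact that the Floer differential strictly increases $w$ forces $w(x_1) = w(x_0)$, so the constraint is preserved. For (c): any sphere bubble would, as in Lemma \ref{lem:compactness}, have positive symplectic area, hence the energy bound $w(x_0)\epsilon_\ell^2/2$ forbids it outright; one also excludes (via Lemma 4.13 of \cite{GP1} and positivity of intersection with $\D$) Floer-cylinder breaking along a divisorial orbit $y \in \mathcal{X}(\D;\Hlm)$, and cylinders that stay in $X$ but touch $\D$.

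Having ruled out all other degenerations, I would conclude that $\overline{\mathcal{M}}(\v_1,\v_2;x_0)$ is a compact 1-manifold with boundary when $\operatorname{vdim} = 1$, whose boundary is covered by the three listed inclusion maps, and that these maps are injective on the complement of a codimension-$\geq 2$ set, which is what the statement asserts. The expected main obstacle is the analysis at $q \to 0$: one must rule out that a non-constant bubble forms on $S_{sphere}$ and, crucially, that no energy is lost into a bubble at the node $z'$; this is where the sharp energy estimate of Lemma \ref{lem:energypss} (valid precisely under $w(x_0) = w(\v_1) + w(\v_2)$) does the essential work, and also where one must carefully track that the tangency data $(\v_1, \v_2)$ at the two separating points add correctly to the constraint $\v_1 + \v_2$ imposed on the plane component via the enhanced evaluation map \eqref{eq:enhancedevaluationpss}. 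The transversality needed to make these boundary strata honest cut-out manifolds is standard (generic $J_{S_q}$, as in Lemma 4.14 of \cite{GP1}) and I would relegate it to a remark.
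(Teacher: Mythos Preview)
Your proposal is correct and follows essentially the same approach as the paper's own proof, which likewise reduces to the analysis in Lemma~\ref{lem:compactness} via the adapted energy estimate from Lemma~\ref{lem:energypss}, using the smallness of $w(x_0)\epsilon_\ell^2/2$ to exclude all non-constant sphere bubbles (including on $S_{sphere}$ when $q\to 0$). Your write-up is in fact more explicit than the paper's about the case analysis and the tracking of tangency data at $q\to 0$, but the underlying argument is identical.
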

\begin{proof}
    The proof involves identical analysis as in Lemma \ref{lem:compactness}
    (except that there are new boundaries to consider when $q \to 0,b$ and no
    Morse flowlines). In particular, the condition that $w(x_0) = w(\v_1) +
    w(\v_2)$ implies, under the smallness hypotheses of Lemma
    \ref{lem:energypss} (and by a straightforward adaptation of the energy
    approximation in Lemma \ref{lem:energypss} to $\mathcal{M}(\v_1, \v_2 ;
    x_0)$) that the topological energy of any $u \in \mathcal{M}(\v_1, \v_2 ;
    x_0)$  is smaller than the minimal energy of a
    non-constant $J$-holomorphic sphere in $M$. Hence, there can be no sphere
    bubbles that arise, including when $q \to 0$ and the domain $\Sigma_q$
    degenerates into the stable domain $\Sigma_0$, meaning any limiting map from
    $\Sigma_0$ must be a constant map on the sphere component $\Sigma_{sphere}$, hence an
    element of $\mathcal{M}_0(\v_1, \v_2; x_0)$.  
\end{proof}

We further equip the marked points $z_1$ and $z_2$ with asymptotic markers, in the positive real direction at $z_1$ and in the negative real direction at $z_2$. As before, doing so gives rise to evaluation maps: 
\begin{align} 
    \operatorname{Ev}_{z_{1}}^{\v_1}: \mathcal{M}(\v_1, \v_2 ; x_0) \to \mathring{S}_I \\ 
    \operatorname{Ev}_{z_{2}}^{\v_2}: \mathcal{M}(\v_1, \v_2 ; x_0) \to \mathring{S}_J,  
\end{align} 
where $I$ denotes the support of $\v_1$ and $J$ denotes the support of $\v_2$.
There are natural embeddings $\mathring{S}_I, \mathring{S}_J \hookrightarrow M^{log}$ which allow us to view both of the above evaluation maps as landing in $M^{log}.$  Given a limiting map from the stable domain $\Sigma_0$ (which as we have seen above, intersects $\D$ at $z_0$ with multiplicity $\v_1+\v_2$), we set 
\begin{align} 
    \operatorname{Ev}^{\v_i} _{z_i}(u)=\operatorname{Ev}^{\v_1+\v_2}_{z_0}(u) 
\end{align} 
viewed as a point in $M^{log}$. Doing this allows us to extend the evaluation
map over points in $\overline{\mathcal{M}}(\v_1, \v_2 ; x_0)$  where $q=0$:
\begin{align} \label{eq:evz1} 
    \operatorname{Ev}_{z_{1}}^{\v_1}: \overline{\mathcal{M}}(\v_1, \v_2 ; x_0)
    \to M^{log} \\ 
    \label{eq:evz2} \operatorname{Ev}_{z_{2}}^{\v_2}:
    \overline{\mathcal{M}}(\v_1, \v_2 ; x_0) \to M^{log} 
\end{align} 

\begin{lem} \label{lem: evalKN} 
Assume that $\v_2=\v_J$ is primitive.
\begin{itemize} 
    \item[(i)]  If $\v_1=\v_I$ is primitive, then the evaluation maps \eqref{eq:evz1}, \eqref{eq:evz2} are continuous.  
    \item[(ii)] If $J \subseteq I$, then the evaluation \eqref{eq:evz1} is continuous. 
\end{itemize} 
\end{lem}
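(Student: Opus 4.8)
The plan is to prove Lemma \ref{lem: evalKN} by analyzing the behavior of the enhanced evaluation maps as the domain $S_q$ degenerates to $S_0$ (i.e.\ as $q \to 0$). Away from the wall $q=0$ the maps \eqref{eq:evz1} and \eqref{eq:evz2} are continuous by the usual smooth dependence of jet data on parameters (as in the construction of $\Evzo$ in \cite[\S 3.5]{GP1}), so the only issue is continuity at points $(0,u) \in \overline{\mathcal{M}}(\v_1,\v_2;x_0)$ with $q=0$. By Lemma \ref{lem:nospheresinproducthomotopy}, under the energy hypothesis $w(x_0) = w(\v_1)+w(\v_2)$ any such limiting configuration consists of a constant ghost sphere $S_{sphere}$ carrying $z_1, z_2$ attached at $z'$ to a plane component $S_{plane}$ on which $u|_{S_{plane}} \in \mathcal{M}(\v_1+\v_2;x_0)$ has a tangency of order $\v_1+\v_2$ to $\D$ at $z_0 = z'$. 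So I must show that as $q_n \to 0$ and $u_n \to u$ in the Gromov--Floer sense, the points $\operatorname{Ev}_{z_1}^{\v_1}(q_n,u_n)$ and $\operatorname{Ev}_{z_2}^{\v_2}(q_n,u_n) \in M^{log}$ converge to the common value $\operatorname{Ev}_{z_0}^{\v_1+\v_2}(u) \in M^{log}$ we assigned.

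The key computation is a local model in normal coordinates: work near a stratum $D_K$ (where $K$ is the support of $\v_1+\v_2$) and pick local coordinates $(y_1,\dots,y_a,\dots)$ in which $\D$ is $\{y_1\cdots y_a = 0\}$. The tangency condition says that the first nonvanishing normal jet of $u|_{S_{plane}}$ in the $y_i$ direction ($i \in K$) occurs at order $(\v_1+\v_2)_i$, and $\operatorname{Ev}_{z_0}^{\v_1+\v_2}$ records the real-oriented projectivization of that jet, i.e.\ the unit complex numbers $[\partial^{(\v_1+\v_2)_i} y_i \circ u / \partial s^{(\v_1+\v_2)_i}]$ read off in $S_I^{log}$-coordinates. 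On the nearby curve $S_{q_n}$, the marked points $z_1,z_2$ are at bounded distance in the \emph{rescaled} coordinate on the long neck/sphere-bubble region but collide to $z_0$ in the original coordinate; the rescaled limit of $u_n$ on the sphere bubble is \emph{constant}, and one must extract the evaluation-map data not from the (trivial) rescaled limit but from the leading Taylor coefficients in the original coordinate. A Taylor-expansion argument (compare \cite[\S 6]{CieliebakMohnke} and \cite[\S 3]{Ionel:2011fk}) shows that the $\v_1$-jet of $y_i \circ u_n$ at $z_1$ and the $\v_2$-jet at $z_2$ are both, to leading order in $q_n$, governed by the single leading coefficient of the $(\v_1+\v_2)$-jet of the limit at $z_0$, up to the $(\mathbb{R}^+)^K$-rescaling that is quotiented out in passing to $S_K^{log}$. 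This yields the claimed convergence and hence continuity.

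The case distinction (i) versus (ii) enters precisely in controlling \emph{which} stratum the evaluation lands in, and hence in which $S_I^{log}$ (or $S_J^{log}$) the convergence takes place, so that the statement ``the evaluation map is continuous into $M^{log}$'' is meaningful. In case (i), both $\v_1 = \v_I$ and $\v_2 = \v_J$ are primitive; then $\operatorname{Ev}_{z_1}^{\v_1}$ lands in $\mathring{S}_I \subset M^{log}$ and $\operatorname{Ev}_{z_2}^{\v_2}$ in $\mathring{S}_J \subset M^{log}$, and the common limit point lies in the closed stratum $\mathring{S}_K^{log} \subset S_I^{log}$ (and also $\subset S_J^{log}$) inside $M^{log}$, with $K = I \cup J$; the argument of the previous paragraph applies verbatim to each. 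In case (ii) we drop primitivity of $\v_1$ but assume $J \subseteq I$, so that $K = I \cup J = I$; now the leading jets of $u_n$ at $z_1$ already ``see'' all of $I$, and for the $z_2$-evaluation one checks that the jet in the $y_i$ direction for $i \in J$ is unobstructed by the $J \subseteq I$ hypothesis, so $\operatorname{Ev}_{z_1}^{\v_1}$ again extends continuously to $M^{log}$ — this is exactly the content of (ii) and the reason (ii) only asserts continuity of \eqref{eq:evz1}.

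The main obstacle I expect is the bookkeeping in the local Taylor-expansion/rescaling argument: one must track how the gluing parameter $q_n$ enters the various normal jets at $z_1$, $z_2$, and $z'$ simultaneously, show that the dominant terms match up under the real-oriented projectivization (modulo $(\mathbb{R}^+)^K$), and verify that no lower-order terms contribute in the limit — all while the sphere-bubble component carries a trivial (constant) limit so that the relevant data is ``hidden'' in subleading behavior. This is where the hypotheses (primitivity in (i), $J \subseteq I$ in (ii)) are genuinely used to rule out the jet data jumping strata in the limit; the energy bound from Lemma \ref{lem:nospheresinproducthomotopy} is what guarantees there is nothing else (no extra bubbles) to worry about.
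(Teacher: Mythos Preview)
Your approach is essentially the same as the paper's: reduce to a local Taylor-expansion computation in normal coordinates to a stratum $D_K$ (with $K = I \cup J$), and show that the leading normal jets at $z_1, z_2$ converge, after real-oriented projectivization, to the single $(\v_1+\v_2)$-jet at $z_0$.

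One point of framing is worth correcting, though. You describe the analysis in terms of a ``rescaled coordinate on the long neck/sphere-bubble region'' and say the data is ``hidden in subleading behavior'' of a constant rescaled limit. The paper's argument is actually simpler and more direct: precisely because Lemma~\ref{lem:nospheresinproducthomotopy} excludes nonconstant bubbles, the maps $u_q$ converge in $C^\infty$ on a \emph{fixed} neighborhood $U_S$ of $y=0$ (in the coordinate $y = z^{-1}$), with no rescaling needed. One then writes, for each normal coordinate $y_k$ with $k \in K$, the projection $\bar{u}_{k,q} = \pi_k \circ u_q$ and expands. In the case $k \in I \cap J$ (so the tangency order is $2$ at $z_0$ for the limit): $\bar{u}_{k,0} = b_0 y^2 + O(|y|^3)$ while $\bar{u}_{k,q} = a(q) y + O(|y|^2)$ for $q \neq 0$. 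The key algebraic step --- which you gesture at but do not make explicit --- is to use the \emph{vanishing constraint} $\bar{u}_{k,q}(-q) = 0$ (since $u_q(z_1)$ lies in $D_k$) together with $C^\infty$ convergence of the second-order coefficient $b(q) \to b_0$ to deduce $a(q)/q \to b_0$; an identical expansion about $y = -q$ handles the $z_1$-jet. The other cases ($k \in J \setminus I$, and case (ii)) are handled by the analogous but simpler expansions. So your plan is correct, but the mechanism is straightforward $C^\infty$ convergence plus the tangency constraint at the colliding marked point, rather than a genuine rescaling/neck analysis.
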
 
\begin{proof}  
    For what follows, let $K = I \cup J$ denote the support of $\v_1 + \v_2$.
    Consider a convergent sequence of Floer curves $u_q: \Sigma_q \to M$, with $q
    \to 0$ with limit $u_0$.  We first discuss assertion (i), where
    $\v_1=\v_I$. To simplify our notation, we may assume that, after reordering
    the divisors, $K=\lbrace 1,\cdots, |K| \rbrace.$ Choose the data of a chart
    $W \subset \C^n$, $W \stackrel{\phi}{\hookrightarrow} M$ (as usual we suppress $\phi$)
    centered about $u_{0}(z_0)$, such that the divisors are sent to standard
    linear subspaces: $D_k \cap W =
    \{y_k = 0\} \cap W$ for any $k \in K$. We can
    assume for specificity that $W:= (B^2(0))^{|K|} \times \bar{W} \subset
    \mathbb{C}^{|K|} \times \mathbb{C}^{n-|K|}$ and that
    $W^{log}:=(B^2(0)^{log})^{|K|} \times \bar{W}$.

There exists an open set $U_S \subset S$ containing $y=-q$ and $y=0$ (recall that $y=z^{-1}$ from \eqref{eq:coordinfty})
with $u_q(U_S) \subset W$ for all $q$ sufficiently small. We will show that
$\operatorname{Ev}_{z_{0}}(u_0)$ is the limit as $q \to 0$ of the evaluations
$\operatorname{Ev}_{z_{i}}$ viewed as lying inside of $W^{log}$.
It suffices to prove this convergence in each of the
$(B^2(0)^{log})^{|K|}$ factors as the other factors are not modified by the blow-up and hence there is nothing to check. To this end, let $\pi_k$ denote the coordinate projections
$\pi_k: W \to \mathbb{C}$ and set $\bar{u}_{k,q}=\pi_k\circ u_q: U_S \to
\mathbb{C}$. We first consider the case when $k \in I \cap J$. 
\cite[Lemma 3.4]{Ionel:2003kx} allows us to determine the leading order terms of the
Taylor expansions of  $\bar{u}_{k,q}$: 
\begin{align} 
    \bar{u}_{k,0}= b_0y^2 +O(|y|^3) \\ 
    \label{eq:expansion} \bar{u}_{k,q}=a(q) y+O(|y|^{2}), \, q \neq 0 
\end{align} 
where $b_0,a(q) \neq 0$.  Because there is no bubbling, $C^{\infty}$
convergence together with the fact that $\bar{u}_{k,q} (-q)=0$ implies that 
\begin{align} 
    -a(q) q + b(q) q^2+O(q^3)=0 
\end{align} 
with $b(q) \to b_0$. In particular, we have used that the third order derivatives are
uniformly bounded in $q$ to conclude that the remainder term is $O(q^3)$. It
therefore follows that $\lim_{q\to 0}a(q)/q = b_0$. Performing a Taylor
expansion about $z_1$ instead and writing
$\bar{u}_{k,q}(y)=\tilde{a}(q)(y+q)+O(|y+q|^2)$, the same reasoning shows that
$\lim_{q \to 0} \tilde{a}(q)/q = -b_0$. In the case that $k \in J$ but not in
$I$ we have  that \eqref{eq:expansion} holds, this time when $q=0$ as well.
$C^{\infty}$ convergence of the maps $u_q$ imply that that $a(q) \to a_{0} \neq
0$ and $\lim_{q\to 0}\bar{u}_{k,q}(z_1) /q = a_0.$ 

Assertion (ii) of the Lemma follows similarly. Suppose that $k \notin J$, then
the result follows immediately by $C^{\infty}$ convergence, so we consider the
other case. Then, employing the notation from the previous paragraph and taking
$q \to 0$, we have 
\begin{align} \label{eq:expansion2} 
    \bar{u}_{k,q}=a(q)(y+q)^{\v_1} + b(q)(y+q)^{\v_1+1}+O(|y+q|^{\v_1+2}) 
\end{align}
As before we have that $a(q)/q \to -b_0$. 
\end{proof}

 We now incorporate Morse flow-lines into the picture.  To formulate our next
 definition, fix two semi-infinite incoming edges 
 \begin{equation}\label{eq:semiinfiniteedges}
     \vec{e}_1,\vec{e}_2 \cong [0,\infty).
 \end{equation}  
 When dealing with moduli spaces where  $I=J=K$, we fix
 perturbation data $X_{\vec{e}_{i}}: \vec{e}_i \to C^{\infty}(S_K^{log})$
 on each edge in order to assure that regularity holds.

\begin{defn} \label{defn:auxmod} 
    Fix two critical points $c_1 \in \mathcal{X}(\mathring{S}_I, f_I)$, $c_2 \in \mathcal{X}(\mathring{S}_J, f_J)$. 
    Let 
    \[ 
        \mathcal{M}(\v_1, \v_2 ; c_1, c_2 ; x_0)
    \] 
    denote the moduli of triples $((q, u),\phi_1,\phi_2)$, where 
    $(q,u) \in \mathcal{M}(\v_1, \v_2; x_0)$ is an element of the moduli space from Definition \ref{psstwomarkedpoints},  
    $\phi_1: \vec{e}_1 \to S_I^{log}$ and $\phi_2: \vec{e}_2 \to S_J^{log}$ solve the
    gradient flow equation for $f_I$ and $f_J$ respectively (perturbed by
    $X_{\vec{e}_{i}}$ in the case $I=J=K$), satisfying the following additional
    asymptotics and incidence conditions:
    \begin{align}
  & \lim_{t_{\vec{e}_{i}} \to \infty} \phi_i(t_{\vec{e}_{i}})=c_i; \\
 & \phi_1(0) \in \mathring{S}_I; \\
 & \phi_2(0) \in \mathring{S}_J;\\
& \operatorname{Ev}_{z_{i}}^{\v_i}(u)=\phi_i(0).
\end{align}
\end{defn}
See Figure \ref{logpssringmap} item (4) for a pictorial schematic of this parametric (in $q$) family of moduli spaces, along with items (3) and (5) for a schematic of its $q \to b$ and $q \to 0$ limits (and note the caveats in the caption about incorporating incidence and enhanced evaluation conditions).

Let us assume that $f_{J}: \mathring{S}_{J,m} \to \mathbb{R}$ is chosen to have a unique critical point of index 0, which we denote by $c_u$. In view of the defining relations given in \eqref{eq:rel2}, \eqref{eq:rel1}, it suffices to consider only those moduli spaces where $\v_2=\v_J$ is primitive supported on $J$ and furthermore that either 
\begin{enumerate} [label=A.\arabic*]
    \item \label{itm:caseprod1}  $\v_1=\v_I$ is primitive supported on $I$, or 
    \item \label{itm:caseprod2} $c_2=c_u$ and $J \subseteq I$, and $\v_1 = \v$ is an arbitrary multiplicity vector supported on $I$.
\end{enumerate}  
We will do this for the remainder of this section without further mention. As
before, we choose our data so that these spaces are cut out transversally. Next
consider the moduli space $\mathcal{M}_b( M, \v_1, \v_2 ; c_1, c_2 ; x_0)$
which is the restriction of the above moduli space to domains $\Sigma_{b}$ where $q=b$ (and hence the marked point $z_1$ is located at $-1/b$). See Figure \ref{logpssringmap} item (3) for a schematic picture of this moduli space. By
counting rigid elements $u \in \mathcal{M}_b(M, \v_1, \v_2 ; c_1, c_2 ; x_0)$,
we may define a map 
\begin{align} \label{eq:auxprod} 
    H^*(\mathring{S}_I) t^{\v_1} \otimes H^*(\mathring{S}_J)t^{\v_2} \to
    SH^*(X)  
\end{align}

\begin{lem} \label{lem: connectsumcob} 
    The operation defined in \eqref{eq:auxprod} agrees with $\operatorname{PSS}_{log}^{low}(\alpha_1t^{\v_1}) \cdot \operatorname{PSS}_{log}^{low}(\alpha_2t^{\v_2})$. 
\end{lem}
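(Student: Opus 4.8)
The statement to prove is Lemma \ref{lem: connectsumcob}: the operation \eqref{eq:auxprod}, defined by counting rigid elements of $\mathcal{M}_b(M, \v_1, \v_2; c_1, c_2; x_0)$ (the moduli space over the domain $S_b$ with two asymptotic Morse flow-lines), agrees with the product $\operatorname{PSS}_{log}^{low}(\alpha_1 t^{\v_1}) \cdot \operatorname{PSS}_{log}^{low}(\alpha_2 t^{\v_2})$ of the two low-energy PSS classes. The approach is a standard TQFT gluing/degeneration argument, realizing both sides as the two ends of a one-parameter family of moduli spaces. Since the $\operatorname{PSS}_{log}^{low}$ map is built from the moduli spaces $\mathcal{M}(\v, c, x_0)$ over the single-marked-point domain $S$, and the product on $SC^*(X)$ (hence on its low-energy quotient) is built from the pair-of-pants moduli space, the natural interpolating object is a moduli space parametrized by the gluing parameter of a pair of pants with a PSS-type cap glued onto each of its two incoming ends. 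The plan is to describe this interpolating family, identify its two boundary strata with the two operations in question, and show no other boundary appears.

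First I would set up the interpolating moduli space: take a family of domains parametrized by a real gluing length $R \in [0, \infty]$, where at $R = \infty$ the domain is the nodal curve consisting of a pair of pants (carrying a sub-closed one-form with slopes matching $\lambda_{\ell_1}, \lambda_{\ell_2}$ on its two inputs and $\lambda_{\ell_3}$ on its output) with a copy of $S = \mathbb{C}P^1 \setminus \{0\}$ glued (via its negative cylindrical end) to each of the two input punctures, while at $R = 0$ (or some finite value) the domain is precisely $S_b$ from Definition \ref{defn:auxmod}. On the glued domain one arranges the Floer data (Hamiltonian one-form built from the $H^\ell$'s, and $S$-dependent almost complex structures in $\mathcal{J}_S(\bar X_\ell, V)$) to interpolate appropriately, incorporating the gradient-flow perturbation data on the two incoming Morse edges $\vec e_1, \vec e_2$ exactly as in the definition of the product on $C^*_{log}$ (the Y-tree count $\mathcal{M}_Y$ composed with restriction maps $r_{IK}^*$). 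One then forms the parametrized moduli space of maps satisfying Floer's equation with the prescribed tangency constraints $\v_1$ at $z_1$, $\v_2$ at $z_2$, asymptotic orbit $x_0$, and the two flow-line tails limiting to $c_1, c_2$.

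Next I would analyze the compactness and boundary structure of this $[0,\infty]$-family in the rigid (index zero after accounting for the $R$-parameter, i.e.\ one-dimensional) case, invoking the low-energy hypothesis $w(x_0) = w(\v_1) + w(\v_2)$ throughout. As in Lemma \ref{lem:energypss} and Lemma \ref{lem:compactness}, the topological energy of any such map is approximately $w(\v_1) + w(\v_2) - w(x_0)(1 - \epsilon_\ell^2/2) \approx 0$ up to a small positive error controlled by $\epsilon_\ell$, hence strictly smaller than the minimal energy of a non-constant $J$-holomorphic sphere; so no sphere bubbling occurs, and (by the positivity-of-intersection argument of \cite[Lemma 4.13]{GP1}, reviewed in the proof of Lemma \ref{lem: homotopycomp}) no Floer breaking along divisorial orbits $\mathcal{X}(\D; H^\ell)$ occurs. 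This leaves only: (a) the $R = \infty$ stratum, which by a standard gluing analysis is identified with the fiber product of two low-energy PSS moduli spaces $\mathcal{M}(\v_1, c_1, x')$ and $\mathcal{M}(\v_2, c_2, x'')$ with the pair-of-pants product moduli space (with $w(x') = w(\v_1)$, $w(x'') = w(\v_2)$ forced by the energy splitting) — this is exactly the count computing $\operatorname{PSS}_{log}^{low}(\alpha_1 t^{\v_1}) \cdot \operatorname{PSS}_{log}^{low}(\alpha_2 t^{\v_2})$; (b) the $R = 0$ stratum, which is $\mathcal{M}_b(M, \v_1, \v_2; c_1, c_2; x_0)$, the count defining \eqref{eq:auxprod}; (c) Floer breaking of the output orbit $x_0$ at one end of the $s$-cylinder, contributing a $\partial_{CF}$-term; (d) breaking of a Morse flow-line along $\vec e_1$ or $\vec e_2$, contributing a Morse-differential term; and (e) collision of $z_1$ or $z_2$ with the node when the domain degenerates (the $q \to 0$ analogue in Definition \ref{psscollidedmarkedpointsghost}), which by the no-sphere-bubble analysis produces only the constant-on-$S_{sphere}$ configurations already accounted for, together with the continuity of evaluation maps established in Lemma \ref{lem: evalKN} (this is exactly why the hypotheses \ref{itm:caseprod1}, \ref{itm:caseprod2} on $\v_1, c_2$ are imposed). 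Items (c) and (d) are precisely the terms saying the total count is a cochain map, so they cancel in homology; summing the signed count of boundary points of the one-dimensional parametrized moduli space to zero then gives the desired chain-homotopy identity, hence the equality of operations after passing to cohomology.

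\textbf{Main obstacle.} The principal difficulty is the analysis at the $R = \infty$ and the domain-degeneration ($q \to 0$) ends simultaneously — i.e.\ understanding the compactification of the parametrized family near the ``most degenerate'' configuration where the pair-of-pants neck is fully stretched \emph{and} a marked point collides with a node. One must show these two degenerations do not interact to produce spurious boundary strata (e.g.\ a sphere bubble sliding off to the node carrying nontrivial tangency), which again reduces to the energy estimate forcing all bubble components to be constant, but bookkeeping the tangency/jet constraints and the enhanced evaluation maps $\operatorname{Ev}^{\v_i}_{z_i}$ across this corner of the compactified moduli space — and checking continuity of these evaluation maps into $M^{log}$ there, extending Lemma \ref{lem: evalKN} — is the delicate technical point. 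A secondary but nontrivial point is verifying the signs/orientations match (the Koszul signs in the product on $C^*_{log}$ from Definition \ref{defn: ringstructure} versus the orientation conventions on the glued moduli spaces), which follows the standard pattern but must be carried out with care given the multi-index and torus-bundle orientation conventions fixed in \S \ref{section: blowups}.
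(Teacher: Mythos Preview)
Your core approach is correct and matches the paper's: the interpolating family is a neck-stretching parameter $R \in [0,\infty]$ between the glued domain $S_b$ at one end and the broken configuration (two PSS planes attached to the inputs of a pair of pants) at the other, with the standard energy estimate ruling out sphere bubbling and divisorial breaking.

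However, you have over-engineered the argument by importing concerns that belong to the \emph{next} lemma. Your item (e) and the entire ``Main obstacle'' discussion concern the $q \to 0$ degeneration (collision of the two marked points), but in Lemma~\ref{lem: connectsumcob} the parameter $q$ is \emph{fixed} at $q = b > 0$: the marked points $z_1 = -1/b$ and $z_2 = \infty$ sit at definite distinct locations on $S_b$ and, as $R$ varies, each one simply ends up on its own PSS plane in the broken limit. There is no marked-point/node collision anywhere in this cobordism, and Lemma~\ref{lem: evalKN} plays no role here. The paper separates the argument cleanly: this lemma handles only the $R$-variation, Lemma~\ref{lem:prodcompq0} handles the $q$-variation from $b$ down to $0$, and Lemma~\ref{lem:PSSofproduct} handles a further tree-length variation; Theorem~\ref{lem:spectralrings} then concatenates the three. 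So your boundary analysis should consist only of (a), (b), (c), (d), and your stated ``principal difficulty'' does not arise in this lemma at all.
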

\begin{proof}  
This is a very standard gluing argument in TQFT and so we will only sketch
the main idea. Let $\Sigma$ denote the pair of pants, with three standard
cylindrical ends attached. We consider the broken 
domain $\Sigma_{split}$ of the form
$$S \cup_\epsilon \Sigma \cup_\epsilon S $$
where the negative cylindrical ends of $S$ are glued to the positive cylindrical
ends of $\Sigma$ and gradient flow lines are attached at the two marked points.  Maps from $\Sigma_{split} \to M$ (glued to appropriate gradient flow lines) are given by the fiber product of
moduli spaces: 
 \begin{align} 
     \label{eq:prodopss} 
     \coprod_{x_1,x_2} \mathcal{M}( \v_1, c_1, x_1) \times \mathcal{M}(\Sigma, x_0, x_1, x_2 )
     \times \mathcal{M}( \v_2, c_2, x_2) 
 \end{align} 

We construct a homotopy between this moduli space and $\mathcal{M}_b( M, \v_1,
\v_2; x_0)$ in two steps. First, we perform a finite connect sum along the
cylindrical ends, producing a domain with two distinguished marked points and
one negative cylindrical end. The precise complex modulus of this domain and
the Floer datum over it are determined by the gluing parameter. Then, we can
further homotopy the complex structure and Floer datum to the domain $\Sigma_{b}$
above.  We thus reach the desired conclusion.  

See Figure \ref{logpssringmap} item (2) for a pictorial schematic of
the above parametric family of moduli spaces with and items (1) and (3) for its
codimension 1 boundary (modulo differential/chain homotopy terms); Note the caveats
about incorporating incidence and enhanced evaluation conditions described in
the caption. Note also that the picture rather describes the finite connect sum
portion of this homotopy; we leave the further homotopy of complex structure
and Floer data implicit, or --- seeing as the pictures don't describe the
complex structure or Floer data --- we could imagine performing this homotopy
simultaneously with the connect sum.
\end{proof}

Let $T_{2,0}$ denote the graph consisting of two semi-infinite edges $\vec{e}_1$ and
$\vec{e}_2$ as \eqref{eq:semiinfiniteedges} which are glued along their vertices. Let 
$\mathcal{M}_V(c_1,c_2)$ denote the moduli space of ``$V$-shaped trajectories
on $S_I$ and $S_J$''; that is, continuous maps $\phi: T_{2,0} \to M^{log}$
where along each edge $\phi_i=\phi_{|\vec{e}_i}$ is a (possibly perturbed)
gradient trajectory of $f_I$ (if $i=1$) respectively $f_J$ (if $i=2$) in the
appropriate stratum ($S_I$ or $S_J$), asymptotic to $c_i$ at infinity, such
that $\phi_1(0)=\phi_2(0) \in \mathring{S}_K$.

\begin{defn} \label{defn:fibprod2} 
    Define the moduli space $\partial_V\overline{\mc{M}}(\v_1+\v_2,c_1, c_2,
    x_0)$ to be the space of pairs $(\phi,u)$, $\phi \in
    \mathcal{M}_V(c_1,c_2)$, $u \in \mc{M}(\v_1+\v_2, x_0)$ with
    $\operatorname{Ev}^{\v_1+\v_2}_{z_{0}}=\phi_i(0)$.   
\end{defn}
See Figure \ref{logpssringmap} item (5) for a pictorial schematic of
this moduli space (note the caveats about incorporating incidence and enhanced
evaluation conditions described in the caption).

\begin{lem} \label{lem:prodcompq0} 
Suppose that $\v_2=\v_J$ holds and furthermore either \ref{itm:caseprod1}
or \ref{itm:caseprod2} hold. For generic choices, when the moduli space
from Definition \ref{defn:auxmod} is 1-dimensional, it may be completed
over $q \to 0$ to a 1-dimensional manifold with boundary whose fiber
over $q=0$ is given by $\partial_V\overline{\mc{M}}(\v_1+\v_2,c_1, c_2,
x_0).$  
\end{lem}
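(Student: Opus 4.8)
The plan is to run the standard Gromov--Floer compactness analysis for the one-parameter family $\mathcal{M}(\v_1,\v_2;c_1,c_2;x_0)$ as $q \to 0$, and to identify the limiting configurations with the fiber product described in Definition~\ref{defn:fibprod2}. First I would invoke Lemma~\ref{lem:nospheresinproducthomotopy}: under the smallness hypotheses on $\epsilon_\ell$ etc., and the assumption $w(x_0) = w(\v_1) + w(\v_2)$ (which holds automatically here since we are computing the $w(\v_1)+w(\v_2)$ part of the differential), the topological energy of any curve in the family is below the minimal energy of a non-constant $J$-holomorphic sphere. Hence no sphere bubbles can appear in the limit, and the only degeneration of the domain $S_q$ as $q \to 0$ is the appearance of the nodal domain $S_0 = S_{sphere} \cup_{z'=z_0} S_{plane}$ of Definition~\ref{psscollidedmarkedpointsghost}, on which the $S_{sphere}$-component carries a constant map and $S_{plane}$ carries an element of $\mathcal{M}(\v_1+\v_2;x_0)$. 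Floer-cylinder breaking along $\D$ is excluded exactly as in Lemma~\ref{lem:compactness} (via Lemma 4.13 of \cite{GP1} and positivity of intersection), and breaking of the incoming Morse trajectories $\phi_1, \phi_2$ does not occur in the $q \to 0$ limit because we are looking at the stratum of the compactification where only the domain modulus degenerates; other codimension-one boundary strata (Floer breaking at the negative end, Morse breaking on the $\vec{e}_i$) are the ``$q \neq 0$'' boundary pieces and are not part of the $q=0$ fiber.

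The key point, and the one that requires the earlier continuity results, is that the two evaluation constraints $\operatorname{Ev}^{\v_i}_{z_i}(u) = \phi_i(0)$ pass to the limit correctly. By construction \eqref{eq:evz1}--\eqref{eq:evz2}, when $q=0$ one sets $\operatorname{Ev}^{\v_i}_{z_i}(u) := \operatorname{Ev}^{\v_1+\v_2}_{z_0}(u)$, viewed in $M^{log}$; Lemma~\ref{lem: evalKN} guarantees precisely that this extended evaluation map is continuous on $\overline{\mathcal{M}}(\v_1,\v_2;x_0)$ in each of the two relevant cases \ref{itm:caseprod1} and \ref{itm:caseprod2}. Therefore a sequence $((q_n,u_n),\phi_{1,n},\phi_{2,n})$ with $q_n \to 0$ converges to a configuration consisting of a constant sphere, a plane-component map $u_0 \in \mathcal{M}(\v_1+\v_2;x_0)$, and limiting gradient trajectories $\phi_1,\phi_2$ whose endpoints $\phi_i(0)$ agree (both equal to the common limit of $\operatorname{Ev}^{\v_i}_{z_i}(u_n) = \operatorname{Ev}^{\v_1+\v_2}_{z_0}(u_n)$), hence lie over a single point of $\mathring{S}_K$; this is exactly an element of $\mathcal{M}_V(c_1,c_2)$ glued to $u_0$, i.e.\ of $\partial_V\overline{\mc{M}}(\v_1+\v_2,c_1,c_2,x_0)$. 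Conversely, a standard gluing argument near $q=0$ (gluing a short neck into the node and solving the implicit function theorem problem, with the asymptotic markers at $z_1, z_2$ controlling the gluing directions as in Lemma~\ref{lem: evalKN}) shows every element of $\partial_V\overline{\mc{M}}$ is the limit of a unique end of the family, so the completed moduli space is a topological $1$-manifold with boundary and its $q=0$ fiber is $\partial_V\overline{\mc{M}}(\v_1+\v_2,c_1,c_2,x_0)$.

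I expect the main obstacle to be the gluing step at $q=0$: one must glue not just the domain (the standard neck-stretching near a node) but simultaneously the PSS map, the normal jet/tangency data at the collided marked points, and the attached Morse trajectories, and check transversality of the glued problem. The asymptotic markers introduced before \eqref{eq:evz1} are essential here, since without fixing the real ray at $z_1$ and $z_2$ the gluing parameter would not be rigidly determined by the jet of $u_0$ at $z_0$; the leading-order Taylor expansion computations in the proof of Lemma~\ref{lem: evalKN} (following \cite[Lemma 3.4]{Ionel:2003kx}) are exactly what make this gluing well-posed. A secondary technical point is the distinction between cases \ref{itm:caseprod1} ($\v_1$ primitive) and \ref{itm:caseprod2} ($c_2 = c_u$, $J \subseteq I$): in the latter, only $\operatorname{Ev}^{\v_1}_{z_1}$ is continuous, so one uses that $c_2 = c_u$ is the index-zero generator and $\mathcal{M}(\v_2;c_2;x_0)$-type contributions from $z_2$ factor through the fundamental class of $\mathring{S}_{J,m}$, making the $z_2$-evaluation harmless; this asymmetry should be tracked carefully but does not change the shape of the argument.
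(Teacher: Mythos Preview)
Your proposal is essentially correct and follows the same line as the paper: invoke the no-bubbling compactness (Lemma~\ref{lem:nospheresinproducthomotopy}), use Lemma~\ref{lem: evalKN} to pass the evaluation constraints to the $q=0$ limit, and appeal to a standard gluing argument for the constant sphere bubble (the paper cites Chapter~10 of \cite{McDuff:2004aa}).

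The one place where your treatment is less precise than the paper's is case~\ref{itm:caseprod2}. You correctly note that only $\operatorname{Ev}^{\v_1}_{z_1}$ is continuous in this case, but the phrase ``factor through the fundamental class'' is cohomological intuition rather than a geometric argument. The paper's actual mechanism is: since $c_2=c_u$ has Morse index $0$, the stable manifold $W^s(f_J,c_u)$ is open and dense in $S_{J,m}^{log}$ with complement of codimension~$1$. For generic data the finite set $\mc{M}(\v_1+\v_2,\, W^s_K(f_I,c_1),\, x_0)$ avoids this codimension-$1$ set, so the limiting evaluation $\operatorname{Ev}^{\v_1+\v_2}_{z_0}(u_0)$ automatically lands in $W^s_K(f_J,c_u)$ and the trajectory $\phi_2$ exists. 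This is what makes the possibly-discontinuous $z_2$-evaluation harmless; you should state it this way rather than appealing to the fundamental class.
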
 
\begin{proof} 
Suppose first that we are in case \ref{itm:caseprod1} above with $\v_1=\v_I$.
It follows from Lemma \ref{lem: evalKN} that given a $u_0$, which is the limit
of curves $u_q \in \mathcal{M}(\v_1, \v_2 ; c_1, c_2 ; x_0)$, 
\begin{align}
    \label{eq:intersection} \operatorname{Ev}_{z_{0}} \in W_K^s(f_I, c_1) \cap
    W_K^s(f_J, c_2) 
\end{align} 
As in Definition \ref{defn:auxmod}, in the case where $I=J=K$, this should be
interpreted as stable manifolds of the perturbed gradient flow. To complete the
proof of the lemma (in the case \ref{itm:caseprod1}), requires that the
topology near a curve satisfying \eqref{eq:intersection} may be identified with
that near an endpoint of a closed interval. This is an elementary gluing result
(involving gluing in a constant sphere bubble) which follows the arguments of
Chapter 10 of \cite{McDuff:2004aa} quite closely and which we regard as
standard. 

It remains to consider the remaining cases when $c_2=c_u$, in which case the
stable manifold $W^s(f_J,c_2)$ has closure all of $S_{J,m}^{log}$ and
$S_{J,m}^{log}\setminus W^s(f_J,c_2)$ has codimension 1. In this case, by
choosing our data generically, we may assume that an element of the moduli
space $\mc{M}(\v_1+\v_2, W^s_{K}(f_I, c_1))$ is disjoint from
$S_{J,m}^{log}\setminus W^s(f_J,c_2)$. The rest proceeds as in the preceeding
paragraph.  
\end{proof}

\begin{lem} \label{lem:PSSofproduct}
    Suppose that $\v_2=\v_J$ holds and furthermore either \ref{itm:caseprod1} or \ref{itm:caseprod2} hold. The operation defined in \eqref{eq:auxprod} is equal to $\operatorname{PSS}_{log}^{low} (\alpha_1t^{\v_1} \cdot \alpha_2t^{\v_2})$ 
\end{lem}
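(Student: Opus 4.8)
The plan is to run the standard TQFT degeneration argument on the $q$-family of moduli spaces $\mathcal{M}(\v_1,\v_2;c_1,c_2;x_0)$, using the compactness and boundary analysis already assembled in Lemmas \ref{lem:nospheresinproducthomotopy}, \ref{lem: evalKN} and \ref{lem:prodcompq0}. First I would observe that for a generic choice of Floer and perturbation data the moduli spaces $\mathcal{M}(\v_1,\v_2;c_1,c_2;x_0)$ are cut out transversally and, when $w(x_0)=w(\v_1)+w(\v_2)$ and $\deg(x_0)-\deg(|\mathfrak{o}_{c_1}|t^{\v_1})-\deg(|\mathfrak{o}_{c_2}|t^{\v_2}) = -1$, they form smooth $1$-manifolds. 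The core of the argument is then to identify the oriented boundary of the Gromov--Floer compactification $\overline{\mathcal{M}}(\v_1,\v_2;c_1,c_2;x_0)$. Combining the ``no spheres'' statement of Lemma \ref{lem:nospheresinproducthomotopy} (adapted to include the two incoming Morse flow lines, which contributes only the usual Morse-type breakings $\mathcal{M}_Y$-style configurations at the external edges), the continuity of the extended evaluation maps from Lemma \ref{lem: evalKN}, and the $q\to 0$ gluing description of Lemma \ref{lem:prodcompq0}, the boundary decomposes into exactly three families of strata:
\begin{itemize}
\item the fiber over $q=b$, which by Lemma \ref{lem: connectsumcob} computes $\PSSlog^{low}(\alpha_1 t^{\v_1})\cdot\PSSlog^{low}(\alpha_2 t^{\v_2})$ (more precisely, once we further degenerate the $b$-end by a finite connect sum and homotopy, it is the operation \eqref{eq:auxprod}, which that Lemma already identifies with the product of the PSS classes);
\item the fiber over $q=0$, which by Lemma \ref{lem:prodcompq0} is $\partial_V\overline{\mc{M}}(\v_1+\v_2,c_1,c_2,x_0)$; since the sphere component carries a constant map, this configuration is precisely a $\PSSlog$-solution for the multiplicity vector $\v_1+\v_2$ with its jet at $z_0$ constrained to lie in $W^s_K(f_I,c_1)\cap W^s_K(f_J,c_2)$ (in case \ref{itm:caseprod2}, with $c_2$ replaced by the fundamental class of $\mathring{S}_{J,m}$, so only the single stable manifold $W^s_K(f_I,c_1)$ plus the open condition of lying in $\mathring{S}_{J,m}$ enters). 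Interpreting the fiber product with $W^s_K(f_I,c_1)\cap W^s_K(f_J,c_2)$ as the Morse-theoretic model for $\alpha_1\star\alpha_2$ via the restriction maps $r_{IK}^*,r_{JK}^*$ of \eqref{eq: IKMorse} and the cup product (respectively, $\alpha_1\star[\mathring{S}_{J,m}]$ in case \ref{itm:caseprod2}), this fiber counts exactly the matrix coefficients of $\PSSlog^{low}((\alpha_1\star\alpha_2)t^{\v_1+\v_2})$;
\item Floer breaking along an orbit $x_1$ with $w(x_1)=w(\v_1)+w(\v_2)$, which contributes $\partial_{CF}$ applied to the operation \eqref{eq:auxprod} (or, on the $E_1$ page, vanishes since we work with the low-energy differential), together with Morse breaking at the ends $\vec e_1,\vec e_2$, which contributes the differentials of $c_1,c_2$.
\end{itemize}
Counting signed boundary points of this compact oriented $1$-manifold and passing to the associated-graded/low-energy level (where the Floer and Morse breakings assemble into the statement that \eqref{eq:auxprod} is a chain map) then gives the identity \eqref{eq:auxprod}$=\PSSlog^{low}(\alpha_1t^{\v_1}\cdot\alpha_2t^{\v_2})$ on cohomology. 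Combined with Lemma \ref{lem: connectsumcob}, which identifies the $q=b$ end of the same family with $\PSSlog^{low}(\alpha_1t^{\v_1})\cdot\PSSlog^{low}(\alpha_2t^{\v_2})$, this yields the desired equality; and by Lemma \ref{lem:generatingrelationsfortheproduct} the two generating cases \ref{itm:caseprod1}, \ref{itm:caseprod2} suffice to conclude Theorem \ref{lem:spectralrings}.

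I expect the main obstacle to be bookkeeping the orientations and the precise identification of the $q=0$ fiber with the Morse model for the convolution product $\star$: one must check that the constant-sphere gluing at $z'=z_0$ matches the normal-bundle orientation conventions used to define $r_{IK}^*$ via \eqref{eq: IKMorse}, and that the enhanced evaluation maps $\operatorname{Ev}^{\v_i}_{z_i}$ limit correctly to $\operatorname{Ev}^{\v_1+\v_2}_{z_0}$ in $M^{log}$ with the right asymptotic markers (this is exactly what Lemma \ref{lem: evalKN} supplies, but threading it through the fiber-product orientation requires care). The analytic inputs---transversality, the energy estimate ruling out sphere bubbling, and the $q\to 0$ gluing theorem---are all either established in the preceding lemmas or are routine adaptations of \cite{McDuff:2004aa}*{Ch.~10} and \cite{GP1}, so the genuinely new content is purely the combinatorics of the compactified $1$-manifold and the translation of its $q=0$ end into the definition of the product on $H^*_{log}(M,\D)$.
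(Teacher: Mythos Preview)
Your $q$-family cobordism correctly identifies the $q=b$ end with the operation \eqref{eq:auxprod} and the $q=0$ end with $\partial_V\overline{\mc{M}}(\v_1+\v_2,c_1,c_2,x_0)$, exactly as in Lemma \ref{lem:prodcompq0}. The gap is in your second bullet point, where you assert that this $q=0$ fiber ``counts exactly the matrix coefficients of $\PSSlog^{low}((\alpha_1\star\alpha_2)t^{\v_1+\v_2})$''. This is not immediate. By definition, $\PSSlog^{low}$ takes as input a cochain in $CM^*(\mathring{S}_K,f_K)$, and the product $\alpha_1\star\alpha_2$ is represented there via the moduli spaces $\mathcal{M}_{IK}$, $\mathcal{M}_{JK}$ and $\mathcal{M}_Y$ of \S\ref{subsec:logcoh}. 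By contrast, $\partial_V$ is the fiber product of the bare PSS moduli space $\mc{M}(\v_1+\v_2,x_0)$ with the geometric intersection $W^s_K(f_I,c_1)\cap W^s_K(f_J,c_2)$ via $\operatorname{Ev}^{\v_1+\v_2}_{z_0}$. This intersection is a cycle in $\mathring{S}_K$, not a cochain in $CM^*(\mathring{S}_K,f_K)$, and constraining the enhanced evaluation to lie in it is a different chain-level operation from applying $\PSSlog^{\v_1+\v_2}$ to a Morse representative of $r_{IK}^*\alpha_1\cup r_{JK}^*\alpha_2$. You need a further homotopy to pass between the two.

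The paper supplies exactly this missing step: it introduces a second one-parameter family indexed by $p\in[0,\infty]$ of ``$T_{2,0}^{(p)}$-shaped flow lines'', where the tree $T_{2,0}^{(p)}$ has the two infinite incoming edges $\vec e_1,\vec e_2$ (carrying $f_I$- and $f_J$-flow) together with three internal edges $\bar e_0,\bar e_1,\bar e_2$ of length $p$ carrying perturbed $f_K$-flow, glued at a trivalent vertex, and the outgoing end of $\bar e_0$ is matched with $\operatorname{Ev}^{\v_1+\v_2}_{z_0}(u)$ for a PSS solution $u\in\mc{M}(\v_1+\v_2,x_0)$. At $p=0$ this collapses to $\partial_V$; as $p\to\infty$ the internal edges break at critical points of $f_K$, and the limit is precisely the fiber product $\coprod_{w,w',z}\mathcal{M}_{IK}(w,c_1)\times\mathcal{M}_{JK}(w',c_2)\times\mathcal{M}_Y(z,w,w')\times\mc{M}(\v_1+\v_2,z,x_0)$, which is the definitional model for $\PSSlog^{low}((\alpha_1\star\alpha_2)t^{\v_1+\v_2})$. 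So the full argument is a concatenation of two cobordisms (the $q$-family and the $p$-family), and you have only the first.
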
 
\begin{proof} 
This follows another homotopy argument. We can consider configurations
depending on a parameter $p \in [0,\infty]$. Let us first describe the case
where both $I,J \neq K$. For any such $p$, let $T_{2,0}^{(p)}$ denote the
metrized tree with two incoming external edges $\vec{e}_1$,$\vec{e}_2$, two
internal edges $\bar{e}_1$, $\bar{e}_{2} \cong [0,p]$ of length $p$ and one
outgoing edge $\bar{e}_{0}$ of length $p$. We glue these edges by setting
\begin{itemize} 
    \item $\vec{e}_1(0)=\bar{e}_1(0)$, $\vec{e}_2(0)=\bar{e}_2(0)$
        and 
    \item $\bar{e}_{1}(p)=\bar{e}_{2}(p)=\bar{e}_{0}(0)$. 
\end{itemize}
In the limit as $p=0$, this becomes $T_{2,0}$ above. For all $p$, we equip this
tree with perturbation data $e \to C^{\infty}(S_K^{log})$ on each of the finite
edges $\bar{e}_{1}, \bar{e}_{2}, \bar{e}_{0}$ which is supported in a
neighborhood of the vertex $\bar{e}_{0}(0)$. A $T_{2,0}^{(p)}$ shaped flow line
is a continuous map $\phi: T_{2,0}^{(p)} \to M^{log}$ where $\phi$ restricted
to each of the finite edges $\bar{e}_{1}, \bar{e}_2, \bar{e}_{0}$ are perturbed
gradient trajectories of $f_K$ and $\phi_1=\phi_{|\vec{e}_1}$ and
$\phi_2=\phi_{|\vec{e}_2}$ are gradient trajectories of $f_I$ and $f_J$
respectively. Note that this implies in particular that we have that
$\phi(e_i(0)) \in S_K^{log}$.

Let $\mathcal{M}_p(\v_1+\v_2, c_1,c_2, x_0)$ denote the moduli space of pairs
$(\phi,u)$ where $\phi$ is a $T_{2,0}^{(p)}$ shaped flow line and $u$ is a
solution in $\mathcal{M}(\v_1+\v_2, x_0)$ such that
$\operatorname{Ev}^{\v_1+\v_2}_{z_{0}}=\phi(\bar{e}_0(p))$. As $p \to \infty,$
solutions limit to broken curves which define the composition $\operatorname{PSS}_{log}^{low} (\alpha_1t^{\v_1} \cdot \alpha_2t^{\v_2})$.  When $p=0$, this moduli
space reduces to the moduli space of Definition $\ref{defn:fibprod2}$. This
gives rise to a cobordism (up to boundary components which give rise to chain homotopy terms) between these two moduli spaces.  
See Figure \ref{logpssringmap} item (6) for a pictorial schematic of
this parametric family (in $p$) of moduli spaces along with items (5) and (7)
for its $p = 0$ and $p \to \infty$ limits (with the caveats about incorporating
incidence and enhanced evaluation conditions described in the caption).
Lemma \ref{lem:prodcompq0} in turn gives rise to a cobordism (again up to boundary terms that contribute to a chain homotopy)
between the moduli space at $p=0$ and the moduli spaces which define \eqref{eq:auxprod}.  Thus the operation defined in \eqref{eq:auxprod} is equal to $\operatorname{PSS}_{log}^{low} (\alpha_1t^{\v_1} \cdot \alpha_2t^{\v_2})$ (on the level of cohomology).

The other cases where either $I$ or $J$ coincide with $K$ are an easy
modification of the above argument. To handle it, we modify the graph
$T_{2,0}^{(p)}$ above by keeping the edges $\vec{e}_1$,$\vec{e}_2$, $\bar{e}_0$
the same as above, but only introducing an edge $\bar{e}_1$ (respectively
$\bar{e}_{2}$) when $I$ (respectively $J$) is not equal to $K$. The rest
proceeds as before.  
\end{proof}

\begin{proof}[Proof of Theorem \ref{lem:spectralrings}]
    Combine Lemmas \ref{lem: connectsumcob} and \ref{lem:PSSofproduct} to verify
    the low energy log PSS map is compatible with ring structure for inputs of the
    form specified in (i) and (ii) of Lemma
    \ref{lem:generatingrelationsfortheproduct}; the aforementioned Lemma
     then implies compatibility for all inputs.  
\end{proof}

\subsection{Separating actions}
\label{sect: PSSiso1}

In this subsection, we construct a truly ``local" version of the $\PSS_{log}$ map, with source $H^*(\SIo)t^{\v}$ and target a certain Hamiltonian Floer group generated entirely by orbits with winding vector $\v$ (this local map is defined in \eqref{eq:realPSSloc}). As stated in Lemma \ref{lem:localizeisomorphism}, the map  $\eqref{eq:locPSS}$ is an isomorphism iff for every $\v$ with $w(\v) \leq w$, \eqref{eq:realPSSloc} is an isomorphism. The construction is accomplished in two steps, which we briefly summarize. The first step is to vary our symplectic form slightly on $M$ so that:
\begin{itemize}
\item the resulting weight of a cohomology class in $F_{w_{\ell}}\QH^*(M,\D)$ (recall \eqref{eq:filterlogchainys}) uniquely determines the
multiplicity vector $\v$.
\end{itemize}
Thus, the new weight filtrations on log cohomology will be finer than the ones considered in previous sections. One can define the low energy log PSS map with respect to this new symplectic form. The map is \emph{a priori} different, as modifying the symplectic form changes the Hamiltonian vector field (though in this case, the actual time-one orbits will not change for a sufficiently small perturbation). However, much of the work in this subsection goes into showing that (see Lemma \ref{lem: PSScoinc}): 

\begin{itemize}
\item the low energy log PSS map, defined with
respect to this perturbed symplectic form, agrees with the previously defined log PSS map.
\end{itemize}

The second step is to then perturb the Hamiltonian to a Hamiltonian $H_p^{\ell}$ (see the discussion following \eqref{eq:hpellthing}), so that the non-constant orbits of $H_p^{\ell}$ are even closer to the divisor. This is done in such a way that the action of a Hamiltonian orbit $x_0$ of $H_p^{\ell}$ is arbitarily close the perturbed weight of $\v(x_0)$. This will ensure that:

\begin{itemize}
  \item the (families of) Hamiltonian orbits (below a fixed slope) we consider below will all
have distinct actions.
\end{itemize}

Thus, the Hamiltonian Floer groups will acquire a refined filtration which matches the refined filtration on log cohomology. The map \eqref{eq:realPSSloc} is then constructed by passing to a suitable associated graded of the low energy log PSS map with respect to these refined filtrations.

We now turn to describing all of this more precisely.  Fixing an $\ell$ in our sequence, we will perturb the
divisorial weights $\kappa_i$ (as they appear in $[\omega]$
and the formula \eqref{eq:windingvec} for $w(\v)$) 
to rational numbers $\kappa_{i,\ell} \in \mathbb{Q}$ very close to $\kappa_i$,
which in turn leads to a
{\em perturbed weight} of a vector $\v$
\begin{equation}\label{perturbedweight}
    w_p(\v):= \sum \kappa_{i,\ell}v_i \in \mathbb{Q}
\end{equation}
(this depends on $\ell$, though we have suppressed that from the notation).
In terms of this perturbed weight, we require our perturbation $\{\kappa_{i,\ell}\}$ to separate
actions below level $w_\ell$ in the sense that: 
\begin{equation}\label{eq:separateactions}
    \textrm{Given any $\v_1 \neq \v_2$ with $w(\v_i) \leq w_\ell$,  $w_p(\v_1) \neq w_p(\v_2).$} 
\end{equation}
We will also want to assume that our perturbations $\kappa_{i,\ell}$ are sufficiently close to $\kappa_i$,
so that
\begin{align} 
    &\textrm{if $w(\v) \in [w_\ell+1,\infty)$, then $w_p(\v) \in [w_\ell+1-\delta_p, \infty)$ for some very small $\delta_p$}.\\
    &\textrm{If $w(\v) \in [0,w_\ell]$, then $|w(\v)-w_p(\v)|<\delta_p$}. 
\end{align} 
The first condition roughly says that we don't necessarily require the
perturbed weight $w_p(\v)$
to be that close to $w(\v)$ above the
critical action window $[0,w_{\ell}]$; in that range the perturbation simply needs to stay out of the
critical action window.

To define our perturbed symplectic form, choose two very small constants
$\underline{\epsilon}_\ell^{p},\bar{\epsilon}_\ell^{p}$ 
with
$\underline{\epsilon}_\ell^{p} <\bar{\epsilon}_\ell^{p} \ll \epsilon_\ell$ (recall $\epsilon_\ell$ as defined just below \eqref{eq:defnsigmaell} was fixed earlier). 
Fix a monotone increasing
function $g_{p}: [0, \epsilon_\ell] \to [-1,0]$ with 
\[
    g_p(x) = \begin{cases} 
                -1 & x \in [0,(\underline{\epsilon}_\ell^{p})^2]\\
                0 & x \in [(\bar{\epsilon}_\ell^{p})^2,\epsilon_\ell].
            \end{cases}
\]
Because the Hamiltonian flow of $X_{\Hlm}$ preserves the divisors, it follows
that on any Hamiltonian orbit (either divisorial or in $X$), either $\rho_i=0$
or $\rho_i$ is bounded away from zero, by say some $\tau_i > 0$. We can therefore
assume that by choosing our constants $\underline{\epsilon}_\ell^{p}
<\bar{\epsilon}_\ell^{p}$ suitably (i.e., so that $\bar{\epsilon}_\ell^{p} <
\min_i \tau_i$), 
\begin{equation}\label{supportdg}
    \begin{split}
    &\textrm{the functions $\frac{dg_p(\rho_i)}{d\rho_i}$ vanish in open neighborhoods of }\\
    &\textrm{the periodic orbits of $X_{\Hlm}$. }
    \end{split}
\end{equation}
We choose $\kappa_{i,p}^{\ell} \in \mathbb{Q}$ (so that $\kappail=\kappa_i+\kappaipert^{\ell}$ are also rational)
and set 
\begin{align} 
    \theta_\ell = \theta +\sum_i \frac{\kappaipert^{\ell}}{2\pi} g_{p}(\rho_i) \theta_{e,i} 
\end{align}  
Here we use the coordinates $\rho_i$ and forms $\theta_{e,i}$ which are part of the regularization data chosen in \S 2.1. 
Finally, we assume that $\kappaipert^{\ell}$ are sufficiently small so that  
\begin{align} 
    \omega_\ell:= d \theta_\ell 
\end{align} 
remains symplectic. The above condition on the support of
$\frac{dg_p(\rho_i)}{d\rho_i}$ implies that: 
\begin{equation}\label{eq:vanishingperturbation} \tag{$\star$} 
    \begin{split}
        &\textrm{the perturbation $\alpha=\omega_\ell-\omega$ vanishes in an open neighborhood}\\
        &\textrm{of all the time-1 orbits of the Hamiltonian vector field $X_{\Hlm}$}. 
    \end{split}
\end{equation}
This will be used in Lemma
\ref{lem:estvarsym} below. Moreover, it is easy to see that for
$\kappaipert^{\ell}$ sufficiently small, all time-1 Hamiltonian orbits of $\Hlm$ with respect to the two symplectic forms $\omega$ and $\omega_\ell$ coincide
 and the almost-complex structure $J_S$ used to define \eqref{eq:locPSS} is
tamed by both $\omega$ and $\omega_{\ell}$ (recall that being tamed is an open condition in the symplectic form).

 Let $X^{p}_{\Hlm}$ denote the Hamiltonian vector field of $\Hlm$ with respect
 to $\omega_\ell$.  Fix $J_S$ used to define the map \eqref{eq:locPSS}, and
 consider solutions to the equation 
 \begin{align} 
     (du-X^{p}_{\Hlm}\otimes \beta)^{0,1}=0. 
 \end{align} 
 Define the moduli space of PSS solutions with respect to $J_S$ $$\mc{M}(\v,c,
 X^{p}_{\Hlm},x_0)$$ as in Definition \ref{def: higher}.   Using the obvious modification of \eqref{eq:PSSdef}, we may define
 a map 
 \begin{align} \label{eq:anotherrandomPSSformula}
     \PSSlog^{\v,p}: H^*(\SIo)t^{\v} \to  HF^*(X \subset M ;
     \Hlm)_{w(\v)}, 
 \end{align} 
 where $HF^*(X \subset M ;
     \Hlm)_{w(\v)}$ is the low energy Floer cohomology from \eqref{eq:lowlow}. The map is well-defined because our complex structure remains of contact type near the boundary of our Liouville domain $\Sigma_{\ell}$ (recall \eqref{eq:defnsigmaell}) and hence the analysis needed to prove the compactness
 result of Lemma \ref{lem:compactness} goes through unchanged. Also implicit in \eqref{eq:anotherrandomPSSformula}
 is the fact that the (filtered and quotiented) Floer group of $\Hlm$ with respect to $\omega_\ell$ agrees with $HF^*(X
 \subset M ; \Hlm)_{w(\v)}$ canonically. This is because the symplectic forms $\omega$ and $\omega_\ell$ both agree on
 $\overline{X}_\ell$ and by the maximum principle, Floer trajectories do not escape this region. 
While the Floer groups coincide automatically it is not a priori clear that the
maps $\PSSlog^{\v,p}$ and $\PSSlog^{\v}$ coincide.  To prove this, we let
$\kappa(s) \geq 0$ be a nondecreasing function such that 
\begin{itemize} 
    \item $\kappa(s)=0$, for $s \leq -1/2$ 
    \item $\kappa(s)=1$ for $s \geq 1/2 $ 
\end{itemize} 
The symplectic forms $\omega_{\ell}$ are connected by a one parameter family of symplectic forms 
\begin{align} 
    \omega_s=\omega+\kappa(s)\alpha 
\end{align}  
The members of this family of symplectic forms all agree in a neighborhood of
the orbits, which as before enables us to identify Floer cohomologies $HF^*(X
\subset M ; \Hlm)_{w(\v)}$. For each $s$, let $X_{\Hlm}(s)$ denote the
Hamiltonian vector field taken with respect to the symplectic form $\omega_s$. 

For what follows, recall that $\rho(r)$ denotes a cutoff function as in
\eqref{eq:cuttoff}.
\begin{defn} 
    Fix a complex structure $J_S \in J_{S,\ell}(V)$, an orbit $x_0 \in U_\v$ of $X_{\Hlm}$, and a critical point of $c \in \chifI$. 
    For a parameter $q \in \mathbb{R}$, define the moduli space $\mathcal{M}_q(\v, c, x_0)$ to be the set of solutions 
    $u: S \to M$ to the following differential equation 
\begin{align} 
    \label{eq: PSSq} (du- \rho(s-q)X_{\Hlm}(s)\otimes dt)^{0,1}=0 
\end{align}  
with asymptotics \eqref{eq:limitingcondition}, incidence/tangency conditions \eqref{eq:incidence},  and 
enhanced evaluation constraint $\Evzo(u) \in W^s(f_I,c)$.
\end{defn} 
By choosing $J_S$ generically, one can ensure that the parameterized moduli space of pairs $(q,u)$, $u \in \mathcal{M}_q(\v, c, x_0)$ is cut out transversely and also that the same holds for $\mathcal{M}_q(\v, c, x_0)$ when $q \in \mathbb{R}$ is chosen generically. For $q\ll 0$, this equation coincides with the standard PSS solution moduli
space for the symplectic form $\omega$ (note that the cutoff function
$\rho(s-q)$ is only non-zero when $s \ll q \ll 0$, in which region
$X_{H^\ell}(s)$ is simply the usual $X_{H^\ell}$ computed with respect to
$\omega$). 

We define the geometric energy of such a solution $u \in \mathcal{M}_q(\v, c,
x_0)$ to be the usual (c.f. \eqref{eq:geo}) 
\begin{align} 
    E_{g}(u)=\int_S ||du- \rho(s-q)X_{\Hlm}(s) \otimes dt|| 
\end{align}
where the norm is taken with respect to the symplectic form $\omega_s$ and
the family of complex structures $J_S = \{J_z\}_{z \in S}$.
By compactness of $M$, the fact that our homotopy $\omega_s$ is compactly supported, and the fact that our space of complex structures $J_z$ depends only on $t$ along the cylindrical end, there
exists a smallest constant $|\alpha|$ such that for all $z \in S$ we have
\begin{align} \label{eq:baralphabound}
    |\alpha(v,w)| \leq |\alpha| |v|_z|w|_z 
\end{align} 
for any two
tangent vectors $v,w$ at any point $m \in M$. In this equation the norms $|v|_z$, $|w|_z$ are again the norms associated with $\omega_s$ and $J_z$.
Of course, the constant
$|\alpha|$ becomes smaller as the $\kappaipert^{\ell}$ all tend to zero.  We
may also make use of the following energy which is defined by analogy with the
topological energy (see  \eqref{eq:top}) 
\begin{align} 
    E_b(u) := \int_S \omega_s(\partial_s u,\partial_t u)-\int d(H\wedge \beta_q) 
\end{align}
where in this equation we have $\beta_q=\rho(s-q)dt$. As before, we have that \begin{align} \label{eq: gtopbound2} E_g(u) \leq E_b(u) \end{align} A slight complication is that this integral $E_b(u)$ is no longer topological, i.e. it depends on the curve $u$ and not just the multiplicity vector $\v$ and the output $x_0$.  Nevertheless, we still have:

\begin{lem} \label{lem:estvarsym} For any $u \in \mathcal{M}_q(\v, c, x_0)$, we have the following bounds 
    \begin{align} 
        \label{varsym:Egbound}        E_{g}(u)  \leq w(\v) + A_{\ell}(x_0) +C|\alpha|E_{g}(u) \\ 
        \label{varsym:Ebbound}        E_{b}(u)  \leq w(\v) + A_{\ell}(x_0) +C|\alpha|E_{b}(u)  
    \end{align} 
    for a constant $C$ which is independent of the curve $u$ or $q$. 
    Moreover the constant $C$ can be taken independent of the $\kappaipert^{\ell}$.  \end{lem}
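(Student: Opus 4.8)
The plan is to prove the two estimates by a Stokes-type computation on the domain $S$, handling the fact that the symplectic form is now $s$-dependent so that the usual $\omega$ is not closed along the homotopy. First I would fix $u \in \mathcal{M}_q(\v, c, x_0)$ and, as in Lemma \ref{lem:energypss}, work on the complement $S_\delta := S \setminus B_\delta(z_0)$ and take $\delta \to 0$ at the end; on $S_\delta$ the map lands in $X$ where each $\omega_s$ is exact with primitive $\theta_s := \theta + \kappa(s)\sum_i \frac{\kappaipert^{\ell}}{2\pi} g_p(\rho_i)\theta_{e,i}$. The key point is that $d_S(u^* \theta_s) = u^*\omega_s + (\partial_s \kappa)\, ds \wedge u^*\big(\sum_i \tfrac{\kappaipert^{\ell}}{2\pi} g_p(\rho_i)\theta_{e,i}\big)$, i.e. differentiating on the base $S$ produces an extra error term proportional to $\alpha = \omega_\ell - \omega$. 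Applying Stokes to $E_b(u)$ written as $\int_S \omega_s(\partial_s u, \partial_t u)\, ds\, dt - \int_S d(H \wedge \beta_q)$ then yields the usual boundary contributions — the action $\mathcal{A}_\ell(x_0)$ at the negative cylindrical end and the winding integral $w(\v)$ as the $\delta \to 0$ limit of $\int_{\partial B_\delta(z_0)} u^* \theta$ (using \eqref{eq:vanishingperturbation}, so near the orbit and near $z_0$ the primitive is the unperturbed $\theta$) — plus the error term
\begin{align}
    \int_S (\partial_s \kappa)(s)\, u^*\Big(\sum_i \tfrac{\kappaipert^{\ell}}{2\pi} g_p(\rho_i)\theta_{e,i}\Big) \wedge dt.
\end{align}

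Next I would bound this error term by $C|\alpha| E_g(u)$, respectively $C|\alpha| E_b(u)$. The integrand is supported where $\partial_s\kappa \neq 0$, which is the compact strip $s \in [-1/2,1/2]$, and the $1$-form $\sum_i \tfrac{\kappaipert^{\ell}}{2\pi} g_p(\rho_i)\theta_{e,i}$ is bounded, with bound controlled by $|\alpha|$ in the sense of \eqref{eq:baralphabound} after absorbing the (fixed, $\kappaipert^{\ell}$-independent) geometry of the strip; here one uses that pulling back a bounded $1$-form wedged with $dt$ against $u$ is pointwise bounded by $|\alpha|\,|\partial_s u|\,|\partial_t u|$ up to a universal constant, and $\int_S |\partial_s u||\partial_t u| \leq E_g(u)$ by the pointwise inequality $|\partial_s u||\partial_t u| \leq \tfrac{1}{2}(|\partial_s u|^2 + |\partial_t u|^2)$ together with the standard identity relating $|du - X_{\mathcal{K}}|^2$ to these norms on a compact region. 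Since $E_g \leq E_b$ by \eqref{eq: gtopbound2}, the same bound holds with $E_b(u)$ on the right. The constant $C$ depends only on the fixed cutoff $\rho$, $\kappa$, the compact strip, and the fixed complex-structure data — crucially not on $q$ (the strip $\partial_s\kappa \neq 0$ has fixed length, only translated) and not on $\kappaipert^{\ell}$ (all $\kappaipert^{\ell}$-dependence has been isolated into the factor $|\alpha|$, which $\to 0$ as $\kappaipert^{\ell} \to 0$).

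Finally, combining the Stokes computation with the error bound gives
\begin{align}
    E_b(u) \leq w(\v) + \mathcal{A}_\ell(x_0) + C|\alpha| E_b(u),
\end{align}
and feeding this through \eqref{eq: gtopbound2} gives the $E_g$ version, which are exactly \eqref{varsym:Ebbound} and \eqref{varsym:Egbound}. I expect the main obstacle to be the careful bookkeeping in the error-term estimate: one must verify that the pullback of the perturbation $1$-form is genuinely controlled by the taming constant $|\alpha|$ uniformly over the (non-compact in $t$, but the relevant strip is compact in $s$) domain, and that no hidden $q$- or $\kappaipert^{\ell}$-dependence sneaks into $C$ through the norms $|\cdot|_z$ — this requires using that $J_S$ is fixed independent of all these parameters and that $\omega_s$ and $\omega$ agree outside a fixed compact set, so the metrics $|\cdot|_z$ vary in a fixed compact family. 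Everything else is a routine adaptation of the proof of Lemma \ref{lem:energypss}.
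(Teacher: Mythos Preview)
Your Stokes-with-primitive approach has a genuine gap. The one-form
\[
\gamma := \sum_i \tfrac{\kappaipert^{\ell}}{2\pi}\, g_p(\rho_i)\,\theta_{e,i}
\]
is \emph{not} bounded on $M$: near $D_i$ one has $g_p(\rho_i)=-1$ while the connection form $\theta_{e,i}$ restricts on fibers to $d\varphi_i$, whose norm blows up like $\rho_i^{-1/2}$. So your claim that ``the $1$-form \ldots is bounded, with bound controlled by $|\alpha|$'' fails, and with it the error-term estimate. (Note that $|\alpha|$ as defined in \eqref{eq:baralphabound} is the pointwise norm of the \emph{2-form} $\alpha=d\gamma$, which does extend smoothly across $\D$ since $d\theta_{e,i}$ is a curvature form; this says nothing about $\gamma$ itself.) On the strip $s\in[-1/2,1/2]$ there is no a priori reason the curve $u$ stays away from $\D$---establishing such control is precisely part of the compactness argument we are building towards---so the singularity of $\gamma$ cannot be sidestepped.

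The paper avoids primitives entirely. It writes $E_b(u)=\int_S u^*\omega + \int_S \kappa(s)\,u^*\alpha - \int_S d(H\wedge\beta_q)$ and recognises $\int_S u^*\omega-\int_S d(H\wedge\beta_q)=w(\v)+\mathcal{A}_\ell(x_0)$ as in Lemma~\ref{lem:energypss}; the entire difficulty is then to bound $\int_S \kappa(s)\,\alpha(\partial_s u,\partial_t u)\,ds\,dt$, where now $\alpha$ \emph{is} bounded but the domain of integration (the support of $\kappa$) is the infinite half-cylinder $s\geq -1/2$. Using Floer's equation splits this into a term bounded by $|\alpha|E_g(u)$ and a Hamiltonian term $\int |\alpha(\partial_s u, \rho(s-q)X_{H^\ell}(s))|$. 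It is here that condition~\eqref{eq:vanishingperturbation} does the real work: since $\alpha$ vanishes on a neighbourhood $\mathcal{N}_E$ of all orbits, the integrand vanishes whenever $u(s,\cdot)\subset\mathcal{N}_E$, and Zhang's argument shows the Lebesgue measure of the ``escape set'' $\mathcal{I}=\{s: u(s,\cdot)\not\subset\mathcal{N}_E\}$ is bounded by $E_g(u)/N$ for a curve-independent $N$. Cauchy--Schwarz over $\mathcal{I}\cup[s_1^*,s_0^*]$ then gives the estimate. Your argument only invokes \eqref{eq:vanishingperturbation} for the boundary terms, which is a symptom of the gap: the paper's proof uses it in an essential way to control the bulk error on an unbounded region.
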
 
\begin{proof} 
These estimates are implicit in \cite{Zhang} (Proof of Theorem 1.6) and can be
treated in a very similar fashion to the proof of Proposition 1.5 of \emph{loc.
cit.} (see also \cite{Ritter2}).  For the reader's convenience and to
illustrate how the crucial assumption \eqref{eq:vanishingperturbation} on the
perturbation is used, we sketch how to modify the proof of Proposition 1.5 of
\emph{loc.cit}. 
First, note that
\begin{align} 
    E_b(u)=\int_S u^*(\omega)+\int_S\kappa(s)u^*\alpha - \int_{S}H_{s,t}dsdt.
\end{align} 
We have also seen that (as described in e.g., the proof of Lemma \ref{lem:energypss})
\begin{align} 
    \int_S u^*(\omega) - \int_{S}H_{s,t}dsdt = w(\v) + A_{\ell}(x_0).
\end{align} 
Thus, in view of \eqref{eq: gtopbound2}, the essential point is to
estimate $| \int_S \kappa(s) \alpha(\partial_su, \partial_t u)dsdt|$ for any
PSS solution $u$ in terms of the energy $E_g$ and $|\alpha|$. To begin, we have
the upper bound
\begin{align} \label{eq:middleofroad} 
    | \int_S \kappa(s) \alpha(\partial_su, \partial_t u)dsdt| \leq \int_S |\alpha(\partial_su, \partial_t u)|dsdt.
\end{align} 
Floer's equation and the triangle inequality allow us to further bound the right hand side of \eqref{eq:middleofroad} by
\begin{align} \label{eq:middleofroad2}
    \int_S|\alpha(\partial_su, \partial_t u)|dsdt \leq \int_S|\alpha(\partial_su, J\partial_s u)|dsdt + \int_S|\alpha(\partial_su, \rho(s-q)X_{\Hlm}(s)|dsdt. 
\end{align} 
Observe that the first term of the right hand side of \eqref{eq:middleofroad2}
is bounded by $|\alpha|E_g(u)$ (where $|\alpha|$ is the constant in
\eqref{eq:baralphabound}); hence the problem may be reduced to bounding the
second term of this equation. Let $s_*^0$ denote the minimum s for which
$\rho(s-q)=0$ and let $s_*^1$ denote the maximum $s$ for which $\rho(s-q)=1$.
Then
\begin{align}  \label{eq:middleofroad3}
    \int_S|\alpha(\partial_su, \rho(s-q)X_{\Hlm}(s)|dsdt = \int_{-\infty}^{s_0^*} \int_0^{1}|\alpha(\partial_su, \rho(s-q)X_{\Hlm}(s)|dsdt,
\end{align}  
seeing as the integrand vanishes for $s \geq s_0^*$.
By assumption \eqref{eq:vanishingperturbation}, there is an isolating set $\mathcal{N}_E$ about the
union of all period orbits where $\alpha$ vanishes. Let 
$$ \mathcal{I}:=\lbrace s \in (-\infty,s_1^*], u(s,-) \notin \mathcal{L}\mathcal{N}_E \rbrace. $$ 
Note that the integrand \eqref{eq:middleofroad3} is further $0$ when $s \notin \mathcal{I} \cup [s_1^*, s_0^*]$; hence
$$\int_{-\infty}^{s_0^*} \int_0^{1}|\alpha(\partial_su, \rho(s-q)X_{\Hlm}(s)|dsdt= \int_{\mathcal{I} \cup [s_1^{*}, s_0^{*}]} \int_0^{1}|\alpha(\partial_su, \rho(s-q)X_{\Hlm}(s)|dsdt. $$ 
Now 
$$|\alpha(\partial_su, \rho(s-q)X_{\Hlm}(s)| \leq C' |\alpha||\partial_su|$$ 
where $C'$ is the sup of the norm of the Hamiltonian vectorfields $X_{\Hlm}(s)$. Thus, 
$$ \int_{\mathcal{I} \cup [s_1^{*}, s_0^{*}]} \int_0^{1}|\alpha(\partial_su, \rho(s-q)X_{\Hlm}(s)|dsdt \leq C'|\alpha|\int_{\mathcal{I} \cup [s_1^{*}, s_0^{*}]} \int_0^{1}|\partial_su|dsdt $$ 
The crucial point is that, as shown by Zhang in \cite[Proof of Prop. 1.5]{Zhang}, the condition
\eqref{eq:vanishingperturbation} implies that the Lebesgue measure of
$\mathcal{I}$ is bounded by $E_g(u)/N$ for some constant $N$ which is
independent of the PSS solution.
Then by Cauchy Schwarz,
$$ (\int_{\mathcal{I} \cup [s_1^{*}, s_0^{*}]} \int_0^{1}|\partial_su|dsdt)^2 \leq (s_0^*-s_1^*+E_g(u)/N) E_g(u). $$ 
Putting all of this together, the first energy estimate \eqref{varsym:Egbound}
follows from the fact that either $E_g(u)$ is smaller than $w(\v) +
A_{\ell}(x_0)$ (in which case the estimate trivially holds) or there
will be some fixed constant for which $E_g \leq C''E_g^2$. The second
inequality \eqref{varsym:Ebbound} may be deduced similarly. Note also that since
$s_0^*-s_1^*$ is independent of $q$, so are the final estimates.
\end{proof} 

This provides the necessary energy bounds which are needed for Gromov compactness arguments to go through. Expanding on this, we have: 
\begin{lem} 
    Suppose that $|\alpha|$ is taken sufficiently small $\epsilon_\ell, ||\Hlm-\hlm||_{C^2}$  are both sufficiently small and $\Sigma_\ell$ is sufficiently $C^0$-close to $\hatXl$. Counting rigid index 0 solutions to
    \eqref{eq: PSSq} for a generic $J_S \in J_{S,\ell}(V)$ and a generic $q \in \R$ 
   defines a chain map, inducing the following cohomological map:  
    \begin{align}
        \label{eq:PSSqc} \PSSlog^{\v,q}: H^*(\SIo)t^{\v} \to  HF^*(X \subset M ; \Hlm)_{w(\v)}  
    \end{align}  
\end{lem}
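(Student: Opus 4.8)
The statement asserts two things: that counting rigid index-$0$ solutions to the $q$-dependent equation \eqref{eq: PSSq} (for generic $J_S$ and generic $q$) defines a chain map, and that the resulting cohomological map lands in the filtered-and-quotiented Floer group $HF^*(X \subset M;\Hlm)_{w(\v)}$. The strategy is the standard ``moduli space with boundary'' argument in Floer-theoretic TQFT, with the one new ingredient being the energy bounds of Lemma \ref{lem:estvarsym} that make Gromov compactness work in the presence of the varying symplectic form $\omega_s$. First I would fix a generic $q$ and a generic $J_S \in J_{S,\ell}(V)$ so that $\mathcal{M}_q(\v, c, x_0)$ is cut out transversely for all $c, x_0$ (this is exactly the genericity already asserted just before the statement). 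The energy estimate \eqref{varsym:Ebbound}, combined with the smallness of $|\alpha|$ (so that $C|\alpha| < 1$ and the inequality can be rearranged to $E_b(u) \leq \frac{1}{1 - C|\alpha|}(w(\v) + \mathcal{A}_\ell(x_0))$), gives a uniform a priori energy bound; moreover, by taking $\epsilon_\ell$, $\|\Hlm - \hlm\|_{C^2}$ small and $\Sigma_\ell$ sufficiently $C^0$-close to $\hatXl$ as in Lemma \ref{lem:energypss}, this bound can be made smaller than the minimal energy of a non-constant $J$-holomorphic sphere in $M$ whenever $w(x_0) = w(\v)$ — so no sphere bubbles arise and no Floer breaking along divisorial orbits occurs (the latter exactly as in Lemma 4.13 of \cite{GP1}). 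As in Lemma \ref{lem:compactness}, this rules out all the degenerations except Floer breaking in $X$ along orbits $x'$ with $w(x') = w(\v)$.

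The chain map property then follows from the usual analysis of the compactified $1$-dimensional components of $\mathcal{M}_q(\v, c, x_0)$: when $\deg(x_0) - \deg(|\mathfrak{o}_c|t^\v) = 1$, the boundary $\partial \overline{\mathcal{M}}_q(\v, c, x_0)$ is a disjoint union $\partial_F \sqcup \partial_M$, where $\partial_F = \bigsqcup_{x', w(x') = w(\v)} \mc{M}(x_0, x') \times \mc{M}_q(\v, c, x')$ accounts for Floer breaking at the outgoing end and $\partial_M = \bigsqcup_{c', \deg(c') - \deg(c) = 1} \mc{M}_q(\v, c', x_0) \times \mc{M}(c', c)$ accounts for Morse breaking along the incidence-evaluation axis — exactly the structure of Lemma \ref{lem:compactness}, with the $q$-dependent Floer equation \eqref{eq: PSSq} in place of the standard one. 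Counting signed rigid points with the induced orientation isomorphisms on orientation lines, the vanishing of the signed count of the boundary of these $1$-manifolds translates into the identity $\partial_{CF} \circ \PSSlog^{\v,q} + \PSSlog^{\v,q} \circ \partial_{CM} = 0$, i.e.\ $\PSSlog^{\v,q}$ is a chain map; that it lands in the low-energy quotient $HF^*(X \subset M;\Hlm)_{w(\v)}$ is automatic since all breaking orbits $x'$ that appear satisfy $w(x') = w(\v)$, so the count respects the filtration and descends to the associated graded piece. Finally, since $\omega$ and $\omega_\ell$ (hence $\omega_s$ for all $s$) agree on $\overline{X}_\ell$ and Floer trajectories cannot escape this region by the maximum principle (using that $J_S$ is of contact type near $\partial\bar{X}_\ell$ with respect to both forms), the underlying Floer complex and its filtration are literally unchanged, so the target is canonically $HF^*(X \subset M;\Hlm)_{w(\v)}$.

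The main obstacle is not any single step but rather carefully propagating the smallness hypotheses: one must check that a single choice of $\kappaipert^\ell$ (equivalently $|\alpha|$) can be made small enough to \emph{simultaneously} (i) keep $\omega_\ell$ symplectic, (ii) keep $J_S$ tamed by $\omega_s$ for all $s$, (iii) make $C|\alpha| < 1$ so the estimates of Lemma \ref{lem:estvarsym} can be solved for the energy, and (iv) make the resulting energy bound $\frac{1}{1-C|\alpha|}(w(\v) + \mathcal{A}_\ell(x_0))$ — which is $\approx w(\v)\epsilon_\ell^2/2$ up to a controlled error when $w(x_0) = w(\v)$, by Lemma \ref{lem:energypss} — still strictly less than the minimal sphere energy in $M$. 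This is a matter of choosing constants in the right order (fix $\epsilon_\ell$ and the Hamiltonian perturbation first, obtain the constant $C$ of Lemma \ref{lem:estvarsym} which is independent of $\kappaipert^\ell$, then take $\kappaipert^\ell$ small), but it is the delicate bookkeeping that makes the argument work rather than a deep new idea. Everything else — transversality, gluing, orientations — follows the patterns already established in \cite{GP1} and in the proofs of Lemmas \ref{lem:compactness} and \ref{lem:nospheresinproducthomotopy}.
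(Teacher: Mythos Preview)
Your proposal is correct and follows essentially the same approach as the paper: both invoke the energy estimates of Lemma~\ref{lem:estvarsym} (solving $E_b(u) \leq \frac{1}{1-C|\alpha|}(w(\v)+\mathcal{A}_\ell(x_0))$ for small $|\alpha|$) to control the energy of solutions, then rule out divisorial Floer breaking by the argument of \cite[Lemma~4.13]{GP1} adapted via this estimate, and finally conclude by the standard boundary analysis of the one-dimensional moduli spaces.

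The one minor difference worth noting: you exclude \emph{all} sphere bubbling by the single observation that $E_b(u)\approx \tfrac{1}{2}w(\v)\epsilon_\ell^2$ is below the minimal sphere energy. The paper instead splits this into two cases---at $z_0$ it uses the energy bound exactly as you do, but for bubbling elsewhere it gives a separate positivity-of-intersection argument (positive $\omega_s$-area forces positive intersection with some $D_i$, contradicting the conservation of total intersection number away from $z_0$). Your route is slightly more direct here and is valid since the minimal $J_z$-sphere area with respect to any $\omega_s$ remains uniformly bounded below (all $[\omega_s]$ are small perturbations of the rational class $[\omega]$); the paper's intersection argument is more robust in that it does not rely on this uniform lower bound. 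Either way the conclusion is the same.
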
  
\begin{proof} 
    This again follows the pattern of Lemma 4.13 of \cite{GP1} 
(or Lemma
\ref{lem:compactness} of the present text) and to avoid being repetitive,
we only mention the two key points. The first is that can one use the
second equation of Lemma \ref{lem:estvarsym} to obtain a suitable
replacement for Equation (4.34) of \cite{GP1}. This allows one to handle
breaking along orbits in the divisor just as in \emph{loc. cit.} 

The second point concerns sphere bubbling: observe that this Lemma also implies
that when $|\alpha|$ is sufficiently small, $E_b(u)$ is still approximately
$w(\v)\epsilon_\ell^2/2$ and thus no sphere bubbling can occur at the point
$z_0$ as the minimal energy of any such non-constant sphere bubble is larger than
$E_b(u)$ (compare proof of Lemma \ref{lem:compactness}).  
It follows that given a sequence of curves $u_n$ limiting to a curve
$u_{\infty}$, the limiting configuration intersects $\D$ with multiplicity $\v$
at the point $z_0$ (here we identify the domain $S$ as one component of such a
conjectural limiting configuration). To rule out sphere bubbling elsewhere, 
note that the $\omega_s$-energy of the union of any putative sphere bubbles must
be positive and hence the collection of these sphere bubbles must intersect at
least one of the divisors $D_i$ positively. By positivity of intersection with
$\D$, any Floer cylinder or PSS solution must intersect $D_i$ with non-negative
multiplicity. This however contradicts the fact that the sum of all
intersections with $D_i$ away from $z_0$ must be zero.  It follows that no
sphere bubbling can occur. With these points in place, the rest of the proof
follows as in the proof of Lemma 4.13 of  \emph{loc. cit.}  
\end{proof}  

\begin{lem} \label{lem: continuationell} 
    Let $x_0,x_1$ be orbits in $U_\v$ such that $\deg(x_0)=\deg(x_1)$. Let $u$
    be a solution to \begin{align} \label{eq:continuationl}
        (du-X_{\Hlm}(s)\otimes dt)^{0,1}=0 \end{align} 
    For $|\alpha|$ sufficiently small, generic $J_t$, and
    $||H^{\ell}-h^{\ell}||$ sufficiently $C^2$ small, $u$ is s-independent.  
\end{lem}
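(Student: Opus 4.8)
The plan is to run the standard ``no escape'' energy argument for $s$-dependent Floer equations, exactly in the spirit of Lemma \ref{lem:compactness} and Lemma \ref{lem: homotopycomp} above, but now using the energy estimates of Lemma \ref{lem:estvarsym} in place of the naive topological energy bound. First I would observe that \eqref{eq:continuationl} is the ``$q = \infty$'' limit (or rather the limiting equation) of the family \eqref{eq: PSSq} with no marked point constraint, so a solution $u$ between orbits $x_0, x_1 \in U_\v$ has $E_b(u) \leq \mathcal{A}_\ell(x_0) - \mathcal{A}_\ell(x_1) + C|\alpha| E_b(u)$, hence for $|\alpha|$ small enough $E_b(u)$ (and so the geometric energy $E_g(u)$) is bounded by something arbitrarily close to $\mathcal{A}_\ell(x_0) - \mathcal{A}_\ell(x_1)$. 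Since $x_0, x_1$ both lie in the same orbit set $\mathcal{F}_\v$ (as they live in $U_\v$), Lemma \ref{lem: sharpactions} shows $|\mathcal{A}_\ell(x_0) - \mathcal{A}_\ell(x_1)|$ can be made arbitrarily small by taking $\Sigma_\ell$ sufficiently $C^0$-close to $\hatXl$ and $\|H^\ell - h^\ell\|_{C^2}$ small (the action difference is controlled by the size of the $C^2$-perturbation used to make orbits nondegenerate).

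With this energy bound in hand, the argument proceeds in three steps. Step one: rule out escape to $\D$, i.e.\ show the Gromov--Floer compactification of the moduli space of solutions to \eqref{eq:continuationl} contains only configurations mapping entirely into $X$. This is the same argument as in the proof of Lemma \ref{lem: homotopycomp}: a broken solution cannot break along a divisorial orbit (by the estimate replacing (4.34) of \cite{GP1} via the second inequality in Lemma \ref{lem:estvarsym}), positivity of intersection with $\D$ forces the total intersection number to be preserved and nonnegative on each piece, and sphere bubbles would carry positive $\omega_s$-area hence positive intersection with some $D_i$, contradicting the vanishing total intersection number. Step two: once everything stays in $X$, the energy of any such solution is at most $|\mathcal{A}_\ell(x_0) - \mathcal{A}_\ell(x_1)| + (\text{small error}) \ll$ the minimal period of a Reeb orbit on $\partial\bar X_\ell$, which by the usual reasoning (the orbits are nondegenerate and isolated within $\mathcal{F}_\v$) forces $E_g(u) = 0$ when $\deg(x_0) = \deg(x_1)$: a nonconstant $s$-dependent Floer trajectory between orbits of equal index would have to have strictly positive energy bounded below by the minimal ``gap'' in the action spectrum, which we have arranged to exceed the available energy. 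Step three: $E_g(u) = 0$ forces $du = X_{H^\ell}(s) \otimes dt$, i.e.\ $u$ is $s$-independent (a constant family of the orbit $x_0 = x_1$), which is the claim.

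The main obstacle is making Step two fully rigorous: one must verify that the available energy is genuinely smaller than the relevant action gap. Concretely, the subtlety is that although $x_0$ and $x_1$ lie in the same $U_\v$, the Conley--Zehnder indices and actions depend on the $C^2$-small perturbation $H^\ell - h^\ell$ in a way that is only controlled qualitatively; one needs that, after fixing the perturbation (hence the finite set of nondegenerate orbits in $\mathcal{F}_\v$), the minimal positive energy of a nonconstant $s$-dependent cylinder between equal-index orbits is bounded below by a constant that survives shrinking $|\alpha|$, $\epsilon_\ell$, and $\|H^\ell - h^\ell\|_{C^2}$ further. This is a standard ``action window'' bookkeeping argument --- one fixes the perturbation first and only then shrinks $|\alpha|$ --- but it requires care with the order of choices. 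I would handle it by noting that for an $s$-independent homotopy the equation \eqref{eq:continuationl} with $|\alpha| = 0$ is the genuine Floer equation for $H^\ell$ with respect to $\omega$, for which $s$-independence of index-$0$ (relative to each other) solutions is classical; then a standard implicit-function/compactness argument shows this persists under the small $s$-dependent perturbation by $\kappa(s)\alpha$, provided $|\alpha|$ is small relative to the (now fixed) data. Everything else is a routine adaptation of arguments already carried out in the excerpt.
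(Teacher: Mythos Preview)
Your Step~1 (ruling out breaking along $\D$) is essentially the paper's argument: the paper adapts Lemma~\ref{lem: homotopycomp} using a modified energy estimate from \cite[Proposition~1.5]{Zhang} (in the same spirit as Lemma~\ref{lem:estvarsym}) to show solutions cannot break along divisorial orbits, after which positivity of intersection excludes sphere bubbling as usual.

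Where you diverge is in Steps~2--3, and here you are missing the key structural observation that makes the paper's argument much shorter. Recall property \eqref{eq:vanishingperturbation}: the perturbation $\alpha = \omega_\ell - \omega$ was \emph{deliberately constructed to vanish on a neighborhood of every Hamiltonian orbit}, in particular on the isolating set $U_\v$. Once breaking along $\D$ is excluded, the usual local Floer compactness argument (as in Lemmas~\ref{lem: localcompactness} and~\ref{lem: localFloersep}) confines any solution $u$ entirely to $U_\v$. But on $U_\v$ all the symplectic forms $\omega_s = \omega + \kappa(s)\alpha$ coincide with $\omega$, so $X_{\Hlm}(s) = X_{\Hlm}$ is $s$-independent there, and equation~\eqref{eq:continuationl} is the genuine $\R$-invariant Floer equation. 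Now the standard argument applies: any $s$-dependent solution would come in an $\R$-family by translation, contradicting zero-dimensionality of the (generically cut-out) index-$0$ moduli space.

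So your energy/action-gap argument in Step~2 and the implicit-function workaround are both unnecessary. The gap you correctly identified in Step~2 is real (the available energy \emph{is} essentially the action difference, so it cannot be strictly smaller than the minimal action gap between distinct orbits in $U_\v$), but the paper sidesteps it entirely by reducing to the $\R$-invariant case via the vanishing of $\alpha$ on $U_\v$.
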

\begin{proof} 
    Again assuming that we can show that solutions do not develop limits at
    orbits along $\D$, the usual compactness argument shows that all such
    solutions lie in $U_\v$ where the symplectic form is constant (and the
    result follows by noticing that, by applying $\R$-translations,
    $s$-dependent solutions come in non-rigid families). 
    In order to show solutions do not break along orbits in $\D$, 
    similarly to the construction of the map \eqref{eq:PSSqc}, we
    adapt the proof of Lemma \ref{lem: homotopycomp}. Note that this Lemma gets off the ground via the estimate in Equations \eqref{eq:homeest}-\eqref{eq:finalest216}. Using the notation of that Lemma, one can in the present situation use \cite[Proposition
    1.5]{Zhang} to obtain the following modified version of these equations: \begin{align} E_g(\bar{S}) \leq \frac {-A_{\ell}(x_1) + C|\alpha| A_{\ell}(x_0)}{1-C|\alpha|} + \int_{\partial \bar{S}}u^*\theta-\theta(X_{H_{s,t}})dt + \int_{\partial \bar{S}}\lambda^{\ell} dt \end{align} When $|\alpha|$ is sufficiently small, this is enough to carry out the rest of the argument of that Lemma to rule out breaking along $\D$. With this established, when $||H^{\ell}-h^{\ell}||$ sufficiently $C^2$ small,
    solutions to \eqref{eq:continuationl} are therefore just Floer trajectories
    and the only index 0 solutions are constant in $s$. 
\end{proof}

\begin{lem} \label{lem: PSScoinc} 
    The maps $\PSSlog^{\v}$ and $\PSSlog^{\v,p}$ (cohomologically) coincide.  
\end{lem}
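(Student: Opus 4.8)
The plan is to realize both $\PSSlog^{\v}$ and $\PSSlog^{\v,p}$ as the two ends of a single continuation-type family of moduli problems, namely the family $\{\mathcal{M}_q(\v, c, x_0)\}_{q \in \R}$ of solutions to \eqref{eq: PSSq}, and to extract a chain homotopy between them from the boundary of the associated $1$-parameter moduli spaces. So both maps are induced by counts of rigid solutions --- the first with respect to the pair $(\omega, X_{\Hlm})$, the second with respect to $(\omega_\ell, X^{p}_{\Hlm})$ --- and it is enough to produce such a homotopy.

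First we would check compactness uniformly in $q$. The energy bound \eqref{varsym:Egbound} of Lemma \ref{lem:estvarsym}, together with the hypothesis $w(x_0)=w(\v)$, shows that (for $|\alpha|$, $\epsilon_\ell$, $||\Hlm-\hlm||_{C^2}$ small and $\Sigma_\ell$ sufficiently $C^0$-close to $\hatXl$) the geometric energy of any $u\in\mathcal{M}_q(\v,c,x_0)$ is bounded above by roughly $w(\v)\epsilon_\ell^2/2$, hence below the minimal energy of a non-constant $J$-holomorphic sphere in $M$; moreover this bound is $q$-independent, as noted at the end of the proof of Lemma \ref{lem:estvarsym}. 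Running the argument of Lemma \ref{lem:compactness} --- sphere bubbling excluded by this energy bound, breaking along divisorial orbits excluded by \eqref{varsym:Ebbound} exactly as in the construction of \eqref{eq:PSSqc} --- would then show that, for generic choices and in the relevant low-dimensional cases, the parameterized moduli space $\bigsqcup_{q\in\R}\{q\}\times\mathcal{M}_q(\v,c,x_0)$ is a manifold whose only non-compactness is (i) breaking off a rigid Floer trajectory in $X$ along the negative end, (ii) the limit $q\to-\infty$, and (iii) the limit $q\to+\infty$.

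Next we would identify the two ends. For $q\ll 0$ the cutoff $\rho(s-q)$ is supported in $\{s\ll q\ll 0\}$, where $\omega_s=\omega$ and $X_{\Hlm}(s)=X_{\Hlm}$, so the $q\to-\infty$ end is precisely the moduli space $\mc{M}(\v,c,x_0)$ defining $\PSSlog^{\v}$. For $q\gg 0$, on the long region $\{1/2\le s\le q-O(1)\}$ one has simultaneously $\rho(s-q)=1$ and $\omega_s=\omega_\ell$, so \eqref{eq: PSSq} there is the $\R$-translation-invariant Floer equation for $X^{p}_{\Hlm}$; standard Gromov--Floer compactness and gluing (using the uniform energy bound above) would then identify the $q\to+\infty$ end with a fiber product of the moduli space of continuation solutions to \eqref{eq:continuationl} with the moduli space $\mc{M}(\v,c,X^{p}_{\Hlm},x_0)$ defining $\PSSlog^{\v,p}$. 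By Lemma \ref{lem: continuationell} the only rigid solutions of \eqref{eq:continuationl} are $s$-independent, and since \eqref{eq:vanishingperturbation} forces $X_{\Hlm}=X^{p}_{\Hlm}$ near the orbits, these are exactly the constant cylinders at the time-$1$ orbits; hence the continuation map they induce is the identity on $HF^*(X\subset M;\Hlm)_{w(\v)}$, and the $q\to+\infty$ end contributes $\PSSlog^{\v,p}$.

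Counting the boundary of the compactified $1$-dimensional parameterized moduli spaces would then yield the usual identity $\PSSlog^{\v}-\PSSlog^{\v,p}=\partial_{CF}\,\mathfrak{h}\pm\mathfrak{h}\,\partial$ for a chain homotopy $\mathfrak{h}$ assembled from the rigid elements, so the two maps agree on cohomology. The main obstacle is step (iii): pinning down the $q\to+\infty$ degeneration --- the breaking/gluing analysis along the region where \eqref{eq: PSSq} becomes $\R$-invariant, and checking that the continuation piece which breaks off is the one controlled by Lemma \ref{lem: continuationell}. Everything else is routine once the uniform (non-topological) energy estimates of Lemma \ref{lem:estvarsym} and the contact-type behaviour of $J_S$ near $\partial\bar{X}_\ell$ are in hand.
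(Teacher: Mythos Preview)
Your proposal is correct and follows essentially the same route as the paper's proof: both use the parameterized family $\{(q,u): u\in\mathcal{M}_q(\v,c,x_0)\}$, identify the $q\ll 0$ end with $\PSSlog^{\v}$ and the $q\to+\infty$ end (via Lemma~\ref{lem: continuationell}, which forces the broken-off continuation cylinder to be the trivial constant) with $\PSSlog^{\v,p}$, and extract the chain homotopy from the boundary of the compactified $1$-dimensional components. Your treatment is somewhat more explicit about invoking Lemma~\ref{lem:estvarsym} for the $q$-uniform energy bound underlying compactness, which the paper leaves implicit.
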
 
\begin{proof} 
    As observed above when $q\ll 0$, $\PSSlog^{\v,q}=\PSSlog^{\v}$. As $q \to
    +\infty$, the fact that Gromov compactness applies to our setting 
    shows that solutions tend to broken configurations consisting of PSS solutions 
    elements followed by cylinders solving \eqref{eq:continuationl}; appealing
    to Lemma \ref{lem: continuationell}, this broken moduli space can be
    expressed as:
    \begin{align} \label{brokenconstant}
        \mc{M}(\v,c, X^{p}_{\Hlm}, x_0) \times \widetilde{\mathcal{M}}(x_0,x_0). 
    \end{align} 
    The right hand side of the above product (which is the space of
    parametrized Floer trajectories prior to any quotient by $\R$) consists of
    a single trivial constant solution; in particular, the
    associated count gives the identity map, and the operation associated to
    counting elements of \eqref{brokenconstant} is simply $\PSSlog^{\v,p}$. It
    follows by standard methods and what we have seen previously that counts
    associated to the (rigid elements of the compactification of the)
    parametrized moduli space $\{ (q, u) | u \in \mathcal{M}_q(\v, c, x_0)\}$
    give, by looking at the boundary of the 1-dimensional components of the
    same moduli space, a chain homotopy between $\PSSlog^{\v}$ and $\PSSlog^{\v,p}$.
\end{proof} 

\begin{equation}\tag{$\maltese$}
    \begin{split} 
        &\textrm{{\em For the remainder of \S \ref{section: PSSreview} and all of \S \ref{sect: PSSiso2}, all Hamiltonian vector fields, Floer}} \\
        &\textrm{{\em cohomology groups, PSS maps, etc. will be defined using the symplectic form $\omega_\ell$}.}  
    \end{split}
\end{equation}

We now complete the process of separating out the Hamiltonian actions of our
orbits by perturbing our Hamiltonian to a new Hamiltonian $H_p^{\ell}$ so that
all time-1 Hamiltonian orbits with respect to $\omega_\ell$ are now supported
in the region where $g_{p}=-1$. 
Fix $\epsilon_\ell^{p}<\underline{\epsilon}_\ell^{p}$ and consider a new
Liouville boundary $\Sigma_{\ell}^{p}=\Sigma_{\vec{\epsilon}^{p}}$ which is
even closer
to the divisor by taking $\epsilon_1^{p}=\epsilon_\ell^{p}$ and rounding the
boundary as before. Next, fix functions \begin{align} \label{eq:hpellthing} h_p^{\ell}(R_p^{\ell}) \end{align} of the Liouville
coordinate $R_p^{\ell}$ (with respect to $\theta_\ell$) which are of slope
$\lambda_{\ell}$ and satisfy the same conditions as $h^{\ell}(\rlm)$ from Section
\ref{section:SHtor}. Finally, we perturb these Hamiltonians in small regions
about the Hamiltonian orbits which, in a slight abuse of notation we also denote by
$U_\v$, to obtain Hamiltonians $H_p^\ell$. 

For any $w_p \leq w_\ell+\delta_p$, we may form Floer cochain groups
$F_{w_{p}}CF^*(X \subset M; H_p^{\ell})$ and cohomology groups $\FpHF$ by
complete analogy with the unperturbed case. Continuation maps give rise to
isomorphisms: 
$$\FwHF \cong F_{w+\delta_{p}} HF^*(X \subset M; H_p^{\ell}) $$
These isomorphisms are compatible with the respective $\PSSlog^{\v}$ maps.
Shrinking $\epsilon_\ell^{p}$ as needed,  the complexes $F_{w_{p}}CF^*(X
\subset M; H_p^{\ell})$ admit a finer filtration
than $F_wCF^*(X \subset M; H_p^{\ell})$ because the function $w_p$ uniquely
determines the winding vector $\v$. Namely, for any vector $\v$ with $w(\v)
\leq w$, we may set 
\begin{align} \label{eq:CFv} 
    CF^*(X \subset M; H_p^{\ell})_{\v}=\frac{F_{w_{p}(\v)}CF^*(X \subset M; H_p^{\ell})}{F_{w_{p}(\v)-d_\v}CF^*(X \subset M; H_p^{\ell})} 
\end{align}
for a sufficiently small constant $d_\v>0$ so that this quotient is generated
by orbits in the single isolating set $U_\v$. We therefore have a ``local" log PSS
map 
\begin{align} \label{eq:realPSSloc} 
    \PSSlog^{\v}: H^*(\SIo)t^{\v} \to HF^*(X \subset M ; H_p^{\ell})_{\v} 
\end{align} 
where the right hand side
denotes the cohomology of the cochain complex defined in \eqref{eq:CFv}. The
following lemma follows immediately from the above considerations and allows us
to completely localize the task of proving that the map \eqref{eq:locPSS} is an
isomorphism:
\begin{lem}\label{lem:localizeisomorphism} 
    The map  $\eqref{eq:locPSS}$ is an isomorphism iff for every $\v$ with $w(\v) \leq w$, \eqref{eq:realPSSloc} is an isomorphism. \qed
\end{lem}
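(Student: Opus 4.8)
The plan is to recognize both sides of \eqref{eq:locPSS} as direct sums indexed by the multiplicity vectors $\v$ with $w(\v)=w$, and to identify \eqref{eq:locPSS} with the corresponding direct sum of the maps \eqref{eq:realPSSloc}. Since a direct sum of $\K$-module maps is an isomorphism precisely when every summand is, the claimed equivalence is then immediate. On the source side the decomposition is a matter of definition: $\QH^*(M,\D)_w=\bigoplus_{\v:\,w(\v)=w}H^*(\SIo)t^{\v}$ with $I=\mathrm{support}(\v)$, and \eqref{eq:locPSS} is by construction $\bigoplus_{\v}\PSSlog^{\v,\ell}$.

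For the target I would first invoke the continuation isomorphisms built in \S\ref{sect: PSSiso1}, which identify the weight-$w$ (low energy) group $\HFw$ with the corresponding weight $w+\delta_p$ quotient of the perturbed Floer groups $\FpHF$ attached to $H_p^{\ell}$, and which are compatible with the $\PSSlog$ maps. The point is then that, by the separating-actions property \eqref{eq:separateactions} together with the sharp action estimates for $H_p^{\ell}$, the weight-$w$ orbits of $H_p^{\ell}$ all lie in the disjoint isolating sets $U_{\v}$ (over $\v$ with $w(\v)=w$), at pairwise distinct action levels, close to $-w_p(\v)(1-(\epsilon_\ell^p)^2/2)$. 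Hence on exactly these generators the finer filtration $F_{w_p}$ refines $F_w$, and its successive quotients over the relevant action window are precisely the complexes $CF^*(X\subset M;H_p^{\ell})_{\v}$ of \eqref{eq:CFv}. Passing to cohomology yields a canonical splitting $\HFw\cong\bigoplus_{\v:\,w(\v)=w}HF^*(X\subset M;H_p^{\ell})_{\v}$.

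It then remains to match the maps summand by summand. Under the identifications above, \eqref{eq:locPSS} is $\bigoplus_{\v}\PSSlog^{\v,\ell}$, whereas the maps \eqref{eq:realPSSloc} are the $\PSSlog^{\v}$ defined with respect to $\omega_\ell$ and $H_p^{\ell}$; Lemma \ref{lem: PSScoinc} (that $\PSSlog^{\v}$ and $\PSSlog^{\v,p}$ coincide cohomologically), together with the compatibility of the continuation isomorphisms with the $\PSSlog$ maps, shows these agree on each summand. So \eqref{eq:locPSS} is exactly the direct sum of the maps \eqref{eq:realPSSloc}, and the ``iff'' follows.

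The only step requiring any care --- the ``obstacle,'' such as it is --- is the bookkeeping of filtration shifts: one must fix the shift $\delta_p$, the constant $d_{\v}$ of \eqref{eq:CFv}, and the action windows consistently, all small relative to the gaps between the finitely many values $w_p(\v)$ with $w(\v)\le w_\ell$, so that the successive-quotient description of $CF^*(X\subset M;H_p^{\ell})_{\v}$ holds on the nose. All of these parameters were already arranged in \S\ref{sect: PSSiso1}, so this is purely organizational, and the lemma indeed follows immediately from the constructions there.
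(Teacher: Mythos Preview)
Your proposal is correct and matches the paper's approach exactly: the paper marks this lemma with a \qed{} and says it ``follows immediately from the above considerations,'' and what you have written is precisely a careful unpacking of those considerations (the continuation isomorphism to $H_p^{\ell}$, the refinement of $F_w$ by $F_{w_p}$ via \eqref{eq:separateactions}, and the compatibility of $\PSSlog^{\v}$ with $\PSSlog^{\v,p}$ from Lemma~\ref{lem: PSScoinc}). One small remark: your decomposition correctly runs over $\v$ with $w(\v)=w$, which is what is actually needed for the fixed-$w$ version of \eqref{eq:locPSS}; the ``$\leq w$'' in the lemma statement should be read as asserting the equivalence simultaneously for all weights up to $w_\ell$.
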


\section{Low energy log PSS is an isomorphism} \label{sect: PSSiso2}

The main goal of this section is to complete the proof of Theorem
\ref{thm:main} by showing that each ``local low energy log PSS map''
\eqref{eq:realPSSloc}, from a single multiplicity vector $\v$ summand of log
cohomology to the associated graded Floer cohomology of the (refined in the previous subsection) action
filtration is an isomorphism. To do so, we first
geometrically confine the (moduli spaces appearing in) each local low energy
log PSS map (and its source and target) to live in a small tubular neighborhood
of the divisor $D_I$ corresponding to $\v$, and hence in a neighborhood of the zero section in
the fibrewise complex projectivized normal bundle $PD_I$ to $D_I$. Then, once in
$PD_I$,
we define a one-sided inverse map by counting curves with incidence conditions
along the {\em $\infty$-divisors} of
the bundle.  (Unlike the classical case, there does not seem to be a way to
describe the inverse to the (local) $\PSS_{log}$ map using counts of curves in
$M$.) 
After a Morse-Bott type analysis 
(of the associated graded of Floer cohomology) which shows the source and
target of the map \eqref{eq:realPSSloc} are abstractly isomorphic finitely
generated $\K$-modules, we finally conclude \eqref{eq:realPSSloc} is an isomorphism.

In some more detail, here is an overview of what occurs in each sub-section:

\begin{itemize}
\item  In \S \ref{subsection:spectral}, we begin by carefully describing the perturbations $H_p^{\ell}$ of the Hamiltonians $h_p^{\ell}$ (from  \eqref{eq:hpellthing}). It follows from the construction that the generators of the associated graded Floer cohomology $CF^*(X \subset M ; H_p^{\ell})_{\v}$ (see \eqref{eq:CFv}) are all located in a small neighborhood $U_\v$ near the divisor stratum corresponding to $\v.$ We then show that after making suitable choices, the \emph{differentials} in this complex (given by counting Floer trajectories of low energy) can also be constrained to lie in $U_\v.$ It follows that the Floer cohomology group $HF^*(X \subset M ; H_p^{\ell})_{\v}$  is purely ``local" to $U_\v.$  

\item Having ensured that the target of the \emph{local log PSS} map \eqref{eq:realPSSloc} is local to a particular divisor stratum $D_I$ (the source of the map only depends on the normal bundles to $D_I$ and is hence ``local" by definition), we turn to discussing the low energy $\PSS_{log}$ moduli spaces which are used to define the map itself.  In \S \ref{subsection:controlPSS}, we show that by making additional careful choices, the low energy log PSS moduli spaces counted in \eqref{eq:realPSSloc} can be geometrically confined in $M$ to a small neighborhood of 
$D_I$. The key ingredient here is the well-known monotonicity lemma (Lemma \ref{lem:monotonicity}) for pseudo-holomorphic
curves.

\item In \S \ref{subsection:lowin}, we formally introduce the  $(\C P^1)^{|I|}$-bundles, $PD_I$, their symplectic forms and relevant divisors.

\item In \S \ref{sect: locHam}, we continue the analysis of the 
associated graded Hamiltonian Floer complex began in \S \ref{subsection:spectral}. Here, we adapt an argument of Hofer and Salamon \cite{Hofer-Salamon}  in order 
to argue that the source and target of \eqref{eq:realPSSloc} are abstractly isomorphic
finitely generated $\K$-modules (Corollary \ref{lem: Pozniack}). This can be viewed as a form of Morse-Bott analysis and proves the additive part of Theorem \ref{thm:main}. As explained in the proof of Corollary
\ref{cor:reallocPSSiso}, Corollary \ref{lem: Pozniack} implies that it suffices to construct a \emph{one-sided} inverse
to each map \eqref{eq:realPSSloc} in order to prove that it is an isomorphism.

\item In \S \ref{sect: locPSS}, we transfer the log PSS map to the projective bundles $PD_I.$ In these projective bundles, log PSS moduli spaces count
maps from a punctured sphere with incidence (and enhanced evaluation) conditions along
various 0-divisors (meaning the divisor where a fiber coordinate is equal 0). We show that for suitable choices of (domain-dependent) almost-complex structures, these log PSS moduli spaces are in bijection with the low energy log PSS moduli spaces from \S \ref{subsection:controlPSS} (this is again a sort of confinement argument). 

\item Finally, in \S \ref{section: inverseloc}, we exhibit the one-sided inverse to \eqref{eq:realPSSloc}, the ``local log SSP map.''
The relevant operation is built out of maps from a punctured sphere $S^{\vee}= \mathbb{C}P^1 \setminus \lbrace \infty \rbrace$ into
the various projective bundles $PD_I$ which intersect the $\infty$-divisors with multiplicity $\v$. The fact that this map is a one-sided inverse ultimately comes from the fact that a certain count of $J$-holomorphic spheres (given by gluing PSS and SSP moduli spaces) is 1, because it can be shown to coincide with a count of spheres which lie in the fiber of the bundle $PD_I \to D_I$. 
\end{itemize}

Combining everything we have done so far, 
the proof of Theorem \ref{thm:main} appears at the end of \S
\ref{sec:mainproof} as Theorem \ref{thm:spectral}.

\subsection{Local Floer cohomology} \label{subsection:spectral} 

In this subsection we describe careful perturbations of our Hamiltonian
$h_p^{\ell}$ (see \eqref{eq:hpellthing}) for a given $\ell \in \mathbb{N}$. Using these
carefully chosen perturbations, standard techniques
in local Floer cohomology (see Lemma \ref{lem: localFloersep}) will imply that (both the orbits and differential of) the associated graded Floer cochain complexes 
$CF^*(X \subset M ; H_p^{\ell})_{\v}$ (see \eqref{eq:CFv}) for each $\v$ localize to small
(disoint for different $\v$) neighborhoods $U_\v$ near the divisor stratum
corresponding to $\v$.

Recall that for nonzero values of $\v$, the orbit sets $\mathcal{F}_\v$ (defined in \eqref{eq: orbitsetsF})  occur at points in $U_I$ where $\rho_i= \rho_{i,\v}$ for some $\rho_{i,\v} \in \mathbb{R}$ and $i\in I$. The manifold also has boundary and
corner strata where $\rho_i=\rho^{c}_{\v,i}$ for some $\rho^{c}_{\v,i} \in \mathbb{R}$ and $i \notin I$. For later use, we now note the following: 

\begin{lem}\label{lem: SihomFv} For $\v \neq 0$, the manifolds-with-corners  $\mathcal{F}_\v$ are homeomorphic to the manifolds $S_I^{log}$ defined in \S \ref{section: blowups}. \end{lem}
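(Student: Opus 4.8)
The plan is to identify explicitly, in each neighborhood $U_I$ of a deepest stratum $D_I$, the local model for the orbit set $\mathcal{F}_\v$ (for $\v$ supported on $I$) and to recognize it as a fibrewise compactification of the torus bundle $S_I$ along the lower-dimensional strata, which is exactly $S_I^{log}$ by the construction in \S \ref{section: blowups}. First I would recall from Lemma \ref{nicehamiltonianvectorfields} and the formula \eqref{Xhlm} that over $U_I$ the Hamiltonian flow of $h_p^\ell$ is tangent to the fibers of $\pi_I \colon U_I \to D_I$, and the non-constant time-$1$ orbits of multiplicity $\v$ occur exactly where $[\partial h_p^\ell/\partial \rho_i]_{i} = -\pi \v$. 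Because $h_p^\ell = h_p^\ell(R_p^\ell)$ and $R_p^\ell$ is a function of the radial coordinates $\rho_1, \dots, \rho_k$ (equivalently, using $S_{\epsilon_1,\epsilon_2}$ from \eqref{Sfunction}), the condition $[\partial h_p^\ell/\partial \rho_i]_i = -\pi \v$ cuts out, in the $\rho$-coordinates alone, a region $P_\v \subset \prod_{i=1}^k [0,\epsilon_\ell^2 \kappa_i/2\pi]$. The key point (item \eqref{itm: conv} in the definition of $q$, hence of $S_{\epsilon_1,\epsilon_2}$) is that $P_\v$ is itself a manifold with corners, diffeomorphic to a product of intervals/rays, with $\rho_i$ bounded away from $0$ for $i \in I$ and allowed to hit $0$ (the corner faces) for the directions in which the orbit approaches a deeper stratum.

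Next I would assemble $\mathcal{F}_\v$ as the total space of a bundle over $D_I$ whose fiber is the quotient of $P_\v \times T^I$ (angular directions $\varphi_i$) by the relation collapsing the $i$-th circle whenever $\rho_i = 0$; concretely, over a point of the open stratum $\mathring{D}_I$ the fiber is $T^I$, while over a point of $\mathring{D}_{K}$ with $K \supsetneq I$ a subtorus $T^{K \setminus I}$ has been collapsed, precisely matching the blow-up description \eqref{eq:sIlog} of $S_I^{log}$ as the real oriented blow-up of $S_I$ along the preimages of $D_j \cap D_I$, $j \notin I$. To make this rigorous I would use the explicit local form of the symplectic structure from the $\omega$-regularization (so that the $\rho$- and $\varphi$-coordinates genuinely split off as in \eqref{eq:fiberdisks}) together with the fact that the stable/unstable manifolds and metrics were chosen invariant under the partial $(\mathbb{R}^+)^I$-actions on $U_I^{log}$; the upshot is that the map sending an orbit $x_0 \in \mathcal{F}_\v$ to (its center point in $D_I$, its angular data) descends to a homeomorphism $\mathcal{F}_\v \xrightarrow{\ \cong\ } S_I^{log}$. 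This is essentially the content of Step 2 of the proof of Theorem 5.16 of \cite{McLean2}, adapted to keep track of which torus directions degenerate at which corner faces, and I would cite that while spelling out the identification of corner strata with $S_K^{log} \subset S_I^{log}$ for $K \supsetneq I$.

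The main obstacle I anticipate is not the combinatorics of which circles collapse where — that is dictated cleanly by \eqref{Xhlm} and the product structure of the regularization — but rather verifying that the gluing of these local pictures along overlaps $U_I \cap U_{I'}$ (and the smoothing encoded in $q_{\epsilon_1,\epsilon_2}$ near the corners) produces a globally well-defined homeomorphism, i.e. that the "manifold with corners" structure on $\mathcal{F}_\v$ coming from $S_{\epsilon_1,\epsilon_2}$ is genuinely the one on $S_I^{log}$ rather than merely abstractly homeomorphic to it fiberwise. Here the compatibility conditions \eqref{eq:agreesubstrata} for the regularization and the uniqueness of lifts of diffeomorphisms to the blow-up (the Lemma around \eqref{eq: diffofF}) do the work: they guarantee the transition maps between the local models $U_I^{log}$ are precisely the ones defining $M^{log}$, so the identifications of the $\mathcal{F}_\v$-pieces patch. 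I would therefore structure the proof as: (1) local model over a single $U_I$ via \eqref{Xhlm} and \eqref{Sfunction}; (2) identification of the fiber with the blow-up of $T^I$, matching corner faces to $S_K^{log}$; (3) globalization using the regularization compatibilities and blow-up uniqueness; and I would remark that the homeomorphism is only claimed topologically (not smoothly) because the smoothing profile $q$ does not respect the smooth structure near corners, which is all that is needed for the Morse--Bott / local Floer analysis that follows in Lemma \ref{lem: Pozniack}.
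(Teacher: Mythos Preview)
Your overall strategy is in the right spirit, but your description of the corner structure of $\mathcal{F}_\v$ is geometrically incorrect, and this is where the proposal goes wrong. You write that the fiber is ``the quotient of $P_\v \times T^I$ \ldots\ by the relation collapsing the $i$-th circle whenever $\rho_i = 0$'' and that ``over a point of $\mathring{D}_{K}$ with $K \supsetneq I$ a subtorus $T^{K \setminus I}$ has been collapsed.'' Neither of these happens. For $i \in I$ the radii $\rho_i$ are pinned to fixed \emph{positive} values $\rho_{i,\v}$, so no $T^I$-circle ever degenerates; and $T^{K\setminus I}$ is not even a subtorus of $T^I$. The corners of $\mathcal{F}_\v$ occur at $\rho_j = \rho^c_{\v,j} > 0$ for $j \notin I$ (as stated immediately before the lemma), i.e.\ the orbit set simply \emph{stops} before reaching the deeper stratum $D_j \cap D_I$ rather than degenerating there. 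In particular the fiber of $\pi_I|_{\mathcal{F}_\v}$ is $T^{|I|}$ everywhere; what varies is the \emph{base}: the image $\pi_I(\mathcal{F}_\v) \subset D_I$ is $D_I$ with small tubular neighborhoods of the $D_j \cap D_I$ removed, a manifold with corners.

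Once this is corrected, the argument becomes the one-line proof the paper actually gives: $\mathcal{F}_\v$ is a $T^{|I|}$-bundle over a ``complement of disc neighborhoods'' model of $D_I$, and Remark \ref{rem: diffeoblow} identifies precisely such a model with $D_I^{log}$ (hence the total space with $S_I^{log}$). Your steps (2) and (3)---matching fibers with blown-up tori and globalizing via the uniqueness-of-lifts lemma---are solving a problem that does not arise, because there is no nontrivial fiber degeneration to track and the global identification is already packaged in Remark \ref{rem: diffeoblow}. The citation of Step 2 of Theorem 5.16 of \cite{McLean2} is appropriate for the manifold-with-corners structure on $\mathcal{F}_\v$, but the rest of your local analysis should be replaced by the observation that the torus fiber is constant and the base is exactly a tubular-neighborhood-complement model of $D_I^{log}$.
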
 
 \begin{proof} This follows from the above description of  $\mathcal{F}_\v$ and the discussion in Remark \ref{rem: diffeoblow}. \end{proof}

All of the $\mathcal{F}_\v$ are Morse-Bott in their interiors $\mathcal{F}_\v \setminus \partial \mathcal{F}_\v$ (for $\v=0$, this is obvious and we again refer to Step 2 of the proof of Theorem 5.16 of \cite{McLean2} for this result in the case $\v \neq 0$)
and thus Morse-Bott type perturbations are required to make
the orbits non-degenerate.  
Compared to genuine Morse-Bott situations, where the orbit sets form genuine closed manifolds, we must pay a bit of extra attention near
the boundaries of our orbit sets. 

We next choose the isolating
neighborhoods $U_\v$ (compare with the discussion just above \eqref{eq:divisorially})  for these critical sets for each critical set
$\mathcal{F}_{\v}$, that is to say neighborhoods of $\mathcal{F}_{\v}$, which
contain only the orbit set $\mathcal{F}_{\v}$ and no others. 
For $\mathcal{F}_{\mathbf{0}}$, 
choose our isolating neighborhood 
$U_{\mathbf{0}}$ to
be the complement of neighborhood where 
\[U_{\mathbf{0}}:= M \setminus \lbrace R_p^{\ell} \geq R_{0,\ell}+c_0 \rbrace \]
for a sufficiently small constant
$c_0$. For the other orbits, let
$D^{c_0}_{I}$ denote the open manifold $D^{c_0}_{I} := D_I \setminus \cup_{i
\notin I} U_{i,\rho^{c}_{\v,i}-c_0}$  and let $S^{c_0}_I$
denote the induced $T^{I}$ bundle over $D^{c_0}_I$ where
$\rho_i=\rho^{c}_{\v,i}$. After possibly shrinking $c_0$, the isolating sets
are then chosen to be the neighborhoods $U_{\v} \subset U_I$ such that
$\pi_I(x) \in D^{c_0}_{I}$  and $\rho_{i,\v}-c_0 <\rho_i <\rho_{i,\v}+c_0, \ i
\in I$. We choose $c_0$ sufficiently small so that these neighborhoods do not
pairwise intersect, e.g. $U_\v \cap U_{\v'}=\emptyset$ for $\v \neq \v'$. We
let $U_{\v}'$ to be slightly smaller subsets such that $U_{\v}' \subset U_{\v}$
which are of the same form (to construct them just take choose a constant
$c_0'$ which is slightly smaller than $c_0$). Choose a Morse function
$\hat{h}_I: S^{c_0}_{I}  \to \mathbb{R}$ such that near the corners the
function $\hat{h}_I$ are functions of the $\rho_i$ and point outwards along the
boundary.  Finally, we choose cutoff functions $\rho_\v$ such that
\begin{itemize} 
    \item $\rho_\v(x)=0, x\in M\setminus U_{\v}$  
    \item $\rho_\v(x)=1, x \in U_{\v}'$ 
\end{itemize} 

Next we recall the {\em spinning} construction in Morse-Bott
theory (see e.g. \cite[Proof of Prop. B.4]{KoertKwon}). For
non-constant orbits, observe that on all of the orbit sets
$\mathcal{F}_\v$, the Reeb flow generates an $S^1$-action on
$\mathcal{F}_\v$ which extends canonically to $S^{c_0}_{I}$ and $U_\v$. The
inverse circle action on $U_\v$ is a Hamiltonian flow with associated
Hamiltonian function $K: U_\v
\to \mathbb{R}$, where 
\begin{align} 
    K= \sum_i \pi v_{i,0}\rho_i 
\end{align}
We denote the associated time-$t$ flow of $K$ by $\Delta_t(x)$. If
$x(t)$ is a one periodic orbit of the Hamiltonian vector field, then
$\hat{x}(t)= \Delta_t \circ x(t)$ is a one periodic orbit corresponding to
the Hamiltonian 
\begin{align} \label{eq:shiftedham}
        \hat{h}_p^{\ell}=h_p^{\ell} + K(x) 
\end{align}
and similarly for Floer trajectories. Here we have used the fact
$h_p^{\ell}(t,\Delta^{-1}_t(x))=h_p^{\ell}$ by the local invariance of the
function $h_p^{\ell}$.  We thus obtain a new Hamiltonian system in which the
Hamiltonian $\hat{h}_p^{\ell}$ is constant on $\mathcal{F}_\v$ and hence has
constant orbits. Define $h_I$ to be the time dependent function
$h_I:=\widehat{h_I}(t,\Delta_t(x))$. For $\v \neq 0$, let $h_\v$ denote the
pull-back of $h_I$ to $U_\v$ under the projection map. We also set
$h_{\mathbf{0}}$ 
to be a function which near $R_p^{\ell}=R_{0,\ell}$ is a
function of $R_p^{\ell}$ with positive derivative.

We also recall how to choose perturbing data for the divisorial orbits (see
\cite[\S 4.1]{GP1} for more details on this perturbation), even though
the details of this will be less important for our applications. The divisorial
orbits come in strata which are also manifolds-with-corners. The orbits which
lie in $D_I$ correspond to the points where $\rho_i=0$ for all $i \in I$. 
The Hamiltonian vector field restricted to the fibers of $U_I$ at these points
is of the form 
$\sum_{i\in I}-\lambda_i \partial_{\varphi,i}$ 
with
$\lambda_i>0$ which
infinitesimally generates a non-trivial rotation of the fibers fixing the
points where $\rho_i=0$. 

It follows that these orbits are transversely nondegenerate over the open
parts. Similar to what we have seen above, choose an outward pointing Morse
function $\widehat{h}_\D$ on the disjoint union of these submanifolds and set
$h_\D=\widehat{h}_\D(t, \Delta_t(x))$ as before. Choose cutoff functions
$\rho_\D$ supported in a small neighborhood of the divisors (and in particular
inside $V_{0,\ell}$).  For sufficiently small constants $\delta_\v$ and
$\delta_\D$ define  
\begin{align} 
    \label{eq: Hpert2} H^{\ell}_{p}=  \sum_ \v \delta_\v \rho_\v h_\v + h_p^\ell +\delta_\D  \rho_\D h_\D 
\end{align} 
It follows from the analysis in \cite[\S 4.1]{GP1} 
that for suitable choices of $\rho_\D$ and $h_\D$ that the Hamiltonian flow of
this perturbed
function preserves the divisor $\D$. For the remainder of this section, we
focus on the orbits which lie in $M\setminus V_\ell$ In each open set $U_{\v}$
there are obvious time-1 orbits corresponding to critical points $x_i$ of the
function $\hat{h}_I$ on $\mathcal{F}_\v$. 

\begin{lem} 
    For $\delta_{\v}$ sufficiently small, all time-1 orbits of $H_p^{\ell}$
    are those created as critical points from the manifolds $\mathcal{F}_\v$ as
    critical points of $h_\v$.  
\end{lem}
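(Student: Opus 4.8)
The plan is to run the standard Morse--Bott perturbation argument, but localized: first confine all time-$1$ orbits into the prescribed isolating neighborhoods $U_\v'$ by a $C^1$-closeness estimate, and then analyze each $U_\v'$ separately using the spinning construction. Since the lemma is understood in the context of orbits lying in $M\setminus V_\ell$ (the divisorial orbits live inside $V_{0,\ell}\subset V_\ell$ and are dealt with as in \cite[\S 4.1]{GP1}), I restrict throughout to $M\setminus V_\ell$. For $\delta_\v=\delta_\D=0$ the Hamiltonian is just $h_p^{\ell}$, whose time-$1$ orbits in this region are exactly $\bigcup_\v \mathcal{F}_\v$ (Step~2 of the proof of \cite[Thm.~5.16]{McLean2}, recalled above), and by construction each $\mathcal{F}_\v$ sits in the interior of $U_\v'\subset U_\v$. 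By \eqref{eq: Hpert2} the general $H_p^{\ell}$ differs from $h_p^{\ell}$ by $\sum_\v\delta_\v\rho_\v h_\v+\delta_\D\rho_\D h_\D$, a combination of fixed $C^1$ functions, so $\|H_p^{\ell}-h_p^{\ell}\|_{C^1}\to 0$ as the $\delta$'s go to $0$; since the property that the time-$1$ flow has no fixed point on a fixed compact set is stable under $C^1$-small perturbations of the Hamiltonian, there is a threshold $\delta_0>0$ below which all time-$1$ orbits of $H_p^{\ell}$ lie in $\bigcup_\v U_\v'$. The same estimate, applied to the compact collar $\overline{U_\v\setminus U_\v'}$ (where the unperturbed flow already has no orbits since $\mathcal{F}_\v\subset U_\v'$), shows no orbit lies in the region where $\rho_\v$ interpolates. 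Thus everything reduces to a neighborhood of each $\mathcal{F}_\v$ inside $U_\v'$, where $\rho_\v\equiv 1$ and $H_p^{\ell}=h_p^{\ell}+\delta_\v h_\v$.

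Next I would run the spinning construction of \cite[Prop.~B.4]{KoertKwon} on $U_\v'$ (for $\v\neq\mathbf{0}$; the case $\v=\mathbf{0}$ is identical with $\hat{h}_I$ replaced by the auxiliary Morse function incorporated into $h_{\mathbf{0}}$). Conjugating the flow by $\Delta_t$ --- legitimate because $h_p^{\ell}$ is invariant under the local $S^1$-action generated by $K$ --- replaces $X_{h_p^{\ell}}$ by $X_{\hat{h}_p^{\ell}}$ with $\hat{h}_p^{\ell}=h_p^{\ell}+K$ constant along $\mathcal{F}_\v$, and replaces $\delta_\v h_\v$ by $\delta_\v$ times the pullback along $\pi_I$ of the Morse function $\hat{h}_I$ on $\mathcal{F}_\v\cong S_I^{log}$. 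After spinning, $\mathcal{F}_\v$ is a Morse--Bott orbit set in its interior (again \cite[Thm.~5.16]{McLean2}), and the interior Morse--Bott perturbation lemma says that for $\delta_\v$ small the only time-$1$ orbits of $\hat{h}_p^{\ell}+\delta_\v\hat{h}_I$ in a neighborhood of $\mathcal{F}_\v\setminus\partial\mathcal{F}_\v$ are the nondegenerate ones over the critical points of $\hat{h}_I$; because $\hat{h}_I$ points outward along the boundary, all of its critical points --- hence all such orbits --- lie in the interior. Undoing the spinning gives exactly the orbits ``created as critical points of $h_\v$'' in the statement, and assembling these over all $\v$ (shrinking $\delta_0$ finitely many times) finishes the argument.

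The main obstacle, and the only non-routine point, is that $\mathcal{F}_\v$ is a manifold with corners rather than a closed manifold, so neither the $C^1$-confinement step nor the Morse--Bott lemma is a literal black box near $\partial\mathcal{F}_\v$. Two features of the set-up, both already in place, handle this. First, the isolating neighborhoods were chosen using the constants $\rho^c_{\v,i}-c_0$ so that $\mathcal{F}_\v$ has a definite gap to $\partial U_\v$ in the corner directions $\rho_i$, $i\notin I$; together with the hypothesis that $\hat{h}_I$ is a function of the $\rho_i$ pointing outward near the corners, this keeps orbits of the perturbed flow away from $\partial\mathcal{F}_\v$, so all newly created orbits stay in the region where the ordinary Morse--Bott picture holds. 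Second, all relevant data --- the regularization, $h_p^{\ell}$, $K$, $\Delta_t$, $\hat{h}_I$, and the cutoffs --- are invariant under the partial $(\R^+)^{I}$-actions in the tubular neighborhoods and lift to the real blow-ups, so the implicit-function-theorem argument establishing Morse--Bott nondegeneracy transverse to $\mathcal{F}_\v$ over the open stratum extends continuously across the corner strata, and the $\delta_\v$-threshold can be taken uniform up to the boundary. This is precisely the corner-handling carried out throughout \cite{McLean2} and \cite{GP1} (compare Lemma~\ref{lem: SihomFv} and the surrounding discussion), and I would simply invoke it here.
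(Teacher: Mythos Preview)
Your proposal is correct and follows essentially the same approach as the paper's proof: confine orbits to the $U_\v'$ by a compactness/$C^1$-smallness argument, rule out orbits near the corner strata using the outward-pointing property of $\hat{h}_I$, and then invoke the standard Morse--Bott perturbation lemma (the paper cites \cite{KoertKwon}). The paper's version is simply terser, compressing your three paragraphs into three sentences.
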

\begin{proof} 
    For sufficiently small $\delta_\v$, it follows by compactness that such
    orbits must be contained in some $U_{\v}'$ where $\rho_\v=1$. There are
    also no fixed points near $\rho^c_{\v,i}$ because the derivative of
    $\hat{h}_I$ points outwards along the strata. Thus, the lemma reduces to
    the well-known case in Morse-Bott theory as covered in e.g.
    \cite{KoertKwon}.  
\end{proof}

We will also consider a very closely related version of Floer cohomology known
as {\em local Floer cohomology} (see e.g., \cite{McLeanlocalFloer} for a
treatment most directly applicable to the setting here), which is defined
whenever $\delta_{\v}$ above are taken sufficiently small. The abelian group
underlying the complex is the same as
$CF^*(X \subset M ; H_p^{\ell})$. However, the differential $\partial_{loc}$
counts local solutions to Floer's equation which lie in $W_\ell :=\cup_\v
U_\v$. Let $J_t \in \mathcal{J}(M,\D)$, be a (generic) time-dependent complex
structure, which we will always assume is $\omega_{\ell}$-compatible in an open
neighborhood of all Hamiltonian orbits $x_0 \in \mathcal{X}(X;
H_p^{\ell})$.\footnote{We do this to rely on standard gluing results in the
literature on (local) Floer cohomology. It is likely not necessary.} For any
two orbits $x_0, x_1$, let $\mathcal{M}(W_{\ell} ; x_0,x_1)$ denote the moduli
space of solutions to Floer's equation \eqref{eq:FloerRinv} for the Hamiltonian
$H_p^{\ell}$, modulo $\mathbb{R}$-translation, which additionally
satisfies the following ``locality'' and asymptotic conditions:
\begin{align} \label{eq:Floerloc}
\left\{
\begin{aligned}
 & u \colon \R \times S^1 \to \cup_\v U_\v, \\
 & \lim_{s \to -\infty} u(s, -) = x_0\\
& \lim_{s \to \infty}u(s, -) = x_1 \\
\end{aligned}
\right.
\end{align}
The following basic result ensures that such local Floer curves are confined
i.e., stay away from the boundary of $\cup_\v U_\v$: 
\begin{lem} 
    \label{lem: localcompactness}(\cite[Lemma 2.3]{McLeanlocalFloer}) For sufficiently small $\delta_{\v}$, those Floer curves contained in $\cup_\v U_\v$ are in fact contained in $\cup_\v U'_\v$.  \qed
\end{lem}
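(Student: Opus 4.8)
The statement is a confinement result for local Floer trajectories: for sufficiently small $\delta_{\v}$, any Floer cylinder $u: \R \times S^1 \to \cup_\v U_\v$ solving \eqref{eq:Floerloc} actually maps into the smaller nested sets $\cup_\v U'_\v$. The plan is to cite \cite[Lemma 2.3]{McLeanlocalFloer} and explain why its hypotheses are met in our setting, rather than to reprove it from scratch. The essential mechanism is a compactness/rescaling argument: if the conclusion failed, one would have, for a sequence $\delta_{\v}^{(n)} \to 0$, Floer trajectories $u_n$ contained in $\cup_\v U_\v$ but escaping $\cup_\v U'_\v$; in the limit $\delta_{\v}^{(n)} \to 0$, the Hamiltonian term $\sum_{\v} \delta_{\v} \rho_{\v} h_{\v}$ in \eqref{eq: Hpert2} (restricted to the relevant region, away from $V_\ell$ and from the divisorial perturbation support) tends to zero, so $u_n$ converges (after Gromov compactification and possible breaking) to a configuration of $J_t$-holomorphic spheres and cylinders with zero Hamiltonian perturbation, i.e. genuine pseudo-holomorphic objects asymptotic to points of the critical manifolds $\mathcal{F}_{\v}$.

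First I would set up the local energy estimate. Because the orbits $x_0, x_1$ lie in a single isolating neighborhood (our choices in \S\ref{subsection:spectral} place all orbits of $H_p^\ell$ inside some $U'_\v$, and by the action estimates of Lemma \ref{lem: sharpactions} trajectories between them have small energy controlled by $\delta_\v$ plus the $C^2$-smallness of $H_p^\ell - h_p^\ell$), the geometric energy of any $u \in \mathcal{M}(W_\ell; x_0, x_1)$ is bounded by a quantity tending to $0$ as $\delta_\v \to 0$. Next I would invoke the standard ``no escape''/monotonicity argument for local Floer homology: since $\mathcal{F}_\v$ is Morse–Bott in its interior (as recorded just above, citing Step 2 of the proof of \cite[Theorem 5.16]{McLean2}), a Gromov–Floer limit of such low-energy trajectories must be constant, because any non-constant component (sphere bubble, or non-trivial cylinder) carries energy bounded below by the minimal energy of a $J_t$-holomorphic sphere in $M$ or by the ``Poincaré–type'' minimal energy of a non-trivial local Floer cylinder near the Morse–Bott manifold — both strictly positive and independent of $\delta_\v$. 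Hence for $\delta_\v$ small enough the image of $u$ is $C^0$-close to $\mathcal{F}_\v$, in particular contained in $U'_\v$, which is a neighborhood of $\mathcal{F}_\v$.

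The main obstacle is dealing honestly with the \emph{boundary and corner strata} of the orbit sets $\mathcal{F}_\v$, where the classical Morse–Bott package of \cite{McLeanlocalFloer} or \cite{KoertKwon} does not literally apply. Here I would use the geometric features built in earlier: the Morse functions $\hat h_I$ point strictly outward along the boundary strata $\rho_i = \rho^c_{\v,i}$, and the cutoff functions $\rho_\v$ are supported in $U_\v$ with $\rho_\v \equiv 1$ on $U'_\v$, so the perturbed Hamiltonian has the unperturbed (integrable, fiberwise-rotation) form in the collar $U_\v \setminus U'_\v$; combined with Lemma \ref{nicehamiltonianvectorfields} this yields that $d\rho_i(X_{H_p^\ell}) = 0$ on that collar, and the spinning construction makes the relevant action monotone in $\rho_i$ there. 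Consequently a maximum-principle-type argument in the radial coordinates $\rho_i$ prevents a Floer cylinder confined to $\cup_\v U_\v$ from touching $\partial(\cup_\v U'_\v)$ for small $\delta_\v$ — this is exactly the content one must extract from \cite[Lemma 2.3]{McLeanlocalFloer}, whose proof is written to accommodate precisely this cornered situation. With the radial confinement in hand, the remaining possibility of escape would require a non-constant bubble or break, which the energy estimate above rules out, completing the argument.
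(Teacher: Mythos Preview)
The paper does not give its own proof of this lemma: it is stated with the citation \cite[Lemma 2.3]{McLeanlocalFloer} and the \qed symbol, indicating that the authors simply import McLean's result and verify (implicitly, via the setup of \S\ref{subsection:spectral}) that the hypotheses apply. So there is no ``paper's proof'' to compare against beyond the citation itself.

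Your sketch correctly identifies the mechanism underlying McLean's lemma --- the energy of trajectories between orbits in a fixed $U_\v$ tends to zero as $\delta_\v \to 0$, and a Gromov limit would be a zero-energy (hence $s$-constant) solution for the unperturbed Hamiltonian, i.e.\ an orbit lying on $\mathcal{F}_\v \subset U'_\v$. One comment: your treatment of the boundary/corner strata via a radial maximum-principle argument is more elaborate than what the cited result actually uses. The confinement in \cite{McLeanlocalFloer} is a pure compactness argument and does not rely on any maximum principle in the $\rho_i$; the outward-pointing condition on $\hat h_I$ is used only to ensure that the perturbed orbits themselves lie in the interior (hence in $U'_\v$), not to control trajectories directly. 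Once the orbits are known to sit compactly inside $U'_\v$, the energy-goes-to-zero argument forces $C^0$-closeness of the trajectories to those orbits, and the corners play no further role.
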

From here, Gromov compactness applies and the usual Floer-theoretic arguments
allow us to, for 
small $\delta_\v$ as above, define the
$|\mathfrak{o}_{x_1}|$-$|\mathfrak{o}_{x_0}|$ component of the differential as
\begin{equation} 
    (\partial_{loc})_{x_1, x_0}  = \sum_{u \in |\mathcal{M}(W_\ell ; x_0,x_1)|} \mu_u
\end{equation} 
where $\mu_u: \mathfrak{o}_{x_1} \stackrel{\sim}{\to} \mathfrak{o}_{x_1}$ is the induced
isomorphism of orientation lines for a rigid $u$ (which occurs whenever
$\deg(x_0)-\deg(x_1)=1$). Also, standard methods show
$\partial_{loc}^2=0.$ We denote the resulting cohomology
theory by $HF^{*}(W_\ell \subset M,H_p^{\ell})$. We similary form the group
$HF^*(U_\v \subset M,H_p^{\ell})$ which is the local Floer subcomplex
consisting of orbits just in $U_\v$.  There is an obvious decomposition 
$$
HF^*(W_\ell \subset M,H_p^{\ell}) \cong \bigoplus_{\mathcal{F}_\v \in \mathcal{X}(X; h_p^{\ell})} HF^*(U_\v \subset M, H_p^{\ell}).
$$ 

\begin{lem} 
    \label{lem: localFloersep} Fix a generic $J_t \in \mathcal{J}_\ell(\bar{X}_\ell^{p},V)$ and let $x_0, x_1 \in U_\v$. For
    $\epsilon_\ell^{p}$(see the discussion above \eqref{eq:hpellthing}), $\delta_\v$ sufficiently small and $\Sigma_\ell^{p}$
    sufficiently $C^0$-close to $\Sigma_{\vec{\epsilon}^{p}}$(again see just above \eqref{eq:hpellthing}), any two Floer
    trajectories in $X$ connecting $x_0,x_1$ lie in $U_\v$. As a consequence,
    we have canonical isomorphisms 
    \begin{align} 
        \label{eq: localsep} HF^*(X \subset M;H_p^{\ell})_{\v} \cong HF^*(U_\v \subset M,H_p^{\ell}) \\
        \bigoplus \limits_{w(\v) \leq w_\ell} HF^*(X \subset M; H_p^{\ell})_{\v} \cong HF^*(W_{\ell} \subset M,H_p^{\ell}) 
    \end{align}
\end{lem}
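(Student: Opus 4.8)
\textbf{Proof plan for Lemma \ref{lem: localFloersep}.}

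The plan is to reduce the statement to a confinement estimate of exactly the type already used repeatedly in the paper: namely, that Floer trajectories between orbits $x_0,x_1\in U_{\v}$ (which, by hypothesis, lie in the region where $g_p=-1$) cannot escape the isolating neighborhood $U_{\v}$. Once this confinement is established, the two displayed isomorphisms in \eqref{eq: localsep} follow essentially by definition. Indeed, the first one identifies the subquotient Floer complex $CF^*(X\subset M; H_p^{\ell})_{\v}$ of \eqref{eq:CFv} — whose generators are precisely the orbits in the single isolating set $U_{\v}$, and whose differential is the action-filtered piece of $\partial_{CF}$ — with the local Floer complex $CF^*(U_{\v}\subset M, H_p^{\ell})$, whose differential counts the trajectories confined to $U_{\v}$; confinement says these two differentials literally agree on chains (every trajectory counted by one is counted by the other). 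The second isomorphism is then obtained by summing over all $\v$ with $w(\v)\le w_{\ell}$, using the already-noted direct sum decomposition $HF^*(W_{\ell}\subset M, H_p^{\ell})\cong \bigoplus_{\v} HF^*(U_{\v}\subset M, H_p^{\ell})$ together with the fact that $\bigoplus_{w(\v)\le w_{\ell}} CF^*(X\subset M;H_p^{\ell})_{\v}$ exhausts $CF^*(X\subset M;H_p^{\ell})$ additively (all orbits below slope $\lambda_{\ell}$ belong to some $\mathcal{F}_{\v}$).

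The confinement step itself I would carry out by combining two ingredients. First, an \emph{action} argument: since $H_p^{\ell}$ is a small perturbation of $h_p^{\ell}$, and the perturbed weights $w_p$ separate the actions of distinct multiplicity vectors below level $w_{\ell}$ (condition \eqref{eq:separateactions}, with the fine filtration of \eqref{eq:CFv}), any Floer trajectory with both asymptotics in $U_{\v}$ has topological energy controlled by $d_{\v}$, hence — for $\epsilon_{\ell}^p$, $\delta_{\v}$ small and $\Sigma_{\ell}^p$ sufficiently $C^0$-close to $\Sigma_{\vec{\epsilon}^{p}}$ — arbitrarily small energy. Second, a \emph{monotonicity/local-Floer} argument in the spirit of \cite[Lemma 2.3]{McLeanlocalFloer} (already invoked as Lemma \ref{lem: localcompactness}): a trajectory of such small energy that started and ended inside $U_{\v}$ cannot reach $\partial U_{\v}$, because doing so would cost a definite amount of energy bounded below by a monotonicity constant for $J_t$-holomorphic curves near the compact set $\partial U_{\v}'$. (Here one uses that $J_t$ has been chosen $\omega_{\ell}$-compatible near all orbits and of the controlled form on the relevant region, so the monotonicity constant is uniform.) In fact this is the same mechanism by which Lemma \ref{lem: localcompactness} confines curves from $\cup_{\v}U_{\v}$ into $\cup_{\v}U_{\v}'$; the only new input is that the energy gap between different $U_{\v}$'s (guaranteed by the action separation) prevents a trajectory from leaving $U_{\v}$, travelling through $X$, and returning.

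The main obstacle — and the place requiring genuine care rather than routine citation — is that the orbit sets $\mathcal{F}_{\v}$, hence the isolating neighborhoods $U_{\v}$, are manifolds \emph{with corners} rather than closed manifolds, so the standard local Floer confinement results do not apply off the shelf near $\partial U_{\v}$. The resolution is the one already set up in \S\ref{subsection:spectral}: the Morse functions $\hat h_I$ on $S_I^{c_0}$ were chosen to point outward along the boundary and corner strata (where $\rho_i=\rho^c_{\v,i}$), and the neighborhoods $U_{\v}'\subset U_{\v}$ were chosen of the same ``product-of-intervals in the $\rho_i$'' shape; combined with the fact that the unperturbed Hamiltonian $h_p^{\ell}$ has no time-$1$ orbits near the corner radii, a maximum-principle / monotone-radial-function argument (exactly as in the proof that $U_{\v}$ contains only the orbit set $\mathcal{F}_{\v}$) shows trajectories cannot approach the corner boundary either. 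So the argument is: (i) small energy from action separation, (ii) monotonicity rules out escape through the ``interior'' boundary of $U_{\v}$, (iii) the outward-pointing perturbation data rules out escape through the corner boundary. Putting (i)--(iii) together gives confinement, and confinement gives \eqref{eq: localsep}.
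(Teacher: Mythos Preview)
Your proposal is correct and follows essentially the same approach as the paper: the key input is that the perturbed weights $\kappa_{i,\ell}$ were chosen so that distinct orbit sets have distinct Hamiltonian actions, which forces trajectories between orbits in the same $U_{\v}$ to have arbitrarily small energy and hence remain confined. The paper's proof is in fact a one-line citation to \cite[Lemma 2.8]{McLeanlocalFloer} (rather than Lemma 2.3, which you invoke), with the action separation as the sole hypothesis to check; your elaboration of the monotonicity mechanism and the corner discussion is a reasonable unpacking of what that citation contains, though the corner issue is already absorbed into the setup of the isolating neighborhoods and does not require separate treatment here.
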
 
\begin{proof} 
    Because we have chosen our $\kappail$ so that our orbits have distinct
    Hamiltonian actions, this follows from \cite[Lemma 2.8]{McLeanlocalFloer}. 
\end{proof}

\subsection{Controlling the low energy log PSS solutions} \label{subsection:controlPSS}

\begin{equation}\tag{$\diamondsuit$} \label{eq:fixv0}
    \begin{split}
        &\textrm{\emph{From this point until the end of \S \ref{section: inverseloc}, we work with a fixed non-zero vector $\v=\v_0$}},\\
        &\textrm{\emph{along with its associated divisor stratum index $I = I_0 = \{i \in  \{1, \ldots, k\} \ | \ (\v)_i \neq 0\}$}}.
    \end{split}
\end{equation}

Equation \eqref{eq: localsep} shows that the target of our $\PSSlog^{\v}$ map \eqref{eq:realPSSloc} is completely ``localized" near the divisor stratum $D_I$ corresponding to $\v$ (the source is of course by definition local to the same stratum). The main result of this subsection is Lemma \ref{lem: money}, which states that after choosing our data suitably, the low energy log $\PSS$ solutions themselves are constrained to
lie in a small neighborhood of the stratum corresponding to $\v$. 
The general idea for doing this is to apply the monotonicity lemma for $J$-holomorphic curves (Lemma \ref{lem:monotonicity}). In order to carry this out, we need to make the following specific choices: 
\begin{itemize} 
\item We need to work with complex structures that are (arbitrarily close to) ``split" complex
structures (c.f. Definition \ref{defn: split}) which have simple local models
(in our chosen tubular neighborhoods) near the various strata. In particular, for these complex structures, the maps $\pi_I: U_I \to D_I$ are $J$-holomorphic.
\item We need to consider the limit as the perturbing constants $\delta_\v$, $\v \neq \v_0$, and $\delta_\D$ from \eqref{eq: Hpert2} are taken to zero.  
\end{itemize} 
In order to show that we can work with ``split" complex structures, we need to show that our low energy log PSS map is well-defined for any  $J_S \in \mathcal{J}_S(M,\D).$ This requires a different argument for excluding breaking along $\D$ (given below in Lemma \ref{lem: anyJ}) from the one given in Lemma \ref{lem:compactness}, where we assumed $J_S \in \mathcal{J}_{S,\ell}(V)$.\footnote{On the other hand, it is important to note that the argument in Lemma \ref{lem:compactness} rules out breaking along divisorial orbits for log PSS moduli spaces that are not necessarily low energy. This will be used in Lemma \ref{lem:compactness2}.} ``Turning off" the parameters $\delta_\v$, $\v \neq \v_0$, and $\delta_\D$ also introduces some subtlety, as the limiting Hamiltonians $H^{\ell}_{\infty}$ are (mildly) degenerate. Nevertheless, we show in Lemma \ref{lem: MBcompactness} that the low energy log $\PSS$  moduli spaces  with the respect to $H^{\ell}_{\infty}$ have well-behaved compactifications.  Working with split complex structures/limiting Hamiltonians will allow us to project our $\PSSlog^{\v}$ solutions to genuine $J$-holomorphic curves, when we can then apply monotonicity to obtain confinement; see the proof of Lemma \ref{lem: money}. 

To begin, we introduce some further notation. Let $H^S_{2}(M)$ denote the image $\operatorname{im}(\pi_2(M)) \subset
 H_2(M)$. Since by assumption our symplectic form $\omega_{\ell}$ is rational,
 it follows there is a minimum quantity
 $\omega_{min}$ such for any class $A \in H^S_{2}(M)$,  $|\omega_\ell([A])|>\omega_{min}$ whenever
 $\omega_{\ell}([A]) \neq 0$; by convention we set $\omega_{min} = \infty$ if $\omega_{\ell}([A])=  0$ for all $A \in H^S_2(M)$.
 In particular, we also know that $\omega_\ell(A)<-\omega_{min}$ whenever
 $\omega_\ell(A)<0$. 

Now choose $J_S \in \mathcal{J}_S(M,\D)$ (as in \eqref{defn: sdcs}) which restricts on the cylindrical end to some
$J_t$ that we use to define local Floer cohomology. For any $x_0 \in U_\v$ and
$c \in \critfI$ (see \eqref{eq:morsecritl}), let $\mc{M}(\v,c, H_p^{\ell}, x_0)$ denote the moduli space of
PSS solutions with respect to the Hamiltonian $H_p^{\ell}$. The next Lemma
gives the basic compactness results for low energy log PSS moduli spaces, for
generic such $J_S$:
\begin{lem}\label{lem: anyJ} 
    Fix any critical point $c \in \critfI$ and an orbit  $x_0 \in U_\v$ of $H^{\ell}_p$ such
    that $\operatorname{vdim}(\mc{M}(\v, c, H_p^{\ell}, x_0))= 1$. Then
    for generic $J_S \in \mathcal{J}_S(M,D)$, $\epsilon_\ell^{p},
    ||H_p^{\ell}-h_p^{\ell}||_{C^2}$ sufficiently small, and $\Sigma_\ell^{p}$
    sufficiently $C^0$ close to $\hatXlp$, the Gromov-Floer compactification of
    $\mc{M}(\v,c, H_p^{\ell}, x_0)$ is a compact 1-manifold-with-boundary
    $\partial\overline{\mc{M}}(\v, c, H_p^{\ell}, x_0)= \partial_M \bigsqcup
    \partial_F$ where 
    \begin{align}
        \partial_F:= \bigsqcup_{x,|x_0|-|x'|=1} \mc{M}(\v,c, H_p^{\ell}, x') \times \mc{M}(x_0,x')  \\ 
        \partial_M:= \bigsqcup_{c', \deg(c') - \deg(c) = 1}
        \mc{M}(c',c)
        \times \mc{M}(\v,c',H_p^{\ell}, x_0)
    \end{align} 
 \end{lem}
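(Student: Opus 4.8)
The plan is to establish the compactness statement in three stages, each following a by-now-familiar pattern but adapted to the local setting. The first stage is the energy bound: by Lemma \ref{lem:energypss} (in the form carried over to the perturbed symplectic form $\omega_\ell$ and perturbed Hamiltonian $H_p^{\ell}$, as in \S \ref{sect: PSSiso1}), the topological energy of any $u \in \mc{M}(\v,c, H_p^{\ell}, x_0)$ is approximately $w(\v)-w(x_0)(1-\epsilon_\ell^2/2)$; since $x_0 \in U_\v$ has $w(x_0) = w(\v)$ (the orbits in $U_\v$ all wind with multiplicity $\v$), this energy is approximately $w(\v)\epsilon_\ell^2/2$, which can be made arbitrarily small by shrinking $\epsilon_\ell^{p}$. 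In particular, once this quantity is smaller than $\omega_{min}$ (the minimal area of a non-constant $J$-holomorphic sphere in $M$), no sphere bubbling can occur, exactly as in the proof of Lemma \ref{lem:compactness}.

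The second stage is to rule out Floer-cylinder breaking along divisorial orbits $\mathcal{X}(\D; H_p^{\ell})$ at the negative end. This is the key claim labelled (i) in the proof of Lemma \ref{lem: homotopycomp}, and here it is handled by the energy estimate of Lemma \ref{lem:estvarsym} together with the positivity-of-intersection argument: a broken configuration in the Gromov--Floer limit that maps into $M$ has a well-defined total intersection number with $\D$ equal to $0$ (since all asymptotics lie in $X$ and the incidence condition at $z_0$ accounts for multiplicity exactly $\v$); combined with the absence of sphere bubbles from stage one and positivity of intersection for each component not contained in $\D$, this forces every component to avoid $\D$, hence no breaking along a divisorial orbit. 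This is verbatim the argument in \cite[Lemma 4.13]{GP1} and in Lemma \ref{lem:compactness}, with no genuinely new input.

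The third stage is the routine gluing/transversality analysis: for generic $J_S \in \mathcal{J}_S(M,\D)$, all moduli spaces $\mc{M}(\v, c', H_p^{\ell}, x')$ and Floer/Morse trajectory spaces of virtual dimension $\leq 1$ are cut out transversally (the index formula \eqref{vdimlogpss} and the standard Sard--Smale argument, noting as in the footnote that the Nijenhuis-tensor constraint does not obstruct achieving transversality), so the one-dimensional moduli space $\overline{\mc{M}}(\v,c, H_p^{\ell}, x_0)$ is a compact $1$-manifold whose boundary, by the broken-trajectory description and stages one and two, consists precisely of the two strata $\partial_F$ (breaking off a Floer cylinder at the cylindrical end onto an orbit $x'$ with $w(x')=w(\v)$, necessarily in $U_\v$) and $\partial_M$ (breaking off a Morse flowline of $f_I$ before the enhanced evaluation constraint). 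I expect the only subtlety, and hence the main obstacle, to be ensuring that the breaking in $\partial_F$ lands on orbits $x'$ with $w(x')=w(\v)$ — so that $x'$ lies again in the single isolating neighborhood $U_\v$ and the boundary term is expressed purely in terms of local data — but this is exactly the argument at the end of the proof of Lemma \ref{lem:compactness}, where the energy identity \eqref{eq:Etop} shows that a broken configuration with $w(x') \neq w(\v)$ would have negative energy on some component. With these three stages in place the statement follows.
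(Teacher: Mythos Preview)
There is a genuine gap in stage two. The whole point of Lemma~\ref{lem: anyJ} is that it applies to $J_S \in \mathcal{J}_S(M,\D)$, the enlarged class of almost complex structures with \emph{no contact-type condition on any shell} (see the paragraph preceding the lemma). The arguments you invoke --- the proof of claim (i) in Lemma~\ref{lem: homotopycomp} and the proof of Lemma~\ref{lem:compactness}, both ultimately tracing to \cite[Lemma~4.13]{GP1} --- rely essentially on the maximum-principle / contact-type estimate (equations \eqref{eq:homeest}--\eqref{eq:finalest216}) to rule out breaking along divisorial orbits. That estimate is unavailable here. Your alternative phrasing via positivity of intersection is circular: the statement ``all asymptotics lie in $X$'' is precisely what must be shown, and if the configuration breaks at some $y \in \mathcal{X}(\D; H_p^{\ell})$ then the intersection-number bookkeeping across the node at $y$ is not the ordinary one.

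The paper's argument is genuinely different and uses instead the rationality of $\omega_\ell$ (guaranteeing a minimal positive area $\omega_{min}$). If a sequence breaks as $(u_1,u_2)$ with $u_1$ a PSS solution limiting to a divisorial orbit $y$, one caps $u_1$ by the fibre disc $-F$ of $y$, writing $[u_1] = -F \# A$ for some $A \in H_2(M)$. Since $\int_F \omega_\ell$ is tiny and $\omega_\ell(A) < 0$ would force $\omega_\ell(A) \leq -\omega_{min}$ (making $E_{top}(u_1)$ negative), one gets $\omega_\ell(A) \geq 0$. But then $E_{top}(u_1) \geq H_p^{\ell}(y) \approx \lambda_\ell(\frac{1}{1-\frac{1}{2}(\epsilon_\ell^p)^2}-1)$ via Lemma~\ref{lem:hamiltonianestimateonD}, which for small $\epsilon_\ell^p$ already exceeds the total energy $\approx \frac{1}{2}w_p(\v)(\epsilon_\ell^p)^2$ of the original curve, forcing $E_{top}(u_2) < 0$, a contradiction. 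Your stages one and three are fine, but stage two needs this $\omega_{min}$-based argument (or something equivalent that does not use contact-type $J$).
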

 
\begin{proof}
  By Lemma \ref{lem:energypss}, the energy of a solution $u \in \mc{M}(\v, c, H_p^{\ell}, x_0)$  is arbitrarily close to $\frac{1}{2}w_p(\v)
    (\epsilon_{\ell}^{p})^2$ (recall \eqref{perturbedweight} for the definition of $w_p$).   
    Recall from Lemma \ref{lem:hamiltonianestimateonD}
    that by taking
$\Sigma_\ell^{p}$ sufficiently $C^0$ close to $\hatXlp$, we can assume that
$ H_p^{\ell} \approx
\lambda_{\ell}(\frac{1}{1-\frac{1}{2}(\epsilon_{\ell}^{p})^2}-1)$ along the
divisors. Assume that $\epsilon^{p}_{\ell}$ is chosen sufficiently small so
that 
\begin{equation}
    \lambda_{\ell}(\frac{1}{1-\frac{1}{2}(\epsilon_{\ell}^{p})^2}-1)\ll \omega_{min}.
\end{equation} 
The Lemma as usual follows from Gromov compactness provided we
exclude other possible ``bad'' limits from occuring in the compactification.
In light of our total energy being very small, sphere bubbling cannot occur, so
it suffices to rule out cylinder breaking along orbits in $\D$. So,
suppose $y$ is in $\mathcal{X}(\D; H_p^{\ell})$ (defined in \eqref{eq:divisorially}), 
and that the curve limits to a broken curve $(u_1,u_2)$ with
$u_1$ a PSS solution and $u_2 \in \overline{\mathcal{M}}(x_0,y)$. The class
of $u_1$ in relative homology must be the connect sum of 
\begin{itemize}
    \item a (multiply covered) fiber disc $-F$ (the canonical capping disc of $y$ in $\D$ which is a product of discs and constant discs in the fibers of the regularization), oriented so that the boundary is $y$ with the opposite orientation; with

    \item some absolute homology class $A \in H_2(M)$. 
\end{itemize}
 We have that \begin{align} \Etop(u_1)=\int_{u_1} u_1^*(\omega) + \lambda_{\ell}(\frac{1}{1-\frac{1}{2}(\epsilon^{p}_{\ell})^2}-1) \end{align}
Observe that $\int_F \omega_{\ell} \geq 0$ is small, at most $\frac{1}{2}
(w_{\ell}+\delta_p) (\epsilon^{p}_{\ell})^2$. In particular, it is not possible for $\omega_{\ell}(A)<0$
because otherwise it would be less than $-\omega_{min}$, implying the
topological energy of $u_1$ would be negative.
It follows that the topological energy of $u_1$ is at least
$\lambda_{\ell}(\frac{1}{1-\frac{1}{2}(\epsilon^{p}_{\ell})^2}-1)$, which is
bigger than the total energy $\frac{1}{2}w_p(\v) (\epsilon^{p}_{\ell})^2$. Thus the topological
energy of $u_2$ must be negative, a contradiction. 
\end{proof} 

Having shown that we can work with $J_S \in \mathcal{J}(M,\D)$ (and in particular ``split'' almost complex structures from Definition \ref{defn: split}), we recall (see the summary at the beginning of this sub-section) that in order to geometrically confine (or localize) PSS moduli spaces, we also need to take a limit where we turn off most of the
Hamiltonian perturbations supported anywhere except for our particular
$U_{\v_0}$. The resulting Hamiltonian is degenerate, but has the nice property
that for ``split'' almost complex structures (Definition \ref{defn: split}),
its Floer curves are genuinely holomorphic curves outside $U_{\v_0}$ (after
possibly projecting to whichever divisorial stratum it is near). In particular
an iterative application of the the usual monotonicity Lemma for
pseudo-holomorphic curves (over strata) gives the desired geometric control
over ``degenerate PSS'' solutions. By a version of Gromov compactness (Lemma
\ref{lem: generalHam}) this implies confinement for ``small'' perturbations.
This argument and the appeal to monotonicity appears in Lemma \ref{lem: money}.

To begin this process, for any $\v' \neq \v_0$ (where $\v_0$ is our fixed
vector \eqref{eq:fixv0}), we fix a sequence of $\delta_{\v',n} \to 0$ (recall these constants from \eqref{eq: Hpert2}). We also
fix a sequence of $\delta_{\D,n} \to 0$ and we let $H_n^{\ell}$ denote the
associated Hamiltonians constructed by Equation  \eqref{eq: Hpert2}. Let
$H_{\infty}^{\ell}$ denote the limiting Hamiltonian.  By construction, we have
that $H_{\infty}^{\ell}-h_p^{\ell}$ is supported in $U_{\v_{0}}$. For each $n$,
we fix $J_{S,n} \in \mathcal{J}_S(M,D)$ which converge to some $J_S \in
\mathcal{J}_S(M,D)$. Let $\mathcal{M}(\v_0, J_{S,n}, H_n^{\ell}, x_0)$ denote
the moduli space of PSS solutions for this sequence of Hamiltonians. We will
make use of the general fact about solutions to Floer's equation using a
possibly degenerate Hamiltonian (such as the limiting Hamiltonian $H^{\ell}_\infty$):
\begin{lem}[compare \cite{Oh} Proposition 18.4.10 or \cite{Salamon Conley Index} proof of Prop. 4.2] \label{lem: generalHam}
    Let $H:S^1 \times X \to \mathbb{R}$ be any Hamiltonian and $\Sigma$ be a
    domain decorated with suitable cylindrical ends and perturbation data $K$.
    Suppose that $u: \Sigma \to X$ be a finite energy solution to Floer's
    equation \eqref{eq:generalFloer}. Restrict $u$ to a cylindrical end
    $\epsilon_{+}: [0,\infty) \times S^1$ or $\epsilon_{-}:(-\infty, 0] \times
    S^1$. Then for any $s_m \to \pm \infty$ there is a subsequence $s_m,$ so
    that $u(s_m,-) \to \gamma(t)$ in $C^{\infty}(S^1, X)$,
    where $\gamma$ is some periodic orbit of the Hamiltonian vector
    field.  
\end{lem}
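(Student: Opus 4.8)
The statement is the standard ``asymptotic convergence'' result for finite-energy solutions of a (possibly degenerate) Floer equation: along any cylindrical end, any sequence $s_m \to \pm\infty$ has a subsequence along which $u(s_m,-)$ converges in $C^\infty(S^1,X)$ to a periodic orbit of the Hamiltonian vector field. The proof I would give follows the classical template (Floer's original argument, and the references \cite{Oh}, \cite{Salamon Conley Index} cited in the statement), adapted only to the fact that our Hamiltonian need not be nondegenerate and our domain is a general Riemann surface with cylindrical ends rather than a plain cylinder.

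First I would reduce to the model situation: restricting $u$ to a cylindrical end $\epsilon_\pm$, the perturbation data $K$ takes the standard form $H\otimes dt$ there, so on $[0,\infty)\times S^1$ (say, the case $s_m\to+\infty$; the other is symmetric) $u$ solves the honest Floer equation $\partial_s u + J_t(\partial_t u - X_H) = 0$. Finite energy of $u$ on $\Sigma$ implies finite geometric energy on this half-cylinder, hence $\int_{[0,\infty)\times S^1}|\partial_s u|^2\, ds\, dt < \infty$; in particular the ``action along slices'' $s\mapsto A_H(u(s,-))$ is monotone and bounded, and $\int_S^\infty \|\partial_s u(s,-)\|_{L^2(S^1)}^2\, ds \to 0$ as $S\to\infty$. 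The next step is the key compactness input: because $X$ has a maximum principle for our Hamiltonians and complex structures on the relevant region (or, since we will apply this on $\cup_\v U_\v$, by Gromov compactness in the ambient compact $M$), the images $u(s_m,-)$ lie in a fixed compact set, and elliptic bootstrapping for the (first-order) Floer operator gives uniform $C^\infty_{loc}$ bounds on the translated maps $u_m(s,t):=u(s_m+s, t)$ on compact subsets of $\R\times S^1$. By Arzel\`a--Ascoli, a subsequence converges in $C^\infty_{loc}(\R\times S^1, X)$ to some limit $v$, which again solves Floer's equation.

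The final step is to identify the limit $v$ as ($t\mapsto$) a single periodic orbit. Since $\int_{s_m + [-R,R]\times S^1}|\partial_s u|^2 \to 0$ for every $R$ (tail of a convergent integral), the limit $v$ satisfies $\partial_s v \equiv 0$, so $v(s,t)=\gamma(t)$ is $s$-independent; plugging into Floer's equation gives $J_t(\gamma'(t)-X_H(t,\gamma(t)))=0$, i.e. $\gamma'(t)=X_H(t,\gamma(t))$, so $\gamma$ is a $1$-periodic orbit of $X_H$. Finally, $C^\infty_{loc}$ convergence of $u_m$ to the constant-in-$s$ map $\gamma$ restricts to $C^\infty(S^1,X)$ convergence of $u_m(0,-)=u(s_m,-)$ to $\gamma$, which is exactly the assertion. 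I expect the only genuinely delicate point to be the uniform compactness/bootstrapping step in our noncompact setting: one must ensure the $C^0$ (hence $C^\infty$) bounds on $u(s_m,-)$ hold \emph{uniformly in $m$}, which is where I would invoke the maximum principle of \cite[Lem. 7.2]{Abouzaid:2010ly} together with the fact that in our application $u$ has image in the compact region $\cup_\v U_\v \subset M$ (Lemma \ref{lem: localcompactness}), so that ambient Gromov compactness in $M$ supplies the needed bounds; degeneracy of $H$ plays no role here since the argument never uses nondegeneracy, only finite energy and the structure of Floer's equation.
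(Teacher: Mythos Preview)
Your argument is correct and follows the classical template from the cited references (\cite{Oh}, \cite{Salamon Conley Index}): translate, use finite energy to kill $\partial_s v$ in the limit, and identify the limit as an orbit. The paper itself does not give a proof of this lemma; it simply records the statement with those citations and uses it as a black box, so there is no independent argument to compare against. One small remark: in the paper's actual applications (e.g.\ Lemma~\ref{lem: MBcompactness}) the curves live in the compact $M$, so the $C^0$ bound you flag as ``delicate'' is immediate from compactness of $M$ rather than from a maximum principle; you correctly anticipated this.
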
 
When the Hamiltonian is degenerate there may exist sequences $s_m$ and $s_m'$
giving rise to different limits $\gamma$ and $\gamma'$ \footnote{It is still
true that $\gamma$ and $\gamma'$ must have the same Hamiltonian action.}.
Returning to our sequence of moduli spaces, the relevant version of Gromov
compactness for PSS solutions with respect to $H_{n}^{\ell}$ as $n \to \infty$ is the following: 
\begin{lem} 
    \label{lem: MBcompactness} 
    Fix $x_0 \in U_{\v_{0}}$. For $\epsilon_\ell^{p}$, $\delta_{\v_{0}}$
    sufficiently small and $\Sigma_\ell^{p}$ sufficiently $C^0$ close to
    $\hatXlp$ and given any sequence of solutions $u_n \in \mathcal{M}(\v_0,
    J_{S,n}, H_n^{\ell}, x_0)$, there is a subsequence which converges in the
    Gromov topology to some 
    \begin{align} 
        u_{\infty} \in \mathcal{M}(\v_0, J_{S},H_{\infty}^{\ell}, y_0) \times \mathcal{M}(y_1,y_0) \times \cdots
        \times \mathcal{M}(x_0,y_k) 
    \end{align} 
    for $y_i \in U_{\v_{0}}$.
\end{lem}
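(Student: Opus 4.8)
The plan is to establish Lemma \ref{lem: MBcompactness} as a routine adaptation of Gromov--Floer compactness (in the sense of Lemma \ref{lem: generalHam}) to the family of Hamiltonians $H_n^\ell \to H_\infty^\ell$, with the main work being to rule out the two ``bad'' degenerations: (a) bubbling of nonconstant holomorphic spheres (or holomorphic/Floer discs in the fibers along the way), and (b) breaking of Floer cylinders along divisorial orbits $y \in \mathcal{X}(\D; H_n^\ell)$. Both of these are excluded by the now-standard energy arithmetic used throughout \S \ref{sect: PSSiso2}: under the smallness hypotheses on $\epsilon_\ell^p$, $\delta_{\v_0}$, and the $C^0$-closeness of $\Sigma_\ell^p$ to $\hatXlp$, the topological energy of any $u_n \in \mathcal{M}(\v_0, J_{S,n}, H_n^\ell, x_0)$ is approximately $\tfrac12 w_p(\v_0)(\epsilon_\ell^p)^2$ (by the same computation as Lemma \ref{lem:energypss}, adapted to the perturbed symplectic form $\omega_\ell$), which is strictly smaller than both $\omega_{min}$ and $\lambda_\ell(\tfrac{1}{1-\frac12(\epsilon_\ell^p)^2}-1)$; this is precisely the numerical input already exploited in Lemma \ref{lem: anyJ}. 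Hence the argument of Lemma \ref{lem: anyJ} applies verbatim (the only change is that the Hamiltonian perturbations away from $U_{\v_0}$ are being turned off rather than held fixed, which only shrinks the energy further).

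First I would fix the small constants so that the estimate of Lemma \ref{lem:energypss} (in the $\omega_\ell$-perturbed form) holds with error small enough that $\Etop(u_n) < \min(\omega_{min}, \lambda_\ell(\tfrac{1}{1-\frac12(\epsilon_\ell^p)^2}-1))$ uniformly in $n$; note this is possible because $H_n^\ell - h_p^\ell \to H_\infty^\ell - h_p^\ell$ is uniformly $C^2$-bounded and supported in a region where the energy contribution is controlled. Second, I would invoke Gromov compactness for maps into the compact manifold $M$ (valid for the sequence $J_{S,n} \to J_S$ of $\omega_\ell$-tamed almost complex structures, using the uniform energy bound), producing a limiting stable broken configuration; here I would appeal to Lemma \ref{lem: generalHam} to guarantee that each end of each component is asymptotic to an honest periodic orbit of $H_\infty^\ell$ (even though $H_\infty^\ell$ is degenerate, so the ``orbits'' occur in the Morse--Bott families $\mathcal{F}_{\v'}$). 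Third, I would run the energy exclusion: the union of any nonconstant sphere bubbles has positive $\omega_\ell$-area, hence $\geq \omega_{min}$, contradicting the total energy bound; and a break along a divisorial orbit $y$ forces, as in Lemma \ref{lem: anyJ}, a PSS component whose topological energy is at least $\lambda_\ell(\tfrac{1}{1-\frac12(\epsilon_\ell^p)^2}-1)$ (by the fiber-disc capping argument and positivity of intersection with $\D$), again contradicting the bound. Fourth, I would observe that the surviving configuration is a PSS solution for $H_\infty^\ell$ with output some orbit $y_0$, followed by a chain of (honest, $\omega_\ell$-holomorphic for $H_\infty^\ell$) Floer cylinders; since the limiting Hamiltonian perturbation is supported in $U_{\v_0}$ and the actions are separated (Lemma \ref{lem: localFloersep}, using the $\kappa_{i,\ell}$), and since the energy of the whole configuration is too small to leave a small neighborhood of the stratum, all asymptotic orbits $y_i$ must lie in $U_{\v_0}$, as claimed.

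The main obstacle I anticipate is not the energy arithmetic (which is by now mechanical) but the careful bookkeeping forced by the \emph{degenerate} limiting Hamiltonian $H_\infty^\ell$: because $H_\infty^\ell$ is only Morse--Bott (indeed only Morse--Bott with corners) on the families $\mathcal{F}_{\v}$ away from $U_{\v_0}$, the notion of ``convergence to a broken configuration'' must be interpreted in the Morse--Bott sense, and one must ensure that the limiting PSS component $u_\infty$ has its negative end converging to a point of $\mathcal{F}_{\v_0}$ (not to a point of some $\mathcal{F}_{\v'}$ with $\v' \neq \v_0$) --- this is exactly where the action separation from the choice of $\kappa_{i,\ell}$ and the smallness of the total energy are both needed, and where one must be slightly careful since Lemma \ref{lem: generalHam} only says subsequential limits exist without a priori controlling which family they land in. I expect this to be handled by the same local-Floer-homology confinement already established in Lemma \ref{lem: localFloersep} together with the energy bound, but it requires stating the convergence statement at the level of $U_{\v_0}$ rather than globally, and one should be careful that the intermediate Floer cylinders $\mathcal{M}(y_{i+1}, y_i)$ are computed with respect to $H_\infty^\ell$ (hence are Morse--Bott moduli spaces, to be resolved later when $H_\infty^\ell$ is itself perturbed back to a nondegenerate Hamiltonian). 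The remaining transversality and gluing statements needed to upgrade this compactness into an identification of moduli spaces are deferred to the subsequent subsections and are not needed here.
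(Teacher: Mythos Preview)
Your proposal is correct and follows essentially the same approach as the paper: rule out sphere bubbling by the uniform energy bound $\Etop(u_n) \approx \tfrac{1}{2}w_p(\v_0)(\epsilon_\ell^p)^2 < \omega_{min}$, rule out breaking along divisorial orbits by the argument of Lemma \ref{lem: anyJ}, and use Lemma \ref{lem: generalHam} together with action/energy arithmetic to show every subsequential limit orbit must lie in $U_{\v_0}$. The paper's proof handles your anticipated ``main obstacle'' (excluding limits along orbits $y' \in X \setminus U_{\v_0}$ for the degenerate $H_\infty^\ell$) by a direct energy dichotomy---the tail energy $E(\bar{u}_n) \approx -\mathcal{A}_\ell(y') - (1-\tfrac{1}{2}(\epsilon_\ell^p)^2)w(\v_0)$ would be either negative or much larger than the total available energy---which is exactly the action-separation mechanism you invoke.
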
 
\begin{proof} 
    Observe that by our assumptions no bubbling can occur by arguments already
    given (namely, that the energies of the $u_n$ are smaller than the minimal
    energy of $J$-holomorphic spheres) 
    It follows that 
    \begin{align} 
        |\nabla u_n|_{L_{\infty}}<\infty 
    \end{align}
    remains bounded and hence that we have that the solution converges in
    $C_{loc}^\infty$ to some solution 
    \begin{align} \mathring{u}_{\infty}: S
        \to M 
    \end{align} 
By Lemma \ref{lem: generalHam} it follows that there are sequences $s_m \to
-\infty$ and an orbit $y'$ such that \begin{align} \mathring{u}_{\infty}(s_m,t)
    \to y' \in C^\infty(S^1,M) \end{align} Suppose that $y'$ is a divisorial
orbit in $\D$. Then there would be a pair $(n,s_0)$ for which $u_n(s_0,t)$ is
arbitrarily close to $y' \in C^\infty$. The argument of Lemma \ref{lem: anyJ} shows
that this is not possible. 

 Suppose there is some orbit $y'$ in $X$ not in $U_{\v_{0}}$ together with a
 sequence of $s_m \to -\infty$ such that $\mathring{u}_{\infty}(s_m,t) \to y'
 \in C^\infty(S^1,M)$. Then there would again be a pair $(n,s_0)$ for which
 $u_n(s_0,t)$ is arbitrarily close to $y' \in C^\infty$. Let
 $\bar{u}_n=u_n^{-1}(-\infty,s_0)$. Then \begin{align} E(\bar{u}_n) \approx
 -A_{\ell}(y') -(1-\frac{1}{2}(\epsilon_{\ell}^p)^2) w(\v_0)  \end{align}
 (recall $A_{\ell}:=A_{H^{\ell}}$, where the latter is defined in \eqref{eq:action}) which by our assumptions would imply that either  
 \begin{itemize} 
     \item $E(\bar{u}_n) <0$; or
     \item $E(\bar{u}_n) \gg \frac{1}{2}w(\v_0)(\epsilon_{\ell}^p)^2$.
 \end{itemize} 
 Thus we conclude
 that $\mathring{u}_\infty$ limits uniquely to some orbit $y_0$ in
 $U_{\v_{0}}$. Likewise, the same arguments show that after rescaling by
 suitable $s_m \to -\infty$ we obtain Floer trajectories with asymptotes
 only in $U_{\v_{0}}$. 
 \end{proof} 

The version of monotonicity for pseudoholomorphic curves we will appeal to is:
\begin{lem}[Monotonicity lemma, compare \cite{BEHWZ} Lemma 5.2 or \cite{Sikorav} Proposition 4.3.1]\label{lem:monotonicity}
    Let $(W,J)$ be a compact almost complex manifold and suppose that $J$ is
    tamed by some $\omega$. Then there exists a positive constant $C_0$ having
    the following property: For any compact $J$-holomorphic curve $f: (S,j) \to (W,J)$,
    point in the domain $s_0 \in S\setminus \partial S$, and $r > 0$ smaller
    than the injectivity radius of $W$,  if the boundary $f(\partial S)$ is
    contained in the complement of the ball $B_r(s_0)$ about $s_0$ of radius
    $r$, then the area\footnote{Both the ball $B_r(s_0)$ and definition of area
    are with respect to the the metric induced by $\omega$ and $J$.} of the
    portion of $f$ mapping to $B_r(s_0)$ is bounded below by: 
    $$ A(f^{-1} (B_r(s_0))) \geq C_0r^2.$$ 
\end{lem}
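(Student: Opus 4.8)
The plan is to recall the standard proof of the monotonicity lemma for pseudo-holomorphic curves and adapt it to our setting; the statement is essentially classical and we will follow the treatment in \cite{BEHWZ} (Lemma 5.2) or \cite{Sikorav} (Proposition 4.3.1), so the proof here will be brief. First I would fix, by compactness of $W$, constants governing the local geometry: a uniform upper bound for $\|\omega\|$ in the $J$-metric (equivalently, a constant bounding $\omega$ against area, which follows from $J$-tameness by a standard linear-algebra argument), a uniform lower bound $\varrho_0$ for the injectivity radius, and a uniform constant $c_1$ such that for $r < \varrho_0$ the symplectic form $\omega$ restricted to $B_r(s_0)$ differs from the flat K\"ahler form on a coordinate ball by a controlled amount (so that the isoperimetric inequality holds with a uniform constant on all such balls). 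The taming hypothesis is exactly what lets us pass between the area of $f$ (measured in the $\omega$-$J$ metric) and $\int f^*\omega$ over the relevant region.

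Next I would introduce the area function $a(r) := A\big(f^{-1}(B_r(s_0))\big)$ for $r$ up to the injectivity radius, and observe two facts. First, by the coarea formula $a'(r) = \int_{f^{-1}(\partial B_r(s_0))} |d\ell|$ (length of the preimage circles) for almost every $r$, since $f(\partial S)$ avoids $B_r(s_0)$ the set $f^{-1}(\partial B_r(s_0))$ bounds $f^{-1}(B_r(s_0))$ inside the domain. Second, because $f$ is $J$-holomorphic and $J$ is tamed by $\omega$, on the ball $B_r(s_0)$ the area of the image piece is comparable to $\int_{f^{-1}(B_r(s_0))} f^*\omega$; applying Stokes' theorem and the uniform isoperimetric inequality on $B_r(s_0)$ gives a differential inequality of the form $a(r) \le C_2 \, a'(r)^2$, or after taking square roots and using that $\omega$ is closed, $\frac{d}{dr}\sqrt{a(r)} \ge c_3 > 0$ for a uniform constant $c_3$. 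Here the standard point is that for a minimal (in particular holomorphic) surface the isoperimetric inequality improves by a factor of two relative to the Euclidean plane, which is what produces the sharp quadratic growth.

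Integrating $\frac{d}{dr}\sqrt{a(r)} \ge c_3$ from $0$ to $r$ and using $a(0) = 0$ (the point $s_0$ is interior, so the preimage shrinks to a point) yields $\sqrt{a(r)} \ge c_3 \, r$, hence $a(r) \ge c_3^2 \, r^2$; setting $C_0 := c_3^2$, which by construction depends only on $(W, J, \omega)$ and not on $f$ or $s_0$, completes the argument. The only subtlety worth flagging is the degenerate case where $f^{-1}(B_r(s_0))$ is not connected or $s_0$ has nontrivial preimage of positive dimension; but since $f$ is non-constant on the component containing $s_0$ (indeed $s_0$ is in the image), the preimage near $s_0$ is a proper analytic subset and the area estimate applies to the connected component through $s_0$, which is all that is needed. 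The main (mild) obstacle is simply bookkeeping the uniformity of all constants over the compact manifold $W$, but this is routine and is exactly why we have invoked compactness of $W$ in the hypothesis.
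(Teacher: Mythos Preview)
The paper does not give its own proof of this lemma; it is stated with citations to \cite{BEHWZ} and \cite{Sikorav} and then used as a black box. Your sketch follows the standard coarea/isoperimetric argument from those references, which is the expected route, so there is nothing substantive to compare.

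One small correction worth noting: the differential inequality you wrote, $a(r)\le C_2\,a'(r)^2$, is not quite the form that appears in the standard argument, and as written it does not directly yield $\frac{d}{dr}\sqrt{a(r)}\ge c_3$. The usual chain is: coarea gives $a'(r)\ge \ell(r)$ (length of the level set), and the isoperimetric inequality for $J$-holomorphic (hence almost-minimal) pieces in a small ball gives $a(r)\le C\,\ell(r)^2$, so $a(r)\le C\,a'(r)^2$; rearranging gives $a'(r)\ge c\,\sqrt{a(r)}$, i.e.\ $\frac{d}{dr}\sqrt{a(r)}\ge c/2$, which integrates to the claim. Alternatively one can bypass the isoperimetric step entirely and derive $\frac{d}{dr}\big(a(r)/r^2\big)\ge 0$ directly from the calibration property of $J$-holomorphic curves plus a comparison with the flat model, as in Sikorav. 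Either way the conclusion and the uniformity over compact $W$ are as you described.
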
 

\begin{defn} \label{defn: split} 
    We say that $J_0 \in \mathcal{J}(M,\D)$ is {\em split} if over each $\pi_I: U_I \to D_I$
\begin{itemize} 
    \item the map $\pi_I$ is $J_0$-holomorphic;
    \item For every point p, the complex structure respects the decomposition $H_p \oplus F_p$;
    \item On each $F_p$, the complex structure is split with respect to the
            product decomposition on $D_{\epsilon}^{|I|}$. 
    \end{itemize}
\end{defn} 

For any $U_I$, we may consider the the horizontal piece of the symplectic form
$$ \omega_H=\omega_{\ell}-\omega_{vert} $$ This is symplectic on each
horizontal subspace $H$.  Observe that for a split $J_0$, the energy of a curve
which lies in $U_I$ can be split into horizontal and vertical pieces. $$
E(u)=E_{hor}+ E_{vert} $$  Let $g_{\omega_{H}}$ and $g_{\omega_{D_I}}$ denote
the corresponding metrics on $D_I$ and the horizontal subspsace of the tangent
space to $U_I$.  Since the metric $g_{\omega_H}$ can be made to extend to a
compactification $\bar{U}_I$ of $U_I$ (by shrinking $U_I$ if necessary), it
follows that 
there exists a constant $G_I>0$ such that 
\begin{equation} \label{eq:divisormetricbound}
    g_{\omega_{D_I}} < G_I \cdot g_{\omega_{H}} 
\end{equation}
as positive bilinear forms on each $H_p$.

We fix once and for all an almost-complex structure $J_0$ which is split
outside of a small neighborhood of $U_{\v_0}$ and compatible inside of
$U_{\v_{0}}$. All of our almost-complex structures in this section will be
arbitrarily close to this fixed complex structure. Let $C_{I}$ 
be the monotonicity constant associated to the complex structures over $D_I$. 

We are finally in a position to prove the promised confinement of 
low energy log $\PSS$-solutions: 

\begin{lem} \label{lem: money} 
    Fix $\v_0$ as above, let $I$ denote the support of $\v_0$, and fix some
    $\epsilon_{\ell}' > 0$ smaller than the size $\epsilon$ of all of the
    tubular neighborhoods $U_i$.
    For $\epsilon_\ell^{p}$, $\delta_\D$, $\delta_{\v'}$, $\delta_{\v_{0}}$
    sufficiently small (depending on $J_0$),
    $\Sigma_\ell^{p}$ sufficiently $C^0$ close to
$\hatXlp$ and $J_S$ sufficiently close to the split $J_0$ above,  any
    $\PSSlog^{\v_{0}}$ solution $u \in \mc{M}(\v_0, x_0)$ must lie in
    $U_{I,(\epsilon'_\ell)^2}:= \cap_{i \in I} U_{i,(\epsilon'_\ell)^2}$ (see \eqref{eq:randomUithing}). 
\end{lem}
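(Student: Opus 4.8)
The plan is to argue by contradiction, combining the energy bound from Lemma \ref{lem:energypss} (adapted to $H_p^\ell$, as recorded in the proof of Lemma \ref{lem: anyJ}), the confinement-via-monotonicity philosophy just outlined in the text, and the Gromov compactness statement of Lemma \ref{lem: MBcompactness}. Suppose the conclusion fails: then there are sequences $\delta_{\v',n}, \delta_{\D,n}, \delta_{\v_0,n} \to 0$, parameters $\epsilon_\ell^p, \Sigma_\ell^p$ in the allowed ranges, and $J_{S,n} \to J_0$ (the fixed split-outside-$U_{\v_0}$, compatible-inside structure), together with solutions $u_n \in \mc{M}(\v_0, J_{S,n}, H_n^\ell, x_0)$ whose images are not contained in $U_{I,(\epsilon'_\ell)^2}$. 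By Lemma \ref{lem: MBcompactness} a subsequence converges in the Gromov topology to a broken configuration $u_\infty \in \mc{M}(\v_0, J_S, H_\infty^\ell, y_0) \times \cdots$ with all breaking orbits in $U_{\v_0}$; since $H_\infty^\ell - h_p^\ell$ is supported in $U_{\v_0}$, the principal component $u_\infty$ solves a Floer-type equation whose Hamiltonian term vanishes outside $U_{\v_0}$, so outside $U_{\v_0}$ it is genuinely $J_0$-holomorphic (after projecting to the relevant divisorial stratum via $\pi_J$, since $J_0$ is split there).

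The heart of the argument is then an iterated monotonicity estimate applied to $u_\infty$ in the region outside $U_{\v_0}$. First I would observe that the total (topological) energy of $u_\infty$ is approximately $\tfrac12 w_p(\v_0)(\epsilon_\ell^p)^2$, which can be made arbitrarily small; in particular it is far below $\omega_{min}$ and below $C_I (\epsilon'_\ell)^2 / 2$ and the analogous monotonicity thresholds for each stratum. Now suppose $u_\infty$ exits $U_{I,(\epsilon'_\ell)^2}$, i.e.\ it reaches a point $p$ with $\rho_i(p) > (\epsilon'_\ell)^2$ for some $i \in I$, or more generally reaches a point lying over a stratum $D_J$ with $J \subsetneq I$ (or $J = \emptyset$, i.e.\ deep in $X$) at distance $\ge \epsilon'_\ell$ from $D_I$. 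Projecting the portion of $u_\infty$ lying in some $U_J$ by $\pi_J$ yields a $J_0$-holomorphic curve in $D_J$ (using splitness and \eqref{eq:divisormetricbound} to compare the pulled-back metric with the ambient one up to the constant $G_J$), and whose boundary is confined near the relevant substrata; applying Lemma \ref{lem:monotonicity} on each such stratum traversed — walking up the stratification from $X$ toward $D_I$, picking up a ball of radius comparable to $\epsilon'_\ell$ (or to the gap between successive $\rho^c_{\v,i}$ thresholds) at each step — forces the energy of $u_\infty$ to be at least some fixed positive multiple of $(\epsilon'_\ell)^2$ determined by the monotonicity constants $C_J$ and the metric-comparison constants $G_J$. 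This contradicts the smallness of the total energy once $\epsilon_\ell^p$ (and the $\delta$'s, $\Sigma_\ell^p$) are taken small enough relative to $\epsilon'_\ell$ and $J_0$. Hence $u_\infty$, and therefore (by the $C^\infty_{loc}$ convergence and the confinement of the breaking orbits) all but finitely many $u_n$, must lie in $U_{I,(\epsilon'_\ell)^2}$ — and the same conclusion for the $\PSSlog^{\v_0}$ solutions $u \in \mc{M}(\v_0, x_0)$ defined with $J_S$ sufficiently close to $J_0$ follows by one more application of the same Gromov-compactness/monotonicity package (this time without turning off the perturbations, using Lemma \ref{lem: generalHam} and Lemma \ref{lem: anyJ} directly).

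The main obstacle I anticipate is making the iterated monotonicity argument honest across the corner structure of the stratification: a single $J_0$-holomorphic curve may wander through several strata $U_J$ of varying depth, and at each transition one must (i) project correctly via the appropriate $\pi_J$, (ii) verify the projected curve still has boundary confined away from the center of the monotonicity ball, and (iii) bookkeep the metric-comparison constants $G_J$ so that the accumulated energy lower bound is a genuine fixed positive quantity independent of all the small parameters except $\epsilon'_\ell$ and $J_0$. A secondary technical point is ensuring that the degenerate limiting Hamiltonian $H_\infty^\ell$ does not introduce extra noncompactness — this is handled by Lemma \ref{lem: MBcompactness}, but one should double-check that the energy identity used there (topological energy $\approx \tfrac12 w_p(\v_0)(\epsilon_\ell^p)^2$) is stable under the degeneration, which follows because the Hamiltonian perturbation terms being switched off all have arbitrarily small $C^0$-norm. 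Apart from these points, the argument is a fairly standard confinement-by-monotonicity argument of the kind used repeatedly in \cite{GP1} and in McLean's work \cite{McLean2}.
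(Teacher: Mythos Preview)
Your proposal is correct and follows essentially the same approach as the paper: reduce via Lemma \ref{lem: MBcompactness} to confining the limiting PSS component for the split $J_0$, observe its energy is $\approx \tfrac12 w_p(\v_0)(\epsilon_\ell^p)^2$, and then run an iterated monotonicity argument over the strata using the projections $\pi_K$ (enabled by splitness) and the metric comparison \eqref{eq:divisormetricbound}. The paper makes your ``iterated monotonicity across corners'' precise by ordering the subsets $K$ (with $D_K\cap D_I\neq\emptyset$, $I\not\subset K$) by cardinality then lexicographically, introducing nested regions $\mathring{U}_K$ with radii $(\epsilon'_\ell/2^{\,n-|K|-1})^2$, and inductively shrinking $\epsilon_\ell^p$ so the projected curve in $D_K$ has boundary only on the inner locus (the inductive hypothesis kills the outer boundary), at which point Lemma \ref{lem:monotonicity} gives the contradiction. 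One minor inaccuracy: in the limit you should keep $\delta_{\v_0}$ fixed (small) rather than send it to zero, since the orbits in $U_{\v_0}$ must remain nondegenerate; only the $\delta_{\v'}$ for $\v'\neq\v_0$ and $\delta_\D$ are switched off in $H_\infty^\ell$.
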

\begin{proof} 
By the compactness statement of Lemma \ref{lem: MBcompactness}, it suffices
to prove that any limiting broken curve $u_\infty$ appearing in the statement
of Lemma \ref{lem: MBcompactness} with $J_S=J_0$ may not escape 
$U_{I,(\epsilon'_\ell)^2}$, for some sufficiently small $\epsilon_{\ell}^p$ 
(or equivalently large constant $C_e = \epsilon'_\ell / \epsilon_\ell^p$)
determined in the proof.
For sufficiently small $\delta_{\v_{0}}$ we
have seen that any Floer trajectory remains in $U_{\v_{0}}$ by Lemma
\ref{lem: localFloersep}. Hence it only remains to confine the PSS component $\mathring{u}_\infty$ of
the broken curve, which as we have seen has energy approximately
$\frac{1}{2}w(\v_{0})(\epsilon_{\ell}^p)^2$, which becomes arbitrarily small as
we decrease $\epsilon_{\ell}^p$.

For what follows, it suffices to replace the curve $\mathring{u}_\infty$ with
its intersection with $U_I$. Our argument will proceed by inductively
decreasing $\epsilon_{\ell}^p$ (and implicitly the other constants so that the
confinement of Floer trajectories continues to hold)
in order to confine this PSS curve away from various strata.

Note that $U_I \cap U_K=\emptyset$ for any subset $K$ with $D_I \cap
D_K=\emptyset$. Now assume by contradiction that this curve contains a point
$p$ in $U_I \setminus
U_{I,(\epsilon'_{\ell})^2}$. We let $K$ range over all subsets such that $D_K
\cap D_I \neq \emptyset$ and $I \not \subset K$ and consider the ordering where
$K_1<K_2$ if $\#|K_1|<\#|K_2|$ or if $\#|K_1|=\#|K_2|$ and the (lexicographically) first distinct digits
$k_1$ and $k_2$ 
between $K_1$ and $K_2$ satisfy $k_1<k_2$. Define
$\mathring{U}_K:= U_{K,(\epsilon'_\ell/2^{n-|K|-1})^2} \setminus \cup_{i \notin
K} U_{i,(\epsilon'_\ell/2^{n-|K|-1})^2}$ (as usual $U_\emptyset = U_{\emptyset,
(\epsilon'')^2} =M$ for any $\epsilon''$). These sets cover $U_I \setminus
U_{I,(\epsilon'_\ell)^2}$ and, throughout the rest of the proof we will, in an
abuse of notation, replace $U_{K,(\epsilon'_\ell/2^{n-|K|-1})^2}$ with its
intersection with $U_{I}\setminus U_{I,(\epsilon'_\ell)^2}$.

Our proof will inductively in $K$ (using the above ordering) show that, by
taking $\epsilon_{\ell}^p$ sufficiently small,
the curve cannot intersect $\mathring{U}_K$. First let us
consider the base case, which is to show that for $\epsilon_{\ell}^p$ small, the
curve cannot have a point $p$ in $U_\emptyset \setminus \cup_{i=1}^k U_{i,
(\epsilon_{\ell}' / 2^{n-1})^2}$ (implicitly intersected with
$U_{I}\setminus U_{I,(\epsilon'_\ell)^2}$).
This follows from monotoncity
(Lemma \ref{lem:monotonicity}) since 
for $\epsilon_{\ell}^p \ll \frac{1}{2^{n-1}} \epsilon_{\ell}'$ 
the curve is holomorphic in the larger
region $M\setminus \cup_i U_{i,(\epsilon_\ell^{p})^2}$,
and its intersection with this region has boundary
on the loci where one or more of the $\rho_i=\frac{\kappa_i}{2\pi}(\epsilon_\ell^{p})^2$. 
This loci is metrically bounded away from the smaller region $U_\emptyset \setminus \cup_{i=1}^k U_{i,
(\epsilon_{\ell}' / 2^{n-1})^2}$, and hence for some $r>0$ smaller than the injectivity radius,
any point in the smaller region has $r$-ball contained in the larger region,
a fact that continues to hold if $\epsilon_{\ell}^p$ is further shrunk.  
Appealing to monotonicity (Lemma
\ref{lem:monotonicity}), if there is a point $p \in U_\emptyset \setminus
\cup_{i=1}^k U_{i, (\epsilon_{\ell}' / 2^{n-1})^2}$ in the image of the curve,
the energy of the curve must be bounded below by $C_{\emptyset} r^2 > 0$ for
all $\epsilon_{\ell}^p$ sufficiently small, a quantity that becomes
greater than the energy of a PSS solution when $\epsilon_{\ell}^p$ is small, a
contradiction.

Now we turn to the inductive step in the proof. Suppose that there is a point
$p$ where the curve intersects $\mathring{U}_K$. Observe that $\mathring{U}_K
\subset U_{K,(\epsilon'_\ell/2^{n-|K|-1})^2}\setminus \cup_{i \notin K}
(U_{i,(\epsilon'_\ell/2^{n-|K|})^2})$. Consider
$u_K=\mathring{u}_{\infty}^{-1}(U_{K,(\epsilon'_\ell/2^{n-|K|-1})^2}\setminus
\cup_{i \notin K} (U_{i,(\epsilon'_\ell/2^{n-|K|})^2})$, the intersection of
$\mathring{u}_{\infty}$ with this larger set.  By the inductive hypothesis,
this curve can have no boundary along the loci where $\rho_i=\frac{\kappa_i}{2\pi}
(\epsilon'_\ell/2^{n-|K|-1})^2$ for any $i \in K$ (as such points are contained
in some $\mathring{U}_{K'}$ for $K' < K$).  Using the fact that the cylindrical
end of our curves lie in $U_\v$ and that $K$ does not contain $I$, we see that
the projection of this curve $\pi_K(u_K)$ is then a holomorphic curve in $D_K
\setminus \cup_{i \notin K} (U_{i,(\epsilon'_\ell/2^{n-|K|})^2})$ with boundary
along the boundary of this region.

The point $\pi_K(p)$ lies in $D_K \setminus \cup_{i \notin K}
(U_{i,(\epsilon'_\ell/2^{n-|K|-1})^2})$, a region which is bounded away from the boundary of the larger region $D_K
\setminus \cup_{i \notin K} (U_{i,(\epsilon'_\ell/2^{n-|K|})^2})$. 
It follows that we can find a ball $B_K(\pi_K(p))$ about $\pi_K(p)$ of some
non-zero radius that is independent of the particular point $p$ in the region 
(depending on the metric distance from $D_K \setminus \cup_{i \notin K}
(U_{i,(\epsilon'_\ell/2^{n-|K|-1})^2})$ to the boundary of the larger region)
which is disjoint from the boundary. In particular, monotonicity (Lemma
\ref{lem:monotonicity}) implies that, since $u$ has image containing this point
$p$, the energy of $\pi_K(u)$ must be bounded below by $C_K r_K^2$ for $r_K$
the lesser of the size of the ball $B_K$ and the injectivity radius of $D_K$.
On the other hand, 
 \begin{align} 
     \frac{1}{2}w(\v) (\epsilon_\ell^p)^2 \approx E(u_{\infty}) \geq E(u_K) \\ 
     \geq E_{horiz}(u_K) \\ 
     \geq \frac{E(\pi_K(u))}{G_K}
 \end{align} 
 where the last inequality uses \eqref{eq:divisormetricbound}. In particular,
 by taking $\epsilon_{\ell}^p$ sufficiently small, $E(\pi_K(u))$ can be made
 smaller than $C_K r_K^2$, implying such a $p$ cannot exist.
\end{proof}

\subsection{Projective bundle compactifications} \label{subsection:lowin}

Here we describe the projective bundle compactifications $PD_I$ (of neighborhoods of the strata $D_I$) and their symplectic structures.  Fix an $\epsilon_{\ell}'$ as in the statement of Lemma \ref{lem: money}.
Given a smooth component $D:=D_i \subset \D$, with associated constant $\kappa:= \kappa_i$ as in \eqref{eq:kappai}, consider the standard projective bundle $PD= P(ND
\oplus \mathcal{O}_D)$ over $D$, and let $\pi_P: PD \to D$ denote the
projection to $D$. There are two natural holomorphic sections $D_0$ and
$D_{\infty}$ and we may algebraically identify 
\begin{align} 
    PD \setminus D_\infty=ND=Tot(\mathcal{O}(D)) \\ PD \setminus D_0 \cong Tot(\mathcal{O}(-D)) 
\end{align} 
Observe also that 
$$PD \setminus (D_\infty \cup D_0) = ND \setminus D=Tot(\mathcal{O}(D)) \setminus D \cong SD \times \mathbb{R}$$ 
where the last isomorphism makes sense in the smooth category only. Turning to symplectic forms and letting $p$ denote the norm in the fiber with respect to the Hermitian metric on $ND$ fixed as part of the regularization chosen in \S \ref{subsec:ncsymplectic}, we equip $PD$ with the standard symplectic form 
\begin{align} 
    \omega_{PD}= \frac{\kappa(\epsilon'_\ell)^2}{2\pi} d(\frac{p^2}{1+p^2} \theta) +\pi_P^*(\omega_D) 
\end{align} 
After setting 
\begin{align} 
    \rho_{loc}= \frac{\kappa (\epsilon'_\ell)^2}{\pi} \frac{p^2}{1+p^2} 
\end{align}
we see that the complement of the divisor $D_\infty$ can be identified with a
standard (open) symplectic disc bundle $U_{D_{0}}$ of radius $\sqrt{\frac{\kappa}{\pi}}
\epsilon_\ell ' $. Symmetrically, we can identify a neighborhood of
$D_{\infty}$ inside of $PD$ with a disc bundle and the projective bundle as
arising from the gluing 
of these two disc bundles.  We may embed $U_{(\epsilon'_{\ell})^2} \subset
U_{D_{0}} \subset PD$. Denote its image by $U^{loc}_{\ell}$.    

Over lower dimensional strata, we let $PD_I$ be the $(\mathbb{C}P^1)^{|I|}$
bundle which is the fiber product over $D_I$ of the $PD_i$ for $i \in I$.  For
$i \notin I$, denote by $D_{i,I}$ the divisors $D_i \cap D_I$ (in a small abuse of notation, we will also use $D_{i,I}$ to denote the preimage of these divisors in $PD_I$), so
$\mathring{D}_I = D_I \setminus \cup_{i \notin I} D_{i,I}$.
Let $\mathring{N}D_I \to \mathring{D}_I$ denote the complement (in $PD_I$) of the
sections $D_{i,0}$, $D_{i,\infty}$ for $i \in I$,  restricted to the open
stratum of the base $\mathring{D}_I$.  See Figure \ref{projectivebundle} for a
sample schematic of some of these various strata in a special case.
This is a
$(\mathbb{C}^*)^I$ bundle over $\mathring{D}_I$. As $PD_I$ is a fiber product,
for any subset $J \subset I$ 
we also have maps $\pi_{(J)}: PD_I \to PD_I^{(J)}$, where $PD_I^{(J)}$ is the
$(\mathbb{C}P^1)^{|I|-|J|}$ bundle over $D_I$ given by taking fiber products of
$PD_i$ for $i \in I \setminus J$, and $\pi_{(J)}$ projects away from the fibers
corresponding to elements of $J$. 
  \begin{figure}[h] 
  \caption{A schematic identifying a neighborhood $U_1^{(\ell)}$ of a divisor $D_1$ in $M$ with a neighborhood $U_{1,(\ell)}^{loc}$ of the zero section in the projectivized normal bundle $PD_1$. Our schematic is draw in in toric moment map coordinates, assuming either we are in a situation in which such coordinates exist globally or working (and giving such a description) in a small neighborhood of $D_{\{1,2\}}$ where a local $T^2$ action exists).\label{projectivebundle}}
    \centering
    \includegraphics[scale=1.5]{./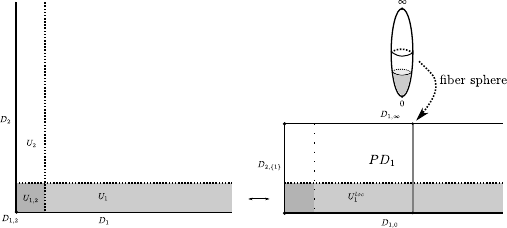}
\end{figure}

We equip $PD_I$ with the fiber product symplectic form where on each factor we
use Hermitian connections $\theta_{e,i}$. More precisely we set
\begin{equation}\label{hatkappa}
    \hatkappa=\kappail(\epsilon'_\ell)^2
\end{equation}
and let 
\begin{align} \label{omegaloc}
    \omega_{loc}:=\sum_{i \in I} \frac{\hatkappa}{2\pi} d(\frac{p_i^2}{1+p_i^2} \theta_{e,i}) +\pi_I^*(\omega_{D_{I}}) 
\end{align} 

As before, for any $i \in I$ we set 
\begin{equation}
    \rho_{i,loc}=\frac{\hatkappa}{\pi} \frac{p_i^2}{1+p_i^2}.
\end{equation}
For $i \notin I$, we set $\rho_{i,loc}:= \rho_i \circ
\pi_I$ where $\rho_i$ are the functions defined earlier in \S \ref{section:SHtor}. 
The cohomology class of the symplectic form \eqref{omegaloc} is Poincare dual to the
divisors 
\begin{align} \label{eq:cohomologyclasslocal}
    [\omega_{loc}] = \sum_{i \in I} \kappail D_{i,0}-(\kappail-\hatkappa) D_{i,\infty} +\sum_{i \notin I} \kappail
    D_{i,I} .
\end{align} 
We let 
\begin{equation}\label{eq:XIloc}
    X_I^{loc}
\end{equation}
be the symplectic manifold which is
the complement of the divisors $D_{i,0}$, $D_{i,\infty}$, $D_{i,I}$. Of course
this is diffeomorphic to $\mathring{N}D_I$ as smooth manifolds.  We denote by
$\Uiloc$ the regions of $PD_I$ where $\rho_{i,loc} \leq
\frac{\hatkappa}{2\pi}$, and set 
\begin{equation}\label{eq:UIlocdef}
    \UIloc:= \cap_{i \in I} \Uiloc.
\end{equation}
We may assume that $\theta_{e,i}$ agree with those fixed in the regularization from \S \ref{section:SHtor} and that $\rho_{i,loc}$ agrees with $\rho_i$.
Doing so gives rise to a symplectic identification:
\begin{align} 
    \label{eq: Uloc} \UIloc \cong U_{I,(\epsilon'_\ell)^2} 
\end{align}

\subsection{Local Floer cohomology in the projective bundle compactification} \label{sect: locHam}

The purpose of this sub-section is to define and calculate a version of the local Floer cohomology group from \S \ref{subsection:spectral} in the projective bundle compactification $PD_I$ of the tubular neighborhood about $D_I$ for suitable Hamiltonian orbits with winding $\v:= \v_0$, for each fixed $\v$ and $I$ as in \eqref{eq:fixv0}  (these are defined analogously to the groups in \S \ref{subsection:spectral}; see \eqref{eq:localchaincomplexPD}). The main result of this sub-section is Lemma \ref{lem: localmorsePDI}, which adapts an argument of Hofer-Salamon to calculate these local Floer cohomology groups in terms of the cohomology of (a neighborhood of) the relevant orbit set $\mathcal{F}^{loc}_\v$, which is homeomorphic to the unit torus bundle $S_I^{log}$ defined in \eqref{eq:sIlog}. 
The local Floer cohomology groups in $PD_I$ are canonically isomorphic to those in $M$ (because in both cases we only consider Floer differentials that lie in small isolating sets that are identified, along with choices of Hamiltonians and almost complex structures on these sets, under \eqref{eq: Uloc}). As a consequence, an essentially immediate Corollary of Lemma \ref{lem: localmorsePDI} is Corollary \ref{lem: Pozniack}, which gives the identification of the $E_1$ page of our spectral sequence with log cohomology.

We begin by transporting all of Hamiltonians (as well as isolating sets) to the projective bundle $PD_I$. To do this, consider the function $h_p^{\ell}(\rho_{1,loc},\cdots, \rho_{k,loc})$ where
$h_p^{\ell}$ are the perturbed versions of the functions $h^{\ell}$ defined in \S \ref{sect: PSSiso1} (see \eqref{eq:hpellthing}). This is a well-defined function on all of
$PD_I$ and following previous conventions we label its orbit sets by
\begin{equation}\label{localorbitset}
    \mathcal{F}^{loc}_\v 
\end{equation}
and fix isolating sets 
\begin{equation}\label{localisolatingset}
    U^{loc}_\v \supset \mathcal{F}^{loc}_\v.
\end{equation}
As in the earlier construction of the Hamiltonians $H_p^\ell$, we may perturb
$h_p^{\ell}$ to a function $H_{loc}^{\ell}: PD_I \to \mathbb{R}$ of the
following form (c.f. \eqref{eq: Hpert2}):
\begin{align} 
    \label{eq: Hpertloc} H^{\ell}_{loc}=  \sum_ \v \delta_\v \rho_\v h_\v + h_p^\ell +\delta_\D \rho_\D h_\D
\end{align} 
where each of the $h_\v$ are supported in $U^{loc}_\v$ and $h_\D$ is supported in a neighborhood of the $D_{i,0}$ and $D_{i,I}$ divisors. However, we no longer require these functions to be Morse except in $U^{loc}_{\v_0}$. The key properties of our perturbation are:
 \begin{itemize} 
    \item The Hamiltonian vector-field $X_{H^{\ell}_{loc}}$ should preserve our divisor $\D$. Moreover, within $\UIloc$, the Hamiltonian $H_{loc}^{\ell}$ should coincide with the Hamiltonian $H_p^{\ell}$ under the identification \eqref{eq: Uloc};

\item $H_{loc}^{\ell}=0$ over the locus where $\rho_{i,loc} \geq \frac{3\hatkappa}{4\pi}$ for all $i \in I$ and $\rho_j \geq  \frac{\kappa_j}{2\pi} (\epsilon_\ell^{p})^2$ for all $j \notin I$. 

    \end{itemize} 

All of the Hamiltonian orbits $x_0$
have canonical capping discs $F(x_0)$: In the case of constant orbits, we take these orbits to be constant capping discs
and in the case of orbits which wind non-trivially around the divisor, we take these orbits to be multiply covered fiber discs passing through the divisors $D_{i,0}$ oriented so that the boundary of $F(x_0)$ is
$x_0$.

\begin{rem} 
    While these Hamiltonians may have some degenerate orbits, only orbits inside of $U^{loc}_{\v_0}$ will be inputs or outputs of our Floer theoretic operations and we will be able to use Lemma \ref{lem: generalHam}, as in the proof of Lemma \ref{lem: MBcompactness}, to rule out undesirable breakings.
\end{rem}

We note for later use that several of our other constructions from \S 2 and \S \ref{sect: PSSiso1} have obvious analogues in the projective bundle, for example we let the hypersuface $\hat{\Sigma}_\ell^{loc}, \Sigma_{\ell}^{loc}$ denotes the local analogues of the hypersufaces $\hat{\Sigma}_{\epsilon_{\ell}^{p}},\Sigma_{\ell}^{p}$ in $X$. In the projective bundle $PD_I$, we work relative to the normal crossings
divisor 
\begin{equation}
    \D = \bigcup_{i \in I} (D_{i,0} \cup D_{i, \infty}) \cup \bigcup_{i \notin I} D_{i,I}.
\end{equation}
We will need
to adapt the Floer theoretic structures introduced in the previous sections to
this local setting.  
\begin{defn} 
Let $J_c(PD_I, \mathbf{D})$ denote the space of complex structures which are 
\begin{itemize} 
    \item split (in the sense of Definition \ref{defn: split}) outside of $\UIloc$ (recall the definition of this region in \eqref{eq:UIlocdef}); and
    \item split in some neighborhood 
        \begin{equation}\label{eq:vlocelldefn}
            V_{loc,\ell} 
        \end{equation}
        of the divisors which is disjoint from all Hamiltonian orbits. 
\end{itemize} 
\end{defn}

\begin{defn} \label{def:Floerindef}
    Let $J_{F}^{loc}(PD_I, \mathbf{D}) \subset C^\infty(S^1,J_c( PD_I,
    \mathbf{D}))$ denote the space of $S^1$-dependent almost complex structures in
    $J_c(PD_I, \mathbf{D})$ that are time independent outside of $\UIloc$.  
\end{defn}

We will assume our complex structures are $\omega_{loc}$ compatible in $U^{loc}_{\v_{0}}$. We can consider Floer's equation for these Hamiltonians: 
\begin{align} \label{eq:FloerRinvloc} 
    \left\{ 
        \begin{aligned}
             & u \colon \mathbb{R} \times S^1 \to PD_I, \\
             &  \partial_s u + J_{F}^{loc}(\partial_tu-X_{H_{loc}^{\ell}})=0.
    \end{aligned}
    \right.
\end{align}
subject to the usual asymptotic constraints. For any two orbits 
$x_0, x_1 \in U^{loc}_{\v_{0}}$, let $\tilde{\mathcal{M}}(X^{loc}_I; x_0,x_1)$ denote the
moduli space of these solutions which satisfy 
\begin{align} 
    u \cdot \D=0 
\end{align} 
and let $\mathcal{M}(X^{loc}_I; x_0,x_1)$ denote this moduli space modulo $\mathbb{R}$-translations. 

\begin{lem} 
    \label{lem: locdiffmorsebott} 
    Fix $\delta_{\v_{0}}$ sufficiently small. For any two orbits $ x_0 \in
    U^{loc}_{\v_{0}}$ and $ x_1 \in U^{loc}_{\v_{0}}$; we have that any $u \in
    \mathcal{M}(X^{loc}_I; x_0,x_1)$ lies in $U^{loc}_{\v_{0}}$. 
\end{lem}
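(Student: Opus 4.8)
The plan is to follow the same pattern used to prove Lemma \ref{lem: money} for the low-energy PSS moduli spaces, but now for local Floer trajectories in the projective bundle $PD_I$: the key point is to leverage the split structure of $J_F^{loc}$ outside $\UIloc$ and apply the monotonicity lemma (Lemma \ref{lem:monotonicity}) iteratively over strata. First I would record the energy estimate: a trajectory $u \in \mathcal{M}(X^{loc}_I; x_0, x_1)$ connecting two orbits $x_0, x_1 \in U^{loc}_{\v_0}$ has topological energy $E_{top}(u) = \mathcal{A}_\ell(x_1) - \mathcal{A}_\ell(x_0)$, which (since both orbits are in the single isolating set $U^{loc}_{\v_0}$, where all orbits have $\mathcal{A}_\ell$-values within $d_{\v_0}$ of each other by the action-separation construction of \S\ref{sect: PSSiso1}) is bounded by some quantity that goes to zero as $\epsilon_\ell^p \to 0$ and $\delta_{\v_0} \to 0$; concretely it is of order $\frac{1}{2}w(\v_0)(\epsilon_\ell^p)^2$ up to the small corrections controlled by $d_{\v_0}$.

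\textbf{Key steps.} The argument proceeds by contradiction: suppose there is a point $p$ in the image of $u$ that lies outside $\UIloc$. By positivity of intersection with $\D = \bigcup_{i \in I}(D_{i,0} \cup D_{i,\infty}) \cup \bigcup_{i \notin I} D_{i,I}$ and the condition $u \cdot \D = 0$, the curve (away from its asymptotes, which are in $U^{loc}_{\v_0}$ hence disjoint from $\D$) never touches $\D$. Outside $\UIloc$, the complex structure $J_F^{loc}$ is split, so the projection $\pi_{(J)}$ to the appropriate factor of the projective bundle near whichever stratum $p$ is close to is holomorphic, and the projected curve is genuinely $J$-holomorphic in the compact manifold obtained by compactifying. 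Then, exactly as in the proof of Lemma \ref{lem: money}, one runs an induction over the strata $D_{K}$ adjacent to $D_I$ (ordered by depth and lexicographically), using at each stage that the curve has already been confined away from the deeper strata by the inductive hypothesis, so that $\pi_K(u)$ is a holomorphic curve in a region whose boundary is metrically bounded away from the point $\pi_K(p)$. Monotonicity (Lemma \ref{lem:monotonicity}) then forces the energy of $\pi_K(u)$ to exceed $C_K r_K^2$ for a radius $r_K$ independent of $\epsilon_\ell^p$, and the metric comparison \eqref{eq:divisormetricbound} relates this back to a lower bound on $E(u)$; taking $\epsilon_\ell^p$ sufficiently small makes this incompatible with the energy estimate above, a contradiction. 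One new wrinkle compared to Lemma \ref{lem: money} is the presence of the $\infty$-divisors $D_{i,\infty}$: I would handle the region near $D_{I,\infty}$ separately using the third bullet point in the construction of $H_{loc}^\ell$ (no orbits where $\frac{\hatkappa}{2\pi} \leq \rho_{i,loc} \leq \frac{3\hatkappa}{4\pi}$) together with Lemma \ref{lem:sums} and the auxiliary symplectic form $\omega'_{loc}$ to rule out curves escaping toward $D_{i,\infty}$ on homological/energy grounds.

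\textbf{Main obstacle.} The main technical point, as in Lemma \ref{lem: money}, is the bookkeeping in the iterative monotonicity argument: one must carefully track the nested neighborhoods $U_{K,(\epsilon'_\ell/2^{n-|K|-1})^2}$, verify that at each inductive stage the relevant piece of the curve has boundary only on loci already excluded, and check that all the small constants ($\epsilon_\ell^p$, $\delta_\D$, $\delta_{\v'}$, $\delta_{\v_0}$, $\eta$) can be chosen in the right order so that every monotonicity lower bound beats the (uniformly small) total energy. The genuinely \emph{new} obstacle relative to Lemma \ref{lem: money} is controlling the behavior near the $\infty$-divisors $D_{i,\infty}$, which did not appear in $X$; here one cannot directly apply monotonicity on $D_{i,\infty}$ itself without first knowing the curve cannot accumulate there, so I would invoke the no-orbit-in-the-collar condition on $H_{loc}^\ell$ to show any sub-curve reaching $\rho_{i,loc} \leq \frac{3\hatkappa}{4\pi}$ must in fact be confined by the same monotonicity-plus-energy mechanism (using $\omega'_{loc}$ via Lemma \ref{lem:sums} to handle homology classes with nonzero $\omega_{loc}(A_1)$), and then the conclusion $u \subset U^{loc}_{\v_0}$ follows once the curve is confined to $\UIloc$, since there $H_{loc}^\ell = H_p^\ell$ and Lemma \ref{lem: localcompactness} / Lemma \ref{lem: localFloersep} apply verbatim.
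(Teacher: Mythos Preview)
Your proposal would likely work but is dramatically more complicated than necessary. The paper's proof is two sentences: the symplectic form $\omega_{loc}$ is \emph{exact} on $X^{loc}_I$ (since $[\omega_{loc}]$ is Poincar\'e dual to a combination of the divisors $D_{i,0}$, $D_{i,\infty}$, $D_{i,I}$, all of which have been removed), so the topological energy of any $u \in \mathcal{M}(X^{loc}_I; x_0, x_1)$ equals the action difference of $x_0$ and $x_1$. Since both orbits arise from a $\delta_{\v_0}$-small Morse perturbation of the single Morse--Bott family $\mathcal{F}^{loc}_{\v_0}$, this action difference is $O(\delta_{\v_0})$. The confinement then follows from the standard Gromov compactness argument underlying local Floer cohomology (exactly as in Lemma~\ref{lem: localcompactness}): if curves escaped $U^{loc}_{\v_0}$ for a sequence $\delta_{\v_0,n} \to 0$, they would limit to a nontrivial Floer curve for the unperturbed Hamiltonian with zero energy, a contradiction.

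A couple of smaller issues in your write-up: your energy bound ``of order $\tfrac{1}{2}w(\v_0)(\epsilon_\ell^p)^2$'' is the PSS energy, not the energy of a Floer cylinder between two orbits in the \emph{same} isolating set; the correct bound is $O(\delta_{\v_0})$, which is what makes the confinement depend only on shrinking $\delta_{\v_0}$ (as in the Lemma's statement) rather than on $\epsilon_\ell^p$. Also, the action-separation of \S\ref{sect: PSSiso1} separates actions \emph{between different} multiplicity vectors $\v$; it says nothing about orbits within a single $U^{loc}_{\v_0}$, whose actions are instead close precisely because they come from a small perturbation of a single Morse--Bott family. Your monotonicity-over-strata machinery and the $\omega'_{loc}$ argument for the $\infty$-divisors are the tools used for the genuinely harder Lemmas~\ref{lem: PSSloccompact} and~\ref{lem: SSPloccompact}, where curves have large energy and can in principle interact with $\D$; none of that is needed here.
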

\begin{proof} 
    The symplectic form is exact on $X^{loc}_I$ and so action considerations
    imply that the energy of such a solution must be very small. The argument
    then follows from the same Gromov compactness argument as Lemma  \ref{lem:
    localcompactness}.
\end{proof} 

For $\delta_{\v_{0}}$ sufficiently small, we therefore may define the Floer cochain groups 
\begin{align} \label{eq:localchaincomplexPD}
    CF^*(U^{loc}_{\v_{0}} \subset PD_I, H_{loc}^{\ell}) := \bigoplus_{x \in U_{\v_{0}}} |\mathfrak{o}_x|    
\end{align}

The differential is defined by counting solutions in (a compactification of)
$\mathcal{M}(X^{loc}_I;x_0,x_1)$ as we have seen previously. Denote the resulting
cohomology groups by $HF^*(U^{loc}_{\v_{0}} \subset PD_I, H_{loc}^{\ell})$.  In
fact, for $\delta_{\v_{0}}$ sufficiently small, 
we have a bijection between $\mathcal{M}(X^{loc}_I;x_0,x_1)$ and
$\mathcal{M}(x_0,x_1)$ where in the latter case the orbits and Floer
trajectories are thought of as lying in $X$.
Hence we have a canonical isomorphism: 
\begin{align} \label{eq:locglob} 
    \begin{aligned}  
        HF^*(U^{loc}_{\v_{0}} \subset PD_I, H_{loc}^{\ell}) \cong HF^*(U_{\v_{0}} \subset M,H_{p}^{\ell}) \\ \cong HF^*(X \subset M,H_{p}^{\ell})_{\v_{0}} 
    \end{aligned} 
\end{align}

We now turn to analyzing the cohomology groups $HF^*(U^{loc}_{\v_{0}} \subset
PD_I, H_{loc}^{\ell}).$   For this, we need to recall one more construction.
Consider the Hamiltonian on $PD_I$ 
\begin{align} \label{eq:shiftedhamII}  
    \hat{h}_p^{\ell}(\rho_{1,loc},\cdots, \rho_{\k,loc})=h_p^{\ell}(x) + K(x) 
\end{align}
where $K(x)$ is the Hamiltonian inducing the circle action considered in Equation
\eqref{eq:shiftedham}. For $0<\delta' \leq 1$ and $\delta_{\v_{0}}$ sufficiently
small (depending on $\delta'$) consider the family of functions 
\begin{align}
    \hat{H}_{\delta'}^{\ell}:= \delta' \hat{h}_p^{\ell}(\rho_{1,loc},\cdots,
    \rho_{\k,loc})+\delta_{\v_{0}} \rho_{\v_{0}} \hat{h}_I. 
\end{align}  
In $X^{loc}_I$, this defines a Morse function whose critical points lie entirely
in $U^{loc}_{\v_{0}}$. Set $\hat{H}_{loc}^{\ell}=\hat{H}_{1}^{\ell}$. As
discussed in Section \ref{subsection:spectral}, the spinning construction
induces a bijection of Hamiltonian orbits and Floer trajectories.  To prove
that it induces an isomorphism of local Floer cohomologies, we also need that
this bijection preserves orientation theories. The differences in determinant
lines is measured by a local system defined on the interior of
$\mathcal{F}^{loc}_{\v_{0}}$, where our Hamiltonians orbits are Morse-Bott.  In the
present situation, this local system is trivial
(see Lemma 8.7 of \cite{KoertKwon} or Section 8 of \cite{diogolisi}, 
for a careful discussion of closely related
situations) giving rise to an isomorphism 
\begin{align} 
    HF^*(U^{loc}_{\v_{0}} \subset PD_I , H_{loc}^{\ell}) \cong HF^*(U^{loc}_{\v_{0}} \subset
    PD_I,\hat{H}_{loc}^{\ell}) 
\end{align}  
The family of Hamiltonians $\hat{H}_{\delta'}^{\ell}$ form an isolated deformation i.e. in the neighborhood $U^{loc}_{\v_{0}}$, $\mathcal{F}^{loc}_{\v_{0}}$, are the only family of orbits for all $\delta'$. Local Floer cohomology is invariant under such isolated deformations \cite[Lemma 2.5]{McLeanlocalFloer} or \cite[(LF1) of Section 3.2]{Ginzburg} and so as a consequence we have that 
\begin{align} 
    HF^*(U^{loc}_{\v_{0}} \subset PD_I ,\hat{H}_{loc}^{\ell}) \cong HF^*(U^{loc}_{\v_{0}} \subset PD_I ,
    \hat{H}_{\delta'}^{\ell}).
\end{align}  
for any $\delta'$. For $\delta'$ sufficiently small $\hat{H}_{\delta'}^{\ell}$ is a $C^2$ small
Morse function and we claim that: 

\begin{lem} \label{lem: localmorsePDI} 
    We have an isomorphism 
    \begin{align} 
        HF^*(U^{loc}_{\v_{0}} \subset PD_I, \hat{H}_{\delta'}^{\ell}) \cong H^*(U^{loc}_{\v_{0}}) 
    \end{align} 
\end{lem}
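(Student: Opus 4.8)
\textbf{Proof plan for Lemma \ref{lem: localmorsePDI}.}

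The plan is to identify $HF^*(U^{loc}_{\v_{0}} \subset PD_I, \hat{H}_{\delta'}^{\ell})$ with ordinary Morse cohomology of $U^{loc}_{\v_{0}}$ via the classical Floer--Morse comparison for $C^2$-small autonomous Hamiltonians, and then to invoke the standard identification of Morse cohomology of an open manifold (computed with a function that is outward-pointing along the relevant boundary strata) with its singular cohomology. First I would record that, for $\delta'$ sufficiently small, $\hat{H}_{\delta'}^{\ell}$ is a $C^2$-small Morse function whose critical points all lie in $U^{loc}_{\v_{0}}$ (as noted in the text, via the spinning construction the Morse-Bott family $\mathcal{F}^{loc}_{\v_{0}}$ becomes a nondegenerate critical manifold of $\hat{h}_I$), and that the local Floer complex $CF^*(U^{loc}_{\v_{0}} \subset PD_I, \hat{H}_{\delta'}^{\ell})$ has underlying module $\bigoplus_{x} |\mathfrak{o}_x|$ with $x$ ranging over these critical points. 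The key input is that for $\delta'$ small, any local Floer trajectory between two such critical points is confined to $U^{loc}_{\v_{0}}$ (this is Lemma \ref{lem: locdiffmorsebott} together with the isolated-deformation invariance already established), and that the $C^2$-small condition forces such trajectories to be $t$-independent, hence to coincide with negative gradient flowlines of $\hat{H}_{\delta'}^{\ell}|_{U^{loc}_{\v_{0}}}$ for a suitable metric.

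Concretely, the steps would be: (1) cite the standard theorem (Floer, Hofer--Salamon, or the exposition in \cite{AbSch1,Salamon:2006ys}) that for a $C^2$-small time-independent Hamiltonian on a closed region, with a generic compatible metric/almost complex structure, the Floer differential agrees with the Morse differential and the orientation conventions match up, so that $CF^*(U^{loc}_{\v_{0}} \subset PD_I, \hat{H}_{\delta'}^{\ell}) \cong CM^*(U^{loc}_{\v_{0}}, \hat{H}_{\delta'}^{\ell})$ as complexes; (2) observe that $\hat{H}_{\delta'}^{\ell}$ restricted to $U^{loc}_{\v_{0}}$ is (a small perturbation of) a function that is outward-pointing along the strata $\rho_i = \rho^c_{\v_0,i}$, $i \notin I$, exactly as the Morse functions $\hat h_I$ were chosen in \S \ref{subsection:spectral}, so the unstable manifolds all lie in the interior and Morse cohomology of $U^{loc}_{\v_{0}}$ with this function computes $H^*(U^{loc}_{\v_{0}})$ (equivalently $H^*(S_I^{log})$ up to the identification of Lemma \ref{lem: SihomFv}); (3) assemble the chain of isomorphisms through $\hat{H}_{loc}^{\ell}$ and $H_{loc}^{\ell}$ already displayed in the text. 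One must also note that the confinement in step (1) needs the energy of such trajectories to be bounded by the $C^0$-smallness of $\hat H_{\delta'}^\ell$, which in turn is arranged by shrinking $\delta'$ and $\delta_{\v_0}$; the degenerate behavior of $H_{loc}^\ell$ outside $U^{loc}_{\v_0}$ is irrelevant here because we are computing a purely local invariant, as flagged in the Remark preceding the Lemma.

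The main obstacle I anticipate is bookkeeping rather than conceptual: one must make sure the metric used for the Morse-theoretic comparison is compatible with the (split) almost complex structures that are forced on $PD_I$ near the divisors and orbits, and that the outward-pointing condition for $\hat{H}_{\delta'}^{\ell}$ near the corner strata $\rho_i = \rho^c_{\v_0,i}$ is genuinely preserved under the spinning construction and the passage from $H_{loc}^{\ell}$ to $\hat H_{\delta'}^\ell$ (the spinning adds the term $K(x) = \sum_i \pi v_{i,0}\rho_i$, which is itself a function of the $\rho_i$ and so does not disturb the behavior transverse to those strata). Once that compatibility is in place the statement is the standard Floer $=$ Morse principle for $C^2$-small Hamiltonians on a compact manifold-with-corners neighborhood, and no further genuinely new analysis is required.
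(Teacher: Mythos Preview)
Your approach is correct and close in spirit to the paper's: both rest on the Hofer--Salamon principle that Floer trajectories for a $C^2$-small autonomous Hamiltonian are $t$-independent and hence Morse. The main difference is the ambient space in which this principle is invoked. The paper first passes from local Floer cohomology to Floer cohomology on the full noncompact manifold $X^{loc}_I$, introducing a family $\hat H_q = \tfrac{1}{q}\hat H_{\delta'}^\ell$ and proving a Subclaim that trajectories for $\hat H_q$ cannot break along orbits in $\D$ (via monotonicity near $D_{i,\infty}$ and action/area arguments near $D_{i,0}$, $D_{i,I}$); once this compactness is established, Hofer--Salamon and a continuation argument in $q$ give $HF^*(X^{loc}_I,\hat H_q)\cong H^*(X^{loc}_I)\cong H^*(U^{loc}_{\v_0})$. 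You instead stay entirely in the local Floer framework on $U^{loc}_{\v_0}$, using the confinement of local trajectories to run Hofer--Salamon directly there; this bypasses the divisorial-breaking analysis altogether.

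Two small points of care in your version. First, the confinement you need is for trajectories of $\hat H_{\delta'}^\ell$, whereas Lemma~\ref{lem: locdiffmorsebott} is literally stated for $H^\ell_{loc}$; the correct citation is the general local Floer confinement (Lemma~\ref{lem: localcompactness}, i.e.\ McLean's Lemma~2.3), which applies to any sufficiently small perturbation of a Hamiltonian with isolated fixed-point set, and hence to $\hat H_{\delta'}^\ell$ for small $\delta',\delta_{\v_0}$. Second, the identification of the resulting Morse complex with $H^*(U^{loc}_{\v_0})$ is really the statement that the perturbing Morse function $\hat h_I$ on the corner manifold $\mathcal F^{loc}_{\v_0}\cong S_I^{log}$ is outward-pointing (as arranged in \S\ref{subsection:spectral}), together with the homotopy equivalence $U^{loc}_{\v_0}\simeq \mathcal F^{loc}_{\v_0}$; the behavior of $\hat H_{\delta'}^\ell$ at $\partial U^{loc}_{\v_0}$ in the transverse $\rho_i$-directions ($i\in I$) is not ``outward-pointing'' in the Morse sense but is irrelevant once trajectories are confined. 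With these adjustments your route is a legitimate simplification; the paper's detour through $X^{loc}_I$ has the advantage of matching more directly the standard literature formulation of Hofer--Salamon on a manifold rather than on an isolating block.
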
 
\begin{proof} 
    For any $q \geq 1$, we set  $\hat{H}_q= \frac{1}{q} \hat{H}_{\delta'}^{\ell}$. We
define $HF^*(X^{loc}_I,\hat{H}_q)$ to be the Floer cohomology of $\hat{H}_q$ in
$X^{loc}_I$ (defining this may require further shrinking $\delta_{\v_{0}}$). In order for this to be well-defined, we need the usual compactness
results to hold, which in our case reduces to showing that: \vskip 5 pt 

\emph{Subclaim:} Trajectories cannot break along Hamiltonian orbits in $\D$:  \vskip 5 pt

\emph{Proof of subclaim:} As in several other proofs below, it is useful to split the proof into two cases: \vskip 5 pt 

\emph{No breaking along $D_{i,\infty}$:} Because $\delta_{\v_{0}}$ is taken arbitrarily small, the energy of the Floer trajectories is 
arbitrarily small as well (independently of $q$). Then a simpler version of the monotonicity argument from Lemma  \ref{lem:
money} (note that the arguments there apply equally well to the space $PD_I$) implies that the trajectories cannot escape $U_{I,\ell}^{loc}$ and hence cannot approach $D_{i,\infty}.$ \vskip 5 pt 

\emph{No breaking along $D_{i,0}$ or $D_{i,I}$:} This follows by action considerations as
in Lemma \ref{lem: anyJ}. As in the previous case, by shrinking
$\delta_{\v_{0}}$ we can suppose the energy of the Floer trajectories is
arbitrarily small. Suppose that a trajectory from $x_1$
to $x_0$ breaks along some $y$ in $D_{i,0}$ or $D_{i,I}$. Using a fiber capping
for $x_1$, $-F(x_1)$, we can cap this trajectory to obtain a capping of $y$
which differs from the fiber capping $-F(y)$ by some element $A.$ As argued in the previous case, the trajectories themselves stay away from $D_{i,\infty}$, and hence we have that  $A \cdot
D_{i, \infty}=0$. So, by examining \eqref{eq:cohomologyclasslocal}, we see
that either $\omega_{loc}(A)=0$ or $|\omega_{loc}(A)| \geq \omega_{min}$ (as defined in \S \ref{subsection:controlPSS}). The rest
proceeds as in the proof of Lemma \ref{lem: anyJ}. 
 \emph{End of proof of subclaim.} \vskip 5 pt

It follows by definition that when $q=1$, 
\begin{align} 
    HF^*(U^{loc}_{\v_{0}} \subset PD_I, \hat{H}_{\delta'}^{\ell}) \cong HF^*(X^{loc}_I, \hat{H}_{\delta'}^{\ell}). 
\end{align} 

We aim to show that these coincide with the cohomologies $H^*(U^{loc}_{\v_{0}})
\cong H^*(X^{loc}_I)$ . To do this, we adapt a well-known argument of Hofer and
Salamon \cite[Lemma 7.1]{Hofer-Salamon} 
to show that for some $q \gg 0$ sufficiently large, all of the Floer
trajectories  are $t$-independent and coincide with Morse trajectories. We
explain briefly why their proof carries over to our non-compact setting. The
first step in their proof is a compactness argument. Given a sequence of curves
$u_q$ with $q \to \infty$ which are not $t$-independent, Hofer and Salamon
employ a compactness argument to show that
one may produce a sequence of curves $v_q$ converging to a possibly broken
$t$-independent solution $v_\infty$. This argument applies here for the same
reasons that Floer cohomology in $X^{loc}_I$ is well-defined i.e. by excluding undesired breakings along $\D$ as in the proof of the above.

In the next step of their proof, they observe that by comparing Fredholm
theories, rigid gradient flow lines are rigid as Floer trajectories and hence
the $v_q$ for large $q$ actually agree with $v_\infty$. From the construction
it follows that the $u_q$ are $t$-independent as well. In our situation, all
$t$-independent trajectories actually lie in $U^{loc}_{\v_{0}}$, where our
complex structure is taken $\omega$-compatible and $t$-independent solutions of
Floer's equation are (negative) gradient flow solutions. In this open subset,
the proof that the Fredholm theories coincide carries over without change. We
conclude that there is an isomorphism: 
\begin{align} 
    HF^*(X^{loc}_I,\hat{H}_q) \cong H^*(X^{loc}_I)   
\end{align} 
Finally, the usual continuation argument shows that the above cohomologies are
independent of $q$ and in particular that there is an isomorphism
\begin{align} 
    HF^*(X^{loc}_I, \hat{H}_{\delta'}^{\ell}) \cong HF^*(X^{loc}_I,\hat{H}_q) 
\end{align} 
\end{proof}

As a byproduct of our local analysis, we now complete the Morse-Bott
(additive) identification of the first page with log cohomology: 
\begin{cor}
    \label{lem: Pozniack} 
For $\delta_{\v_0}$ sufficiently small, there is an identification of cohomologies 
\begin{align} 
    \label{eq:Pzn} 
    HF^*(X \subset M,H_{p}^{\ell})_{\v_{0}} \cong  HF^*(U_{\v_{0}} \subset M,H_{p}^{\ell}) \cong \QH^*(M,\D)_{\v_{0}} 
\end{align} 
where the left-most group is the cohomology of \eqref{eq:CFv}, the middle group is defined just before Lemma \ref{lem: localFloersep} and the right-most group is defined in \eqref{eq:anothereqlogcohv}. 
\end{cor}

\begin{proof} 
    As an immediate consequence of Lemma \ref{lem: localmorsePDI} and
    \eqref{eq:locglob} we have that $$ HF^*(X \subset M,H_{p}^{\ell})_{\v_{0}}
    \cong HF^*(U_{\v_{0}} \subset M,H_{p}^{\ell}) \cong
    H^*(U^{loc}_{\v_{0}}).$$ Now $U^{loc}_{\v_{0}}$ deformation retracts onto
    $\mathcal{F}_{\v_0}^{loc}$, which by the analogue of Lemma
    \ref{lem: SihomFv}, is homeomorphic to $S_{I}^{log}$ (recall \eqref{eq:sIlog}), where $I$ is the support of $\v_0$. The cohomology of $S_{I}^{log}$ by
    definition agrees with $\QH^*(M,\D)_{\v_{0}}$.
\end{proof} 

\begin{rem} 
    A systematic description of the types of (smooth but possibly manifolds-with-corners)
    families of orbits which induce Morse-Bott spectral sequences, generalizing
    the situation considered in Corollary \ref{lem: Pozniack}, has been
    given by McLean \cite{McLeanInPreparation}.  
\end{rem}

\subsection{Local log PSS in the compactification}\label{sect: locPSS}
This sub-section describes versions of the local log $\PSS$ moduli spaces, $\mc{M}^{loc}_{PSS}(\v_0, c, x_0)$, in the projective bundle compactification (Definition \ref{def: higherloc}). By choosing our complex structures suitably, we have a confinement statement for these local log PSS solutions (Lemma \ref{lem:keymonoloc} which is analogous to Lemma \ref{lem: money}), allowing us to define a version of the local log PSS map, $\operatorname{PSS}_{loc,\v_0}$ (see \eqref{eq: PSSdeflocalv}). By construction, we have that under the canonical identification \eqref{eq:locglob}, $\operatorname{PSS}_{loc,\v_0} =\PSSlog^{\v_0}$ (c.f. Corollary \ref{cor:twoPSS}).

\begin{defn} 
Let $J_{S}^{loc}(PD_I, \mathbf{D}) \subset C^\infty(S,J_c( PD_I, \mathbf{D}))$ denote the space of almost complex structures which  
\begin{itemize} 
    \item are surface-independent in a neighborhood of $z=\infty$, as well as in $(X_I^{loc} \setminus \UIloc) \cup V_{loc,\ell}$ (recall the definition of $X_I^{loc}$ in \eqref{eq:XIloc}, $\UIloc$ in \eqref{eq:UIlocdef}, and $V_{loc,\ell}$ in \eqref{eq:vlocelldefn}); 
        and 
                
    \item agree with some $J_F^{loc}$ along the cylindrical end. 
\end{itemize}  
\end{defn} 

For what follows recall the domain $S = \C P^1 \setminus\{0\}$ with
distinguished marked point $z_0 = \infty$, equipped with a cylindrical end and
a distinguished tangent vector to $z_0$ described in \S
\ref{section:lowenergydef}. Recall also that $\v_0 \in \Z_{\geq 0}^k$ is a
fixed non-zero multiplicity vector and $I$ is its support.
\begin{defn} 
Fix $J_S^{loc} \in \mathcal{J}_S(PD_I,\mathbf{D})$. For every orbit $x_0$ of in $U^{loc}_{\v_{0}}$ define a moduli space 
\begin{equation}\label{localmodulispace2}
\mc{M}^{loc}_{PSS}(\v_0, x_0)
\end{equation}
to be the space of maps
\[
u: S \ra PD_I
\]
satisfying Floer's equation
\begin{equation} \label{eq:PSSeq2}
    (du - X_{H_{loc}^\ell} \otimes \beta)^{0,1} = 0
\end{equation}
with incidence and tangency conditions
\begin{align}
    &u(z)\notin \D\textrm{ for }z \neq z_{0};\\
    &u(z_{0})\textrm{ intersects $D_{i,0}$ with multiplicity $(\v_0)_i$ for $i \in I$};\\
    &u(z_0)\textrm{ does not intersect $D_{i,\infty}$ or $D_{j,I}$ for any $i \in I$ or $j\notin I$},
\end{align}
and asymptotics
\begin{align}\label{eq:limitingconditionlocalPSS}
    \lim_{s \ra -\infty} u(\epsilon(s,t)) &= x_0.
\end{align}

 \end{defn} 
As before the incidence and tangency conditions give us an enhanced evaluation
map \eqref{eq:enhancedevaluationpss} $\Evzo: \mc{M}^{loc}_{PSS}(\v_0, x_0) \to
\mathring{S}_I$.  
 \begin{defn} \label{def: higherloc} 
     Let $c \in \critfI$ be a critical point of $f_I$.
    The moduli space $\mc{M}^{loc}_{PSS}(\v_0, c, x_0)$ is defined to be the fiber product 
   \begin{align} 
        \label{eq:evalloc} \mc{M}^{loc}_{PSS}(\v_0, x_0) \times _{\Evzo} W^s(f_I,c).
    \end{align}
 \end{defn}  \vskip 5 pt
As we have seen previously, for generic $J_S^{loc}$ these moduli spaces are 
manifolds of dimension \eqref{vdimlogpss}. It is possible to prove a compactness result for these moduli spaces for 
generic $J_S^{loc}$ (provided the area of the fiber spheres are taken sufficiently large). However, for our purposes this is unnecessary: it 
will be sufficient to work with $J_S^{loc}$ which are close to a split (over all of $PD_I$) $J_0$, for which the following confinement statement holds.

\begin{lem} \label{lem:keymonoloc} 
    Fix $x_0 \in U^{loc}_{\v_{0}}$. 
    For $\Sigma_{\ell}^{loc}$ sufficiently $C^0$ close to $\hat{\Sigma}_{\ell}^{loc}$, $\epsilon_{\ell}^p$ and
    all $\delta_{\v}$, $\delta_\D$ sufficiently small, and $J_S^{loc}$ sufficiently close
    to a split $J_0$, 
    any $\operatorname{PSS}_{loc,\v_0}$ solution $u \in \mc{M}^{loc}_{PSS}(\v_0,c,x_0)$ must lie in 
    $U^{loc}_{I,(\epsilon'_\ell)^2}$. \qed
 \end{lem}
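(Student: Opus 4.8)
\textbf{Proof proposal for Lemma \ref{lem:keymonoloc}.} The plan is to run the identical inductive confinement argument given in the proof of Lemma \ref{lem: money}, replacing the ambient manifold $M$ and its stratification by the projective bundle $PD_I$ with its normal crossings divisor $\bigcup_{i \in I}(D_{i,0} \cup D_{i,\infty}) \cup \bigcup_{i \notin I} D_{i,I}$, and replacing the target strata $D_K$ by the corresponding strata $D_{K,0}$, $D_{K,I}$, and $D_{i,\infty}$ of $PD_I$. First I would record the input facts we need: (i) by Lemma \ref{lem: PSSloccompact} the moduli space $\mc{M}^{loc}_{PSS}(\v_0,c,x_0)$ is compact with the stated boundary structure, so there is no escape via bubbling (the energy, $\approx \tfrac12 w_p(\v_0)(\epsilon_\ell^p)^2$, is much smaller than the minimal sphere energy); (ii) by Lemma \ref{lem: locdiffmorsebott} (with $\delta_{\v_0}$ small), any Floer cylinder component appearing in a broken limit stays inside $U^{loc}_{\v_0}$; and (iii) our complex structures are split outside $\UIloc$, so projections $\pi_{(J)}: PD_I \to PD_I^{(J)}$ and $\pi_K$ to divisorial strata send pieces of a Floer solution (lying where the Hamiltonian perturbation is turned off, by the analogue of the limiting-Hamiltonian $H^\ell_\infty$ construction) to honest holomorphic curves. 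As in Lemma \ref{lem: money}, it therefore suffices to confine a limiting ``degenerate PSS'' curve $\mathring u_\infty$, intersected with each relevant tubular neighborhood.

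The inductive step proceeds exactly as before: assume for contradiction that the curve has a point $p$ in $U^{loc}_{I}(\epsilon'_\ell) \setminus U^{loc}_{I,\epsilon'_\ell}$ (using the identification \eqref{eq: Uloc} and the regions $\Uiloc$, $\UIloc$ from \eqref{eq:UIlocdef}); cover this region by the sets $\mathring U_K$ indexed by subsets $K$ with $D_{K} \cap D_I \ne \emptyset$ and $I \not\subset K$, ordered as in Lemma \ref{lem: money}; in the base case $K = \emptyset$ the curve is holomorphic in a region whose boundary is metrically separated from a point $p$ in the shrunken region, so the monotonicity Lemma \ref{lem:monotonicity} forces energy $\ge C_\emptyset r^2$, contradicting smallness of the PSS energy once $\epsilon^p_\ell$ is small; and in the inductive step the inductive hypothesis removes boundary along the already-confined strata, the projection $\pi_K(u_K)$ is holomorphic in $D_K \setminus \cup_{i \notin K} U_{i}$ with boundary on the outer boundary of that region, and monotonicity together with the metric comparison \eqref{eq:divisormetricbound} (now for the strata of $PD_I$) again yields a lower energy bound $C_K r_K^2$ exceeding $\tfrac12 w_p(\v_0)(\epsilon^p_\ell)^2$. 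The one genuinely new feature of $PD_I$ relative to $M$ is the presence of the $\infty$-sections $D_{i,\infty}$, so I would additionally note that a PSS curve with its incidence condition at $z_0$ along the $0$-divisors $D_{i,0}$ cannot reach near $D_{i,\infty}$: the region between $\rho_{i,loc} = \tfrac{\hatkappa}{2\pi}$ and $\tfrac{3\hatkappa}{4\pi}$ contains no orbits and the Hamiltonian vanishes there, while the auxiliary symplectic form $\omega'_{loc}$ and Lemma \ref{lem:sums} (applied as in Lemma \ref{lem: PSSloccompact}) exclude any homology class with $\omega_{loc}(A_1)\ne 0$ — so the same monotonicity/energy argument confines the curve away from $D_{i,\infty}$ exactly as it confines it away from the $D_{i,I}$.

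The main obstacle, as in Lemma \ref{lem: money}, is purely bookkeeping: organizing the nested chain of shrinkings of $\epsilon^p_\ell$ (and the dependent constants $\delta_\D, \delta_{\v'}, \delta_{\v_0}$) so that each inductive confinement step is valid simultaneously, and checking that the split structure of $J^{loc}_S$ near $z = \infty$ and on $(X^{loc}_I \setminus \UIloc) \cup V_{loc,\ell}$ really does make the projected curves holomorphic in the regions where we invoke monotonicity. Since $PD_I$ is a fiber bundle over a compact base with compact $(\C P^1)^{|I|}$ fibers, the injectivity radii and monotonicity constants $C_K$, $r_K$ exist uniformly, so no new analytic input beyond that already used in Lemma \ref{lem: money} is required; the statement follows by transcription.
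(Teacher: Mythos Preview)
Your proposal is correct and takes exactly the paper's approach: the paper's entire proof is the single line ``proven exactly as in Lemma \ref{lem: money}'', and what you have written is a faithful expansion of how that transcription goes, including the appropriate local inputs (Lemmas \ref{lem: PSSloccompact} and \ref{lem: locdiffmorsebott}) in place of their global counterparts. Your observation that the $\infty$-sections $D_{i,\infty}$ are the only genuinely new strata to confine away from, and that the same monotonicity/energy mechanism handles them, is the one detail the paper leaves implicit.
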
 
\begin{proof}The Lemma is proven exactly as in Lemma \ref{lem: money} (\emph{mutatis mutandis}). \end{proof} 

  See Figure \ref{localPSSprojectivebundle} for a schematic picture of the (thimble component of) a local log PSS curve in a projective bundle.
  \begin{figure}[h] 
      \caption{A picture of the (Riemann surface part of) a local log PSS solution in a projective bundle about $D_1$ (using the same toric picture as in Figure \ref{projectivebundle}), which by Lemma \ref{lem:keymonoloc} must stay in the tubular neighborhood $U^{loc}_{1(,\epsilon'_{\ell})}$.
 \label{localPSSprojectivebundle}}
    \centering
    \includegraphics[scale=1.5]{./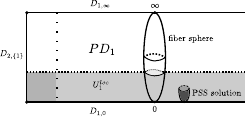}
\end{figure}

Choose a generic $J_S^{loc} \in J_{S}^{loc}(PD_I, \mathbf{D})$ which is sufficiently close to some split $J_0$ so that Lemma \ref{lem:keymonoloc} holds.
It follows that for any $\alpha_c \in |\mathfrak{o}_c| \subset CM^{*}(\SIo)$, we can 
define
$\operatorname{PSS}_{loc,\v_0} (\alpha_c) \in CF^{*}(U^{loc}_{\v_{0}} \subset PD_I, H_{loc}^{\ell})$ 
by
\begin{equation} 
 \operatorname{PSS}_{loc,\v_0} (\alpha_c) = \sum_{x_0, \operatorname{vdim}(\mc{M}^{loc}_{PSS}(\v_0,c,x_0))=0} \sum_{u \in \mc{M}^{loc}_{PSS}(\v_0, c,x_0)} \mu_u (\alpha_c)
\end{equation}
where $\mu_u: \mathfrak{o}_c \ra \mathfrak{o}_{x_0}$ is the induced isomorphism induced on orientation
lines we have seen previously. 
Lemma \ref{lem:keymonoloc} implies that (linearly extending the above
definition to all of $CM^{*}(\SIo)$) $\operatorname{PSS}_{loc,\v_0}$ defines a cochain map, giving
rise to a well-defined cohomological map (with the same name, by standard abuse
of notation):
\begin{align} \label{eq: PSSdeflocalv}
    \operatorname{PSS}_{loc,\v_0}: H^*(\SIo) t^{\v_0} \to  HF^{*}(U^{loc}_{\v_{0}} \subset PD_I, H_{loc}^{\ell}).
\end{align}

  In fact, it follows from the construction that:

\begin{cor} \label{cor:twoPSS} 
    Under the canonical identification \eqref{eq:locglob}, there is an equality of operations
    \begin{align} 
        \operatorname{PSS}_{loc,\v_0}=\PSSlog^{\v_0}.
    \end{align} where $\PSSlog^{\v_0}$ is defined in \eqref{eq:realPSSloc}. \qed 
\end{cor}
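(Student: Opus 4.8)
\textbf{Proof plan for Corollary \ref{cor:twoPSS}.}

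The plan is to compare the two operations $\operatorname{PSS}_{loc,\v_0}$ and $\PSSlog^{\v_0}$ directly on the chain level, showing that after identifying the relevant (local) Floer complexes via \eqref{eq:locglob}, the defining moduli spaces are literally in bijection. The starting point is Lemma \ref{lem: money}, which confines every $\PSSlog^{\v_0}$ solution $u \in \mc{M}(\v_0, c, x_0)$ (computed in $M$ with the Hamiltonian $H_p^\ell$) to the tube $U_{I,(\epsilon'_\ell)^2}$, and Lemma \ref{lem:keymonoloc}, which confines every $\operatorname{PSS}_{loc,\v_0}$ solution $u \in \mc{M}^{loc}_{PSS}(\v_0, c, x_0)$ (computed in $PD_I$ with $H_{loc}^\ell$) to $U^{loc}_{I,\epsilon'_\ell}$. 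Since, by construction in \S \ref{sect: locHam}, the symplectic identification \eqref{eq: Uloc} $\UIloc \cong U_{I,(\epsilon'_\ell)^2}$ intertwines the Hamiltonians $H_{loc}^\ell$ and $H_p^\ell$ (first bullet of the key properties listed there), intertwines the radial coordinates $\rho_{i,loc}$ and $\rho_i$, and matches the divisors $D_{i,0}$ with the $D_i$ (for $i \in I$), both PSS equations \eqref{eq:PSSeq} and \eqref{eq:PSSeq2} become the \emph{same} equation under this identification once we restrict attention to curves lying in the common tube. First I would make precise that the almost complex structures may be taken compatibly: choose the generic $J_S \in \mathcal{J}_S(M,\D)$ used for $\PSSlog^{\v_0}$ to be split outside a neighborhood of $U_{\v_0}$ and $\omega_\ell$-compatible inside (as arranged in \S \ref{subsection:controlPSS}), and let $J_S^{loc} \in \mathcal{J}_S^{loc}(PD_I,\mathbf D)$ be the structure which agrees with $J_S$ on $\UIloc \cong U_{I,(\epsilon'_\ell)^2}$ and is split elsewhere; both confinement lemmas apply to these choices simultaneously.

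The core of the argument is then a bijection of moduli spaces. Given $u \in \mc{M}(\v_0, c, x_0)$, Lemma \ref{lem: money} says $\operatorname{im}(u) \subset U_{I,(\epsilon'_\ell)^2} \cong \UIloc \subset PD_I$, so $u$ transported to $PD_I$ solves \eqref{eq:PSSeq2} for $(H_{loc}^\ell, J_S^{loc})$; the incidence condition $u(z_0)$ intersects $D_i$ with multiplicity $(\v_0)_i$ for $i \in I$ becomes intersection with $D_{i,0}$ with the same multiplicity, and the conditions that $u(z_0)$ avoid $D_{i,\infty}$ (for $i \in I$) and $D_{j,I}$ (for $j \notin I$) hold automatically since $u$ stays in the tube away from all those divisors; finally the enhanced evaluation $\Evzo(u)$ lands in the same $\mathring{S}_I$ (the normal jet data at $z_0$ is computed in the same local coordinates), so $u \in \mc{M}^{loc}_{PSS}(\v_0,c,x_0)$. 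Conversely, Lemma \ref{lem:keymonoloc} confines any local solution to $U^{loc}_{I,\epsilon'_\ell} \subset \UIloc$, so the same identification runs backwards. This gives a bijection $\mc{M}(\v_0,c,x_0) \cong \mc{M}^{loc}_{PSS}(\v_0,c,x_0)$ for all $c, x_0$, and since the output orbit $x_0 \in U_{\v_0}$ corresponds to $x_0 \in U^{loc}_{\v_0}$ under \eqref{eq:locglob} (which is itself built from the confinement of Floer trajectories, Lemma \ref{lem: localFloersep} and its local analogue Lemma \ref{lem: locdiffmorsebott}, to the same tube), the bijection is compatible with the Morse-side fiber products over $W^s(f_I,c)$ as well. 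One must also check the bijection respects orientations: the determinant lines $|\mathfrak o_c|$, $|\mathfrak o_{x_0}|$ and the orientation isomorphisms $\mu_u$ are computed from local Cauchy-Riemann operators that are literally identified under the tube identification, hence $\mu_u$ and $\mu_u^{loc}$ agree. Summing signed counts then yields $\operatorname{PSS}_{loc,\v_0} = \PSSlog^{\v_0}$ on cohomology.

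The main obstacle is not any single hard estimate --- all the analytic work is already packaged in Lemmas \ref{lem: money} and \ref{lem:keymonoloc} --- but rather the bookkeeping needed to verify that the various small constants ($\epsilon_\ell^p$, the $\delta_\v$, the $C^0$-closeness of $\Sigma_\ell^p$ and $\Sigma_\ell^{loc}$, the closeness of $J_S$ and $J_S^{loc}$ to a split $J_0$) can be chosen so that \emph{both} confinement lemmas hold at once, with a common choice of tubular radius $\epsilon'_\ell$ and common auxiliary data. Since both lemmas are proved by the same monotonicity-plus-Gromov-compactness mechanism and both merely require the relevant constants to be ``sufficiently small'', one first fixes $\epsilon'_\ell$, then shrinks the remaining parameters to satisfy the union of finitely many smallness conditions; this is routine but should be stated carefully. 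A secondary, genuinely minor point is continuity of the enhanced evaluation map under the identification, which is immediate because the real-oriented blow-up $S_I^{log}$ near $D_I$ and near $D_{I,0}$ are canonically identified via \eqref{eq: diffofF}. With these observations in place the corollary follows, as indicated, from assembling Lemmas \ref{lem: money}, \ref{lem:keymonoloc}, \eqref{eq: Uloc}, and \eqref{eq:locglob}.
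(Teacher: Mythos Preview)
Your proposal is correct and is precisely the intended argument: the paper itself gives no proof beyond the \qed mark, treating the corollary as immediate from the two confinement Lemmas \ref{lem: money} and \ref{lem:keymonoloc} together with the identification \eqref{eq: Uloc} of tubes and \eqref{eq:locglob} of local Floer groups. You have simply unpacked this, correctly noting that once both sets of PSS solutions are confined to the common tube where the Hamiltonians, complex structures, divisors, and enhanced evaluation maps all agree, the moduli spaces and orientation data are literally identified.
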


\subsection{Local log SSP} \label{section: inverseloc}

In this section, we finally construct a moduli space (and operations) which enable us to invert the
local log PSS maps (on one side). The moduli space is defined in Lemma \ref{def:SSPmodulispace} and its compactness properties are studied in Lemma \ref{lem: SSPloccompact}. These compactness properties enable us to use these moduli spaces to define a \emph{local SSP map}, $\operatorname{SSP}_{loc,\v_0}: HF^*(U^{loc}_{\v_{0}} \subset PD_I, H_{loc}^{\ell}) \to H^*(\SIo) t^{\v_0}$  (see   \eqref{eq:localSSPmapv2}). The fact that this map is a one-sided inverse to the local PSS map is proven in Lemma \ref{lem: degenerations}.

 We begin by choosing all of our data (including some generic $J_S^{loc}$ sufficiently close to some split $J_0$) so that Lemma \ref{lem:keymonoloc} holds.  For this section, it will be convenient to take our slope $\lambda_{\ell}$  very large in order to easily rule out certain breakings in Lemma \ref{lem: SSPloccompact}.  To this end, we assume that the slope has been chosen sufficiently large so that \begin{align} \label{eq:randomCeeq} \lambda_{\ell} > (2C_e)^2w_p(\v_0) \end{align}  (where $C_e = \frac{\epsilon_{\ell}'}{\epsilon_{\ell}^p}$ is as before).
 We may assume for simplicity that the set of constant orbits $\mathcal{F}_{\mathbf{0}}$  of $H_{loc}^{\ell}$ 
is a connected submanifold-with-corners in $PD_I$ (containing the locus where $\rho_{i,loc} \geq \frac{3\hatkappa}{4\pi}$ for all $i \in I$ and $\rho_j \geq  \frac{\kappa_j}{2\pi} (\epsilon_\ell^{p})^2$ for all $j \notin I$) and which intersects $D_{I,\infty}$ in a closed
submanifold-with-corners. Let
$\mathring{\mathcal{F}}_\mathbf{0}$ denote its interior and consider Floer
trajectories $\mathring{u}$ in $X_I^{loc}$
asymptotic as $s\to-\infty$ to an orbit $x_0 \in
\mathring{\mathcal{F}}_\mathbf{0}.$
As Floer curves are $J$-holomorphic curves in a neighborhood $x_0$ (for some time independent $J$, recall Definition \ref{def:Floerindef}),  Gromov's
removal of singularities theorem implies that by setting
\begin{align}
    u(0):=x_0
\end{align}
any such trajectory $\mathring{u}$ extends to a smooth map $u$ from the
``thimble domain" $S^{\vee} :=\mathbb{R} \times S^1 \cup \lbrace 0 \rbrace
\cong \C P^1 \setminus \{\infty\}$ \footnote{Here as in \S \ref{section:lowenergydef} we are viewing $\R \times S^1$ as embedded in $\C P^1$ via \eqref{cylend}.} which is $J$-holomorphic
in some neighborhood of the compactification point $0 \in \C P^1 \setminus \{\infty\}$. 
\begin{defn}  \label{defn: basicSSP}
For $x_1 \in U_{\v_{0}}$, we let $\widetilde{\mathcal{M}}^{loc}_{SSP}(
x_1)$ denote the moduli space of maps $u: S^{\vee} \to
PD_I$ with $u(0) \in \mathring{\mathcal{F}}_\mathbf{0}$ such that $u$ solves Floer's equation \eqref{eq:FloerRinvloc} on the complement of 
$z=0$ (and hence extends holomorphically to zero) with asymptotic condition
\begin{align} \label{eq: SSPprecond}
    &\lim_{s \to +\infty} u(s, -) = x_1. 
\end{align} 
We define $\mathcal{M}^{loc}_{SSP}(x_1)$ to be the quotient of this moduli space by $\mathbb{R}$ translations of the domain cylinder. 
\end{defn}

We again place incidence conditions on this moduli space.

\begin{defn} \label{def:SSPmodulispace}
    We let $\mathcal{M}^{loc}_{SSP}(\v_0, x_1)$ denote the moduli space of maps $u \in \mathcal{M}^{loc}_{SSP}(x_1)$
such that $u(0) \in \mathring{\mathcal{F}}_\mathbf{0} \cap D_{I,\infty}$ and such that
\begin{align}
    & u(z)\notin \D\textrm{ for }z \neq 0;\\
    &u(0)\textrm{ intersects $D_{i,\infty}$ with multiplicity $(\v_0)_i$ for $i \in I$};\\
    & u(0)\textrm{ does not intersect $D_{i,0}$ or $D_{j,I}$ for any $i \in I$ or $j\notin I$}.
\end{align} 
\end{defn}
Maps $u: S^{\vee} \to PD_I$ satisfying the conditions of Definition \ref{defn: basicSSP}
fit into a natural Fredholm theory and are cut out regularly for generic
choices of $J_t$.\footnote{By restricting to the Floer solutions in the complement of $z=0$, one can also place these maps into the more general framework of Morse-Bott Fredholm theory as explained in \S 6.1 of \cite{diogolisi}.} 
As in the case of $\PSS_{log}$ moduli spaces, the moduli spaces
$\mathcal{M}^{loc}_{SSP}(\v_0, x_1)$ are obtained by placing incidence
conditions with $\D$ and transversality for these moduli spaces can be handled
by similar methods as those explained in \cite[\S 4.4]{GP1}. It follows
that for generic choices of almost complex structure, $\mathcal{M}^{loc}_{SSP}(\v_0, x_1)$ is a manifold of
dimension
\begin{align}
    \operatorname{vdim}(\mathcal{M}^{loc}_{SSP}(\v_0, x_1))=2n-1 + 2\sum_i v_i(1-a_i) - \deg(x_1). 
\end{align}

Let  $i_{\v_{0}}: \mathbb{R}^{+} \mapsto (\mathbb{R}^{+})^{I}$ denote the
embedding $a \to a^{\v_{0}} := (a^{(\v_0)_i})_{i \in I}$ 
for any $a \in \mathbb{R}^{+}$.  The group $\mathbb{R}^{+}$ acts by scaling (by
different factors) in each fiber coordinate $\mathring{ND_I}$ 
via the embedding $i_{\v_{0}}$, and we denote by
$\mathring{ND_I}/\mathbb{R}^{+}$ the resulting smooth quotient.
Set $G:=(\mathbb{R}^{+})^{I}/\mathbb{R}^{+}$. The natural projection map
$\mathring{ND_I}/\mathbb{R}^{+} \to \SIo$ is a homotopy equivalence which we
use to identify the cohomology of the two spaces. More precisely, there is a
$G$-equivariant isomorphism
\begin{align} \label{NDIquotientsplit}
    \mathring{ND_I}/\mathbb{R}^{+} \cong G \times \SIo. 
\end{align} 
and in particular there are projection maps $\pi_G:
\mathring{ND_I}/\mathbb{R}^{+} \to G$ and $\pi_{S_I}:
\mathring{ND_I}/\mathbb{R}^{+} \to \SIo$.

Let $f_G: G \to \mathbb{R}$ be an outward pointing (at infinity) Morse function with a single
critical point at the identity element of $G$.  On
$\mathring{ND_I}/\mathbb{R}^{+}$, we may consider the ``split'' (with respect to \eqref{NDIquotientsplit}) Morse function $\hat{f}_I$
which given by 
\begin{align}
    \hat{f}_I := \pi_G^*(f_G) + \pi_{S_{I}}^*(f_I),
\end{align} 
(where $f_I$ is the Morse function we have previously fixed on $\mathring{S}_I$ in \S \ref{subsec:logcoh})
and similarly consider the product metric $\hat{g}_I = \pi_G^*(g_G) + \pi_{S_I}^* g_I$
where $g_I$ is the metric on $S_I$ fixed previously and $g_G$ is any metric
making $(f_G, g_G)$ a Morse-Smale pair.
We let $\hat{f}^{pert}_{I}$ be another (currently unspecified) Morse function
that is a small compactly supported perturbation (which is compactly supported
in fiber directions and supported outside of the neighborhoods $U_j \cap D_I$
for $j \notin I$) of $\hat{f}_I$. 

Denote the marked point at $z=0$ on $S^{\vee}$ by $z_1$. We equip this with an asymptotic marker that points in the positive real direction. As with the $\PSSlog$ map, we have an enhanced evaluation 
map defined at this marked point:
\begin{align}
    \operatorname{Ev}_{z_{1}}^{L}:\mc{M}^{loc}_{SSP}(\v_0, x_1) \to \mathring{ND}_{I,\infty}/\mathbb{R}^{+}.
\end{align}
which is a lift of the enhanced evaluation map defined previously in \eqref{eq:enhancedevaluationpss} given by only quotienting the tuple of all $\vi$ normal jets of the map by the diagonal $\mathbb{R}^{+}$ action (instead of real-oriented projectivizing). We can compose this with the natural diffeomorphism 
\begin{equation}\label{taumap}
    \tau: \mathring{ND}_{I,\infty}/\mathbb{R}^{+} \stackrel{\cong}{\to} \mathring{ND_{I}}/\mathbb{R}^{+}
\end{equation}
which in each fiber copy of $\mathbb{C}^*$ is given by the map $z \mapsto z^{-1}$ to obtain: 
\begin{align}\label{hatenhancedevaluation}
 \widehat{\operatorname{Ev}}_{z_{1}}:= \tau  \circ \operatorname{Ev}_{z_{1}}^{L}:\mc{M}^{loc}_{SSP}(\v_0, x_1) \to \mathring{ND_{I}}/\mathbb{R}^{+}.
\end{align}
Let $b$ be any critical point of the function $\hat{f}^{pert}_I$. By
construction we know that $b \in ND_I \setminus \cup_{j \notin I}
\pi_I^{-1}(U_j \cap D_I)$. Using $b$ and the map
\eqref{hatenhancedevaluation}, we 
form the moduli space:
\begin{align}
    \mc{M}^{loc}_{SSP}(\v_0, x_1,b) := \mathcal{M}^{loc}_{SSP}(\v_0,x_1) \times_ {\widehat{\operatorname{Ev}}_{z_{1}}} W^u(\hat{f}^{pert}_I,b),
\end{align}
where the unstable manifold $W^u$ is taken with respect to the metric
$\hat{g}_I$ defined above. See Figure \ref{sspschematic} a picture of the above definition. 
\begin{figure}[h] 
  \caption{The domain of the moduli space $\mc{M}^{loc}_{SSP}(\v_0, x_1,b)$ along with the incidence, matching, asymptotics, and PDE satisfied at various places.\label{sspschematic}}
    \centering
    \includegraphics[scale=1.0]{./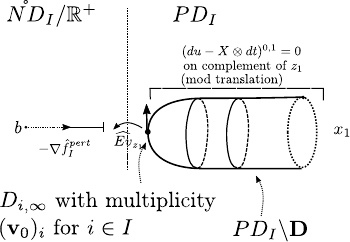}
\end{figure}
For generic choices of unstable manifold and complex
structure (making the constituents of the fiber product and the fiber product
itself transversely cut out), this is a manifold of dimension
\begin{align}
    \operatorname{vdim}(\mc{M}^{loc}_{SSP}(\v_0, x_1,b)) &=  \operatorname{vdim}(\mathcal{M}_{SSP}^{loc}(\v_0, x_1))-(2n-1-\deg(b))\\
    &=\deg(b) + 2\sum_i v_i(1-a_i) - \deg(x_1). 
\end{align} 
We observe that any point in $W^u(\hat{f}^{pert}_I,b)$ projects to a point inside of $D_{I,\infty} \setminus \cup_{j\notin I} U_j \cap D_{I,\infty}$ and in particular lies far away from $\partial \mathcal{F}_\mathbf{0}:= \mathcal{F}_\mathbf{0}\setminus \mathring{\mathcal{F}}_\mathbf{0}.$ We fix a small neighborhood $U_\infty \subset \mathring{\mathcal{F}}_\mathbf{0}$ which contains $D_{I,\infty} \setminus \cup_{j\notin I} U_j \cap D_{I,\infty}$.

In order to use these moduli spaces to define operations, we must as usual demonstrate that they have nice compactifications. To this end, we make a simple observation which will be used in our arguments below: 

\begin{lem} \label{lem:spherebubblefiber} Let $C_r$ be an arbitrary constant bigger than zero. Then if $\epsilon_\ell'$ is sufficiently small, and $J \in J_c(PD_I,\D)$ is an almost complex structure sufficiently close to a split $J_0$, any $J$-holomorphic sphere of area less than $C_r(\epsilon_\ell')^2$ represents a fiber homology class. \end{lem} 
\begin{proof}  Given a $J_0$-holomorphic sphere $u: \mathbb{C}P^1 \to PD_I$ of area less than $C_r(\epsilon_\ell')^2$, we can project it along the map $\pi_I: PD_I \to D_I$ to a holomorphic sphere in the base $D_I$. If $\epsilon_\ell'$ is sufficiently small, the symplectic area of this projection is smaller than $\omega_{min}$ (c.f. \S \ref{subsection:controlPSS}), and hence this projection is constant. Thus, $u$ lies in a fiber $(\mathbb{C}P^1)^{|I|} \to PD_I$.  The result now follows from Gromov compactness.  \end{proof}

The following is the key compactness result for $\operatorname{SSP}$ moduli spaces that will enable us to define the local log $\operatorname{SSP}$ map in \eqref{eq:localSSPmapv2}. 

\begin{lem}\label{lem: SSPloccompact} 
Fix a split complex structure $J_0 \in J_c(PD_I,\D)$. Suppose as usual that $\epsilon_{\ell}^{p}$ (and hence $\epsilon_\ell'$) and all parameters $\delta_\v$, $\delta_\D$ are sufficiently small and $\Sigma_\ell^{p}$ is sufficiently $C^0$-close to $\hat{\Sigma}_{\epsilon_\ell^{p}}$. 
Then there exists a generic $J_t \in J_F^{loc}(PD_I,\D)$ which is very close to $J_0$ such that: 
\begin{itemize} 
    \item If $\operatorname{vdim}(\mc{M}^{loc}_{SSP}(\v_0, x_1,b))$
        the moduli spaces $\mc{M}^{loc}_{SSP}(\v_0,b,x_1)$ are compact.
    
    \item If $\operatorname{vdim}(\mc{M}^{loc}_{SSP}(\v_0, x_1,b)) = 1$, then
        $\mc{M}^{loc}_{SSP}(\v_0,b,x_1)$
    admits a compactification (in the sense of Gromov-Floer convergence) 
       to a (compact 1-dimensional) manifold-with-boundary
       $\overline{\mc{M}}^{loc}_{SSP}(\v_0,b,x_1)$  such that
        $\partial \overline{\mc{M}}^{loc}_{SSP}(\v_0,b,x_1):= \partial_F \bigsqcup \partial_M$ where 
    \begin{align}
        \partial_F := \bigsqcup_{x' \in U^{loc}_{\v_{0}},\deg(x')-\deg(x_1)=1} \mc{M}(x',x_1) \times \mc{M}^{loc}_{SSP}(\v_0,b, x') \\ 
        \partial_M:= \bigsqcup_{b',\deg(b)-\deg(b')=1}\mc{M}^{loc}_{SSP}(\v_0,b', x' ) \times \mc{M}(b,b').
    \end{align}  
\end{itemize}
\end{lem}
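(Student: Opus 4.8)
The plan is to follow the now-familiar template established by Lemmas \ref{lem:compactness}, \ref{lem: anyJ}, and especially \ref{lem: PSSloccompact}, adapting it to the SSP (rather than PSS) domain $S^\vee$. First I would observe that the energy of any $u \in \mc{M}^{loc}_{SSP}(\v_0, x_1, b)$ is controlled: since the symplectic form $\omega'_{loc}$ (or $\omega_{loc}$) is exact on the complement of the $\infty$-divisors, and since the fiber capping disc now passes through $D_{i,\infty}$ rather than $D_{i,0}$, an action computation of the same type as in Lemma \ref{lem:energypss} (adapted to the thimble domain, with the marked point at $z=0$ carrying multiplicity $\v_0$ along $D_{i,\infty}$) shows the topological energy is approximately $\tfrac12 w_p(\v_0)(\epsilon^p_\ell)^2$ up to arbitrarily small error. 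In particular, for $\epsilon^p_\ell$ sufficiently small this is less than the minimal energy $\omega_{min}$ of a non-constant $J$-holomorphic sphere, so \textbf{no sphere bubbling} can occur in the Gromov--Floer compactification.

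Next I would rule out breaking along divisorial orbits. As in the proof of Lemma \ref{lem: PSSloccompact}, suppose a sequence in $\mc{M}^{loc}_{SSP}(\v_0, x_1)$ limits to a broken configuration with a component asymptotic (as $s \to +\infty$) to some $y \in \mathcal{X}(\D; H^\ell_{loc})$. Using the decomposition \eqref{eq: hurdec} $A = A_0 + A_1$ of the relevant homology class, the two cases are: (i) $y$ lies in $D_{i,0}$ or $D_{j,I}$, handled by the same action argument as in Lemma \ref{lem: anyJ} (the fiber capping differs from $-F(y)$ by a class $A$ with $\omega_{loc}(A) \geq 0$, forcing negative energy on the remaining piece); and (ii) $y$ lies in $D_{i,\infty}$, where one uses Lemma \ref{lem:sums} to conclude that if $\omega_{loc}(A_1) \neq 0$ then $|\omega'_{loc}(A)|$ is much larger than the available energy, a contradiction, while if $\omega_{loc}(A_1) = 0$ one uses positivity of intersection together with the hypothesis \eqref{eq:randomCeeq}, $\lambda_\ell > (2C_e)^2 w_p(\v_0)$, to exclude the winding around $D_{i,\infty}$. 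This last input is the new ingredient specific to the SSP side: because the domain now \emph{escapes through} $D_{\infty}$, one must prevent the thimble from wrapping extra times around the $\infty$-divisors, and the enlarged slope is what makes fiber spheres too expensive.

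I would also need to address breaking along the marked point $z=0$: a limiting configuration could a priori break off a sphere component through $D_{I,\infty}$ carrying part of the tangency multiplicity $\v_0$, but the energy bound again forbids any non-constant such component, so the only contributions to $\partial_M$ come from Morse-type breaking of the $\hat f^{pert}_I$-flowline at $b$ (landing in $U_\infty \subset \mathring{\mathcal{F}}_\mathbf{0}$, hence away from $\partial \mathcal{F}_\mathbf{0}$, so no corner strata interfere), and the only contributions to $\partial_F$ come from Floer-trajectory breaking at the positive end, which by Lemma \ref{lem: localFloersep} stays inside $U^{loc}_{\v_0}$. Once sphere bubbling and $\D$-breaking are excluded, the remaining configurations satisfy positivity of intersection with $\D$ and total intersection number zero away from $z=0$, so they do not touch $\D$; standard transversality (\cite[\S 4.4]{GP1}) and gluing then identify the codimension-$0$ boundary as claimed, giving compactness when $\operatorname{vdim} = 0$ and the stated boundary decomposition when $\operatorname{vdim} = 1$.

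The main obstacle will be the divisorial-breaking analysis at $D_{i,\infty}$ in case (ii) above: unlike the PSS situation, here one cannot simply invoke positivity of intersection to kill winding, since the thimble is \emph{designed} to approach $D_{I,\infty}$ at its compactification point. One has to combine the homology decomposition \eqref{eq: hurdec}, the two symplectic forms $\omega_{loc}$ and $\omega'_{loc}$ via Lemma \ref{lem:sums}, the projection argument onto $PD_I^{(JK)}$ (to separate the fiber variables on which $H^\ell_{loc}$ genuinely depends from the rest), and the slope hypothesis \eqref{eq:randomCeeq} — carefully bookkeeping which of $D_{i,0}$, $D_{i,\infty}$, $D_{j,I}$ each break could occur on. This is essentially a more delicate re-run of the proof of Lemma \ref{lem: PSSloccompact}, with the roles of $D_{i,0}$ and $D_{i,\infty}$ interchanged and the additional constraint coming from the enlarged slope; everything else is routine.
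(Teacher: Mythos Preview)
Your energy estimate is wrong, and this undermines the whole strategy. An SSP solution has its marked point on $D_{I,\infty}$ and its asymptotic orbit in $U^{loc}_{\v_0}$ near the $D_{i,0}$ divisors, so it traverses essentially the entire fiber of $PD_I$. The paper computes the topological energy as approximately $\tfrac{1}{2}w_p(\v_0)\bigl((2\epsilon'_\ell)^2-(\epsilon^p_\ell)^2\bigr)$, which is of order $(\epsilon'_\ell)^2$ and does \emph{not} become small as $\epsilon^p_\ell\to 0$. In particular fiber spheres (which have area $\sim(\epsilon'_\ell)^2$) are not excluded by a crude ``energy $<\omega_{min}$'' bound, so your blanket no-bubbling argument fails. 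You also have the role of the slope hypothesis \eqref{eq:randomCeeq} backwards: it is used to rule out breaking at $D_{i,0}$ (where $H^\ell_{loc}\approx \tfrac{1}{2}\lambda_\ell(\epsilon^p_\ell)^2 > 2w_p(\v_0)(\epsilon'_\ell)^2$ forces negative energy), not at $D_{i,\infty}$.

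Because the energy is not small, the paper does not follow the Lemma~\ref{lem: PSSloccompact} template. Instead it runs a degeneration argument: take sequences $\delta_{\v,n}\to 0$ and $J_{t,n}\to J_0$ split, suppose breaking along some divisorial orbit occurs for all $n$, and diagonalize to produce a limiting SSP solution $u_0$ for the degenerate Hamiltonian $h^\ell_{loc}$ and the genuinely split $J_0$. For this $u_0$ one can project to $D_I$ and use monotonicity (as in Lemma~\ref{lem: money}) to show the projection is constant, so $u_0$ lies in a single fiber. Then $u_0$ cannot limit on $D_{i,\infty}$ by the fixed intersection number $(\v_0)_i$, and cannot limit on $D_{i,0}$ by the slope condition \eqref{eq:randomCeeq}; hence $u_0$ converges to an orbit $z\in\mathcal{F}^{loc}_{\v_0}$. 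A final rescaling argument (comparing $E_{top}(u_0)$ with $\lim_n E_{top}(u_{m_n,n})$) produces a zero-energy Floer trajectory that nonetheless exits $U_{\v_0}$, the desired contradiction. Once divisorial breaking is excluded for large $n$, positivity of intersection and the action-separation argument finish the job exactly as you describe.
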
 
\begin{proof}  
    The energy of a solution $u \in \mc{M}^{loc}_{SSP}(\v_0, x_1,b)$ is arbitrarily close to
    $\frac{1}{2}w_p(\v_0)((2\epsilon'_\ell)^2- (\epsilon_{\ell}^p)^2)$. We will consider sequences $u_m \in \mc{M}^{loc}_{SSP}(\v_0,b',x_0)$ ($m \in \mathbb{N}$) which, as in standard proofs of Gromov compactness in the $\mathbb{R}$-invariant setting, we rescale so that $s=0$ is the first $s$ value which exits $U_\infty$. No sphere bubbles (or more precisely Floer breaking where Hamiltonian is zero) can occur in $D_{I,\infty}$ because such a
    sphere bubble would have too high energy in view of Lemma \ref{lem:spherebubblefiber}. 
    Because the evaluation $Ev_{z_{1}}^{L}$ is bounded away from the zero
    divisors of $ND_I$ (i.e., the leading order jets going into the definition
    of $Ev_{z_1}^{L}$ are constrained to lie along $W^u(\hat{f}_I^{pert},b)$ which
    is compact and away from zero or infinity),  it follows that the
    intersection multiplicity at $z_1$ with $D_{i,\infty}$ of any limit is still
    $(\v_{0})_i$. 

 We choose a sequence of parameters $\delta_{\v,n},\delta_{\D,n}$ from \eqref{eq: Hpertloc} depending on $n \in \mathbb{N}$ and which tend to 0 as $n \to \infty.$
 Let $H_{n}^{\ell}$ denote the corresponding functions which are perturbed according to the same equation (\eqref{eq: Hpertloc}) and which
 converge to $h_{loc}^{\ell}$ as $n \to \infty$.
 Also, choose a sequence of $J_{t,n}$ converging to $J_0$ with $J_{t,n}$ 
 generic (meaning achieves transversality) for all elements of
 $\mc{M}^{loc}_{SSP}(\v_0,x_0)$ and for all Floer trajectories (both defined
 with respect to the functions $H_n^{\ell}$) appearing in the statement of this
 Lemma. As before, by appealing to usual Gromov compactness arguments, the key
 claim that verifies the Lemma is the exclusion of various ``undesired''
 breakings from any limiting stable curve, 
 namely: \vskip 5 pt 

\emph{Subclaim:} 
For all $n$ sufficiently large there are no trajectories breaking along orbits of $H_n^{\ell}$ in
$D_{i,\infty}$, $D_{j,I}$, $D_{i,0}$. More precisely, fixing such $n$ sufficiently large, there are no sequences
$u_{m,n} \in \mc{M}^{loc}_{SSP}(\v_0,x_0)$ rescaled as above which converge (modulo sphere
bubbling at finitely many possible points) in $C^{\infty}_{loc}$ to solutions
$u_{\infty,n}$ which asymptotically limit to any orbit $y$ in $D_{i,\infty}$ $D_{i,0}$ or
$D_{j,I}$, in the generalized sense that there exists a sequence of $s_k \to
\infty$ with $u_{\infty,n}(s_k,-) \to y$.\footnote{Recall that these orbits $y$ may be degenerate; compare the discussion just below Lemma \ref{lem: generalHam}.} 

\emph{Proof of subclaim:} Our argument will work to exclude breakings along
each ``bad'' orbit set above one at a time for all $n$ sufficiently large;
since there are only finitely many such orbit sets, the desired result will
follow.  
To begin, 
suppose such a
sequence $u_{m,n}$ (as in the statement of
the subclaim) existed for every $n$, for an orbit $y$ in any of the divisors. Then by diagonalizing the sequence
$\{u_{m,n}\}$, we can find a collection of numbers $m_n$ and $s_n$ so that $u_{m_{n}, n}(s_n,-)$ lies in a
tubular neighborhood of of size $\epsilon'/n$ for $\epsilon'>0$ (with respect to the Riemannian metric determined by $J_0$ and $\omega_{loc}$) of
the relevant divisors. After passing to a subsequence, these converge in $C^{\infty}_{loc}$
(a priori modulo sphere bubbling at finitely many points) to an SSP solution
$u_0$ for the Hamiltonian $h_{loc}^{\ell}$ and with split complex structure
$J_0$. Let us now show that such a $u_0$ does not develop a limit (as before in the sense of Lemma \ref{lem: generalHam}) along any of the relevant divisors:

First, let us show such a $u_0$ stays away (i.e., does not develop a limit
along) $D_{j,I}$ for $j \notin I$.
We note by definition that $Ev_{z_1}^{L}(u_0)$ lies in the unstable locus
$W^u(\hat{f}^{pert}_I,b)$, hence projects to a point in $D_I\setminus \cup_j U_j
\cap D_I$. Meanwhile the function $h^{\ell}_{loc}$ is independent of $\rho_{j,loc}$
whenever $\rho_{j,loc} \geq \epsilon_\ell^{p}$, so it follows that (since the
projection map $PD_I \to D_I$ is $J_0$-holomorphic since $J_0$ is split),
the projection of $u_0$ is $J$-holomorphic
in $D_I \setminus \cup_j U_{j,\epsilon_\ell^{p}} \cap D_I$. If the projection
of $u_0$ is nonconstant, and did not stay away from $D_{j,I}$, we would obtain a holomorphic curve about $ev_{z_{1}}(u_0)$
with boundary along $\partial{U}_{j,\epsilon_{\ell}^{p}} \cap D_I$. 
Because the
$\omega_{D_{I}}$ energy of the projection is bounded by some $g_I \cdot
\frac{1}{2}w_p(\v_0)((2\epsilon'_\ell)^2- (\epsilon_{\ell}^p)^2)$, monotonicity (Lemma \ref{lem:monotonicity})
implies that such a curve would have to energy bounded below by a constant
which can be taken larger than the energy of $u$ and therefore $u_0$ (by
shrinking $\epsilon_{\ell}'$ as needed), so this is impossible. Thus, indeed $u_0$
stays away from $D_{j,I}$.

The above argument further implies that the projection of $u_0$ to $D_I$ is (after
applying removal of singularities) a closed holomorphic curve which, since it has energy less than
the minimal energy of a sphere in $D_I$ (by taking $\epsilon_{\ell}'$ small),
must be constant. Hence $u_0$ in fact lies in a fiber of $PD_I$.

Next, we observe that $u_0$ cannot have any other intersection points $z^*$ (apart from $z_1$) with any of the divisors $D_{i,0}$ or $D_{i,\infty}.$ This is because such an intersection would necessarily have some positive multiplicity $\operatorname{ord}(z^*)>0.$ The rescaling argument of (\cite[Lemma 4.9]{Tehrani},\cite[Proposition 7.3]{Ionel:2003kx}, see also \S \ref{subsec:topologicalpairs} below for a detailed review of these results) would then produce an attached configuration of sphere bubbles which intersects the same divisor with multiplicity $-\operatorname{ord}(z^*)>0$ which is impossible in a fiber of $PD_I$ (even if such a configuration were contained in the relevant divisor). 

This observation allows us to show that $u_0$ does not develop a limit along any $D_{i,\infty}$ (for $i \in I$). If it did, $u_0$ would wind positively around $D_{i,\infty}$ near $y$ because the function $h_\ell^{loc}$ is independent of the variables $\rho_{i,loc},\theta_{i,loc}$ (and thus locally projects holomorphically onto the $i$-th factor of $(\mathbb{C}P^1)^{|I|}$). For homology class reasons, this would imply that $u_0$ would need to have an  intersection point with $D_{i,0},$ which the previous paragraph showed to be impossible.

Now we argue that $u_0$ cannot develop a limit along $D_{i,0}$ for some $i \in I$; suppose it did along some $y$ in $D_{i,0}$. The
   class of $u_0$ must be the connect sum of the fiber capping disc $F(y)$ (described in \S \ref{sect: locHam}) and some other
   class $A(u_0) \in H_2((\mathbb{C}P^1)^{|I|})$.  The intersection multiplicities of the class $A(u_0)$ with $D_{i,\infty}$ 
imply that $\omega_{loc}(A(u_0))= w_p(\v_0)(\epsilon_\ell')^2$. As the energy of the fiber capping disc $F(y)$ itself (with its induced orientation) is nonpositive, we have that \begin{equation} \label{eq:intestSSP1} \int_{u_{0}} \omega_{loc} \leq w_p(\v_0)(\epsilon_\ell')^2. \end{equation} But along $D_{i,0}$,
we know from equations \eqref{eq:randomCeeq} and \eqref{eq:estimateD} (which
holds equally well in the projective bundle setting) that 
\begin{align}  \label{eq:needestSSP2} 
    H_{loc}^{\ell}>\frac{1}{2}\lambda_{\ell} (\epsilon_{\ell}^p)^2> 2w_p(\v_0)(\epsilon'_\ell)^2.
\end{align}  
Combining \eqref{eq:intestSSP1} and \eqref{eq:needestSSP2} implies that the energy of the solution $u_0$
\begin{align} \operatorname{E}_{top}(u_0) < w_p(\v_0)(\epsilon'_\ell)^2 -\frac{1}{2}\lambda_{\ell} (\epsilon_{\ell}^p)^2<0 \end{align}
would be negative and hence cannot
exist.
Given that $u_0$ cannot limit to any of the $D_{i,0}$, $D_{i,\infty}$ or
$D_{j,I}$, it follows that, because the function $h_{loc}^{\ell}$ is Morse-Bott
in the fibers, $u_0$ must converge (in fact exponentially
quickly to) to a unique orbit $z \in \mathcal{F}^{loc}_\v$. This implies two key facts.  The first is that because this $z$ lies in $\mathcal{F}^{loc}_\v$, 
we have by direct calculation that\footnote{In particular, no sphere bubbling can occur in the limiting process.} 
\begin{align} 
    E_{top}(u_0)=\operatorname{lim}_nE_{top}(u_{m_n,n}).
\end{align} 
 The second is that by convergence, there exists $s^*$ for which
$u_{0}$ does not escape $U_{\v_{0}}$ for $s\geq s^*$. Choose $n$ sufficiently
large so that $u_{m_{n},n}$ is close to $u_0$ for $s<s^*$ and so that
$u_{m_{n},n}$ lies in $U_{{\v_0}}$. Because our SSP solution $u_0$ does not
intersect $D_{i,0}$ and $U_{\v_{0}}$ lies a bounded distance
away from $D_{i,0}$, it follows that after possibly enlarging $n$ we can ensure that
$s_n>s^*$. In particular, $u_{m_{n},n}$ exits $U_{{\v_0}}$ at some parameter
$s'_n$ for $s'_n>s^*$. Then by considering the rescaling $u_{m_n,n}(s+s'_n)$, we
obtain a Floer trajectory that passes between an orbit $z'$ in
$\mathcal{F}^{loc}_\v$ and through a point on $\partial{U}_{{\v_0}}$. However, because $E_{top}(u_0)=\operatorname{lim}_nE_{top}(u_{m_n,n})$,  
this rescaled curve has
zero energy, hence must be constant in $s$, a contradiction.  \emph{End proof of subclaim.}

With this established, we see that for $n$ sufficiently
large, SSP solutions with respect to $H^{\ell}_{n}$ cannot develop any other intersections with the divisors $D_{i,0}$, $D_{i,\infty}$ or $D_{j,I}$ by positivity of intersection. This also implies that they cannot develop limits along
other orbit sets in $X_{I}^{loc}\setminus U_{\v_{0}}$ by the following argument: suppose that a broken SSP trajectory limits to some $y$ in $X_{I}^{loc}\setminus U_{\v_{0}}$. Then the energy of such a solution is up to small error  $((\epsilon_\ell')^2-1)w_p(\v_0) - \mathcal{A}_\ell(y)$. 
As our actions are
separated (see \S \ref{sect: PSSiso1}), this energy can be taken larger than the initial energy of our SSP solutions, hence such a broken trajectory does not exist. 
This concludes the proof of the Lemma.
\end{proof}
See Figure \ref{localSSPprojectivebundle} for a schematic picture of the (thimble component of) a local log SSP curve in a projective bundle.
  \begin{figure}[h] 
      \caption{A picture of the (Riemann surface part of) a local log SSP solution in a projective bundle about $D_1$ (using the same toric picture as in Figure \ref{projectivebundle}).
 \label{localSSPprojectivebundle}}
    \centering
    \includegraphics[scale=1.5]{./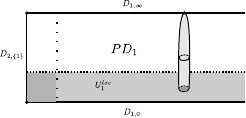}
\end{figure}

Choosing a $J_t$ as in the Lemma, for an orbit $x_1 \in U_{\v_{0}}$ and element
$\alpha_c \in |\mathfrak{o}_{x_1}|$, we define 
\begin{align}
    \operatorname{SSP}_{loc,\v_0}(\alpha_c):= \sum_{b,\operatorname{vdim}(\mc{M}^{loc}_{SSP}(\v_0,x_1, b))=0} \sum_{u \in \mc{M}^{loc}_{SSP}(\v_0,x_1, b)} \mu_u(\alpha_c)  t^{\v_{0}},
\end{align}
where as usual $\mu_u: \mathfrak{o}_{x_1} \to \mathfrak{o}_b$ is the
isomorphism of orientation lines induced by a rigid element of the moduli space
$u$. The Lemma implies that this prescription gives rise to a (finite, well-defined) count and moreover
a cochain map: 
\begin{align}
    CF^{*}(U^{loc}_{\v_{0}} \subset PD_I, H_{loc}^{\ell}) \to CM^*(\mathring{ND}_I/\mathbb{R},\hat{f}^{pert}_I) t^{\v_0}.
\end{align}
This induces a well-defined cohomological map, which we call the {\em local log SSP
map} with multiplicity $\v_0$ (we may sometimes refer to the cochain map
as local log SSP too):
\begin{align} \label{eq:localSSPmapv2}
    \operatorname{SSP}_{loc,\v_0}:  HF^*(U^{loc}_{\v_{0}} \subset PD_I, H_{loc}^{\ell})  \to H^*(\SIo) t^{\v_0} 
\end{align} 

We will need one final moduli space which is constructed again from the domain
$S = \mathbb{C} P^1 \backslash \{0\}$ used in the definition of local log PSS in \S \ref{sect: locPSS}, 
equipped with the
Floer data from that section.  
For the moduli space defined in Definition \ref{defn: interpolateme}, we assume that the complex structure $J_S^{loc}$ agrees with some split (surface-independent) $J_0$ near $z=\infty$ as well as in a neighborhood of $D_{i,\infty}$. We further assume that $J_S^{loc}$ is sufficiently close to $J_0$ so that Lemma \ref{lem:keymonoloc} holds and that it agrees on the cylindrical end with a $J_t$ so that Lemma \ref{lem: SSPloccompact} holds.
As above, any PSS
solution $\mathring{u}:S \to PD_I$ which is asymptotic to an orbit $x_0$ for some $x_0 \in
\mathring{\mathcal{F}}_\mathbf{0} \cap D_{I,\infty}$ can be compactified to a
map $u:S^2 \to PD_I$ which is $J$-holomorphic near $0$. In particular, we can
form the following moduli space:
\begin{defn} \label{defn: interpolateme}
    We let $\mathcal{M}_{0,2}(\v_0)$ denote the moduli space of maps $u:S^2 \to PD_I$ such that 
    \begin{align}
        \mathring{u}: \mathbb{C}P^1 \setminus \lbrace 0 \rbrace \to PD_I
    \end{align} solves
  \begin{equation} \label{eq:PSSeqblah}
    (d\mathring{u} - X_{H_{loc}^{\ell}} \otimes \beta)^{0,1} = 0
\end{equation}
with $u(z) \notin \D$ for $z \neq 0,\infty$ and with the following incidence conditions:
\begin{align}
\label{eq:starti}    &u(0)\textrm{ intersects $D_{i,\infty}$ with multiplicity $(\v_0)_i$ for $i \in I$};\\
    &u(0)\textrm{ does not intersect $D_{i,0}$ or $D_{j,I}$ for any $i \in I$ or $j\notin I$}.
\end{align}
\begin{align}
    &u(\infty)\textrm{ intersects $D_{i,0}$ with multiplicity $(\v_0)_i$ for $i \in I$};\\
  \label{eq:finali}  &u(\infty)\textrm{ does not intersect $D_{i,\infty}$ or $D_{j,I}$ for any $i \in I$ or $j\notin I$}.
\end{align} 
\end{defn}

The techniques used to regularize $\PSS_{log}$ moduli spaces again allow us to show that for generic choices of $J_S$ this is a manifold of dimension $2n$. The next lemma checks that  $\operatorname{SSP}_{loc,\v_0}$ is a one-sided inverse to $\operatorname{PSS}_{loc,\v_0}:$
 
\begin{lem} \label{lem: degenerations} For any class $\alpha t^{\v_0} \in H^*_{log}(M,D)$, we have that 
\begin{align}\label{eq: keyequality} 
    \operatorname{SSP}_{loc,\v_0} \circ \operatorname{PSS}_{loc,\v_0}(\alpha t^{\v_0})=\alpha t^{\v_0} 
\end{align}
\end{lem}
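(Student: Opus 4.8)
The plan is to prove \eqref{eq: keyequality} by the standard TQFT gluing/homotopy argument: the composition $\operatorname{SSP}_{loc,\v_0}\circ\operatorname{PSS}_{loc,\v_0}$ is computed by a fiber product of the two moduli spaces over a common Floer-orbit output in $U^{loc}_{\v_0}$, and this glued moduli space is cobordant (via a family of domains degenerating the neck between the PSS-sphere and the SSP-thimble) to the moduli space $\mathcal{M}_{0,2}(\v_0)$ of two-pointed spheres introduced just before the statement. On the other side, pushing the neck length to zero should degenerate $\mathcal{M}_{0,2}(\v_0)$ onto a configuration built from a constant sphere in a fiber of $PD_I$ carrying the two incidence conditions (at $0$ with $D_{i,\infty}$, at $\infty$ with $D_{i,0}$) glued to two Morse half-trajectories in $\mathring{ND}_I/\mathbb{R}^+$; using that the two enhanced evaluation maps $\operatorname{Ev}^L_{z_1}$ and $\tau$ are set up exactly so that they are inverse to each other in each $\mathbb{C}^*$-fiber coordinate (see \eqref{taumap} and the discussion of \eqref{hatenhancedevaluation}), this boundary configuration computes the identity map on $H^*(\mathring{S}_I)t^{\v_0}$, i.e. an honest diagonal/Morse-theoretic cup with the unit.

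\textbf{Key steps, in order.} First I would set up the parametrized moduli space over a gluing parameter $R\in[0,\infty]$ (or a complex modulus approaching the nodal curve $S\cup_{z'} S^\vee$): at $R=\infty$ it is the fiber product $\coprod_{x_0\in U^{loc}_{\v_0}}\mathcal{M}^{loc}_{PSS}(\v_0,c,x_0)\times\mathcal{M}^{loc}_{SSP}(\v_0,x_0,b)$ computing the composition; at $R=0$ it is (a Morse-decorated version of) $\mathcal{M}_{0,2}(\v_0)$. The compactness of this parametrized space in $PD_I$ is controlled by the same energy estimates used in Lemmas~\ref{lem: PSSloccompact} and \ref{lem: SSPloccompact}: the total topological energy is approximately $\tfrac12 w_p(\v_0)(2\epsilon'_\ell)^2$, which under the smallness hypotheses stays below the minimal sphere energy and below the thresholds forcing breakings along $D_{i,0}$, $D_{i,\infty}$, $D_{I,j}$, so the only codimension-one boundary strata are (a) Floer breaking along orbits in $U^{loc}_{\v_0}$ (the chain-homotopy terms), (b) Morse breaking of the $c$- or $b$-trajectory (more chain-homotopy terms), (c) $R\to\infty$, and (d) $R\to 0$. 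The cobordism then gives $\operatorname{SSP}_{loc,\v_0}\circ\operatorname{PSS}_{loc,\v_0} = (\text{count of }\mathcal{M}_{0,2}(\v_0)\text{-configurations at }R=0)$ on cohomology. Second I would analyze the $R=0$ degeneration: by the energy bound the sphere component must be constant, hence lies in a single fiber $(\mathbb{C}P^1)^{|I|}$ of $PD_I$, and the two incidence conditions (multiplicity $(\v_0)_i$ with $D_{i,\infty}$ at one node, with $D_{i,0}$ at the other) on a constant map force the marked value to be a fixed point of the torus in each fiber; the two normal-jet data are related precisely by the inversion $z\mapsto z^{-1}$ in each $\mathbb{C}^*$, which is exactly $\tau$. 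Feeding this through the two Morse half-trajectories (one into $W^s(f_I,c)$ via $\operatorname{Ev}_{z_0}$, one into $W^u(\hat f_I^{pert},b)$ via $\widehat{\operatorname{Ev}}_{z_1}$) identifies the count with the Morse-theoretic pairing computing the identity on $H^*(\mathring{S}_I)$ — i.e. one recognizes the fiber product as $W^s(f_I,c)\cap W^u(\hat f_I^{pert},b)$ after projecting to $\mathring{S}_I$, whose signed count is $\delta_{c,b}$ up to the homotopy between $f_I$ and the $S_I$-component of $\hat f_I^{pert}$. Third, I would note that since all of the above takes place inside $U^{loc}_{I,\epsilon'_\ell}$ (by the confinement Lemmas~\ref{lem: money}, \ref{lem:keymonoloc}, and the analogous confinement for SSP solutions proven inside Lemma~\ref{lem: SSPloccompact}), we never leave the region where the constructions are geometrically controlled, so no further boundary contributions appear.

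\textbf{The main obstacle.} The hard part will be the $R\to 0$ analysis and the associated gluing/regularity at the nodal configuration: one must show that the family of glued domains extends smoothly over the node to $\mathcal{M}_{0,2}(\v_0)$ with the correct boundary orientation, and that no extra bubbling (in particular no Floer breaking where the Hamiltonian vanishes near $D_{I,\infty}$, and no sphere bubbling off the constant fiber-sphere at $R=0$) is hidden in the limit. This is exactly the same phenomenon handled in Lemma~\ref{lem: SSPloccompact}'s subclaim — degenerate Morse-Bott orbits along $D_{i,\infty}$ forcing a careful diagonal-sequence argument together with monotonicity (Lemma~\ref{lem:monotonicity}) over the projection $PD_I\to D_I$ — and I would import that argument essentially verbatim. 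The genuinely delicate point, which needs a short separate verification, is the orientation/sign computation showing the $R=0$ count is $+\alpha t^{\v_0}$ and not some other multiple: this follows by comparing the Fredholm orientations of the thimble operator and the constant-sphere operator (the local system measuring the discrepancy is trivial, as already invoked via \cite{KoertKwon}, \cite{diogolisi} in \S\ref{sect: locHam}), together with the standard fact that $\operatorname{Ev}_{z_0}$ and $\widehat{\operatorname{Ev}}_{z_1}$ pair unstable against stable manifolds transversally to give the Morse-theoretic unit. Everything else is routine TQFT bookkeeping built on the compactness statements already established.
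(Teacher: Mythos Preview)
Your overall architecture matches the paper's---use the (Morse-decorated) two-pointed moduli space $\mathcal{M}_{0,2}(\v_0, c, b)$ as a cobordism whose Gromov boundary has one stratum $\partial_B$ (Floer breaking in $U^{loc}_{\v_0}$) giving the composition $\operatorname{SSP}_{loc,\v_0}\circ\operatorname{PSS}_{loc,\v_0}$, and another stratum $\partial_S$ giving the identity---but there is a genuine error in your analysis of $\partial_S$, and a related structural confusion. On the structural side: $\mathcal{M}_{0,2}(\v_0,c,b)$ is \emph{itself} the one-dimensional cobordism, with no external gluing parameter $R$; your framing ``$R=0$ is $\mathcal{M}_{0,2}(\v_0)$'' conflates the cobordism with one of its boundary strata.

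More seriously, you claim the sphere at the other end is \emph{constant}, carrying both incidence conditions. This is impossible: $D_{i,0}$ and $D_{i,\infty}$ are disjoint in each fiber $\mathbb{CP}^1$, so no constant map can meet both with positive multiplicity. What actually happens in $\partial_S$ is the opposite: a \emph{non-constant} $J_0$-holomorphic sphere bubbles off at $z_0=\infty$ (where $J_S=J_0$ is surface-independent), and it is the residual PSS/Floer component that becomes constant by energy. The bubble is forced into a fiber class (anything else has too much area or negative $D_{i,0}$-intersection); after deforming $J_0$ so that $\pi:PD_I\to D_I$ is holomorphic, it lies in a single fiber and is explicitly $z\mapsto (a_i z^{(\v_0)_i})$ with $(a_i)\in(\mathbb{C}^*)^I/\mathbb{R}^+$. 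The lifted evaluations $Ev^L_{z_0}$, $Ev^L_{z_1}$ on these explicit curves are each diffeomorphisms and compose to $\tau$; pairing with the Morse constraints then reduces $\partial_S$ to the Abbondandolo--Schwarz continuation count between $\hat f_I$ and $\hat f_I^{pert}$, which is the identity on cohomology. Your conclusion about $\tau$ is correct, but the mechanism producing it is these explicit non-constant fiber curves, not constant spheres.
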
 
\begin{proof} 
We will consider the moduli spaces 
\begin{align}
    \mathcal{M}_{0,2}(\v_0, c , b):= W^s(f_I, c) \times_{Ev_{z_{0}}} \mathcal{M}_{0,2}(\v_0) \times_{\widehat{\operatorname{Ev}}_{z_{1}}} W^u(\hat{f}^{pert}_I,b) 
\end{align} 
For generic choices of $J_S^{loc}$ and perturbed Morse function $\hat{f}_I^{pert}$,
this is a manifold of the correct dimension. We will be interested in cases
when $\mathcal{M}_{0,2}(\v_0, c , b)$ has dimension 1. For $J_S^{loc}$ taken as above (and as usual all perturbing parameters sufficiently small), the Gromov
compactification of  $\mathcal{M}_{0,2}(\v_0, c , b)$ has two strata
$\partial_B \overline{\mathcal{M}}_{0,2}$ and $\partial_S
\overline{\mathcal{M}}_{0,2}$ (as well as other intermediate strata where Morse
trajectories of $\hat{f}^{pert}_I$ or $f_I$ break off, giving chain homotopy
terms whose descriptions are standard, hence omitted) 
corresponding  respectively to breaking along
Hamiltonian orbits and sphere bubbling with respect to the split, surface-independent
complex structure for which $J_S=J_0$ near $z=\infty$. We have that 
$$ \partial_B \overline{\mathcal{M}}_{0,2}:= \bigsqcup_{x_0,\operatorname{vdim}(\mathcal{M}_{PSS}(\v_0,c,x_0)=0)} \mathcal{M}^{loc}_{PSS}(\v_0,c,x_0) \times \mathcal{M}^{loc}_{SSP}(\v_0,x_0,b).  $$ 
To analyze the other boundary $\partial_S\overline{\mathcal{M}}_{0,2}$, first
note that all $J_0$-holomorphic sphere bubbles must lie in the fiber of the projection $\pi_I: PD_I \to D_I$ by the argument of Lemma \ref{lem:spherebubblefiber}. As such a sphere bubble does not lie in $D_{i,0}$, it follows that it must have intersection number with $D_{i,0}$ exactly equal to $(\v_0)_i$ at the point $z_0$ (and cannot intersect $D_{i,0}$ at any other points otherwise the energy would exceed the energy of our initial PSS solution). The sphere bubble therefore has the same energy as an element of $\mathcal{M}_{0,2}(\v_0)$, and thus the PSS component of an element of $\partial_S \overline{\mathcal{M}}_{0,2}$ must be constant and lie entirely in $D_{I,\infty}$. 

Now let $z_1$ denote the marked point on the sphere bubble that attaches to the PSS solution. By $C_{loc}^{\infty}$ convergence, for smooth PSS solutions sufficiently close to this bubble configuration an arbitrarily large part of the domain $S$ (and in particular all of the region where the 1-form $\beta$ is supported) can be made to map into a neighborhood about $D_{I_{\infty}}$ where $H^{loc}_\ell=0$ and $J_S^{loc}=J_0.$  This implies that such nearby PSS solutions are in fact $J_0$-holomorphic curves which converge modulo $\mathbb{R}$-rescaling to the limiting bubble. It follows that the limiting bubble configurations equipped with the two marked points $(z_0, z_1)$ satisfy the incidence conditions \eqref{eq:starti}-\eqref{eq:finali} and moreover that the limit of the enhanced evaluations of an element of $\mathcal{M}_{0,2}(\v_0)$ at $(z_0, z_1)$ coincides with the enhanced evaluation of this sphere bubble at $(z_0, z_1)$.
We can therefore identify the moduli space $\partial_S
\overline{\mathcal{M}}_{0,2}$ with the elements of the moduli space of $J_0$-holomorphic
spheres $u$ whose domain has two marked points $(z_0, z_1)$ satisfying
\eqref{eq:starti}-\eqref{eq:finali} modulo $\mathbb{R}$-translation in the
domain which further satisfy $Ev_{z_{0}}(u) \in W^s(f_I,c)$ and
$\widehat{\operatorname{Ev}}_{z_{1}}(u) \in W^u(\hat{f}_I^{pert},b)$. 

The operation associated to counting rigid elements of $\partial_B
\overline{\mathcal{M}}_{0,2}$ gives rise to (on cohomology) the left-hand side
of \eqref{eq: keyequality}. We also claim that the operation associated to counting rigid elements of $\partial_S \overline{\mathcal{M}}_{0,2}$ gives rise on cohomology to the right hand side of \eqref{eq: keyequality}, i.e., the identity map. To see this let  $\mathcal{M}_{0,2}^{f}$ denote the moduli
space of (smooth) $J_0$-holomorphic spheres with two marked points satisfying
\eqref{eq:starti}-\eqref{eq:finali} modulo $\mathbb{R}$-translation which lie
in the fiber of the projection. Regularity for such spheres follows from Proposition 6.3.B of of \cite{MR1844078} (see also the final paragraph of the proof of Lemma 5.23 of \cite{Ionel:2011fk}). As before, for any map $u \in
\mathcal{M}_{0,2}^{f}$ let $Ev_{z_{0}}^{L}(u)$ and $Ev_{z_{1}}^{L}(u)$ denote
the lifted higher evaluation to $\mathring{ND}_I/\mathbb{R}^{+}$ and $\mathring{ND}_{I,\infty}/\mathbb{R}^{+}$
respectively, given by taking the tuple of all leading order jets
modulo (only) the diagonal $\mathbb{R}^{+}$ action. The
higher evaluation maps $Ev_{z_{i}}^{L}$ define diffeomorphisms 
\begin{align}
    Ev_{z_{0}}^{L}: \mathcal{M}_{0,2}^{f} &\stackrel{\cong}{\to} \mathring{ND}_I/\mathbb{R}^+\\
    Ev_{z_{1}}^{L}: \mathcal{M}_{0,2}^{f} &\stackrel{\cong}{\to} \mathring{ND}_{I,\infty}/\mathbb{R}^+.
\end{align}
This is because we can assume that in any fiber, up to reparameterization, our
maps take the form $z \to (a_i z^{(\v_{0})_{i}})$  where $a_i$ ranges over
$(\mathbb{C}^*)^{I}/\mathbb{R}^+$. 
The composition $Ev_{z_{1}}^{L}\circ (Ev_{z_{0}}^{L})^{-1}$ is the map $\tau$
above in \eqref{taumap}.   

Next, recall from of \cite[Appendix A.2]{AbSch2}, that the signed count of
points in (compare with the moduli space in \eqref{eq: IKMorse}) 
\begin{align}
    \label{eq: AScontin} W^s(\hat{f}_I,(id,c)) \cap W^u(\hat{f}_I^{pert},b) 
\end{align} 
(where $(id,c)$ denotes the critical point of $\hat{f}_I$ induced by the critical point $id \in G$ of $f_G$ and  $c \in \mathring{S}_I$ of $f_I$, thought of as a point of $\mathring{ND}_I/\R^+$ via \eqref{NDIquotientsplit})
induces a map 
$$CM^*(\mathring{ND_I}/\mathbb{R}^{+}, \hat{f}_I) \to CM^*(\mathring{ND_I}/\mathbb{R}^{+},\hat{f}_I^{pert}).$$
By Remark A.2. of \emph{loc. cit.}, this map induces the identity on cohomology once both sides are identified with singular cohomology. 
Next note that having $Ev_{z_{0}}(u) \in W^s(f_I,c)$, is equivalent to requiring that $Ev_{z_{0}}^{L}(u) \in W^s(\hat{f}_I,(id,c))$, as we have made choices so that $W^s(\hat{f}_I, (id,c)) \cong G \times W^s(f_I, c)$.
This gives rise to an identification between points in Equation \eqref{eq: AScontin} and $\partial_S \overline{\mathcal{M}}_{0,2}$. It is straightforward to check that this correspondence preserves orientations and the proof of the Lemma is concluded.   

See Figure \ref{ssppssschematic} for an image of the moduli space $\mathcal{M}_{0,2}(\v_0, c , b)$ and its two degenerations (along with a pictorial description of the analysis above).
\begin{figure}[h] 
    \caption{The domain of the moduli space $\mathcal{M}_{0,2}(\v_0, c , b)$ along with the incidence, matching, asymptotics, and PDE satisfied at various places, along with a description of this moduli space's two codimension 1 boundary components (excluding Morse breaking which serves to produce chain homotopy terms). The first codimension 1 boundary component evidently induces the operation $\operatorname{SSP}_{loc,\v_0} \circ \operatorname{PSS}_{loc,\v_0}(\alpha t^{\v_0})$ and the second is equated (in the Figure) with the identity map, using the existence of a unique fiber spheres at any point in $PD_I$ with any given enhanced evaluation. \label{ssppssschematic}}
    \centering
    \includegraphics[scale=1.0]{./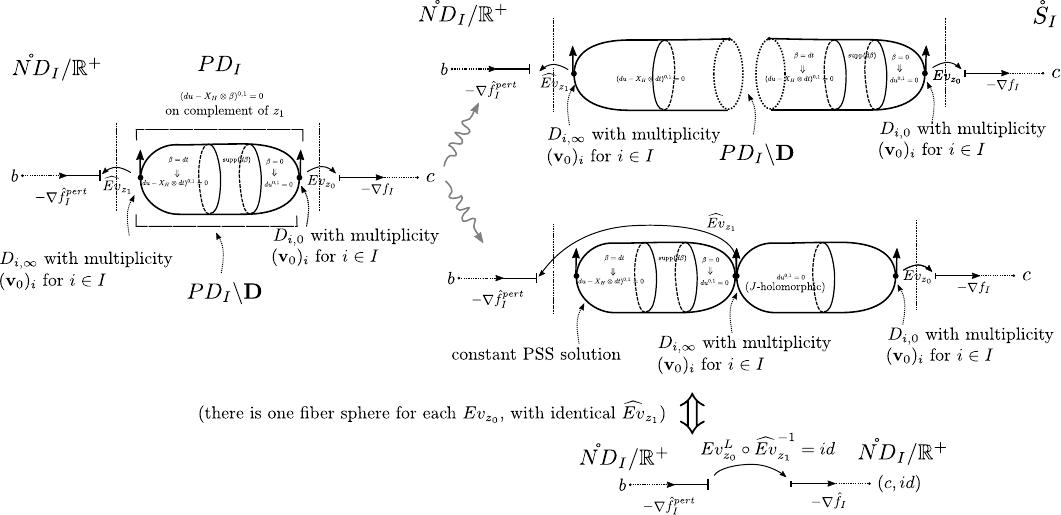}
\end{figure}

\end{proof}

\begin{cor} \label{cor:reallocPSSiso}  For any $\v$, 
    the map \eqref{eq:realPSSloc} induces an isomorphism: $$ \PSSlog^{\v}: H^*(\SIo)t^{\v} \cong  HF^*(X \subset M ; H_p^{\ell})_{\v}. $$  \end{cor}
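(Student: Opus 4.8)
\textbf{Proof plan for Corollary \ref{cor:reallocPSSiso}.}

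The plan is to deduce the isomorphism from three facts established earlier in the paper: (1) the additive identification of source and target as finitely generated $\K$-modules, (2) the fact that $\operatorname{PSS}_{loc,\v}$ and $\PSSlog^{\v}$ agree under the canonical identification \eqref{eq:locglob} (Corollary \ref{cor:twoPSS}), and (3) the one-sided inverse relation $\operatorname{SSP}_{loc,\v} \circ \operatorname{PSS}_{loc,\v} = \operatorname{id}$ (Lemma \ref{lem: degenerations}). First, I would recall from Corollary \ref{lem: Pozniack} and the identification \eqref{eq:locglob} that the target $HF^*(X \subset M; H_p^\ell)_{\v}$ is isomorphic (abstractly, as a graded $\K$-module) to $H^*(\SIo)$, which is the underlying module of the source $H^*(\SIo)t^{\v}$. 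Since $\SIo$ is a manifold (the restriction of a torus bundle to the open stratum $\DIo$), its cohomology is finitely generated in each degree provided $\K$ is Noetherian; more carefully, since we only need the statement degree by degree and the relevant Floer/Morse complexes are finite dimensional, both sides are finitely generated $\K$-modules in each fixed degree.

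The key algebraic step is then: a surjection (or, dually, an injection) between two abstractly isomorphic finitely generated modules over a commutative ring need not be an isomorphism in general, but here we have more—we have a genuine one-sided inverse. By Lemma \ref{lem: degenerations} (combined with Corollary \ref{cor:twoPSS}, which lets us replace $\operatorname{PSS}_{loc,\v}$ by $\PSSlog^{\v}$ and $HF^*(U^{loc}_{\v} \subset PD_I, H_{loc}^\ell)$ by $HF^*(X \subset M; H_p^\ell)_{\v}$ via \eqref{eq:locglob}), the composite $\operatorname{SSP}_{loc,\v} \circ \PSSlog^{\v}$ is the identity on $H^*(\SIo)t^{\v}$. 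Hence $\PSSlog^{\v}$ is a split injection. It remains to promote a split injection between abstractly isomorphic finitely generated $\K$-modules to an isomorphism. This is where I would invoke the standard fact that a surjective endomorphism of a finitely generated module over a commutative ring is an isomorphism (a consequence of the Cayley--Hamilton / Nakayama-type argument): applying this to the composite $\PSSlog^{\v} \circ \operatorname{SSP}_{loc,\v}$, which is a surjective endomorphism of the target (being split by $\operatorname{SSP}_{loc,\v}$ on one side—wait, we need it the other way). More precisely: since $\operatorname{SSP}_{loc,\v}$ is surjective (it has a right inverse $\PSSlog^{\v}$) and source and target are abstractly isomorphic finitely generated modules, $\operatorname{SSP}_{loc,\v}$ composed with any chosen abstract isomorphism is a surjective endomorphism of a finitely generated module, hence an isomorphism; therefore $\operatorname{SSP}_{loc,\v}$ is an isomorphism, and then $\PSSlog^{\v} = (\operatorname{SSP}_{loc,\v})^{-1}$ is too.

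The main obstacle is making the abstract-module comparison fully rigorous in the generality of an arbitrary ground ring $\K$: the argument sketched above uses that a surjective endomorphism of a finitely generated module is injective, which holds for all commutative rings, but one must be careful that the degreewise modules $H^*(\SIo)$ and $HF^*(X \subset M; H_p^\ell)_{\v}$ really are finitely generated—this follows since $\SIo$ has the homotopy type of a finite CW complex (it is an open torus bundle over the smooth quasi-projective variety $\DIo$, which has finite-type cohomology) and the Floer side is computed by a finite-dimensional complex over $\K$ (finitely many Hamiltonian orbits). A secondary bookkeeping point is verifying that the identification in Corollary \ref{lem: Pozniack} and \eqref{eq:locglob} is compatible with gradings so that the comparison can be carried out degree by degree; but this is routine given the degree conventions \eqref{logdegreemorse} and the index formula \eqref{vdimlogpss}. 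Once this is in place, Lemma \ref{lem:localizeisomorphism} immediately upgrades the collection of local isomorphisms \eqref{eq:realPSSloc} to the statement that \eqref{eq:locPSS} is an isomorphism, which is the input needed to complete the proof of Theorem \ref{thm:main} in the next subsection.
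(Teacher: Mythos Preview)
Your proposal is correct and follows essentially the same route as the paper: use Lemma~\ref{lem: degenerations} to see that $\operatorname{SSP}_{loc,\v}$ is surjective, use Corollary~\ref{lem: Pozniack} to see that source and target are abstractly isomorphic finitely generated $\K$-modules, and then invoke the commutative algebra fact that a surjective endomorphism of a finitely generated module over a commutative ring is an isomorphism (the paper cites Vasconcelos for this), whence $\operatorname{PSS}_{loc,\v}$ is an isomorphism and Corollary~\ref{cor:twoPSS} finishes. The one small omission is the case $\v=\mathbf{0}$: the entire local SSP construction was carried out under the standing hypothesis \eqref{eq:fixv0} that $\v_0\neq\mathbf{0}$ (so that one has a projective bundle $PD_I$ to work in), and the paper handles $\v=\mathbf{0}$ separately by observing that $\PSSlog^{\mathbf{0}}$ is just the classical PSS map $H^*(X)\to HF^*(X\subset M;H_p^\ell)_{\mathbf{0}}$, which is already known to be an isomorphism.
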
 
\begin{proof} From Corollary \ref{lem: Pozniack} and Lemma \ref{lem:
    degenerations} we deduce that the map $\operatorname{SSP}_{loc,\v}$ is a
    surjective map between isomorphic finitely-generated $\mathbf{k}$-modules 
    It follows from \cite{Vasconcelos} that it is an isomorphism. Applying
    Lemma \ref{lem: degenerations} again shows that
    $\operatorname{PSS}_{loc,\v}$ is an isomorphism and the result follows
    from Corollary \ref{cor:twoPSS}.  
    (Strictly speaking our description of the one-sided inverse
    $\operatorname{SSP}_{loc,\v}$ used $\v \neq \mathbf{0}$ in order to work in
    a projective bundle; however if $\v = \mathbf{0}$, $\PSSlog^{\mathbf{0}}$
    is the usual PSS map $H^*(X) \to HF^*(X \subset M;
    H_p^{\ell})_{\mathbf{0}}$, which is well known to also be an isomorphism of
    $\K$-modules by e.g. \cite[\S 15.2]{Ritter}).  
\end{proof} 

We now complete the proof of the main result of this section:

\begin{thm} \label{thm:locPSSisom} 
    The map \eqref{eq:locPSS} is an isomorphism: 
    $$\PSSlog^{w}: \QH(M,\D)_w \cong  HF^*(X \subset M ; H^{\ell})_{w}.$$  
\end{thm}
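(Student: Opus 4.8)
The statement follows by assembling the pieces developed above. The starting point is Lemma \ref{lem:localizeisomorphism}, which reduces the claim that \eqref{eq:locPSS} is an isomorphism to the assertion that each ``localized'' map \eqref{eq:realPSSloc}, $\PSSlog^{\v}: H^*(\SIo)t^{\v} \to HF^*(X \subset M; H_p^{\ell})_{\v}$, is an isomorphism for every $\v$ with $w(\v) \leq w$. But this latter assertion is precisely the content of Corollary \ref{cor:reallocPSSiso}. So the plan is simply: first invoke Lemma \ref{lem:localizeisomorphism} to pass from the weight-$w$ statement to the multiplicity-$\v$ statements, then invoke Corollary \ref{cor:reallocPSSiso} for each such $\v$, and conclude.

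To make this self-contained I would briefly recall why each input holds. Lemma \ref{lem:localizeisomorphism} is immediate from the finer filtration $CF^*(X\subset M; H_p^{\ell})_{\v}$ introduced in \eqref{eq:CFv}, valid once the divisorial weights $\kappa_i$ have been perturbed to $\kappa_{i,\ell}$ so as to separate actions (condition \eqref{eq:separateactions}); this perturbation does not change the filtered Floer cohomology or the log PSS map by Lemma \ref{lem: PSScoinc}, and the $w_p$-weight now determines $\v$. Corollary \ref{cor:reallocPSSiso} in turn rests on Corollary \ref{lem: Pozniack} (the source and target of \eqref{eq:realPSSloc} are abstractly isomorphic finitely generated $\K$-modules, via the local Morse--Bott analysis in the projective bundle $PD_I$) together with Lemma \ref{lem: degenerations} (the composite $\operatorname{SSP}_{loc,\v}\circ\operatorname{PSS}_{loc,\v}$ is the identity), and Corollary \ref{cor:twoPSS} (which identifies $\operatorname{PSS}_{loc,\v}$ with $\PSSlog^{\v}$ under \eqref{eq:locglob}); a surjective endomorphism-up-to-isomorphism of a finitely generated module over a commutative ring is injective \cite{Vasconcelos}, forcing both $\operatorname{SSP}_{loc,\v}$ and $\operatorname{PSS}_{loc,\v}$ to be isomorphisms. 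For $\v = \mathbf{0}$ one instead uses that $\PSSlog^{\mathbf{0}}$ is the ordinary PSS map $H^*(X)\to HF^*(X\subset M; H_p^{\ell})_{\mathbf 0}$, which is classically an isomorphism.

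The genuine work, of course, has already been carried out in the preceding subsections; at the level of this theorem there is no obstacle, only bookkeeping. If one wants to identify where the \emph{hard part} of the overall argument lies, it is in the confinement Lemmas (\ref{lem: money}, \ref{lem:keymonoloc}, and especially the subclaim inside Lemma \ref{lem: SSPloccompact}), which use monotonicity for pseudoholomorphic curves together with ``split'' almost complex structures to force all the relevant low-energy Floer and log PSS solutions into a small neighborhood of the stratum $D_I$, so that they may be identified with the corresponding moduli spaces in the projective bundle $PD_I$ where the one-sided inverse $\operatorname{SSP}_{loc,\v}$ can actually be constructed. Once that geometric localization is in hand, the algebra (finitely generated modules, surjective implies bijective) finishes the job, and the present theorem is just the statement that all these localizations fit together over the filtration level $w$.
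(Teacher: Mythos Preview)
Your proposal is correct and follows exactly the same approach as the paper's proof, which simply invokes Lemma \ref{lem:localizeisomorphism} to reduce to the $\v$-localized statements and then appeals to Corollary \ref{cor:reallocPSSiso}. The paper's version is just the one-sentence summary of your first paragraph; your additional recollection of why the inputs hold is accurate but purely expository.
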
 
\begin{proof} 
    Using Corollary \ref{cor:reallocPSSiso} it follows from Lemma
    \ref{lem:localizeisomorphism} that \eqref{eq:locPSS} induces an
    isomorphism.
\end{proof}

\subsection{Proof of Theorem \ref{thm:main}}\label{sec:mainproof}
Collecting all of the results proven so far, we prove our first main theorem:

\begin{thm}[Theorem \ref{thm:main}] \label{thm:spectral} There is a multiplicative spectral sequence converging to the symplectic cohomology ring
\begin{align} 
    \label{eq:sstext} \lbrace E_r^{p,q},d_r \rbrace \Rightarrow SH^*(X). 
\end{align}
whose first page is isomorphic as rings to the logarithmic cohomology of $(M,\D)$:
\begin{equation}\label{eq:sspage1text}
 H^*_{log}(M,\D)  \stackrel{\cong}{\rightarrow} \bigoplus_{p,q} E_1^{p,q}.
\end{equation} \end{thm}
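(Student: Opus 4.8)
The plan is to assemble Theorem \ref{thm:spectral} from the pieces already constructed in \S\ref{section:SHtor}--\S\ref{sect: PSSiso2}, treating it essentially as a bookkeeping exercise that packages four earlier results: (i) the construction of the multiplicative filtered complex $SC^*(X)$ with its associated convergent spectral sequence $\{E_r^{p,q}, d_r\} \Rightarrow SH^*(X)$ from \S\ref{sect: actionspec} (in particular the descending filtration $F^pSC^*(X)$ is bounded above and exhaustive, so convergence holds by \cite{McCleary}*{Thm. 3.2}); (ii) the explicit description of the first page $\bigoplus_q E_1^{p,q} = \varinjlim_\ell HF^*(X \subset M; \Hlm)_{w=-p}$ in \eqref{eq:E1concrete}; (iii) the identification, obtained by combining Lemma \ref{lem:spectral}, Lemma \ref{lem: PSScoinc}, Lemma \ref{lem:localizeisomorphism}, and Theorem \ref{thm:locPSSisom}, that the low-energy log PSS map $\PSSlog^{low}$ of \eqref{eq:Speciso2} is an \emph{isomorphism} of $\K$-modules $H^*_{log}(M,\D) \xrightarrow{\cong} \bigoplus_{p,q} E_1^{p,q}$; and (iv) Theorem \ref{lem:spectralrings}, that this same map $\PSSlog^{low}$ is a ring homomorphism. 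So the proof is: apply (i)--(ii) to produce the spectral sequence and identify its $E_1$ page as a $\K$-module with $\varinjlim_\ell \bigoplus_w HF^*(X \subset M; \Hlm)_w$; apply (iii) to see $\PSSlog^{low}$ gives an isomorphism onto it; apply (iv) to upgrade this to a ring isomorphism; and invoke the multiplicative structure on $E_r$ from the end of \S\ref{sect: actionspec} (induced by the chain-level product \eqref{eq: chainlevelprod}, associative for $r=1$ hence all $r$, satisfying Leibniz) to conclude the spectral sequence is multiplicative.

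Concretely I would write: ``By the discussion of \S\ref{sect: actionspec}, the descending filtration $F^pSC^*(X)$ on the cochain complex $SC^*(X)$ computing $SH^*(X)$ is exhaustive and bounded above, so it induces a convergent cohomological spectral sequence \eqref{eq:Spec2}, which is \eqref{eq:sstext}; and the product \eqref{eq: chainlevelprod} endows $\{E_r\}$ with a bigraded product satisfying the Leibniz rule with respect to $d_r$, associative for all $r$. The $E_1$ page is identified in \eqref{eq:E1concrete} with $\varinjlim_\ell HF^*(X \subset M; \Hlm)_{w=-p}$ in each bidegree. By Lemma \ref{lem:spectral} the low-energy log PSS maps assemble to a map $\PSSlog^{low}: H^*_{log}(M,\D) \to \bigoplus_{p,q} E_1^{p,q}$, which by Lemma \ref{lem: PSScoinc} may be computed using the perturbed data $\omega_\ell, H_p^\ell$, and by Lemma \ref{lem:localizeisomorphism} together with Theorem \ref{thm:locPSSisom} (equivalently Corollary \ref{cor:reallocPSSiso}) it is an isomorphism of $\K$-modules. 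By Theorem \ref{lem:spectralrings} it is a ring homomorphism, hence a ring isomorphism. This is \eqref{eq:sspage1text}.'' I would also insert the short paragraph promised below the statement of Theorem \ref{thm:spectral} in the introduction: that the bidegree $(p,q)$ on $E_1$ corresponds to the weight/multiplicity decomposition of $H^*_{log}(M,\D)$ — namely $p = -w$ records (minus) the weighted winding number and $p+q$ is the cohomological degree \eqref{eq:loggrading} — and that in the $c_1(X)=0$ setting this is a $\Z$-bigrading, with the grading-independent modifications (and the $H_1(X)$-grading) handled as remarked in the introduction.

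I do not expect a genuine obstacle here, since all the hard analysis (Morse--Bott confinement, monotonicity, the one-sided SSP inverse, the TQFT product argument) is already done in the cited lemmas; the only care needed is consistency of conventions — matching the descending-versus-ascending filtration bookkeeping of \S\ref{sect: actionspec} (where $F^p := F_{-p}$) with the weight filtration $F_w\QH^*(M,\D)$ on log cohomology, so that the isomorphism \eqref{eq:sspage1text} is genuinely filtered and the bidegrees line up. If I had to name a ``main point'' it is simply the observation that the low-energy log PSS map, already shown to be a ring map on $E_1$ (Theorem \ref{lem:spectralrings}) and an isomorphism of the underlying modules (Theorem \ref{thm:locPSSisom}), therefore automatically intertwines the ring structures — there is no additional multiplicative compatibility with differentials to check at $E_1$ beyond what \S\ref{sect: actionspec} already provides. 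I would close by noting that a cleaner statement of the bigrading relationship, and the verification that $d_r$ on generators $\alpha t^{\v_I}$ lands in the expected bidegree, is deferred to \S\ref{sec:computations} where it is used.
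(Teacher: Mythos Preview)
Your proposal is correct and follows essentially the same route as the paper's own proof: assemble the spectral sequence from \S\ref{sect: actionspec}, invoke the low-energy log PSS map of Lemma \ref{lem:spectral}, note it is a ring map by Theorem \ref{lem:spectralrings}, and reduce the isomorphism claim to Theorem \ref{thm:locPSSisom}. The paper's version is slightly terser (it does not explicitly cite Lemma \ref{lem: PSScoinc} or Lemma \ref{lem:localizeisomorphism} in the final assembly, since these are already absorbed into the proof of Theorem \ref{thm:locPSSisom}), and it places the bigrading discussion $(p,q)=(-w(\v),\deg(\alpha t^{\v})+w(\v))$ immediately after the proof rather than deferring it.
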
 
\begin{proof} 
The spectral sequence was constructed in \eqref{eq:Spec2} of \S \ref{sect:
actionspec}. In Equation \eqref{eq:Speciso2}, we constructed a map
    $$\PSSlog^{low} : \QH^*(M,\D)  \to \bigoplus_{p,q} E^{p,q}_{1}. $$  
 The fact that this map respects ring structures was proven in Theorem
 \ref{lem:spectralrings}. It therefore remains to show that these maps
 \eqref{eq:Speciso2} are isomorphisms. By taking limits, showing that this is
 an isomorphism reduces to showing that \eqref{eq:Speciso1} is an isomorphism,
 which in turn immediately reduces to proving that \eqref{eq:locPSS} is an
 isomorphism. As this is the main result of Theorem \ref{thm:locPSSisom}, the
 proof of Theorem \ref{thm:main} is complete. 
\end{proof} 
\vskip 5 pt

\textbf{Comparing $\mathbb{Z}$-gradings}: Under the isomorphism \eqref{eq:sspage1text}, a class $\alpha t^{\v}$ lies in bidegree 
\begin{align}
    \label{eq:SSbigradings} (p,q)=(-w(\v),\deg(\alpha t^{\v})+w(\v)) 
\end{align} \vskip 3 pt

\textbf{Comparing $H_1(X)$-gradings}: It is useful to observe that
symplectic cohomology (and Hamiltonian Floer cohomology) admits an optional
second grading by $H_1(X)$,  which assigns to any orbit $x$ (rather its
orientation line) the associated homology class $[x] \in H_1(X)$. The
differential preserves this grading and the
multiplication is additive; in particular, symplectic cohomology additively
splits as a direct sum over $H_1(X)$ classes. An analogous $H_1$ grading can be
associated to $H_{log}(M,\D)$. On a generator of the form $\alpha t^{\v}$, it is described as follows: 
Let $[y_1]$, ... $[y_k]$ be the $H_1(X)$ classes
of small loops around each $D_i$ (for instance, $y_i$ = the boundary of a disc
fiber in $U_i$). Then to a class of multiplicity $\v$, associate the $H_1(X)$ class
$\sum \v_i [y_i]$. (this is also the $H_1(X)$ class of a small loop that winds
$\v_i$ times around each $D_i$). As this $H_1(X)$ grading depends
(additively) only on the vector $\v$, we immediately see that differential on
log cochains preserves this grading and the product of two elements of grading
$[x_1]$ and $[x_2]$ has grading $[x_1]+[x_2]$. Finally, it is straightforward
to see that the low
energy log PSS map is compatible with the two $H_1(X)$ gradings, and in
particular the spectral sequence and identification of its first page from
Theorem 1.1 split as a direct sum over $H_1(X)$ classes (in a manner
multiplicatively compatible with adding $H_1(X)$ classes).
\vskip 3 pt

\begin{rem}[The $\Z/2\Z$-graded case]\label{rem: Z2Zgr}
To simplify the exposition of the numerous moduli spaces and operations
appearing here, we have described dimensions of moduli spaces, their associated
operations, and the relevant cohomology groups in the $\Z$-graded setting (which
applies when $c_1(X) = 0$ using a choice $\Omega_{M,\D}$ of
a holomorphic volume form on $M$ which is non-vanishing on $X$). 

However, our proof remains valid in $\Z/2\Z$-graded\footnote{or $\Z/2k\Z$-graded, or
fractionally-graded when $c_1(X)$ is torsion, etc. though we leave these
details to the reader.} settings as well, with the following adaptations
to the definitions of complexes, moduli spaces, and operations. First, one defines the Floer
cohomology for any Hamiltonian as in \S \ref{subsection:Floercohom} by
associating to an orbit $x$ the determinant line associated to {\em some}
trivialization $\gamma$ of $x^*TM$. There is an ambiguity in such a choice, but
any two choices of $\gamma$ induce canonically isomorphic vector spaces
$\mathfrak{o}_x$ (compare \cite[Prop. 1.4.10]{Abouzaid:2015ad}). The degrees
associated to different choices of $\gamma$ only coincide mod 2, so the complex
inherits a well-defined $\Z/2\Z$ grading. The space of Floer trajectories
between $x$ and $x'$ now contains components of varying dimension (coinciding
mod 2 with the difference of gradings) depending on the underlying homotopy
class of cylinders from $x$ to $x'$, and we only count the 0-dimensional
components when defining operations. Finally, note that any homotopy class of
cylinder induces, for any choice of trivialization $\gamma$ for $x^*TM$, a
canonical induced trivialization $\gamma'$ for $(x')^* TM$; the pair $(\gamma,
\gamma')$ can be used to comute the dimension of this component of the moduli
space, and gluing theory associates, for rigid Floer trajectories in this
homotopy class, isomorphisms between the determinant lines of $\gamma$ and
$\gamma'$, which is the necessary input to getting signed counts.

Next we $\Z/2\Z$ grade log cohomology by defining $\deg(\alpha t^\v)= \deg(\alpha)$ mod 2 
(compare this to the mod 2 reduction of \eqref{eq:loggrading}). Again, the 
low energy log PSS moduli spaces $\mc{M}(\v, x_0)$ contain components of varying dimension
depending on the  underlying homology class $[u]$ in $H_2(M, M\setminus \D)$ of the map;
these are dealt with as in the case of Floer trajectories by noting that any
such homotopy class defines a trivialization $\gamma$ of $x_0^* TM$ and thus a
choice of orientation line to map to (in the rigid case). The remainder of the
moduli spaces and operations (local log PSS, local Floer homology, and local log SSP)
work in a similar fashion. 

In particular, the proof of Theorem \ref{thm:main} continues to hold, seeing as
nothing about the proof of isomorphism used gradings: once the maps are defined
as above, the same appeal to energy and action considerations to confine curves
and/or argue e.g., that PSS is a ring map or SSP and PSS compose in the desired
fashion go through (with suitable definitional changes as above to all of the
intermediate chain homotopies).  
\end{rem}

\section{Computations of symplectic cohomology}\label{sec:computations}
\subsection{Topological and multiplicatively topological pairs}\label{subsec:topologicalpairs}
We recall definitions and give examples of topological pairs (a slight
generalization of the notion used in \cite[\S 3]{GP1}) and multiplicatively
topological pairs, introduced in the introduction.  In the terminology of \S
\ref{sec:intropointedspheres},  a pair $(M,\D)$ is topological (respectively
multiplicatively topological) if there is some $J_0 \in \mathcal{J}(M,\D)$ (recall Definition \ref{defn:complexint}) such
that $(M,\D)$ has no 0 or 1-pointed (respectively no 0, 1, or 2-pointed)
relative $J_0$-holomorphic spheres. To spell this out:

\begin{defn} \label{def:topologicalpair}
    We say that a pair $(M,\D = D_1 \cup \cdots \cup D_k)$ is {\em topological} if there exists a $J_0 \in \mathcal{J}(M,\D)$ such that
    for any subset $I \subset \{1, \ldots, k\}$, there are no non-constant
    $J_0$-holomorphic curves $u: \mathbb{C}P^1 \to D_I$ which intersect
    $\cup_{j \notin I} (D_j \cap D_I)$ in 1 or fewer distinct points.
\end{defn}
Note that as per our convention, we include the case $I = \emptyset$ above,
with $D_{\emptyset} = M$.

\begin{example} 
    \label{example:toppairs} To illustrate that this is a reasonably broad class of pairs, we list some examples: 
\begin{enumerate}
        \item If $\pi_2(M)=0$ or more generally $\omega(\pi_2(M)) = 0$, any
            pair $(M,\mathbf{D})$ will be topological, as there are no
            $J$-holomorphic spheres in $M$ at all for any $J$.

        \item \label{ex:manyampledivisors}
            If each smooth component $D_i$ of $\mathbf{D}$ corresponds to some
            power of the same line bundle and the number of components $k$ of
            $\mathbf{D}$ satisfies $k \geq \dim_{\C} M+1$, then $(M,
            \mathbf{D})$ is topological. To see this, note that if $[u] \in
            H_2(M)$ is the class of a $J$-holomorphic sphere in $M =
            D_{\emptyset}$ which isn't contained in any $D_j$, then since
            $\omega([u]) > 0$, $[u] \cdot D_j > 0$ for every $j$. Thus, $u$
            must intersect $\D$, and moreoever cannot intersect $\D$ at only
            one point, because then that point would be contained in
            $\cap_{i=1}^{n+1} D_i = \emptyset$. A similar argument applies to
            any $[u] \in H_2(D_I)$ the class of a $J$-holomorphic sphere which
            is not contained in any $D_j \cap D_I$.\\

            \noindent As a specific case, note that $(\P^n, \mathbf{D} = \{\geq
            n+1\textrm{ generic planes}\})$ is a topological pair.
 \item In the $\Z/2\Z$ graded setting (as in Remark \ref{rem: Z2Zgr}),
             another general class of
            topological pairs $(M, \mathbf{D})$ with $\mathbf{D} = D$ a smooth
            divisor can be constructed as follows: Let $M$ be a hypersurface of
            degree at least $2n+1$  in $\mathbb{C}P^n$, $n \geq 2$ and let $D$
            be any smooth hyperplane section. The topological pair condition
            holds since for any $A \in H_2(M,\mathbb{Z})$ with $\omega(A)>0$, the virtual
            dimension of the moduli space of $J$-holomorphic spheres in
            homology class $A$ is negative (and the same applies to curves that
            lie entirely in $D$). As any nonconstant curve factors through a somewhere
            injective curve, the moduli spaces are generically empty. 
        \end{enumerate} 
        As (\ref{ex:manyampledivisors}) shows, $M$ (and each stratum
        $D_I$) could contain many $J$-holomorphic spheres even if $(M,\D)$ is a
        topological pair.  
\end{example}

To give a slightly more elaborate example, let $M_0=\mathbb{P}^2$ and let $D_0$ be a (possibly non-generic) hyperplane arrangement such that for every component $D_{0,i}$ of $D_0$, there are at least two distinct points $D_{0,i} \cap D_{0,j}$ and $D_{0,i} \cap D_{0,j'}$ for $j,j' \neq i$. 

\begin{lem} \label{lem:hyperplanes} Let $M \to M_0$ denote the blowup of $M_0$ at each of the points where $\geq 3$ components of $D_0$ meet. Let $\D$ denote the union of the proper transform of the divisors of $D_0$ and the exceptional divisors. Then the pair $(M,\D)$ just constructed is a topological pair. \end{lem}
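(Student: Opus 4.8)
\textbf{Proof plan for Lemma \ref{lem:hyperplanes}.}

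The plan is to exhibit a split almost complex structure $J_0 \in \mathcal{J}(M, \D)$ (in fact the standard integrable one away from the corner strata, deformed as in Theorem \ref{thm: MTZ} near the corners) and verify Definition \ref{def:topologicalpair} stratum by stratum, using the geometry of the blowup $M \to M_0$. First I would enumerate the strata of $(M,\D)$: the open strata $\mathring{D}_I$ are (i) $M$ itself, (ii) the open parts of the proper transforms of the lines $D_{0,i}$ and of the exceptional divisors $E_p$, (iii) the isolated double points (either honest nodes of the original arrangement that survive, or points where a proper transform meets an exceptional divisor). For a stratum of type (iii) there is nothing to check, since $D_I$ is a point and carries no nonconstant curves. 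For a stratum of type (ii), $D_I \cong \mathbb{C}P^1$ and a nonconstant $J_0$-holomorphic curve $u : \mathbb{C}P^1 \to D_I$ is a branched cover, hence of positive degree; it must therefore meet every point of $D_I \cap (\cup_{j \notin I} D_j)$, and by construction each component of $\D$ contains at least two such points (the proper transform of $D_{0,i}$ meets $\geq 2$ of the other divisors or exceptional curves by the hypothesis on $D_0$; an exceptional divisor $E_p$ meets $\geq 3$ proper transforms since $\geq 3$ lines met at $p$), so $u$ meets the deeper stratum in $\geq 2$ distinct points. Hence no $0$- or $1$-pointed relative spheres occur in strata of type (ii) or (iii).

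The substantive case is $I = \emptyset$, i.e. $J_0$-holomorphic spheres $u : \mathbb{C}P^1 \to M$ not contained in $\D$, which must be shown to meet $\D$ in at least two distinct points. Here I would argue on homology classes. Write $H_2(M; \mathbb{Z})$ in the standard basis: the hyperplane class $\ell$ pulled back from $M_0$ and the exceptional classes $e_p$ for each blown-up point $p$, with $\ell^2 = 1$, $e_p^2 = -1$, $\ell \cdot e_p = 0$. For $J_0$ compatible (or tamed) with our K\"ahler form, any nonconstant sphere has $A := [u]$ with $A = d\ell - \sum_p m_p e_p$ for some $d \geq 1$ and $m_p \geq 0$ (positivity of $d$ because $A \cdot (\text{proper transform of a generic line}) \geq 0$ forces $d \geq 0$, and $d = 0$ would force $A$ to be a negative combination of exceptional classes, impossible for an effective class meeting an ample divisor positively — alternatively $d=0, m_p>0$ would make $A$ a multiple cover of $E_p$, which is a stratum already handled). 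Now I compute the total intersection number of $A$ with $\D$. The divisor $\D$ consists of the proper transforms $\widetilde{D}_{0,i}$ of the lines and the exceptionals $E_p$; in homology $[\widetilde{D}_{0,i}] = \ell - \sum_{p \in D_{0,i}} e_p$ and $[E_p] = e_p$, so
\begin{align*}
A \cdot \D &= \sum_i A \cdot [\widetilde{D}_{0,i}] + \sum_p A \cdot [E_p] \\
&= \sum_i \Bigl( d - \sum_{p \in D_{0,i}} m_p \Bigr) + \sum_p m_p.
\end{align*}
Since every blown-up point $p$ lies on at least three lines of the arrangement, the coefficient $-m_p$ appears at least three times in $\sum_i \sum_{p \in D_{0,i}} m_p$, so that sum is $\geq 3 \sum_p m_p$; hence $A \cdot \D \geq kd - 3\sum_p m_p + \sum_p m_p = kd - 2\sum_p m_p$, where $k$ is the number of lines. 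Because there are at least three lines ($k \geq 3$, indeed the hypothesis that each line meets $\geq 2$ others forces $k \geq 3$) and the $m_p$ are bounded by $d$ (each blown-up point lies on a line, and $A$ meets the proper transform of that line non-negatively: $d - \sum_{p' \in D_{0,i}} m_{p'} \geq 0$ gives $m_p \leq d$ for $p \in D_{0,i}$), a short bookkeeping argument of this kind yields $A \cdot \D \geq 2$; moreover one checks the two units of intersection cannot be concentrated at a single point of $\D$ (a point of $\D$ lies in at most a depth-$2$ stratum, so at worst two branches pass through it, and a degree argument as above shows $u$ must also meet a third component away from that point). The hard part will be making this last step — ``total intersection $\geq 2$ cannot all occur at one point'' — fully rigorous: one must combine positivity of intersection with the local tangency bookkeeping, exactly as in the argument of Example \ref{example:toppairs}(\ref{ex:manyampledivisors}), and handle the possibility that $u$ is tangent to one component to high order while missing the others, using that each component contains a point of a deeper stratum that $u$ is then forced to hit. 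Once the homological count is pinned down, the Lemma follows by taking $J_0$ integrable (so that $J_0$-holomorphic curves are genuine algebraic curves and positivity of intersection applies cleanly), which lies in $\mathcal{J}(M,\D)$ after the deformation of Theorem \ref{thm: MTZ}.
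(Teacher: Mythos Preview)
Your treatment of strata of types (ii) and (iii) is fine and matches the paper. The gap is in the $I=\emptyset$ case: your inequality is pointing the wrong way. You correctly compute
\[
A\cdot \D \;=\; kd \;-\; \sum_i \sum_{p\in D_{0,i}} m_p \;+\; \sum_p m_p,
\]
and you correctly note that $\sum_i \sum_{p\in D_{0,i}} m_p \geq 3\sum_p m_p$ (each blown-up $p$ lies on $\geq 3$ lines). But this is a \emph{lower} bound on a quantity appearing with a \emph{minus} sign, so what follows is
\[
A\cdot \D \;\leq\; kd - 2\textstyle\sum_p m_p,
\]
not $\geq$. A concrete counterexample to your stated inequality: take four lines through a single point $p$ and a fifth line off $p$; blow up $p$. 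For $A=\ell - e_p$ one has $A\cdot\D = 2$, whereas your claimed bound would give $A\cdot\D \geq 5\cdot 1 - 2\cdot 1 = 3$. So the homological bookkeeping does not yield the lower bound you want, and the subsequent ``short bookkeeping argument'' cannot be completed as written. Moreover, as you yourself observe, even a correct bound $A\cdot\D\geq 2$ would not finish the argument, since that intersection could in principle be concentrated at a single point; your sketch for ruling this out is vague and is in fact where the real content lies.

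The paper's proof sidesteps both issues by a direct case split on the number of exceptional divisors that $u$ meets. If $u$ meets two or more exceptionals, these are pairwise disjoint and you are done. If $u$ meets no exceptionals, then every $m_p=0$, so $u\cdot[\widetilde{D}_{0,i}]=d>0$ for \emph{every} $i$; since after blowing up no three proper transforms are concurrent, the intersections with the $\geq 3$ proper transforms occur at $\geq 2$ distinct points. If $u$ meets exactly one exceptional $E_p$, the hypothesis on the arrangement guarantees some line $D_{0,j}$ not through $p$ (otherwise every $D_{0,i}\cap D_{0,j}=\{p\}$, contradicting the ``two distinct intersection points'' assumption); its proper transform is disjoint from $E_p$ and satisfies $u\cdot[\widetilde{D}_{0,j}]=d>0$, giving a second intersection point. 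This argument uses positivity of intersection only in the form $u\cdot[\widetilde{D}_{0,i}]\geq 0$ and $u\cdot[E_p]\geq 0$, together with disjointness of the relevant components, and avoids any global count of $A\cdot\D$ altogether.
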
 
\begin{proof} 
    Every sphere which lies in $\D$ must intersect at least two of the other
    components in distinct points, so consider spheres $u$ which meet $\D$
    transversely. Either it meets more than one exceptional divisor, in which
    case we are done, or it meets at most one exceptional divisor. If it meets
    no exceptional divisors, then it must intersect all components of $\D$ with
    the same intersection multiplicity. If it meets one exceptional divisor,
    there is at least one other component of $\D$ disjoint from that
    exceptional divisor and it must meet this divisor too.  
\end{proof}

\begin{rem} 
    Similar examples can likely be constructed for higher dimensional
    hyperplane arrangements, but this requires tracing through the resolutions
    of such arrangements, which are necessarily more elaborate than in the
    2-dimensional case.  
\end{rem} 

There is a strengthening of the topological condition which is useful for comparing product structures. 

\begin{defn} \label{def:multiplicativelytopological}
    We say that a pair $(M,\D)$ is {\em multiplicatively topological} if there exists a $J_0 \in \mathcal{J}(M,\D)$ such that for any $I \subset\{1, \ldots, k\}$, there are no non-constant holomorphic curves 
    $u: \mathbb{C}P^1 \to D_I$ which intersect $\cup_{j \notin I} (D_j \cap
    D_I)$ in 2 or fewer distinct points. \end{defn}

\begin{example} \label{example:multoppairs} Some examples of multiplicatively topological pairs include: 
\begin{enumerate}
        \item If $\pi_2(M)=0$ or $\omega(\pi_2(M)) = 0$, any pair
            $(M,\mathbf{D})$ will be multiplicatively topological because there
            are no spheres in $M$ at all.

        \item 
            Whenever each smooth component $D_i$ of $\mathbf{D}$ corresponds to
            powers of the same line bundle and the number of components of
            $\mathbf{D}$, $k$, satisfies $k \geq 2 \dim_{\C} M+1$, then $(M,
            \mathbf{D})$ is multiplicatively topological, by the same analysis
            as (\ref{ex:manyampledivisors}) of Example \ref{example:toppairs}.
            For example $(\P^n, \mathbf{D} = \{\geq 2n+1\textrm{ generic
            planes}\})$ is multiplicatively topological. 

           \item 
            Let $M_0=\mathbb{P}^2$ and let $D_0$ be a hyperplane arrangement
            such that every component $D_{0,i}$ meets other components of $D_0$
            at at least three distinct points. Let $(M,\D)$ be the resolution
            of this hyperplane arrangement constructed in Lemma
            \ref{lem:hyperplanes}. Then $(M,\D)$ is multiplicatively
            topological, by an analogous argument to Lemma
            \ref{lem:hyperplanes}.

        \item In the $\Z/2\Z$-graded setting, 
            we can let $M$ be a hypersurface of
            degree at least $2n+1$  in $\mathbb{C}P^n$, $n \geq 2$ and let $D$
            be any smooth hyperplane section as above. This is once more
            multiplicatively topological because (as seen in Example
            \ref{example:toppairs}) there are no spheres in $M$ at all, for
            generic $J$.  \end{enumerate}
\end{example}

We now turn to the proofs of  Theorem \ref{thm: toppair} (Theorem \ref{thm: toppair2}) and Theorem \ref{thm: toppairring} (Theorem \ref{thm: ringstructures}), which involves introducing some basic definitions arising in log Gromov-Witten theory. To fix notation, for $T$ a tree with  $|E_{\operatorname{ext}}(T)|=2$, we let
 $\mathcal{M}_{0,2}(T,M)$ to be the moduli space of $J_0$-holomorphic sphere
 bubbles modelled on $T$. Denote the two external edges by $\vec{e}$ and
 $\cev{e}$.  
 \begin{defn} 
     We say that a $J_0$-holomorphic sphere $u: S^2 \to D_{I}$ has {\em depth $I$} if 
     $u(S^2) \subset D_I$ and 
     $u(S^2) \not \subset D_j$ for $j \notin I$. 
 \end{defn}

 For any $u \in \mathcal{M}_{0,2}(T,M)$, and any vertex $\nu \in V(T)$, let
 $I_{\nu} \subset \{1, \ldots, k\}$ denote the depth of $u_\nu$; 
 over all $\nu \in V(T)$, one has a corresponding depth function associated to
 $u$ 
 \[
     I_{(-)}: V(T) \to \mathcal{P}(\{1, \ldots, k\}),
 \]
(as usual $\mathcal{P}(-)$ denotes powerset).

Assume that for each $\nu \in V(T)$, $u_\nu$ is enhanced with the additional
data of meromorphic sections $\psi_i \in \Gamma_m(S^2, u_{\nu}^*(ND_i))/
\mathbb{C}^*$ for every $i \in {I_{\nu}}$ 
For any $z \in S^2$ and $i \in I_{\nu}$ we set $\operatorname{ord}_{\nu,i}(z)$
to be the order of any zero or pole of $\psi_i$ at $z$. This function is
non-vanishing at at most finitely many points $z \in S^2$.  We may extend the
definition of $\operatorname{ord}_{\nu,j}(z)$ to any
 $j \notin I_{\nu}$ by  recording the intersection
multiplicity $m_i(z)$ of $u_\nu$ with $D_i$ at $z$ (which is again
non-vanishing at at most finitely many points); note that positivity of
intersection implies that this number is strictly positive if $u_\nu$ intersects
$D_i$. Putting these constructions together gives rise to a function 
\begin{align*} 
    \operatorname{ord}_\nu: S^2 \to \mathbb{Z}^k \\ z \to \lbrace ord_{\nu,i}(z)\rbrace
\end{align*}
(which implicitly depends on the choice of extra data $\{\psi_i \in \Gamma_m(S^2, u_{\nu}^*(ND_i))/
\mathbb{C}^*\}_{i \in I_{\nu}}$).
\begin{defn} 
    A {\em pre-logarithmic enhancement} of a stable curve $u \in \mathcal{M}_{0,2}(T,M)$
    consists of, for each $\nu \in V(T)$, a collection of meromorphic sections
    $\{\psi_i \in \Gamma_m(S^2, u_{\nu}^*(ND_i))/ \mathbb{C}^*\}_{i \in
    I_{\nu}}$ such that the associated functions
    $\{\operatorname{ord}_{\nu}\}_{\nu \in V(T)}$ satisfy:
    \begin{itemize} 
        \item[(i)] $\operatorname{ord}_\nu$ is non-vanishing only at the marked points corresponding to edges $e \in E(T).$ 
        \item[(ii)] For any internal edge $e \in E(T)$ bounding two vertices $\nu^{+}$ and $\nu^{-}$, we have 
        \begin{align} \label{eq:ordersmatch}
            \operatorname{ord}_{\nu^{+}}(z_e^{+})=-\operatorname{ord}_{\nu^{-}}(z_e^{-}),
        \end{align} 
        where $z_e^{+}$ and $z_{e}^{-}$ are the marked points corresponding to
        $e$ on $u_{\nu^{+}}$ and $u_{\nu^{-}}.$ 
    \end{itemize} 
\end{defn} 
\begin{rem}
    We note that unlike the more sophisticated notion of log map from
    Definition 3.8 of \cite{Tehrani}, the data and conditions constituting a
    pre-logarithmic enhancement are simply a fiber product of the data and conditions
    defined individually for (a pre-logarithmic enhancement relative) each
    smooth component of $\D$.
\end{rem}
If $z_{\infty}$ is the marked point on $S^2$ corresponding to the edge
$\cev{e}$, we have an evaluation $$ ev_{z_{\infty}}: \mathcal{M}_{0,2}(T,M) \to
M $$ Let  $\mathcal{M}(T, x_0)$ be the moduli space of  
$$ \mathcal{M}(T, x_0):=\mathcal{M}_{0,2}(T,M) \times_{ev_{z_{\infty}}} \mc{M}(x_0) $$ 
(recall the definition of $\mc{M}(x_0)$ in Definition \ref{def:classicalPSSmoduli}).

The next Lemma shows that for topological pairs, the log PSS moduli spaces from \eqref{modulispace1} (not
just in low energy) have a suitable compactification provided that $\ell$ and
hence $\lambda_{\ell}$ is taken sufficiently large and provided we choose
generic $J_S \in \mathcal{J}_S(V)$ such that the complex structure $J_0$ at $z_0$ 
(recall Definition \ref{defn: sdcs}) agrees with the one from Definition
\ref{def:topologicalpair}. Before stating the Lemma, we remind the reader that $w(\v)$ is the weighted winding number from 
\eqref{eq:windingvec} and $\lambda_\ell$ is the slope of the Hamiltonian $H^{\ell}.$

\begin{lem} 
\label{lem:compactness2} Let $(M,\D)$ be a topological pair, $\v$ a multiplicity vector such that $$w(\v) < \lambda_{\ell}, $$ 
and let $c$ be a critical point of the Morse function $f_I: \SIo \to \mathbb{R}$ where $I$ is the support of $\v$. Assume that $||H^{\ell}-h^{\ell}||$ is chosen sufficiently $C^2$ small and let $x_0$ be a Hamiltonian orbit in $\mathcal{X}(X; H^{\ell})$.
\begin{itemize} 
    \item If $\deg(x_0)-\deg(|\mathfrak{o}_c| t^{\v})=0$, then for generic $J_S \in
    \mathcal{J}_{S,\ell}(V)$ with $J_{z_{0}}=J_0$, the moduli space ${\mc{M}}(\v, c, x_0)$ is compact. \vskip 5 pt
    \item  If $\deg(x_0)-\deg(|\mathfrak{o}_c| t^{\v})=1$, then for generic $J_S \in
    \mathcal{J}_{S,\ell}(V)$ with $J_{z_{0}}=J_0$, ${\mc{M}}(\v, c, x_0)$ admits a compactification (in the sense of Gromov-Floer convergence)
    $\overline{\mc{M}}(\v,c, x_0)$, such that $\partial\overline{\mc{M}}(\v, c, x_0)= \partial_M \bigsqcup \partial_F$ where 
\begin{align} 
    \label{pf}\partial_F:= \bigsqcup_{x',\deg(x_0)-\deg(x')=1}  \mc{M}(x_0,x') \times \mc{M}(\v,c, x') \\
    \label{pm}\partial_M:= \bigsqcup_{c',\deg(c')-\deg(c)=1} \mc{M}(\v,c', x_0) \times \mc{M}(c',c)
 \end{align}
\end{itemize} 
\end{lem}
\begin{proof}   
  We consider the closure $\overline{\mc{M}}(\v, x_0) \subset
  \overline{\mc{M}}(x_0)$. The argument of \cite[Lemma 4.13]{GP1} rules
  out cylinders breaking along orbits in $\D$, so we are only concerned with preventing sphere
  bubbling (after which, positivity of intersection implies as usual that
  broken cylinders stay away from $\D$). For simplicity, we first discuss
  sphere bubbling at the distinguished point $z_0 \in S$, temporarily
  ignoring the possibility of sphere bubbles forming at other marked points
  along $S$ or along Floer cylinders. Consider a subsequence $u_n$
  converging to some limit 
  \[ u_{\infty} \in \prod_{x_1,\cdots,x_r} \mc{M}(x_0,x_r) \times \cdots \times \mathcal{M}(x_2,x_{1})  \times \mathcal{M}(T,x_1).\] 
  As our PSS solution is $J_0$-holomorphic near $z_0$ and in view of the
  condition on the Nijenhuis tensor for complex structures in
  $\mathcal{J}(M,\D)$, we may apply the rescaling analysis of \cite[Lemma
  4.9]{Tehrani}  to conclude that the corresponding $u_{\infty,T} \in
  \mathcal{M}_{0,2}(T,M)$ admits a pre-logarithmic enhancement such that if
  $\nu_f$ is the vertex bounding $\cev{e}$, 
  \begin{align}
     \operatorname{ord}_{\nu_{f}}(z_\infty)=-m(z_0) 
 \end{align} 
 where $-m(z_0)$ is equal to the intersection multiplicity of the PSS solution
 at $z_0$.\footnote{The argument given in \cite[Lemma 4.9]{Tehrani} begins
  by noting that it suffices to verify \eqref{eq:ordersmatch} for the
  orders relative each smooth component $D_i$ of $\D$ individually, immediately
  reducing to the case of a single smooth divisor. Once in the smooth case, the
  result appears in various places; see e.g., \cite[Proposition 7.3]{Ionel:2003kx} for a
  separate treatment.}
  By definition $u_{\nu_f}$ lies in the stratum $D_{I_{\nu_f}}$, and  because
 $\operatorname{ord}_{\nu_{f}}(z_\infty) \in (\mathbb{Z}^{\leq 0})^k$ 
 $z_\infty$ cannot be an intersection point of $u_{\nu_f}$ with $\cup_{j \notin
 I_{\nu_f}} (D_j \cap D_{I_{\nu_f}})$ by positivity of intersection. The
 topological pair condition therefore implies $u_{\nu_f}$ has at least 2
 other marked points where it intersects $\cup_{j \notin
 I_{\nu_f}} (D_j \cap D_{I_{\nu_f}})$, i.e., there are
 at least 3 marked points including $z_\infty$.  The topological pair
 condition similarly implies that any other component $u_\nu$ must have at
 least 2 marked points, but now the stable curve has too many external marked
 points to arise as a bubble tree at $z_0 \in S$.

Now we rule out the possibility of sphere bubbling occuring at some other point
in $S$ or along a Floer cylinder.  We have seen that no sphere bubbling can
occur at at $z_0 \in S$ and thus $u_{\infty}$ intersects $\D$ with multiplicity
$\v$ at this point. If $u_{\nu,\infty}$ denote the collection of non-trivial
sphere bubbles, by positivity of symplectic area, there must exist a divisor
$D_i$ for which $\sum_\nu u_{\nu,\infty} \cdot D_i >0$. By positivity of
intersection with $\D$, any Floer cylinder or PSS solution must intersect $D_i$
with non-negative multiplicity. This however contradicts the fact that the sum
of all intersections with $D_i$ away from $z_0$ must be zero.  

\end{proof}
See Figure \ref{1pointedsphere} for a schematic image of how the presence of a 1-pointed sphere could lead to additional breakings appearing in the codimension 1 boundary of the compactification $\overline{\mc{M}}(\v,c, x_0)$.

\begin{figure}[h] 
    \caption{A 1-pointed sphere appearing (in a degenerate configuration) as a limit of elements of $\mc{M}(\v,c, x_0)$. The existence of such a 1-pointed sphere (for non-topological pairs) a priori obstructs compactness of the union of the 1-dimensional moduli space $\mc{M}(\v,c, x_0)$ with \eqref{pf}-\eqref{pm}, and hence (a priori) obstructs the map \eqref{eq: fullPSSdef} from being a cochain map. \label{1pointedsphere}}
    \centering
    \includegraphics[scale=1.5]{./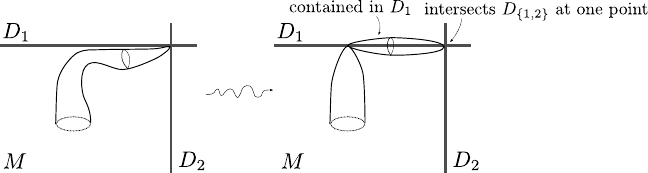}
\end{figure}

 It follows from Lemma \ref{lem:compactness2} that, after as usual choosing generic $J_S$, we can define: \begin{align} \label{eq: fullPSSdef}
 \PSSlog^{\ell,\v} (z t^{\v}) :=  \sum_{\operatorname{vdim}(\mc{M}(\v, c, x_0))=0} \sum_{u \in\mc{M}(\v, c, x_0)} \mu_u (z)
\end{align} for any $z \in |\mathfrak{o}_c|$ (we are using the same notation here as in \eqref{eq:PSSdef}). Extending by $\mathbf{k}$-linearity induces a cochain map 
$$ \PSSlog^{\ell}: F_{w_{\ell}} C_{log}^*(M,\D) \to CF^*(X \subset M;H^{\ell}), $$
where the left-hand side is (the chain-level version of) \eqref{eq:filterlogchainys} and the right-hand side is the Floer cochains from \eqref{eq:Floerrelcoc}. This induces a well-defined cohomological map 
\begin{align} \label{eq: PSSl} 
    \PSSlog^{\ell}: F_{w_{\ell}} H_{log}^*(M,\D) \to HF^*(X \subset M;H^{\ell}).
\end{align}

The argument from
\cite[Lemma 4.13]{GP1} 
can also be similarly adapted to show that for $\ell_2 \geq \ell_1$, the
continuation map commutes with the PSS maps :

\begin{lem}  We have a commutative triangle

 \[
\xymatrix{
   F_{w_{\ell_{1}}} H_{log}^*(M,\D) \ar[d]^{\PSSlog^{\ell_1}} \ar[dr]^{\PSSlog^{\ell_2}} \\
  HF^{*}(X \subset M; H^{\ell_1})  \ar[r]^{\mathfrak{c}_{\ell_1,\ell_2}}  & HF^{*}(X \subset M; H^{\ell_2})
}
\]

\end{lem}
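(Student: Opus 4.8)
The plan is to run the standard TQFT chain-homotopy argument: construct a one-parameter family of log PSS moduli spaces on $S = \C P^1\setminus\{0\}$ interpolating between the composite $\mathfrak{c}_{\ell_1,\ell_2}\circ\PSSlog^{\ell_1}$ and $\PSSlog^{\ell_2}$, and read off the chain homotopy from the boundary of the $1$-dimensional components. This is the ``full'' (rather than low energy) analogue of Lemma \ref{lem: lowcontok}, so as there the only substantial point will be compactness of the interpolating moduli space; for topological pairs this is handled by combining the action estimates of Lemma \ref{lem: homotopycomp} (to rule out breaking along divisorial orbits) with the rescaling analysis of \cite[Lemma 4.9]{Tehrani} and the topological pair hypothesis (to rule out sphere bubbling at $z_0$), exactly as in Lemma \ref{lem:compactness2}.

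In more detail, I would keep the domain $S$ with its marked point $z_0=\infty$, cylindrical end $\epsilon$, and cutoff $\beta=\rho(s)\,dt$ from \S\ref{section:lowenergydef}, and for a parameter $q\in\R$ equip it with the Hamiltonian term $X_{H^q_{s,t}}\otimes\beta$, where along the cylindrical end $H^q_{s,t} = (1-\rho(s-q))H^{\ell_2}+\rho(s-q)H^{\ell_1}$ is the concrete homotopy \eqref{eq: concretehomotopy} recentered at $q$ (so that where $R^{\mathrm{out}}\ge K^{\mathrm{out}}$ it takes the standard linear form and the ``monotonicity of slopes with bounded error'' hypothesis of Lemma \ref{lem: homotopycomp} holds), together with a domain-dependent $J^q_S$ agreeing near $z_0$ with the $J_0$-type data defining $\PSSlog^{\ell_2}$ and along the cylindrical end with the continuation data $J_{s,t}$. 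For $q\gg0$ this coincides with the data defining $\PSSlog^{\ell_1}$ on the locus where $\beta\ne0$, and for $q\ll0$ with that defining $\PSSlog^{\ell_2}$. Imposing the tangency/intersection constraints \eqref{eq:incidencecondition}, \eqref{eq:incidence} at $z_0$ and forming the fiber product with $W^s(f_I,c)$ along the enhanced evaluation $\Evzo$ produces, for generic $J^q_S$, moduli spaces $\mc{M}_q(\v,c,x_0)$ of virtual dimension $\deg(x_0)-\deg(|\mathfrak{o}_c|t^{\v})$ and a parametrized moduli space $\mc{M}_{[q]}(\v,c,x_0)$ one dimension larger, all cut out transversely. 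The energy bookkeeping is the evident adaptation of Lemma \ref{lem:energypss}, picking up an extra error term $\int_S u^*\partial_sH^q_{s,t}\le\operatorname{sup}(H^{\ell_1}-H^{\ell_2})<\operatorname{sup}(H^{\ell_1})$ as in \eqref{eq: errorterm}; together with \eqref{eq: action} and the inequality $\lambda_{\ell_1}(1-\epsilon_{\ell_1}^2)>w(\v_{\ell_1})(1-\epsilon_{\ell_1}^2/2)^2$ fixed after Lemma \ref{lem: homotopycomp}, this keeps the topological energy of any element bounded in terms of $w(\v)$ and $\lambda_{\ell_1}$, hence below the thresholds needed, whenever $w(\v)\le w_{\ell_1}$.

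The compactness analysis then proceeds fiberwise over $q$, following Lemma \ref{lem:compactness2}. Gromov--Floer compactness in $M$ applies; the ``bad'' limits to exclude are: (i) breaking of the incoming cylinder along a divisorial orbit in $\mathcal{X}(\D;H^{\ell_1})$ or $\mathcal{X}(\D;H^{\ell_2})$, ruled out by the action estimate of Lemma \ref{lem: homotopycomp} (the string of inequalities culminating in \eqref{eq:keyinequality}, with the error term now supplied by $\operatorname{sup}H^{\ell_1}$), which applies since $H^q_{s,t}$ is monotone up to the controlled error; (ii) sphere bubbling at $z_0$, ruled out by applying the rescaling analysis of \cite[Lemma 4.9]{Tehrani} to the $J_0$-holomorphic piece near $z_0$ to endow the resulting bubble tree with a pre-logarithmic enhancement, and then invoking the topological pair condition to see that such a bubble tree would carry at least three external marked points, impossible for a tree of sphere bubbles at a single point; (iii) sphere bubbling elsewhere on $S$ or along the cylinder, ruled out by positivity of intersection with $\D$ (all asymptotics, and $z_0$, carry non-negative intersection with each $D_i$, summing to $0$ away from $z_0$). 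Hence $\overline{\mc{M}}_{[q]}(\v,c,x_0)$ is, when $\deg(x_0)=\deg(|\mathfrak{o}_c|t^{\v})$, a compact $1$-manifold whose boundary consists of: the fiber over $q=+\infty$, namely $\bigsqcup_y\mc{M}(\v,c,y)\times\mc{M}_s(x_0,y)$ (a $\PSSlog^{\ell_1}$ solution glued to a continuation cylinder, i.e.\ the composite $\mathfrak{c}_{\ell_1,\ell_2}\circ\PSSlog^{\ell_1}$); the fiber over $q=-\infty$, namely $\mc{M}(\v,c,x_0)$ for $\PSSlog^{\ell_2}$; and the Floer-breaking and Morse-breaking strata $\partial_F$, $\partial_M$ (breaking of the incoming end along an $H^{\ell_2}$-orbit in $X$, resp.\ breaking of a Morse flowline at $z_0$), which assemble into $\partial_{CF}\circ K+K\circ\partial$ for the map $K$ counting rigid elements of $\mc{M}_{[q]}(\v,c,x_0)$ with $\deg(x_0)=\deg(|\mathfrak{o}_c|t^{\v})-1$. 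Signed counting of this boundary gives $\PSSlog^{\ell_2}-\mathfrak{c}_{\ell_1,\ell_2}\circ\PSSlog^{\ell_1}=\partial_{CF}\circ K+K\circ\partial$ on $F_{w_{\ell_1}}C^*_{log}(M,\D)$, which on cohomology is the claimed commutativity. The one genuinely load-bearing step is (ii), the exclusion of sphere bubbling at $z_0$; but since this is precisely the mechanism of Lemma \ref{lem:compactness2} applied over the parameter $q$, with the extra continuation error term harmlessly absorbed into the energy bound, the only thing needing a check is that the inequalities of Lemma \ref{lem: homotopycomp} survive with that error term, which is routine.
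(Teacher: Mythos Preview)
Your approach is correct and is precisely the elaboration the paper has in mind: the paper's own proof is the single sentence ``The argument from \cite[Lemma 4.13]{GP1} can also be similarly adapted,'' and your parametrized-moduli-space chain homotopy, with compactness supplied by the action estimate of Lemma~\ref{lem: homotopycomp} together with the bubble exclusion of Lemma~\ref{lem:compactness2}, is exactly that adaptation (this is also the ``full'' version of the argument behind \cite[Lemma 4.18]{GP1}, cited in the low-energy case as Lemma~\ref{lem: lowcontok}).

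One bookkeeping correction: with the paper's convention $\rho(s)=1$ for $s\ll 0$, your formula $H^q_{s,t}=(1-\rho(s-q))H^{\ell_2}+\rho(s-q)H^{\ell_1}$ has asymptotic $H^{\ell_1}$ at $s\to-\infty$, so the output orbit $x_0$ lies in $\mathcal{X}(X;H^{\ell_1})$ rather than $\mathcal{X}(X;H^{\ell_2})$; this is inconsistent with your subsequent identification of the $q\to\pm\infty$ fibers and of the $\partial_F$ stratum. Swapping $\ell_1\leftrightarrow\ell_2$ in the formula (equivalently, using $H^q_{s,t}=(1-\rho(s+q))H^{\ell_1}+\rho(s+q)H^{\ell_2}$ with $q\in[0,\infty)$) fixes this: the output then lies in $CF^*(X\subset M;H^{\ell_2})$, the $q\to\infty$ fiber is the glued configuration $\PSSlog^{\ell_1}$-then-continuation, and the $q=0$ fiber is $\PSSlog^{\ell_2}$, as you intended.
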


Passing to the limit, we therefore obtain a map 
\begin{align}\label{eq: PSSlim} 
    \PSSlog: H_{log}^*(M,D) \to SH^*(X) 
\end{align}

For topological pairs $(M, \D)$, the spectral sequence \eqref{eq:Spec2}
degenerates. Moreover, the map \eqref{eq: PSSlim} gives a canonical splitting
of this spectral sequence:
\begin{thm}  \label{thm: toppair2} Suppose that $(M, \D)$ is a topological pair. The map \eqref{eq: PSSlim} is an isomorphism. \end{thm}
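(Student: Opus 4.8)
The plan is to deduce Theorem \ref{thm: toppair2} from the already-proven Theorem \ref{thm:main} (Theorem \ref{thm:spectral}) by an ``associated graded'' argument. First I would observe that the map \eqref{eq: PSSlim} is filtered: by construction, $\PSSlog^{\ell}$ sends $F_{w_{\ell}} H_{log}^*(M,\D)$ into the Floer cohomology of $H^{\ell}$, and tracking the weight $w(\v)$ through the definition \eqref{eq: fullPSSdef} together with the action estimate \eqref{eq: action} shows that a class $\alpha t^{\v}$ maps into $F_{w(\v)} HF^*(X \subset M; H^{\ell})$; passing to the limit, $\PSSlog$ respects the filtrations $F_w H_{log}^*(M,\D)$ and $F_w SH^*(X)$. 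Thus $\PSSlog$ is a morphism of filtered $\mathbf{k}$-modules, and it induces a map on the associated graded pieces, equivalently on the $E_1$-pages (since the log cohomology filtration is canonically split, $gr_F H_{log}^*(M,\D) \cong H_{log}^*(M,\D)$).

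The key step is to identify this induced map on associated graded with the low-energy log PSS map $\PSSlog^{low}$ from \eqref{eq:Speciso2}. This is where the topological pair hypothesis does its work: the compactness statement of Lemma \ref{lem:compactness2} shows that for a topological pair the full (not merely low-energy) PSS moduli spaces $\mc{M}(\v, c, x_0)$ with $\deg(x_0) - \deg(|\mathfrak{o}_c| t^{\v}) \leq 1$ admit boundary-free or codimension-one-broken compactifications with no sphere bubbles, so $\PSSlog^{\ell}$ is a genuine cochain map on all of $F_{w_\ell} C_{log}^*(M,\D)$. When one passes to the associated graded quotient $CF^*(X \subset M; H^{\ell})_{w(\v)}$, only the PSS solutions with $w(x_0) = w(\v)$ survive (others increase action and are killed in the quotient), and by Lemma \ref{lem:energypss} these are precisely the low-energy solutions counted by $\PSSlog^{low}$. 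Hence $gr_F(\PSSlog) = \PSSlog^{low}$ on each $E_1$-page, and by Theorem \ref{thm:spectral} the latter is an isomorphism $H_{log}^*(M,\D) \cong \bigoplus_{p,q} E_1^{p,q}$.

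The remaining step is a standard homological-algebra fact: a filtered map between filtered $\mathbf{k}$-modules (with filtrations that are bounded and exhaustive, as here) which induces an isomorphism on associated graded is itself an isomorphism. I would invoke this for the filtered complexes computing $H_{log}^*(M,\D)$ and $SH^*(X)$ — more precisely, one applies it to the induced map of spectral sequences: $\PSSlog$ induces a map of the convergent spectral sequences \eqref{eq:Spec2} which is an isomorphism on $E_1$, hence on $E_\infty$, hence on the associated graded of the target filtration, which forces $\PSSlog$ itself to be an isomorphism (using exhaustiveness and boundedness-above of $F^p SC^*(X)$, exactly as in the convergence discussion of \S \ref{sect: actionspec}). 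In particular the spectral sequence degenerates at $E_1$ and $\PSSlog$ realizes the splitting.

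\textbf{Main obstacle.} The principal subtlety is the bookkeeping needed to make ``$gr_F(\PSSlog) = \PSSlog^{low}$'' precise: one must check that the various small constants ($\epsilon_\ell$, $\|H^\ell - h^\ell\|_{C^2}$, the $C^0$-closeness of $\Sigma_\ell$ to $\hatXl$, and the separating perturbations $\kappa_{i,\ell}$ of \S \ref{sect: PSSiso1}) can be chosen simultaneously so that (i) Lemma \ref{lem:compactness2} applies for all $\v$ with $w(\v) \leq w_\ell$, (ii) the action estimates \eqref{eq: action} and energy estimates \eqref{eq:Etop} are sharp enough that the full PSS cochain map genuinely refines the low-energy one on the associated graded, and (iii) the resulting maps are compatible with continuation maps, so the limit \eqref{eq: PSSlim} exists and is filtered. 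Each of these is of a type already carried out earlier in the paper (compare Lemmas \ref{lem:compactness}, \ref{lem: lowcontok}, \ref{lem:localizeisomorphism}), so the work is in assembling them rather than in any genuinely new analysis; once assembled, the conclusion is immediate from Theorem \ref{thm:main}.
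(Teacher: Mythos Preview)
Your proposal is correct and follows essentially the same approach as the paper: lift $\PSSlog$ to a filtered cochain map whose induced map on $E_1$-pages is $\PSSlog^{low}$, then invoke Theorem \ref{thm:spectral} and spectral sequence comparison. The paper makes the cochain-level lift explicit by passing through a telescope complex $\widehat{C}_{log}(M,\D)$ mapping to $SC^*(X)$ (using the chain homotopies witnessing compatibility with continuation), but this is exactly the bookkeeping you flag in your ``main obstacle'' paragraph rather than a different idea.
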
 
\begin{proof} Recall that we have inclusion maps \[
i_{w_{\ell}, w_{\ell+1}}: F_{w_{\ell}} C^*_{log}(M,\D) \to F_{w_{\ell+1}} C^*_{log}(M,\D)
\]  Set \begin{align} \widehat{C}_{log}(M,D)= (\bigoplus_{\ell}F_{w_{\ell}} C^*_{log}(M,\D)[q],\partial) \end{align} to be the cochain level direct limit from Section \ref{sect: actionspec}. One may lift \eqref{eq: PSSlim} to a cochain map \begin{align} \underline{\operatorname{PSS}}_{log}: \widehat{C}_{log}(M,D) \to SC^*(X) \end{align}
using the chain homotopy constructed in the proof of \cite[Lemma 4.18]{GP1}. This cochain level lifting is filtered and chasing through the definitions, it is easy to see that the induced map on spectral sequences is given by  \eqref{eq:Speciso2}. It follows that the map \eqref{eq: PSSlim} is an isomorphism.   \end{proof} 

\begin{cor} For each of the Examples in Example \ref{example:toppairs} and Lemma \ref{lem:hyperplanes}, we have additive isomorphisms $H^*_{log}(M,\D) \cong SH^*(X)$. \end{cor}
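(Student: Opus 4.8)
The plan is to observe that the corollary is an immediate formal consequence of Theorem \ref{thm: toppair2} (Theorem \ref{thm: toppair}), once we verify that each of the listed pairs is in fact a topological pair in the sense of Definition \ref{def:topologicalpair}. Indeed, Theorem \ref{thm: toppair2} says precisely that whenever $(M,\D)$ is topological, the map \eqref{eq: PSSlim} $\PSSlog\colon H^*_{log}(M,\D)\to SH^*(X)$ is an isomorphism of $\mathbf{k}$-modules; so there is nothing further to prove on the Floer-theoretic side, and the entire content of the corollary is the claim that the pairs enumerated in Example \ref{example:toppairs} and in Lemma \ref{lem:hyperplanes} satisfy the hypothesis.

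The key steps are therefore just to run down the list. For item (1) of Example \ref{example:toppairs}, if $\omega(\pi_2(M))=0$ then no pair $(M,\D)$ admits any non-constant $J_0$-holomorphic sphere for any $J_0\in\mathcal{J}(M,\D)$ (and likewise in any stratum $D_I$, since a sphere there is in particular a sphere in $M$), so the defining condition of a topological pair is vacuously satisfied. For item (2) (and the special case $(\P^n,\{\ge n+1\textrm{ generic planes}\})$), the argument is already given in the excerpt: a non-constant $J_0$-holomorphic sphere in $D_I$ not contained in a deeper stratum has positive $\omega$-pairing with each remaining component $D_j$, so it must meet $\cup_{j\notin I}(D_j\cap D_I)$, and it cannot meet it in a single point because that point would then lie in an empty $(\dim_\C M+1)$-fold intersection. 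Item (3) is the $\Z/2\Z$-graded case of a hypersurface $M\subset\mathbb{C}P^n$ of degree $\ge 2n+1$ with $D$ a smooth hyperplane section: a dimension count shows the moduli space of somewhere-injective $J$-holomorphic spheres of positive area in $M$ (or in $D$) is generically empty, so again there are no relative spheres at all; here one invokes Remark \ref{rem: Z2Zgr} to know Theorem \ref{thm: toppair2} still applies in the $\Z/2\Z$-graded setting. Finally, for Lemma \ref{lem:hyperplanes}, the topological pair property was proven there directly, so no extra work is needed.

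I do not expect any genuine obstacle: the corollary is a bookkeeping statement assembling Theorem \ref{thm: toppair2} with the already-established verifications that the listed pairs are topological. The only mild subtlety is making sure that for the $\Z/2\Z$-graded examples one cites Remark \ref{rem: Z2Zgr} so that ``additive isomorphism'' is interpreted in the $\Z/2\Z$-graded sense, and that in each example the same $J_0$ simultaneously lies in $\mathcal{J}(M,\D)$ and realizes the absence of $0$- and $1$-pointed relative spheres --- but this is exactly what Definition \ref{def:topologicalpair} and the cited examples provide, so the proof is essentially a one-line appeal to Theorem \ref{thm: toppair2} together with Examples \ref{example:toppairs} and Lemma \ref{lem:hyperplanes}.
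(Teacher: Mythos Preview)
Your proposal is correct and matches the paper's approach: the corollary is stated without proof in the paper, being an immediate consequence of Theorem \ref{thm: toppair2} applied to pairs already verified to be topological in Example \ref{example:toppairs} and Lemma \ref{lem:hyperplanes}. Your expanded verification of each case is accurate and more detailed than what the paper provides.
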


We next show how this result may be strengthened for multiplicative pairs to
give a description of the ring structure on $SH^*(X)$:  

\begin{thm} \label{thm: ringstructures} 
    For multiplicatively topological pairs, the map \eqref{eq: PSSlim} is an isomorphism of rings.  
\end{thm}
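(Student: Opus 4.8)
\textbf{Proof strategy for Theorem \ref{thm: ringstructures}.} The plan is to run the same TQFT/cobordism argument that established Theorem \ref{lem:spectralrings} (low energy PSS is a ring map), but now applied to the ``global'' (non-low-energy) log PSS moduli spaces $\mc{M}(\v, c, x_0)$ used to construct \eqref{eq: PSSlim}, whose good compactness for topological pairs is the content of Lemma \ref{lem:compactness2}. By Theorem \ref{thm: toppair2} we already know \eqref{eq: PSSlim} is an additive isomorphism, so it suffices to verify multiplicativity, and by Lemma \ref{lem:generatingrelationsfortheproduct} it suffices to verify the two generating relations: for $\alpha_1 \in H^*(\SIo)$, $\alpha_2 \in H^*(\mathring{S}_J)$,
\begin{align}
    \PSSlog(\alpha_1 t^{\v_I}) \cdot \PSSlog(\alpha_2 t^{\v_J}) &= \PSSlog((\alpha_1 \star \alpha_2) t^{\v_I + \v_J}),\\
    \PSSlog(\alpha_1 t^{\v_1}) \cdot \PSSlog([\mathring{S}_{J,m}] t^{\v_J}) &= \PSSlog((\alpha_1 \star [\mathring{S}_{J,m}]) t^{\v_1 + \v_J}),
\end{align}
for $J \subseteq I$ in the second case. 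First I would set up, exactly as in \S \ref{section:rings}, the interpolating family of domains $S_q = \mathbb{C}P^1 \setminus \{0\}$ with two extra marked points $z_1, z_2$ colliding as $q \to 0$, together with the auxiliary moduli spaces $\mathcal{M}(\v_1, \v_2; x_0)$, $\mathcal{M}(\v_1, \v_2; c_1, c_2; x_0)$ and their $q \to 0$ degenerate limit $\mathcal{M}_0(\v_1, \v_2; x_0)$, attaching Morse flow lines at the two marked points and incorporating the enhanced evaluation maps from Lemma \ref{lem: evalKN}. The operation obtained by counting $q=b$ configurations equals $\PSSlog(\alpha_1 t^{\v_1}) \cdot \PSSlog(\alpha_2 t^{\v_2})$ (the analogue of Lemma \ref{lem: connectsumcob}) and the operation obtained from the $q=0$ configuration $\partial_V$ equals $\PSSlog(\alpha_1 t^{\v_1} \cdot \alpha_2 t^{\v_2})$ (the analogue of Lemma \ref{lem:PSSofproduct}), so the desired identity follows from a count of the boundary of the $1$-dimensional components of the compactified parametrized moduli space.

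The entire content of the argument — and the main obstacle — is proving that this parametrized moduli space has the expected compactification, i.e. that no extra boundary strata arise beyond $q \to b$, $q \to 0$, and Floer-breaking of the output orbit. Unlike the low-energy setting of \S \ref{section:rings}, where small energy alone excluded all sphere bubbling, here the relevant curves carry enough energy that $J_0$-holomorphic sphere bubbles could in principle form; the whole point of the multiplicatively topological hypothesis is to rule these out. Concretely, I would mimic the proof of Lemma \ref{lem:compactness2}: a limiting stable curve $u_\infty$ has a log PSS component $u_{S_q}$ together with a bubble tree $u_{\infty,T} \in \mathcal{M}_{0,2}(T,M)$, and applying the rescaling/pre-logarithmic enhancement analysis of \cite[Lemma 4.9]{Tehrani} as in Lemma \ref{lem:compactness2}, every bubble component $u_\nu$ of depth $I_\nu$ must — by the no-$2$-pointed-relative-sphere hypothesis — carry at least $3$ marked points (counting the node matched to the output multiplicity and the intersections with $\cup_{j \notin I_\nu}(D_j \cap D_{I_\nu})$). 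But the domain $S_q$ only supplies finitely many special points: at $z_0$ (where the multiplicity is $\v_1 + \v_2$ after collision, or $\v_1, \v_2$ at the two separate marked points for $q \neq 0$), and the incoming cylindrical end. A bubble tree attached at one of these points would then have strictly more external marked points than available, giving a contradiction; the only configurations surviving are (i) the constant-bubble degeneration producing $S_0$ (which is the genuine $q \to 0$ boundary $\mathcal{M}_0$, and is accounted for), and (ii) Floer-cylinder breaking at the output, excluded from touching $\D$ by positivity of intersection just as in Lemma \ref{lem:compactness2}. Care is needed precisely when $q \to 0$, since the two marked points $z_1, z_2$ collide onto a sphere bubble $S_{sphere}$: here the no-$1$-pointed-relative-sphere hypothesis (already in force for a topological pair) forces $u|_{S_{sphere}}$ to be constant so that this limiting configuration lands in $\mathcal{M}_0(\v_1, \v_2; x_0)$ rather than introducing a genuine relative sphere — this is the analogue of Lemma \ref{lem:nospheresinproducthomotopy}, upgraded from the low-energy to the topological-pair setting.

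With compactness in hand, the remaining steps are routine adaptations of \S \ref{section:rings}: transversality for all the parametrized moduli spaces and their fiber products with stable/unstable manifolds follows from the methods of \cite[\S 4.4]{GP1} together with \cite{AbSch2} for the Morse pieces; the gluing results identifying the $q=0$ fiber with $\partial_V \overline{\mc{M}}(\v_1 + \v_2, c_1, c_2, x_0)$ are the same elementary ``glue in a constant sphere bubble'' arguments of \cite[Chapter 10]{McDuff:2004aa} invoked in Lemma \ref{lem:prodcompq0}; and the two-step homotopy (finite connect-sum along cylindrical ends, then deformation of Floer data) reproducing Lemma \ref{lem: connectsumcob} and Lemma \ref{lem:PSSofproduct} carries over verbatim. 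Combining these, the signed count of the boundary of the $1$-dimensional parametrized moduli spaces yields the two generating relations, and Lemma \ref{lem:generatingrelationsfortheproduct} then promotes this to full multiplicativity of \eqref{eq: PSSlim}. Since \eqref{eq: PSSlim} is already an additive isomorphism by Theorem \ref{thm: toppair2}, it is a ring isomorphism. As in Remark \ref{rem: Z2Zgr}, nothing in this argument uses the $\Z$-grading, so it applies equally in the $\Z/2\Z$-graded settings of Example \ref{example:multoppairs}.
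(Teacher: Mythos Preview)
Your proposal is correct and follows exactly the paper's approach: re-run the TQFT cobordism argument of \S\ref{section:rings} for the global log PSS moduli spaces, using the multiplicatively topological hypothesis (via the pre-logarithmic enhancement analysis of Lemma \ref{lem:compactness2}) to rule out non-constant sphere bubbles in the compactification of $\mathcal{M}(\v_1,\v_2;x_0)$ (now for all $x_0$, not just those with $w(x_0)=w(\v_1)+w(\v_2)$), after which Lemmas \ref{lem: evalKN}, \ref{lem: connectsumcob}, \ref{lem:prodcompq0}, and \ref{lem:PSSofproduct} carry over verbatim. One small correction: at the $q\to 0$ collision, forcing $u|_{S_{sphere}}$ to be constant already requires the no-$2$-pointed hypothesis, not merely the no-$1$-pointed one, since $S_{sphere}$ carries both marked points $z_1,z_2$ plus the node $z'$ (which, by the pre-log argument, is \emph{not} an intersection with a deeper stratum), so a non-constant $u|_{S_{sphere}}$ could be a $2$-pointed relative sphere; this is exactly the content of the paper's footnote explaining why ``multiplicatively'' topological (rather than just topological) is needed here.
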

\begin{proof}  
    We now have a global $\PSSlog$ map and we run the same analysis as in the proof of Theorem
    \ref{lem:spectralrings} but we consider the moduli spaces $\mathcal{M}(\v_1, \v_2 ; x_0)$ for all $x_0$ (not just those where $w(x_0)=w(\v_1)+w(\v_2)$). We assume that our complex structures near the marked points coincide with the $J_0$ which appears in the definition of multiplicatively topological pairs. The multiplicatively topological hypothesis then ensures, by an analysis identical to the proof of Theorem \ref{lem:compactness2},\footnote{The only difference is that, given that the points $z_1$ and $z_2$ collide, any possible stable sphere bubble will have at most 3 (rather than 2) external marked points. The same analysis thus requires the extra ``multiplicatively'' topological hypothesis to rule such bubbles out.}  that even though we are looking at all $x_0$, the Gromov compactification of this moduli space continues to behave as given in Lemma \ref{lem:nospheresinproducthomotopy}; in particular, 
    only constant spheres arise in the Gromov compactification $\overline{\mathcal{M}}(\v_1, \v_2 ; x_0)$. As in the proof of Theorem \ref{lem:spectralrings} it suffices to verify compatibility with ring structures for inputs of the form specified in cases (i) and (ii) of 
   Lemma \ref{lem:generatingrelationsfortheproduct}.
   
   The fact that only constant sphere bubbles arise now suffices to prove as before that we have continuous evaluations to the real blow-ups as in Lemma \ref{lem: evalKN}. From here, we may construct the moduli spaces $\mathcal{M}(\v_1, \v_2 ; c_1, c_2 ; x_0)$ of Definition \ref{defn:auxmod} as well as their Gromov compactifications. 
The remaining arguments in \S \ref{section:rings} needed to show that these compactified moduli spaces define a cobordism (as usual up to intermediate boundary components which define chain homotopies) between $\operatorname{PSS}_{log}(\alpha_1t^{\v_1}) \cdot \operatorname{PSS}_{log}(\alpha_2t^{\v_2})$ and $\operatorname{PSS}_{log}(\alpha_1t^{\v_1} \cdot \alpha_2t^{\v_2})$ carry through without change. 
\end{proof}

See Figure \ref{2pointedsphere} for a schematic image of how the presence of a 2-pointed sphere could lead to additional breakings appearing in the codimension 1 boundary of the compactification $\overline{\mc{M}}(\v_1, \v_2, x_0)$.

\begin{figure}[h] 
    \caption{A 2-pointed sphere appearing (in a degenerate configuration) as a limit of elements of $\mc{M}(\v_1, \v_2, x_0)$. The existence of such a 2-pointed sphere (when $(M, \D)$ is not multiplicatively topological) a priori obstructs the compactness result stated in Lemma \ref{lem:nospheresinproducthomotopy} from holding as stated, and hence (a priori)   
        obstructs the map \eqref{eq: fullPSSdef} from being a ring map. \label{2pointedsphere}}
    \centering
    \includegraphics[scale=1.5]{./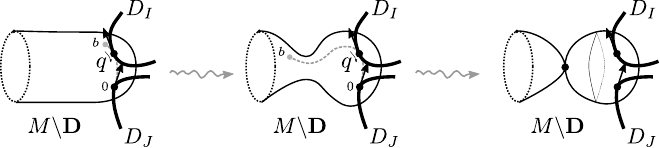}
\end{figure}

\begin{cor} For each of the Examples in Example \ref{example:multoppairs}, we have ring isomorphisms $H^*_{log}(M,\D) \cong SH^*(X)$. \end{cor}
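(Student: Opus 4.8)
The final statement to prove is the corollary that for each pair $(M,\D)$ listed in Example \ref{example:multoppairs}, there is a ring isomorphism $H^*_{log}(M,\D) \cong SH^*(X)$. The plan is essentially to observe that this is an immediate application of Theorem \ref{thm: ringstructures} once we check that each listed pair is multiplicatively topological, which is precisely the hypothesis of that theorem.

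First I would recall from Theorem \ref{thm: ringstructures} that whenever $(M,\D)$ is multiplicatively topological (Definition \ref{def:multiplicativelytopological}), the global log PSS map $\PSSlog\colon H^*_{log}(M,\D) \to SH^*(X)$ of \eqref{eq: PSSlim} is an isomorphism of rings. Hence the corollary reduces to verifying, for each of the four families in Example \ref{example:multoppairs}, that there exists some $J_0 \in \mathcal{J}(M,\D)$ with no $m$-pointed relative $J_0$-holomorphic spheres for $m \leq 2$. But this verification is exactly the content of Example \ref{example:multoppairs} itself: in case (1) ($\pi_2(M) = 0$ or $\omega(\pi_2(M)) = 0$) there are no nonconstant $J$-holomorphic spheres in $M$ at all; in case (2) (all $D_i$ powers of a common line bundle, $k \geq 2\dim_\C M + 1$) one argues as in part (\ref{ex:manyampledivisors}) of Example \ref{example:toppairs} that a sphere meeting $\D$ in $\leq 2$ points would force those points to lie in an empty $(2n+1)$-fold intersection; in case (3) (the blow-up of a hyperplane arrangement in $\P^2$ with each component meeting the others in $\geq 3$ points) the argument parallels Lemma \ref{lem:hyperplanes}; and in case (4) (degree $\geq 2n+1$ hypersurface in $\mathbb{C}P^n$, $\Z/2\Z$-graded setting) the relevant moduli spaces of somewhere-injective spheres are generically empty for dimension reasons, again leaving no spheres at all.

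I would therefore write the proof as: for each listed pair, Example \ref{example:multoppairs} shows the pair is multiplicatively topological; applying Theorem \ref{thm: ringstructures} to each such pair gives that \eqref{eq: PSSlim} is a ring isomorphism $H^*_{log}(M,\D) \stackrel{\cong}{\to} SH^*(X)$, which is the assertion of the corollary. In the $\Z/2\Z$-graded case (4) one additionally invokes Remark \ref{rem: Z2Zgr} to note that Theorem \ref{thm: ringstructures} continues to hold with the grading modifications described there, so the conclusion is unaffected.

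The only conceivable subtlety — and the place I would be most careful — is making sure the same $J_0$ used to certify the multiplicatively topological condition can be taken to be the surface-independent complex structure $J_{z_0}$ near the marked points in the construction of the global PSS and product-homotopy moduli spaces; but this compatibility is already built into the proof of Theorem \ref{thm: ringstructures} (which explicitly chooses $J_S$ so that $J_{z_0}$ agrees with the $J_0$ from Definition \ref{def:multiplicativelytopological}), so nothing new is required here. In short, the corollary is a direct corollary — no new argument beyond citing Example \ref{example:multoppairs} and Theorem \ref{thm: ringstructures} is needed.
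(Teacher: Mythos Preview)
Your proposal is correct and matches the paper's approach: the corollary is stated without proof in the paper, as it is an immediate consequence of Theorem \ref{thm: ringstructures} applied to the pairs already verified to be multiplicatively topological in Example \ref{example:multoppairs}. Your additional remarks about the $J_0$ compatibility and the $\Z/2\Z$-graded case are accurate but go beyond what the paper records.
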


\begin{rem} 
    More generally, one can define,  for each $r \geq 0$  the notion of a pair $(M,\D)$ being ``$r$-topological":
      there should exist a $J_0 \in \mathcal{J}(M,\D)$ such that $(M,\D)$ has no $m$-pointed relative spheres for all $m \leq r$. 
      The topological condition corresponds to $r=1$ and multiplicatively
      topological to $r=2$. It seems reasonable to expect that for such pairs,
      genus zero topological field theoretic operations of ``arity less than or
      equal $r$" on symplectic cohomology can be described in terms of the
      operations on log cohomology (or rather co-chains) of $(M,\D)$.
  \end{rem}

\subsection{Proving deformations are trivial using GW-invariants}\label{subsec:trivializingdeformations}

 For topological pairs, it follows from Lemma \ref{lem:spectralrings}
 and Theorem \ref{thm: toppair2} that the product on $SH^*(X)$ is a deformation
 of the product on $\QH^*(M,\D)$ which respects the filtration. Although this
 deformation is often non-trivial, it can frequently be trivial, even for pairs
 that are not multiplicatively topological.  To this end, we formulate a
 criterion for topological pairs which implies that the $\PSSlog$ map becomes a
 map of rings.  Our criterion will be valid for pairs $(M,\D)$ satsifying the
 following additional condition:
 \vskip 5 pt

\textbf{Condition A:} All divisors $D_i$, $i \in \lbrace 1,\cdots, k \rbrace$ for $k>dim(X)$, are in the same linear system and all strata $D_I$ are connected.  \vskip 5 pt

\begin{defn} 
 For any pair satisfying Condition A, choose an almost complex structure $J_0 \in \mathcal{J}(M,D)$.  For any partition $\lbrace 1,\cdots, k \rbrace= I \cup K$ with $I \cap K=\emptyset$ let
    $\mathcal{M}_{0,3}(M,\mathbf{D},\v_{I},\v_{K})$ denote the
    space of $D_{1}$ and $D_{J}$-regular maps $ u: (S^2,z_0,z_1,z_2) \to (M,\D)$
    such that 
    \begin{align*}
        u^{-1}(D_i)&=  z_0  \textrm{ {for} } i \in I.  \\
        u^{-1}(D_j)&=  z_1 \textrm{ {for} } j \in K.
    \end{align*}
\end{defn}

As the classes represented by curves are primitive, all such curves $u$ are automatically somewhere
injective and thus we may achieve transversality for such maps. Consider $$ev_{z_{2}}^{-1}(X)=\mathcal{M}_{0,3}(M,\mathbf{D},\v_{I},\v_{K})^{o}.$$
Then the evaluation map $$ev_{z_{2}}: \overline{\mathcal{M}}_{0,3}(M,\mathbf{D},\v_{I},\v_{K})^{o} \to X$$
is a proper map. Fix a pair of Liouville domains $\bar{X}_{\vec{\epsilon'}}, \bar{X}_{\vec{\epsilon}}$ such that $\bar{X}_{\vec{\epsilon'}} \subset \bar{X}_{\vec{\epsilon}}.$ After choosing $J_0 \in \mathcal{J}(\bar{X}_{\vec{\epsilon'}},V)$ generically, we may define a relative pseudocycle \begin{equation}
    \underline{GW(\v_I,\v_K)} \in H^*(\bar{X}_{\vec{\epsilon}})
\end{equation}
by intersecting with $\bar{X}_{\vec{\epsilon}}$.

\begin{lem}\label{lem:GWProduct}
    Let $(M,\D)$ be a topological pair which additionally satisfies Condition A. Let $\lbrace 1,\cdots, k \rbrace= I \cup K$ be a partition as above. Then there is a (cohomological) equality:
    \begin{align} 
        \PSSlog(Id t^{\v_I}) \cdot \PSSlog(Id t^{\v_K}) = \PSS(\underline{GW(\v_I,\v_K)}) 
    \end{align}  
\end{lem}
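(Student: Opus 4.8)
The strategy is a TQFT/cobordism argument, entirely parallel in spirit to the proof of Theorem \ref{lem:spectralrings} (low energy PSS is a ring map) and Theorem \ref{thm: ringstructures}, but now keeping careful track of the single non-constant relative sphere class $\v_I + \v_K = \v_{\{1,\dots,k\}}$ that is allowed to bubble because $(M,\D)$ need not be multiplicatively topological. The key point is that under Condition A all the components $D_i$ are primitive in a fixed linear system, so the only relative spheres that can appear when the marked points $z_1,z_2$ collide are those in the class recorded by $\mathcal{M}_{0,3}(M,\D,\v_I,\v_K)$ (or multiple covers of fibers, which we rule out by energy), and these are automatically somewhere injective, hence regular.

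First I would set up, as in \S \ref{section:rings}, the one-parameter family of domains $S_q = \C P^1 \setminus \{0\}$ with two extra marked points $z_1,z_2$ colliding as $q \to 0$, carrying Floer data interpolating so that the $q = b$ end computes $\PSSlog(Id\, t^{\v_I}) \cdot \PSSlog(Id\, t^{\v_K})$ (via Lemma \ref{lem: connectsumcob}, whose proof is formal and carries over) and the $q \to 0$ end yields maps from the nodal domain $S_0 = S_{sphere} \cup_{z'=z_0} S_{plane}$. The crucial difference from Lemma \ref{lem:nospheresinproducthomotopy} is that now $S_{sphere}$ need not carry a constant map: since we are looking at all output orbits $x_0$ (not just $w(x_0) = w(\v_I)+w(\v_K)$), a non-constant $J_0$-holomorphic sphere bubble in the class of $\mathcal{M}_{0,3}(M,\D,\v_I,\v_K)$ can genuinely appear. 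I would analyze this degeneration using the rescaling/pre-logarithmic enhancement analysis of \cite{Tehrani} (as invoked in Lemma \ref{lem:compactness2}): the bubble tree attached at $z'$ must carry marked points $z_1,z_2,z'$ with orders matching $\v_I$ at $z_1$, $\v_K$ at $z_2$, and $-(\v_I+\v_K)$ at $z'$; since $(M,\D)$ is topological (no $0$- or $1$-pointed relative spheres), each component of the tree with fewer than two special points touching a deeper stratum is excluded, and counting points forces the tree to consist of exactly one sphere with three marked points in $\mathcal{M}_{0,3}(M,\D,\v_I,\v_K)$, glued to the $S_{plane}$ component which must then be a (rigid, constant or low-energy) plane contributing the PSS-capping of the evaluation point $ev_{z_2}$. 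Any further sphere bubbling along $S_{plane}$ or along breaking Floer cylinders is excluded exactly as in Lemma \ref{lem:compactness2} by positivity of intersection with $\D$. Multiple covers of fiber classes appearing on $S_{sphere}$ are excluded by energy, using that $\lambda_\ell$ is large.

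With the compactification understood, the boundary of the $1$-dimensional components of the interpolating moduli space (with a codim-$1$ incidence/Morse constraint cutting it down, landing $ev_{z_2}$ in $X$ and then applying the classical $\PSS$ map) has three types of contributions: the $q=b$ end giving $\PSSlog(Id\, t^{\v_I}) \cdot \PSSlog(Id\, t^{\v_K})$; the $q\to 0$ end giving $\PSS$ composed with evaluation from the relative-sphere moduli space, i.e.\ $\PSS(\underline{GW(\v_I,\v_K)})$ after identifying the count of the compactified space $\overline{\mathcal{M}}_{0,3}(M,\D,\v_I,\v_K)^o$ intersected with $\bar X_{\vec\epsilon}$ with the relative pseudocycle $\underline{GW(\v_I,\v_K)}$; and Floer breaking terms, which as usual give chain homotopies. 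This yields the cohomological equality. The main obstacle I anticipate is the compactification analysis at $q\to 0$: specifically, verifying that the pre-logarithmic/rescaling argument of \cite{Tehrani} applies verbatim (one needs the Nijenhuis condition on $J_0 \in \mathcal{J}(M,\D)$, which is part of the definition), and then carefully checking that the topological pair hypothesis — \emph{not} multiplicatively topological — is exactly enough to pin down the bubble tree to the single expected configuration, together with confirming that the resulting evaluation pseudocycle is precisely $\underline{GW(\v_I,\v_K)}$ with the correct orientations and that it is bounded away from $\D$ (hence properly defines a class pulled back to $\bar X_{\vec\epsilon}$). I would also need the elementary but slightly fiddly check that the relative pseudocycle $\underline{GW(\v_I,\v_K)}$ is independent of the auxiliary choices of $\bar X_{\vec\epsilon'} \subset \bar X_{\vec\epsilon}$ and generic $J_0$, so that the right-hand side is well-defined.
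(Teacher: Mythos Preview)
Your approach is essentially the same as the paper's: a cobordism over the family $S_q$, with the $q=b$ end giving the product via Lemma~\ref{lem: connectsumcob} and the $q\to 0$ end producing the new stratum $\overline{\mathcal{M}}_{0,3}(M,\D,\v_I,\v_K)^o \times_{ev_{z_2}} \mc{M}(\vec{0},x_0)$, whose count is $\PSS(\underline{GW(\v_I,\v_K)})$; the topological pair hypothesis is used exactly as in Lemma~\ref{lem:compactness2} to constrain the bubble tree to a single component. Two small remarks: first, the paper notes an additional simplification you omit --- since the inputs are $Id$ classes, no enhanced evaluations into the oriented blowup are needed and Lemma~\ref{lem: evalKN} is bypassed entirely; second, your description of the order at the node $z'$ as $-(\v_I+\v_K)$ is not right: the limiting sphere lies generically in $X$ (it meets each $D_i$ only at $z_1$ or $z_2$), so its order at $z'$ is $\vec{0}$, matching the PSS component in $\mc{M}(\vec{0},x_0)$ --- but this slip does not affect your argument.
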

\begin{proof} As this again follows the same pattern as Theorem \ref{thm: ringstructures}, we will only point out the new points that arise here (the reader can also see Lemma 6.10 of \cite{GP1} for an almost identical argument). For simplicity, we assume that $J_z \in \mathcal{J}(\bar{X}_{\vec{\epsilon'}}, V)$ for all $z \in S$ when running this argument. The first point to note is that, as we are not putting any normal bundle constraints on the jets, we need not equip our marked points with projectivized tangent vectors or consider enhanced evaluations into the oriented blowup (hence the argument of Lemma   \ref{lem: evalKN} is not needed). The second is that there is a new stratum in the Gromov compactification compared to Lemma \ref{lem:nospheresinproducthomotopy} which consists of configurations in the fiber product 
\begin{equation}\label{eq:fiberproductGW}
\overline{\mathcal{M}}_{0,3}(M,\mathbf{D},\v_{I},\v_{K})^o \times
_{ev_{\infty}} \mc{M}(\vec{0}, x_0). 
\end{equation}
The operation associated to counting rigid configurations of
\eqref{eq:fiberproductGW} for varying $x_0$ is by definition the composition 
\begin{align}
    \label{eq: fiberprod} \operatorname{PSS}(GW(\v_I, \v_K))
\end{align}  
The proof now follows from the same cobordism arguments as before, where this
extra stratum gives rise to the right-hand term.   
\end{proof} 
We can now state our criterion for triviality of the deformation: 

\begin{thm} \label{eq:topproduct} Let $(M,\D)$ be a topological pair satisfying Condition A and suppose that \begin{itemize}  \item all invariants  $\underline{GW(\v_I,\v_K)}$ vanish. \item the restriction maps $H^*(X) \to H^*(\SIo)$ are surjective for all $I$. \end{itemize}  Then \eqref{eq: PSSlim} is an isomorphism of rings.  \end{thm}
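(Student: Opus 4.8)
The plan is to verify, under the two hypotheses, that the low energy $\PSSlog^{low}$ map (already known to be a ring map by Theorem \ref{lem:spectralrings}) lifts to a \emph{genuine} ring isomorphism $\PSSlog \colon H^*_{log}(M,\D) \to SH^*(X)$. Since $(M,\D)$ is a topological pair, Theorem \ref{thm: toppair2} already furnishes an additive isomorphism via \eqref{eq: PSSlim}, so the only remaining point is multiplicativity. As in the proof of Theorem \ref{lem:spectralrings} (and Theorem \ref{thm: ringstructures}), it suffices to check compatibility with the product on the generating inputs from Lemma \ref{lem:generatingrelationsfortheproduct}. The new ingredient compared to the multiplicatively topological case is that we no longer exclude $2$-pointed relative spheres, so sphere bubbles \emph{can} appear in the Gromov compactification of the interpolating moduli spaces $\mathcal{M}(\v_1,\v_2;x_0)$; the content of the theorem is that the extra contributions they produce are exactly measured by the GW invariants $\underline{GW(\v_I,\v_K)}$, which we have assumed to vanish.

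\textbf{Key steps.} First I would set up the interpolating moduli space exactly as in \S \ref{section:rings}, but running over all output orbits $x_0$ rather than only those with $w(x_0)=w(\v_1)+w(\v_2)$, and using $J_0 \in \mathcal{J}(M,\D)$ (from Definition \ref{def:topologicalpair}) near the colliding marked points. Second, I would analyze the Gromov compactification $\overline{\mathcal{M}}(\v_1,\v_2;x_0)$: by the usual positivity-of-intersection argument no cylinder breaks along $\D$, and the only genuinely new boundary faces beyond those in Lemma \ref{lem:nospheresinproducthomotopy} arise when the domain degenerates at $q\to 0$ and a sphere bubble forms at $z_0$ carrying some nontrivial homology. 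By the topological pair condition, any stable sphere component meeting a deeper stratum does so in at least $2$ further points, and since $z_0$ is an interior tangency point (with $\operatorname{ord}\le 0$, as in Lemma \ref{lem:compactness2}) the bubble tree would have $\ge 3$ external marked points if it lay entirely in a single stratum $D_J$ with $J\supseteq I\cup K$; the combinatorics then forces the bubble to live in the deepest relevant stratum $D_{I\cup K}$ and, together with Condition A (connectedness of strata and all divisors in one linear system), forces it to have precisely the shape of a curve counted by $\mathcal{M}_{0,3}(M,\mathbf{D},\v_I,\v_K)$. Third, I would identify this boundary face, for inputs of the form $Id\, t^{\v_I}$ and $Id\, t^{\v_K}$, with the fiber product \eqref{eq:fiberproductGW}, whose associated operation is $\PSS(\underline{GW(\v_I,\v_K)})$ by Lemma \ref{lem:GWProduct}. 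Fourth, combining the cobordism coming from the $1$-dimensional components of $\overline{\mathcal{M}}(\v_1,\v_2;c_1,c_2;x_0)$ (as in Lemmas \ref{lem:prodcompq0}, \ref{lem: connectsumcob}, \ref{lem:PSSofproduct}) with the vanishing of $\underline{GW(\v_I,\v_K)}$, one concludes
\begin{align}
    \PSSlog(Id\, t^{\v_I}) \cdot \PSSlog(Id\, t^{\v_K}) = \PSSlog(Id\, t^{\v_I}\cdot Id\, t^{\v_K})
\end{align}
on the nose. Fifth, for the remaining generating relation \eqref{eq:drel1} with a general class $\alpha_1 t^{\v_1}$ against a fundamental class $[\mathring{S}_{J,m}]t^{\v_J}$, I would use the surjectivity of $H^*(X)\to H^*(\mathring{S}_I)$ to write $\alpha_1 = r^*_{\emptyset I}(\tilde\alpha)$ for some $\tilde\alpha\in H^*(X)$; then the relation is deduced from the $\v=\v_I$ case just established together with the fact that $\PSSlog$ is already a module map over $H^*(X) = H^*_{log}(M,\D)_{\mathbf 0}$ (which follows from the low energy statement and compatibility with the filtration, or directly by a simpler interpolation argument in which no sphere bubbling can occur). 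Finally, Lemma \ref{lem:generatingrelationsfortheproduct} promotes compatibility on these generators to a full ring isomorphism.

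\textbf{Main obstacle.} The delicate step is the second one: controlling precisely which sphere bubble configurations survive in $\overline{\mathcal{M}}(\v_1,\v_2;x_0)$ once $2$-pointed spheres are permitted, and checking that the surviving face is \emph{exactly} (with correct orientations and multiplicities) the fiber product $\overline{\mathcal{M}}_{0,3}(M,\mathbf{D},\v_I,\v_K)^o \times_{ev} \mc{M}(\vec 0,x_0)$ that defines $\PSS(\underline{GW(\v_I,\v_K)})$. This requires the rescaling/pre-logarithmic enhancement analysis of \cite[Lemma 4.9]{Tehrani} adapted to the two colliding tangency points, an energy bookkeeping argument to rule out ghost components or bubbling along Floer cylinders (using that non-constant absolute classes have area $>\omega_{min}$ as in Lemma \ref{lem: anyJ}), and a gluing result identifying a neighborhood of such a configuration with a half-open interval — all of which is standard in this circle of ideas but needs Condition A to pin down the stratum and connectedness. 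The use of Condition A and of the surjectivity hypothesis are what let us reduce to the single GW class $\underline{GW(\v_I,\v_K)}$ rather than a sum over strata, and everything else is a routine repetition of the TQFT cobordism arguments already carried out in \S \ref{section:rings}.
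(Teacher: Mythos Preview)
Your overall architecture is close to the paper's, but your step~2 analysis of the bubble configurations is where the argument breaks down, and the paper's proof organizes this very differently.

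The claim that ``the combinatorics forces the bubble to live in the deepest relevant stratum $D_{I\cup K}$'' is not correct and in fact cannot be correct in the case that matters: when $I\cup K=\{1,\dots,k\}$ is a partition, $D_{I\cup K}=\emptyset$, and the relevant sphere bubbles are precisely nonconstant curves in $M$ meeting $\D$ at exactly the two marked points --- the curves counted by $\mathcal{M}_{0,3}(M,\D,\v_I,\v_K)$. The topological pair hypothesis by itself does not pin down the stratum of a bubble with three special points; a $2$-pointed relative sphere in $M$ with node at the third point is perfectly allowed. What actually makes the argument work is a clean case split that you are missing. If the support of $\v_1+\v_2$ has nonempty stratum (so $|I\cup K|\le n$), then Condition~A supplies some $j\notin I\cup K$; since every $D_i$ lies in the same linear system, any nonconstant sphere has strictly positive intersection with $D_j$, while the total configuration has intersection $0$ with $D_j$, so positivity of intersection on the remaining PSS/cylinder components rules out nonconstant bubbles entirely and the argument of Theorem~\ref{thm: ringstructures} goes through verbatim. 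Only when the support of $\v_1+\v_2$ is empty do bubbles survive; one then factors to reduce to a genuine partition $\v_I,\v_K$, and Lemma~\ref{lem:GWProduct} identifies the extra boundary contribution as $\PSS(\underline{GW(\v_I,\v_K)})$, which vanishes by hypothesis. Your attempt to handle both cases uniformly via a pre-logarithmic enhancement argument does not go through as written.

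There is a second, smaller gap in step~5. Saying that $\PSSlog$ is an $H^*(X)$-module map ``follows from the low energy statement and compatibility with the filtration'' is not right: filtration compatibility only gives the module structure on the associated graded. The correct justification is the same case~(i) argument above applied with $\v_2=\mathbf{0}$: since $\v_1$ has support of size at most $n$, there is always an extra divisor $D_j$ and no nonconstant bubbles can form, so $\PSSlog(\alpha t^{\v_1})=\PSSlog(\operatorname{Id}\,t^{\v_1})\cdot\PSS(\alpha')$ holds on the nose. With these two fixes your outline becomes essentially the paper's proof: first show $\PSSlog$ restricted to $\mathcal{SR}^*(M,\D)$ is a ring map (cases (i) and (ii) above), then use surjectivity of $H^*(X)\to H^*(\mathring{S}_I)$ to factor every class as $\operatorname{Id}\,t^{\v}\cdot\alpha'$, and combine.
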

\begin{proof}
    We first observe that in view of the previous Lemma and the first bulleted assumption,  
    $$\PSSlog: \mathcal{SR}^*(M,\D) \to SH^*(X)$$ 
    is a ring map, where $\mathcal{SR}^*(M,\D) \subset \QH^*(M,\D)$ is the subalgebra defined in \eqref{eq:SRdef}. 
    Indeed, consider the product of two elements 
    $$\PSSlog (\operatorname{Id} t^{\v_1}) \cdot \PSSlog(\operatorname{Id} t^{\v_2}).$$ 
    There are two cases: 
    \begin{itemize}
        \item[(i)]
    If the stratum on which $\v_1+\v_2$ is supported is nonempty, then the
    argument of Theorem \ref{thm: ringstructures} (using Condition A to exclude the relevant possible spheres) 
    shows that this product agrees with $\PSSlog (\operatorname{Id} t^{\v_1} \cdot \operatorname{Id}
    t^{\v_2})$.
    In more detail: let $A \subset \{1, \ldots, k\}$ denote the support of $\v_1$ and $B
    \subset \{1, \ldots, k\}$ the support of $\v_2$. Let us suppose that, in the process of the proof of Theorem
    \ref{thm: ringstructures} for the product of two such elements, a limiting stable curve containing
    non-constant $J_0$-holomorphic sphere bubbles arose. Denote by $u$ the
    union of all such bubbles.  Note first that Condition A (specifically that
    all divisors are in the same linear system) implies that $u$ has (positive symplectic area hence) positive
    homological intersection number with $[D_i]$ for each $i \in \{1, \ldots,
    k\}$. On the other hand, (by conservation of homological intersection
    number) the total stable curve, like the original relevant PSS moduli
    space, has positive homological intersection with $D_j$ for $j \in A \cup
    B$ and zero intersection with $D_i$ for $j \notin A \cup B$.  Since $D_A
    \cap D_B \neq \emptyset$, it must be the case that $\#|A \cup B| \leq
    \dim(X)$, so Condition A again (specifically $k > \dim(X)$) implies that
    such a $D_i$ with $i \notin A \cup B$ exists. By additivity of homological
    intersection numbers over the components of the broken curve, this implies
    that the PSS moduli space component of this stable curve has negative
    intersection with $D_i$, a contradiction to positivity of intersection
    (seeing as the PSS moduli spaces are not contained in $D_i$).

\item[(ii)] If the stratum on which $\v_1+\v_2$ is supported is empty, we aim to show that this product is zero. After
    factoring common supports out of the right hand term, we may assume
    that the the supports of $\v_1$ and $\v_2$ give a partition of $\lbrace
    1,\cdots, k \rbrace$. By further factoring, we can assume that $\v_1=\v_I$
    and $\v_2=\v_K$. The claim now follows from the hypothesis that all of the invariants $\underline{GW(\v_I,\v_K)}$ vanish and Lemma \ref{lem:GWProduct}.
\end{itemize}

 In view of the second hypothesis, we may next write any class in $H_{log}^*(M,\D)$ as a product of two elements $\alpha t^{\v_1}=Id t^{\v_1} \cdot \alpha' $ for $\alpha' \in H^*(X)$. Now again by Theorem \ref{thm: ringstructures} we have that $$ \PSSlog(\alpha t^{\v_1}) = \PSSlog(\operatorname{Id} t^{\v_1}) \cdot \PSS(\alpha') $$
For a general pair of elements $\alpha_1 t^{\v_1}$, $\alpha_2 t^{\v_2}$ factoring them each as $\PSSlog(\operatorname{Id} t^{\v_i}) \cdot \PSS(\alpha_i')$, we have 
\begin{align}
    \PSSlog(\alpha_1 t^{\v_1}) \cdot \PSSlog(\alpha_2 t^{\v_2})&=\PSSlog(\operatorname{Id} t^{\v_1}) \cdot \PSS(\alpha_1') \cdot \PSSlog(\operatorname{Id} t^{\v_2}) \cdot \PSS(\alpha_2') \\  
 &=\PSSlog(\operatorname{Id} t^{\v_1} \cdot \operatorname{Id} t^{\v_2}) \cdot \PSS(\alpha_1' \cup \alpha_2') \\ 
 &=\PSSlog(\alpha_1 t^{\v_1} \cdot \alpha_2 t^{\v_2}) 
 \end{align}  
 \end{proof}

In the situation of Theorem \ref{eq:topproduct}, we obtain a presentation for the symplectic cohomology given by 
\begin{align} 
    SH^*(X) \cong \frac{\mathcal{SR}(M,D) \otimes H^*(X)}{(t^{I}\operatorname{ker} (r_{\emptyset,I}^*)=0)} 
\end{align}

where $r_{\emptyset,I}^*$ is defined in \eqref{stratumrestriction} and $\mathcal{SR}(M,D)$ is defined in \eqref{eq:SRdef}. We now give a prototypical case where our criterion applies, the case where
$M=\mathbb{C}P^n$ and $\D$ is the union of $ k \geq n+2$ generic hyperplanes in
$\mathbb{C}P^n$.  For $k \geq 2n+1$, these examples are multiplicatively
topological and so $H^*_{log}(M,\D) \cong SH^*(X)$ as rings by
Theorem \ref{thm: ringstructures}. We now explain how Theorem
\ref{eq:topproduct} can be used to verify the same ring isomorphism holds in
the remaining cases (and give a different proof in the case $k \geq 2n+1$). 

To prepare for this, we recall the following expression for the cohomology of
$X$. For any arrangement of affine hyperplanes $\mathcal{A}=\lbrace H_i
\rbrace$ in $\mathbb{C}^{n+1}$ defined by linear equations $\phi_i$, Orlik and
Solomon \cite{OrlikSolomon} have introduced the following presentation the
cohomology of $M(\mathcal{A})=\mathbb{C}^{n+1} \setminus \cup_i H_i$. For any
$i$, consider the logarithmic differential form $d\phi_i/\phi_i$, which defines
a cohomology class $\hat{\beta_i}$ in $H^1(M(\mathcal{A}))$. Let
$E_\mathcal{A}$ be the vector space generated by these forms and
$\Lambda^*(E_\mathcal{A})$ the exterior algebra. Equip this with a derivation
$\delta$ which is determined by the fact that $\delta(\hat{\beta_i})=1$.
Consider the ideal $OS(\mathcal{A})$ generated by 
\begin{itemize} 
    \item $\hat{\beta}^{I}=\hat{\beta}_{i_{1}} \wedge \hat{\beta}_{i_{2}} \cdots \wedge \hat{\beta}_{i_{I}}$ for any stratum $H_I=\emptyset$. 

    \item $\delta(\hat{\beta}^{I})$ for any $H_I$ such that $\operatorname{codim}(H_I)<|I|$.  
\end{itemize} 
Then we have an
isomorphism 
\begin{align} 
    H^*(M(\mathcal{A})) \cong \Lambda^*(E_\mathcal{A})/ OS(\mathcal{A}) 
\end{align} 

In the present situation, we take $\mathcal{A}$ to be the affine cone over our arrangement of divisors. We have an identification $M(\mathcal{A}) \cong \mathbb{C}^* \times X$ and  we may identify the cohomology of $X$ with the algebra \begin{align} H^*(X) \cong\operatorname{ker}(\delta) \subset H^*(M(\mathcal{A})) \end{align} 

Of course, we may also view our hyperplane complement $X$ as the complement of an affine hyperplane arrangement $\bar{\mathcal{A}} \subset \mathbb{C}^n$ by removing one of the divisors $H_j$. After choosing this $H_j$, letting $\beta_i=\hat{\beta_i}-\hat{\beta_j}$ gives an identification of between our description of the cohomology of $H^*(X)$ and the Orlik Solomon algebra associated to $\bar{\mathcal{A}}$. The description using affine cones is, naturally, more symmetric. We now turn to the computation of $SH^*(X)$.    

\begin{lem} $\QH^*(\mathbb{C}P^n,\D) \cong SH^*(X)$ as rings. \end{lem}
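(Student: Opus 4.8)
The plan is to verify the hypotheses of Theorem \ref{eq:topproduct} for the pair $(M,\D) = (\mathbb{C}P^n, \{k \geq n+2 \text{ generic hyperplanes}\})$ and apply it. First I would note that Condition A holds: all the divisors $D_i$ are hyperplanes, hence in the linear system $|\mathcal{O}(1)|$, and since the hyperplanes are generic, every nonempty stratum $D_I$ (which occurs precisely when $|I| \leq n$) is a projective linear subspace of $\mathbb{C}P^n$ of dimension $n - |I|$, hence connected; and $k > n = \dim X$. So the two structural requirements are in place, and it remains to check the two bulleted hypotheses of Theorem \ref{eq:topproduct}: vanishing of the relative Gromov--Witten invariants $\underline{GW(\v_I,\v_K)}$ for all partitions $\{1,\dots,k\} = I \sqcup K$, and surjectivity of the restriction maps $H^*(X) \to H^*(\mathring{S}_I)$.

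For the surjectivity of $r^*_{\emptyset, I}\colon H^*(X) \to H^*(\mathring{S}_I)$, the plan is to use the Orlik--Solomon description recalled at the end of the excerpt. The stratum $\mathring{S}_I$ is the $T^{|I|}$-bundle over $\mathring{D}_I = D_I \setminus \bigcup_{j \notin I} D_j$, and $\mathring{D}_I$ is itself the complement of a generic hyperplane arrangement (the traces of the $D_j$, $j \notin I$, inside the linear subspace $D_I \cong \mathbb{C}P^{n-|I|}$); its cohomology is the corresponding Orlik--Solomon algebra, generated by the classes $\beta_j$ for $j \notin I$, and the $T^{|I|}$-bundle contributes the classes $\beta_i$, $i \in I$. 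Thus $H^*(\mathring{S}_I)$ is generated (as an algebra) by restrictions of the ambient logarithmic forms $\hat\beta_1, \dots, \hat\beta_k$, every one of which is in the image of $H^*(X)$ (they come from $d\phi_i/\phi_i$ on all of $X$); hence $r^*_{\emptyset,I}$ is surjective. I should be slightly careful about the torus directions over the open stratum versus the real-oriented blowup model $S_I^{log}$, but the identification $H^*(\mathring{S}_I) \cong H^*(S_I^{log})$ from \S\ref{section: blowups} plus the fact that the Gysin/Leray--Hirsch classes of the normal circle bundles are the restrictions of the global $\hat\beta_i$ makes this routine.

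For the vanishing of $\underline{GW(\v_I,\v_K)}$: here $\v_I = \sum_{i\in I}\mathbf{e}_i$ and $\v_K = \sum_{j\in K}\mathbf{e}_j$, with $I \sqcup K = \{1,\dots,k\}$ a partition into \emph{both} parts nonempty (this is the only case relevant to case (ii) of the proof of Theorem \ref{eq:topproduct}, where the stratum on which $\v_I + \v_K = (1,\dots,1)$ is supported is empty because $k \geq n+2 > n$). A curve $u\colon \mathbb{C}P^1 \to M = \mathbb{C}P^n$ contributing to $\mathcal{M}_{0,3}(M,\D,\v_I,\v_K)$ meets each of the $k$ hyperplanes, all of them passing through only two points $z_0$ (for $i \in I$) and $z_1$ (for $j \in K$); since a degree-$d$ rational curve in $\mathbb{C}P^n$ meets a generic hyperplane in $d$ points counted with multiplicity, and here the incidences are concentrated at two points, $u$ meets $D_i$ at $z_0$ with total multiplicity $d$ for each $i\in I$ and $D_j$ at $z_1$ with multiplicity $d$ for each $j \in K$. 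The plan is then to show no such curve exists, i.e. the moduli space (hence the pseudocycle) is empty. For $d = 1$: a line meets a hyperplane in exactly one point, so $u$ would have to pass through $\bigcap_{i \in I} D_i$ at $z_0$ and $\bigcap_{j\in K} D_j$ at $z_1$; but for generic hyperplanes with $|I| \geq n+1$ or $|K| \geq n+1$ one of these intersections is empty, and even when both $|I|, |K| \leq n$, the line through the (generically unique or positive-dimensional) loci $D_I$ and $D_J$ would still need degree-$1$ passage through all hyperplanes, forcing $|I| + |K| \leq$ something incompatible with $k \geq n+2$ by a dimension count --- I would argue that a line can meet at most $n+1$ hyperplanes in generic position without being contained in one, giving the contradiction. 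For $d \geq 2$ one argues by an index/dimension estimate: imposing tangency of order $d$ along each of $k$ hyperplanes at prescribed points is a codimension condition that exceeds $\dim \overline{\mathcal{M}}_{0,2}(\mathbb{C}P^n, d)$ once $k \geq n+2$ (each tangency-$d$ condition at a single marked point on a fixed-domain curve cuts down by roughly $d$, and one sums over $k$ hyperplanes while only gaining $2n + 2(d(n+1) - 1) - $ corrections), so the moduli space is empty for generic $J_0 \in \mathcal{J}(M,\D)$. I would phrase this via the standard virtual dimension formula for relative stable maps. I expect this dimension count, done carefully enough to cover all degrees $d$ simultaneously and with the correct bookkeeping of the jet conditions at $z_0, z_1$, to be the main obstacle; the surjectivity of restriction maps is comparatively soft.

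With both hypotheses verified, Theorem \ref{eq:topproduct} applies and gives that $\PSS_{log}\colon H^*_{log}(M,\D) = \QH^*(\mathbb{C}P^n, \D) \to SH^*(X)$ is a ring isomorphism, which is exactly the statement of the Lemma. For $k \geq 2n+1$ this reproduces the ring isomorphism already obtained from Theorem \ref{thm: ringstructures}, and for the intermediate range $n+2 \leq k \leq 2n$ it is the new content, showing the a priori possible deformation of the product structure on $\QH^*(\mathbb{C}P^n,\D)$ contributing to $SH^*(X)$ is in fact trivial.
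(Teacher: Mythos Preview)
Your overall approach matches the paper's: verify the two hypotheses of Theorem \ref{eq:topproduct} (vanishing of $\underline{GW(\v_I,\v_K)}$ and surjectivity of $r^*_{\emptyset,I}$) and apply it. Your surjectivity argument is essentially the same as the paper's. However, your argument for the vanishing of the Gromov--Witten invariants has a genuine gap.

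You try to show that the moduli spaces $\mathcal{M}_{0,3}(M,\D,\v_I,\v_K)$ are \emph{empty}, but they are not. For degree $d=1$, take $k=n+2$ and a partition with $|I|=2$, $|K|=n$: then $D_I$ is a codimension-2 linear subspace and $D_K$ is a single point, and any line through $D_K$ and a point of $D_I$ satisfies the incidence conditions (it meets each $D_i$, $i\in I$, only at $u(z_0)\in D_I$ and each $D_j$, $j\in K$, only at $u(z_1)=D_K$). Your claim that ``a line can meet at most $n+1$ hyperplanes in generic position without being contained in one'' is false: a line in $\mathbb{CP}^n$ meets \emph{every} hyperplane. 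So the emptiness strategy cannot work, and your higher-degree dimension counts do not salvage the $d=1$ case.

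The paper's argument is entirely different and purely cohomological. The evaluation map $ev_{z_2}$ extends over the compactified moduli space to a map into $M=\mathbb{CP}^n$, so the pseudocycle $\underline{GW(\v_I,\v_K)}\in H^*(\bar X_{\vec\epsilon})\cong H^*(X)$ lies in the image of the restriction $H^*(\mathbb{CP}^n)\to H^*(X)$. Since the moduli space has real dimension $4n+2-2dk<2n$ for every degree $d\geq 1$ (as $k\geq n+2$), this class sits in strictly positive cohomological degree. But $H^{>0}(\mathbb{CP}^n)$ is generated by the hyperplane class, which restricts to zero in $H^*(X)$ because the hyperplanes $D_i$ have been removed. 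Hence $\underline{GW(\v_I,\v_K)}=0$ even though the underlying moduli spaces are nonempty. This is the step you are missing.
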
 
\begin{proof} It suffices to check that the criterion of Theorem \ref{eq:topproduct} are satisfied. The vanishing of the obstructions $\underline{GW(\v_I,\v_K)}$ follow easily from the fact that $H^*(\mathbb{C}P^n) \to H^*(X)$ vanishes except in degree 0.  So it suffices to check the surjectivity of the restriction maps $r_{\emptyset,I}^*$. The higher dimensional strata $S_I^{log}$ are trivial torus bundles over the real oriented blowups of lower dimensional hyperplane complements. 

Choose any $j\notin I$ and view $X$ as an affine hyperplane complement by removing $D_j$. Then the classes $\beta_i$ for $i \in I$ are represented by forms $\frac{1}{2\pi i} dz_i/z_i$ along the restriction maps $\mathbb{C}^n\setminus \cup_{i \in I} H_i$. Thus these classes $\beta_i$, $i\in I$ surject onto the cohomology of the fibers of the torus bundles. From the Orlik-Solomon presentation, we can see that the remaining classes $\beta_s$, $s \notin I$, generate the pull-back of classes on $\DIo$. The surjectivity of the map $H^*(X) \to H^*(\SIo)$ now follows from this. \end{proof} 

We can also describe the kernel $\operatorname{ker} (r_{\emptyset,I}^*)$. As above, choose any $j$ not in $I$. Then $$\operatorname{ker} (r_{\emptyset,I}^*)=\operatorname{span}\langle\beta^J\rangle$$ such that $J \cap I=\emptyset$.

\begin{rem} 
    It would be interesting to know whether this argument could be adapted to
    the non-generic hyperplane arrangements of Lemma \ref{lem:hyperplanes}.
\end{rem}

It is interesting to see how this calculation fits into the context of Kontsevich's homological mirror symmetry conjecture. More precisely, we let $\mathcal{P} \subset \mathbb{C}P^{n+1}$ denote the hyperplane cut out by the equations $\sum_i z_i=0$ and set $X = \mathcal{P} \setminus \cup_i \lbrace z_i=0 \rbrace$ (observe that in this example the number of generic hyperplanes removed from $\mathcal{P} \cong \mathbb{C}P^n$ is $k=n+2$). The affine variety $X$ is called the generalized pair of pants \cite{Mikhalkin} and is known to play an important role in the study of homological mirror symmetry for hypersurfaces in projective space \cite{SheridanPants, Sheridan}. The mirror to $X$ is the Landau-Ginzburg model $$(\mathbb{A}^{n+2}, W=z_1z_2 \cdots z_{n+2}).$$ Let us try to identify $SH^*(X)$ (with $\mathbb{C}$ coefficients) with the Hochschild cohomology of the matrix factorization category 
$\operatorname{MF}(\mathbb{A}^{n+2}, W=z_1z_2 \cdots z_{n+2})$. The first thing
to observe is that the Jacobian ring is given by $\mathcal{J}ac(W) \cong
\mathbb{C}[z_1,\cdots,z_{n+2}]/( z_1z_2 \cdots  \widehat{z_i}\cdots z_{n+2})$
and that the map sending $\alpha_i=\operatorname{Id}t^{\v_i} \to z_i$ induces
an isomorphism: $$\mathcal{SR}^*(M,\D) \cong \mathcal{J}ac(W)$$

Let $\beta_i$ denote a standard set of generators for $H^1(X)$ determined by
viewing $X$ as an affine hyperplane complement by removing the divisor
$z_{n+2}=0$. On the mirror side, we have corresponding cohomology classes
$$z_i\partial_{z_i}-z_{n+2}\partial_{z_{n+2}} \in H^*(T^{poly}_{\mathbb{A}^{n+2}},[W,-]) $$ 
where $i \in \lbrace 1, \cdots, n+1 \rbrace$. These classes generate
$H^1(T^{poly}_{\mathbb{A}^{n+2}},[W,-])$ as a module over $\mathcal{J}ac(W)$.
With these observations in place, one can easily check for low $n$ using a
computer algebra package that the map 
$$ \QH^*(\mathbb{C}P^n,\D) \to H^*(T^{poly}_{\mathbb{A}^{n+2}},[W,-])$$ 
defined by sending $\alpha_i \to z_i$ and $\beta_i \to
z_i\partial_{z_i}-z_{n+2}\partial_{z_{n+2}} $ is an isomorphism of rings.

\subsection{Proving degeneration at $E_1$ page using GW-invariants}\label{sect: suitesspectrales}

Note that the ring structure on $H^*_{log}(M,\D)$ is generated by primitive classes of the form $\alpha t^{\v_{I}}$. Because our spectral sequence \eqref{eq:Spec2} is multiplicative, it follows that it degenerates at the $E_1$ page iff all differentials $d_r(\alpha t^{\v_{I}})=0$ for all primitive classes and all $r \geq 1$. The purpose of this section is to record some consequences of this observation.  

\begin{cor}\label{thm: easycor}  Suppose that there is a divisor $H$ such that for
    each smooth component $D_i \subset \D$, $\mathcal{O}(D_i)\cong
    \mathcal{O}(n_i H)$ for $n_i \in \mathbb{Z}^{>0}$.  If any of the $n_i >
    1$, then the spectral sequence \eqref{eq:ss} degenerates at the $E_1$ page. 
\end{cor}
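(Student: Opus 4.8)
By the multiplicativity of the spectral sequence \eqref{eq:Spec2} (Theorem \ref{thm:spectral}) together with the observation that $H^*_{log}(M,\D)$ is generated as a $\K$-algebra by the classes $\alpha t^{\v_I}$ for primitive vectors $\v_I$ and the classes $y \in H^*(X)$, it suffices to show that $d_r(\alpha t^{\v_I}) = 0$ and $d_r(y) = 0$ for all $r \geq 1$, all primitive $\v_I$, and all $\alpha, y$. For the classes $y \in H^*(X)$, the differential must raise the weight filtration strictly (this is how the filtration is set up: $d_r$ moves from $F^p$ to $F^{p-r}$, equivalently increases $w$), so $d_r(y)$ lands in $\bigoplus_{w' > 0} \QH^*(M,\D)_{w'}$, and the plan is to rule this out by a homology-class (or rather a cohomology-class / $H_1(X)$-grading) argument exactly as below. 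The main content is thus the primitive divisorial classes.

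First I would analyze the $\Z$-bigrading and the $H_1(X)$-grading of the first page recorded at the end of \S\ref{sec:mainproof}: under \eqref{eq:sspage1text}, $\alpha t^{\v_I}$ sits in bidegree $(p,q) = (-w(\v_I), \deg(\alpha t^{\v_I}) + w(\v_I))$, and carries the $H_1(X)$-class $\sum_i (\v_I)_i [y_i] = \sum_{i \in I}[y_i]$, where $[y_i] \in H_1(X)$ is the class of a small loop around $D_i$. The differential $d_r$ preserves the $H_1(X)$-grading and has bidegree $(r, 1-r)$, so $d_r(\alpha t^{\v_I})$ is a sum of classes $\beta t^{\v}$ with $w(\v) = w(\v_I) + r > w(\v_I)$ and with $H_1(X)$-class equal to $\sum_{i \in I}[y_i]$. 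The key point is that the hypothesis $\mathcal{O}(D_i) \cong \mathcal{O}(n_i H)$ constrains which $H_1(X)$-classes can occur: the classes $[y_i]$ satisfy exactly the relations in $H_1(X)$ imposed by the divisor classes (each $[y_i]$ maps to $-(\text{the class Poincaré dual to } D_i)$ under the appropriate Gysin/boundary map, up to the standard normalization), so $\sum_i n_i a_i [y_i] = 0$ in $H_1(X)$ whenever $\sum_i n_i a_i [D_i]$ is a multiple of $[H]$ that... — more precisely, I would use that any relation $\sum_i c_i [y_i] = \sum_i c_i' [y_i]$ in $H_1(X)$ forces $\sum_i (c_i - c_i') n_i [H]$-component and the torsion-free part to match, hence $\sum c_i n_i = \sum c_i' n_i$ after pairing with $[H]$. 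Concretely, pairing a multiplicity vector $\v$ with the ``total degree'' functional $\v \mapsto \sum_i n_i v_i$ is well-defined on $H_1(X)$-classes (it is the pairing of the $H_1$-class against $c_1(\mathcal{O}(H))|_X$-type data, or simply: two vectors $\v, \v'$ with $\sum_{i\in I}[y_i] = \sum [y_j']$ must have $\sum_i n_i v_i = \sum_j n_j v_j'$). Since $\v_I$ is primitive, $\sum_i n_i (\v_I)_i = \sum_{i \in I} n_i$; if some $n_i > 1$, then for $r \geq 1$ there is no vector $\v \neq \v_I$ supported compatibly with the $H_1(X)$-class $\sum_{i\in I}[y_i]$ and with $w(\v) = w(\v_I) + r$ — here I would check that the only multiplicity vectors with $H_1(X)$-class $\sum_{i\in I}[y_i]$ are those of the form $\v_I + \sum_j m_j \v_{K_j}^{(0)}$ with $\sum_j m_j (\text{something}) = 0$, forcing $m_j = 0$. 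Actually the cleanest route: the functional $\v \mapsto \sum_i n_i v_i$ and the weight $w(\v) = \sum_i \kappa_i v_i$ are two linear functionals; if $\kappa_i$ is proportional to $n_i$ (which one can arrange, or which holds on the nose when the ample line bundle $\mathcal{L}$ is a power of $\mathcal{O}(H)$) then $w(\v) = c\sum n_i v_i$ is itself an $H_1(X)$-grading-invariant, so $d_r$, which preserves $H_1(X)$-grading, cannot change $w$ — contradiction with $w(\v) = w(\v_I)+r$, $r\geq 1$, unless the target is zero.

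The hard part — and the step I would spend the most care on — is making the relationship between the $H_1(X)$-grading and the weight $w(\v)$ precise and checking that the hypothesis ``some $n_i > 1$'' genuinely forces the degeneration rather than merely being a convenient sufficient condition. The subtlety is that $H_1(X)$ is generally not freely generated by the $[y_i]$ (there can be relations from $H_2(M)$, and torsion), so I cannot simply say ``the $H_1$-class determines $\v$''. Instead I would argue: $d_r(\alpha t^{\v_I})$ lies in the summand of $\bigoplus_{p,q}E_r^{p,q}$ with $H_1(X)$-class $\sum_{i\in I}[y_i]$ and with $w = w(\v_I) + r$; a nonzero such class involves some $\beta t^{\v}$ with $\v \neq \v_I$, $\sum_{i\in I}[y_i] = \sum_i v_i [y_i]$ in $H_1(X)$, and $\sum_i \kappa_i v_i = \sum_{i\in I}\kappa_i + r$. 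Choosing $\mathcal{L} = \mathcal{O}(dH)$ (so that, after rescaling, $\kappa_i$ is proportional to $n_i$ — this is legitimate since we are free to pick any ample $\mathcal{L}$ supported on $\D$ in Definition \ref{def:logsmooth}, and the spectral sequence is independent of this choice up to isomorphism) gives $\kappa_i = c \, n_i$, hence $\sum_i \kappa_i v_i$ depends only on $\sum_i n_i v_i$, which in turn is determined by the class $\sum_i v_i[y_i] \in H_1(X)$ because the pairing $H_1(X) \times H^1(X) \to \K$ with the logarithmic form $\sum_i n_i \frac{dz_i}{z_i}$ (i.e. $d\log$ of the section of $\mathcal{O}(dH)|_X$) detects exactly $\sum_i n_i v_i$. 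Therefore $w(\v) = w(\v_I)$, contradicting $w(\v) = w(\v_I) + r$ with $r \geq 1$. Hence $d_r(\alpha t^{\v_I}) = 0$ for all $r \geq 1$, and likewise $d_r(y) = 0$ for $y \in H^*(X)$ (same argument with $\v = \mathbf{0}$), so by multiplicativity all $d_r$ vanish and the spectral sequence degenerates at $E_1$. The condition ``some $n_i > 1$'' is what makes the statement non-vacuous: it is used precisely to guarantee $\kappa_i$ can be taken proportional to the $n_i$ while keeping all $\kappa_i$ positive integers, i.e. that $\mathcal{O}(dH)$-supported sections exist — and when all $n_i = 1$ this reduces to the familiar case where degeneration need not hold (e.g. $X = \C$).
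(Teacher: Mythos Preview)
Your argument has a genuine gap at its central step. The claim that the weight $w(\v) = \sum_i \kappa_i v_i$ (with $\kappa_i$ chosen proportional to $n_i$) is determined by the $H_1(X)$-class $\sum_i v_i [y_i]$ is false. The relations among the $[y_i]$ in $H_1(X)$ come (via the long exact sequence of the pair $(M,X)$) from classes $A \in H_2(M)$, in the form $\sum_i (A \cdot D_i)[y_i] = 0$; for the functional $\v \mapsto \sum_i n_i v_i$ to descend to $H_1(X)$ one would need $\sum_i n_i (A \cdot D_i) = (\sum_i n_i^2)(A \cdot H)$ to vanish for all such $A$, which fails whenever $A \cdot H \neq 0$. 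Concretely, take $M = \P^2$ and $D$ a smooth cubic ($k=1$, $n_1 = 3$): then $H_1(X) \cong \Z/3\Z$, and $\v = (1)$, $\v' = (4)$ have the same $H_1$-class but weights $3$ and $12$. The ``logarithmic form $\sum_i n_i\,dz_i/z_i$'' you invoke is $d\log$ of a section of a \emph{non-trivial} line bundle and does not define a class in $H^1(X)$ with the periods you want. A further symptom: your argument never actually uses the hypothesis that some $n_i > 1$ (the justification in your last paragraph is spurious, since one can set $\kappa_i = n_i$ regardless), so if correct it would also prove degeneration when all $n_i = 1$, which is false for $X = \C$. You also have the direction of $d_r$ on the weight backwards: with the paper's conventions $d_r$ \emph{decreases} $w$ by $r$, so in particular $d_r(y) = 0$ for $y \in H^*(X)$ is automatic and needs no argument.

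The paper's proof is quite different and more direct. One takes $\kappa_i = 1$, i.e.\ $[\omega] = [\sum_i D_i]$. Then any $A \in H_2(M)$ with $\omega(A) > 0$ satisfies $(\sum_i n_i)(A \cdot H) > 0$, hence $A \cdot H \geq 1$ and $\sum_i A \cdot D_i = (\sum_i n_i)(A \cdot H) \geq \sum_i n_i > k$; this last strict inequality is exactly where ``some $n_i > 1$'' enters. On the other hand, a Floer trajectory between orbits with $w(\underline{y}') < w(\underline{y}) \leq k$ caps off (via fiber-disk cappings) to a sphere class $A$ with $\omega(A) > 0$ and $\sum_i A \cdot D_i = w(\underline{y}) - w(\underline{y}') \leq k$, a contradiction. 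Hence no such trajectories exist, all $d_r$ vanish on primitive classes $\alpha t^{\v_I}$ (which have $w(\v_I) = |I| \leq k$), and multiplicativity of the spectral sequence gives degeneration.
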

	\begin{proof} 
        We equip this pair with the Kahler form in class $[\sum_{i=1}^k D_i$] (meaning we take all of the $\kappa_i$ in Definition \ref{def:logsmooth} to be 1). For
        any $A \in H_2(M)_\omega$ (meaning a class with positive symplectic
        area), the hypotheses imply that $\sum_{i=1}^k A \cdot D_i > k$, where $k$ is the number of components of $\D$ (see Definition \ref{def:logsmooth}).
        Let $\underline{y}, \underline{y}'$ be a pair of Hamiltonian orbits (for one of the Hamiltonians $\Hlm$) whose
        weighted winding numbers (see \eqref{eq:weightedwindingnumber}) satisfy
        \begin{equation} \label{eq:smallwinding}
            w(\underline{y}') < w(\underline{y}) \leq k.
        \end{equation}
       and suppose that $u$ is a Floer trajectory with inputs $\underline{y}$ and output
        $\underline{y}'$. Then $u$ can be capped off by attaching $-F(\underline{y})$, the fiberwise capping disc of $\underline{y}$ with opposite orientation (described in Lemma \ref{lem: anyJ}), and $F(\underline{y}')$, the fiberwise capping of $\underline{y}'$, to produce a sphere of homology class $A.$
       The symplectic area of $A$ is given by $$\omega(A)=w(\underline{y})-w(\underline{y}')>0.$$ 
       On the other hand, \eqref{eq:smallwinding}
        implies that such an $A$ has $\sum_i A \cdot D_i \leq k$; so
        it follows that no such Floer trajectories exist. In particular, for orbits $\underline{y}$ with
        $w(\underline{y}) \leq k$, $d_{CF}(|\mathfrak{o}_{\underline{y}}|)$ only has terms corresponding to (meaning in sum of orientation lines of) orbits $\underline{y}'$ of the same weight $w(\underline{y}')=w(\underline{y})$. In other words, the only non-trivial differentials are those which appear in the low energy Floer cohomology from \eqref{eq:lowlow}.

        Let $[\underline{\beta}]$ be an element of the $E_1$ page of the spectral sequence \eqref{eq:E1concrete} of some weight $w([\underline{\beta}]) \leq k$, the number of components of $\D$. Such an element is represented by a cocycle $\underline{\beta} \in  \frac{F_wCF^*(X \subset M; \Hlm)}{F_{w-1}CF^*(X
    \subset M; \Hlm)}$, which can be represented by a generator which is a sum of elements of orientation lines corresponding to certain Hamiltonian orbits $\underline{y}$ with winding number $w([\underline{\beta}])$. By the above analysis, this same generator defines a cocycle in $F_wCF^*(X \subset M; \Hlm)$ (and thus a class in $SH^*(X)$). This means that all such elements $[\underline{\beta}]$ of weight $w([\underline{\beta}]) \leq k$ survive to the $E_\infty$ page, i.e. that $d_r([\underline{\beta}]) = 0$ for all $r \geq 1$.

    Observe that for any primitive vector $\v_I$, $w(\v_I)= |I| \leq k$ (to see that $w(\v_I)= |I|$, recall the formula \eqref{eq:windingvec} for the weights and that we have taken $\kappa_i=1$). 
     As the identification of $E_1$ with $H^*_{log}(M,\D)$ is weight-preserving, we therefore have that $d_r(\alpha t^{\v_{I}})=0$ for all primitive classes and all $r \geq 1$. From multiplicativity (see the discussion above the Corollary), it now follows that the spectral sequence \eqref{eq:ss} degenerates at $E_1$ as desired. 
\end{proof}

Simple examples of this include the cases where $M=\mathbb{C}P^n$ and $D$ is a smooth Calabi-Yau hypersurface. Let us consider the case where $n=2$ and choose $\mathbf{k}$ to be a field of characteristic zero. We begin by calculating the cohomology $H_{log}^*(M,D)$ above. We have that $H^*(X, \mathbf{k})$ is \begin{align}H^*(X) \cong \mathbf{k}\oplus \mathbf{k}^{2}[2] \end{align}  where $[a]$ means shift the grading by $a$.  We have \begin{align} H^0(SD) \cong H^3(SD) \cong \mathbf{k} \\ \QH^0(M,D) \cong \QH^3(M,D) \cong \mathbf{k}[t]  \end{align} It is also straightforward to calculate using the Gysin exact sequence that \begin{align} H^1(SD) \cong H^2(SD) \cong \mathbf{k}^2 \end{align} Let $e_1$ and $e_2$ denote standard generators of $H^1$ of the torus and call the generators of $H^2(X)$, $x,y$. Lastly, observe that the restriction map  $H^2(X) \to H^2(SD)$ is an isomorphism. We therefore have that as $\mathbf{k}[t]$ modules \begin{align}  \QH^1(M,D) \cong \mathbf{k}[t]\cdot e_1\oplus  \mathbf{k}[t]\cdot e_2 \\ \QH^2(M,D) \cong \mathbf{k}[t]\cdot x \oplus \mathbf{k}[t]\cdot y \end{align} 

\begin{lem} When $M=\mathbb{C}P^2$ and $D$ is a smooth elliptic curve, $SH^*(X)$ is a free module over $\mathbf{k}[t]$ of the same rank as $\QH^*(M,D)$. \end{lem}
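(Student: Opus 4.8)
The plan is to combine Corollary \ref{thm: easycor} with the structural identifications already established in the paper. Since $M = \mathbb{C}P^2$ and $D$ is a smooth elliptic curve, we have $\mathcal{O}(D) \cong \mathcal{O}(3H)$ with $H$ the hyperplane class, so $n_1 = 3 > 1$ and Corollary \ref{thm: easycor} applies: the spectral sequence \eqref{eq:ss} degenerates at the $E_1$ page. By Theorem \ref{thm:spectral}, the $E_1$ page is isomorphic as a ring (hence certainly as a graded $\mathbf{k}$-module) to $H^*_{log}(M,D)$, and degeneration means $E_1 = E_\infty$, so $\mathrm{gr}_F SH^*(X) \cong H^*_{log}(M,D)$ as graded $\mathbf{k}$-modules, where the filtration is the weight filtration $F_w SH^*(X)$ coming from $w(\v)$.

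Next I would upgrade this associated-graded statement to the claim about free $\mathbf{k}[t]$-module structure. The point is that $\mathbf{k}[t]$ acts on $SH^*(X)$ via $\PSS_{log}$ of the class $\operatorname{Id} t^{\v_D} \in H^0(SD)t$ (the generator in weight $3$, using the single divisor), and on $H^*_{log}(M,D)$ by multiplication by $t$; I would check these actions are intertwined by the isomorphism on associated gradeds, which follows since $\PSS_{log}^{low}$ is a ring map (Theorem \ref{lem:spectralrings}) and multiplication by $t$ raises weight by $w(\v_D) = 3$ in both settings. The computation in the excerpt exhibits $H^*_{log}(M,D)$ explicitly as a free $\mathbf{k}[t]$-module: rank $2$ in degrees $0$ and $3$ (spanned by $1$, $t^k$ and the torus class, its $t$-translates), and rank $2$ in degrees $1$ and $2$ (spanned by $e_1, e_2$, resp.\ $x,y$, and their $t$-translates). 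So on the associated graded, $SH^*(X)$ is free over $\mathbf{k}[t]$ of total rank $6$ (two in each of degrees $0,1,2,3$, shifted appropriately).

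Finally I would deduce freeness of $SH^*(X)$ itself over $\mathbf{k}[t]$. Here the argument is essentially a filtered Nakayama / spectral-sequence-of-a-module argument: the weight filtration on $SH^*(X)$ is exhaustive and bounded below (weights are non-negative integers), and $t$ shifts it by $3$; since the associated graded is a free $\mathbf{k}[t]$-module, one lifts a homogeneous $\mathbf{k}[t]$-basis from $\mathrm{gr}_F SH^*(X)$ to $SH^*(X)$ by choosing, for each associated-graded basis element, any element of $SH^*(X)$ of the appropriate weight representing it, and then checks that these lifts form a $\mathbf{k}[t]$-basis. Surjectivity onto $SH^*(X)$ follows from exhaustiveness of the filtration by induction on weight; $\mathbf{k}[t]$-linear independence follows because any nontrivial relation would, after passing to the lowest-weight part, produce a nontrivial relation in the free module $\mathrm{gr}_F SH^*(X)$. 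I expect the main (only mildly delicate) point to be verifying that the $t$-action on $SH^*(X)$ is genuinely given by $\PSS_{log}(\operatorname{Id}\,t)$-multiplication and is compatible with the weight filtration so that this lifting argument runs cleanly; everything else is either quoted from earlier results or a routine computation with the Gysin sequence for the circle bundle $SD \to D$.
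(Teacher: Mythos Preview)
Your proposal is correct and follows the same route as the paper: degeneration via Corollary \ref{thm: easycor} gives $\mathrm{gr}_F SH^*(X) \cong H^*_{log}(M,D)$ as a free $\mathbf{k}[t]$-module, and a filtered lifting argument (the paper cites Bourbaki, Commutative Algebra Ch.\ III \S 2, rather than spelling out the basis-lifting and lowest-weight injectivity check as you do) upgrades this to freeness of $SH^*(X)$. Two harmless numerical slips: with the convention $[\omega]=[D]$ used in the proof of Corollary \ref{thm: easycor} one has $\kappa=1$, so $t$ shifts weight by $1$ rather than $3$; and $\QH^0$, $\QH^3$ each have $\mathbf{k}[t]$-rank $1$ (not $2$), though the total rank $6$ is correct.
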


\begin{proof} Because the spectral sequence degenerates, we have that there is a filtration on symplectic cohomology such that $gr_F(SH^*(X))=\QH^*(M,D)$. A simple result on associated graded modules (see e.g., Chapter 3, Section II of \cite{Bourbaki Chapter III}) says that generators of $\QH^*(M,D)$ lift to generators of $SH^*(X)$, giving rise to a surjection from a free module $s: N \to SH^*(X).$ Applying the same argument to the kernel $K$ of this surjection shows that $s$ is an isomorphism (here we are using the fact that each $\QH^*(M,D)$ page consists of free modules over $\mathbf{k}[t]$). \end{proof}

It is again interesting to compare the result with what happens in mirror symmetry in the case where $\mathbf{k}=\mathbb{C}$. In this setting, mirror symmetry predicts (in somewhat simplified terms) that there is a mirror partner to $X$, $Y$, which admits a proper algebraic map (see Lemma \ref{lem: smlCY} for another result consistent with this) \begin{align} f: Y \to \mathbb{A}_{\mathbf{k}}^1 \end{align} In the present situation, the mirror geometry is well-known and is studied in \cite{AKO}. The mirror is a noncompact Calabi-Yau $Y$ whch admits an elliptic fibration $f:Y \to \mathbb{A}^1$ with exactly three Lefschetz critical points. By properness, we have that $$\mathbb{H}^0(\mathcal{O}_Y) \cong \mathbb{H}^1(\Lambda^2T_Y) \cong \mathbf{k}[f]$$  An elementary sheaf theory computation shows that we have $$\mathbb{H}^1(\mathcal{O}_Y) \oplus \mathbb{H}^0(T_Y) \cong \mathbb{H}^1(T_Y) \oplus \mathbb{H}^0(\Lambda^2 T_Y) $$ are both free modules of rank two over $ \mathbf{k}[f]$. \vskip 5 pt

We next consider a slightly more general case where the differentials in the spectral sequence can be calculated in terms of Gromov-Witten theory. 

 \begin{defn} 
     Given a class $A \in H_2(M,\mathbb{Z})$, we will denote by $A \cdot
     \mathbf{D}$ the vector of multiplicities $[A\cdot D_i]_{i=1}^k \in
     \mathbb{Z}^k$.  
 \end{defn}

\begin{defn} Fix a pair $(M,\D)$ and let $H_2(M)_\omega$ denote the set of
    integral classes $A$ for which $\omega(A)>0$. We say that a primitive
    vector $\v_I$ is {\em admissible} if for all $A \in H_2(M)_\omega$, $$
    \sum_i \kappa_i (A \cdot D_i) \geq  w(\v_I). $$ \end{defn}

Given a primitive cohomology class $\alpha t^{\v_I}$, corresponding to an
admissible vector, 
there is exactly one possible non-vanishing differential on $\alpha t^{\v_I}$ in
the spectral sequence, namely
$$d_{w(\v_I)}(\alpha t^{\v_I}) \in H^*(X).$$

\begin{defn}\label{defn: modsphere} 
    Consider $\mathbb{C}P^1$ with two fixed marked points $z_0$, $z_1$ at 
    $0, \infty$ respectively, and let $\v_I$ be a primitive vector. For any $A \in H_2(M,\mathbb{Z})$ satisfying 
    \begin{align} 
        \label{eq: intersection} A \cdot \mathbf{D}=\v_I 
    \end{align} set
    $\widetilde{\mathcal{M} }_{0,2}(M,\mathbf{D},\v_I, A)$ to be the
    moduli space of maps $ u: (\mathbb{C}P^1,z_0,z_1) \to (M,\D)$ with 
    \begin{align}
        u_*([\mathbb{C}P^1])=A \\ u^{-1}(D_I)= z_0 
    \end{align}
  \end{defn}

 \begin{defn} From the moduli spaces in Definition \ref{defn: modsphere} form: 
     \begin{align} 
         \widetilde{\mathcal{M}}_{0,2}(M,\mathbf{D},\v_I) :=\bigsqcup_{A, A \cdot \D=\v_I} \widetilde{\mathcal{M}}_{0,2}(M,\mathbf{D},\v_I, A) \\ 
         \mathcal{M}_{0,2}^{S^1}(M,\mathbf{D},\v_I):=\widetilde{\mathcal{M}}_{0,2}(M,\mathbf{D},\v_I)/\mathbb{R} 
     \end{align} 
     where the quotient in the latter equation is by $\mathbb{R}$-translations. 
  \end{defn}

These moduli spaces are canonically oriented. We have an evaluation map 
at $z_1 = \infty$
$$ ev_{\infty}: \mathcal{M}_{0,2}^{S^1}(M,\mathbf{D},\v_I) \to  M. $$

Let $\mathcal{M}_{0,2}^{S^1}(M,\mathbf{D},\v_I)^o$ denote the preimage $ev_{\infty}^{-1}(X).$ By the arguments of 
\cite[Lemma 3.26]{GP1}, 
for an admissible vector $\v_I$,  the moduli spaces
$\mathcal{M}_{0,2}^{S^{1}}(M,\mathbf{D},\v_I)^o$ map properly to $X$. Moreover,
after equipping the marked point at $z_0$ with the additional data of real
positive ray in $T_{z_{0}}\mathbb{C}P^1$, we have as in
\eqref{eq:enhancedevaluationpss} an enhanced evaluation map 
$$\Evo: \mathcal{M}_{0,2}^{S^{1}}(M,\mathbf{D},\v_I)^o \to  \SIo. $$  
It follows that, for any class $\alpha$ in $H^*(\mathring{S}_I)$, we may define
a Borel-Moore homology class in $H^{BM}_*(X)$ via 
\begin{align} \label{eq: GWinvariantsnormal} 
    GW_{\v_{I}}(\alpha)=[ev_{\infty,*}(\operatorname{Ev}_0^{\v_I,*}(\alpha))] \in H^{BM}_{*}(X). 
\end{align}
(compare \cite[eq. (3.35)]{GP1}).

\begin{thm} \label{thm: degenerescence} 
    Assume $\mathbf{k}= \mathbb{Z}$. For any admissible vector $\v_I$, with
    respect to the identification of the first page of \eqref{eq:ss} with log
    cohomology by Theorem \ref{thm:main}, there is an equality 
    \begin{align} 
        d_{w(\v_I)}(\alpha t^{\v_I})=GW_{\v_{I}}(\alpha) \in H^*(X) \subset \QH^*(M,\D).
    \end{align}
\end{thm}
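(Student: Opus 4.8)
The plan is to compute the associated-graded differential on the generator $\alpha t^{\v_I}$ directly, using the low-energy log PSS identification of the $E_1$ page from Theorem~\ref{thm:main}, and then show that the coefficient of each output orbit is exactly the corresponding enhanced-evaluation count appearing in the definition of $GW_{\v_I}(\alpha)$. First I would fix $\ell$ large enough that $w(\v_I) \le w_\ell$ and work in the separated-action model $H_p^{\ell}$ of \S\ref{sect: PSSiso1}, so that the multiplicity vector is recorded faithfully by the perturbed weight; there the class $\alpha t^{\v_I}$ is represented (via $\PSSlog^{\v_I}$, which is an isomorphism by Corollary~\ref{cor:reallocPSSiso}) by a cocycle $\underline{y}$ supported on orbits in $U_{\v_I}$. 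The differential $d_{w(\v_I)}$ on the spectral sequence is, by the general formalism for a filtered complex (see \cite[p. 35]{McCleary}), induced by the component of the Floer differential $\partial_{CF}$ that \emph{strictly decreases} weight from $w(\v_I)$ to some $w' < w(\v_I)$ --- and since $\v_I$ is admissible, the energy inequality forces any such contribution to come from capping off by a sphere class $A \in H_2(M)_\omega$ with $\sum_i \kappa_i (A\cdot D_i) = w(\v_I)$, i.e. $A\cdot\mathbf D = \v_I$. The output orbit $x_0$ then has $w(x_0) = 0$, i.e. lies in $\mathcal F_{\mathbf 0}$, so the target is exactly $H^*(X) \subset \QH^*(M,\D)$.

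\textbf{Key steps.} (1) Set up the relevant degenerating family of moduli spaces: the PSS-type moduli space $\mc M(\v_I, c, x_0)$ with $x_0 \in \mathcal F_{\mathbf 0}$, and analyze (via an energy/neck-stretching argument in the spirit of Lemma~\ref{lem:compactness2} and the confinement Lemmas \ref{lem: money}, \ref{lem:keymonoloc}) the boundary of its one-dimensional components. The new boundary stratum, absent in the low-energy regime, is a sphere bubble at $z_0$; the remaining PSS component becomes constant for energy reasons (its topological energy would be $w(\v_I) - w(x_0)(1-\epsilon_\ell^2/2) \approx w(\v_I)$ absorbed entirely by the bubble), so the limiting configuration is a $J_0$-holomorphic sphere with two marked points realizing a class $A$ with $A\cdot\mathbf D = \v_I$, glued to a constant plane, with the enhanced jet data at $z_0$ still recording $\v_I$. (2) Identify this stratum with $\mathcal M_{0,2}^{S^1}(M,\mathbf D, \v_I)^o$ together with its enhanced evaluation $\Evo$ to $\SIo$ and ordinary evaluation $ev_\infty$ to $X$; the fiber product of these with $W^s(f_I, c)$ and the capping data of $x_0$ produces precisely the matrix coefficient $\langle GW_{\v_I}(\alpha), \text{(dual of } x_0)\rangle$. (3) Match orientations: check that the canonical orientations on $\mathcal M_{0,2}^{S^1}(M,\mathbf D, \v_I)$ and on the Floer/log-PSS moduli spaces are compatible under the gluing, so signs agree (this is where $\mathbf k = \mathbb Z$ matters, and why we need the holomorphic volume form and the $\mathbb Z$-grading). (4) Conclude that the weight-decreasing part of $\partial_{CF}\underline y$ equals $\PSS(GW_{\v_I}(\alpha))$, which under the $E_1 \cong H^*_{log}$ identification is the statement $d_{w(\v_I)}(\alpha t^{\v_I}) = GW_{\v_I}(\alpha)$.

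\textbf{Main obstacle.} I expect the analytic heart to be Step~(1): showing that, for an admissible vector, the \emph{only} way the Floer differential can decrease weight is by a single $J_0$-holomorphic sphere bubbling at $z_0$ (carrying class $A$ with $A\cdot\mathbf D = \v_I$ exactly) --- and that this bubbling is transverse and glues back correctly, with no lower-energy or multiply-covered degenerations and no bubbling along the Floer cylinder or at other points. This combines the energy bookkeeping of Lemma~\ref{lem:energypss}, the confinement/monotonicity arguments of \S\ref{subsection:controlPSS}, the positivity-of-intersection arguments already used repeatedly, and the rescaling analysis of \cite[Lemma 4.9]{Tehrani} to extract the pre-logarithmic enhancement at the bubble. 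Admissibility is precisely the hypothesis that rules out all competing sphere classes: any $A$ with smaller $\sum_i \kappa_i(A\cdot D_i)$ would give a weight drop smaller than $w(\v_I)$ but still positive, which admissibility forbids, while strict positivity of $\omega(A)$ forbids $A = 0$. Once this compactness/transversality package is in place, the identification of the count with $GW_{\v_I}(\alpha)$ and the orientation check follow the now-standard patterns in the paper, so I would treat them more briefly.
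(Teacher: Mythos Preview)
Your strategy is sound and tracks the paper's argument, but the paper's proof is much shorter because it does not re-derive the key chain-level identity: it simply invokes \cite[Lemma~4.21]{GP1}, which already establishes
\[
\partial_{CF}\bigl(\PSSlog(\underline{\alpha}\, t^{\v_I})\bigr) \;=\; \PSS_{\v_{\emptyset}}(\GWvc)
\]
for the \emph{full} log PSS cochain (counting rigid PSS solutions to all outputs, not just weight-$w(\v_I)$ ones). From there the spectral-sequence translation is one line: the full $\PSSlog(\underline{\alpha}\, t^{\v_I})$ lies in $F_{w(\v_I)}CF^*$, its differential lies in $F_0 CF^*$, so it represents a valid $E_{w(\v_I)}$-cycle; on $E_1$ it coincides with $\PSSlog^{low}(\alpha t^{\v_I})$, and $[\PSS(\GWvc)] = GW_{\v_I}(\alpha)$ in $H^*(X)$. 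Your steps (1)--(4) amount to reproving that lemma via the boundary of one-dimensional $\mc M(\v_I, c, x_0)$.

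If you do carry this out, two points to tighten. First, the confinement Lemmas~\ref{lem: money} and~\ref{lem:keymonoloc} are low-energy statements and do not apply here; the relevant compactness input is closer to Lemma~\ref{lem:compactness2}, with admissibility replacing the topological-pair hypothesis to force any sphere bubble at $z_0$ to carry the full incidence $\v_I$. Second, your boundary analysis omits a stratum: Floer breaking at an intermediate orbit $x'$ with $w(x')=0$ (a rigid high-energy PSS solution followed by a weight-$0$ Floer cylinder). This contributes an \emph{exact} term in the weight-$0$ complex, so it vanishes in cohomology and does not affect the conclusion, but at the chain level the weight-decreasing part of $\partial_{CF}\underline{y}$ equals $\PSS(GW_{\v_I}(\alpha))$ only modulo this exact term --- not on the nose as your step (4) suggests. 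Relatedly, after the sphere bubbles off, the remaining piece is a classical weight-zero PSS solution (producing the $\PSS$ factor), not a literally constant plane.
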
 
\begin{proof} 
    Given an admissible vector $\v_I$, and a cohomology class $\alpha
    t^{\v_I}$, choose $\underline{\alpha} t^{\v_I} \in C^*_{log}(M,\D)$ be a
    cocycle which represents this class. We choose generators of the
    orientation lines associated to each critical point and denote them by $c$
    in a slight abuse of notation. For every $c$, let $\bar{W}^s(c)$ denote the
    partial compactification of $W^s(c)$ to a manifold with boundary given by
    adding in simply broken flow lines.  
    Recall from \cite[\S 4.1]{Schwartz} that, given a cocycle of the form
    $$ \underline{\alpha}= \sum_{c} \alpha_c,\ \alpha_c \in |\mathfrak{o}_c| $$ 
    we may construct a corresponding pseudocycle
    $Z(\underline{\alpha})$ by gluing together ($|\alpha_c|$ copies of) the stable manifolds
    $\bar{W}^s(c)$ (oriented by whichever generator $\alpha_c$ is a positive multiple of) 
    along cancelling boundary components.
    Given a primitive admissible class and a generic complex structure,
    restricting the fiber product 
\begin{align}
    \mathcal{M}_{0,2}(M,\D,\vec{\v}_{I})^o
    \times_{\operatorname{Ev}_0} Z(\underline{\alpha}) 
\end{align} 
to $\bar{X}_\ell \subset X$ defines a pseudocycle (rel boundary) $\GWvc$ whose corresponding relative homology class is 
$\GWv$. Consider also the variant of the PSS map defined using pseudocycles: 
\begin{align}  \label{eq:PSSpseudo}
    \PSS_{\vec{\v}_{\emptyset}}:H_{2n-*}(\bar{X}_\ell,\partial{\bar{X}}_\ell) \to
    F_0 HF^{*}(X \subset M; H^\ell),
\end{align} 
which is defined by representing elements of 
$H_{2n-*}(\bar{X}_{\ell},\partial{\bar{X}}_\ell)$ by relative psuedocycles $P$
such that $\partial P \subset \partial{\bar{X}}_\ell$. On homology this map agrees with
the classical PSS map defined using Morse co-chains.

In \cite[Lemma 4.21]{GP1}, 
we proved that 
\begin{align} \label{eq:randomeqGP1}
    \partial_{CF}(\PSSlog(\underline{\alpha} t^{\v_I}))= \PSS_{\vec{\v}_{\emptyset}}(\GWvc).  
\end{align}
The result follows from the definition of the pages $(E_r^{p,q},d_r)$ together with the fact that on the $E_1$ page of the
spectral sequence, $[\PSSlog(\underline{\alpha} t^{\v_I})]=\PSSlog^{low}(\alpha
t^{\v_I})$ and $[\PSS(\GWvc)]=\PSS(\GWv).$ 
\end{proof} 

\begin{rem} 
    The assumption that $\mathbf{k}= \mathbb{Z}$ in Theorem \ref{thm:
    degenerescence} was only for simplicity and can likely be removed.
\end{rem} 

\begin{cor} \label{cor: degenerescencecor}
 Assume $\mathbf{k}= \mathbb{Z}$. Given an admissible pair $(M,\D)$, suppose
 the maps \eqref{eq:obstructionclass} vanish for all $I \in \lbrace 1,\cdots k
 \rbrace$. Then the spectral sequence \eqref{eq:ss} degenerates at the $E_1$
 page. 
 \end{cor}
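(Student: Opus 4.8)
The plan is to combine Theorem \ref{thm: degenerescence} with the multiplicativity of the spectral sequence \eqref{eq:Spec2}, exactly in the manner foreshadowed in the discussion opening \S \ref{sect: suitesspectrales}. First I would recall that, since the ring $H^*_{log}(M,\D)$ is generated over $\mathbf{k}$ by classes of the form $y \in H^*(X)$ and $\alpha t^{\v_I}$ with $\v_I$ primitive (this is the observation used already in Corollary \ref{cor:simplecor}), and since the spectral sequence \eqref{eq:ss} is multiplicative with $d_r$ satisfying the Leibniz rule, degeneration at $E_1$ is equivalent to the vanishing of $d_r$ on each such generator for every $r \geq 1$. For a class $y \in H^*(X) = H^*_{log}(M,\D)_{\mathbf{0}}$, the weight is $0$, so $y$ lies in the bottom piece of the (weight) filtration and $d_r(y) = 0$ automatically for all $r$ (a class of minimal weight is always a permanent cocycle). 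So the entire problem reduces to the primitive classes $\alpha t^{\v_I}$.

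\textbf{Reduction to the single differential.} For each primitive $\v_I$ in an admissible pair, I would invoke the definition of admissibility: for every $A \in H_2(M)_\omega$ one has $\sum_i \kappa_i(A \cdot D_i) \geq w(\v_I)$. As noted in the text just before Definition \ref{defn: modsphere}, this forces the \emph{only} possible nonzero differential on $\alpha t^{\v_I}$ to be $d_{w(\v_I)}(\alpha t^{\v_I})$; all $d_r$ for $r \neq w(\v_I)$ vanish for weight reasons (a differential $d_r$ shifts the filtration degree by a fixed amount, and energy/action considerations — the same capping-off argument as in the proof of Corollary \ref{thm: easycor} — rule out Floer trajectories producing any other weight jump, because such a trajectory would cap to a sphere in a class $A \in H_2(M)_\omega$ violating the admissibility inequality). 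Then Theorem \ref{thm: degenerescence} identifies this sole surviving differential with the Gromov–Witten obstruction map: $d_{w(\v_I)}(\alpha t^{\v_I}) = GW_{\v_I}(\alpha)$, viewed inside $H^*(X) \subset \QH^*(M,\D)$.

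\textbf{Conclusion.} By hypothesis the maps $GW_{\v_I}: H^*(\SIo) \to H^*(X)$ of \eqref{eq:obstructionclass} vanish for all $I \subset \{1,\ldots,k\}$, hence $d_{w(\v_I)}(\alpha t^{\v_I}) = 0$ for every primitive $\alpha t^{\v_I}$, and therefore $d_r$ vanishes on all algebra generators of $E_1 \cong H^*_{log}(M,\D)$ for all $r \geq 1$. Running over all generators and appealing once more to the Leibniz rule and the fact that the generators generate, $d_r \equiv 0$ on $E_r$ for all $r \geq 1$; thus $E_1 = E_\infty$ and the spectral sequence \eqref{eq:ss} degenerates at the $E_1$ page. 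I do not expect a genuine obstacle here: everything has been set up in Theorem \ref{thm: degenerescence} and the preceding definitions, and the only point requiring a sentence of care is the bookkeeping that admissibility plus multiplicativity collapses the potentially infinite list of differentials down to the single one computed by $GW_{\v_I}$ — this is the step I would write out most explicitly, but it is routine given the weight/action estimates already established in the proofs of Corollary \ref{thm: easycor} and Lemma \ref{lem:energypss}.
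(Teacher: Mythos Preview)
Your proof is correct and follows exactly the approach the paper intends: the corollary is stated without proof because it is immediate from Theorem \ref{thm: degenerescence} together with the multiplicativity observation at the opening of \S \ref{sect: suitesspectrales} (that degeneration follows once $d_r$ vanishes on primitive generators), and you have filled in precisely those details. Your separate handling of $y \in H^*(X)$ (bottom of the weight filtration, hence automatically permanent) and of the single surviving differential $d_{w(\v_I)}$ on primitive classes is the right bookkeeping.
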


\begin{rem} 
    As mentioned in the introduction, Section 6.1.2 of \cite{diogothesis}
    considered the case where $M \cong \mathbb{P}^1\times \mathbb{P}^1$ is a
    quadric in $\mathbb{P}^3$ and $D$ is a smooth hyperplane. Here, the moduli
    spaces $\mathcal{M}_{0,2}(M,\mathbf{D},\v_I)$ are nonempty; however the
    invariants $GW_{\v_{I}}(\alpha)=0$. In this case, the relevant class lives
    in $H^2(X)$ and it is easy to compute the moduli spaces directly as is done
    in \emph{loc. cit}. A more abstract way to see that it vanishes is to
    observe $GW_{\v_{I}}(\alpha)$ is invariant under the action which switches
    the two factors of $\mathbb{P}^1$ and hence vanishes when restricted to the
    complement. Similar reasoning involving group actions applies e.g. in the
    case when $\D=D_1 \cup D_2$ is the union of two hyperplanes. It would be
    interesting to see if such reasoning applies in other cases.  
\end{rem}

Our partial description of the differentials in the spectral sequence may be
used to prove the following strong finiteness result:

\begin{thm} \label{thm: finitely} 
    Assume that all $\kappa_i=1$ and all of the strata $D_I$ are connected.
    Then $SH^*(X)$ is finitely generated as a graded algebra over $\mathbf{k}$.
\end{thm}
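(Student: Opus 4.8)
The strategy is to exploit the multiplicative spectral sequence from Theorem \ref{thm:spectral} together with the fact that when $\kappa_i = 1$ for all $i$ and all strata $D_I$ are connected, the logarithmic cohomology $H^*_{log}(M,\D)$ is finitely generated as a $\mathbf{k}$-algebra. First I would establish this finiteness of $H^*_{log}(M,\D)$: under the connectedness hypothesis, the Stanley--Reisner subalgebra $\mathcal{SR}^*(M,\D)$ from \eqref{eq:SRdef} is the Stanley--Reisner ring of the (finite) dual intersection complex of $\D$, hence is a finitely generated $\mathbf{k}$-algebra (generated by the finitely many degree-one monomials $t^{\v_i}$, modulo the monomial relations coming from empty strata). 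Since $H^*_{log}(M,\D)$ is a finitely generated module over $\mathcal{SR}^*(M,\D)$ (as recorded just after \eqref{eq:SRdef}), it follows that $H^*_{log}(M,\D)$ is a finitely generated $\mathbf{k}$-algebra — concretely, it is generated by the finite set of classes $\{t^{\v_i}\} \cup \{\alpha t^{\v_I}\} \cup H^*(X)$ where $\alpha$ ranges over a finite set of module generators of each $H^*(\mathring{S}_I)$ and $H^*(X)$ is itself finitely generated since $X$ is a smooth affine variety.

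Next I would transfer this finiteness through the spectral sequence. By Theorem \ref{thm:spectral} the first page $E_1 = \bigoplus_{p,q} E_1^{p,q}$ is isomorphic as a ring to $H^*_{log}(M,\D)$, hence is a finitely generated $\mathbf{k}$-algebra. The spectral sequence \eqref{eq:ss} is multiplicative, so each differential $d_r$ is a derivation with respect to the $E_r$-page product, and each successive page $E_{r+1} = H^*(E_r, d_r)$ inherits a ring structure. The key algebraic point is: \emph{if $E_r$ is a finitely generated (bigraded) $\mathbf{k}$-algebra, so is $E_{r+1}$}, provided $\mathbf{k}$ is Noetherian (which we may assume, and in the finitely-generated-algebra statement is automatic in the cases of interest; one can also work graded-piece by graded-piece). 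Indeed $E_{r+1}$ is a subquotient of $E_r$ — the kernel of $d_r$ modulo its image — and a graded subalgebra of a finitely generated graded algebra over a Noetherian ring is finitely generated (this is essentially the graded Artin--Tate / Noetherianity argument), while the quotient by a homogeneous ideal is trivially finitely generated. So by induction all pages $E_r$, and in particular $E_\infty$, are finitely generated $\mathbf{k}$-algebras.

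Finally, I would pass from $E_\infty$ back to $SH^*(X)$. The spectral sequence converges, and by the discussion in \S\ref{sect: actionspec} the relevant filtration $F_w SH^*(X)$ on $SH^*(X)$ is exhaustive and bounded below (it arises from the weight filtration with $w \geq 0$, and $F_w$ vanishes for $w$ sufficiently negative); moreover $E_\infty \cong gr_F SH^*(X)$ as bigraded rings. A standard lemma (the same one invoked just below the $\mathbb{C}P^2$ example, cf. \cite{Bourbaki Chapter III} Ch.~3 \S II) says that a choice of $\mathbf{k}$-algebra generators of $gr_F SH^*(X) = E_\infty$ lifts to a set of algebra generators of $SH^*(X)$, as long as the filtration is exhaustive and appropriately bounded so the filtration-degree induction terminates: given $x \in SH^*(X)$ in filtration level $w$, subtract a polynomial in the lifts matching its image in $gr^w$, dropping to level $w-1$, and repeat. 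Hence $SH^*(X)$ is finitely generated as a graded $\mathbf{k}$-algebra.

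\textbf{Main obstacle.} The delicate point is the second step — controlling the behavior of finite generation under passage to pages of the spectral sequence. Taking kernels of derivations does not in general preserve finite generation of algebras over an arbitrary base; one genuinely needs Noetherianity of $\mathbf{k}$ (or of $E_1$) to invoke the graded Artin--Tate argument that a subalgebra of a finitely generated graded algebra is finitely generated. One must also be careful that the spectral sequence is \emph{bounded} in each total degree — i.e., for fixed total degree only finitely many pages are nonzero and the $E_\infty$ page is reached in finitely many steps — so that the inductive ``lift generators'' argument for both the subquotient steps and the final $gr_F \to SH^*$ step actually terminates; this boundedness should follow from the structure of the weight filtration (weights are non-negative integers $w(\v) = \sum \kappa_i v_i$, here $= \sum v_i$, and each fixed total degree involves only finitely many weights because of the grading formula \eqref{eq:loggrading}), but it needs to be checked rather than assumed. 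I expect the connectedness and $\kappa_i = 1$ hypotheses to enter precisely in making $E_1 \cong H^*_{log}(M,\D)$ a \emph{Noetherian} ring (via the Stanley--Reisner presentation), which is what powers the rest of the argument.
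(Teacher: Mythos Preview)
Your proposal has a genuine gap at the heart of step two. The claim that ``a graded subalgebra of a finitely generated graded algebra over a Noetherian ring is finitely generated'' is simply false: for instance, the subalgebra of $\mathbf{k}[x,y]$ generated by $\{xy^n : n \geq 1\}$ is not finitely generated. Artin--Tate requires the ambient algebra to be \emph{module-finite} over the subalgebra, and there is no reason $\ker d_r \subset E_r$ should satisfy this. So your induction from $E_r$ to $E_{r+1}$ breaks. A related symptom: your argument never really uses the hypothesis $\kappa_i = 1$ (you conjecture it enters to make $E_1$ Noetherian, but the algebra structure of $E_1 \cong H^*_{log}(M,\D)$ is independent of the $\kappa_i$).

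The paper's proof repairs exactly this point by a geometric input. Because $\kappa_i = 1$, every primitive vector $\v_i$ is admissible in the sense of \S\ref{sect: suitesspectrales}; one then checks (via \cite{GP1}, Lemma 4.21, the relevant pseudocycle factoring through a lower-dimensional manifold) that $\partial_{CF}\PSS_{log}(c\, t^{\v_i}) = 0$, so the classes $\PSS_{log}(\alpha_i t^{\v_i})$ exist in $SH^*(X)$ itself, not merely on some page. Let $\mathcal{A} \subset SH^*(X)$ be the subalgebra they generate. Its associated graded is a quotient of $\mathcal{SR}(M,\D)$ (here connectedness is used), and since these generators are permanent cycles, multiplication by them commutes with every $d_r$. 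Thus every page is a \emph{module} over the Noetherian ring $\mathcal{SR}(M,\D)$, and for modules over a Noetherian ring subquotients of finitely generated modules \emph{are} finitely generated. This is the crucial switch: module-finiteness over a fixed Noetherian ring survives subquotients, whereas algebra-finiteness over $\mathbf{k}$ does not survive passage to subalgebras. One then concludes via the same Bourbaki lemma you cite that $SH^*(X)$ is finitely generated over $\mathcal{A}$, hence over $\mathbf{k}$.
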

\begin{proof} 
    Because all of the $\kappa_i=1$, all of the vectors $\v_i$  are admissible
    for $i \in \lbrace 1,\cdots, k \rbrace$. Choose over each $\mathring{S}_i$
    a Morse function with a unique critical point $c$ of degree zero and again
    choose a generator for the corresponding orientation line which we denote
    by $c$ as well. Because the cycle  $\mathcal{M}_{0,2}(M,\D,\vec{\v}_{I})^o
    \times_{\operatorname{Ev}_0} Z(c)$ factors through a lower dimensional
    manifold, we have that the right hand side of \eqref{eq:randomeqGP1} vanishes (compare with \cite[Lem. 4.23 and Rem. 4.25]{GP1} for the same claim phrased in the language of singular cohomology). Equation  \eqref{eq:randomeqGP1} thus becomes: 
  
    \begin{align} \label{eq: BormanSheridan}  \partial_{CF}{\PSS_{log}(c \, t^{\v_i})}=0 \end{align}
    i.e., we obtain a class $\PSSlog(\alpha_i t^{\v_i})$ in
    symplectic cohomology, where $\alpha_i \in H^0(\mathring{S}_i)$ is the
    fundamental class of the stratum $\mathring{S}_i$. Let $\mathcal{A}$ denote
    the subalgebra of $SH^*(X)$ generated by $\PSSlog(\alpha_i t^{\v_i})$.
    Connectedness of all
    of the stata implies that the associated graded of $\mathcal{A}$ is a
    quotient algebra of $\mathcal{SR}(M,\D)$ and that the first page of the
    spectral sequence is a finitely generated module over $\mathcal{SR}(M,\D)$.
    As  the spectral sequence is multiplicative this means that the cohomology
    groups of all subsequent pages are also finitely generated modules over
    $\mathcal{SR}(M,\D)$. Because our filtrations on $\mathcal{A}$ and
    $SH^*(X)$ are increasing and in nonnegative degree (and in particular
    induce discrete topologies), Chapter 3, Section II of \cite{Bourbaki
    Chapter III} again implies that $SH^*(X)$ is finitely generated over
    $\mathcal{A}$ as well. 
\end{proof}

\subsection{Log Calabi-Yau pairs} \label{sect:logCY}

One may also consider how the spectral sequence behaves in specific
cohomological degrees. A natural situation to do this in is where $\D$ is an
anticanonical divisor, i.e. $M$ admits a meromorphic volume form with all
$a_i=1$. In this case, we say that $(M,\D)$ is a {\em log Calabi-Yau
pair}.
\footnote{This notion of log Calabi-Yau pair is stricter than the usual
usage in birational geometry \cite{bicluster}.}
Here we wish to use our methods to derive information about $SH^0(X)$. For such
pairs, we have $$\QH^*(M,\D) =0, \quad *<0$$ and to understand the degree zero
piece of the $E_\infty$ page, it suffices to analyze the differential in degree
zero.

Let $(M, \D)$ be a log Calabi-Yau pair such that all strata $D_I$ are
connected. Then the ring $\QH^0(M,\D)$ is generated by classes $\alpha_i
t^{\v_i}$  for $i \in \lbrace 1,\cdots, k \rbrace$ where $\alpha_i \in
H^0(\mathring{S}_i)$ is the fundamental class. Again, to prove that $d_r$
vanishes in degree zero, it suffices to prove that $d_r(\alpha_i t^{\v_i})=0$
for all $r \geq 1$.   

 \begin{thm} \label{thm: Fano} 
     Let $(M, \D)$ be a pair with $M$ a Fano manifold and $\D$ an anticanonical
     divisor. Assume that all strata $D_I$ are connected. Then the spectral
     sequence degenerates in degree zero. With respect to the standard
     filtration, we have an isomorphism 
     \begin{align} 
         gr_F SH^0(X) \cong \mathcal{SR}(M,\D),
     \end{align} 
     where $\mathcal{SR}(M,\D)$ is the Stanley-Reisner ring on the dual
     intersection complex of $\D$ (see \eqref{eq:SRdef}).  
 \end{thm}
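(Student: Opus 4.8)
The strategy is to leverage Theorem \ref{thm: degenerescence} (the identification of the single possible differential on an admissible primitive class with a Gromov--Witten count), combined with a dimension/degree constraint coming from the Fano and connectedness hypotheses. The key point is that under these hypotheses the relevant obstruction classes $GW_{\v_i}(\alpha_i)$ land in too high a degree of $H^{BM}_*(X)$ to be nonzero. First I would check admissibility: since $\D$ is anticanonical with all $a_i = 1$, the divisorial weights satisfy $w(\v_i) = \kappa_i$, and Fano-ness (so $-K_M = \D$ is ample, and in fact we may take $\omega$ in the class of $\sum_i \kappa_i D_i$) gives $\sum_i \kappa_i (A\cdot D_i) = \omega(A) > 0$ for every $A \in H_2(M)_\omega$; because the $A\cdot D_i$ are nonnegative integers (positivity of intersection, applied to curves not contained in any component) and at least one is positive, one gets $\sum_i \kappa_i(A\cdot D_i) \geq \kappa_{i_0} \geq w(\v_i)$ after a short combinatorial argument — so every primitive $\v_i$ is admissible, and Theorem \ref{thm: degenerescence} applies.

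Next I would pin down the degree of $GW_{\v_i}(\alpha_i)$. The log-cohomology degree formula \eqref{eq:loggrading} with $a_i = 1$ gives $\deg(\alpha_i t^{\v_i}) = \deg(\alpha_i) = 0$ since $\alpha_i$ is the fundamental class of the connected stratum $\mathring{S}_i$ (here connectedness is what makes $H^0(\mathring{S}_i)$ one-dimensional and spanned by this class). The differential $d_{w(\v_i)}$ raises cohomological degree by $1$, so $GW_{\v_i}(\alpha_i) \in H^1(X) \subset \QH^1(M,\D)$. On the other hand, following the construction of \eqref{eq: GWinvariantsnormal}, this class is represented by $ev_{\infty,*}(\operatorname{Ev}_0^{\v_i,*}(\alpha_i))$ on the moduli space $\mathcal{M}^{S^1}_{0,2}(M,\D,\v_i)^o$; since $\alpha_i$ is the fundamental class, $\operatorname{Ev}_0^{\v_i,*}(\alpha_i)$ is the fundamental class of that moduli space, so the pushforward factors through a space of dimension $\operatorname{vdim}$. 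A virtual dimension count for spheres meeting $D_i$ once with total Chern number governed by $c_1(M)\cdot A = \sum \kappa_j(A\cdot D_j)$ shows that, after quotienting by the $\R$-action and passing to the open part mapping to $X$, this class lives in $H^{BM}_{2n-1}(X)$, i.e.\ in $H^1(X)$ only through components of top Borel--Moore dimension — and the real content is that $\operatorname{Ev}_0$ factors the cycle through the lower-dimensional stratum $\mathring{S}_i$ (as in the proof of Theorem \ref{thm: finitely}). Indeed the cycle $\mathcal{M}_{0,2}(M,\D,\v_i)^o \times_{\operatorname{Ev}_0} Z(\alpha_i)$ is the pullback of the fundamental class of $\mathring{S}_i$ under a map whose image has dimension at most $\dim_\R \mathring{S}_i < 2n-1$, forcing the relative homology class $GW_{\v_i}(\alpha_i)$ to vanish for purely dimensional reasons — this is exactly the argument used in the proof of Theorem \ref{thm: finitely} to show $\partial_{CF}\PSS_{log}(c\,t^{\v_i}) = 0$, and it carries over verbatim here since each $\v_i$ is admissible.

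Having shown $d_{w(\v_i)}(\alpha_i t^{\v_i}) = 0$ for all $i$, and knowing (by admissibility) that this was the \emph{only} possibly-nonzero differential on each generator $\alpha_i t^{\v_i}$ of $\QH^0(M,\D)$, multiplicativity of the spectral sequence \eqref{eq:ss} forces all differentials in degree zero to vanish: every element of $E_1^{p,q}$ with $p+q = 0$ is a polynomial in the $\alpha_i t^{\v_i}$ (using that $\QH^{<0} = 0$ so there are no lower-degree factors to worry about and the degree-zero part is generated by these primitive classes), and $d_r$ is a derivation. Hence $E_\infty^{p,q} = E_1^{p,q}$ whenever $p+q=0$, which gives $gr_F SH^0(X) \cong \bigoplus_{p+q=0} E_1^{p,q} \cong \QH^0(M,\D)$. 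Finally, $\QH^0(M,\D)$ is precisely $\mathcal{SR}(M,\D)$: by \eqref{eq:SRdef} the Stanley--Reisner ring consists of the classes $e \in H^0(X)$ and $\alpha t^{\v}$ with $\alpha \in H^0(\mathring{S}_I)$, and since all strata are connected and all $a_i = 1$ (so the grading shift in \eqref{eq:loggrading} contributes $2\sum(1-a_i)v_i = 0$), the degree-zero part of log cohomology is exactly this subalgebra. The main obstacle is the second paragraph — making the dimension count for $GW_{\v_i}(\alpha_i)$ rigorous, i.e.\ verifying that the enhanced evaluation $\operatorname{Ev}_0$ genuinely factors the cycle through $\mathring{S}_i$ so that the pushforward to $X$ vanishes in Borel--Moore homology; but since the admissible primitive vectors $\v_i$ are precisely the setting of \cite[Lemma 4.21]{GP1} and the proof of Theorem \ref{thm: finitely}, this reduces to citing that argument with $\alpha = \alpha_i$ the fundamental class.
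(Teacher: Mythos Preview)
Your approach is essentially the same as the paper's: verify that each primitive vector $\v_i$ is admissible, invoke the ``factors through a lower-dimensional manifold'' argument from the proof of Theorem~\ref{thm: finitely} (equivalently \cite[Lemma 4.21]{GP1}) to kill the relevant differential on each generator $\alpha_i t^{\v_i}$, and then use multiplicativity. The paper's proof is only a few lines because it points directly to the cochain-level vanishing $\partial_{CF}\PSS_{log}(c\,t^{\v_i})=0$ already established in Theorem~\ref{thm: finitely}, rather than routing through Theorem~\ref{thm: degenerescence}.

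There is, however, a genuine slip in your admissibility paragraph. The claim that ``the $A\cdot D_i$ are nonnegative integers (positivity of intersection)'' is not valid: admissibility is a condition on \emph{all} $A\in H_2(M)_\omega$, not just classes represented by $J$-holomorphic curves, and positivity of intersection says nothing about an arbitrary homology class. Moreover the chain of inequalities $\sum_j\kappa_j(A\cdot D_j)\geq \kappa_{i_0}\geq w(\v_i)=\kappa_i$ fails in general, since there is no reason $\kappa_{i_0}\geq\kappa_i$. The correct (and simpler) argument is the one the paper has in mind implicitly: because $M$ is Fano and $\D$ is anticanonical, one may take $\mathcal L=-K_M$ so that all $\kappa_i=1$. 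Then $\omega(A)=\sum_j(A\cdot D_j)$ is an \emph{integer}, and $\omega(A)>0$ forces $\sum_j\kappa_j(A\cdot D_j)\geq 1=w(\v_i)$ immediately. Once you fix this, your proof goes through. (A minor further point: by going through Theorem~\ref{thm: degenerescence} you inherit the hypothesis $\mathbf{k}=\mathbb Z$, which the statement of Theorem~\ref{thm: Fano} does not carry; the paper avoids this by using the chain-level vanishing directly, which you also end up citing.)
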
 
\begin{proof} 
Under the assumption that $M$ is Fano, all of the vectors $\v_i$ are admissible
for $i \in \lbrace 1,\cdots, k \rbrace$. Choose over each $\mathring{S}_i$ a
Morse function with a unique critical point $c$ of degree zero and again choose
a generator for the corresponding orientation line which we denote by $c$ as
well. As we saw in \eqref{eq: BormanSheridan}, $$ \partial_{CF}{\PSS_{log}(c \, t^{\v_i})}=0$$
i.e. this defines a class $\PSSlog(\alpha_i t^{\v_i})$ in symplectic
cohomology. As their image in the $E_1$ page generates $\QH^0(M,\D)$ as a ring,
the result follows. 
\end{proof}

We next show how to adapt our methods to recover a result of Pascaleff
\cite{Pascaleff} that for any log Calabi-Yau surface, there is an isomorphism
$gr_F SH^0(X) \cong \QH^0(M,\D)$. To prepare for this, we need to introduce a
little bit more terminology. If $(T,A)$ is a tree with
$|E_{\operatorname{ext}}(T)|=2$ equipped with a labelling function $$A: V(T)
\to  H_2(M,\mathbb{Z}),$$ we let  $\mathcal{M}_{0,2}(T,A, M)$ denote the moduli
space of sphere bubbles modelled on this labelled tree. By definition, we have
that $$ \mathcal{M}_{0,2}(T,M):=\bigsqcup_{A} \mathcal{M}_{0,2}(T,A, M).$$ Let
$z_1 = \infty$ and $z_0=0$ be the marked points corresponding to the external
vertices $\cev{e}$ and $\vec{e}$ respectively. We let $\nu_f$ and $\nu_{i}$
denote the vertices which are attached to each of these points. 

\begin{defn}  
    For any $\v',\v \in (\mathbb{Z}^{\geq 0})^k,$ we let 
$$ \mathcal{M}_{0,2}(T,A,\v', \v) \subset  \mathcal{M}_{0,2}(T,A, M) $$
denote the subset of maps admitting a pre-logarithmic enhancement with 
\begin{align}  
    \operatorname{ord}_{\nu_{i}}(z_0)=\v'  \\
    \operatorname{ord}_{\nu_{f}}(z_\infty)=-\v 
\end{align} 
\end{defn}

Notice that 
\begin{equation}\label{obs:emptymodulispaces}
    \textrm{$\mathcal{M}_{0,2}(T,A,\v', \v)$ is empty unless $A \cdot \D=\v'-\v$.}
\end{equation}

\begin{defn}\label{defn: logCY} 
Let $(M,\D)$ be a log Calabi-Yau pair and let $\v_I$ denote a primitive vector.
We say that the vector $\v_I$ is {\em degree zero admissible} if the moduli spaces 
$\mathcal{M}_{0,2}(T,A,\v_I, \v)= \emptyset $ unless  
\begin{itemize} 
    \item $\v=\mathbf{0}$; and 
    \item there is exactly one $\nu \in V(T)$ for which $A(\nu) \neq 0.$ 
    \end{itemize} 
\end{defn} 

In the case $T$ has a single vertex, we write $ \mathcal{M}_{0,2}(T,A,\v_I,
\mathbf{0}):= \mathcal{M}_{0,2}(A,\v_I)$  and let
$\mathcal{M}_{0,2}(A,\v_I)^{o}:= ev_{\infty}^{-1}(X)$. The moduli space
$\mathcal{M}_{0,2}(A,\v_I)$ has expected dimension $2n-2$ and, because the vector
$\v_I$ is primitive, $\mathcal{M}_{0,2}(A,\v_I)^{o}$ is a manifold of this
expected dimension for generic $J$.\footnote{This follows because each curve $u
    \in \mathcal{M}_{0,2}(A,\v_I)^{o}$ is necessarily somewhere injective.} 

When $\v_I$ is degree zero admissible, $|V(T)| \leq 2$ because configurations 
either consist of a single component with two marked points (in which case
we are in the situation of the previous paragraph) or a single constant
component in $\D$ with three marked points glued to a non-constant
component with one marked point. We can describe the configurations with two
components very explicitly: the constant sphere bubble comes equipped with an
$|I|-tuple$ of meromorphic functions (well-defined up to a rescaling action)
with two marked points corresponding to unique zeros and poles of order $\v_I$.
The zero corresponds to the point $z_0$ and the nonconstant component is glued
in along the pole. The marked point $z_\infty$ corresponds to the third marked
point on the constant sphere, at which the meromorphic functions has neither a
zero or pole. In the case $|V(T)|=2$, we therefore see that $ev_{\infty} \in
\D.$  

Corollary \ref{lem:logCYsur} below shows that in dimension 2, all $\v_I$ are
degree zero admissible. It is an immediate consequence of the following
stronger lemma:
\begin{lem} \label{lem:logCYlem} 
    Let $(M,\D)$ be a log Calabi-Yau pair with $\operatorname{dim}_\mathbb{C}
    M=2$.  Let $\v',\v$ be any pair and let $u \in \mathcal{M}_{0,2}(T,A,\v',
    \v)$. Then any component $u_\nu$ of $u$ which is contained in $\D$ must be
    constant. 
\end{lem}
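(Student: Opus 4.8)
The plan is to rule out non-constant components $u_\nu$ lying in a stratum $D_J \subset \D$ by a dimension count combined with the log Calabi-Yau hypothesis, exploiting the fact that $\dim_{\C} M = 2$ forces all such strata to be points or curves. First I would observe that if $u_\nu$ has image generically in an open stratum $\mathring{D}_J$, then since $\D$ is simple normal crossings in a surface, $|J|$ is either $1$ or $2$; if $|J| = 2$ then $D_J$ is a point and $u_\nu$ is constant, so the only case to treat is $|J| = 1$, i.e. $u_\nu$ maps generically into a smooth component $D_i$, which is a smooth projective curve.

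Next I would use the log Calabi-Yau condition ($a_i = 1$ for all $i$, i.e. $K_M = -\D$) together with adjunction to compute the relevant normal bundle degrees. By adjunction, $K_{D_i} = (K_M + D_i)|_{D_i} = (-\sum_{j} D_j + D_i)|_{D_i} = -\sum_{j \neq i} D_j|_{D_i}$, so $\deg K_{D_i} = - \sum_{j\neq i} (D_j \cdot D_i)$. The key point is the index computation: for a component $u_\nu: \mathbb{C}P^1 \to D_i$ of class $A_\nu = (u_\nu)_*[\mathbb{C}P^1] \in H_2(D_i)$ with prescribed tangency orders to $\cup_{j \neq i}(D_j \cap D_i)$ at its special marked points (coming from the pre-logarithmic enhancement), the expected dimension of the corresponding moduli space of log curves, after subtracting the constraints, works out to be bounded above by an expression involving $c_1(D_i) \cdot A_\nu$ minus the total tangency order. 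I would compute that $c_1(D_i) \cdot A_\nu = -K_{D_i}\cdot A_\nu = \sum_{j \neq i} (D_j \cdot D_i) \cdot A_\nu$, and then compare this against the tangency contributions, which are controlled by the matching conditions \eqref{eq:ordersmatch} and the constraint $\operatorname{ord}_\nu(z) \geq 0$ at any point of $S^2$ not corresponding to an edge. The upshot I expect is that the only way a non-constant $u_\nu \subset D_i$ can fit into a stable pre-logarithmically enhanced curve of the type appearing in $\mathcal{M}_{0,2}(T,A,\v',\v)$ (which has only two external marked points) is for a negativity or genus obstruction to be violated.

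The main obstacle, and where I would spend the most care, is the bookkeeping of the index/dimension formula for the components lying in $\D$: one must correctly account for (i) the automorphisms of $\mathbb{C}P^1$ fixing the relevant marked points, (ii) the tangency conditions at nodes forced by \eqref{eq:ordersmatch}, and (iii) the fact that a component in $D_i$ only has the normal directions $N D_i$ plus the tangent directions of $D_i$ to move in, with the former being rigidly constrained by the pre-logarithmic data. I anticipate the cleanest route is: reduce to a single such component $u_\nu \subset D_i$, note it has at least one special point (the node or external edge connecting it to the rest of $T$), and show via adjunction on the curve $D_i$ that $c_1(D_i)\cdot A_\nu$ is too small relative to the number and orders of the required tangencies unless $A_\nu = 0$. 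A secondary technical point is handling the case where $u_\nu$ could a priori be a multiple cover; here I would use that the relevant homology class in $H_2(D_i)$ still has bounded symplectic area (bounded by the total energy of $u$, which is controlled by $w(\v') - w(\v)$ via the energy estimate analogous to Lemma \ref{lem:energypss}) and that, since we are in a surface, such covers are still obstructed by the same adjunction inequality applied to the underlying simple curve. Once all components in $\D$ are forced to be constant, the statement follows.
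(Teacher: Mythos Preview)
Your approach has a genuine gap. A dimension or index count for a single component $u_\nu \subset D_i$ cannot rule out non-constant maps here: when $\D$ is a cycle of rational curves (the generic case for log Calabi-Yau surfaces with reducible boundary), each $D_i \cong \mathbb{P}^1$, and non-constant holomorphic maps $\mathbb{P}^1 \to \mathbb{P}^1$ exist in abundance and are unobstructed. Your adjunction computation correctly gives $c_1(D_i)\cdot A_\nu = 2d$ for a degree-$d$ map, which is \emph{positive}, so no local negativity arises. Moreover, the lemma is a statement about \emph{every} element of $\mathcal{M}_{0,2}(T,A,\v',\v)$ for the fixed $J_0$, not a generic-$J$ statement, so even a negative expected dimension would not yield non-existence.

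The paper's proof is entirely different and essentially combinatorial. The key structural input is that for a log Calabi-Yau surface, $\D$ is either a smooth elliptic curve (where non-constant maps from $\mathbb{P}^1$ are impossible for genus reasons) or a cycle of $\mathbb{P}^1$'s, so any non-constant $u_\nu \subset D_i$ must have at least two marked points with positive $\operatorname{ord}$ (coming from preimages of the two nodes on $D_i$). One then chooses a non-constant divisorial component $u_\nu$ \emph{furthest} from the external edge $\vec{e}$, picks one of these two marked points so that the subtree $T_e$ beyond it does not contain $\vec{e}$, and sums the matching conditions \eqref{eq:ordersmatch} over $T_e$: this forces $\sum_{\nu' \in T_e} A(\nu')\cdot D_i < 0$. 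But by the extremality of $u_\nu$, no component in $T_e$ lies in $\D$, so positivity of intersection gives $\sum_{\nu' \in T_e} A(\nu')\cdot D_i \geq 0$, a contradiction. The obstruction is thus global (tree combinatorics plus matching) rather than local (index of a single component), which is the idea your proposal is missing.
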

\begin{proof} 
    Suppose that the statement of the Lemma is false and let $u_\nu$ be a
    non-constant component in $\D$ whose corresponding node is furthest away
    from $\vec{e}$ (in terms of number of nodes in between that node and
    $\vec{e}$). 
    When $\operatorname{dim}_\mathbb{C} M=2$, $\D$ is either a
    cycle of $\mathbb{P}^1$'s or an elliptic curve, and in particular we know
    that
    every non-constant component $u_{\nu}$ in $\D$ has at least two marked
    points $z$ and $z'$ at which $\operatorname{ord}_{\nu,i}(z)>0$
    (respectively $\operatorname{ord}_{\nu,i'}(z')>0$)  for some divisor $D_i$
    (respectively $D_{i'}).$  Without loss of generality, assume that $z$
    corresponds to an edge $e$ which lies in a different connected component of
    $T \setminus \nu$ from $\vec{e}$. In $T \setminus \nu$, the connected
    component containing $e$ defines a tree $T_e$ beginning at $e$. Then
    because $\vec{e}\notin E(T_e)$, we must have that $\sum_{\nu \in T_e}
    A(\nu) \cdot  D_i\leq -\operatorname{ord}_{\nu,i}(z)<0$. But this is
    impossible if none of the components $u_\nu$, $\nu \in T_e$ lies in $\D$,
    contradicting the fact that $u_\nu$ was a component the furthest away from
    $\vec{e}$. 
\end{proof} 

\begin{cor} \label{lem:logCYsur} 
    Let $(M,\D)$ be a log Calabi-Yau pair with $\operatorname{dim}_\mathbb{C}
    M=2$. Then every primitive vector $\v_I$ is degree zero admissible.
\end{cor}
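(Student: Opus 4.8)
The plan is to deduce the corollary directly from Lemma~\ref{lem:logCYlem} by unwinding what degree zero admissibility (Definition~\ref{defn: logCY}) demands. Fix a primitive vector $\v_I$ and suppose $\mathcal{M}_{0,2}(T,A,\v_I,\v)$ is nonempty for some tree $T$ with $|E_{\operatorname{ext}}(T)| = 2$, labelling $A \colon V(T) \to H_2(M,\Z)$, and vector $\v$. We must show that then $\v = \mathbf{0}$ and exactly one vertex carries a nonzero label. First I would observe that Lemma~\ref{lem:logCYlem} tells us every component $u_\nu$ contained in $\D$ is constant; in particular the vertex $\nu_f$ bounding the external edge $\cev{e}$ (whose marked point is $z_\infty$) has $\operatorname{ord}_{\nu_f}(z_\infty) = -\v$ by hypothesis, and since a constant component lies in $\D$ while a non-constant component landing generically in $\D$ is excluded, $\nu_f$ either carries a non-constant map meeting $\D$ transversally or is constant.

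Next I would run the following bookkeeping argument. Consider the vertex $\nu_i$ attached to $z_0 = 0$, with $\operatorname{ord}_{\nu_i}(z_0) = \v_I$. Since $\v_I$ is a primitive (multiplicity-one) vector supported on $I$, positivity of intersection forces $u_{\nu_i}$ to be non-constant and somewhere injective (its homology class has $A(\nu_i) \cdot \D$ with entries bounded below in a controlled way). Now I would show any \emph{other} vertex $\nu$ with $A(\nu) \neq 0$ leads to a contradiction: walking along $T$ from $\nu_i$ towards such a $\nu$, one passes through components in $\D$ which by Lemma~\ref{lem:logCYlem} are all constant. A constant component in $\D$ contributes no homology, so the intersection orders at its nodes must cancel in pairs by the pre-logarithmic matching condition \eqref{eq:ordersmatch}; hence the nonzero homology contribution of $u_\nu$ would have to propagate as a nonzero order at the node leading back towards $\nu_i$, ultimately forcing either $\v \neq \mathbf{0}$ with the wrong sign or a negative intersection number somewhere along the chain, contradicting positivity of intersection with $\D$. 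Combined with the observation \eqref{obs:emptymodulispaces} that $A \cdot \D = \v_I - \v$ where $A = \sum_\nu A(\nu)$, once we know exactly one vertex carries nonzero label $A_0$ we get $A_0 \cdot \D = \v_I - \v$; but then running Lemma~\ref{lem:logCYlem} once more on the remaining components (all constant, all in $\D$) and using that the only non-constant component $u_{\nu_i}$ already accounts for intersection multiplicity $\v_I$ at $z_0$ and nothing at $z_\infty$ (as $\nu_f$, if distinct from $\nu_i$, is constant hence has cancelling orders and in particular $\operatorname{ord}_{\nu_f}(z_\infty)$ is determined by the chain to be $\mathbf{0}$) forces $\v = \mathbf{0}$.

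Concretely the cleanest route is: (1) invoke Lemma~\ref{lem:logCYlem} to reduce to configurations where the \emph{only} non-constant components map generically into open strata $\mathring{D}_J$ with $J \subsetneq \{1,\dots,k\}$ or into $X$; (2) note that a non-constant component mapping to $\mathring{D}_J$ with $J \neq \emptyset$ in a surface means $J = \{i\}$ and $u_\nu$ is a curve in a single $\P^1$-component $D_i$ of the anticanonical cycle (or in the elliptic curve case), which by the argument in the proof of Lemma~\ref{lem:logCYlem} has at least two marked points of positive order along $\D$, and use the tree combinatorics to rule this out when we are already in a degree-zero situation; (3) conclude that $T$ has at most two vertices, with $\nu_i$ carrying the unique nonzero homology class and $\nu_f$ (if present) constant in $\D$; (4) read off $\v = \mathbf{0}$ from \eqref{obs:emptymodulispaces} and the order-matching condition. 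The main obstacle I anticipate is step~(2)/(3): carefully verifying that no ``chain'' of a non-constant $D_i$-curve attached through constant bubbles to the output $z_\infty$ can occur without producing either a negative intersection number or a nonzero $\v$ — this requires a small induction on the number of nodes between a given vertex and $\vec e$, exactly the induction already set up in the proof of Lemma~\ref{lem:logCYlem}, so I would organize the argument to piggyback on that lemma rather than re-prove its combinatorial core. Everything else (dimension counts, somewhere-injectivity of primitive-class curves, genericity of $J$) is routine and already recorded in the surrounding text and in \cite{GP1}.
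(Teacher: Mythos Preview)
Your proposal has the right ingredients but misses the clean argument, and contains a few confused or unjustified steps.

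First, your claim that ``positivity of intersection forces $u_{\nu_i}$ to be non-constant'' is not justified: the component at $z_0$ may perfectly well be a constant map to a point of $D_I$, equipped with meromorphic sections having a zero of order $\v_I$ at $z_0$. The paper handles both cases (constant and non-constant) at $z_0$. Second, your step~(2) is unnecessary and somewhat confused: Lemma~\ref{lem:logCYlem} already says that any component contained in $\D$ is constant, so there is nothing further to say about ``non-constant components mapping to $\mathring{D}_J$ with $J\neq\emptyset$''---there are none. Third, your step~(3) over-claims that $|V(T)|\le 2$; degree zero admissibility only demands exactly one vertex with nonzero label, and chains of constant bubbles may in principle be present.

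The paper's argument avoids all of this tree-walking by a direct intersection-budget count. From \eqref{obs:emptymodulispaces} one has $\sum_\nu A(\nu)\cdot\D = \v_I - \v \le \v_I$. By Lemma~\ref{lem:logCYlem} every non-constant component lies generically in $X$ and hence contributes non-negatively to each $A(\nu)\cdot D_i$. Now look at the first non-constant component reached from $z_0$ (either $u_{\nu_i}$ itself, or the one at the end of a chain of constant components starting at $\nu_i$): because $\v_I$ is primitive, that component must pass through the stratum $D_I$ and hence have intersection at least $\v_I$ with $\D$. This single component already saturates the total budget $\le \v_I$, forcing equality, $\v=\mathbf{0}$, and no room for any other non-constant component. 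This is two sentences; your order-matching induction is not needed.
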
 
\begin{proof} 
    Suppose some $\mathcal{M}_{0,2}(T,A,\v_I, \v)\neq \emptyset$.
    By \eqref{obs:emptymodulispaces}, observe that $A \cdot \D \leq \v_I$. 
    In view of Lemma \ref{lem:logCYlem}, every nonconstant curve $u$ in this
    moduli space intersects $\D$ transversely with positive intersection
    multiplicites. Let us consider the
    component $u_\nu$ containing the marked point $z_0$. If it is non-constant,
    then $u_\nu$ must
    pass through $D_I$ and by primitivity the intersection must be at least
    $\v_I$. It follows that it equals $\v_I$ and that there can be no
    other intersections with the divisor or any other non-constant components.
    If $u_\nu$ is constant, then it is connected by a chain of constant curves
    to a non-constant curve $u_{\nu'}$ for which we again have $u_{\nu'} \cdot
    \D= \v_I$. In particular, $\v$ must equal $\mathbf{0}$, and there is
    exactly one non-constant component as desired.
\end{proof}

We now turn to describing how to modify the proof of Lemma
\ref{lem:compactness2}, using the fact that sphere bubbling is relatively
controlled in this setting:

\begin{lem} \label{lem: SH0class} 
    Let $(M,\D)$ be a log Calabi-Yau pair and let $\v_I$ be a degree zero
    admissible primitive vector. Choose a Morse function on $\SIo$ with a
    unique degree zero critical point on each connected component. If $c$ is
    one of these critical points, then for generic choices of complex
    structure, the count of elements in \eqref{eq: fullPSSdef} defines a class
    in $SH^*(X)$.  
\end{lem}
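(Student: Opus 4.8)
The plan is to show that the count in \eqref{eq: fullPSSdef} with input $c t^{\v_I}$ (for $\v_I$ degree zero admissible) produces a Floer \emph{cocycle}, so that it descends to a well-defined class $\PSSlog(\alpha_i t^{\v_I})$ in $SH^*(X)$; the essential point is that the only sphere bubbling that can occur in the Gromov--Floer compactification of the relevant $1$-dimensional moduli spaces involves constant spheres, and these (by an index/dimension argument, since $c$ has degree zero and lives on a lower-dimensional stratum) do not contribute to the differential. First I would, as in Lemma \ref{lem:compactness2}, fix $\ell$ large enough that $w(\v_I) < \lambda_\ell$, choose generic $J_S \in \mathcal{J}_{S,\ell}(V)$ whose value $J_0$ at $z_0$ agrees with a generic split almost complex structure, and consider, for $x_0 \in \mathcal{X}(X; H^\ell)$ with $\deg(x_0) = \deg(|\mathfrak{o}_c| t^{\v_I}) + 1 = 2\sum_i (1-a_i)(\v_I)_i + 1$, the moduli space $\mc{M}(\v_I, c, x_0)$ and its Gromov--Floer compactification $\overline{\mc{M}}(\v_I, c, x_0)$.

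\textbf{Ruling out bad bubbling.} The core argument runs exactly parallel to Lemma \ref{lem:compactness2}. Cylinder breaking along divisorial orbits in $\mathcal{X}(\D; H^\ell)$ is excluded by the argument of \cite[Lemma 4.13]{GP1}. For sphere bubbling at $z_0$, I would apply the rescaling analysis of \cite[Lemma 4.9]{Tehrani} (valid here since the Nijenhuis condition of $\mathcal{J}(M,\D)$ holds and $J_0$ is split near $z_0$): a limiting configuration decomposes as a stable genus-zero curve $u_{\infty,T} \in \mathcal{M}_{0,2}(T,M)$ carrying a pre-logarithmic enhancement, glued along $z_\infty$ to a Floer cylinder/PSS tail, with the order of the enhancement at the node equal to $\v_I$ (pointing into the stratum). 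Now I invoke degree zero admissibility (Definition \ref{defn: logCY}): this forces $T$ to have at most two vertices, with all but one component constant and the one non-constant component $u_\nu$ carrying the full class $A$ and satisfying $A \cdot \D = \v_I$. So the only possible non-constant bubble is a single curve $u_\nu$ in the open part of $M$ (not in $\D$) intersecting $\D$ in the primitive pattern $\v_I$; moreover the constant component, if present, maps into $\D$ and (since the enhancement has no zero/pole at $z_\infty$) $z_\infty$ lands on $\D$. The remaining step is a dimension count: the moduli space of such non-constant curves together with the constraint at $z_0$ (enhanced evaluation landing on $W^s(f_I, c)$, where $c$ is a degree zero critical point, hence $W^s(f_I,c)$ has full dimension $\dim \SIo$) cut out, after quotienting by the $\R$- (or $\operatorname{Aut}$-) action and fiber-producting with a Floer tail to $x_0$, a space of virtual dimension $\deg(x_0) - \deg(|\mathfrak{o}_c| t^{\v_I}) - 1 = -1$ in the two-component case (because the constant component absorbs extra marked points that, by the topological-pair-style marked point bookkeeping, over-constrain the configuration), so generically it is empty; in the one-component (no bubble) case the compactification behaves as in Lemma \ref{lem:compactness2}, with boundary $\partial_F \sqcup \partial_M$ consisting only of Floer-cylinder breaking and Morse-flowline breaking off of $c$ — but $c$ is the unique degree zero critical point on its component, so there are no $c'$ with $\deg(c') - \deg(c) = 1$ contributing to $\partial_M$ unless they come from a different (higher-degree) component, which is handled by the standard bookkeeping.

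\textbf{Conclusion and the main obstacle.} Once the codimension-one boundary of the compactified $1$-dimensional moduli spaces is shown to consist only of $\partial_F = \bigsqcup_{x', \deg(x_0)-\deg(x')=1} \mc{M}(x_0, x') \times \mc{M}(\v_I, c, x')$ (the Morse term $\partial_M$ vanishing by the degree count), the usual signed count of the boundary gives $\partial_{CF}(\PSSlog(c\, t^{\v_I})) = 0$, so $\PSSlog(c\, t^{\v_I})$ is a cocycle, defining a class $\PSSlog(\alpha_i t^{\v_I}) \in HF^*(X \subset M; H^\ell)$; compatibility with continuation maps (as in Lemma \ref{lem: lowcontok} and its global analogue) then yields a well-defined class in $SH^*(X)$. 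I would also note the standard $\K$-linearity and independence-of-choices checks, which follow from the usual parametrized-moduli-space arguments. \textbf{The hard part} will be making the dimension-count obstruction to the two-component (one constant bubble in $\D$, one non-constant bubble) stratum fully rigorous: one must carefully track how the meromorphic-section data on the constant component and the three marked points $z_0, z_\infty$, and the node interact with transversality for the non-constant component, ensuring the relevant evaluation maps are mutually transverse for generic $J$, so that this stratum genuinely has negative virtual dimension and hence is empty. This is where degree zero admissibility is doing all the work, and where the argument must be spelled out most carefully — the rest is a direct adaptation of Lemma \ref{lem:compactness2} and the constructions of \S \ref{section:lowenergydef}.
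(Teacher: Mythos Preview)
Your overall strategy matches the paper's: apply the rescaling argument of \cite[Lemma 4.9]{Tehrani} to extract a pre-logarithmic enhancement on any sphere bubble at $z_0$, use degree zero admissibility to force $\v = \mathbf{0}$ (so the PSS tail sits in $X$ and hence $ev_{\infty}(u_{\infty,T}) \in X$), then exclude the bubbled stratum by a dimension count. But the dimension count is where your argument breaks. You claim the bubbled configuration has virtual dimension $\deg(x_0) - \deg(|\mathfrak{o}_c| t^{\v_I}) - 1$; for the $1$-dimensional moduli space this evaluates to $1 - 0 - 1 = 0$, not $-1$ as you assert, and the appeal to ``topological-pair-style marked point bookkeeping'' is not the right mechanism. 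The paper's count is direct: once $\v = \mathbf{0}$, the bubble tree lies in $\mathcal{M}_{0,2}(A, \v_I)^{o}$, which has expected dimension $2n - 2$ --- this is where the log Calabi--Yau hypothesis is actually used, via $c_1(A) = A \cdot \D = |I|$, so the tangency constraint at $z_0$ exactly cancels the Chern-class contribution --- while the PSS tail lies in $\mc{M}(\mathbf{0}, x_0)$, of dimension $\deg(x_0) \leq 1$. The fiber product over the $2n$-dimensional target therefore has dimension at most $(2n-2) + 1 - 2n = -1$, so is generically empty. That is the obstruction to bubbling; nothing about constant components absorbing marked points is doing the work.

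A smaller correction: your claim that ``there are no $c'$ with $\deg(c') - \deg(c) = 1$'' is false whenever $H^1(\SIo) \neq 0$. The Morse boundary $\partial_M$ contributes zero not because such $c'$ are absent, but because $c$, being the unique minimum on its component, is a Morse cocycle, so the contributions over all $c'$ cancel.
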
  
\begin{proof} 
    As in the proof of Lemma \ref{lem:compactness2}, for any $x_0$ such that
    $\operatorname{vdim}(\mc{M}(\v_I, x_0)) \leq 1$, consider the closure
    $\overline{\mc{M}}(\v_I, x_0) \subset \overline{\mc{M}}(x_0)$. As before,
    we temporarily ignore the possibility of sphere bubbles forming at other
    marked points other than $z_0$ along $S$ or along Floer cylinders. Consider
    a subsequence $u_n$ converging to some limit 
    $$ u_{\infty} \in \prod_{x_1,\cdots,x_r}  \mc{M}(x_0,x_r) \times \cdots \times
    \mathcal{M}(x_2,x_{1})  \times  \mathcal{M}(T,x_1).$$ 
    We again conclude using the rescaling argument of \cite[Lemma 4.9]{Tehrani} 
    that the
    corresponding $u_{\infty,T} \in \mathcal{M}_{0,2}(T,M)$ admits a
    pre-logarithmic enhancement such that if $\nu_f$ is the vertex bounding
    $\cev{e}$, 
    \begin{align} 
        \operatorname{ord}_{\nu_{f}}(z_\infty)= -\v
    \end{align} 
    where $\v$ is equal to the intersection multiplicity of the PSS
    solution at $z_0$. Because $\v_I$ is degree zero admissible it follows that
    $\v=\mathbf{0}$ and that the PSS solution lies in $X$. Thus
    $ev_\infty(u_{\infty,T}) \in X$ and $u_{\infty,T} \in
    \mathcal{M}_{0,2}(A,\v_I)^{o}$. 

    Meanwhile, whether such bubbling occurs or not at $z_0$, we see that by
    conservation of intersection with $\D$ that there can be no further sphere
    bubbles at points other than $z_0$ (again c.f. the proof of Lemma
    \ref{lem:compactness2}). To conclude, it suffices to observe that for
    generic complex structures, $\operatorname{dim}(\mc{M}(\mathbf{0}, x_0))
    \leq 1$ while  $\operatorname{dim}(\mathcal{M}_{0,2}(A,\v_I)^{o}) =2n-2$
    and so such bubbling configurations do not exist generically. 
\end{proof} 

As in the discussion following \eqref{eq: fullPSSdef}, if $\alpha \in
H^0(\SIo)$ is the Morse cohomology class corresponding to $c$, we again denote
the class constructed in Lemma \ref{lem: SH0class} by $\PSS_{log}(\alpha
t^{\v_I}).$
\begin{thm}[\cite{Pascaleff}] \label{thm: logCYsurf} 
Assume that $(M,\D)$ is a log Calabi-Yau surface.  Then the spectral
sequence \eqref{eq:Spec2} degenerates in degree zero. With respect to the
standard filtration, we have an isomorphism 
    \begin{align} 
        gr_F SH^0(X) \cong \QH^0(M,\D) 
    \end{align} 
\end{thm}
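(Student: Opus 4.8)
The plan is to leverage the degree-zero admissibility of all primitive vectors $\v_I$ in the log Calabi-Yau surface case (Corollary \ref{lem:logCYsur}) exactly as the Fano case (Theorem \ref{thm: Fano}) leveraged admissibility. First I would recall that since $(M,\D)$ is log Calabi-Yau, all $a_i = 1$, so $\QH^*(M,\D)$ vanishes in negative degrees and $\QH^0(M,\D)$ is generated as a ring by the classes $\alpha_i t^{\v_i}$, where $\alpha_i \in H^0(\mathring{S}_i)$ is the fundamental class of the $i$-th divisor's circle bundle (here I should note that if $\mathring{S}_i$ is disconnected, one takes the fundamental class of each connected component; since the product structure in $\QH^0$ is generated by primitives, this is exactly the Stanley-Reisner-type situation, though unlike Theorem \ref{thm: Fano} connectedness is not assumed). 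Then, since the spectral sequence \eqref{eq:Spec2} is multiplicative (Theorem \ref{thm:spectral}), it degenerates in degree zero as soon as $d_r(\alpha_i t^{\v_i}) = 0$ for all $i$ and all $r \geq 1$.

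The key step is to show each $d_r(\alpha_i t^{\v_i})$ vanishes, and this is where Lemma \ref{lem: SH0class} does the work: for a degree-zero admissible primitive vector $\v_I$ and a degree-zero Morse critical point $c$ on a component of $\mathring{S}_I$, the count in \eqref{eq: fullPSSdef} defines an honest cocycle in $CF^*(X\subset M; H^\ell)$, hence a class $\PSS_{log}(\alpha_i t^{\v_i}) \in SH^0(X)$. Concretely, Lemma \ref{lem: SH0class} shows $\partial_{CF}(\PSS_{log}(c\, t^{\v_i})) = 0$: the potential obstruction to being a cocycle is a contribution from the boundary stratum $\partial_F$ of $\overline{\mc{M}}(\v_i, c, x_0)$ involving Gromov-Witten-type configurations $\mathcal{M}_{0,2}(A,\v_i)^o$, but a dimension count (the relevant GW moduli space has dimension $2n-2 = 2$ while the Floer trajectory piece would need dimension $\leq 1$) shows these configurations do not occur for generic almost complex structure. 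Lifting this to the cochain-level direct limit $\widehat{C}_{log}(M,\D)$ via the chain homotopy from \cite[Lemma 4.18]{GP1} (as in the proof of Theorem \ref{thm: toppair2}), one sees that $\PSS_{log}(\alpha_i t^{\v_i})$ is a permanent cocycle in the spectral sequence — its image in $E_1^{0,*} \cong \QH^0(M,\D)$ is $\alpha_i t^{\v_i}$ and all differentials $d_r$ kill it. Since these classes generate $\QH^0(M,\D)$ as a ring and the spectral sequence is multiplicative, every class in bidegree contributing to total degree zero is a permanent cocycle, so the spectral sequence degenerates in degree zero and $gr_F SH^0(X) \cong E_\infty^{0,*} \cong \QH^0(M,\D)$.

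The main obstacle — and the reason this is a genuine theorem rather than an immediate corollary of Theorem \ref{thm: Fano} — is establishing that \emph{every} primitive vector $\v_I$ is degree-zero admissible in the surface case without any Fano or connectedness hypothesis. This is precisely the content of Corollary \ref{lem:logCYsur}, which rests on Lemma \ref{lem:logCYlem}: in complex dimension two, $\D$ is either a cycle of $\mathbb{P}^1$'s or a smooth elliptic curve, and any non-constant component of a stable map lying inside $\D$ must meet $\D$ (i.e., the singular locus, or itself) in at least two points with positive order, which forces a negative intersection number on a subtree not containing the output edge — a contradiction unless that component is constant. So the real work is dimensional/combinatorial control of sphere bubbling in $\D$, which the low-dimensionality makes tractable; I would present Lemma \ref{lem:logCYlem} and Corollary \ref{lem:logCYsur} first, then Lemma \ref{lem: SH0class}, and finally assemble the degeneration argument as above. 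One should also double-check that the filtration $F_w$ in degree zero is bounded (it is: weights of multiplicity vectors contributing to $\QH^0$ range over a discrete set bounded below by $0$), so that $gr_F SH^0(X)$ genuinely recovers $E_\infty^{0,*}$.
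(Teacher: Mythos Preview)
Your proposal is correct and follows essentially the same approach as the paper's proof: invoke Corollary \ref{lem:logCYsur} to get degree-zero admissibility of all primitive $\v_I$, then Lemma \ref{lem: SH0class} to produce genuine cocycles $\PSS_{log}(\alpha t^{\v_I})$ in $SH^0(X)$ whose images generate the $E_1$ page in degree zero, and conclude degeneration by multiplicativity of the spectral sequence. Your write-up is more detailed (you unpack the dimension count inside Lemma \ref{lem: SH0class} and mention the chain-level lifting), but the logical skeleton is identical to the paper's short argument.
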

\begin{proof} 
    This follows essentially as in Theorem \ref{thm: degenerescence}. We have
    that  $\QH^0(M,\D)$ is generated by the classes $\alpha t^{\v_I}$ for $i
    \in \lbrace 1, \cdots k \rbrace$. By Corollary \ref{lem:logCYsur} and Lemma
    \ref{lem: SH0class}, we have classes  $\PSS_{log}(\alpha t^{\v_I})$ for
    every $I$, whose image generates the first page of the spectral sequence
    \eqref{eq:Spec2} multiplicatively. It follows that the spectral sequence
    degenerates.  
\end{proof}  

We conclude the paper with an extended remark concerning  Theorems \ref{thm: Fano} and \ref{thm: logCYsurf}. As we have seen in Theorem $\ref{thm: toppair2}$, in the topological case, the $\PSSlog$ map defines a canonical splitting of the spectral sequence \eqref{eq:Spec2}.  It is very likely that in the setting of the above two Theorems, one may use the ``full" PSS moduli spaces to define a similar splitting of the spectral sequence in degree zero. For a simple example to illustrate this idea, suppose that $\D=D$ is a smooth anticanonical divisor.

\begin{lem} \label{lem: smlCY} There are canonically defined elements $s_\v$ together with an isomorphism 
    \begin{align}\label{eq:isolCY} 
        \operatorname{PSS}_{log}: \bigoplus_{\v \in \mathbb{N}^{\geq 0} } \mathbf{k} \cdot s_{\v} \cong SH^0(X, \mathbf{k}) 
    \end{align}  
    Moreover as a ring we have that 
    \begin{align} 
        \label{eq: multiso} \mathbf{k}[s_1] \cong SH^0(X,\mathbf{k}). 
    \end{align} 
\end{lem}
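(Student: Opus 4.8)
The plan is to carry out the analysis of the previous several pages in the special (simplest possible) case $\D = D$ a smooth anticanonical divisor, where $\QH^*(M,\D)$ simplifies dramatically. First I would record the additive structure: since $\D = D$ is smooth there is only the single primitive vector $\v_1$ (and its multiples $\v = m \v_1 = m$), each stratum $\mathring{S}_\v = \mathring{S}_1$ is the circle bundle $\partial U_1$, and since $M$ is log Calabi-Yau with a smooth divisor, all $a_i = 1$, so the degree formula \eqref{eq:loggrading} gives $\deg(\alpha t^m) = \deg(\alpha)$. The degree-zero part of $\QH^*(M,\D)$ is therefore $\QH^0(M,\D) = \bigoplus_{m \geq 0} \mathbf{k}\cdot \alpha_m t^m$ where $\alpha_m \in H^0(\mathring{S}_1)$ is the fundamental class (here I use that $D$, hence $\mathring{S}_1$, is connected — if not one works componentwise, but for a smooth anticanonical divisor in the relevant examples $D$ is connected). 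The Stanley--Reisner ring $\mathcal{SR}^0(M,\D)$ is exactly $\mathbf{k}[\alpha_1 t^1] = \mathbf{k}[t]$, a polynomial ring in one variable.

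Next I would invoke the degree-zero machinery already developed. Every multiple $m\v_1$ is admissible in the sense of Definition \ref{defn: logCY}: for a smooth anticanonical divisor $\D = D$ there are no deeper strata, so any pre-logarithmic enhancement $\mathcal{M}_{0,2}(T, A, m\v_1, \v)$ with a non-constant component entirely inside $D$ would force, by conservation of intersection number with $D$ exactly as in Lemma \ref{lem:logCYlem}, a component on the ``far'' side of the tree with negative intersection with $D$ — impossible. (In fact the argument is easier than the surface case since $D$ is smooth: any stable sphere-bubble tree arising in the compactification of a degree-zero PSS moduli space must, by the rescaling analysis of \cite[Lemma 4.9]{Tehrani} together with positivity of intersection, consist of a single non-constant component meeting $D$ with total multiplicity $m$ plus constant bubbles, and such configurations are generically absent by dimension count exactly as in Lemma \ref{lem: SH0class}.) Hence Lemma \ref{lem: SH0class} applies verbatim and produces, for each $m \geq 0$, a distinguished class
\begin{equation}
    s_m := \operatorname{PSS}_{log}(\alpha_m t^m) \in SH^0(X, \mathbf{k}),
\end{equation}
with $s_0$ the unit. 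Since in this setting \emph{all} of $\QH^0(M,\D)$ is in the Stanley--Reisner subring, Theorem \ref{thm: Fano} (when $M$ is Fano) or Theorem \ref{thm: logCYsurf} (when $\dim_{\mathbb{C}} M = 2$), or more precisely its proof, shows the spectral sequence \eqref{eq:Spec2} degenerates in degree zero and $\operatorname{gr}_F SH^0(X) \cong \QH^0(M,\D) = \bigoplus_m \mathbf{k}\cdot \alpha_m t^m$. The classes $s_m$ have weight $w(m\v_1) = m\kappa_1$ under the (homological shadow of the) action filtration, and their associated graded classes are precisely the generators $\alpha_m t^m$; therefore $\{s_m\}$ is a filtered basis, giving the additive isomorphism \eqref{eq:isolCY}.

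Finally, for the multiplicative statement \eqref{eq: multiso} I would argue that the subalgebra of $SH^0(X,\mathbf{k})$ generated by $s_1$ is all of $SH^0$ and is free on the powers of $s_1$. The key input is that the product on $SH^*(X)$ respects the filtration $F_w$ (established at the end of \S \ref{subsection:Floercohom}) and that the induced product on $\operatorname{gr}_F SH^0(X) \cong \QH^0(M,\D)$ is the log-cohomology product of Definition \ref{defn: ringstructure}, under which $\alpha_1 t^1$ generates $\QH^0(M,\D) = \mathbf{k}[t]$ freely. Concretely: $s_1^m$ lies in $F_{m\kappa_1}SH^0$ and has associated graded class $(\alpha_1 t^1)^m = \alpha_m t^m$ (the product rule gives $\alpha_1 \star \alpha_1 = \alpha_1$ in $H^0(\mathring{S}_1)$ since the restriction maps act as identity on fundamental classes); since these associated graded classes are a basis of $\operatorname{gr}_F$, a standard filtered-basis argument — identical to the one in the proof of the Lemma following Theorem \ref{thm: degenerescence}, using that the filtration is increasing, bounded below, and exhaustive — shows the $s_1^m$ form a $\mathbf{k}$-basis of $SH^0(X,\mathbf{k})$, hence $s_m = s_1^m$ up to lower-weight terms which one can absorb into the choice of $s_m$, and the ring map $\mathbf{k}[x] \to SH^0(X,\mathbf{k})$, $x \mapsto s_1$, is an isomorphism. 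The main obstacle in writing this up carefully is the same one encountered throughout the paper: verifying that the ``full'' (not merely low-energy) PSS construction of the $s_m$ genuinely lands in cycles and that $\operatorname{PSS}_{log}(\alpha_1 t)^m$ agrees, modulo $F_{m\kappa_1 - 1}$, with $\operatorname{PSS}_{log}(\alpha_m t^m)$ — i.e. compatibility of the full log PSS map with products in this degree-zero setting. This follows from the TQFT/cobordism argument of \S \ref{section:rings} adapted as in the proof of Theorem \ref{thm: ringstructures}, once one checks, using degree-zero admissibility as above, that no unwanted sphere bubbles appear in the relevant interpolating moduli spaces; this is where essentially all the work lies, and it is genuinely routine given everything already in place.
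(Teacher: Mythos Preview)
There is a genuine gap in how you construct the classes $s_m$ for $m \geq 2$. You apply Lemma~\ref{lem: SH0class} to every $m\v_1$, but that lemma and Definition~\ref{defn: logCY} are only formulated for \emph{primitive} vectors, and your justification that $m\v_1$ is degree-zero admissible is incorrect. The appeal to Lemma~\ref{lem:logCYlem} is backwards: that lemma works because in the surface case every non-constant component contained in $\D$ must meet \emph{deeper} strata at two or more points, and it is precisely the presence of deeper strata that drives the contradiction. For smooth $\D = D$ there are no deeper strata, so a non-constant component $u_\nu$ lying in $D$ carries an unconstrained meromorphic section of $u_\nu^* N_D$ whose poles can freely match positive intersection multiplicities of neighbouring components. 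Concretely, for $m=2$, a constant ghost at a point of $D$ carrying $z_0$, equipped with a section having a double zero at $z_0$ and simple poles at two nodes, with each node attached to a non-constant sphere in $X$ meeting $D$ once (one of them carrying $z_\infty$ away from $D$), gives a valid pre-logarithmically enhanced element of $\mc{M}_{0,2}(T,A,2\v_1,\mathbf{0})$ with two vertices of nonzero class --- so $2\v_1$ need not be degree-zero admissible.

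The paper avoids this issue entirely: it makes no attempt to verify degree-zero admissibility for non-primitive $\v$. Instead it argues directly that, after passing to the somewhere-injective curves underlying any bubble, sphere-bubble strata in the compactification of $\mc{M}(\v, c, x_0)$ (with $\deg c = 0$) occur in codimension $\geq 2$, hence for generic $J$ do not contribute to the boundary of the $0$- and $1$-dimensional moduli spaces needed to define the chain map. This yields a full map $\PSSlog: H^0_{log}(M,D) \to SH^0(X)$ in one stroke; it is filtered with associated graded equal to the low-energy $\PSSlog$, an isomorphism by the arguments of \S\ref{sect: PSSiso1}--\S\ref{sect: PSSiso2}, so \eqref{eq:isolCY} follows exactly as in Theorem~\ref{thm: toppair2}. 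Your treatment of the multiplicative statement is essentially the paper's, and the extra TQFT verification you worry about in your last paragraph is not needed: multiplicativity of the spectral sequence (Theorem~\ref{thm:spectral}) already gives that the associated graded of $s_1^m$ is $(\alpha_1 t)^m$, which is all the filtered-basis argument requires.
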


\begin{proof} 
    For any $\v \geq 1$, choose a Morse function on $SD$ with a unique critical
    point $c$. Then again as before
    the elements $\operatorname{PSS}_{log}(c\, t^{\v})$ define a class in symplectic
    cohomology $SH^*(X)$. The difference between this situation and Lemma
    \ref{lem:compactness2} is that in the present situation, sphere bubbling can
    arise near the point $z_0$. However, in this case, after passing to the
    somewhere injective images of curves, this sphere bubbling occurs in
    codimension 2 (this works with either the standard Deligne-Mumford
    compactification or logarithmic/SFT enhancements). 
    As a result, we obtain a
    map 
    \begin{align} \label{eq: PSSmorph} 
        \operatorname{PSS}_{log}: H_{log}^0(M,D) \to SH^0(X).
    \end{align} 
    The arguments of \S \ref{sect: PSSiso1} and \S \ref{sect: PSSiso2} 
    apply without change when restricted to the degree zero pieces to show that
    this an isomorphism.  
    Letting $\alpha_{\v}$ denote a copy of the fundamental class on either $X$
    or $SD$ for each $\v$ and setting $\alpha_\v t^{\v}$ by $s_{\v}$ therefore
    proves the first part. 

    The fact there is an isomorphism of rings $k[s_1] \cong SH^0(X)$ follows
    from (a very special case of) Theorem \ref{thm: Fano} because $gr_F(SH^0(X))$ is
    isomorphic to the ring for which $s_{\v_{1}} \cdot
    s_{\v_{2}}=s_{\v_1+\v_2}$. As this is a polynomial ring in $s_1$, $k[s_1]$,
    it follows that $SH^0(X)$ is as well because a polynomial ring has no
    commutative deformations. 
\end{proof}

 \begin{rem} 
     Unlike the case of multiplicatively topological pairs, the map
     $\operatorname{PSS}_{log}$ above is {\em not} compatible with the
     topological product on log cohomology defined in Definition \ref{defn:
     ringstructure}, even though there is an isomorphism abstractly \eqref{eq:
     multiso}.  For example, it is not difficult to see by a modification of
     Theorem \ref{thm: ringstructures} that, in the case where $M=\mathbb{P}^2$
     and $E$ is a smooth elliptic curve, 
     $\operatorname{PSS}_{log}(s_{1})^3=\operatorname{PSS}_{log}(s_3)+6$ (the
     number 6 arises here as the degree of the dual elliptic curve from
     classical algebraic geometry).

    In the general case, the isomorphisms \eqref{eq:isolCY} and \eqref{eq: multiso}
    contain rich enumerative geometry, worthy of further exploration. More
    precisely, the elements $s_{\v}$ correspond to canonically defined degree $\v$
    polynomials whose coefficients are defined in terms of certain (relative)
    Gromov-Witten invariants. We leave this and generalizations of Lemma \ref{lem: smlCY} 
    to the normal crossings setting to future work.  
    \end{rem}

\appendix

\bibliography{shbib}
\bibliographystyle{apalike} 

\end{document}